\documentclass[12pt]{article}

\usepackage{amsmath, amssymb, amsthm, amsfonts}
\usepackage{amsthm}
\usepackage{supertabular}
\usepackage{caption}
\newtheorem{theorem}{Theorem}[section]
\newtheorem{proposition}[theorem]{Proposition}
\newtheorem{definition}[theorem]{Definition}
\newtheorem{lemma}[theorem]{Lemma} 
\newtheorem{example}[theorem]{Example}
\newtheorem{remark}[theorem]{Remark}
\newtheorem{corollary}[theorem]{Corollary}

\usepackage[section]{placeins} 
\usepackage{subcaption}
\usepackage{multicol}

\usepackage{graphicx}

\usepackage{enumitem}

\usepackage{fancyhdr}

\usepackage[section]{placeins}

\usepackage[hidelinks]{hyperref}

\usepackage{etoolbox}

\usepackage{indentfirst}

\usepackage[explicit]{titlesec}

\usepackage[T1]{fontenc}

\usepackage{XCharter}

\usepackage{tocloft}

\usepackage[doublespacing]{setspace}

\usepackage{tikz-cd}
\usetikzlibrary{arrows.meta}
\usepackage{pgfplots}
\pgfplotsset{compat=1.5}

\usepackage{longtable}
\usepackage{booktabs}
\usepackage{array}

\usepackage{comment}

\makeatletter
\def\vv(#1,#2,#3){\ensuremath{v_{#1,#2}^{{}^{\langle #3 \rangle}}}}
\makeatother

\newcommand{\Z}{\mathbb{Z}}

\titleformat{\section}[block]{\centering\singlespace\large\bf}{CHAPTER \thesection \hspace{0.4em} \MakeUppercase{#1}}{0em}{}{}

\titleformat{name=\section,numberless}[block]{\centering\singlespace\large\bf}{\hspace{0.4em} \MakeUppercase{#1}}{0em}{}{}

\titlespacing{\section}{0pt}{0pt}{0pt}
\titlespacing{\subsection}{0pt}{0pt}{0pt}
\titlespacing{\subsubsection}{0pt}{0pt}{0pt}

\cftsetindents{section}{0em}{6em}

\makeatletter
\def\thm@space@setup{\thm@preskip=0pt
\thm@postskip=0pt}
\makeatother
\newtheoremstyle{newstyle}      
{} 
{} 
{\mdseries} 
{} 
{\bfseries}  
{.} 
{ } 
{} 

\makeatletter
\renewenvironment{proof}[1][\proofname]{\par
  \pushQED{\qed}%
  \normalfont \topsep0\p@\relax
  \trivlist
  \item[\hskip\labelsep\itshape
  #1\@addpunct{.}]\ignorespaces 
}{
  \popQED\endtrivlist\@endpefalse
}
\makeatother

\patchcmd{\thebibliography}{\section*{\refname}}{}{}{}

\let\OLDthebibliography\thebibliography
\renewcommand\thebibliography[1]{
    \OLDthebibliography{#1}
    \setlength{\parskip}{0pt}
    \setlength{\itemsep}{0pt plus 0.3ex}
}

\usepackage[left=1in,right=1in, top=1in, bottom=1in]{geometry}
\renewcommand{\arraystretch}{0.85}

\newcommand{\Mdef}[2]{\newcommand{#1}{\relax \ifmmode #2 \else $#2$\fi}}

\newcommand{\Vspc}{\vspace*{0.1in}}

\makeatletter \@addtoreset{equation}{section} \makeatother

\renewcommand{\thesection}{\arabic{section}}

\begin{document}

\centerline{\bf ON THE COBORDISM GROUPS OF $O\langle n \rangle$-MANIFOLDS}

\vskip-0.4cm
\thispagestyle{empty}

\begin{center}
    \vspace{-0.4cm}
    by \\
    {\bf HASSAN H. ABDALLAH}\\ 
    {\bf DISSERTATION}\\  
    Submitted to the Graduate School,\\
    of Wayne State University,\\
    Detroit, Michigan\\
    in partial fulfillment of the requirements\\
    for the degree of\\
    {\bf DOCTOR OF PHILOSOPHY} 
\end{center}

\begin{flushleft}
    \vspace*{-0.20in}
    \hspace*{3.09in}2026 
    \hspace*{3.09in}MAJOR: Mathematics\\ 
    \hspace*{3.09in}Approved By:\\
    \hspace*{3.09in}-----------------------------------------------------------\\
    \vspace*{-0.25in}
    \hspace*{3.09in}Advisor\hspace*{1.5in} Date\hspace*{0.1in}\\
    \bigskip
    \hspace*{3.09in}-----------------------------------------------------------\\
    \medskip
    \hspace*{3.09in}-----------------------------------------------------------\\
    \medskip
    \hspace*{3.09in}-----------------------------------------------------------\\
\end{flushleft}

\newpage

\pagestyle{fancy} \chead{} \rhead{} \lhead{}
\pagenumbering{roman} \lfoot{}\cfoot{\thepage}\rfoot{}
\setcounter{page}{2}

\phantomsection
\section*{DEDICATION}

\addcontentsline{toc}{section}{Dedication}
\begin{center}
	\textit{To my son, Idriss.} 
\end{center}

\newpage

\phantomsection
\section*{ACKNOWLEDGEMENTS}

\addcontentsline{toc}{section}{Acknowledgements}
I am deeply indebted to my advisor, Andrew Salch. His support, guidance, and unwavering willingness to discuss mathematics has shaped me into the mathematician I am today. I benefited greatly from the algebraic topology faculty at Wayne State University, including Bob Bruner, Dan Isaksen, and John Klein. I lost count of the number of times 
I stopped by Bob Bruner's office, usually uninvited, with a question in hand. His enthusiasm and willingness to share tricks of the computational stable homotopy theory trade were indispensable to me. Receiving Dan Isaksen's guidance over the years has kept me on a productive path. I could not have asked for a better place to do algebraic topology research. I am grateful to Andy Baker for serving on my thesis committee. 

I would like to thank my mother, my brother Ali, and my sisters Laura and Sara for always believing in me. I would also like to thank my son, Idriss, for being a loving and amusing distraction from mathematics. Seeing him grow into a little toddler these last few years has been my greatest joy. Finally, I would like to thank my wife, Leila, without whom this thesis would not have been possible.


\newpage

\begin{singlespace}

\renewcommand{\contentsname}{\hfill\large TABLE OF CONTENTS \hfill}
\tableofcontents
\newpage

\phantomsection
\addcontentsline{toc}{section}{List of Tables}
\renewcommand{\listtablename}{\hfill\large LIST OF TABLES \hfill}
\listoftables
\newpage

\phantomsection
\addcontentsline{toc}{section}{List of Figures}
\renewcommand{\listfigurename}{\hfill\large LIST OF FIGURES \hfill}
\listoffigures

\end{singlespace}

\clearpage

\pagestyle{fancy} \chead{\thepage} \rhead{} \lhead{}
\pagenumbering{arabic} \lfoot{}\cfoot{}\rfoot{}
\setcounter{equation}{0}

\section{Introduction}
\label{chap:Introduction}

Given a smooth manifold, a tangential $O\langle n \rangle$-structure is a lift of the classifying map of its tangent bundle to $BO\langle n \rangle$. An $O\langle n \rangle$-manifold is a manifold equipped with an equivalence class of $O\langle n \rangle$-structures. The cases $n=1,2\text{ and }4$ recover the familiar notions of unoriented, oriented, and spin manifolds. Larger values of $n$ furnish higher analogs of these notions with additional geometric properties.
The goal of this thesis is to classify $O\langle n\rangle$-manifolds up to \textit{cobordism}, an equivalence relation first described by Henri Poincar\'e in 1895 \cite{Poincare1895}. Two manifolds $M_1$ and $M_2$ are \textit{cobordant} if there is a manifold $N$ of one higher dimension such that the boundary of $N$ is the disjoint union $M_1 \sqcup M_2$. 
Cobordism of manifolds forms an abelian group in each dimension under disjoint union, and, in the cases considered in this thesis, a graded ring under Cartesian product. 
Pontryagin identified characteristic numbers as an invariant for cobordism in 1947 \cite{pontryagin47}, and applied the cobordism of framed manifolds to study the stable homotopy groups of spheres in 1955 \cite{pontryagin55}.
Ren\'e Thom, in his pioneering thesis in 1956 \cite{MR0061823}, built on the work of Pontryagin used stable homotopy theory instead to study cobordism, completing the cobordism classification of unoriented manifolds. In particular, Thom calculated the unoriented cobordism groups $\Omega^{O}_{n}$
by calculating the stable homotopy groups of the spectrum $MO$. This identification holds more broadly: the $O\langle n \rangle$-cobordism groups $\Omega^{\langle n \rangle}_{*}$ (i.e. the cobordism groups of $O\langle n \rangle$-manifolds) are isomorphic to the stable homotopy groups of the spectrum $MO\langle n \rangle$. 
The case of oriented cobordism ($n=2$) was calculated by the collective work of Milnor, Novikov, and Wall around 1960 \cite{MR119209} \cite{MR0120654}. In 1966, Anderson-Brown-Peterson successfully calculated the spin cobordism groups ($n=4$) \cite{MR0190939}. The next stage, $n=8$, corresponds to the string cobordism groups. At the prime 2, the first few dozen string cobordism groups were calculated by 
Giambalvo \cite{giambalvo70} and Gorbunov-Mahowald \cite{GorbunovMahowald1993}. Beginning with $n=9$, very few groups $\Omega^{\langle n \rangle}_{*}$ are explicitly written down. In this thesis, we calculate the 2-primary component of the groups $\Omega^{\langle n \rangle}_{*}$ for $9 \leq n \leq 17$ in a range of degrees depending on $n$. We provide the results of our calculation at the end of this section in Theorem \ref{thm: combined bordism groups}.

Beyond the classification itself, these calculations have applications to other phenomena in geometry, topology, and physics. In what follows, we describe a few of these applications and the relevant results 
in this thesis. 

For $m \geq 5$, let $\Theta_m$ denote the group of oriented, smooth, closed manifolds $\Sigma$ that are homotopy equivalent to the $m$-sphere, where the group operation is connected sum. By the work of Smale on the $h$-cobordism theorem \cite{smale1962}, $\Theta_{m}$ is the group of exotic spheres (under diffeomorphism). There is the Kervaire-Milnor exact sequence
\begin{align*}
0 \rightarrow bP_{m+1} \rightarrow \Theta_{m} \rightarrow (\text{Coker } J)_{m}
\end{align*}
where $bP_{m+1}\subseteq \Theta_m$ consists of all homotopy spheres that are the boundaries of parallelizable $(m+1)$-manifolds and $J$ is the stable $J$-homomorphism. In \cite{Stolz1985}, Stolz gave a condition for when the boundary of an almost closed manifold also bounds a parallelizable manifold that depends on
the kernel of the map $\pi_{*} \mathbb{S} \rightarrow \pi_{*} MO\langle k \rangle$, where $\mathbb{S}$ is the sphere spectrum. The calculations in this thesis provide information about this map. For instance, we prove 
\begin{theorem}
The boundary of any $8$-connected, almost closed $(18+d)$-manifold also bounds a parallelizable manifold for $d=7,8,10,11$.
\end{theorem}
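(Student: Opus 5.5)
Let me call $\Sigma = \partial W$ the boundary, where $W$ is an $8$-connected almost closed $m$-manifold with $m=18+d$, so $\Sigma$ is a homotopy $(m-1)$-sphere. The plan is to use Stolz's criterion from \cite{Stolz1985} as the bridge. Roughly, an $(n-1)$-connected almost closed $m$-manifold $W$ carries an $O\langle n\rangle$-structure, here with $n=9$. Its boundary $\Sigma$ defines a class in $\operatorname{Coker} J$. Stolz's obstruction to $\Sigma$ bounding a parallelizable manifold lies in the kernel of the Hurewicz-type map
\[
\pi_{m-1}\mathbb{S} \longrightarrow \pi_{m-1} MO\langle 9\rangle ,
\]
taken modulo the image of $J$. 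So it suffices to show that this kernel, in degrees $m-1 = 17+d \in \{24,25,27,28\}$, consists only of elements in the image of $J$. Odd-primary issues should be harmless in this range: $MO\langle 9\rangle$ agrees with $MString$ away from low-dimensional modifications, and the odd-primary Coker $J$ in these stems is small and detected classically. I would therefore concentrate on the $2$-primary part, citing the standard odd-primary facts for the rest.

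First I will identify the $2$-primary stems $\pi_{24},\pi_{25},\pi_{27},\pi_{28}$ of $\mathbb{S}$ from the known tables. I will list generators of Coker $J$ in these stems, for example $\eta_4$-type classes, $h_1 g$, $P d_0$, $\epsilon\bar\kappa$ and related elements in the relevant range. Second, I will use the thesis's Adams spectral sequence computation of $\pi_* MO\langle 9\rangle$ (the main calculation summarized in Theorem \ref{thm: combined bordism groups}) together with the unit map $\mathbb{S}\to MO\langle 9\rangle$. For each Coker $J$ generator in these stems, I will check that its image is nonzero by comparing Adams $E_2$-pages. The unit induces a map
\[
\operatorname{Ext}_{A}(\mathbb{F}_2,\mathbb{F}_2)\to \operatorname{Ext}_A(H^*MO\langle 9\rangle,\mathbb{F}_2).
\]
I will show that the class detecting each generator maps to a nonzero permanent cycle that is not a boundary. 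Where the $E_2$-image vanishes, I will instead use a Toda bracket or multiplicative argument (e.g.\ multiplication by $\eta$, $\nu$, or $\bar\kappa$ relations) to detect the element in higher filtration. Third, since injectivity on Coker $J$ in stems $24,25,27,28$ makes the kernel lie in Im $J$, Stolz's theorem gives the conclusion.

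The main obstacle is the second step. The Adams spectral sequence for $MO\langle 9\rangle$ in stems $24$--$28$ has differentials and hidden extensions, and it is here that $d=9$ (stem $26$) fails. I must verify that no Coker $J$ class becomes a boundary in the $MO\langle 9\rangle$ spectral sequence. I would first try naturality: any differential hitting the image class in $MO\langle 9\rangle$ must have a source that is not in the image from the sphere. I would rule such sources out by degree and filtration bookkeeping using the computed $E_2$-page of $H^*MO\langle 9\rangle$ as an $A$-module through degree about $30$, which the thesis computes via a minimal cell structure or Margolis-homology approximation. If that fails for a particular class, I would fall back on comparing with $tmf$ through the string orientation $MO\langle 9\rangle\to MString\to tmf$. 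Many of these Coker $J$ classes are known to be detected in $\pi_*tmf$, which gives nontriviality directly.
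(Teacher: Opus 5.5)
Your outline is the paper's argument. You use Stolz's Lemma 12.5 and the observation that $\operatorname{Im}(\partial_*\circ b_*)\subseteq\ker(\pi_{*}\mathbb{S}\to\pi_{*}MO\langle 9\rangle)$, dispose of odd primes by Ravenel's tables, and read off the $2$-primary kernel in stems $24,25,27,28$ from the Adams spectral sequence for $MO\langle 9\rangle$.

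Two small corrections:
\begin{itemize}
\item Odd-primary $\mathrm{Coker}\,J$ is actually zero in these four stems, so nothing has to be detected there. Your comparison with $MString$ is not valid at odd primes anyway: the fibre $K(\mathbb{Z},7)$ of $BO\langle 9\rangle\to BO\langle 8\rangle$ already kills $p_2$ rationally.
\item $h_1g$ and $Pd_0$ live in stems $21$ and $22$. The $2$-primary $\mathrm{Coker}\,J$ classes you must handle are $\epsilon\eta^{*}$ (detected by $h_1h_4c_0$) in stem $24$, $\mu_{25}$ (detected by $P^3h_1$) in stem $25$, and $\kappa^2$ (detected by $d_0^2$) in stem $28$. $\pi_{27}\mathbb{S}$ is entirely $\mathrm{Im}\,J$, so $d=10$ is automatic.
\end{itemize}
Stems $25$, $27$, $28$ then go through as in the paper.

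The real problem is stem $24$, i.e.\ $d=7$, and it affects the paper's argument as much as yours. $\epsilon\eta^{*}$ has Adams filtration $5$ and order $2$. So a nonzero image in $\pi_{24}MO\langle 9\rangle$ would have to be detected in $E_\infty$ by a class $z$ of filtration $\ge 5$ with $h_0z=0$; otherwise twice the image would be detected by $h_0z\neq 0$.

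The paper's own analysis of that stem leaves no such class:
\begin{itemize}
\item The proof that $h_1v_{24,2}$ is a permanent cycle rests on every stem-$24$ class of filtration $>3$ on $E_4$ being $h_0$-torsion free.
\item The computation of $\pi_{24}MO\langle 9\rangle$ finds torsion only from $h_4I_9$ (filtration $1$) and possibly $v_{24,2}$ (filtration $2$), alongside two $h_0$-towers.
\end{itemize}
Taken at face value, this $E_\infty$-page puts $\epsilon\eta^{*}$ in the kernel of the unit map. Your step ``check that each $\mathrm{Coker}\,J$ class has nonzero image'' would therefore fail in stem $24$. The paper's unit-map theorem, which lists $24$ among the good stems, is in tension with its own computation.

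To actually get $d=7$ you need one of two things. Either exhibit a surviving $h_0$-torsion class of filtration $\ge 5$ in stem $24$, which means correcting that part of the computation. Or use Stolz's lemma in its sharper form and show directly that $\epsilon\eta^{*}\notin\partial_*b_*\bigl(\pi_{25}A[9]\bigr)$, rather than bounding the whole kernel of $\pi_{24}\mathbb{S}\to\pi_{24}MO\langle 9\rangle$.
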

We also show that this fails in other dimensions.
\begin{theorem}
There exists an $8$-connected, almost closed $(18+d)$ manifold with boundary non-trivial in $(\mathrm{Coker}\, J)_{18+d-1}$ for $d=3,4,5,6,14$.
\end{theorem}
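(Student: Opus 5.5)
Following Stolz \cite{Stolz1985}, an almost closed $(m+1)$-manifold $W$ is a compact manifold whose boundary $\Sigma=\partial W$ is a homotopy sphere. If $W$ is $8$-connected, its stable normal bundle lifts to $BO\langle 9\rangle$, so $W$ determines a class in $\pi_{m+1}$ of the cofiber of the unit map $\mathbb{S}\to MO\langle 9\rangle$. The boundary map in the long exact sequence
\[
\pi_{m+1}\mathbb{S}\rightarrow \pi_{m+1}MO\langle 9\rangle \rightarrow \pi_{m+1}\big(MO\langle 9\rangle/\mathbb{S}\big)\xrightarrow{\ \partial\ } \pi_{m}\mathbb{S}\rightarrow \pi_{m}MO\langle 9\rangle
\]
sends $[W]$ to the class of the framed homotopy sphere $\Sigma$, and $\Sigma$ bounds a parallelizable manifold exactly when its image in $(\mathrm{Coker}\,J)_m$ is zero. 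Conversely, Stolz's surgery results say that every class in the image of $\partial$ that satisfies the needed connectivity conditions is realized by an $8$-connected almost closed manifold; in the range $m+1=18+d$ with $d\le 14$ the dimension is large compared with the connectivity, so these conditions hold. Hence both theorems reduce to the following homotopy-theoretic question: for $m=17+d$, which elements of $(\mathrm{Coker}\,J)_m$ lie in the kernel of the Hurewicz-type map $\pi_m\mathbb{S}\to\pi_m MO\langle 9\rangle$? Since the image of $J$ maps to zero or is detected by $KO$-type invariants that we can handle separately, the reduction is to the kernel of $(\mathrm{Coker}\,J)_m\to \pi_m MO\langle 9\rangle$. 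The first theorem is the statement that this kernel vanishes for $m=24,25,27,28$. The second is the statement that it is nonzero for $m=20,21,22,23,31$.

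The key step is to compute the unit map on homotopy in degrees up to about $31$. This can be done 2-locally, since the relevant odd-primary Coker $J$ classes in this range are detected in $MO\langle 9\rangle$, or else are handled by the $BP$-based odd-primary argument. I would use the 2-primary computation of $\pi_*MO\langle 9\rangle$ carried out later in the thesis via the Adams spectral sequence. The $E_2$-page is $\mathrm{Ext}_A(H^*MO\langle 9\rangle)$, computed via a Thom isomorphism and a change of rings over a suitable subalgebra together with a cell-by-cell filtration. The unit map $\mathbb{S}\to MO\langle 9\rangle$ induces a map of Adams spectral sequences. For each listed degree I would go through the known elements of $(\mathrm{Coker}\,J)_m$; for example, in stem $20$ there is $\bar\kappa$, and in the stems $21$ through $23$ and $31$ there are the corresponding Toda-range classes. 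For each element I would check whether its image in $\mathrm{Ext}$ is nonzero and survives.

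There are two ways an element can die in $MO\langle 9\rangle$. Either its image in $\mathrm{Ext}$ is zero and no higher-filtration class can detect it, or its image is hit by a differential. A detected survivor shows the kernel is zero; this gives $d=7,8,10,11$, and there I must also rule out hidden extensions that would make the image zero. An element killed for degree reasons, for instance because $\pi_m MO\langle 9\rangle$ is zero or too small in that stem, gives the existence statement for $d=3,4,5,6,14$. For $d=14$, in stem $31$, I expect to need an explicit differential or a comparison with the $tmf$-like quotient.

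The main obstacle is determining the Adams differentials and hidden extensions for $MO\langle 9\rangle$ in stems $20$ through $31$, and especially certifying injectivity in stems $24,25,27,28$. There, naturality from the sphere's known differentials only gives lower bounds. I would first try to compare with $MO\langle 8\rangle\simeq MString$ and $tmf$, using the map $MO\langle 9\rangle\to MString\to tmf$: an element detected in $\pi_*tmf$ is certainly nonzero in $\pi_*MO\langle 9\rangle$. The classes not detected by $tmf$ would then be handled directly in the Adams spectral sequence.
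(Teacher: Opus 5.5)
\textbf{There is a genuine gap at $d=3$, and it cannot be repaired there.} $d=3$ is stem $20$. You claim its existence statement comes from a class killed ``for degree reasons,'' but stem $20$ is not small: $\pi_{20}MO\langle 9\rangle\cong\mathbb{Z}\oplus\mathbb{Z}/8$. The $\mathbb{Z}/8$ is exactly the image of $\overline{\kappa}$, since $g$, $h_0g$, $h_0^2g$ survive. This is also forced by the composite $\mathbb{S}\to MO\langle 9\rangle\to MO\langle 8\rangle\to tmf$, which is the unit of $tmf$ and in which $\overline{\kappa}$ has order $8$.

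The rest of $(\mathrm{Coker}\,J)_{20}=\pi_{20}\mathbb{S}\cong\mathbb{Z}/24$ is the $3$-primary class $\beta_1^2$. It maps to $\beta^2\neq 0$ in $\pi_{20}tmf_{(3)}$. So your remark that odd-primary classes ``are detected in $MO\langle 9\rangle$'' is exactly what kills this case: for the existence direction, detection is what you do not want. Hence $\pi_{20}\mathbb{S}\to\pi_{20}MO\langle 9\rangle$ is injective.

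Now apply the reduction you set up yourself. A suitably framed boundary of an $8$-connected almost closed manifold lies in the kernel of the unit map; this is Stolz's Lemma 12.5 as used in the paper's last chapter. So every $8$-connected almost closed $21$-manifold has boundary trivial in $(\mathrm{Coker}\,J)_{20}$, and the $d=3$ case of the statement is false as written. This is consistent with the paper itself. Its unit-map theorem for $MO\langle 9\rangle$ explicitly excludes $n=20$, and its ``if and only if'' corollary lists $d\in\{0,1,2,4,5,9,14\}$. The ``$3$'' here looks like a misprint, which your uniform ``degree reasons'' claim absorbed rather than caught.

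\textbf{The cases $d=4,5,6$.} Your counting idea works here and is a genuinely different route from the paper's.
\begin{itemize}
\item By Hovey's theorem, $\mathrm{Im}\,J$ maps to zero in these stems.
\item $(\mathrm{Coker}\,J)_{21}$, $(\mathrm{Coker}\,J)_{22}$, $(\mathrm{Coker}\,J)_{23}$ have $2$-primary orders $4$, $4$, $16$.
\item $\pi_{21}$, $\pi_{22}$, $\pi_{23}$ of $MO\langle 9\rangle$ have orders $2$, $2$, and at most $4$.
\end{itemize}
The paper instead exhibits explicit differentials: $d_2(h_2\vv(19,0,9))=h_2^2h_4$, $d_2(h_2\vv(20,1,9))=h_2c_1$, and $d_2(\vv(24,4,9))=h_0h_2g$. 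These name the kernel elements $\nu\nu^*$, $\nu\overline{\sigma}$, $2\nu\overline{\kappa}$, which its ``if and only if'' corollary needs. Counting needs the full $E_\infty$-page and extensions in those stems but not the specific element. It is also insensitive to whether the image of a class whose detecting element is hit reappears in higher filtration.

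\textbf{The case $d=14$.} Counting is unavailable, because $\pi_{31}MO\langle 9\rangle$ is not determined. A comparison with $tmf$ cannot help either: such comparisons only ever certify that a class is \emph{not} in the kernel. You need the explicit differential $d_2(\vv(32,3,9))=n$, which is how the paper obtains $[n]$.

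\textbf{The realization step.} You explicitly invoke the converse direction: that kernel elements are realized by actual manifolds. This requires Kervaire--Milnor surjectivity of $\Theta_m\to(\mathrm{Coker}\,J)_m$ in these stems, followed by surgery below the middle dimension on an $O\langle 9\rangle$ null-bordism. The paper leaves this step implicit, so spelling it out is worthwhile.
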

These results deal with a range not handled by the main theorems of Stolz \cite{Stolz1985} or Burklund-Hahn-Senger \cite{BHS2023Boundaries} on this topic.

The spectra $MO \langle 2 \rangle$, $MO \langle 4 \rangle$, and $MO \langle 8 \rangle$ carry orientation maps that land in successively higher chromatic targets: 
\begin{align*}
MO \langle 2 \rangle &\rightarrow H\mathbb{Z} \\
MO \langle 4 \rangle &\rightarrow ko \\
MO \langle 8 \rangle &\rightarrow tmf 
\end{align*}
where $H\mathbb{Z}$ is the Eilenberg-MacLane spectrum representing integral homology, $ko$ is the connective real k-theory spectrum, and $tmf$ is the connective topological modular forms spectrum.
The latter two are known as the Atiyah-Bott-Shapiro and Ando-Hopkins-Rezk orientations respectively. 
The first two maps, after localizing at 2, split off in homotopy with the target occurring as a bottom summand, and the third map has long been suspected to. 
Here $ko$ and $tmf$ can be viewed as connective versions of the higher real $K$–theories $EO_1$ and $EO_2$. Hovey conjectured that this pattern continues at the prime 2, with the next stage being an orientation map from $MO\langle 9 \rangle = MFivebrane$ to $EO_3$ \cite{Hovey1997}. In light of these possible orientation maps and splittings, we prove 
\begin{theorem}
The spectrum $MO \langle 9 \rangle$ is indecomposable through degree 22.
\end{theorem}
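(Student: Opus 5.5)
The plan is to work 2-locally throughout and to show that there is no nontrivial splitting $MO\langle 9\rangle_{(2)} \simeq X \vee Y$, with both $X$ and $Y$ nontrivial, that is visible in degrees $\leq 22$. Concretely, we would show that for any such decomposition the $(22)$-skeleton (equivalently, the Postnikov section $\tau_{\leq 22}$) of one of the two summands must be contractible. The tool is the mod $2$ cohomology $H^*(MO\langle 9\rangle;\mathbb{F}_2)$ regarded as a module over the Steenrod algebra $\mathcal{A}$. A splitting of spectra gives a splitting of $H^*$ as an $\mathcal{A}$-module. By the Thom isomorphism this module is $H^*(BO\langle 9\rangle)\cdot U$, and $H^*(BO\langle 9\rangle)$ is known via Stong's computation: it is the quotient of $H^*(BO)$ by the ideal generated by $w_{2^i}$ for $i\le 3$ and the Steenrod operations on them. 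This description makes the module explicit through degree $22$ (and a bit beyond, as needed).

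First, we compute $H^*(MO\langle 9\rangle)$ through degree $22$ together with the action of $Sq^1, Sq^2, Sq^4, Sq^8, Sq^{16}$. Second, we prove \emph{cyclicity in this range}: every class of degree $\le 22$ lies in $\mathcal{A}\cdot U$, up to possibly some specified new generators. The key is then to show that each such generator $x$ is connected to $U$. This means either $x$ is hit by an operation from something that comes from $U$, or some $Sq^{I}x$ equals a nonzero element of $\mathcal{A}U$. If so, no $\mathcal{A}$-module projection can separate $x$ from $U$.

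To make this rigorous, we would argue with idempotents. Any splitting gives an idempotent self-map $e$ of $MO\langle 9\rangle_{(2)}$. Its effect on $H^*$ in degrees $\le 22$ is an $\mathcal{A}$-linear idempotent $e^*$ of the truncated module. Since $H^0=\mathbb{F}_2 U$, we have $e^*U\in\{0,U\}$; after replacing $e$ by $1-e$ we may assume $e^*U=U$. Then $e^*$ fixes $\mathcal{A}U$. For each extra generator $x$, $\mathcal{A}$-linearity together with the connecting relations forces $e^*x\equiv x$ modulo $\mathcal{A}U$. Hence $e^*$ is the identity through degree $22$, so $1-e$ induces zero on cohomology there. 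We would also need to rule out summands that are invisible in mod $2$ cohomology. Since $MO\langle 9\rangle$ is connective of finite type and we work 2-locally, a summand with vanishing mod 2 cohomology through degree 22 has vanishing 2-local homotopy through degree 22 (by Hurewicz mod $p$/Adams). So the complementary summand is trivial through degree $22$.

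We expect the main obstacle to be this second step. Indecomposability of the $\mathcal{A}$-module in the range is not automatic: new generators of $H^*(BO\langle 9\rangle)$ appear, for instance around degree $16$ (from $w_{16}$-type classes) and in the low twenties. For each we must exhibit an explicit nonzero operation linking it to $\mathcal{A}U$, for example $Sq^{k}$ applied to $U\cdot w_{16}$ landing on a decomposable class that also appears in $\mathcal{A}U$. If pure cohomology fails for some generator, the fallback is the Adams spectral sequence. In that case we show that the corresponding $h_0$-towers or $\mathrm{Ext}$ classes for the putative summand support or receive differentials or extensions tying them to the unit component. Such ties are incompatible with a splitting of homotopy groups through degree $22$.
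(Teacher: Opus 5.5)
Your framework is sound in outline: a splitting gives an $\mathcal{A}$-linear idempotent $e^*$ on $H^*(MO\langle 9\rangle)$, and if $e^*$ is forced to be the identity in the range, the other summand is $22$-connected by mod $2$ Hurewicz. But the proposal has a genuine gap. It breaks at its input, and it never carries out the one substantive step.

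\emph{The cohomology is misdescribed.} $H^*(BO\langle 9\rangle;\mathbb{F}_2)$ is not a quotient of $H^*(BO)$. The quotient you describe is only the image of $H^*(BO)$, namely $\mathbb{F}_2[w_i:\alpha(i-1)\ge 4]$, which through degree $22$ is spanned by $1$ and $w_{16}$. In this range Stong's description is $H^*(K(\mathbb{Z}/2,9))/(Sq^2\iota_9)$; the polynomial factor only starts in degree $32$. It begins with $x_9=\iota_9$ in degree $9$. With your description you would find the cyclic module spanned by $U$ and $Sq^{16}U$, and your cyclicity step would be vacuous. The actual $\mathcal{A}$-generators lie in degrees $0,9,19,22$, detected on the $0$-line by $1$, $I_9=x_9U$, $v_{19,0}$ and $v_{22,0}$. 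Nothing new appears near degree $16$, since $w_{16}U=Sq^{16}U\in\mathcal{A}U$.

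\emph{The linking criterion is too weak.} For a generator $y$ in degree $19$ or $22$, $e^*y$ may equal a nonzero decomposable $z$. A relation $Sq^Iy\in\mathcal{A}U\setminus 0$ then only yields $Sq^Iz=Sq^Iy$, which such a $z$ may satisfy. What is needed is that no $y+z$ spans a complement, which is an $\mathrm{Ext}$-level statement. You should also apply $e^*$ to all of $H^*$, not to the module truncated at $22$, since such ties can live just above the range.

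\emph{The paper's route.} The paper does not argue through the module. It asserts that through degree $23$ the $\mathcal{A}$-module splits into four summands generated in degrees $0,9,19,22$. It then deduces the theorem from $d_2(I_9)=h_1h_3$, $d_2(v_{19,0})=h_2h_4$ and $d_2(v_{22,0})=h_2v_{18,1}$. In a splitting $X\vee Y$ with $U\in H^*X$, the $E_2$-page splits compatibly with $d_2$, and images of sphere classes lie in the $X$-part. Each of $I_9$, $v_{19,0}$, $v_{22,0}$ spans its bidegree, so each differential forces its source into the $X$-part in turn. Hence $H^*Y$, and so $\pi_*Y$, vanish through degree $22$. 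Your fallback gestures at this but names no differential.

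\emph{Your primary route is not actually blocked.} The paper's stated module splitting conflicts with its own data.
The paper records $Sq^{16}U=(Sq^4Sq^2Sq^1+Sq^7)i_9$. Since $H^j(BO\langle 9\rangle)=0$ for $0<j<9$, the Cartan formula gives $Sq^{16}U=(Sq^4Sq^2Sq^1+Sq^7)I_9$. This is nonzero, because $Sq^7\iota_9$ and $Sq^4Sq^2Sq^1\iota_9$ are independent in $H^{16}(BO\langle 9\rangle)$. As $H^0$ and $H^9$ are one-dimensional, $e^*I_9=\lambda I_9$, and $\lambda\,Sq^{16}U=e^*Sq^{16}U=Sq^{16}U$ forces $\lambda=1$. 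That is exactly your argument, and it works here.

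Two further pieces of the paper's data point the same way:
\begin{itemize}
\item $h_4$ is nonzero on the paper's $E_2$-page. This is impossible if $U$ generated a summand $\mathcal{A}//\mathcal{A}(3)$, since the unit map on $\mathrm{Ext}$ would then factor through $\mathrm{Ext}_{\mathcal{A}(3)}(\mathbb{F}_2,\mathbb{F}_2)$.
\item The relations $h_0^2v_{19,0}=h_1v_{18,1}$ and $h_0v_{22,0}=h_2v_{19,0}$ have both sides nonzero, by the paper's $d_2$ computations. Since $\mathrm{Ext}$ of a direct sum splits as an $\mathrm{Ext}_{\mathcal{A}}(\mathbb{F}_2,\mathbb{F}_2)$-module, they rule out splitting the degree-$19$ generator off from the lower ones, or the degree-$22$ generator off from the degree-$19$ one.
\end{itemize}

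So with the correct module, your approach should give $\mathcal{A}$-module indecomposability in the range, which is stronger than the theorem. The paper's $d_2$ argument has the advantage of working whether or not the module splits.
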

As an $\mathcal{A}$-module, where $\mathcal{A}$ is the mod 2 Steenrod algebra, the mod 2 cohomology of $MO \langle 9 \rangle$ has four direct summands through degree 22. As a consequence of the above theorem, this splitting of $\mathcal{A}$-modules does not lift to a splitting of spectra.
Furthermore, our calculations show that the homotopy groups of $MO \langle 9 \rangle$ in this range are more complicated than the homotopy groups of $EO_{3}$ \cite{hahnshi}. 

\begin{corollary}
Suppose there exists an orientation map $MO\langle 9 \rangle \rightarrow eo_{3}$. Then the map does not split. 
\end{corollary}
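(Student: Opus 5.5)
The plan is to argue by contradiction and reduce the statement to the preceding theorem, that $MO\langle 9\rangle$ is indecomposable through degree 22, together with a comparison of homotopy groups. Suppose there is an orientation $\phi\colon MO\langle 9\rangle \to eo_3$ which splits. This means there is a map $s\colon eo_3 \to MO\langle 9\rangle$ with $\phi\circ s \simeq \mathrm{id}$ after $2$-localization, so that
\begin{align*}
MO\langle 9\rangle_{(2)} \simeq eo_3 \vee C,
\end{align*}
where $C$ is the fiber (equivalently, the cofiber) of $\phi$.

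First I will check that a genuine splitting of this kind is incompatible with the indecomposability theorem. If $C$ has nontrivial homotopy, or nontrivial mod $2$ cohomology, in degrees at most $22$, then the wedge decomposition contradicts indecomposability through degree 22. So the only remaining possibility is that $\phi$ is a $2$-local equivalence through degree 22, meaning $C$ is $22$-connected.

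The second step rules out that possibility by comparing homotopy groups. Using the computation of $\pi_*MO\langle 9\rangle$ in this range, which comes later in the thesis from the Adams spectral sequence, and the known groups $\pi_*EO_3$ (and hence $\pi_*eo_3$ in low degrees) from \cite{hahnshi}, I will exhibit a degree $k\le 22$ where $\pi_k MO\langle 9\rangle_{(2)}\not\cong\pi_k eo_3$. The introduction already points to this: the homotopy of $MO\langle 9\rangle$ in this range is more complicated than that of $EO_3$. A single mismatch shows $C$ is not $22$-connected, which finishes the contradiction.

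The main obstacle is definitional. "Connective $eo_3$" has to be fixed precisely, for example as a connective cover or a suitable connective model of $EO_3$, so that its low-degree homotopy and its cohomology as an $\mathcal{A}$-module can actually be compared. I would first try a purely homotopy-group argument: find a degree $k\le 22$ in which $\pi_k eo_3$ is not a direct summand of $\pi_k MO\langle 9\rangle$, since a splitting forces that. For example, I would look for an $\eta$- or $\nu$-multiplication in $eo_3$ that cannot be realized in $MO\langle 9\rangle$ compatibly with the unit map $\mathbb{S}\to MO\langle 9\rangle \to eo_3$. That argument would avoid needing the precise model, and the indecomposability theorem would then serve as a backup.
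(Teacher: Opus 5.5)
Your plan matches the reasoning the paper leaves implicit around this corollary. Indecomposability of $MO\langle 9\rangle$ through degree 22 forces any complementary summand to be 22-connected, and the paper's remark that $\pi_*MO\langle 9\rangle$ is more complicated than $\pi_*EO_3$ (Hahn--Shi) in this range then rules out $eo_3$ absorbing everything, with the same unresolved vagueness about what $eo_3$ is. The only slip is minor: the complement is the fiber of $\phi$, equivalently the cofiber of the section $s$, not the cofiber of $\phi$, which is $\Sigma C$.
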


If $MO\langle 9\rangle$ does admit a splitting, what is a (likely height 3) spectrum with the requisite homotopy and cohomology through degree 23? 
The orientation maps above also have geometric implications when considering their induced maps in homotopy, or their corresponding \textit{genera}. The $\hat{A}$-genus ($\hat{A}: \pi_{*}MO \langle 4 \rangle \rightarrow \pi_{*}ko$), introduced by Hirzebruch \cite{Hirzebruch1954}, 
vanishes on a spin manifold that admits a metric of everywhere positive scalar curvature; this is the content of the Lichnerowicz theorem~\cite{Lichnerowicz1963}, refined by Hitchin~\cite{Hitchin1974}. Analogously, the Witten genus $w: \pi_{*} MO \langle 8 \rangle \rightarrow tmf$ \cite{Witten1987} is conjectured 
to vanish on string manifolds that admit a metric of everywhere positive Ricci curvature. This is often referred to as the Stolz conjecture \cite{Stolz1996}. Recent work of Botvinnik--Labbi suggests that fivebrane bordism classes are tied to metrics of everywhere positive 3-curvature, hinting at a continuation of this pattern \cite{BotvinnikLabbi2014}.  

Cobordism groups of various types have become increasingly relevant to physicists, with applications to Symmetry Protected Topological (SPT) phases~\cite{Kapustin2014, Kapustin2015, FreedHopkins2016} and higher anomalies in quantum field theories~\cite{McNamaraVafa2019, Debray2023, Andriot2022, WanWang2019}.
In fact, many spectra $MO\langle n \rangle$ have distinguished names inherited from the relationship of $BO \langle n \rangle$ to string theory and M-theory \cite{SatiSchreiberStasheff2009Fivebrane} \cite{SatiSchreiberStasheff2012TwistedDifferential} \cite{Sati2015Ninebrane}, including the already mentioned $MO\langle 9 \rangle = MFivebrane$.
These physics applications often require knowledge of $\Omega^{F}(BG)$ for some $F$ and $G$. To date, choices of $F$ as $string$, $spin$, or $spin^c$ have proven fruitful. This thesis makes calculations such as $\Omega^{\langle n \rangle}_{*}(BG)$ more accessible. 
The author is interested in pursuing some of these applications in the future. For instance, such calculations may be relevant to determining some properties of $6D$ superconformal field theories with $N = (2,0)$ and $N = (1,0)$ supersymmetry, or more generally properties of backgrounds with $M5$-branes.

\subsection{Conventions}
Outside of Chapter \ref{chap:5}, all groups are implicitly 2-completed and all spectra are implicitly $2$-complete. The notation $(G)^n$ denotes the direct sum of $n$ copies of $G$. The sphere spectrum is denoted $\mathbb S$. We write $(0)$ for the trivial group.

\subsection{Computer automation}
There is a long tradition of computer automation in stable homotopy theory. This thesis continues in that vein. We primarily use Bob Bruner's \textbf{ext} \cite{bruner_ext_code}
program and Weinan Lin's \textbf{sseqcpp} program \cite{lin_sseqcpp}. Relevant Steenrod algebra module structures are calculated using Sage. Massey products are calculated using \textbf{ext}. 
The program \textbf{sseqcpp} includes some automated differential-deduction functionality. Differentials whose proofs were generated by \textbf{sseqcpp} are labeled accordingly.

\subsection{Table of $O\langle n \rangle$-cobordism groups}
\begin{theorem}\label{thm: combined bordism groups}
Table \ref{tab:combined-bordism-groups} describes the 2-primary component of the $O\langle n\rangle$-cobordism groups
$\Omega^{\langle n \rangle}_m$ for the values of $m$ listed. Entries of the form "$x$" where $x$ is a natural number indicate the order of the relevant group. The symbol $F$ represents a possibly trivial finite abelian group and $l$ is a finite multiple of $2$. 
\end{theorem}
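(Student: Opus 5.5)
The plan is to compute $\pi_*MO\langle n\rangle$ at $2$ for each $9\le n\le 17$ in the stated range, using the Adams spectral sequence
\[
E_2^{s,t}=\mathrm{Ext}_{\mathcal A}^{s,t}\bigl(H^*(MO\langle n\rangle;\mathbb F_2),\mathbb F_2\bigr)\Longrightarrow \pi_{t-s}MO\langle n\rangle^\wedge_2 ,
\]
and then reading off the table entries $\Omega^{\langle n\rangle}_m\cong\pi_m MO\langle n\rangle$ via the Pontryagin--Thom isomorphism. I would carry out the following steps in order.

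\textbf{Step 1: cohomology as an $\mathcal A$-module.} For each $n$, compute $H^*(BO\langle n\rangle;\mathbb F_2)$ through the relevant range. Stong's theorem gives it as the quotient of $H^*(BO)$ by the ideal generated by the images of Steenrod operations on $w_i$, for $i$ with $\alpha(i-1)<\phi(n)$. Applying the Thom isomorphism gives $H^*(MO\langle n\rangle)$ as a cyclic-generated $\mathcal A$-module in low degrees. Since $BO\langle n\rangle$ is $(n-1)$-connected, through degree roughly $2n$ the module looks like $\mathcal A/\!/\mathcal A(k)$ plus a few extra generators. I would determine the module structure explicitly in Sage and decompose it into indecomposable summands. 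Examples are the four summands mentioned for $n=9$, with degree bounds depending on $n$.

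\textbf{Step 2: the $E_2$-page.} Feed minimal resolutions of these summands into Bruner's \textbf{ext} to obtain the $E_2$-page together with the $h_0,h_1,h_2,h_3$-multiplications and the relevant Massey products. The map from $\mathrm{Ext}$ of the sphere, induced by the unit $\mathbb S\to MO\langle n\rangle$, supplies the names of classes and their products.

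\textbf{Step 3: differentials and extensions.} This is the main obstacle. I would use \textbf{sseqcpp} to propagate differentials via the Leibniz rule and $h_i$-linearity, with the following inputs:
\begin{itemize}
\item naturality along the unit map, so that known differentials in the sphere's Adams spectral sequence force differentials or permanence;
\item naturality along the maps $MO\langle n+1\rangle\to MO\langle n\rangle$, where the answer for $n=8$ (Giambalvo, Gorbunov--Mahowald) is the base case;
\item the fact that $MO\langle n\rangle$ is a ring spectrum, giving multiplicative structure on the spectral sequence;
\item agreement with the sphere below degree $n$, where $\pi_*MO\langle n\rangle\cong\pi_*\mathbb S$, to pin down low differentials;
\item Massey product shuffles such as Moss's theorem, which are needed for the ambiguous classes.
\end{itemize}
Where these tools cannot decide a differential or a hidden $2$-extension, the table records the ambiguity: the symbol $F$ stands for an undetermined finite group and $l$ for an undetermined multiple of $2$. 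This is why the range of $m$ is chosen depending on $n$, namely up to the first degree where such undecided classes begin to proliferate.

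\textbf{Step 4: assemble the groups.} Resolve additive extensions using $h_0$-towers and hidden $2$-extensions detected by the Massey products from Step 2, and by comparison with the sphere via the unit map. Then record the order of each group in the table.
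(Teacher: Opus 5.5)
Your overall framework matches the paper's: the Adams spectral sequence with $E_2$ computed by machine, differentials propagated by the Leibniz rule, $h_i$-linearity, Moss's theorem and \textbf{sseqcpp}, and extensions settled by comparison with $\mathbb S$. The paper's proof of this table is just the assembly of its computations for $n=9,10,12,16,17$.

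But your Step~3 is missing the input that actually drives most of those computations: information about the kernel of the unit map $\pi_*\mathbb S\to\pi_*MO\langle n\rangle$. Naturality along the unit map only pushes sphere differentials and permanent cycles forward. Comparison along $MO\langle n+1\rangle\to MO\langle n\rangle$ (through which the unit of $MO\langle n\rangle$ factors) can only show that a sphere class maps nontrivially, never that it dies. Leibniz can propagate a kill once one is known. Occasionally an $E_2$-relation also ties a new class to a sphere class that already supports a differential; this is how $d_3(\vv(12,2,9))=Ph_2$ is obtained.

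The basic kills have no such handle. These include $d_2(I_9)=h_1h_3$, $d_3(I_{10})=h_1^2h_3$, $d_5(\vv(12,0,12))=Ph_2$, $d_4(I_{16})=h_0^3h_4$, $d_7(\vv(17,0,17))=Pc_0$, and many of the differentials hitting $P^2h_2$, $h_0^2i$ and $h_0^{10}h_5$. They also include $d_2(\vv(18,1,9))=h_1^2h_4$, from which $d_2(\vv(19,0,9))$ and $d_2(\vv(22,0,9))$ follow, just as $d_2(\vv(20,1,9))=c_1$ follows from $d_2(I_9)$ via Moss. In each case the target is a permanent cycle for $\mathbb S$, and the question is exactly whether the homotopy class it detects dies in $MO\langle n\rangle$. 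Nothing in your toolkit answers that. The paper decides these with Hovey's theorem: $\mathrm{Im}\,J\to\pi_*MO\langle n\rangle$ is injective through degree $n-2$ and zero from degree $n-1$ on. It combines this with Mahowald's identification of the Ext classes detecting $\mathrm{Im}\,J$, and with the kernel results of Burklund--Senger (e.g.\ $\eta\eta_4$ dies in $\pi_{17}MO\langle 9\rangle$) and Senger--Zhang.

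Two of your specific claims fail for the same reason, and a further step needs a missing input.

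\textbf{Low degrees.} $\pi_*MO\langle n\rangle\cong\pi_*\mathbb S$ holds only through degree $n-2$. Since $MO\langle n\rangle/\mathbb S$ is $(n-1)$-connected, in degree $n-1$ the unit is merely surjective, with the $\mathrm{Im}\,J$ class in its kernel. Taken literally, your plan contradicts the table:
\begin{itemize}
\item $\Omega^{\langle 9\rangle}_8=\Z/2$ but $\pi_8\mathbb S=(\Z/2)^2$;
\item $\Omega^{\langle 12\rangle}_{11}=(0)$ but $(\pi_{11}\mathbb S)_{(2)}=\Z/8$;
\item $\Omega^{\langle 17\rangle}_{16}=\Z/2$ but $\pi_{16}\mathbb S=(\Z/2)^2$.
\end{itemize}

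\textbf{Cohomology.} $H^*(BO\langle n\rangle;\Z/2)$ is not a quotient of $H^*(BO;\Z/2)$. For instance, $H^9(BO;\Z/2)\to H^9(BO\langle 9\rangle;\Z/2)\cong\Z/2$ is zero. Stong's theorem has an additional tensor factor, a quotient of $H^*(K(\pi_nBO,n);\Z/2)$. Its bottom class gives $I_9$, $I_{10}$, $\vv(12,0,12)$, $I_{16}$, $\vv(17,0,17)$, which is precisely the source of the differential killing $\mathrm{Im}\,J$ in stem $n-1$.

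\textbf{Tower differentials and free ranks.} Differentials between $h_0$-towers and the ranks of the free parts are not reachable from Leibniz and sphere naturality. The paper uses the integral and rational cohomology of $BO\langle n\rangle$, via May--Milgram and the rational Thom isomorphism. This is how it gets $d_2(\vv(21,0,10))=h_0\vv(20,1,10)$ and $\Omega^{\langle 12\rangle}_{29}=(0)$.
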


\setlength{\tabcolsep}{4pt} 
\label{tab:combined-bordism-groups}
\tablehead{\hline%
$m$ & $\Omega^{\langle 9 \rangle}_m$ & $\Omega^{\langle 10\rangle}_m$ & $\Omega^{\langle 12\rangle}_m$ & $\Omega^{\langle 16\rangle}_m$ & $\Omega^{\langle 17\rangle}_m$\\%
\hline}
\tabletail{\hline%
\multicolumn{6}{r}{%
\small\slshape to be continued on the next page}\\}
\tablelasttail{\hline}
\begin{supertabular}{|c|p{3.0cm}|p{2.7cm}|p{2.9cm}|p{2.9cm}|p{2.9cm}|}
$0$  & $\Z$ & $\Z$ & $\Z$ & $\Z$ & $\Z$\\
$1$  & $\Z/2$ & $\Z/2$ & $\Z/2$ & $\Z/2$ & $\Z/2$\\
$2$  & $\Z/2$ & $\Z/2$ & $\Z/2$ & $\Z/2$ & $\Z/2$\\
$3$  & $\Z/8$ & $\Z/8$ & $\Z/8$ & $\Z/8$ & $\Z/8$\\
$4$  & $(0)$ & $(0)$ & $(0)$ & $(0)$ & $(0)$\\
$5$  & $(0)$ & $(0)$ & $(0)$ & $(0)$ & $(0)$\\
$6$  & $\Z/2$ & $\Z/2$ & $\Z/2$ & $\Z/2$ & $\Z/2$\\
$7$  & $\Z/16$ & $\Z/16$ & $\Z/16$ & $\Z/16$ & $\Z/16$\\
$8$  & $\Z/2$ & $\Z/2 \oplus \Z/2$ & $\Z/2 \oplus \Z/2$ & $\Z/2 \oplus \Z/2$ & $\Z/2 \oplus \Z/2$\\
$9$  & $\Z/2 \oplus \Z/2$ & $\Z/2 \oplus \Z/2$ & $(\Z/2)^{3}$ & $(\Z/2)^{3}$ & $(\Z/2)^{3}$\\
$10$ & $\Z/2$ & $\Z/2$ & $\Z/2$ & $\Z/2$ & $\Z/2$\\
$11$ & $(0)$ & $(0)$ & $(0)$ & $(0)$ & $\Z/8$\\
$12$ & $\Z$ & $\Z$ & $\Z$ & $(0)$ & $(0)$\\
$13$ & $(0)$ & $(0)$ & $(0)$ & $(0)$ & $(0)$\\
$14$ & $\Z/2 \oplus \Z/2$ & $\Z/2 \oplus \Z/2$ & $\Z/2 \oplus \Z/2$ & $\Z/2 \oplus \Z/2$ & $\Z/2 \oplus \Z/2$\\
$15$ & $\Z/2$ & $\Z/2$ & $\Z$ & $\Z$ & $\Z/32$\\
$16$ & $\Z \oplus \Z/2$ & $\Z \oplus \Z/2$ & $\Z\oplus \Z/2$ & $\Z \oplus \Z/2$ & $\Z/2$\\
$17$ & $\Z/2 \oplus \Z/2$ & $(\Z/2)^3$ & $(\Z/2)^3$ & $(\Z/2)^3$ & $(\Z/2)^3$\\
$18$ & $\Z/2$ & $\Z/8 \oplus \Z/2$ & $\Z/8 \oplus \Z/2$ & $\Z/8 \oplus \Z/2$ & $\Z/8 \oplus \Z/2$\\
$19$ & $(0)$ &  & $\Z/2$ & $\Z/2$ & $\Z/2$\\
$20$ & $\Z \oplus \Z/8$ & $\Z \oplus F$ & $\Z/8 \oplus \Z$ & $\Z/8 \oplus \Z$ & $\Z$\\
$21$ & $\Z/2$ &  & $\Z/2 \oplus \Z/2$ & $\Z/2 \oplus \Z/2$ & $\Z/2 \oplus \Z/2$\\
$22$ & $\Z/2$ &  & $\Z/2 \oplus \Z/2$ & $\Z/2 \oplus \Z/2$ & $\Z/2 \oplus \Z/2$\\
$23$ & $2\text{ or }4$&  & $\Z/2 \oplus \Z/4$ & $(\Z/2)^2 \oplus \Z/4$ & $\Z/8 \oplus \Z/2$\\
$24$ & $(\Z)^2\oplus(\Z/2)^2 \text{ or }(\Z)^2\oplus \Z/2$ &  & $\Z/2 \oplus (\Z)^2$ & $\Z/2 \oplus \Z$ & $\Z \oplus \Z/2 \oplus \Z/2$\\
$25$ &  &  & $2\text{ or }4$ & $\Z/2$ & $\Z/2$\\
$26$ &  &  &  & $\Z/2 \oplus \Z/2$ & $\Z/2 \oplus \Z/2$\\
$27$ &  &  &  & $(0)$ & $(0)$\\
$28$ & $\Z \oplus \Z \oplus \Z/2$ &  &$(\Z)^2\oplus \Z/2^l \oplus \Z/2$ & $\Z/2 \oplus \Z$ & $\Z$\\
$29$ &  &  & $(0)$ & $(0)$ & $(0)$\\
$30$ & $\Z/2$ &  & $\Z/2 \oplus \Z/2 \text{ or } \Z/4$ & $\Z/2 \oplus \Z/2$ & $\Z/2$\\
$31$ &  &  &  & $\Z/2 \oplus \Z/2$ & $\Z/2 \oplus \Z/2$\\
$32$ &  &  &  & $\Z \oplus \Z \oplus (\Z/2)^3$ & $\Z \oplus (\Z/2)^3$\\
$33$ &  &  &  & $32$ & $(\Z/2)^4$\\
$34$ &  &  &  &  & $\Z/4 \oplus (\Z/2)^4$\\
$35$ &  &  &  &  & \\
$36$ &  &  &  & $\Z \oplus \Z \oplus F$ & \\
\end{supertabular}


\clearpage

\section{Preliminaries}
\label{chap:Related Work}
\subsection{Tangential structures}
\FloatBarrier
Any real $n$-dimensional vector bundle $V$ over a space $X$ is classified by a map $f: X \rightarrow BO(n)$. 
We restrict to the case where $X=M$ is an $n$-dimensional compact manifold and $V=TM$, the tangent bundle of $M$. 
\begin{definition}\label{def: theta_structure}
Suppose $B\theta$ is a topological space (in particular a classifying space). A \textit{tangential $\theta$-structure} on a manifold $M$ is lift of the classifying map of $TM$ to $B\theta$:
\[\begin{tikzcd}
	TM & B\theta \\
	M & {BO(n)}
	\arrow[from=1-1, to=2-1]
	\arrow[from=1-2, to=2-2]
	\arrow[dashed, from=2-1, to=1-2]
	\arrow["f", from=2-1, to=2-2]
\end{tikzcd}\]
\end{definition}
\begin{remark}
Definition \ref{def: theta_structure} need not restict to classifying spaces. We do so because our examples of interest are classifying spaces, and for
convenient notation in the following definition. 
\end{remark}
\begin{definition}
A $\theta$-manifold is a manifold with an equivalence class of tangential $\theta$-structures.
\end{definition}
The $\theta$-manifolds of interest to this thesis are related to connective covers of $BO$. Recall that the \emph{Whitehead tower} of a (simply connected) CW complex $X$ is a sequence of fibrations
\[
\begin{tikzcd}[row sep=small]
\vdots \arrow[d] \\
X\langle n\rangle \arrow[d] \\
X\langle n-1\rangle \arrow[d] \\
\vdots \arrow[d] \\
X\langle 3\rangle \arrow[d] \\
X\langle 2\rangle \arrow[d] \\
X\langle 1\rangle \arrow[d] \\
X,
\end{tikzcd}
\]
together with maps $X\langle n\rangle \to X$ such that
\[
\pi_i\bigl(X\langle n\rangle\bigr)=0 \text{ for } i\leq n,
\qquad
\pi_i\bigl(X\langle n\rangle\bigr)\xrightarrow{\ \cong\ }\pi_i(X) \text{ for } i> n.
\]
Equivalently, $X\langle n\rangle$ is the \emph{$(n)$-connected cover} of $X$.

The maps $X\langle n\rangle \to X\langle n-1\rangle$ fit into principal fibrations
\[
K(\pi_{n-1}X,\,n-1)\longrightarrow X\langle n\rangle \longrightarrow X\langle n-1\rangle,
\]
classified by maps
\[
X\langle n-1\rangle \xrightarrow{\ k_n\ } K(\pi_{n-1}X,\,n-1),
\]
i.e.\ by cohomology classes
\[
k_n \in H^{n}\!\bigl(X\langle n-1\rangle;\,\pi_{n-1}X\bigr),
\]
 Given a finite CW complex $Y$, a map $Y\to X\langle n-1\rangle$ lifts through
the fibration $X\langle n\rangle \to X\langle n-1\rangle$ if and only if the pullback of
$k_n$ vanishes in $H^{n}(Y;\pi_{n-1}X)$.

Consider the case where $X=BO$. The Whitehead tower for $BO$ is given in Figure \ref{bo_whitehead}, where $w_i$ is the $i$th Stiefel-Whitney class and $p_{i}$ is the $i$th Pontryagin class. The classes $x_{9}$ and $x_{10}$ are exotic characteristic classes related to 
the fundamental classes of the cohomology of Eilenberg-Maclane spaces. In particular, consider the transgression $\tau_{9}$
induced by the fibration $BO\langle 9 \rangle \rightarrow BO\langle 8 \rangle \rightarrow K(\Z,8)$
\[
\dots \rightarrow H^{9}(BO\langle 9 \rangle; \Z/2) \xrightarrow{\tau_9} H^{10}(K(\Z,8); \Z/2) \rightarrow H^{10}(BO\langle 8 \rangle; \Z/2) \rightarrow \dots
\]
and the transgression $\tau_{10}$ induced by the fibration $BO\langle 10 \rangle \rightarrow BO\langle 9 \rangle \rightarrow K(\Z/2,9)$ 
\[
\dots \rightarrow H^{10}(BO\langle 10 \rangle; \Z/2) \xrightarrow{\tau_{10}} H^{11}(K(\Z/2,9); \Z/2) \rightarrow H^{11}(BO\langle 9 \rangle; \Z/2) \rightarrow \dots 
\]
Sati proves the following.
\begin{proposition}[Sati \cite{Sati2015Ninebrane}]\label{prop: sati_x9_x10}
The generators $x_{9}$ and $x_{10}$ are related to the fundamental classes $i_{8}$ and $i_{9}$ of $K(\Z,8)$ and $K(\Z/2,9)$ via $\tau_{9}(x_{9})=Sq^{2}i_{8}$ and $\tau_{10}(x_{10})=Sq^{2}i_{9}$. 
\end{proposition}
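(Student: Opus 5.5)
The plan is to compute the low-degree mod 2 cohomology of $BO\langle 9\rangle$ and $BO\langle 10\rangle$ with the Serre spectral sequence, one stage of the Whitehead tower at a time, and to read off the transgressions. Throughout I write the fibrations as $K(\Z,7)\to BO\langle 9\rangle\to BO\langle 8\rangle$ and $K(\Z/2,8)\to BO\langle 10\rangle\to BO\langle 9\rangle$. These are the fibre sequences whose classifying maps are $BO\langle 8\rangle\to K(\Z,8)$ and $BO\langle 9\rangle\to K(\Z/2,9)$, as in the text. The statement's transgressions $\tau_9,\tau_{10}$ are then identified with the maps in the long exact sequences displayed before the proposition, via the Serre exact sequence of the fibrations $BO\langle 9\rangle\to BO\langle 8\rangle\to K(\Z,8)$ and $BO\langle 10\rangle\to BO\langle 9\rangle\to K(\Z/2,9)$.

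\textbf{Step 1 (stage $BO\langle 8\rangle$).} I take as input Stong's computation: $H^*(BO\langle 8\rangle;\Z/2)$ is $H^*(BO)/(w_1,w_2,w_4)\mathcal{A}$ tensored with a polynomial algebra on classes coming from the fibres. In low degrees it begins $w_8, w_{12}, w_{14}, w_{15}, w_{16},\dots$. The class $w_8$ is the mod 2 reduction of the integral generator $\tfrac{1}{6}p_2$ of $H^8(BO\langle 8\rangle;\Z)$, which is the map $BO\langle 8\rangle\to K(\Z,8)$. In particular, $H^9$ and $H^{10}$ of $BO\langle 8\rangle$ vanish.

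\textbf{Step 2 (defining $x_9$).} I analyse the Serre spectral sequence of the fibration $BO\langle 9\rangle\to BO\langle 8\rangle\to K(\Z,8)$. Since $H^*(K(\Z,8))$ is generated over $\mathcal{A}$ by $i_8$ with $Sq^1 i_8=0$, its low-degree classes are $i_8$, $Sq^2 i_8$ in degree $10$, and $Sq^3 i_8$ in degree $11$. The map $H^*(K(\Z,8))\to H^*(BO\langle 8\rangle)$ sends $i_8\mapsto w_8$, and then $Sq^2 i_8\mapsto Sq^2 w_8$. By the Wu formula, $Sq^2 w_8=w_2w_8+w_{10}$, and both terms vanish in $H^*(BO\langle 8\rangle)$ by Step 1. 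Hence $Sq^2 i_8$ lies in the kernel of $H^{10}(K(\Z,8))\to H^{10}(BO\langle 8\rangle)$, which in the Serre exact sequence (valid through degree about $16$, since the base is $7$-connected and the fibre is $8$-connected) is exactly the image of $\tau_9$. On the other hand, $i_8$ itself maps injectively. So $H^9(BO\langle 9\rangle)$ contains a unique new class $x_9$ with $\tau_9(x_9)=Sq^2 i_8$, and exactness gives uniqueness modulo the image of $H^9(BO\langle 8\rangle)=0$.

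\textbf{Step 3 (defining $x_{10}$).} I repeat the argument one stage up. The class $i_9$ maps to the $k$-invariant of $BO\langle 9\rangle$, which is the unique nonzero class in $H^9(BO\langle 9\rangle;\Z/2)$, namely $x_9$. Then $Sq^2 i_9\mapsto Sq^2 x_9$, so I must show $Sq^2 x_9=0$ in $H^{11}(BO\langle 9\rangle)$. Here I first use $Sq^2 Sq^2 i_8=Sq^3Sq^1 i_8=0$. Combined with the fact that transgression commutes with Steenrod squares, this gives $\tau_9(Sq^2x_9)=0$. So $Sq^2 x_9$ comes from $H^{11}(BO\langle 8\rangle)$, and by Step 1 that group is $0$ (the first classes there are $w_8, w_{12}$). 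Therefore $Sq^2 x_9=0$. Exactness then produces $x_{10}$ with $\tau_{10}(x_{10})=Sq^2 i_9$; it is unique because $H^{10}(BO\langle 9\rangle)=0$, which is checked from the spectral sequence in Step 2.

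\textbf{Main obstacle.} The main difficulty is justifying Step 1's low-degree vanishing: precisely that $H^{10}$ and $H^{11}$ of $BO\langle 8\rangle$ are zero and that $w_{10}$ dies there. I would get this by citing Stong's description, or by checking directly in the Serre spectral sequence of $K(\Z,3)\to BO\langle 8\rangle\to BSpin$ that $w_{10}=Sq^2 w_8$ (modulo decomposables) is killed.
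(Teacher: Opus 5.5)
The paper gives no argument of its own here (it defers to Sati), and your Serre-exact-sequence proof is correct. Stong's description gives $H^9=H^{10}=H^{11}=0$ for $BO\langle 8\rangle$, so $\tau_9\colon H^9(BO\langle 9\rangle;\Z/2)\to H^{10}(K(\Z,8);\Z/2)=\Z/2\{Sq^2 i_8\}$ is an isomorphism. Your Adem-relation argument also works: $Sq^2Sq^2 i_8=Sq^3Sq^1 i_8=0$, transgression commutes with squares, and $H^{11}(BO\langle 8\rangle)=0$. Together these give $Sq^2x_9=0$ and hence $Sq^2 i_9\in\operatorname{Im}\tau_{10}$.

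One assertion in Step 3 is false: $H^{10}(BO\langle 9\rangle;\Z/2)$ is not zero. Your own Step 2 sequence gives $H^{10}(BO\langle 9\rangle)\cong H^{11}(K(\Z,8))=\Z/2\{Sq^3 i_8\}$. The class is $Sq^1x_9$, with $\tau_9(Sq^1x_9)=Sq^1Sq^2 i_8=Sq^3 i_8$; equivalently, $H_9(BO\langle 9\rangle;\Z)\cong\Z/2$ contributes an $\mathrm{Ext}$ term to $H^{10}$.

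The conclusion is unaffected. Uniqueness is not needed: $H^{10}(BO\langle 10\rangle;\Z/2)\cong\Z/2$ by Hurewicz, so any preimage of $Sq^2 i_9\neq 0$ is $x_{10}$. Uniqueness also holds anyway, because $Sq^1 i_9\mapsto Sq^1x_9$ makes $H^{10}(K(\Z/2,9))\to H^{10}(BO\langle 9\rangle)$ surjective, so $\tau_{10}$ is injective. That injectivity, together with $H^{11}(K(\Z/2,9);\Z/2)=\Z/2\{Sq^2 i_9\}$, already forces $\tau_{10}(x_{10})=Sq^2 i_9$. So $Sq^2x_9=0$ comes out as a consequence rather than being a needed input.
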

\begin{figure}[ht]
\centering
\[
\begin{tikzcd}[row sep=2.6em, column sep=4.6em]
K(\mathbb{Z},11) \arrow[r] &
BO\langle 13\rangle = BO\langle 16\rangle = BNinebrane \arrow[d] & \\
K(\mathbb{Z}/2,9) \arrow[r] &
BO\langle 11\rangle = BO\langle 12\rangle \arrow[d]
  \arrow[r, "\frac{1}{240}p_3"] &
K(\mathbb{Z},12)\simeq B^{11}U(1) \\
K(\mathbb{Z}/2,8) \arrow[r] &
BO\langle 10\rangle \arrow[d]
  \arrow[r, "x_{10}"] &
K(\mathbb{Z}/2,10)\simeq B^{9}\mathbb{R}P^\infty \\
K(\mathbb{Z},7) \arrow[r] &
BO\langle 9\rangle = BFivebrane \arrow[d]
  \arrow[r, "x_{9}"] &
K(\mathbb{Z}/2,9)\simeq B^{8}\mathbb{R}P^\infty \\
K(0,6) \arrow[r] &
BO\langle 8\rangle = BString \arrow[d, equals]
  \arrow[r, "\frac{1}{6}p_2"] &
K(\mathbb{Z},8)\simeq B^{7}U(1) \\
K(\mathbb{Z},3) \arrow[r] &
BO\langle 5\rangle = BString \arrow[d]
  \arrow[r] &
K(0,5)\simeq \mathrm{pt} \\
K(0,2) \arrow[r] &
BO\langle 4\rangle = BSpin \arrow[d, equals]
  \arrow[r, "\frac{1}{2}p_1"] &
K(\mathbb{Z},4)\simeq B^{3}U(1) \\
K(\mathbb{Z}/2,1) \arrow[r] &
BO\langle 3\rangle = BSpin \arrow[d]
  \arrow[r] &
K(0,3)\simeq \mathrm{pt} \\
K(\mathbb{Z}/2,0) \arrow[r] &
BO\langle 2\rangle = BSO \arrow[d]
  \arrow[r, "w_2"] &
K(\mathbb{Z}/2,2)\simeq B\mathbb{R}P^\infty \\
&
BO\langle 1\rangle = BO \arrow[r, "w_1"] &
K(\mathbb{Z}/2,1)\simeq \mathbb{R}P^\infty
\end{tikzcd}
\]
\caption{The first few stages of the Whitehead tower of $BO$. Taken from Andriot--Carqueville--Cribiori, Fig.\ 3.1.\label{bo_whitehead}}
\end{figure}
The characteristic classes in Figure \ref{bo_whitehead} give obstructions to the existence of $O\langle n \rangle$-structures, as shown in this diagram

\[\begin{tikzcd}
	& {} && {BFivebrane \cong BO\langle9\rangle} \\
	&&& {BString \cong BO\langle8\rangle} \\
	&&& {BSpin \cong BO\langle4\rangle} \\
	TM &&& {BSO\cong BO\langle2\rangle} \\
	M &&& BO
	\arrow[from=1-4, to=2-4]
	\arrow[from=2-4, to=3-4]
	\arrow[from=3-4, to=4-4]
	\arrow[from=4-1, to=5-1]
	\arrow[from=4-4, to=5-4]
	\arrow["{\frac{1}{6}p_2=0}"{description, pos=0.6}, shift left=2, dashed, from=5-1, to=1-4]
	\arrow["{\frac{1}{2}p_1=0}"{description, pos=0.6}, dashed, from=5-1, to=2-4]
	\arrow["{w_2=0}"{description, pos=0.6}, dashed, from=5-1, to=3-4]
	\arrow["{w_1=0}"{description, pos=0.6}, dashed, from=5-1, to=4-4]
	\arrow[from=5-1, to=5-4]
\end{tikzcd}\]
Thus, a manifold admits an $O\langle 2\rangle$-structure (equivalently an \textit{orientation}) if and only if $w_1(TM)=0$. This is the familiar notion of an \textit{orientation}. Furthermore, $M$ admits an $O\langle 4 \rangle$-structure (equivalently \textit{spin}) if and only if $w_{1}(TM)=0$
and $w_2(TM)=0$. 
\begin{remark}
There are lucid geometric interpretations of $O\langle 2\rangle$ and $O\langle 4\rangle$-structures. A manifold is an $O\langle 2\rangle$-manifold if the restriction of $TM$ to any embedded loop
is not twisted. A manifold is an $O\langle 4\rangle$-manifold if additionally the bundle on any embedded surface induced by $TM$ is not twisted \cite{Francis_connective_real_K_theory_KZ4}.
\end{remark}

\FloatBarrier
\subsection{Cobordism}
\FloatBarrier
\begin{definition}\label{def: cobordism}
  A \textit{cobordism} of smooth $n$-manifolds $M$ and $M^{'}$ is an $(n+1)$-manifold $N$ with a diffeomorphism $M\sqcup M^{'}\xrightarrow{\cong} \partial N$. We say $M$ 
  and $M^{'}$ are \textit{cobordant}. 
\end{definition}
An example of a cobordism is illustrated in Figure \ref{fig: cobordism}.
\begin{figure}[ht]\label{fig: cobordism}
\centering
\includegraphics[scale=0.3]{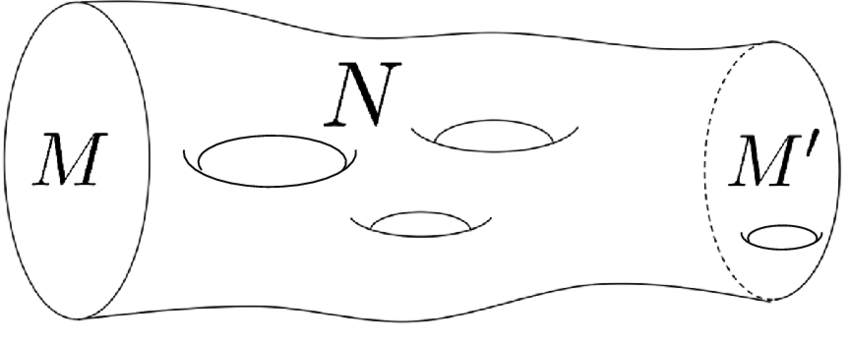}
\caption{A cobordism between manifolds $M$ and $M^{'}$. Taken from Wan-Wang \cite{WanWang2020}.\label{fig: cobordism}}
\end{figure}
\FloatBarrier
Definition \ref{def: cobordism} can be extended to the case of $\theta$-manifolds: 

\begin{definition}
Two smooth $n$-dimensional $\theta$-manifolds $M$ and $M^{'}$ are \textit{$\theta$-cobordant} if there is a compact $(n+1)$-dimensional $\theta$-manifold and a diffeomorphism 
$M\sqcup M^{'}\xrightarrow{\cong} \partial N$ inducing an equivalence of $\theta$-structures $\partial N \cong M\sqcup (-M^{'})$.
\end{definition}

The set of equivalence classes of $n$-dimensional $\theta$-manifolds under $\theta$-cobordism is denoted $\Omega^{\theta}_{n}$. The set $\Omega^{\theta}_{n}$ is an abelian group under disjoint union.

The primary tools for calculating the groups $\Omega^{\theta}_{*}$ come from stable homotopy theory. We now discuss how to translate the problem of computing the groups $\Omega^{\theta}_{n}$ into one in stable homotopy theory. We assume some familiarity with spectra. For a textbook account, see Malkiewich \cite{malkiewich_spectra}.

Let $\xi:V \rightarrow X$ be a vector bundle. Choose a metric on $\xi$ and denote the unit disc bundle of $\xi$ as $D(\xi)$ and the unit sphere bundle of $\xi$ as $S(\xi)$.
\begin{definition}\label{def:thom space}
The Thom space $Th(\xi)$ is the quotient $D(\xi)/S(\xi)$.
\end{definition}
Note that the homeomorphism type of $Th(\xi)$ does not depend on the choice of metric. The space $Th(\xi)$ can be thought of as a twisted (reduced) suspension of the base space $X$. In fact, the suspension $\Sigma X$ is a Thom space:
\begin{example}
Let $\xi: X \otimes \mathbb{R}^{n} \rightarrow X$ be the trivial vector bundle of rank $n$, then 
\[
Th(\xi) \cong \Sigma^{n} X
\]
\end{example} 
\begin{example} 
Let $\xi: \mathbb{M} \rightarrow S^{1}$ be the M\"obius bundle, then 
\[
Th(\xi) \cong \mathbb R P^{2}
\]
\end{example}
A Thom space of particular importance is $MO(n)$. 
\begin{definition}
Let $\gamma^{n}: EO(n) \rightarrow BO(n)$ be the universal $n$-plane bundle. Define 
\[
MO(n) := Th(\gamma^{n})
\]
\end{definition}
We are interested in constructing a spectrum out of the sequence of spaces $\{MO(n)\}$. To that end, we have the following sequence of inclusions 
\[
\dots \rightarrow BO(n-1) \xrightarrow{f_{n-1}} BO(n) \xrightarrow{f_n} BO(n+1) \rightarrow \dots
\]
The pullback of the $\gamma^{n+1}$ along $f_n$ is $\gamma^{n} \oplus \mathbb{R}$, giving maps 
\[
\Sigma MO_{n} = \Sigma Th(\gamma^{n})\cong Th(\gamma^{n} \oplus \mathbb R) \rightarrow Th(\gamma^{n+1})=MO_{n+1}
\]
\begin{definition}
Let $MO$ denote the spectrum given by the sequence of spaces $\{MO(n)\}$ with structure maps given above. 
\end{definition}
Let $\theta=O\langle m\rangle$ for some $m \geq 1$. There is an analogous construction for a universal $\theta$-bundle $\gamma_{\theta}^{n}: E\theta(n) \rightarrow B\theta(n)$. Denote the Thom space of $\gamma^{n}_{\theta}$ as $M\theta({n})$.
\begin{definition}
Let $M\theta$ be the spectrum given by the sequence of spaces $\{M\theta(n)\}$. 
\end{definition}
The spectra $MO$ and $M\theta$ are examples of \textit{Thom spectra}. The following theorem illustrates the fundamental relation Thom spectra play in cobordism theory. Write $\Omega^{\langle n \rangle}_{*}$ for the cobordism groups of $O\langle n \rangle$-manifolds. 

\begin{theorem}[Pontryagin \cite{pontryagin55}, Thom \cite{MR0061823}]
There is an isomorphism $\Omega^{\langle n\rangle}_{m} \cong \pi_{m}MO\langle n \rangle$. 
\end{theorem}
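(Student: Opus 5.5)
The plan is the classical Pontryagin--Thom construction, applied to the universal $O\langle n\rangle$-bundles $\gamma^{k}_{\theta}$ over $B\theta(k)$ with $\theta = O\langle n\rangle$. Since $\pi_m MO\langle n\rangle = \operatorname{colim}_k \pi_{m+k} M\theta(k)$, it suffices to build, for $k$ large, a natural bijection between $\Omega^{\langle n\rangle}_m$ and $\pi_{m+k}M\theta(k)$ that is compatible with the suspension structure maps. We will first pass from tangential to normal structures: for $k \gg m$, an $O\langle n\rangle$-structure on $TM$ corresponds, up to equivalence, to one on the stable normal bundle $\nu$ of an embedding $M \hookrightarrow \mathbb{R}^{m+k}$. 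This uses $TM \oplus \nu \cong \varepsilon^{m+k}$ together with the fact that $BO\langle n\rangle \to BO$ is a map of infinite loop spaces, so the inverse of a lift is again a lift. By Whitney embedding and the fact that any two embeddings are isotopic for $k$ large, this normal structure is well defined up to homotopy.

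\textbf{The map $\Omega \to \pi$ (Pontryagin--Thom collapse).} Given $M^m \subset \mathbb{R}^{m+k}$ with normal $\theta$-structure $\nu \to \gamma^k_\theta$, choose a tubular neighborhood $U \cong D(\nu)$. Collapse the complement of $U$ in $S^{m+k}$ to a point, giving $S^{m+k} \to Th(\nu)$, and compose with the Thom map $Th(\nu) \to M\theta(k)$ induced by the bundle map. To see this is well defined on cobordism classes, apply the same construction to a cobordism $N \subset \mathbb{R}^{m+k} \times [0,1]$, embedded with its boundary at the ends and with a compatible structure. This produces a homotopy between the two collapse maps. Disjoint union goes to the sum, so the map is a homomorphism. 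Increasing $k$ corresponds to the structure maps $\Sigma M\theta(k) \to M\theta(k+1)$, so we obtain a well-defined map into the colimit.

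\textbf{The inverse (transversality).} $B\theta(k)$ need not be a manifold, so we first reduce to finite skeleta: a map $S^{m+k} \to M\theta(k)$ has compact image, hence lands in the Thom space of the restriction of $\gamma^k_\theta$ to a finite subcomplex. Up to homotopy equivalence we may replace that subcomplex by a smooth manifold (a regular neighborhood in Euclidean space) carrying a smooth vector bundle. Homotope the map to be smooth near the zero section and transverse to it. The preimage of the zero section is then a closed $m$-manifold $M \subset \mathbb{R}^{m+k}$ whose normal bundle is pulled back from $\gamma^k_\theta$, hence carries a $\theta$-structure. Applying the same argument to a homotopy $S^{m+k} \times I \to M\theta(k)$, made transverse relative to its ends, gives a $\theta$-cobordism. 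Checking that the two constructions are mutually inverse is standard: the collapse map of the transverse preimage is homotopic to the original map via the tubular neighborhood, and conversely the preimage of the collapse map is $M$ itself.

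\textbf{Main obstacle.} The expected obstacle is bookkeeping rather than geometry. We must verify that equivalence classes of tangential structures correspond exactly to homotopy classes of normal lifts, stably and compatibly with boundaries and orientation reversal, so that $\partial N \cong M \sqcup (-M')$ matches the homotopy. We must also check that the finite-dimensional, non-manifold base $B\theta(k)$ does not obstruct transversality. We would handle the latter by the skeleton/manifold approximation above, and the former by working throughout with the homotopy fiber of $B\theta \to BO$. Because homotopy lifting is fibration-theoretic, structures are then transported along bundle isomorphisms canonically up to contractible choice.
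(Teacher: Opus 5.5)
Your proposal is correct and follows the same approach as the paper. The paper gives no proof of its own: it attributes the isomorphism to Pontryagin, Thom, and, in this generality, Lashof, whose proof is the argument you give, namely the Pontryagin--Thom collapse onto the Thom space of the structured normal bundle, an inverse via transversality, and a prior conversion of tangential structures to stable normal ones. One point to make explicit: you read the paper's tangential $O\langle n\rangle$-structures stably, as lifts of $M\to BO$ to $BO\langle n\rangle$ (equivalently, lifts along $BO(m)\times_{BO}BO\langle n\rangle\to BO(m)$ with $m=\dim M$), and this is the convention under which the identification with $\pi_m MO\langle n\rangle$ is intended.
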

Pontryagin considered this isomorphism for the case of $n=\infty$, or \textit{framed cobordism}, where $\text{lim}_{n\rightarrow \infty} MO\langle n \rangle \cong \mathbb{S}$ for the sphere spectrum $\mathbb S$. Thom considered the case of $n=1$ and $n=2$, or unoriented and oriented cobordism.
The general statement of the isomorphism  is due to Lashof \cite{lashof63}.

\subsection{The Adams spectral sequence}
 In this section, we discuss the primary tool used for computing stable homotopy groups in this thesis, the \textit{Adams spectral sequence}.
The mod $2$ Steenrod algebra $\mathcal{A}$ is the graded associative algebra over $\mathbb{Z}/2$
generated by the stable cohomology operations $Sq^i$ ($i\ge 1$), subject to the Adem relations. For a spectrum $X$, the mod $2$ cohomology $H^*(X;\mathbb{Z}/2)$ is a graded left $\mathcal{A}$-module.
The Adams spectral sequence leverages this module structure. For a prime $p$ and a spectrum $E$ of finite type, there is a spectral sequence
\[
E^{s,t}_{2}=\text{Ext}^{s,t}_{\mathcal{A}}(H^*(E;\mathbb{Z}/2),\mathbb{Z}/2) \Rightarrow (\pi_{t-s}E)^{\wedge}_{2}
\]
with differentials
\[
d_r \colon \text{E}_r^{s,t}\to \text{E}_r^{s+r,t+r-1}.
\]
This spectral sequence is called the \textit{Adams spectral sequence}. For an introduction to the Adams spectral sequence, see Adams \cite{Adams_1974}. For a recent introduction with some example calculations, see Beaudry-Campbell \cite{beaudrycambpell2018}.
There are many algebraic constraints present in this spectral sequence that aid in calculating differentials. We discuss a few here that are used in later calculations. 

The Leibniz rule
\[
d_r(ab)=d_r(a)b+ad_r(b)
\]
usually determines a surprisingly large number of differentials. There is a generalization of the Leibniz rule to higher-order products, so-called \textit{Massey Products}. They were introduced by Massey in \cite{MR98366}. See also May \cite{MR238929} for an exhaustive account. The $E_r$-page of the Adams spectral sequence is the homology of a differential graded algebra, and so each has Massey products. The Massey product of elements $a$, $b$, and $c$ is denoted by the bracket 
$\langle a, b, c \rangle$. It is only defined when $ab=0$ and $bc=0$. Moss gave a generalization of the Leibniz rule with Massey products
\begin{theorem}[Moss Theorem 1.1 (ii) \cite{MR266216}]\label{mossleibniz}
Let $a$, $b$, and $c$ be elements in the $E_r$-page of the Adams spectral sequence for a spectrum $X$. Suppose that $ab=0$, $bc=0$, $d_{r}(b)a=0$, and $bd_r(c)=0$. Then
\[
d_r\langle a,b,c \rangle \subseteq \langle d_r(a),b,c \rangle + \langle a,d_r(b),c \rangle + \langle a,b,d_r(c) \rangle
\]
where all brackets are calculated in the $E_r$-page. 
\end{theorem}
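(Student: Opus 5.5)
The statement is Moss's theorem, which the paper cites and does not prove. What follows is a sketch of how I would establish it from the construction of the Adams spectral sequence.

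\emph{Setup.} I would realize the Adams spectral sequence from an Adams tower
\[
X=X_0\leftarrow X_1\leftarrow X_2\leftarrow\cdots
\]
with cofibres $K_s=X_s/X_{s+1}$, each a wedge of mod 2 Eilenberg--MacLane spectra. The $E_r$-term is then $Z_r/B_r$, where
\begin{itemize}
\item $Z_r$ consists of elements of $\pi_*K_s$ whose boundary lifts to $\pi_*X_{s+r}$;
\item $B_r$ consists of images of elements from $\pi_*X_{s-r+1}$.
\end{itemize}
The pairing enters through the pairing of towers $X_s\wedge Y_{s'}\to Z_{s+s'}$. To form $\langle a,b,c\rangle$ with $a\in E_r(X)$, I would use a smash-product pairing on $E_r(X)$ with the $E_r$-term for the sphere (or for $X$ itself when $X$ is a ring spectrum). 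For $d_r$ of a Massey product to make sense, the $E_r$-page must first be the homology of a DGA with Massey products. So I would first pass to the cochain model: the cobar complex with $d_1$ for $r=1$, and for $r\ge 2$ Moss's description of $E_r$ as the homology of $E_{r-1}$ with the induced pairing. The hypotheses $ab=0$ and $bc=0$ on $E_r$ make $\langle a,b,c\rangle\subseteq E_r$ defined via the DGA $(E_r,d_r)$.

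\emph{Key step.} Choose representatives in $(E_r,d_r)$ and nullhomotopies $u,v$ with $d_r u\equiv ab$ and $d_r v\equiv bc$, working at the chain level inside the tower. This is precisely where the products vanish only modulo $B_r$, so the correction terms must be tracked. A typical element of the bracket is then represented by
\[
w=uc+(-1)^{|a|}av .
\]
Apply the differential, realized as the connecting map $\delta$ in the tower, using the Leibniz rule for the pairing of towers:
\[
d_r w = d_r(u)c \pm u\,d_r(c) \pm d_r(a)v \pm a\,d_r(v).
\]
The aim is to regroup this expression as a sum of three bracket representatives:
\begin{itemize}
\item $d_r(a)v$ together with a $u'$-type term, representing $\langle d_r a,b,c\rangle$;
\item $u\,d_r c$, representing $\langle a,b,d_r c\rangle$;
\item the remaining terms, in which $d_r b$ appears, representing $\langle a,d_r b,c\rangle$.
\end{itemize}
The hypotheses $a\,d_r(b)=0$ and $d_r(b)\,c=0$ are exactly what make the middle bracket defined. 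The other two brackets are defined because $d_r(a)b=d_r(ab)\pm a\,d_r(b)=0$, and similarly for $b\,d_r(c)$.

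\emph{Main obstacle.} The difficulty is that $d_r$ is not a derivation on a genuine DGA at $E_r$. It is defined only on $Z_r$, through choices of lifts in the tower. Showing that these choices can be made compatibly, so that the indeterminacies land inside the stated sum of brackets, requires a careful chain-level model of the tower. Moss handles this either by realizing the $E_r$-term via a filtered complex (for instance a Cartan--Eilenberg or cellular filtration of the smash product) or by using Toda-bracket-style cofibre constructions. I would start from the filtered-chain-complex model, where $E_r$ is the $r$-th page of the spectral sequence of a filtered DGA, and prove the corresponding statement there: in a filtered DGA, $d_r$ of a Massey product formed in $E_r$ lies in the indicated sum. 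I would then transport the result to the Adams spectral sequence via the comparison between the Adams tower and a filtered model.
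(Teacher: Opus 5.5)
The paper gives no proof of this statement; it cites Moss. Judged on its own, your sketch has a genuine gap, and it starts with where the bracket lives. A Massey product ``on the $E_r$-page'' is a bracket in $E_r=H(E_{r-1},d_{r-1})$. You choose $u,v\in E_{r-1}$ with $d_{r-1}u=ab$ and $d_{r-1}v=bc$. For $r=2$ these are cobar nullhomotopies, which is how the paper forms $\langle h_2,h_3,I_9\rangle$ in Ext before applying $d_2$.

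It is not a bracket in the DGA $(E_r,d_r)$. That would require $a,b,c$ to be $d_r$-cycles, so every $d_r$ in the statement would vanish. The bracket would also land in $E_{r+1}$, where $d_r$ is not defined. Consequently your formula $d_rw=d_r(u)c\pm u\,d_r(c)\pm d_r(a)v\pm a\,d_r(v)$ applies $d_r$ to $u$ and $v$. These are not $d_{r-1}$-cycles and hence not elements of $E_r$. Read literally, with $d_ru=ab$ and $d_rv=bc$, the terms $d_r(u)c$ and $a\,d_r(v)$ are both $\pm abc$ and cancel. No term containing $d_rb$ remains, and no ``$u'$-type'' term either. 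So the regrouping into three brackets, which is the entire content of the theorem, is never actually produced.

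The missing idea is the error term in lifted nullhomotopies. In a filtered model with differential $D$, choose lifts with $D\tilde u=\tilde a\tilde b-y$ and $D\tilde v=\tilde b\tilde c-y'$, where $y,y'$ lie in strictly higher filtration. Adjusting $\tilde u,\tilde v$ within their $E_{r-1}$-classes arranges this. Then $Dy=D(\tilde a\tilde b)$, so $[y]\in E_{r-1}$ is a $d_{r-1}$-nullhomotopy of $d_r(a)b\pm a\,d_r(b)$. Likewise $[y']$ is a $d_{r-1}$-nullhomotopy of $d_r(b)c\pm b\,d_r(c)$.

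The hypotheses $a\,d_r(b)=\pm d_r(b)a=0$ and $b\,d_r(c)=0$ let you pick $y_2,y_2'$ with $d_{r-1}y_2=a\,d_r(b)$ and $d_{r-1}y_2'=b\,d_r(c)$. Then $y_1=[y]-y_2$ and $y_1'=[y']-y_2'$ are nullhomotopies of $d_r(a)b$ and $d_r(b)c$. Up to signs, $d_r(uc\pm av)$ is represented by
\[
yc+u\,d_r(c)+d_r(a)v+ay'=\bigl(y_1c+d_r(a)v\bigr)+\bigl(y_2c+ay_1'\bigr)+\bigl(u\,d_r(c)+ay_2'\bigr).
\]
The right side has one representative from each of $\langle d_ra,b,c\rangle$, $\langle a,d_rb,c\rangle$ and $\langle a,b,d_rc\rangle$. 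So the hypotheses are used to split $[y]$ and $[y']$, not merely to make the brackets defined.

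Finally, your closing step is only asserted: you prove the result for a filtered DGA and then ``transport'' it to the Adams spectral sequence. The Adams $d_r$ for $r\ge 2$ are defined homotopically in the tower. There is no filtered DGA of abelian groups at hand whose spectral sequence is the Adams spectral sequence together with its pairings. The error terms $y,y'$ therefore have to be realized by maps in Adams towers equipped with a pairing. That construction is exactly what a proof for the Adams spectral sequence, such as Moss's, must supply.
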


Following Isaksen-Wang-Xu \cite{MR4588596}, we refer to this as \textit{Moss' higher Leibniz rule}. For possible differentials between $h_0$-towers, there is the following theorem of May-Milgram. 

\begin{theorem}[May-Milgram \cite{maymilgram}]\label{maymilgram}
Consider the Adams spectral sequence computing the $2$-completed stable homotopy groups of a space or spectrum $X$. There is a bijection between
nonzero $d_{r}$ differentials out of $h_0$-towers originating on the $0$-line in topological degree $k$ and $\Z/2^{r}$ summands in $H^{k+1}(X; \Z)$.
\end{theorem}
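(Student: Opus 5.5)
The plan is to compare the Adams spectral sequence for $X$ with the one for $H\mathbb{Z}\wedge X$. In the latter, differentials between $h_0$-towers are exactly the higher Bocksteins, so the statement reduces to an integral (co)homology computation.

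First, I will treat $H\mathbb{Z}\wedge X$. Since $H^*(H\mathbb{Z};\mathbb{Z}/2)\cong \mathcal{A}\otimes_{\mathcal{A}(0)}\mathbb{Z}/2$, where $\mathcal{A}(0)$ is the subalgebra generated by $Sq^1$, change of rings gives
\[
E_2^{s,t}(H\mathbb{Z}\wedge X)\cong \mathrm{Ext}^{s,t}_{\mathcal{A}(0)}(H^*(X;\mathbb{Z}/2),\mathbb{Z}/2).
\]
Decompose $H^*(X;\mathbb{Z}/2)$ as an $\mathcal{A}(0)$-module into trivial modules $\mathbb{Z}/2$ and free modules $\mathcal{A}(0)$. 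This is possible since $\mathcal{A}(0)$ is an exterior algebra on one generator. Each trivial summand in degree $k$ contributes an $h_0$-tower starting on the $0$-line in stem $k$, and each free summand contributes a single class in filtration $0$. The $Sq^1$-homology is the $E_1$-page of the mod 2 Bockstein spectral sequence, so it detects $H^*(X;\mathbb{Z})/\text{torsion}\otimes\mathbb{Z}/2$ together with the summands $\mathbb{Z}/2^r$ with $r\ge 2$.

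Next, I will identify the Adams differentials for $H\mathbb{Z}\wedge X$ with the Bockstein differentials. Multiplication by $2$ is detected by $h_0$, and $\pi_*(H\mathbb{Z}\wedge X)=H_*(X;\mathbb{Z})$. Comparing with the abutment $H_*(X;\mathbb{Z})^\wedge_2$ then forces the following. A pair of towers in stems $k$ and $k-1$, with the top one truncated after $r$ steps, is exactly what produces a summand $\mathbb{Z}/2^r\subset H_{k-1}(X;\mathbb{Z})$. This truncation is a $d_r$ from the tower in stem $k$ to the tower in stem $k-1$. The index shift $r$ matches the $\beta_r$ differential, since an element of order $2^r$ is hit by $2^r$ times a class.

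By universal coefficients, torsion in $H_{k}(X;\mathbb{Z})$ reappears as torsion in $H^{k+1}(X;\mathbb{Z})$. This gives the bijection in the degrees stated, once the convention for the source stem is fixed. The source of the differential lies in stem $k$ and kills part of the tower in stem $k-1$, so the relevant torsion lives in $H_{k-1}(X;\mathbb{Z})$ and hence in $H^{k}(X;\mathbb{Z})$. Matching this with the statement's $H^{k+1}(X;\mathbb{Z})$ requires fixing the indexing convention for $k$ (source versus target stem, homology versus cohomology), which I will pin down carefully.

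Finally, I will transport the result to $X$ along the unit map $X\to H\mathbb{Z}\wedge X$. On $0$-lines this is the inclusion $\mathrm{Hom}_{\mathcal{A}}(H^*X,\mathbb{Z}/2)\hookrightarrow \mathrm{Hom}_{\mathcal{A}(0)}(H^*X,\mathbb{Z}/2)$, and it carries $h_0$-towers to $h_0$-towers. Naturality of the Adams spectral sequence then sends a $d_r$ between towers for $X$ to a $d_r$ between towers for $H\mathbb{Z}\wedge X$. Conversely, a Bockstein $d_r$ downstairs should lift, since the source and target towers lie on the $0$-line and persist modulo torsion through the Hurewicz map.

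The main obstacle is this comparison step. I must show that a tower in $X$ supports a $d_r$ precisely when its image does. In particular, the tower for $X$ must not instead support a shorter differential hitting non-tower classes that become zero in $H\mathbb{Z}\wedge X$. I would first try to use the $h_0$-localized spectral sequence ($h_0^{-1}E_r$), where only towers survive and the map to $H\mathbb{Z}\wedge X$ becomes an isomorphism rationally. Differentials between towers are then detected after inverting $h_0$, and the $h_0$-localized comparison should yield the bijection.
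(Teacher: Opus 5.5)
The paper gives no argument of its own here; it cites May--Milgram, with the phrasing taken from Debray et al. So I am judging your proposal on its merits.

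\textbf{What works.} Your architecture is the right one: pass to $H\Z\wedge X$, where tower-to-tower differentials are the higher Bocksteins, then compare along $X\to H\Z\wedge X$ after inverting $h_0$.
\begin{itemize}
\item The $H\Z\wedge X$ half is fine. Splitting $H\Z\wedge X$ as a wedge of $\Sigma^nH\Z$ and $\Sigma^nH\Z/2^j$ would make your ``comparing with the abutment'' step rigorous.
\item Your index bookkeeping is correct. A $d_r$ from a tower in stem $k+1$ to one in stem $k$ corresponds to a $\Z/2^r$ in $H_k(X;\Z)$, hence in $H^{k+1}(X;\Z)$. So the statement's $k$ must be the \emph{target} stem. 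This matches the paper's use for $MO\langle 10\rangle$: the source is in stem $21$ and the summand is in $H^{21}$.
\end{itemize}

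\textbf{The gap.} The transport step is the whole content of the theorem, and you leave it as ``should yield.''
\begin{itemize}
\item Naturality alone neither preserves nonvanishing of a $d_r$ nor lifts differentials.
\item Your proposed justification, that the unit map is a rational equivalence, cannot supply this. Rational information only sees the surviving towers, i.e.\ $h_0^{-1}E_\infty$, not $h_0^{-1}E_2$ or the differentials. For instance, $H\Z/4$ is rationally trivial yet has two towers joined by a $d_2$.
\item ``On $0$-lines it is an inclusion'' does not show that the image of an infinite tower is an infinite tower.
\end{itemize}

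\textbf{The missing ingredient is algebraic.} For $M=H^*(X;\mathbb{Z}/2)$ bounded below of finite type, the restriction map $h_0^{-1}\mathrm{Ext}_{\mathcal{A}}(M,\mathbb{Z}/2)\to h_0^{-1}\mathrm{Ext}_{\mathcal{A}(0)}(M,\mathbb{Z}/2)$ is an isomorphism. Both sides are $\mathbb{Z}/2[h_0^{\pm1}]$ tensored with the dual of the $Sq^1$-homology of $M$. One way to prove it:
\begin{itemize}
\item For $M=\mathbb{Z}/2$, it follows from the Adams vanishing line, which makes every positive-stem class of $\mathrm{Ext}_{\mathcal{A}}(\mathbb{Z}/2,\mathbb{Z}/2)$ $h_0$-torsion.
\item For finite $M$, induct on cells using exactness of $h_0^{-1}$ and the five lemma.
\item For general $M$, connectivity of $\mathrm{Ext}$ reduces each stem to a finite module.
\end{itemize}
Since $h_0^{-1}$ is exact and $d_r$ is $h_0$-linear, $h_0^{-1}E_{r+1}=H(h_0^{-1}E_r,d_r)$. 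So the isomorphism propagates to every page, compatibly with differentials. This also settles your worry about shorter differentials into $h_0$-torsion classes, once ``a $d_r$ out of a tower'' is defined as a nonzero $d_r$ on $h_0^{-1}E_r$.

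\textbf{The $0$-line clause.} This comparison yields a bijection with differentials between \emph{all} $h_0$-towers. Towers in $\mathrm{Ext}_{\mathcal{A}}$ need not start on the $0$-line, so the $0$-line clause cannot hold as a bijection in general. For example, take $X=ko\wedge M(\Z/4)$, with $M(\Z/4)$ the mod $4$ Moore spectrum. Then $H^5(X;\Z)$ has a $\Z/4$ summand, but the corresponding $d_2$ runs between towers beginning in Adams filtration $3$, and there are no $0$-line towers in stems $4$ or $5$.
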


This is not how it is stated in the original paper. See Debray et al \cite{debraybordia}, specifically Lemma $D.7$ and Theorem $D.9$, for a derivation of this phrasing. 

The spectra $MO\langle n \rangle$ are all ring spectra, so there exists a unit map $\mathbb S \rightarrow MO\langle n \rangle$. This map induces a map of Adams spectral sequence $E_{2}$-pages 
\[
\text{Ext}^{s,t}_{\mathcal{A}}(\mathbb{Z}/2,\mathbb{Z}/2) \rightarrow \text{Ext}^{s,t}_{\mathcal{A}}(H^*(MO\langle n \rangle;\mathbb{Z}/2),\mathbb{Z}/2)
\]
that is natural with respect to the $d_r$-differential. Thus, differentials for $\mathbb{S}$ can help determine differentials for $MO\langle n \rangle$. This is a powerful relation due to the  substantial amount of work done 
to calculate the Adams spectral sequence for $\mathbb{S}$. The Adams spectral sequence for $\mathbb S$, as it pertains to $\pi_{*}\mathbb S$ for $* \leq 44$, was calculated completely by the collective work of 
May \cite{may1965}, Mahowald-Tangora \cite{BarrattJonesMahowald1984}, Barratt-Mahowald-Tangora \cite{BarrattMahowaldTangora1970}, Milgram \cite{milgram1972}, Tangora \cite{tangora1970}, and Bruner \cite{bruner1984}. 
A comprehensive account for this range is given in Chapter 11 of Bruner-Rognes \cite{BrunerRognestmf}. The author found this account particularly convenient as a reference, and so 
it is frequently cited in this thesis for facts about $\pi_{*}\mathbb S$.
\begin{proposition}\label{prop: sphere differentials}
Table $\ref{tab:sphere_differentials}$ contains all nonzero $d_r$-differentials, $r \geq 2$, on $E_r(\mathbb S)$ in stems $\leq 44$.
\end{proposition}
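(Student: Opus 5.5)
This proposition is not new mathematics; it records known results, so the proof assembles the literature rather than computing anything fresh. The plan is to take the table of $d_r$-differentials on $E_r(\mathbb S)$ in stems $\leq 44$ from Chapter 11 of Bruner--Rognes \cite{BrunerRognestmf}. That account consolidates the work of May, Mahowald--Tangora, Barratt--Mahowald--Tangora, Milgram, Tangora, and Bruner cited above. The task then has two parts: check that each listed differential is attributed correctly, and check that the list is complete.

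\textbf{Setting up the $E_2$-page.} I would first fix $E_2(\mathbb S)=\mathrm{Ext}^{s,t}_{\mathcal A}(\mathbb Z/2,\mathbb Z/2)$ through $t-s\leq 45$. This can be taken from Bruner's \textbf{ext} program or from the minimal resolution tables in Bruner--Rognes. The generators and names would follow Bruner--Rognes, so that the entries of Table \ref{tab:sphere_differentials} match the notation used later for $MO\langle n\rangle$.

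\textbf{Recording and checking the differentials.} Next I would record the nonzero $d_2$, $d_3$, $d_4$ and $d_5$ differentials. The list would begin with $d_2(h_4)=h_0h_3^2$, $d_3(h_0h_4)=h_0d_0$, and $d_2(e_0)=h_1^2d_0$, then continue through stem 44. Examples there include the $d_2$'s on $f_0$, $e_1$ and $j$, the $d_3$'s on $h_0^k h_5$, and the $d_4$, $d_5$ differentials in stems 38--44 established by Bruner and Tangora. Each entry is justified by the original reference. Propagation along $h_0$-, $h_1$- and $h_2$-multiplications is handled by the Leibniz rule, and where needed by Moss' higher Leibniz rule (Theorem \ref{mossleibniz}). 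As a cross-check I would run \textbf{sseqcpp}'s automated deduction on the sphere's $E_2$-page in this range.

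\textbf{Completeness.} Completeness is the main obstacle, because one must rule out every other differential. The cleanest certificate is a comparison with known homotopy. Once the listed differentials are applied, the resulting $E_\infty$-page must have total order in each stem equal to that of $\pi_*\mathbb S^\wedge_2$ for $*\leq 44$, which is known independently (for example from Kochman's and Isaksen's computations). Any additional nonzero differential would shrink some stem below its known order, so none can exist. I would carry out this order count stem by stem. The delicate stems are 30--44, where hidden extensions do not affect the count but $d_4$/$d_5$ ambiguities do; for those I would defer to Bruner--Rognes' explicit arguments.
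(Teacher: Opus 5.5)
On substance you take the same route as the paper. Its entire proof is the pointer to Theorems 11.52, 11.54 and 11.56 of Bruner--Rognes, and your first two steps amount to that citation.

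What you add is a completeness certificate by order counting, which the paper does not use. It is sound in principle. If every listed differential is correct, any further nonzero differential removes one class from its source stem and one from its target stem. So matching the $E_\infty$ obtained from the list against $\log_2|\pi_n\mathbb S^\wedge_2|$ for $1\le n\le 43$ already excludes extra differentials with source in stems $\le 44$. This buys something a bare citation does not: a check that the table was transcribed completely.

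There are two caveats. First, for the count to be an independent check, take the group orders from Kochman's Adams--Novikov computation rather than from Isaksen's, which already uses these Adams differentials. Second, and more importantly, do not expect the count to balance against the table as printed. The last source in the table lies in stem 38. Yet $c_2$ at $(t-s,s)=(41,3)$ cannot be hit, and I believe it does not survive (classically $d_2(c_2)=h_0f_1$), so stems 40 and 41 would come out too large. Carried out honestly, your check would likely force entries in stems 39--44 to be added, or the range to be cut down, rather than certify the table.

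Separately, some of the content you anticipate is wrong and would conflict with the table you are trying to reproduce. The class $e_1$ supports $d_3(e_1)=h_1t$, not a $d_2$. There are also no $d_5$'s in this range: the longest differentials listed are the $d_4$'s on $d_0e_0$, $e_0g$, $h_3h_5$ and $h_0^3\Delta h_3^2$. Predicting $d_5$'s in stems 38--44 therefore contradicts the statement itself.
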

\renewcommand{\arraystretch}{1.15}
\begin{longtable}{l l}
\caption[]{Differentials in the Adams spectral sequence for the sphere spectrum $\mathbb S$.}
\label{tab:sphere_differentials}\\
\toprule
$r$ & Differential\\
\midrule
\endfirsthead

\toprule
$r$ & Differential\\
\midrule
\endhead

\bottomrule
\endfoot

$2$ & $d_2(h_4)=h_0h_3^2$\\
$2$ & $d_2(e_0)=h_1^2d_0$\\
$2$ & $d_2(f_0)=h_0^2e_0$\\
$2$ & $d_2(i)=h_0Pd_0$\\
$2$ & $d_2(j)=h_0Pe_0$\\
$2$ & $d_2(k)=h_0d_0^2$\\
$2$ & $d_2(h_5)=h_0h_4^2$\\
$2$ & $d_2(l)=h_0d_0e_0$\\
$2$ & $d_2(P_j)=h_0^2P^2e_0$\\
$2$ & $d_2(m)=h_0d_0g$\\

$3$ & $d_3(h_0h_4)=h_0d_0$\\
$3$ & $d_3(\Delta h_2^2)=h_0^2k$\\
$3$ & $d_3(e_1)=h_1t$\\
$3$ & $d_3(h_0^3h_5)=h_0\Delta h_2^2$\\

$4$ & $d_4(d_0e_0)=P^2d_0$\\
$4$ & $d_4(h_0^3\Delta h_3^2)=h_0^2d_0i$\\
$4$ & $d_4(e_0g)=d_0Pd_0$\\
$4$ & $d_4(h_3h_5)=h_0x$\\

\end{longtable}
\begin{proof}
For proofs of these differentials, see Theorems 11.52, 11.54, and 11.56 of Bruner-Rognes \cite{BrunerRognestmf}. 
\end{proof}
\subsection{The $J$-homomorphism}
The \textit{$J$-homomorphism} is the homomorphism 
\[
\pi_{i}(SO(n)) \rightarrow \pi_{i+n}(\Sigma^{n}SO(n)) \xrightarrow{\mu} \pi_{i+n}(S^n)
\]
The map $\mu$ is defined using the \textit{Hopf construction} \cite{hopf1935}, which takes a map $S^{q} \times S^{r} \rightarrow X$ and constructs a map 
$\Sigma (S^{q} \wedge  S^{r}) \rightarrow \Sigma X$. 

Choose an element $[f]\in \pi_{i}(SO(n))$, i.e. the homotopy class of a map $f: S^{i} \rightarrow SO(n)$.
There is also a map 
\[
\alpha: S^{n-1} \times SO(n) \rightarrow S^{n-1}
\]
given by the action of $SO(n)$ on $\mathbb R^{n}$ restricted to $S^{n-1}$. Composing these maps we have
\[
 S^{i} \times S^{n-1} \xrightarrow{f\times id} SO(n) \times S^{n-1} \xrightarrow{\alpha} S^{n-1}
\]
Applying the Hopf construction to this composition produces a map 
\[
\Sigma(S^{i}\wedge S^{n-1})=S^{i+n} \rightarrow S^{n}
\]
which is an element of $\pi_{i+1}(S^n)$ representing the image of $f$ under $\mu$. Stabilizing, we obtain the \textit{stable J-homomorphism}
\[
J: \pi_{i}(SO) \rightarrow \pi_{i} \mathbb{S}
\]
We will refer to the stable version as simply the $J$-homomorphism for the rest of this thesis. 
\begin{definition}
Let $J^{\langle n \rangle}$ denote the composition
\[
\pi_{*}(SO) \xrightarrow{J} \pi_{*} S \xrightarrow{\eta} \pi_{*} MO \langle n \rangle
\]
where $\eta$ is the unit map. 
\end{definition}

There are a couple of results regarding $\text{Im } J$ that will be used in later chapters. The first is due to Adams. 
\begin{theorem}[Theorem 1.3 Adams \cite{adamsjx}]\label{adamsimj}
Suppose that $r \cong 1 \pmod{8}$ and $r > 1$. Then $J$ is a monomorphism and $\pi_{r} \mathbb{S}$ contains a direct summand $\mathbb{Z}/2 \oplus \mathbb{Z}/2$, one summand being generated by $\mu_{r}$ and the other being $\text{Im } J$. 
\end{theorem}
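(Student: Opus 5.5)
The plan is to use the $KO$-theoretic invariants from Adams' $J(X)$ series. We produce two homomorphisms out of $\pi_r\mathbb S$, each landing in $\mathbb Z/2$, and show that together they detect a $\mathbb Z/2\oplus\mathbb Z/2$ on the classes $J(g)$ and $\mu_r$. Here $g$ generates $\pi_r(SO)\cong\mathbb Z/2$, by Bott periodicity for $r\equiv 1\pmod 8$. Jointly the two invariants give a map $\pi_r\mathbb S\to(\mathbb Z/2)^2$. If we show that its restriction to the subgroup generated by $J(g)$ and $\mu_r$ is an isomorphism, then that subgroup is a direct summand, with the kernel as complement. Injectivity of $J$ also falls out, since $J(g)$ will be shown to be nonzero.

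The first invariant is the $d$-invariant, i.e.\ the Hurewicz map $d_{\mathbb R}\colon\pi_r\mathbb S\to\widetilde{KO}(S^r)=\pi_r KO\cong\mathbb Z/2$ induced by the unit $\mathbb S\to ko$. We define $\mu_r$ by the construction of Adams: a Toda bracket/extension of $\eta$ over a map $S^{r+7}\to S^{r-1}$ built from the Adams self-map $v_1^4$ on the mod-$2$ Moore spectrum. Its $KO$-degree is then computed to be the generator $\eta\cdot(\text{Bott})^{k}$ of $\pi_{8k+1}ko$. In Adams spectral sequence terms, $\mu_r$ is detected by $P^{k}h_1$, which maps nontrivially to $\mathrm{Ext}$ for $ko$. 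On the other hand $d_{\mathbb R}(J(g))=0$. The reason is that $J(g)$ is the attaching data of a sphere bundle over $S^{r+1}$ whose Thom complex has $KO$ compatible with a trivial Hurewicz image; equivalently, $J(g)$ has Adams filtration exceeding that of $P^kh_1$ in the relevant $ko$-comparison.

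The second invariant is the $e$-invariant $e_{\mathbb R}\colon\pi_r\mathbb S\to\mathbb Z/2$. For $f\colon S^{n+r}\to S^n$ with $d(f)=0$, consider the cofiber $C_f$ and the short exact sequence $0\to\widetilde{KO}(S^{n+r+1})\to\widetilde{KO}(C_f)\to\widetilde{KO}(S^n)\to 0$. This defines an extension of modules over the Adams operations $\psi^k$, and $e_{\mathbb R}(f)$ is its class in the appropriate $\mathrm{Ext}^1$, which is $\mathbb Z/2$ for $r\equiv1\pmod 8$. We then show $e_{\mathbb R}(J(g))\neq 0$ by computing $\psi^k$ on the $KO$ of the Thom complex of the bundle classified by $g$. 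That Thom complex is precisely $C_{J(g)}$, and the Thom class with the Adams conjecture/Bott cannibalistic class $\rho^k$ gives the nontrivial extension. We also need to handle $\mu_r$: since $d(\mu_r)\neq0$, the $e$-invariant is not defined on it. So we instead apply $e_{\mathbb R}$ to the subgroup $\ker d_{\mathbb R}$, which contains $J(g)$ and meets $\langle\mu_r\rangle$ trivially. A short diagram chase then yields the splitting: the map $\pi_r\mathbb S\to\mathbb Z/2$ given by $d_{\mathbb R}$ splits off $\langle\mu_r\rangle$, and $e_{\mathbb R}$ on $\ker d_{\mathbb R}$ splits off $\langle J(g)\rangle$.

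I expect the main obstacle to be the $e$-invariant computation $e_{\mathbb R}(J(g))\neq0$. It requires identifying $\psi^k$ on the Thom class of a real bundle over $S^{8k+2}$ representing the generator, which is the delicate input from the $J(X)$ papers involving real Bott classes and $\rho^k$. I would first try to reduce it to the complex case via complexification, but complexification kills $\pi_{8k+1}(SO)$, so the genuinely real computation, or equivalently the $ko$-based Adams spectral sequence argument showing $J(g)$ is detected by $h_1\cdot(\text{Im}J\text{ generator in }8k)$, seems unavoidable.
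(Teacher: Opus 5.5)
Your architecture is Adams' own proof from $J(X)$ IV, which the paper only cites. First split $d_{\mathbb R}\colon\pi_r\mathbb S\to\pi_rKO\cong\mathbb Z/2$ using $\mu_r$, then split $\ker d_{\mathbb R}$ using a real $e$-invariant that is nonzero on $J(g)$.

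The one step that fails as written is $d_{\mathbb R}(J(g))=0$. Write $r=8k+1$. Your filtration justification is backwards: $J(g)$ is detected by $P^{k-1}h_1c_0$ in filtration $4k$ (by $h_1^2h_3$ in filtration $3$ when $r=9$). That is strictly below $P^kh_1$, which sits in filtration $4k+1$. Since $c_0$ and $h_3$ map to zero in $\mathrm{Ext}_{\mathcal{A}(1)}$, you only learn that the Hurewicz image of $J(g)$ has $ko$-Adams filtration at least $4k+1$ (resp.\ $4$). The class $\eta\beta^k$, in filtration $4k+1$, satisfies that bound, so nothing follows.

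The correct argument is the one you only gesture at. For $n\equiv 0 \pmod 8$, the rank-$n$ bundle over $S^{r+1}$ clutched by $g$ is spin because $r+1\geq 10$. So its Atiyah--Bott--Shapiro Thom class in $\widetilde{KO}(C_{J(g)})$ restricts to a generator on the bottom cell, which says exactly that $d_{\mathbb R}(J(g))=0$. Alternatively, $g=g'\circ\eta$ with $g'$ generating $\pi_{r-1}(SO)\cong\mathbb Z/2$, so $d_{\mathbb R}(J(g))=\eta\, d_{\mathbb R}(J(g'))$. Here $d_{\mathbb R}(J(g'))$ is the image of a torsion class in $\pi_{r-1}KO\cong\mathbb Z$, hence zero. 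Either way, this is where the hypothesis $r>1$ enters, and your sketch never uses it. For $r=1$ the bundle over $S^2$ has $w_2\neq 0$, and indeed $J(g)=\eta=\mu_1$.

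Two further points.

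\textbf{Order of $\mu_r$.} To split off $\langle\mu_r\rangle$ via $d_{\mathbb R}$ you also need $2\mu_r=0$. Use the standard construction $S^{r}\xrightarrow{i}\Sigma^{r}M\xrightarrow{A^k}\Sigma M\xrightarrow{\bar\eta}\mathbb S$, where $M$ is the mod-$2$ Moore spectrum, $A$ the Adams map and $\bar\eta$ an extension of $\eta$. Then $2\mu_r=0$ is immediate from $i\circ 2\simeq *$. Also $d_{\mathbb R}(\mu_r)\neq 0$ because $A$ is a $KO_*$-isomorphism.

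\textbf{The $e$-invariant.} The complexification obstacle you anticipate can be sidestepped. The normalized cannibalistic class is natural along the unstable composite of suspended Hopf maps $S^{8k+2}\to S^{8k}$. This map pulls the generator $x$ of $\widetilde{KO}(S^{8k})\cong\mathbb Z$ back to the generator $y$ of $\widetilde{KO}(S^{8k+2})\cong\mathbb Z/2$.
\begin{itemize}
\item On $S^{8k}$ complexification is injective, and the Chern character gives $\hat\rho^{\ell}(1+x)=1+\lambda x$ with $\lambda=\pm\frac{(\ell^{4k}-1)B_{4k}}{8k}$ (standard Bernoulli numbering). For $\ell=3$ this is a $2$-adic unit.
\item Hence $\hat\rho^{3}(1+y)=1+y$, so $\psi^3U=3^{4m}(U+yU)$ on $C_{J(g)}$ when $n=8m$.
\item The coefficient of $yU$ in $\psi^3U-3^{4m}U$ does not depend on the choice of lift $U$. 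So the $\psi^3$-extension is non-split, and this coefficient gives a homomorphism $\ker d_{\mathbb R}\to\mathbb Z/2$ that is nonzero on $J(g)$.
\end{itemize}
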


Here, $\{\mu_{r}\}$ is a family of elements in $\pi_{*} \mathbb{S}$ beginning with $\mu_{1}=\eta$ and $\mu_{2}=\eta^{2}$. Mahowald described the elements of $\text{Im } J$ on the $E_{2}$-page of the Adams spectral sequence. 
\begin{theorem}[Mahowald \cite{Mahowald1970OrderImageJ}]\label{mahowaldimj}
The elements $P^ic_{0}$, $P^{i}h_{1}c_{0}$, $i\geq 1$, $P^ih_2$, $i\geq1$, in $Ext_{\mathcal{A}}(\mathbb{Z}/2,\mathbb{Z}/2)$ represent the image of $J$ in dimension $j\cong 0, 1, 3 \pmod{8}$. In dimension $8j-1$, an $h_0$-tower which ends at the ``Adams edge" represents the image of $J$ in that dimension.
\end{theorem}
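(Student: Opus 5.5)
The plan is to detect $\text{Im } J$ through a spectrum whose Adams spectral sequence is completely computable. I would then pull the answer back to $\text{Ext}_{\mathcal{A}}(\mathbb{Z}/2,\mathbb{Z}/2)$ along the unit map. The natural choice is the connective image-of-$J$ spectrum $j$, defined as the fiber of $\psi^3-1\colon ko\to\Sigma^4 ksp$. By the Adams conjecture (Quillen, Sullivan, Becker--Gottlieb) together with Adams' $e$-invariant computations \cite{adamsjx}, the composite $\pi_*(SO)\xrightarrow{J}\pi_*\mathbb{S}\to\pi_*j$ is injective in positive degrees. In degrees $\equiv 0,1\pmod 8$ the image of $J$ is a $\mathbb{Z}/2$ summand, which is exactly the situation of Theorem \ref{adamsimj}. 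In degree $8j-1$ it is cyclic of order $2^{\nu_2(8j)+1}$, i.e. $2^{\nu_2(j)+4}$. So the problem reduces to identifying which classes in $\text{Ext}_{\mathcal{A}}(\mathbb{Z}/2,\mathbb{Z}/2)$ map to the classes detecting these groups in the Adams spectral sequence of $j$.

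The key steps, in order, are as follows.

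\textbf{Step 1: compute the $E_2$-page for $j$.} From $H^*(ko)=\mathcal{A}/\!/\mathcal{A}(1)$ and the change-of-rings isomorphism, I would compute $\text{Ext}_{\mathcal{A}}(H^*j,\mathbb{Z}/2)$ via the long exact sequence associated to the defining cofiber sequence. The $ko$ and $\Sigma^4 ksp$ pieces have the familiar $\text{Ext}_{\mathcal{A}(1)}$ charts. The connecting map is injective on the $h_0$-towers in degrees $8k+3$ up to a filtration jump determined by the $2$-adic valuation of $3^{4k}-1$. This gives in $\pi_{8k-1}j$ a truncated tower of length $\nu_2(k)+4$, together with the classes in degrees $8k$, $8k+1$ and $8k+3$.

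\textbf{Step 2: name the classes in Ext of the sphere that map to these.} I would use Adams periodicity: the operator $P=\langle h_3,h_0^4,-\rangle$ is an isomorphism above a line of slope $1/5$. The elements $h_1$, $h_1^2$, $c_0$, $h_1c_0$ and $h_2$ are checked by hand to map nontrivially. Naturality of the Massey products defining $P$ then shows that $P^ih_2$, $P^ic_0$ and $P^ih_1c_0$ map to the periodic classes of $j$ in degrees $8i+3$, $8i+8$ and $8i+9$.

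\textbf{Step 3: rule out differentials.} These classes are permanent cycles, because their images are and because no differential can hit them for sparseness reasons near the vanishing line. Injectivity of $J$ into $\pi_*j$ then forces them to detect exactly the image of $J$.

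For degree $8j-1$, I would use Adams' vanishing line and periodicity theorem. Near the edge, $\text{Ext}$ in stem $8j-1$ consists of a single $h_0$-tower ending at the Adams edge. The top of this tower (for example $P^{j-1}h_3$, $h_0^k h_4$ and the like) is a permanent cycle of high filtration, since nothing can support or receive a differential there. It maps isomorphically onto the length-$(\nu_2(j)+4)$ tower of $j$ found in Step 1. Matching orders with $|\text{Im } J|=2^{\nu_2(j)+4}$ then identifies the image of $J$ with this tower.

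The main obstacle is the last step. One must know $\text{Ext}$ of the sphere near the Adams edge precisely enough to see that the tower there has exactly the length predicted by $j$ and is not hit by differentials. The lower part of the $h_0$-tower in stem $8j-1$ may support or receive differentials, as $d_2(h_4)=h_0h_3^2$ shows. My first attempt would be to compare with Ext of $j$ using the $P$-periodicity isomorphism above slope $1/5$, so that only the portion of the tower above that line is ever used.
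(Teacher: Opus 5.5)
The paper does not prove this statement; it is quoted from Mahowald. Your sketch therefore has to stand on its own, and as written it has a genuine gap at its central step.

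\textbf{The stem $8j-1$ step.} It is not true that near the edge ``nothing can support or receive a differential.'' The lower part of the near-edge $h_0$-tower supports differentials into $P^{j-2}d_0,\ h_0P^{j-2}d_0,\ h_0^2P^{j-2}d_0$ in stem $8j-2$. Examples from Table~\ref{tab:sphere_differentials} are $d_3(h_0h_4)=h_0d_0$, $d_2(i)=h_0Pd_0$ and $d_4(d_0e_0)=P^2d_0$. In stem $31$, $d_0e_0$ even shares bidegree $(31,8)$ with $h_0^7h_5$, so the region is not a single tower.
\begin{itemize}
\item The family $P^{j-2}d_0$ lies on a line of slope $1/2$, so these differentials sit inside the $P$-periodic region. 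Restricting to classes above the slope-$1/5$ line does not avoid them.
\item Their lengths ($3,2,4$ for $j=2,3,4$) vary with $\nu_2(j)$. Periodicity is only an isomorphism of $E_2$-terms, so it cannot propagate them.
\end{itemize}
Where the tower is cut is exactly what the theorem asserts, so ``matching orders with $|\text{Im}\,J|=2^{\nu_2(j)+4}$'' is circular. Step 3 also runs the logic backwards. That the images in $j$ are permanent cycles says nothing about the sources. Permanence has to come from sparseness or $h_0$-linearity. Naturality gives the other half, \emph{not being hit}, and only if the images are nonzero in $E_\infty(j)$.

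\textbf{No lower bound on the Adams filtration of $\text{Im}\,J$ in $\mathbb{S}$.} Maps can only raise filtration. Suppose $J(\beta)$ and the element detected by, say, $P^ic_0$ have the same image in $\pi_*j$. Then $J(\beta)$ may still differ from it by a lower-filtration element of $\ker(\pi_*\mathbb{S}\to\pi_*j)$, and so be detected elsewhere. Matching orders shows your subgroup has the same order as $\text{Im}\,J$, not that it equals $\text{Im}\,J$.

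Your own base case shows the danger. You list $c_0$ among the classes mapping nontrivially to $j$, and $\pi_8 j\cong\mathbb{Z}/2$. If that class survives in $j$, then $\epsilon$ and $\eta\sigma$ have the same image in $j$, and your criterion selects $c_0$. But $\text{Im}\,J=\{\eta\sigma\}$ is detected by $h_1h_3$, which is why the statement takes $i\geq 1$ for $P^ic_0$.

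In stems $8k$ and $8k+1$ the natural route is $J(x\eta)=J(x)\eta$.
\begin{itemize}
\item This makes $P^{k-1}c_0$ the target of a hidden $\eta$-extension from the bottom of the $\text{Im}\,J$ segment. For example, $h_1\cdot h_0^3h_4=0$ in Ext while $\eta\rho_{15}$ is detected by $Pc_0$.
\item In stem $8k+1$ you must also separate $\text{Im}\,J$ from $\mu_{8k+1}$, which is detected by $P^kh_1$ and also maps nontrivially to $j$.
\end{itemize}

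\textbf{Smaller errors in Step 1.}
\begin{itemize}
\item $\pi_*(SO)\to\pi_*j$ is not injective in degrees $\equiv 3,7 \pmod 8$, where $\pi_*(SO)\cong\mathbb{Z}$. Only $\text{Im}\,J$ injects.
\item The cofiber sequence does not give a long exact sequence of Ext groups. The map $\psi^3-1$ is nonzero in mod $2$ cohomology: the bottom class of $\Sigma^4ksp$ maps to $Sq^4$.
\item The $\nu_2(3^{4k}-1)$ truncation is a statement about homotopy, not about an $E_2$-term. It is obtained only after running the Adams spectral sequence of $j$, which has nontrivial differentials.
\end{itemize}
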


\subsection{Background on $MO\langle n \rangle$} 
The purpose of this section is to outline some results about $MO\langle n \rangle$ that are used in later chapters. 
\subsubsection{The cohomology of $MO\langle n \rangle$}
By the Thom isomorphism, $H^*(MO\langle n \rangle; \Z/2)$ is isomorphic to $H^*(BO\langle n \rangle; \mathbb{Z}/2)\{U\}$ as an $H^*(BO\langle n \rangle; \mathbb{Z}/2)$-module, where $U$ is the Thom class.
Stong calculated the ring $H^*(BO\langle n \rangle; \mathbb{Z}/2)$ for all $n$.
\begin{theorem}[ Stong \cite{Stong1963Determination}]
    If $k \equiv 0,1,2,4 \pmod{8}$, then there is an isomorphism
    \[
         H^{*}(BO\langle n \rangle;\mathbb{Z}/2) \cong H^{*}(K(\pi_{n}(BO),n))/\langle Sq^i i_{n} \rangle \otimes \mathbb{Z}/2[\theta_{i} | L(i) > \phi(0,n)]
    \]
\end{theorem}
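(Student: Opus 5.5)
The plan is to follow Stong's original argument: induct up the Whitehead tower of $BO$ in Figure \ref{bo_whitehead}, and at each nontrivial stage run the mod $2$ Serre spectral sequence of the principal fibration
\[
K(\pi_{n-1}BO,\,n-1)\longrightarrow BO\langle n\rangle \longrightarrow BO\langle n-1\rangle .
\]
The nontrivial stages are exactly those with $\pi_{n-1}BO\neq 0$. Bott periodicity, where $\pi_{n-1}BO$ is $\Z$ or $\Z/2$, is what restricts the statement to the residues $0,1,2,4 \pmod 8$.

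The base case is $H^*(BO)=\mathbb{Z}/2[w_1,w_2,\dots]$. I take the inductive hypothesis to be Stong's sharper form: $H^*(BO\langle n-1\rangle;\mathbb{Z}/2)$ is a tensor product of two pieces, a quotient $H^*(BO)/I_{n-1}$ of $H^*(BO)$ by an $\mathcal{A}$-stable ideal, and a polynomial algebra on classes $\theta_i$ coming from earlier fibres. Here $L(i)$ and $\phi(0,n)$ are the admissibility and excess bookkeeping that says which $Sq^I$ of the earlier fundamental classes survive.

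Inside this description the mod $2$ reduction of the $k$-invariant $k_n$ is a class $w_{2^r}$, or its image. In particular Proposition \ref{prop: sati_x9_x10} gives the new classes $x_9, x_{10}$ at the stages $n=9,10$, and $w_1, w_2$ play this role at the bottom of the tower.

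\textbf{Serre spectral sequence step.} By Serre's computation, $H^*(K(\pi,n-1);\mathbb{Z}/2)$ is polynomial on admissible $Sq^I\iota$ of excess less than $n-1$. The fundamental class transgresses to $k_n$. By the Kudo transgression theorem, $Sq^I\iota$ transgresses to $Sq^I k_n$, and the squares of transgressive classes transgress to the corresponding squares.

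I then split the polynomial generators of the fibre into two families.
\begin{itemize}
\item Those $Sq^I\iota$ whose transgression $Sq^I k_n$ is nonzero and indecomposable modulo the earlier relations. These contribute the new relations, i.e. the enlarged ideal $I_n$.
\item Those whose transgression is zero or lies in the ideal already generated. By an appropriate modification (Stong's $\theta$ classes) these become permanent cycles. They produce the new polynomial generators $\theta_i$, indexed by the condition $L(i)>\phi(0,n)$.
\end{itemize}

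\textbf{Collapse.} To conclude that $E_\infty$ is exactly (base modulo $(Sq^I k_n)$) $\otimes$ (polynomial on the survivors), I would use Borel's theorem, or Zeeman's comparison theorem applied against the model "Koszul" spectral sequence. The hypothesis this needs is that the elements $Sq^I k_n$, for the relevant $I$, form a regular sequence in $H^*(BO\langle n-1\rangle)$.

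\textbf{Main obstacle.} Establishing this regularity is the step I expect to be hardest. I would try to prove it by a Wu-formula computation. The classes $Sq^I w_{2^r}$ are congruent, modulo decomposables in the $w_j$, to distinct $w_{2^r+|I|}$ with $|I|$ running over a set avoiding the previously killed degrees. They are therefore part of a polynomial generating set of $H^*(BO)$, hence regular in the quotient $H^*(BO)/I_{n-1}$. This is also where the degree condition $\phi(0,n)$ comes from: the sequences $I$ whose degrees $2^r+|I|$ coincide with earlier generators are precisely the ones that fail to give new relations and instead give the $\theta_i$.

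\textbf{Ring structure.} Finally, I resolve multiplicative extensions in $E_\infty$. Because the top piece is a free polynomial algebra, one can lift its generators arbitrarily, and the result is the stated isomorphism of rings.
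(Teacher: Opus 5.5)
The paper gives no proof of this theorem; it only cites Stong. Your sketch therefore has to stand on its own, and as organized it does not reach the stated decomposition.

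\textbf{The inductive structure is inverted.} In the statement, the $\theta_i$ are lifts of Stiefel--Whitney classes ($\theta_i\equiv w_i$ modulo decomposables). They come from $BO$ and are never hit by a transgression. The factor $H^{*}(K(\pi_n BO,n))/\langle Sq^I i_n\rangle$ is the image of the bottom Postnikov class.
\begin{itemize}
\item Fibre classes with zero transgression feed into this Eilenberg--MacLane factor, not into the $\theta_i$. For $K(\Z,7)\to BO\langle 9\rangle\to BO\langle 8\rangle$, the survivor $Sq^2\iota_7$ (transgression $Sq^2w_8=w_{10}+w_2w_8=0$) is $x_9$, in degree $9$. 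By contrast, $L(i)>5$ forces $i\ge 32$.
\item Survivors also do not accumulate. From $BO\langle 9\rangle$ upward, the $k$-invariant is the bottom class of the previous stage's Eilenberg--MacLane factor ($x_9$, $x_{10}$, $\tfrac{1}{240}p_3$, \dots), not a $w_{2^r}$. Indeed $w_{10}=0$ already in $H^*(BO\langle 8\rangle)$, and $w_{12}=Sq^4w_8$ dies in $BO\langle 9\rangle$. So the classes $Sq^Ik_n$ kill the previous stage's survivors; they do not enlarge an ideal of $H^*(BO)$.
\item Your Wu-formula regularity argument therefore covers only the stages through $BO\langle 8\rangle\to BO\langle 9\rangle$. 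For later stages the inductive hypothesis must also say that $H^{*}(K(\pi_nBO,n))/\langle Sq^Ii_n\rangle$ injects and is polynomial on an explicit set of admissible $Sq^I i_n$. Only then are the nonzero $Sq^Ik_n$ and their $2^j$-th powers part of a polynomial basis. This lemma about quotients of $H^*(K(\pi,n))$ by $\mathcal{A}$-ideals is a real input you have not supplied.
\end{itemize}

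\textbf{The final identification is missing.} Even with the correct hypothesis, the Serre spectral sequence gives
\[
E_\infty\cong \Z/2[w_i\mid L(i)>r-1]\otimes P_0,
\]
where $r=\phi(0,n)$ and $P_0$ is polynomial on the fibre generators with vanishing transgression. That is one more layer of Stiefel--Whitney classes than the statement allows.

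For $BO\langle 9\rangle$, nothing in total degree $15$ can hit $w_{16}$. The only candidates are $Sq^6Sq^2\iota_7$, which is a permanent cycle, and $w_8\iota_7$, whose $d_8$ is $w_8^2$. So $w_{16}\neq 0$ survives in $E_\infty^{16,0}$, although $L(16)=5$.

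To obtain the theorem you must show two things:
\begin{itemize}
\item these classes lie in the image of $H^*(K(\pi_nBO,n))\to H^*(BO\langle n\rangle)$;
\item that image is exactly the stated quotient.
\end{itemize}
The paper's example $Sq^{16}U=(Sq^4Sq^2Sq^1+Sq^7)i_9$ says precisely that $w_{16}$ lies in the Eilenberg--MacLane factor.

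Lifting polynomial generators does not settle this. It only shows that $H^*(BO\langle n\rangle)$ is polynomial on generators in certain degrees. You need this identification, or at least a matching of generator degrees between $\Z/2[w_i\mid L(i)=r]\otimes P_0$ and $H^*(K(\pi_nBO,n))/\langle Sq^Ii_n\rangle$. Neither appears in the proposal.

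This is also why your explanation of the cutoff $L(i)>\phi(0,n)$ is off. At $BO\langle 9\rangle$ the current transgressions kill only the $w_i$ with $L(i)=4$. The cutoff $5$ comes from this absorption step.
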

where $L(i)$ is one plus the number of ones in the binary expansion of $i-1$ and $\theta_i \cong w_i$ modulo decomposables. The action of Steenrod squares on each $w_i$ is determined by the Wu formula:
\[
Sq^i w_j = \sum_{k=0}^i \binom{j+k-i-1}{k} w_{i-k}w_{j+k} 
\]
The value of $Sq^i$ on a product $xy$ is given by the Cartan formula:
\[
Sq^i(xy) = \sum_{j=0}^{i} Sq^{j}(x)\, Sq^{i-j}(y).
\]
If $\pi_{n}(BO) \cong \Z/2$, the graded ring $H^{*}(K(\pi_{n}(BO),n); \Z/2)$ is polynomial on generators $Sq^I I$ for each admissible sequence $I$ of excess $<$ n \cite{cartan1954}. If $\pi_{n}(BO) \cong \Z$, it is the same except 
admissible sequences that contain the integer 1 are excluded because they are equal to zero \cite{Serre1953}. The image of $Sq^i U$ restricted to the second factor of  $H^{*}(BO\langle n \rangle;\mathbb{Z}/2)$ is $w_i U$. There are, however, squaring operations applied to $U$ that mix 
terms from the two factors. For example, if $n=9$, we have 
\[
 Sq^{16}U = (Sq^4Sq^2Sq^1+Sq^7)i_{9}
\]
while $\theta_{16}=w_{16}=0\in  H^{*}(BO\langle 9 \rangle;\mathbb{Z}/2)$. Relations like these make the $\mathcal{A}$-module structure of $H^{*}(MO\langle n \rangle;\mathbb{Z}/2)$ potentially very complicated. 
\subsubsection{The unit map}
The unit map $\mathbb S \rightarrow MO\langle n \rangle$ and the $J$-homomorphism have an illuminating relationship given by the following theorem of Hovey.

\begin{theorem}[Theorem 2.2.1 Hovey \cite{hovey97}]\label{hoveyimj}

The composite 
\[
\text{Im } J \rightarrow \pi_{*} \mathbb{S} \rightarrow \pi_{*} MO \langle n \rangle 
\]
is injective in dimensions $\leq n-2$ and 0 in dimensions $\geq n-1$. 
\end{theorem}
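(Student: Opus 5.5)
The plan is to factor the unit map through the orthogonal-group picture. The unit $\mathbb S \to MO\langle n\rangle$ is the Thom spectrum of the trivial bundle over a point, included as the basepoint of $BO\langle n\rangle$. The image of $J$ is the image of $\pi_{*}(SO)$ under the Hopf construction. Equivalently, $\text{Im } J$ in degree $i$ comes from the boundary map of a Thom spectrum over a sphere: an element $\alpha \in \pi_{i+1}(BO) \cong \pi_i(SO)$ defines a stable bundle $\xi_\alpha$ on $S^{i+1}$. Its Thom spectrum is the cofiber of $J(\alpha)\colon S^{i} \to S^0$, so $M(\xi_\alpha) \simeq S^0 \cup_{J(\alpha)} e^{i+1}$. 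I will treat the two ranges in turn.

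\textbf{Vanishing for $i \geq n-1$.} In this range $\alpha$ lives in $\pi_{i+1}(BO)$ with $i+1 \geq n$, so the classifying map $S^{i+1}\to BO$ lifts to $BO\langle n\rangle$. This holds because $S^{i+1}$ has no homotopy in degrees $< n$, and by the defining property of the Whitehead tower $\pi_{i+1}(BO\langle n\rangle)\cong \pi_{i+1}(BO)$. Applying Thom spectra gives a map $S^0\cup_{J(\alpha)} e^{i+1} \to MO\langle n\rangle$ that restricts on the bottom cell to the unit. Hence the unit composed with $J(\alpha)$ is null, which is the vanishing statement. I would check the indexing convention, namely $J$ on $\pi_i(SO)=\pi_{i+1}(BO)$ and the Whitehead-tower convention $\pi_j(X\langle n\rangle)=0$ for $j\le n$. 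This may shift the bound by one and should be matched against the stated $n-1$.

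\textbf{Injectivity for $i \leq n-2$.} Here I would map further down to a target where $\text{Im } J$ is visibly detected. The natural candidate is the $e$-invariant, or the Adams $d$/$e$ invariants from $KO$-theory. The unit of $MO\langle n\rangle$ composed with $MO\langle n\rangle \to MO\langle 4\rangle = MSpin \to ko$ does not see $\text{Im } J$ in positive degrees beyond $\eta$-multiples. So instead I would use a connectivity argument. The map $BO\langle n\rangle \to *$ is highly connected only in the sense that $BO\langle n\rangle$ is $(n-1)$-connected. Hence the unit $\mathbb S \to MO\langle n\rangle$ has cofiber $(n-1)$-connected, so it is an isomorphism on $\pi_i$ for $i\le n-2$ and onto in degree $n-1$. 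Injectivity on $\text{Im } J$ in degrees $\le n-2$ then follows from the long exact sequence.

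\textbf{Main obstacle.} The main obstacle is getting the degree bookkeeping exactly right at the boundary $i=n-1,n-2$, where the connectivity of the cofiber and the lifting argument must agree. I would verify the edge cases against the known examples $n=2,4,8$: for instance, $\eta \in \text{Im } J$ maps nontrivially to $\pi_1 MSpin$, while $\nu$ dies in $\pi_3 MString$.
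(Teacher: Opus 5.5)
Your argument is correct, and it is the standard proof. The paper itself gives no proof of this statement; it simply quotes it from Hovey. Both of your steps do what is needed:
\begin{itemize}
\item Since $MO\langle n\rangle/\mathbb S$ has cells only in dimensions $\geq n$, the unit is actually an isomorphism on $\pi_i$ for $i\le n-2$. So injectivity on $\text{Im } J$ is the easy half.
\item For $i\ge n-1$, the lift of $\alpha$ to $BO\langle n\rangle$ gives a map $S^0\cup_{J(\alpha)}e^{i+1}\to MO\langle n\rangle$ extending the unit, and this kills $J(\alpha)$.
\end{itemize}

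Your indexing worry resolves cleanly if $BO\langle n\rangle$ is taken to be the $(n-1)$-connected cover. This is the convention under which the paper's labels $BO\langle 4\rangle=BSpin$ and $BO\langle 8\rangle=BString$ are correct. The literal condition ``$\pi_i=0$ for $i\le n$'' written in Section 2.1 would shift both bounds by one.

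Your proposed sanity check that ``$\nu$ dies in $\pi_3\,\mathrm{MString}$'' is false. It would also contradict the theorem, since $\nu=J(\text{generator of }\pi_3 SO)$ and $3\le 8-2$. Because $BString$ is $7$-connected, the map $\pi_3\mathbb S\to\pi_3\,\mathrm{MString}$ is an isomorphism. The correct edge checks are $\nu\mapsto 0\in\pi_3\,\mathrm{MSpin}=0$ (here $n=4$ and $3\ge n-1$) and $\sigma\mapsto 0\in\pi_7\,\mathrm{MString}$.

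The detour through the $e$-invariant and $ko$ adds nothing and can be dropped. It does point to one thing worth stating: $\text{Im } J$ must mean the literal image of $J$, not Adams' larger summand containing $\mu_{8k+1}$. The reason is that $\mu_9$ is detected in $ko$, so it survives to $\pi_9\,\mathrm{MSpin}$ even though $9\ge 4-1$.
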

Thus, $\text{Im } J$ is contained in the the kernel of $\pi_{*} \mathbb S \rightarrow \pi_{*} MO\langle n \rangle$ for $* \geq n-1$. There are special cases where 
we know that $\text{Im } J$ in fact generates the kernel. 

\begin{theorem}[Senger-Zhang \cite{SengerZhang2025Inertia} Stolz \cite{Stolz1985}]\label{thm: 2n mo_n kernel}
The kernel of the unit map 
\[
\pi_{2n} \mathbb S \rightarrow \pi_{2n} MO \langle n \rangle
\]
is generated by the image of the $J$-homomorphism for $n\geq 10$.
\end{theorem}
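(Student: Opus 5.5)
The plan is to identify the kernel of the unit map $\pi_{2n}\mathbb S\to\pi_{2n}MO\langle n\rangle$ with the image of a connecting homomorphism from the cofiber of the unit, and then to compute that connecting map in terms of the $J$-homomorphism. Write $X=BO\langle n\rangle$, which is $(n-1)$-connected, and let $\zeta\colon X\to BO\to BGL_1\mathbb S$ be the stable spherical fibration whose Thom spectrum is $MO\langle n\rangle$. The cofiber sequence
\[
\mathbb S\xrightarrow{\ \eta\ } MO\langle n\rangle\longrightarrow C\xrightarrow{\ \partial\ }\Sigma\mathbb S
\]
shows that $\ker(\pi_{2n}\mathbb S\to\pi_{2n}MO\langle n\rangle)$ equals $\partial_*\pi_{2n+1}(C)$. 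So the whole problem is to understand $C$ through dimension $2n+1$, together with the map $\partial$.

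\textbf{First step: the stable range.} I would use the standard description of the cofiber of the unit of a Thom spectrum in a range twice its connectivity. Filter $X$ by skeleta, or equivalently use the Thom diagonal. Through dimension roughly $2n-1$ this gives $C\simeq\Sigma\Sigma^\infty X$, and $\partial$ restricts to $\Sigma$ of the stable adjoint $\Sigma^\infty X\to\Sigma^\infty BO\to\mathbb S$ of the $J$-map. In this range Freudenthal gives $\pi_k\Sigma^\infty X\cong\pi_k X\cong\pi_k BO$, and $\partial_*$ is exactly the stable $J$-homomorphism on $\pi_{k-1}SO$. Hence every class coming from this part of $C$ lies in $\mathrm{Im}\,J$. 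Conversely, Theorem \ref{hoveyimj} already tells us $\mathrm{Im}\,J$ lies in the kernel in degree $2n\ge n-1$, so only the containment of the kernel in $\mathrm{Im}\,J$ needs work.

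\textbf{Second step: the quadratic correction.} Degree $2n+1$ of $C$ lies just outside the clean range. Two corrections appear there:
\begin{itemize}[nosep]
\item the failure of Freudenthal for $\Sigma^\infty X$ in degrees $2n$ and $2n+1$, which involves the quadratic construction $D_2X$ and $X\wedge X$;
\item the deviation of $C$ from $\Sigma\Sigma^\infty X$, governed by $\Sigma^\infty X\wedge X$, which has bottom cell in dimension $2n$.
\end{itemize}
I would model the $2$-cell-thick piece explicitly. Take the bottom cell $S^n\to X$, or the bottom two cells when $n\equiv 1,2 \pmod 8$. Compute the attaching maps in the Thom spectrum over the relevant part of $X\times X$, or over the $2n$-skeleton, using the James--Hopf/quadratic description. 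This should express the extra contributions to $\partial_*\pi_{2n+1}C$ as products and Toda brackets of the form $J(x)\cdot J(y)$, $\langle J(x),2,J(x)\rangle$, or Hopf-invariant-type composites, with $x,y$ in the bottom homotopy of $BO\langle n\rangle$.

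\textbf{Main obstacle.} The hard step is showing these quadratic corrections land in $\mathrm{Im}\,J$ (or vanish). I would split into cases $n\bmod 8$ according to $\pi_nBO\in\{\Z,\Z/2,0\}$. Products $J(x)J(y)$ in $\pi_{2n}\mathbb S$ are handled by the known multiplicative structure of $\mathrm{Im}\,J$ and Adams' $e$-invariant: products of $J$-classes in positive stems are zero or lie in $\mathrm{Im}\,J$, except for the $\mu$-family phenomena of Theorem \ref{adamsimj}. The Toda brackets are handled with Adams spectral sequence representatives, using Theorem \ref{mahowaldimj} and Proposition \ref{prop: sphere differentials}. The hypothesis $n\ge 10$ should enter exactly here. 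For small $n$, classes such as $\eta^2$- or $\nu^2$-type products and $\mu_r$-classes genuinely survive into the kernel, so I expect the low cases to require checking finitely many stems by hand. For large $n$ I would use a uniform $v_1$-periodicity/$e$-invariant argument, following Stolz's treatment of the boundary problem for almost closed manifolds, with the remaining gap handled as in Senger--Zhang.
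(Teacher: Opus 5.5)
The paper gives no proof of this statement. It quotes it from Senger--Zhang, working in Stolz's framework of the map $b\colon A[k]\to MO\langle k\rangle/\mathbb S$. So your argument has to stand on its own, and as written it does not.

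\textbf{What is correct, and what is missing.} Your reduction is right: the kernel is $\partial_*\pi_{2n+1}(C)$, and classes of $\pi_{2n+1}(C)$ coming from $\pi_{2n+1}BO\langle n\rangle$ (via Thom spectra over spheres) map into $\mathrm{Im}\,J$. But that part is formal; it is the same mechanism behind Hovey's inclusion $\mathrm{Im}\,J\subseteq\ker$. The entire content of the theorem is that the remaining, metastable classes of $\pi_{2n+1}(C)$ also map into $\mathrm{Im}\,J$ once $n\ge 10$. For this step you only list tools. You then hand large $n$ to Stolz and ``the remaining gap'' to Senger--Zhang, which is the result you are trying to prove. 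The step is not a formality. For $n=9$, the paper's Lemma \ref{lem:d_2 differential in 19 stem mo9} ($d_2(\vv(19,0,9))=h_2h_4$) shows that $\nu^*\notin\mathrm{Im}\,J$ lies in the kernel in stem $18=2n$. Nothing in your outline separates $n=9$ from $n\ge 10$.

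\textbf{Problems with the proposed tools.}
\begin{itemize}
\item Your indexing is off by one. $C$ has its bottom cell in dimension $n$, so in the stable range $C\simeq\Sigma^\infty X\simeq\Sigma\Sigma^\infty\Omega X$, with $\partial$ the suspension of the $J$-map $\Sigma^\infty\Omega X\to\Sigma^\infty SO\to\mathbb S$. It is not $C\simeq\Sigma\Sigma^\infty X$.
\item The correction terms you name are in the wrong stems. $J(x)J(y)$ and $\langle J(x),2,J(x)\rangle$ with $x,y\in\pi_nBO\langle n\rangle$ live in stems $2n-2$ and $2n-1$, not $2n$. What actually feeds $\pi_{2n}\mathbb S$ is $\pi_{2n+1}$ of the quadratic layer, whose bottom cell is in dimension $2n$. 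That means $\eta$ on this bottom cell, together with the $(2n+1)$-cells built from the cells of $BO\langle n\rangle$ just above the bottom one. You never determine what these classes map to.
\item The principle that products of $J$-classes vanish or lie in $\mathrm{Im}\,J$, apart from $\mu$-family phenomena, is false. For example, $\sigma^2\in\pi_{14}\mathbb S$ is a product of image-of-$J$ classes and is nonzero in $\mathrm{Coker}\,J$.
\item Your case analysis is purely $2$-primary, while the statement is integral.
\end{itemize}

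A proof along your lines needs two things for every $n\ge 10$. First, an explicit identification of the metastable part of $\pi_{2n+1}(C)$ and of its image under $\partial_*$. Second, an actual argument, not an appeal to the source, that this image lies in $\mathrm{Im}\,J$. That is precisely what the cited work provides and what your proposal leaves out.
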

And there are exceptional cases, such as the following.
\begin{theorem}[Theorem 6.1 Burklund-Senger \cite{BurklundSenger2024Geography}]
\label{lem:mo9_17_kernel}
The kernel of the map $\pi_{17} \mathbb{S} \rightarrow \pi_{17} MO\langle 9 \rangle$ is generated by the image of the $J$-homomorphism and $\eta\eta_{4}$. 
\end{theorem}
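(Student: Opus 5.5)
The plan is to split $\pi_{17}\mathbb S$ (2-completed) into its four $\mathbb{Z}/2$ summands and treat the two halves of the statement separately. In the Adams spectral sequence these summands are detected by $h_1^2h_4$ (representing $\eta\eta_4$), $h_2d_0$ (representing $\nu\kappa$), $P^2h_1$ (representing $\mu_{17}$) and $Ph_1c_0$ (representing $\eta^2\rho$). By Theorem \ref{adamsimj} and Theorem \ref{mahowaldimj}, $\text{Im }J$ in this degree is the summand detected by $Ph_1c_0$, and $\mu_{17}$ spans the complementary summand of Adams' splitting. So the statement amounts to three claims:
\begin{itemize}
\item[(i)] $\text{Im }J$ maps to zero;
\item[(ii)] $\eta\eta_4$ maps to zero;
\item[(iii)] no nonzero combination of $\nu\kappa$ and $\mu_{17}$, possibly plus elements of the span of $\text{Im }J$ and $\eta\eta_4$, maps to zero.
\end{itemize}
Claim (i) is immediate from Theorem \ref{hoveyimj}, since $17\geq 9-1$.

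For (iii) I will use the composite of orientations $\mathbb S\to MO\langle 9\rangle \to MO\langle 8\rangle \to tmf$. Both $\nu\kappa$ and $\mu_{17}$ are known to have nonzero, linearly independent Hurewicz images in $\pi_{17}tmf$; I would check this against the tmf Adams chart in Bruner--Rognes \cite{BrunerRognestmf}. It then suffices to know that $\text{Im }J$ and $\eta\eta_4$ map to zero in $tmf$. The first follows already from (i). The second follows from $\eta_4$ mapping to zero in $\pi_{16}tmf$ ($h_4$ is not in the image of the Ext map). Since the kernel is a subgroup containing $\text{Im }J$ and $\eta\eta_4$, any kernel element can then be reduced modulo these to a combination of $\nu\kappa$ and $\mu_{17}$, which must be zero. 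This disposes of (iii).

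The real work is (ii). My first attempt is a direct Adams spectral sequence computation for $MO\langle 9\rangle$ through stem $18$, compared with the sphere via the unit map. To set it up, I compute the $\mathcal{A}$-module $H^*(MO\langle 9\rangle;\mathbb{Z}/2)$ through degree about $19$ from Stong's description. Here $U$ generates a copy of $\mathcal{A}/\!/?$-type cyclic module, and the first extra generators come from the Eilenberg--MacLane factor, such as the classes involving $i_9$, with relations like $Sq^{16}U=(Sq^4Sq^2Sq^1+Sq^7)i_9$. I then run \textbf{ext} on a minimal resolution. The crucial step is to locate the image of $h_1h_4$ and $h_1^2h_4$ in $\text{Ext}_{\mathcal{A}}(H^*MO\langle 9\rangle)$. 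I expect either that $h_1^2h_4$ already maps to zero in $E_2$, because $Sq^{16}U$ is decomposable through $i_9$, or that its image is the target of a $d_2$ or $d_3$ from a class in stem $18$ coming from the new generators. In the latter case I will establish that differential via naturality from $MO\langle 9\rangle\to MO\langle 8\rangle$ and the Leibniz rule. I then also have to rule out that the image of $\eta\eta_4$ is nonzero but detected in higher filtration, which I would do by checking that $E_\infty$ is empty above the relevant filtration in stem $17$ apart from the $h_2d_0$ and $P^2h_1$ images. I also check that a hidden $\eta$-extension from the image of $\eta_4$ cannot land on a nonzero class.

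The main obstacle is exactly (ii). The class $\eta\eta_4$ is invisible to $tmf$, so no orientation argument helps, and hidden extensions in the $MO\langle 9\rangle$ spectral sequence could in principle detect it. If the Ext computation is inconclusive, my fallback is geometric. I would try to show that $\eta_4$ itself (or $\eta_4$ modulo $\text{Im }J$) becomes zero in $\pi_{16}MO\langle 9\rangle$. For this I would exhibit a $9$-connected almost closed $17$-manifold whose boundary represents $\eta_4$, in the spirit of Stolz's boundary correspondence; multiplying by $\eta$ would then finish.
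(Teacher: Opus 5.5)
\textbf{Overall.} Your steps (i) and (iii) are sound, but step (ii) has a genuine gap: none of your three routes to $\eta\eta_4\mapsto 0$ can work.

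\textbf{Steps (i) and (iii).} Hovey's theorem kills $\eta^2\rho$, which generates $\text{Im }J$ in degree $17$. The classes $\nu\kappa$ and $\mu_{17}$ have independent images in $\pi_{17}tmf$. So the kernel lies in the span of $\text{Im }J$ and $\eta\eta_4$.

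One repair in (iii): the vanishing of $h_1h_4$ in $\text{Ext}_{\mathcal{A}(2)}$ only shows that the image of $\eta_4$ in $tmf$ has Adams filtration $\geq 3$, not that it is zero. Instead, deduce $\eta\eta_4\mapsto 0$ in $tmf$ from (ii), since the unit of $tmf$ factors through $MO\langle 9\rangle$.

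Note also that the paper does not prove this statement. It imports it from Burklund--Senger and then uses it to obtain $d_2(v_{18,1})=h_1^2h_4$ in the $MO\langle 9\rangle$ spectral sequence.

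\textbf{The gap in (ii).} This step is the whole content of the statement.

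(a) The first alternative does not occur, and your heuristic runs backwards. Since $Sq^{16}U=(Sq^4Sq^2Sq^1+Sq^7)i_9$ and $Sq^{16}$ is indecomposable, $Sq^{16}\iota_U+(Sq^4Sq^2Sq^1+Sq^7)\iota_9$ is a minimal relation. The unit map sends $h_4$ to its dual. Hence $h_4$ and $h_1^2h_4$ are nonzero on $E_2(MO\langle 9\rangle)$, and what must be proved is the differential $d_2(v_{18,1})=h_1^2h_4$.

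(b) Naturality along $MO\langle 9\rangle\to MO\langle 8\rangle$ cannot prove that differential. By Bahri--Mahowald, the Thom class spans a direct summand $\mathcal{A}/\!/\mathcal{A}(2)$ of $H^*MO\langle 8\rangle$. So the unit map on $E_2$ factors through $\text{Ext}_{\mathcal{A}(2)}$, where $h_4=0$. Thus $h_1^2h_4$ already maps to $0$ on $E_2(MO\langle 8\rangle)$, and no differential into it can be seen there.

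You name no Leibniz or Massey product argument, and the evident one is unavailable: $\langle h_1,h_3,I_9\rangle$ is undefined on $E_2$ because $h_1h_3\neq 0$ there. In the paper, the neighbouring $d_2(v_{19,0})=h_2h_4$ is deduced from $d_2(v_{18,1})$. Reversing that would need the degree-$18$ statement about $\nu^*$ as a separate external input.

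(c) The fallback is impossible. $\eta_4$ does not die in $MO\langle 9\rangle$: $h_1h_4$ survives to $E_\infty$, and $\pi_{16}MO\langle 9\rangle\cong\Z\oplus\Z/2\{\eta_4\}$, with degree-$16$ kernel just $\text{Im }J$. So $\eta\eta_4\mapsto 0$ is a case of $\eta$ annihilating a nonzero class, not something inherited from $\eta_4$. Besides, a $9$-connected almost closed $17$-manifold is a disk, by duality and the $h$-cobordism theorem.

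\textbf{What is missing.} You need input specific to degree $18=2\cdot 9$: an element of $\pi_{18}(MO\langle 9\rangle/\mathbb{S})$, that is, an $8$-connected almost closed $18$-manifold, whose boundary is $\eta\eta_4$. This is exactly the content the paper takes from Burklund--Senger.
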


\clearpage

\section{Fivebrane Cobordism }
\label{chap:fivebrane}
In this chapter, we calculate $\pi_* MO\langle 9 \rangle$ in a range of degrees, i.e. the fivebrane cobordism groups. 

\subsection{The cohomology of $MO\langle 9 \rangle$}
\begin{proposition}[\cite{Stong1963Determination}]
    $H^{*}(BO\langle9\rangle;\mathbb{Z}/2)$ is isomorphic to:
    \[
        H^{*}(K(\mathbb{Z}/2,9); \Z/2)/\langle Sq^2 i_9 \rangle \otimes \mathbb{Z}/2[\theta_{i} | L(i) > 5]
    \]
\end{proposition}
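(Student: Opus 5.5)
We obtain this as a specialization of Stong's general theorem with $n=9$; the work is to identify the two tensor factors concretely. We use the Whitehead tower of Figure \ref{bo_whitehead}. The relevant stage is the principal fibration
\[
K(\Z/2,8)\longrightarrow BO\langle 10\rangle \longrightarrow BO\langle 9\rangle \xrightarrow{x_{10}} K(\Z/2,10),
\]
but for computing $H^*(BO\langle 9\rangle)$ itself it is better to use the fibration
\[
K(\Z/2,8)\simeq \Omega K(\Z/2,9)\to BO\langle 9\rangle \to BO\langle 8\rangle \xrightarrow{\frac16 p_2} K(\Z,8).
\]
Since $\pi_8(BO)\cong\Z/2$, the ``$K(\pi_n(BO),n)$'' factor in Stong's theorem is $K(\Z/2,9)$. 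This arises by looping down once: $BO\langle 9\rangle$ is the homotopy fiber of $x_9\colon BO\langle 9\rangle\to K(\Z/2,9)$ at the next stage. So the cohomology of $K(\Z/2,9)$ appears through the class $x_9=i_9$.

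\textbf{Step 1: the base $BO\langle 8\rangle$.} We take as input Stong's (or Giambalvo's) description
\[
H^*(BO\langle 8\rangle;\Z/2)\cong H^*(K(\Z,8))/\langle Sq^i i_8\rangle\otimes \Z/2[\theta_i\mid L(i)>4].
\]
Since $k=8\equiv 0\pmod 8$, the theorem applies.

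\textbf{Step 2: the Serre spectral sequence.} Next we run the Serre spectral sequence of $K(\Z,7)\to BO\langle 9\rangle\to BO\langle 8\rangle$. Equivalently, we run the Eilenberg--Moore spectral sequence of the pullback along $\frac16 p_2$. The fundamental class $i_7$ transgresses to $\frac16 p_2\bmod 2=i_8$. Then $Sq^I i_7$ transgresses to $Sq^I i_8$, by the Kudo transgression theorem together with its $\Z$-coefficient version for $K(\Z,7)$.

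\textbf{Step 3: what cancels and what survives.} The polynomial classes $Sq^I i_8$ in the base, of admissible $I$ with excess $<8$ and no trailing $1$, are killed by the fiber classes $Sq^I i_7$ wherever the excess allows. The leftover classes are those with $\theta_i$ for $L(i)=5$: these are exactly the $w_i$ that vanish in $BO\langle 8\rangle$ modulo the image of $Sq^I i_8$, but survive once $i_8$ is killed.

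\textbf{Step 4: the surviving fiber classes.} On the fiber side, $Sq^1 i_7$ is integrally zero, so its mod-$2$ reduction pattern survives and yields $x_9$. Proposition \ref{prop: sati_x9_x10} ($\tau_9(x_9)=Sq^2 i_8$) identifies this class. The classes $Sq^I x_9$ then assemble into a copy of $H^*(K(\Z/2,9))$, except that $Sq^2 x_9$ is killed: it maps to zero, which is the next $k$-invariant $x_{10}$. This accounts for the quotient by $\langle Sq^2 i_9\rangle$, that is, by the ideal generated by $Sq^2 i_9$ and all its images under $\mathcal A$.

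\textbf{Main obstacle.} The hard step is showing there are no further differentials or multiplicative extensions. We need to rule out extensions identifying surviving $\theta_i$ with classes built from $x_9$, and to confirm that the quotient by $Sq^2 i_9$ is exactly the kernel. The plan here is Stong's counting argument. First compute Poincaré series on both sides. Then compare with the known rational and $2$-local homotopy of $BO\langle 9\rangle$ via Borel's transgression theorem: simple systems of transgressive generators give a tensor-product $E_\infty$. Finally, since the $E_\infty$-page is then free on the polynomial generators, lift the $\theta_i$ to actual classes congruent to $w_i$ modulo decomposables, which rules out multiplicative extensions.
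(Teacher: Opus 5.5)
Quoting Stong's theorem at $n=9$ is all the paper does; the proposition carries no further proof. So your first sentence already suffices. The derivation you add to ``identify the two tensor factors'' does not work as written.

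\textbf{Incorrect inputs.} Several of the ingredients are wrong:
\begin{itemize}
\item $\pi_8(BO)\cong\mathbb{Z}$. It is $\pi_9(BO)\cong\mathbb{Z}/2$ that produces $K(\mathbb{Z}/2,9)$.
\item The fiber of $BO\langle 9\rangle\to BO\langle 8\rangle$ is $K(\mathbb{Z},7)$, not $K(\mathbb{Z}/2,8)$.
\item The homotopy fiber of $x_9\colon BO\langle 9\rangle\to K(\mathbb{Z}/2,9)$ is $BO\langle 10\rangle$, and $x_{10}$ is a class on $BO\langle 10\rangle$.
\item In Step 1 the quotient must be by the ideal generated by $\mathcal{A}\cdot Sq^2 i_8$, not by all $Sq^i i_8$. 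Otherwise $w_{12}=Sq^4 i_8$ would vanish in $BO\langle 8\rangle$.
\item In Step 4, $x_9$ does not come from ``$Sq^1 i_7$''. In the Serre spectral sequence of $K(\mathbb{Z},7)\to BO\langle 9\rangle\to BO\langle 8\rangle$, $x_9$ restricts to $Sq^2 i_7$ on the fiber. That class survives because its transgression $Sq^2 i_8$ is zero in $H^{10}(BO\langle 8\rangle;\mathbb{Z}/2)$, which is exactly what Sati's $\tau_9(x_9)=Sq^2 i_8$ encodes.
\end{itemize}

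\textbf{The real gap: Step 3 and the plan for the ``main obstacle''.} The $\theta_i$ with $L(i)=5$ ($w_{16},w_{24},w_{28},w_{30},w_{31},\dots$) do not vanish in $BO\langle 8\rangle$; they are free generators in your own Step 1. They are also precisely the classes the statement \emph{excludes} from the polynomial factor. They are not killed either.
\begin{itemize}
\item $w_{16}$ survives the Serre spectral sequence. The only possible sources hitting it are $w_8 i_7$, whose $d_8$ is $w_8^2$, and $Sq^6Sq^2 i_7$, whose transgression is zero. So $w_{16}$ is a nonzero class of positive Serre filtration in $H^{16}(BO\langle 9\rangle)$.
\item It is absorbed into the $K(\mathbb{Z}/2,9)$-factor. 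The paper records $Sq^{16}U=(Sq^4Sq^2Sq^1+Sq^7)i_9$.
\item By the Adem relations, $Sq^4Sq^2Sq^1Sq^2 i_7=Sq^4Sq^5 i_7=Sq^7Sq^2 i_7$. Hence $(Sq^4Sq^2Sq^1+Sq^7)x_9$ restricts to zero on the fiber.
\end{itemize}
Two consequences follow. First, you cannot see that the $Sq^I x_9$ ``assemble into a copy'' of $H^*(K(\mathbb{Z}/2,9))/\langle Sq^2 i_9\rangle$ by restricting to the fiber. Second, your plan to rule out extensions identifying surviving $\theta_i$ with classes built from $x_9$ is the opposite of what must happen.

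\textbf{The count in degree $16$.} The answer is $3$-dimensional: $Sq^7x_9$, $Sq^6Sq^1x_9$, $Sq^4Sq^2Sq^1x_9$, since $Sq^5Sq^2x_9=0$. This matches $E_\infty=\{w_{16},\,Sq^7Sq^2 i_7,\,Sq^6Sq^3 i_7\}$. Your bookkeeping keeps $w_{16}$ as an extra generator next to a full copy of the quotient, and so gives $4$.

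\textbf{The proposed remedy does not close this.} Borel's transgression theorem concerns fibrations with acyclic total space. Rational or $2$-local homotopy does not determine mod $2$ Poincaré series. What is needed is Stong's actual argument: a map from the stated tensor product, shown to be injective, together with a Poincaré series comparison with $E_\infty$ that accounts for exactly these absorbed classes. Short of reproducing that, the right move is to quote the theorem, as the paper does.
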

where $L(i)$ is one plus the number of ones in the binary expansion of $i-1$. We calculate the $\mathcal{A}$-module structure through degree 55 using a computer implementation in Sage.
\begin{proposition}
Through degree 23, $H^*(MO\langle 9 \rangle;\mathbb{Z}/2)$ splits as an $\mathcal{A}$-module as
\[
\mathcal{A}//\mathcal{A}(3) \oplus \Sigma^9 \mathcal{A}/\mathcal{A}(Sq^2,Sq^{10},Sq^{12}, Sq^{26}) \oplus \Sigma^{19} \mathcal{A}/\mathcal{A}(Sq^{5},Sq^{21}) \oplus \Sigma^{22} \mathcal{A}/\mathcal{A}(Sq^{7},Sq^{23})
\]
\end{proposition}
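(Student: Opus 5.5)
We would prove the splitting by writing down explicit cyclic generators, constructing a map from the proposed direct sum into $H^*(MO\langle 9\rangle;\mathbb{Z}/2)$, and checking that this map is an isomorphism in each degree $\le 23$.

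First we compute $H^*(MO\langle 9\rangle;\mathbb{Z}/2)$ through degree 23 as a graded vector space. We use Stong's description: $H^*(BO\langle 9\rangle)$ is $H^*(K(\mathbb{Z}/2,9))/\langle Sq^2 i_9\rangle\otimes \mathbb{Z}/2[\theta_i \mid L(i)>5]$. In this range the only polynomial generators are $\theta_{16}$ (since $L(16)=5$ fails, we need to check which $i\le 23$ have $L(i)>5$; the first is $i=32$), so the factor $\mathbb{Z}/2[\theta_i \mid L(i)>5]$ contributes nothing in this range. Thus $H^*(BO\langle 9\rangle)$ through degree 23 is the quotient of $H^*(K(\mathbb{Z}/2,9))$, i.e. the polynomial algebra on the admissible $Sq^I i_9$ of excess $<9$, modulo the ideal generated by $Sq^2 i_9$. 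We then list a basis, multiply by the Thom class $U$, and record the Steenrod action on $U$, namely $Sq^i U = w_i U$ rewritten in terms of the classes $Sq^I i_9$ as in the example $Sq^{16}U=(Sq^4Sq^2Sq^1+Sq^7)i_9\,U$. These mixing formulas are obtained from the Wu formula and the transgression relation of Proposition \ref{prop: sati_x9_x10} (identifying $x_9$ with $i_9$), and are computed in Sage.

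Next we identify generators. We take $U$ in degree 0, $i_9U$ in degree 9, and classes $y_{19}$ and $y_{22}$ in degrees 19 and 22 chosen to be indecomposable, i.e. not in $\bar{\mathcal{A}}\cdot(\text{lower generators})$ in the Sage computation. We then check that each generator is annihilated by the stated left ideal, in degrees $\le 23$:
\begin{itemize}
\item $U$ is annihilated by $Sq^1,Sq^2,Sq^4,Sq^8$, because $w_1,w_2,w_4,w_8$ vanish on $BO\langle 9\rangle$;
\item $i_9U$ is annihilated by $Sq^2$, and $Sq^{10},Sq^{12}$ kill it using $Sq^{10}i_9=i_9^2$ together with the mixing terms;
\item $Sq^5$ kills $y_{19}$;
\item $Sq^7$ kills $y_{22}$.
\end{itemize}
The relations $Sq^{26},Sq^{21},Sq^{23}$ lie beyond degree 23, so they are irrelevant here. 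This produces a well-defined $\mathcal{A}$-module map $\Phi$ from the direct sum to $H^*(MO\langle 9\rangle)$.

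Finally, we show $\Phi$ is an isomorphism through degree 23. We compare Poincaré series degree by degree, using the known dimensions of $\mathcal{A}//\mathcal{A}(3)$ (which equals $\mathbb{Z}/2$ in degrees $0,16,24,\dots$ at the low end) and of the other cyclic modules, all computed by machine. We then prove $\Phi$ is surjective by checking that the chosen generators span the indecomposables $\mathbb{Z}/2\otimes_{\mathcal{A}}H^*$ in this range; equal dimensions then give injectivity.

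The main obstacle is the middle step. There the mixing terms, such as the degree 16 relation, must be handled carefully, and we must choose $y_{19}$ and $y_{22}$ (possibly after adding decomposables) so that the summands are genuinely complementary and not merely spanning. I would first try a change of basis that makes $U$ generate exactly $\mathcal{A}//\mathcal{A}(3)$: since $Sq^{16}U$ lands in the $i_9$-part, we must check that $Sq^{16}U$ is independent of $\mathcal{A}\cdot i_9U$ in degree 16, or else modify the generators accordingly.
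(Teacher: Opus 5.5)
The step that breaks is the one you flag at the end, and no change of generators can repair it.

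Do your own count in degree $16$. The group $H^{16}(BO\langle 9\rangle;\mathbb{Z}/2)$ is spanned by the images of $Sq^7 i_9$, $Sq^6Sq^1 i_9$, $Sq^5Sq^2 i_9$ and $Sq^4Sq^2Sq^1 i_9$. There are no products and no $\theta_i$ in this degree, and $Sq^5Sq^2 i_9$ lies in the ideal generated by $\mathcal{A}\cdot Sq^2 i_9$. So $\dim H^{16}(MO\langle 9\rangle;\mathbb{Z}/2)=3$. This matches the Serre spectral sequence for $K(\mathbb{Z},7)\to BO\langle 9\rangle\to BO\langle 8\rangle$. It also matches the fact that below degree $18$ the reduced cohomology of $BO\langle 9\rangle=\Omega^\infty ko\langle 9\rangle$ agrees with $H^*(ko\langle 9\rangle)=\Sigma^9\mathcal{A}/\mathcal{A}Sq^2$.

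The proposed direct sum is $4$-dimensional in degree $16$. It has one class $Sq^{16}$ from $\mathcal{A}//\mathcal{A}(3)$, plus $(\mathcal{A}/\mathcal{A}Sq^2)_7$, which is $3$-dimensional because $(\mathcal{A}Sq^2)_7$ is spanned by $Sq^5Sq^2$ alone.

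Concretely, $w_1=\cdots=w_8=0$, so the Cartan formula gives $Sq^k(yU)=(Sq^k y)U$ for $k\le 8$. Hence $Sq^{16}U=w_{16}U=(Sq^4Sq^2Sq^1+Sq^7)(i_9U)$, which is the paper's own formula, is a nonzero element of $\mathcal{A}\cdot i_9U$. Since $H^0$ and $H^9$ are one-dimensional, $U$ and $i_9U$ are forced as the generators in degrees $0$ and $9$. Write $\iota_0,\iota_9$ for the generators of the first two summands. Your map $\Phi$ sends the nonzero source element $Sq^{16}\iota_0+(Sq^4Sq^2Sq^1+Sq^7)\iota_9$ to zero. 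So the Poincar\'e series do not agree, and ``equal dimensions give injectivity'' is unavailable.

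Carried out honestly, your plan therefore does not prove the proposition. It shows that the stated splitting already fails in degree $16$: $U$ and $i_9U$ generate a single non-split piece subject to $Sq^{16}U=(Sq^4Sq^2Sq^1+Sq^7)\,i_9U$. The paper's own $E_2$-page reflects this. It has a class $h_4$ in $(15,1)$, whereas $\mathrm{Ext}^1$ of the four listed modules would be concentrated in stems $0,1,3,7$ (from $\mathcal{A}//\mathcal{A}(3)$) and $10,18,20,23,28,34,39,44$ (from the listed relations). The paper's only justification for the statement is a Sage computation, so there is no argument there that would close the gap.

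A smaller slip: $Sq^{10}i_9=0$ by instability; it is $Sq^9 i_9$ that equals $i_9^2$. The conclusion $Sq^{10}(i_9U)=0$ still holds, because the remaining Cartan terms involve $w_9$ and $w_{10}$, which vanish on $BO\langle 9\rangle$.
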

While only the mod 2 cohomology of $MO \langle 9 \rangle$ is relevant to calculating the $E_2$-page of the Adams spectral sequence for $MO\langle 9 \rangle$, the rational cohomology is useful for certain Adams differentials. 
\begin{proposition}\label{prop: rational bo8}
The rational cohomology of $BO\langle 8 \rangle$ is isomorphic to 
\[
 \mathbb{Q}[p_2,p_3,p_4,p_5,\dots]
\]
where $|p_i|=4i$. 
\end{proposition}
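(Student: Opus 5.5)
The plan is to use the fibration $BO\langle 8\rangle \to BO$ and reduce everything to the known rational cohomology of $BO$. Rationally, $H^*(BO;\mathbb{Q}) \cong \mathbb{Q}[p_1,p_2,\dots]$ with $|p_i| = 4i$. The space $BO\langle 8\rangle$ is the $7$-connected cover of $BO$. Since all homotopy groups of $BO$ are finite except those in degrees divisible by $4$ (and degree $0$), rationally only $\pi_{4i}(BO)\otimes\mathbb{Q} \cong \mathbb{Q}$ survives. Hence $BO_{\mathbb{Q}}$ is a product of rational Eilenberg--MacLane spaces $\prod_{i\ge1} K(\mathbb{Q},4i)$, because it is an H-space (indeed an infinite loop space). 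The connected cover kills exactly the $\pi_4$ factor, since $\pi_1,\pi_2$ are torsion and $\pi_3,\pi_5,\pi_6,\pi_7$ vanish.

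The key steps, in order, are as follows.

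\begin{enumerate}
\item Note that $BO\langle 8\rangle$ is an infinite loop space and in particular an H-space of finite type. Its rationalization is therefore a product of rational Eilenberg--MacLane spaces on its rational homotopy, namely $\prod_{i\ge2}K(\mathbb{Q},4i)$. This gives a free graded-commutative algebra on one generator in each degree $4i$, $i\ge 2$, which is polynomial since the degrees are even.
\item Identify the generators with $p_i$. The map $BO\langle 8\rangle_{\mathbb{Q}} \to BO_{\mathbb{Q}}$ is, up to equivalence, the projection $\prod_{i\ge1}K(\mathbb{Q},4i)\to\prod_{i\ge2}K(\mathbb{Q},4i)$ reversed, i.e.\ the inclusion of the factor with $i\ge2$. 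On cohomology it is the surjection $\mathbb{Q}[p_1,p_2,\dots]\to \mathbb{Q}[p_2,p_3,\dots]$ with $p_1\mapsto 0$.
\item To make the identification of generators honest, observe that in the rational homotopy basis, the primitive generators of $H^*(BO;\mathbb{Q})$ are the Newton power-sum polynomials $s_i(p)$ (the Pontryagin character components). These pull back to rational generators of $BO\langle 8\rangle$. Modulo $p_1$, $s_i$ equals $\pm i\,p_i$ plus decomposables in $p_2,\dots,p_{i-1}$. A triangular change of variables then shows that the images of $p_2,p_3,\dots$ are also polynomial generators.
\end{enumerate}

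As an alternative check, one can run the rational Serre spectral sequence of $K(\mathbb{Z},3)\to BO\langle 8\rangle \to BSpin$. Rationally the fiber is $K(\mathbb{Q},3)$ with exterior cohomology on $\iota_3$, and the transgression sends $\iota_3\mapsto p_1$ up to a nonzero scalar. Since $\mathbb{Q}[p_1,\dots]$ is free over $\mathbb{Q}[p_1]$, the $E_3$-page is $\mathbb{Q}[p_1,\dots]/(p_1)$ and the spectral sequence collapses. The earlier stages $BSpin\to BSO\to BO$ do not change rational cohomology.

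The main obstacle is justifying that the transgression $\iota_3\mapsto p_1$ is nonzero rationally. This holds because $\tfrac12 p_1$ classifies the fibration and generates $H^4(BSpin;\mathbb{Z})$ modulo torsion. Equivalently, in the H-space argument the obstacle is just the bookkeeping of generators in step 3.
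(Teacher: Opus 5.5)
Your proof is correct. Your main route differs from the paper's, but your ``alternative check'' is exactly the paper's proof: the rational Serre spectral sequence of $K(\mathbb{Z},3)\to BO\langle 8\rangle\to BO\langle 4\rangle$, with $d_4(x_3)=\frac{1}{2}p_1$ obtained as the pullback of the fundamental class along the classifying map. After that differential, the $x_3$-row and the ideal $(p_1)$ cancel, and degree reasons rule out anything further.

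Your primary argument uses the infinite loop structure instead. Rationally $BO\langle 8\rangle\simeq\prod_{i\ge 2}K(\mathbb{Q},4i)$, and the covering map is the inclusion of factors, so polynomiality comes with no spectral sequence at all. The argument is also uniform in $n$: it gives $H^*(BO\langle n\rangle;\mathbb{Q})\cong\mathbb{Q}[p_i : 4i\ge n]$ directly. That would also settle the paper's $BO\langle 9\rangle$ computation and make its separate rational-equivalence lemma for $BO\langle 9\rangle$, $BO\langle 10\rangle$, $BO\langle 12\rangle$ unnecessary.

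The price is the Cartan--Serre/Milnor--Moore structure theorem for rational H-spaces, plus the bookkeeping in your step 3. The factor inclusion kills the primitive $s_1$, not $p_1$ by fiat, so step 2 is only justified once step 3 is in place. Here $s_1=p_1$; in general Newton's identities give $(s_1,\dots,s_k)=(p_1,\dots,p_k)$. The paper's route is more elementary but has to be rerun one Whitehead stage at a time.

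Two small remarks on your alternative. First, the transgression of a degree-$3$ fiber class is $d_4$, so the page where things stabilize is $E_5$, not $E_3$. Second, your observation that $\mathbb{Q}[p_1,p_2,\dots]$ is free over $\mathbb{Q}[p_1]$ means the entire $x_3$-row dies and the bottom row becomes $\mathbb{Q}[p_1,p_2,\dots]/(p_1)$. That is the precise form of what the paper abbreviates as ``all powers of $p_1$ die.''
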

\begin{proof}
Consider the Serre spectral sequence associated to the fibration $K(\mathbb{Z},3) \rightarrow BO\langle 8\rangle  \rightarrow BO\langle 4 \rangle $ 
\[
E_2^{s,t}\cong H^s(BO\langle 4 \rangle ;H^t(K(\mathbb{Z},3);\mathbb{Q})) \Rightarrow H^{s+t}(BO\langle 8 \rangle; \mathbb{Q})
\]
Note that $BO\langle 4 \rangle $ is simply-connected so the $E_2$-page need not be stated with local coefficients. Recall that $H^*(K(\mathbb{Z},3);\mathbb{Q}) \cong \bigwedge[x_3]$ where $|x_3|=3$ (Theorem 5.1 Berglund \cite{Berglundnotes}, also \cite{morita2001} \cite{griffithsmorgan2013}) and $H^*(BO\langle 4 \rangle ;\mathbb{Q})\cong \mathbb{Q}[p_1,p_2,p_3,\dots]$ where $|p_i|=4i$.
The first possible nonzero differential is $d_4(x_3)$. This differential is the transgression (Theorem 6.8 McCleary \cite{McCleary}). Furthermore, the fibration $K(\mathbb{Z},3) \rightarrow BO\langle 8 \rangle \rightarrow BO\langle 4 \rangle $ is pulled back over the classifying map 
$\theta:BO\langle 4 \rangle  \rightarrow K(\mathbb{Z},4)$ corresponding to the class $\frac{p_1}{2} \in H^4(BO\langle 4 \rangle ;\mathbb{Z})$. Since this is a principal $K(\mathbb{Z},3)$ fibration, the transgression $d_4(x_3)$ equals the pullback under this map of the fundamental class of $K(\mathbb{Z},4)$ (see proof of Lemma $8^{bis}.28$ McCleary \cite{McCleary}). Thus, $d_4(x_3)=\frac{p_1}{2}$. 
By the Leibniz rule, $d_4(p_1^k x_3)=\frac{p_1^{k+1}}{2}$, so all powers of $p_1$ die. No further differentials are possible for degree reasons, so
$H^*(BO\langle 8\rangle ;\mathbb{Q})\cong \mathbb{Q}[p_2,p_3,\dots]$.
\end{proof}
\begin{proposition}\label{prop: rational bo9}
The rational cohomology of $BO\langle 9 \rangle$ is isomorphic to 
\[
 \mathbb{Q}[p_3,p_4,p_5,\dots]
\]
where $|p_i|=4i$. 
\end{proposition}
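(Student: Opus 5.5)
We follow the proof of Proposition \ref{prop: rational bo8}, now applied to the fibration $K(\mathbb{Z}/2,8) \rightarrow BO\langle 9 \rangle \rightarrow BO\langle 8 \rangle$. This fibration is principal and is pulled back along the map $BO\langle 8\rangle \rightarrow K(\mathbb{Z},8)$ classifying $\frac{1}{6}p_2$ (Figure \ref{bo_whitehead}). The first step is to pin down its fiber. Looping $K(\mathbb{Z},8)$ gives $K(\mathbb{Z},7)$, so the homotopy fiber of $BO\langle 9\rangle \rightarrow BO\langle 8\rangle$ is $K(\mathbb{Z},7)$. The label $K(\mathbb{Z}/2,8)$ at the $BO\langle 10\rangle$ stage in the figure belongs to the next stage and is not the fiber here. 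In either case the fiber is an Eilenberg--MacLane space, and its rational cohomology is computable.

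We then run the rational Serre spectral sequence
\[
E_2^{s,t}\cong H^s(BO\langle 8\rangle; H^t(K(\mathbb{Z},7);\mathbb{Q}))\Rightarrow H^{s+t}(BO\langle 9\rangle;\mathbb{Q}).
\]
Local coefficients are unnecessary because $BO\langle 8\rangle$ is simply connected. We use $H^*(K(\mathbb{Z},7);\mathbb{Q})\cong \bigwedge[x_7]$, an exterior algebra on one class of odd degree, by the same reference as before, together with Proposition \ref{prop: rational bo8}, which gives $H^*(BO\langle 8\rangle;\mathbb{Q})\cong\mathbb{Q}[p_2,p_3,\dots]$. The only possible nonzero differential is the transgression $d_8(x_7)$. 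Since the fibration is principal and classified by $\frac{1}{6}p_2$, the same argument (via McCleary) gives $d_8(x_7)=\frac{1}{6}p_2$, a nonzero rational multiple of $p_2$. The Leibniz rule then gives $d_8(p_2^k q\, x_7)=\frac16 p_2^{k+1}q$ for any monomial $q$ in $p_3,p_4,\dots$. Hence $d_8$ is injective on the $x_7$-row, and its cokernel on the base row is $\mathbb{Q}[p_2,p_3,\dots]/(p_2)\cong\mathbb{Q}[p_3,p_4,\dots]$.

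For degree reasons there are no further differentials, because the $E_2$-page is concentrated in rows $t=0$ and $t=7$. The spectral sequence collapses at $E_9$ to the base row, and the result is free of extension problems, being concentrated in a single row. This gives $H^*(BO\langle 9\rangle;\mathbb{Q})\cong \mathbb{Q}[p_3,p_4,\dots]$, with the $p_i$ the pullbacks of the Pontryagin classes.

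Alternatively, if one insists on a fiber of the form $K(\mathbb{Z}/2,m)$, its rational cohomology is trivial in positive degrees. The map $BO\langle 9\rangle\rightarrow BO\langle 8\rangle$ would then be a rational equivalence, which contradicts the claim, so this reading cannot be the right one. The main point requiring care is exactly this identification of the fiber and of the transgression as a nonzero multiple of $p_2$; the remainder is the same bookkeeping as in Proposition \ref{prop: rational bo8}.
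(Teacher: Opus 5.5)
Your proposal is correct and matches the paper's proof: both run the rational Serre spectral sequence for $K(\mathbb{Z},7)\to BO\langle 9\rangle\to BO\langle 8\rangle$, where the transgression $d_8(x_7)=\frac{1}{6}p_2$ kills the ideal $(p_2)$ in $\mathbb{Q}[p_2,p_3,\dots]$ and leaves $\mathbb{Q}[p_3,p_4,\dots]$ in the bottom row. You should delete the opening sentence that names the fiber as $K(\mathbb{Z}/2,8)$, since the argument you actually carry out uses the correct fiber $K(\mathbb{Z},7)$.
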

\begin{proof}
Consider the Serre spectral sequence associated to the fibration $K(\mathbb{Z},7) \rightarrow BO\langle 9 \rangle \rightarrow BO\langle 8 \rangle$ 
\[
E_2^{s,t}\cong H^s(BO\langle 8 \rangle;H^t(K(\mathbb{Z},7);\mathbb{Q})) \Rightarrow H^{s+t}(BO\langle 9 \rangle; \mathbb{Q})
\]
Recall that $H^*(K(\mathbb{Z},7);\mathbb{Q}) \cong \bigwedge[x_7]$ where $|x|=7$ (Theorem 5.1 Berglund \cite{Berglundnotes} also \cite{morita2001} \cite{griffithsmorgan2013}) and $H^*(BO\langle 8 \rangle;\mathbb{Q})\cong \mathbb{Q}[p_2,p_3,\dots]$ by Proposition \ref{prop: rational bo8}. 
The result follows from a similar argument to the proof of  Proposition \ref{prop: rational bo8}. Here, the classifying map $BO\langle 8 \rangle \rightarrow K(\mathbb{Z},8)$ corresponds to the class $\frac{p_2}{6} \in H^8(BO\langle 8 \rangle; \mathbb{Z})$, and so $d_8(x_7)=\frac{p_2}{6}$. 
\end{proof}

\begin{proposition}\label{prop: bo9 cohomology groups}
Table \ref{tab:bo9_groups} describes the 2-adic cohomology groups $H^n(BO\langle 9 \rangle;\mathbb{Z})$ for $0\leq n \leq 23$. 
\end{proposition}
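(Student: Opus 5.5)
We compute the 2-adic integral cohomology of $BO\langle 9 \rangle$ through degree 23 by combining three inputs: the mod 2 cohomology (Stong's description above, with $\mathcal{A}$-action computed in Sage), the rational cohomology $\mathbb{Q}[p_3,p_4,\dots]$ from Proposition \ref{prop: rational bo9}, and the mod 2 Bockstein spectral sequence. Since $H^*(BO\langle 9\rangle;\mathbb{Z})$ is of finite type, each 2-adic group $H^n$ has the form $\mathbb{Z}_2^{r_n}\oplus T_n$ with $T_n$ a finite 2-group. The rational computation gives $r_n$ directly: in degrees $\le 23$ the only nonzero ranks are $r_{12}=1$ ($p_3$), $r_{16}=1$ ($p_4$) and $r_{20}=1$ ($p_5$), with $r_0=1$. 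For the torsion we use the universal coefficient theorem
\[
H^n(BO\langle 9\rangle;\mathbb{Z}/2)\cong H^n(BO\langle 9\rangle;\mathbb{Z})\otimes\mathbb{Z}/2\;\oplus\;\mathrm{Tor}(H^{n+1}(BO\langle 9\rangle;\mathbb{Z}),\mathbb{Z}/2).
\]
This gives $\dim_{\mathbb{Z}/2}H^n(-;\mathbb{Z}/2)=r_n+t_n+t_{n+1}$, where $t_n$ is the number of cyclic summands of $T_n$. Starting from $t_0=0$, these equations determine every $t_n$ inductively.

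To find the orders of the cyclic summands, we run the Bockstein spectral sequence. We begin with $E_1=H^*(BO\langle 9\rangle;\mathbb{Z}/2)$ and $d_1=Sq^1$. From the Sage module structure we compute $Sq^1$ on a basis of $H^n$ for $n\le 24$, namely on the classes $Sq^Ii_9$ modulo $Sq^2i_9$ and on the polynomial generators $\theta_i$ with $L(i)>5$, together with their products via the Cartan formula. The $Sq^1$-homology $E_2$ must have dimension $r_n$ in each degree if all torsion has order exactly 2. In that case $T_n$ is an elementary abelian group of rank $t_n$, and we are done.

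The main obstacle is any degree where $E_2$ is strictly larger than the rational rank. There higher Bocksteins $\beta_r$ detect $\mathbb{Z}/2^r$ summands, and we cannot read these off from the $\mathcal{A}$-module structure alone. The candidates for this are degrees near 12, 16 and 20, where $p_3$, $p_4$ and $p_5$ may fail to be reductions of indivisible integral classes. A typical example is a class like $\frac{1}{240}p_3$, which suggests an extra divisibility such as a $\mathbb{Z}/8$ summand from $\lambda$-type relations. To handle these degrees, we would first compare with the fibration $K(\mathbb{Z}/2,8)\to BO\langle 10\rangle\to BO\langle 9\rangle$ and with the known integral cohomology of $BO\langle 8\rangle$. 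More concretely, we would run the integral Serre spectral sequence of $K(\mathbb{Z},7)\to BO\langle 9\rangle\to BO\langle 8\rangle$ in degrees $\le 24$, using $d_8(x_7)=\frac{p_2}{6}$ and the known integral cohomology of $K(\mathbb{Z},7)$ in this range. This pins down the orders of the torsion summands. It also identifies which integral classes restrict to $p_3,p_4,p_5$ up to 2-adic units, which determines the $\beta_r$ and hence the orders of any higher torsion. The resulting ranks and torsion orders are then assembled into Table \ref{tab:bo9_groups}.
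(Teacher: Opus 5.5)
Your plan is correct and is essentially the paper's argument in different packaging. The paper runs the Adams spectral sequence for $H\mathbb{Z}\wedge\Sigma^\infty BO\langle 9\rangle$, whose $E_2$-term (Ext over the exterior algebra on $Sq^1$) is read off from the same $Sq^1$-action you propose to compute: $h_0$-towers come from $Sq^1$-homology and filtration-zero classes from free $Sq^1$-pairs. Its collapse amounts to your Bockstein criterion that the $Sq^1$-homology has dimension $r_n$ in each degree, so your Serre spectral sequence fallback is never needed. (Divisibility such as $\frac{1}{240}p_3$ only changes which class generates a free summand; it never produces torsion.)

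One correction for your final assembly step: since $r_{10}=0$ and $Sq^1x_9\neq 0$ (equivalently $H_9(BO\langle 9\rangle;\mathbb{Z})\cong\pi_9BO\cong\mathbb{Z}/2$), both your method and the paper's give $H^{10}(BO\langle 9\rangle;\mathbb{Z})\cong\mathbb{Z}/2$, so the entry $\mathbb{Z}$ in degree $10$ of the table is a typo.
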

\begin{center}
\captionof{table}[2-adic cohomology of $BO\langle 9 \rangle$]{The 2-adic cohomology groups of $BO\langle 9 \rangle$ through degree $23$.}
\label{tab:bo9_groups}
\tablehead{\hline%
$n$ & $H^n(BO\langle 9 \rangle;\Z)$ & $n$ & $H^n(BO\langle 9 \rangle;\Z)$\\%
\hline}
\tabletail{\hline%
\multicolumn{4}{r}{%
\small\slshape to be continued on the next page}\\}
\tablelasttail{\hline}
\begin{supertabular}{|c|p{3.5cm}|c|p{3.5cm}|}
$0$  & $\Z$    & $12$ & $\Z$\\
$1$  & $(0)$   & $13$ & $(0)$\\
$2$  & $(0)$   & $14$ & $\Z/2$\\
$3$  & $(0)$   & $15$ & $(0)$\\
$4$  & $(0)$   & $16$ & $\Z/2 \oplus \Z$\\
$5$  & $(0)$   & $17$ & $\Z/2$\\
$6$  & $(0)$   & $18$ & $\Z/2$\\
$7$  & $(0)$   & $19$ & $\Z/2$\\
$8$  & $(0)$   & $20$ & $\Z/2 \oplus \Z$\\
$9$  & $(0)$  & $21$ & $(0)$\\
$10$ & $\Z$   & $22$ & $(\Z/2)^{2}$\\
$11$ & $(0)$  & $23$ & $(\Z/2)^{2}$\\
\end{supertabular}
\end{center}
\FloatBarrier
\begin{proof}
We will begin by working in homology instead of cohomology. The homology groups $H_{*}(BO \langle 9 \rangle; \mathbb{Z})$ are isomorphic to $H\mathbb{Z}_{*}(\Sigma^{\infty}BO\langle 9 \rangle) \cong \pi_{*}(H\mathbb{Z} \wedge \Sigma^{\infty}BO\langle 9 \rangle)$, where $H\mathbb{Z}$ is the Eilenberg-Maclane spectrum representing integral homology and $\Sigma^{\infty}BO\langle 9 \rangle$ is the suspension spectrum of 
$BO\langle 9\rangle$. Consider the Adams spectral sequence
\[
E^{s,t}_{2} = \text{Ext}^{s,t}_{\mathcal{A}}(H^*(H\mathbb{Z} \wedge \Sigma^{\infty} BO\langle 9 \rangle;\mathbb{Z}/2),\mathbb{Z}/2) \Rightarrow \pi_{t-s}(H\mathbb{Z}\wedge \Sigma^{\infty} BO\langle 9 \rangle) \cong H\mathbb{Z}_{t-s}(\Sigma^{\infty} BO\langle 9 \rangle)
\]
It is well-known that the mod 2 cohomology of $H\mathbb{Z}$ is isomorphic to $\mathcal{A}/Sq^1\mathcal{A}$ as an $\mathcal{A}$-module. By the change-of-rings isomorphism, the $E_2$-page then simplifies to 
\[
\text{Ext}^{s,t}_{\mathcal{A}/Sq^1\mathcal{A}}(H^*(\Sigma^{\infty} BO\langle 9 \rangle;\mathbb{Z}/2),\mathbb{Z}/2)
\]
Thus, we need only calculate the action of $Sq^1$ on $H^*(BO\langle 9 \rangle;\mathbb{Z}/2)$. The $E_2$-page is then given by Figure \ref{fig:hz_bo9}. All possible differentials are ruled out for degree reasons or by $h_0$-linearity. Thus, the spectral sequence collapses at the $E_2$-page and there are no possible hidden $2$-extensions, and so the groups $H_{n}(BO\langle 9 \rangle ; \Z)$ can be read off the $E_2$-page. The cohomology groups and homology groups of $BO\langle 9 \rangle$ are related by the Universal Coefficient Theorem
\[
0 \longrightarrow \text{Ext}_{\Z}\!\bigl(H_{n-1}(BO\langle 9 \rangle;\Z),\,\Z\bigr)
\longrightarrow H^{n}(BO\langle 9 \rangle;\Z)
\longrightarrow \text{Hom}_{\Z}\!\bigl(H_{n}(BO\langle 9 \rangle;\Z),\,\Z\bigr)
\longrightarrow 0.
\]
Based on our calculation, $H_{n}(BO\langle 9 \rangle ; \Z)$ is a direct sum of copies of $\Z$ and $\mathbb{Z}/2$, so the relevant $\text{Ext}$ and $\text{Hom}$ groups are
\begin{align*}
\text{Hom}_{\Z}({\Z,\Z}) &\cong \Z \\
\text{Hom}_{\Z}({\Z/2,\Z}) &\cong 0 \\
\text{Ext}_{\Z}({\Z,\Z}) &\cong 0 \\ 
\text{Ext}_{\Z}({\Z/2,\Z}) &\cong \Z/2
\end{align*}
Applying the Universal Coefficient Theorem for $0\leq n \leq 23$ then takes the form 

\[
0 \rightarrow (\Z/2)^{a} \rightarrow H^n(BO\langle 9 \rangle; \Z) \rightarrow (\Z)^{b} \rightarrow 0 
\]

where $a$ is the number of $\Z/2$ summands in $H_{n-1}(BO\langle 9 \rangle; \Z)$ and $b$ is the number of $\Z$ summands in $H_{n}(BO\langle 9 \rangle; \Z)$. Of course, $\Z$ is a projective $\Z$-module, so the short exact sequence will split and 
$H_{n}(BO\langle 9 \rangle; \Z) \cong (\Z/2)^{a} \oplus (\Z)^{b}$. The particular values for $a$ and $b$ for each $n$ can be read off the Adams chart, and the result follows.
\begin{figure}
\includegraphics[width=\linewidth]{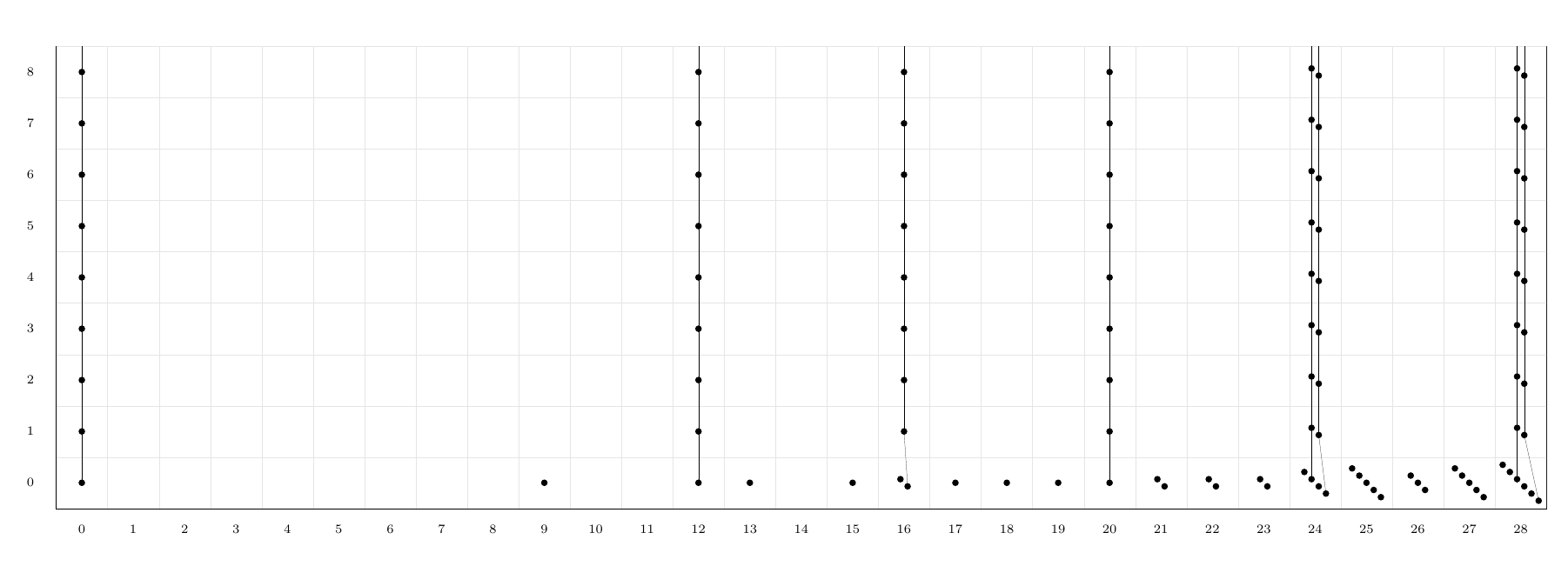}
\caption{The $E_2$-page of the Adams spectral sequence for $HZ_{*}BO\langle 9 \rangle$.\label{fig:hz_bo9}}
\end{figure}
\end{proof}
\FloatBarrier
\subsection{Adams charts}
This section contains charts for the Adams spectral sequence for $MO\langle 9 \rangle$. The $y$-axis is the Adams filtration $s$ and the $x$ axis is the stem $t-s$. The $E_r$-page is displayed along with the $d_r$-differentials. The elements $h_0$, $h_1$, and $h_2$ have had their names suppressed. The dotted lines on the $E_{\infty}$-page represent possibly nonzero differentials that have yet to be determined. 
\clearpage
\FloatBarrier
\begin{figure}[!p]
\centering

\begin{subfigure}{\linewidth}
  \centering
  \includegraphics[width=\linewidth,
    trim=0.5cm 0cm 0cm 0cm,clip]{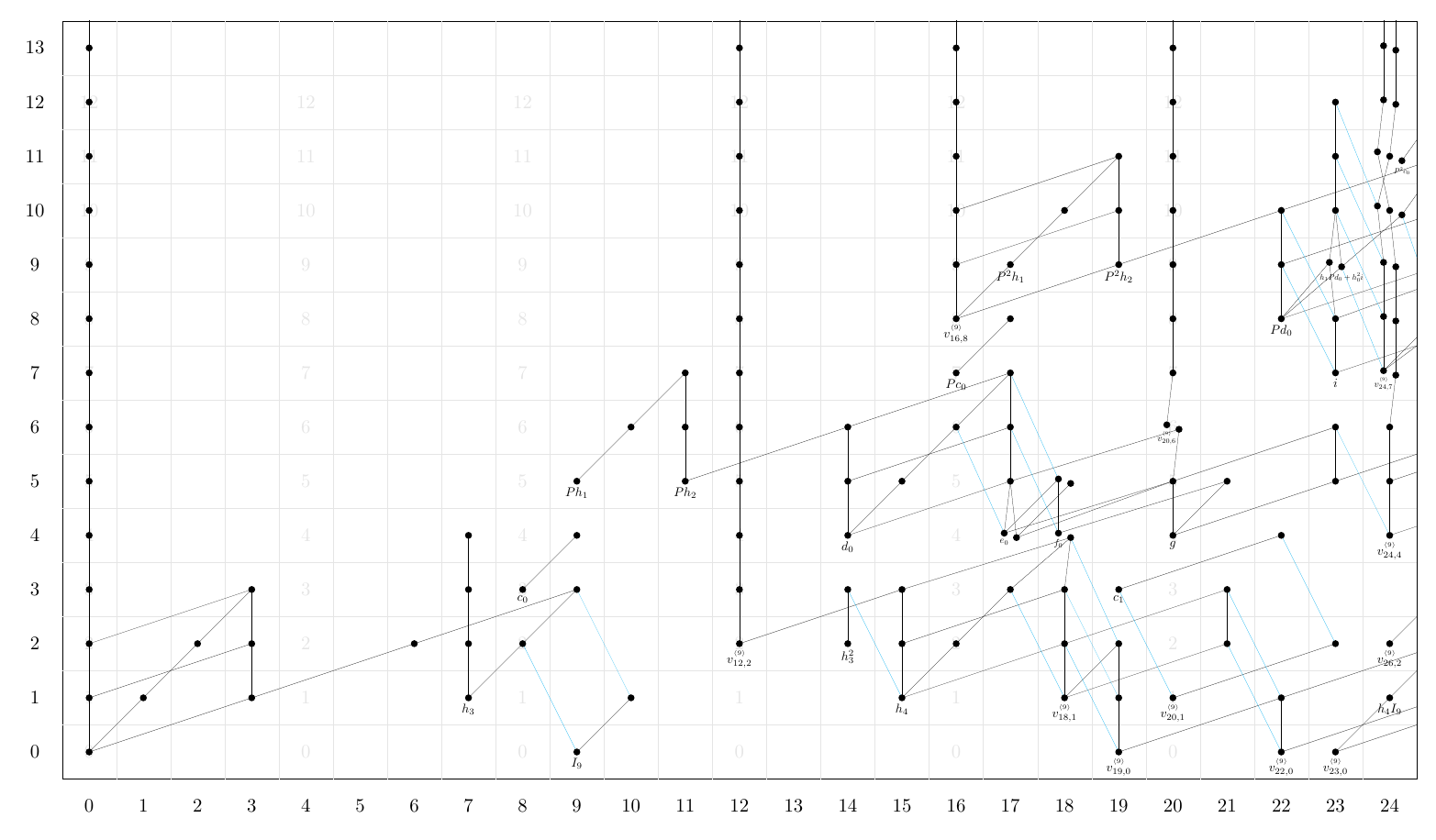}
  \caption{$E_2$-page $MO \langle 9 \rangle $ (stems 0-24)}
  \label{fig:mo9_e2_0_24}
\end{subfigure}

\vspace{-0.4\baselineskip}

\begin{subfigure}{\linewidth}
  \centering
  \includegraphics[width=\linewidth,
    trim=0.5cm 0cm 0cm 0cm,clip]{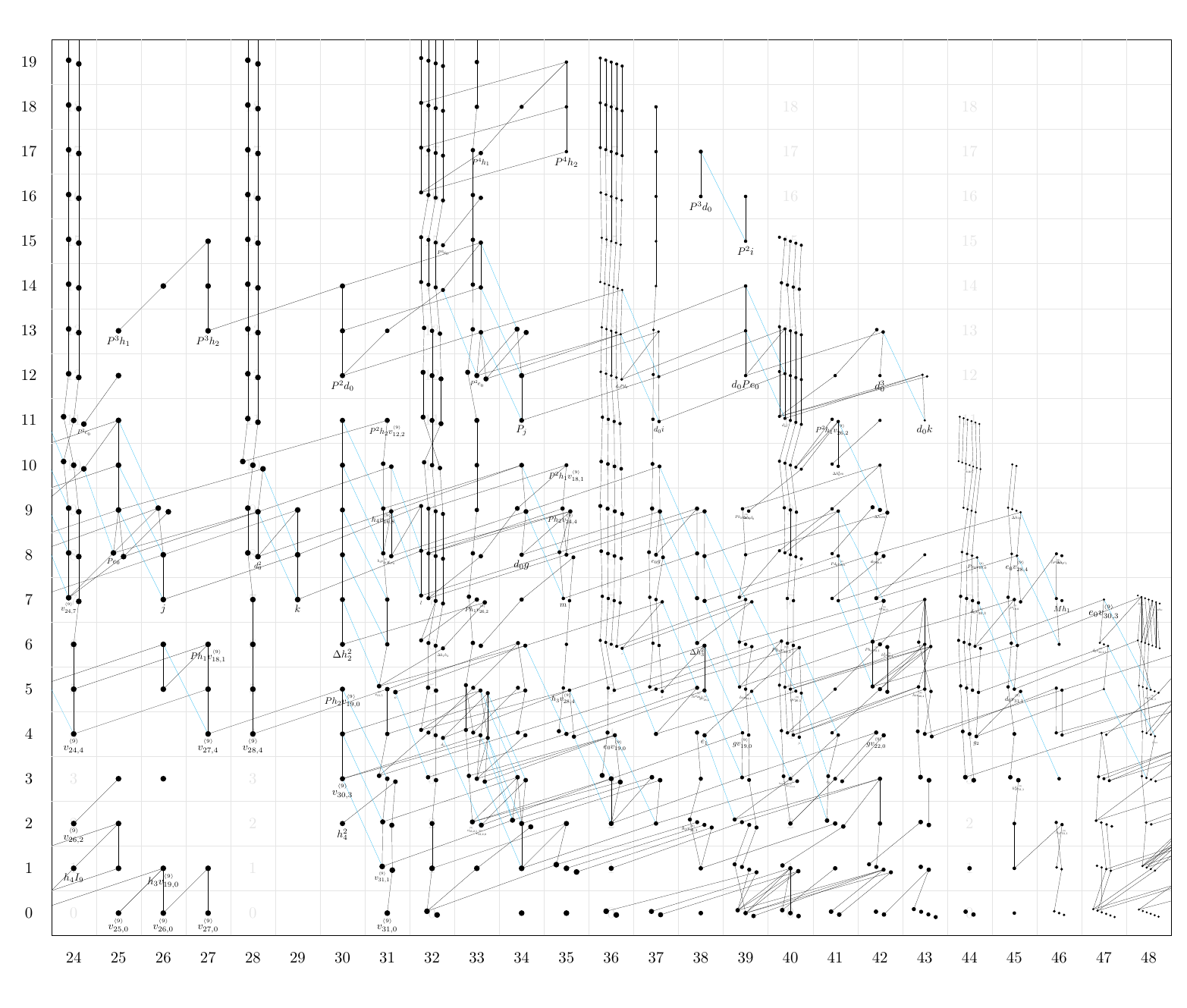}
  \caption{$E_2$-page $MO \langle 9 \rangle $ (stems 24-48)}
  \label{fig:mo9_e2_24_48}
\end{subfigure}

\caption{$E_2$-page $MO\langle 9 \rangle$\label{fig:mo9_e2}}
\end{figure}

\begin{figure}[!p]
\centering
\begin{subfigure}{\linewidth}
  \centering
  \includegraphics[width=\linewidth,
    trim=0.5cm 0cm 0cm 0cm,clip]{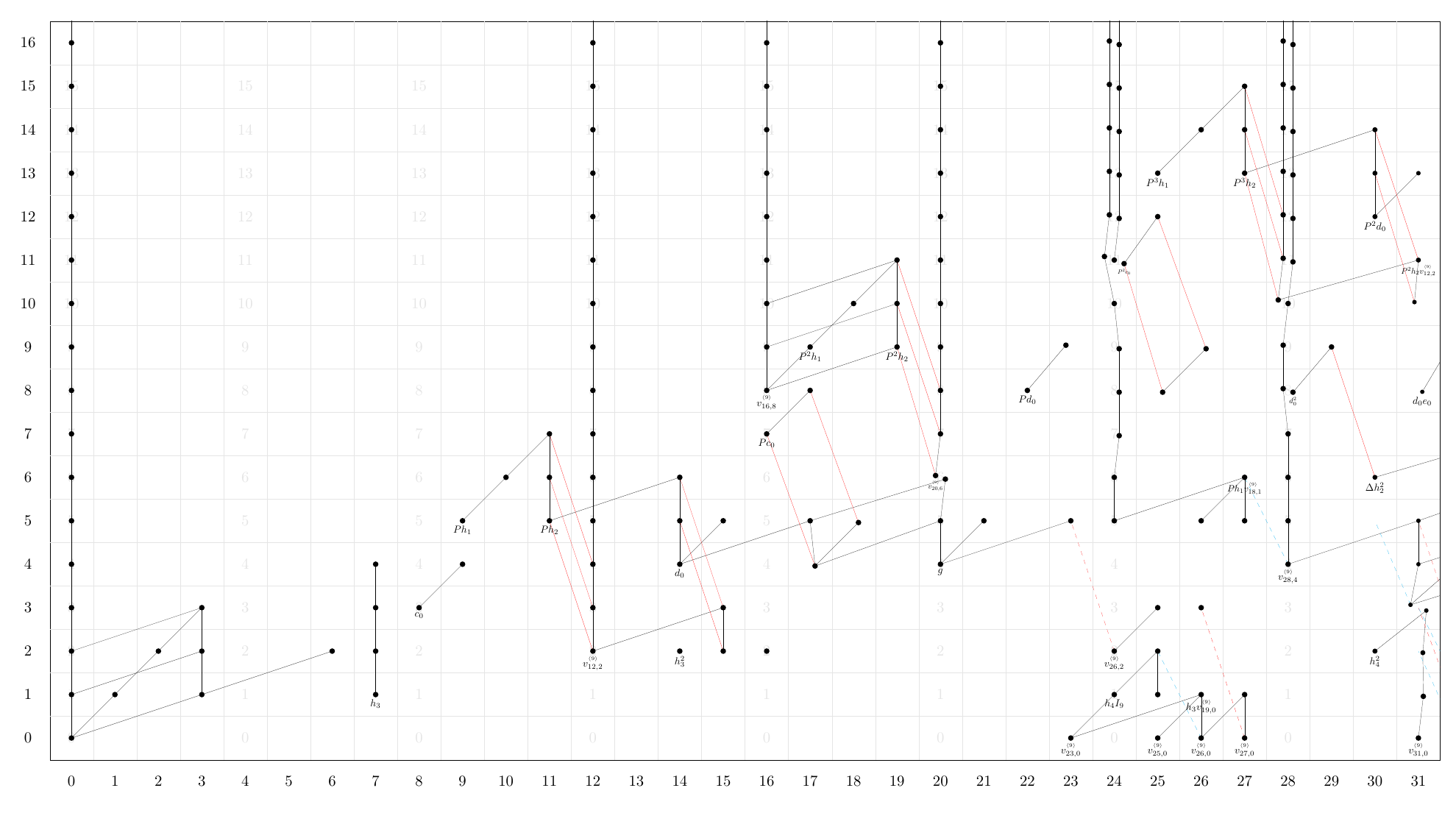}
  \caption{$E_3$-page $MO \langle 9 \rangle $ (stems 0-31)}
  \label{fig:mo9_e3_0_31}
\end{subfigure}


\begin{subfigure}{\linewidth}
  \centering
  \includegraphics[width=\linewidth,
    trim=0.5cm 0cm 0cm 0cm,clip]{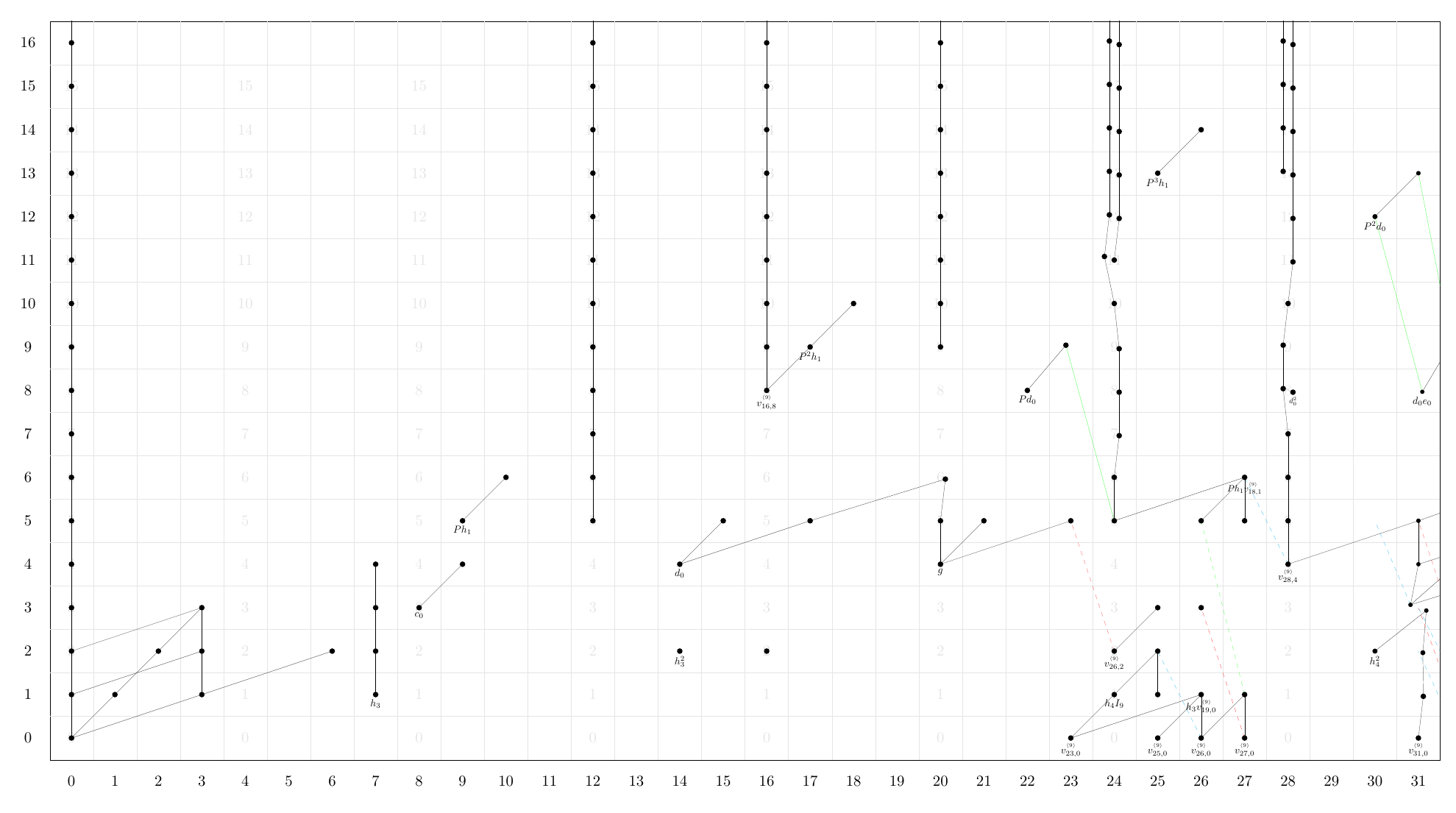}
  \caption{$E_4$-page $MO \langle 9 \rangle $ (stems 0-31)}
  \label{fig:mo9_e4_0_31}
\end{subfigure}

\caption{$E_3$/$E_4$-page $MO\langle 9 \rangle$ \label{fig:mo9_e4}}
\end{figure}
\begin{figure}[!p]
\centering
\begin{subfigure}{\linewidth}
  \centering
  \includegraphics[width=\linewidth,
    trim=0.5cm 0cm 0cm 0cm,clip]{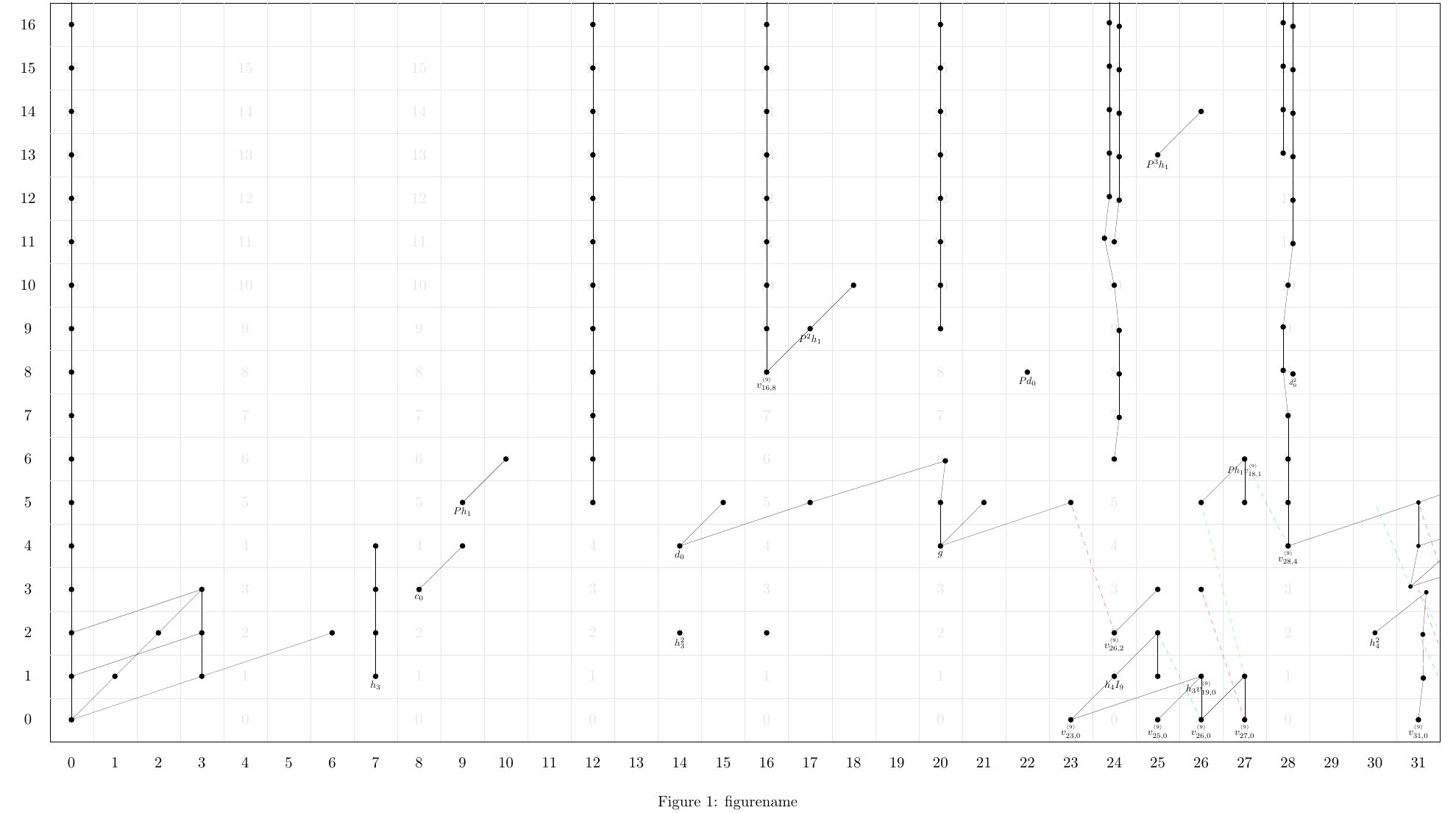}
  \caption{$E_{\infty}$-page $MO \langle 9 \rangle $}
  \label{fig:mo9_einf_0_31}
\end{subfigure}



\caption{$E_{\infty}$-page \(MO\langle 9\rangle\)\label{fig:mo9_einf}}

\end{figure}
\FloatBarrier

\subsection{Adams differentials}
In this section, we calculate nearly all Adams differentials through stem 31. We also calculate some differentials in higher stems that are either used for determining lower stem information, or are relevant in later chapters. The first few uses of Massey products include explicit calculations of any indeterminacy, while later uses omit such details. The proofs of differentials generated by Weinan Lin's \textbf{sseqcpp} program are indicated. 
\setlength{\tabcolsep}{15pt}
\subsection{$d_2$-differentials}
\begin{proposition}
Table \ref{tab:Adams d_2 mo9} describes the non-zero $d_2$-differentials in the Adams spectral sequence for $MO\langle 9 \rangle$  on all indecomposables on $E_2$ through stem 31 and on select elements in stems $> 31$.  
\end{proposition}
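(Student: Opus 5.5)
The statement asserts the complete list of nonzero $d_2$-differentials on indecomposables through stem 31, so the plan has two halves: establish each differential in Table \ref{tab:Adams d_2 mo9} that occurs, and rule out every other candidate. Since $d_2$ is a derivation and the Adams spectral sequence for $MO\langle 9\rangle$ is a module over the one for $\mathbb S$ via the unit map, it suffices to determine $d_2$ on the algebra (or $\text{Ext}_{\mathcal A}(\mathbb Z/2,\mathbb Z/2)$-module) indecomposables of $E_2$. Everything else then follows from the Leibniz rule $d_2(ab)=d_2(a)b+ad_2(b)$.

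The first step is to use the splitting of $H^*(MO\langle 9\rangle;\mathbb Z/2)$ through degree 23 as
\[
\mathcal A//\mathcal A(3)\oplus\Sigma^9\mathcal A/\mathcal A(Sq^2,Sq^{10},Sq^{12},Sq^{26})\oplus\cdots .
\]
This decomposes the $E_2$-page in low stems, and the bottom summand gives $\text{Ext}_{\mathcal A(3)}$. Naturality along the unit map $\mathbb S\to MO\langle 9\rangle$ then imports the sphere differentials of Proposition \ref{prop: sphere differentials}. For example, $d_2(h_4)=h_0h_3^2$, $d_2(e_0)=h_1^2d_0$, $d_2(f_0)=h_0^2e_0$, $d_2(h_5)=h_0h_4^2$, and so on, whenever the source and target survive with nonzero image in $E_2(MO\langle 9\rangle)$. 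Checking that the image of $h_0h_3^2$ is nonzero is a direct Ext computation (done with \textbf{ext}). Conversely, if the image of a sphere class is a permanent cycle target, the differential can vanish in $MO\langle 9\rangle$; this is detected when its image is zero in $E_2$.

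The second step handles indecomposables not in the image of the unit map, such as the bottom classes of the $\Sigma^9$, $\Sigma^{19}$ and $\Sigma^{22}$ summands and their $\text{Ext}$-module generators. For most of these I will rule out a $d_2$ by degree reasons or by $h_0$-, $h_1$- and $h_2$-linearity. Typically the target group is zero, or $h_i$ times the target is nonzero while $h_i$ times the source is zero (or the reverse). On $0$-line $h_0$-towers I will apply May--Milgram (Theorem \ref{maymilgram}) together with the integral cohomology in Table \ref{tab:bo9_groups}, transferred to $MO\langle 9\rangle$ by the Thom isomorphism, to get $d_r$ on towers. For instance, there is no $\mathbb Z/4$ in $H^{k+1}$, so no $d_2$ leaves a tower. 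The rational computation of Proposition \ref{prop: rational bo9} fixes the number of towers that must survive in stems 12, 16, 20, 24 and 28, which forces or forbids differentials from nearby towers.

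The remaining differentials will be forced by Massey products and Moss' higher Leibniz rule (Theorem \ref{mossleibniz}). When an indecomposable $x$ can be written as $\langle a,b,c\rangle$ with $a,b,c$ of known $d_2$, I compute the bracket on $E_2$ with \textbf{ext}, including its indeterminacy, and conclude $d_2(x)$ up to indeterminacy. Where hand arguments run out, I will use \textbf{sseqcpp} and label those cases. I expect the main obstacle to be the stems above 23, roughly 24--31. There the $\mathcal A$-module splitting is no longer available, the cohomology mixes the $K(\mathbb Z/2,9)$ and $\theta_i$ factors, and several candidate targets exist. In that range I would first try to express the sources as Massey products with sphere classes, for example $\langle h_0,h_3^2,-\rangle$-type brackets, before falling back on machine deduction.
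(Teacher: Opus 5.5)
Your framework matches the paper's: reduce to indecomposables, import the sphere's $d_2$'s by naturality, then propagate with $h_i$-linearity, the Leibniz rule and Moss' rule, with \textbf{sseqcpp} as backup. The gap is that nothing in it can \emph{create} a nonzero differential that does not come from $\mathbb S$. Two such differentials are seeds on which most of the table rests: $d_2(I_9)=h_1h_3$ and $d_2(\vv(18,1,9))=h_1^2h_4$.

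Your tools cannot reach them. Their targets are nonzero permanent cycles in $E_\infty(\mathbb S)$, detecting $\eta\sigma$ and $\eta\eta_4$, and no sphere differential hits them, so naturality gives nothing. The targets are $h_0$-torsion, so May--Milgram and the rational rank count, which only constrain $h_0$-towers, say nothing. Linearity and Moss' rule, and \textbf{sseqcpp}, only transport differentials you already have.

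What forces the seeds is homotopical information about the unit map, which the paper brings in from outside the spectral sequence. By Hovey's Theorem \ref{hoveyimj}, $\mathrm{Im}\,J$ maps to zero in $\pi_*MO\langle 9\rangle$ in degrees $\geq 8$, and $\eta\sigma\in\mathrm{Im}\,J$ is detected by $h_1h_3$. By Burklund--Senger (Theorem \ref{lem:mo9_17_kernel}), $\eta\eta_4$ lies in the kernel of $\pi_{17}\mathbb S\to\pi_{17}MO\langle 9\rangle$. So $h_1h_3$ and $h_1^2h_4$ must die, and $I_9$, resp.\ $\vv(18,1,9)$, is the only possible source.

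Your plan says you will dispose of the bottom classes of the $\Sigma^9$, $\Sigma^{19}$, $\Sigma^{22}$ summands by degree and linearity arguments. Those classes are $I_9$, $\vv(19,0,9)$, $\vv(22,0,9)$, and all three support nonzero $d_2$'s. They are exactly the differentials the paper uses to prove $MO\langle 9\rangle$ is indecomposable through degree 22, so that step cannot succeed.

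Once the two seeds are in hand, your tools do recover the rest, essentially as in the paper:
\begin{itemize}
\item The relation $h_0^2\vv(19,0,9)=h_1\vv(18,1,9)$ gives $d_2(\vv(19,0,9))=h_2h_4$.
\item Then $h_0\vv(22,0,9)=h_2\vv(19,0,9)$ gives $d_2(\vv(22,0,9))=h_2\vv(18,1,9)$.
\item Moss' rule on $\langle h_2,h_3,I_9\rangle$ gives $d_2(\vv(20,1,9))=c_1$.
\item Moss' rule on $\langle h_3,h_0^4,\vv(19,0,9)\rangle$ gives $d_2(\vv(27,4,9))=h_2^2g$. This feeds $d_2(\vv(24,4,9))=h_0h_2g$, via $d_0\vv(24,4,9)\in\langle h_1,h_1c_0,\vv(27,4,9)\rangle$, and the argument for $d_2(\vv(32,3,9))=n$, which uses the nonzero $d_2$ on $g\vv(19,0,9)$.
\end{itemize}
Among the nonzero entries not imported directly from $\mathbb S$, the paper derives only $\vv(24,7,9)$, $\vv(31,1,9)$ and $\vv(31,5,9)$ from sphere differentials alone, namely $d_2(j)$ and $d_2(f_0)$. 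Without the two external inputs, most of the table is out of reach of your plan.

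Also, do not expect to rule out every remaining candidate: the paper itself leaves $d_2(\vv(26,0,9))$ and $d_2(\vv(28,4,9))$ undetermined.
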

The following table lists the only indecomposables through stem 31 that, for degree reasons, can support a $d_2$-differential. It also includes select differentials in stems $>31$.
 \begin{longtable}{llllc}
    \caption[Possible $d_2$-differentials $MO\langle 9 \rangle$]{Possible non-zero $d_2$-differentials on indecomposable elements
    \label{tab:Adams d_2 mo9}
    } \\
    \toprule
    $x$ & $(t-s,s)$ & $d_2(x)$ & Occurs & Proof\\
    \midrule \endfirsthead
    \caption[]{Possible non-zero $d_2$-differentials on indecomposable elements} \\
    \toprule
    $x$ & $(t-s,s)$ & $d_2(x)$ & Occurs & Proof\\
    \midrule \endhead
    \bottomrule \endfoot
        $h_1$ & $(1, 1)$ & $h_0^3$ & No & \ref{prop: sphere differentials}\\
        $I_9$ & $(9,0)$ & $h_1h_3$ & Yes & \ref{lem:d_2 differential in stem 9 filtration 0 mo9}\\
        $h_4$ & $(15, 1)$ & $h_0h_3^2$ & Yes & \ref{prop: sphere differentials}\\
        $e_0$ & $(15, 4)$ & $h_1^2d_0$ & Yes & \ref{prop: sphere differentials}\\
        $\vv(18,1,9)$ & $(18, 1)$ & $h_1^2h_4$ & Yes & \ref{lem:d_2-unit-kernel-of-mo9-stem-17}\\
        $f_0$ & $(18, 4)$ & $h_0^2e_0$ & Yes & \ref{prop: sphere differentials}\\
        $\vv(19,0,9)$ & $(19, 0)$ & $h_2h_4$ & Yes & \ref{lem:d_2 differential in 19 stem mo9}\\
        $c_1$ & $(19, 3)$ & $Ph_1I_9$ & No & \ref{d_2 differential in 20 stem mo9}\\
        $\vv(20,1,9)$ & $(20, 1)$ & $c_1$ & Yes & \ref{d_2 differential in 20 stem mo9}\\
        $\vv(22,0,9)$ & $(22, 0)$ & $h_2\vv(18,1,9)$ & Yes & \ref{d_2 differential in 22 stem mo9}\\
        $i$ & $(23, 7)$ & $h_0Pd_0$ & Yes & \ref{prop: sphere differentials}\\
        $\vv(24,4,9)$ & $(24, 4)$ & $h_0h_2g$ & Yes & \ref{d_2 differential in stem 24 filtration 4 mo9}\\
        $\vv(24,7,9)$ & $(24, 7)$ & $h_1Pd_0+h_0^2i$ & Yes & \ref{d_2 differential in stem 24 filtration 7 mo9}\\
        $\vv(25,0,9)$ & $(25, 0)$ & $v_{24,2}$ & Yes & \ref{d_2 differential in stem 25 filtration 0 mo9}\\
        $\vv(26,0,9)$ & $(26, 0)$ & $h_1h_4I_9$ & ? &  \\
        $j$ & $(26, 7)$ & $h_0Pe_0$ & Yes & \ref{prop: sphere differentials}\\
        $\vv(27,4,9)$ & $(27, 4)$ & $h_2^2g$ & Yes & \ref{lem:d_2 differential in mo9 stem 27 filtration 4}\\
        $\vv(28,4,9)$ & $(28, 4)$ & $Ph_1\vv(18,1,9)$ & ? &  \\
        $k$ & $(29, 7)$ & $h_0d_0^2$ & Yes & \ref{prop: sphere differentials}\\
        $\vv(31,0,9)$ & $(31, 0)$ & $h_4^2$ & No & \ref{lem:d_2 differential in stem 31 filtration 0 mo9}  \\
        $\vv(31,1,9)$ & $(31, 1)$ & $\vv(30,3,9)$ & Yes & \ref{lem:d_2 differential in stem 31 filtration 1 mo9} \\
        $\vv(31,5,9)$ & $(31, 5)$ & $h_0\Delta h_2^2$ & Yes & \ref{lem:d_2 differential in stem 31 filtration 5 mo9} \\
        $\vv(32,3,9)$ & $(32, 3)$ & $n$ & Yes & \ref{lem:d_2 differential in stem 32 filtration 3 mo9}\\
         $l$ & $(32, 7)$ & $h_0d_0e_0$ & Yes & \ref{prop: sphere differentials}\\
    \end{longtable}
\begin{proof}
For proofs of these differentials see the following series of Lemmas. 
\end{proof}

\begin{lemma}\label{lem:d_2 differential in stem 9 filtration 0 mo9}
$d_2(I_9)=h_1h_3$.
\end{lemma}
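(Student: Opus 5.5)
We need $d_2(I_9)=h_1h_3$, where $I_9$ is the generator in stem 9, filtration 0. It comes from the summand $\Sigma^9\mathcal{A}/\mathcal{A}(Sq^2,\dots)$, corresponding to the class $x_9$ (or $i_9$). The plan is to show that $h_1h_3$, which comes from the sphere in stem 9 filtration 2, must be killed, and that $I_9$ is the only element that can do it.

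\textbf{Step 1: $\eta\sigma$ maps to zero.} Consider the unit map $\mathbb S\to MO\langle 9\rangle$. In stem 8, $\pi_8\mathbb S\cong \Z/2\oplus\Z/2$, generated by $\eta\sigma$ (detected by $h_1h_3$) and $\epsilon$ (detected by $c_0$). Theorem \ref{hoveyimj} with $n=9$ says that $\operatorname{Im} J$ maps to zero in dimensions $\geq 8$. By Theorem \ref{mahowaldimj}, $\operatorname{Im} J$ in stem 8 is $\{P c_0\}$-type; more precisely, in stem 8 the image of $J$ is generated by $\eta\sigma$, equivalently $\eta\sigma+\epsilon$ depending on the convention. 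I would fix the convention by checking that $\operatorname{Im} J$ in stem 8 is detected by $h_1h_3$ modulo $c_0$. Either way, some class with nonzero $h_1h_3$ component maps to zero in $\pi_8 MO\langle 9\rangle$. Since $c_0$ survives (the table gives $\Omega^{\langle 9\rangle}_8=\Z/2$, and $c_0$ comes from the $\mathcal{A}//\mathcal{A}(3)$ summand), the image of $h_1h_3$ in $E_2(MO\langle 9\rangle)$ must not survive to $E_\infty$, or must detect a class that is zero. The nonzero image of $h_1h_3$ itself is clear from the bottom summand $\mathcal{A}//\mathcal{A}(3)$, since $\operatorname{Ext}_{\mathcal{A}(3)}$ contains $h_1h_3$.

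\textbf{Step 2: Rule out a hidden extension.} The element $h_1h_3$ is a permanent cycle, being the image of a permanent cycle. For it to represent zero in homotopy, it must be hit by a differential; a nonzero $E_\infty$ class cannot detect zero. An alternative is that $\eta\sigma$ maps to a class of higher filtration, but then the image of $\operatorname{Im} J$ would be nonzero. So the element must be a boundary.

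\textbf{Step 3: Identify the source.} Read off the $E_2$-chart (Figure \ref{fig:mo9_e2}). The sources of differentials landing in stem 8 filtration 2 lie in stem 9 at filtration $2-r\geq 0$, so $r=2$ with source in filtration 0. The only class there is $I_9$, since $h_1c_0$ and the others sit in higher filtration. Hence $d_2(I_9)=h_1h_3$.

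The main obstacle is Step 1: pinning down precisely which element of stem 8 spans $\operatorname{Im} J$ and ensuring that its $h_1h_3$-component is nonzero. As a cross-check, I would use Theorem \ref{adamsimj}-style knowledge: $J$ is injective in stem 8, and $\eta\sigma$ is in the image of $J$ because $\eta\cdot J(\text{generator of }\pi_7 SO)=J(\eta\cdot\text{gen})$. Then $\eta\sigma\mapsto 0$, and the $E_2$ image of $h_1h_3$ is nonzero, which forces the differential.
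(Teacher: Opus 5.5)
Your proposal is correct and follows the paper's proof: $\eta\sigma\in\text{Im } J$ maps to zero in $\pi_8 MO\langle 9\rangle$ by Hovey's theorem, so the nonzero, permanent image of $h_1h_3$ must be a boundary, and $I_9$ is the only class in stem 9, filtration 0, hence the only possible source of that differential. Your closing justification, $J(\text{gen}_7\circ\eta)=\sigma\eta$, is a fine replacement for the paper's citation of Bruner--Rognes; drop the appeal to Mahowald's theorem, which says nothing about stem 8 ($Pc_0$ lies in stem 16), and drop the use of the tabulated $\Omega^{\langle 9\rangle}_8$, which is circular since that table relies on this lemma, and is not needed anyway.
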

\begin{proof}
The element $h_1h_3$ detects $\eta\sigma \in \pi_{8}\mathbb{S}$. The class $\eta\sigma$ is in $\text{Im } J$ (see Lemma 11.46 and Theorem 11.61 of Bruner-Rognes \cite{BrunerRognestmf}.) By Theorem \ref{hoveyimj}, $h_1h_3$ must not survive the Adams spectral sequence for $MO\langle 9 \rangle$. For degree reasons, $I_9$ is the only possible source of a differential with target $h_1h_3$.
Thus, $d_2(I_9)=h_1h_3$.
\end{proof}
\begin{lemma}\label{lem:d_2-unit-kernel-of-mo9-stem-17}
$d_2(\vv(18,1,9))=h_1^2h_4$.
\end{lemma}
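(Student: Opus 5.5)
The plan is to follow the pattern of Lemma \ref{lem:d_2 differential in stem 9 filtration 0 mo9}: show that the class $h_1^2h_4$ in the $E_2$-page for $MO\langle 9\rangle$ (stem 17, filtration 3) must die, and then identify $\vv(18,1,9)$ as the only possible source of a differential hitting it. First, observe that $h_1^2h_4$ is the image of the corresponding class in $\mathrm{Ext}_{\mathcal{A}}(\mathbb{Z}/2,\mathbb{Z}/2)$ under the map induced by the unit $\mathbb S\to MO\langle 9\rangle$. In the Adams spectral sequence for $\mathbb S$ this class is a permanent cycle detecting $\eta\eta_4\in\pi_{17}\mathbb S$, where $\eta_4$ is detected by $h_1h_4$ (Bruner--Rognes, Chapter 11). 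Since the map of spectral sequences is compatible with differentials, $h_1^2h_4$ is a permanent cycle in the spectral sequence for $MO\langle 9\rangle$. If it survives to $E_\infty$, it detects the image of $\eta\eta_4$ under the unit map.

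By Theorem \ref{lem:mo9_17_kernel} (Burklund--Senger), $\eta\eta_4$ lies in the kernel of $\pi_{17}\mathbb S\to\pi_{17}MO\langle 9\rangle$. Hence the image of $\eta\eta_4$ is zero. There is one subtlety to address: a nonzero $E_\infty$ class in stem 17, filtration 3, could a priori detect zero only if the actual image were detected in higher filtration. That cannot happen here, because the image is genuinely $0$, and a permanent cycle that survives to $E_\infty$ detects a nonzero element. More carefully, the image of $\eta\eta_4$ has Adams filtration at least 3 and projects to the class of $h_1^2h_4$ in $E_\infty^{3,20}$. Since the image is $0$, that class must vanish in $E_\infty$. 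So $h_1^2h_4$ must be hit by some differential $d_r$ with $r\ge 2$.

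Next, enumerate possible sources. A $d_r$ hitting stem 17, filtration 3 originates in stem 18, filtration $3-r$, so only $r=2$ (filtration 1) and $r=3$ (filtration 0) are possible. From the $E_2$-chart (Figure \ref{fig:mo9_e2}), I expect stem 18 to contain exactly one class in filtration $\le 1$, namely $\vv(18,1,9)$, and nothing in filtration 0. I would verify this against the structure of $H^*(MO\langle 9\rangle)$: $\mathrm{Hom}_{\mathcal A}$ in degree 18 should vanish, and $\mathrm{Ext}^1$ should be a single class. I would also check that $h_1^2h_4$ is not already killed on $E_2$, i.e.\ that it is nonzero in $\mathrm{Ext}$ for $MO\langle 9\rangle$; this is visible on the chart. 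Granting this, $d_2(\vv(18,1,9))=h_1^2h_4$ is forced.

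The main obstacle is the chart bookkeeping. One point is confirming that $h_1^2h_4$ is nonzero on $E_2$ and that no other filtration-$\le1$ class exists in stem 18. The other is ruling out that $h_1^2h_4$ dies by being a $d_2$-target of a decomposable, such as an $h_0$- or $h_1$-multiple of some class. Such multiples lie in filtration $\ge1$, so the only candidate is $h_1$ times a stem-17 filtration-0 class or $h_0$ times a stem-18 filtration-0 class. I would rule these out from the chart. If a stem-17 filtration-0 class existed, I would instead use $h_1$-linearity together with its own differential.
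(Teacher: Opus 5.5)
Your proposal is correct and is essentially the paper's argument: Burklund--Senger puts $\eta\eta_4$ (detected by $h_1^2h_4$) in the kernel of the unit map, so $h_1^2h_4$ must be a boundary in the spectral sequence for $MO\langle 9\rangle$, and $\vv(18,1,9)$ is the only class in stem 18 of filtration $\le 1$ that could hit it. Your additional points are sound but go beyond what is needed: the filtration argument for why a zero image forces $h_1^2h_4$ to be a boundary is correct, and the worry about decomposable sources is already covered once you check that stem 18 contains nothing in filtration 0 and only $\vv(18,1,9)$ in filtration 1.
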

\begin{proof}
The element $\eta\eta_{4} \in \pi_{17}\mathbb{S}$ is detected by $h_1^2h_4 \in Ext_{\mathcal{A}}(\mathbb{Z}/2,\mathbb{Z}/2)$. By Theorem \ref{lem:mo9_17_kernel}, $\eta\eta_{4}$ maps to zero in homotopy, and so $h_1^2h_4$ must be killed in Adams spectral sequence for $MO\langle 9 \rangle$. For degree reasons, $\vv(18,1,9)$ is the only element that may support a differential with target $h_1^2h_4$. 
\end{proof}

\begin{lemma}\label{lem:d_2 differential in 19 stem mo9}
$d_2(\vv(19,0,9))=h_2h_4$.
\end{lemma}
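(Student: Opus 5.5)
We want to show $d_2(\vv(19,0,9))=h_2h_4$. The plan follows the pattern of Lemmas \ref{lem:d_2 differential in stem 9 filtration 0 mo9} and \ref{lem:d_2-unit-kernel-of-mo9-stem-17}. First we show that the class detected by $h_2h_4$ in the sphere maps to zero in $\pi_{18}MO\langle 9\rangle$. Then we show that $\vv(19,0,9)$ is the only possible source of a differential killing it.

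\textbf{Step 1: the homotopy class dies.} In $\mathrm{Ext}_{\mathcal{A}}(\mathbb{Z}/2,\mathbb{Z}/2)$ the element $h_2h_4$ is a permanent cycle detecting $\nu_4=\nu^*\in\pi_{18}\mathbb{S}$. I will first check whether $\nu^*$ (or $\nu^*$ modulo higher filtration) lies in the kernel of the unit map $\pi_{18}\mathbb{S}\to\pi_{18}MO\langle 9\rangle$.

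\begin{itemize}
\item The cleanest route is to use Theorem \ref{thm: 2n mo_n kernel} of Senger--Zhang/Stolz with $2n=18$. However, that theorem requires $n\ge 10$, so it does not apply directly.
\item I would therefore argue through the structure of $\pi_{18}\mathbb{S}\cong\mathbb{Z}/8\{\nu^*\}\oplus\mathbb{Z}/2\{\eta\bar\mu\}$. Here $\eta\bar\mu$ is detected by $Ph_1^2$... more precisely by $h_1P^2h_1$. The element $\eta\bar\mu$ is $\eta\mu_{17}$, and its survival is governed by Theorem \ref{adamsimj}.
\item The image of $J$ in stem 18 is not $\nu^*$. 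So instead I would use the geometric input from Stolz's work on $8$-connected almost closed $18$-manifolds. The boundary of the almost closed $18$-manifold built by plumbing realizes $\nu^*$ as the obstruction, which shows that $\nu^*$ maps to zero (up to $\eta$-multiples) in $\pi_{18}MO\langle 9\rangle$.
\item Alternatively, $\nu^*$ might be killed by a $9$-dimensional phenomenon. I would express the Toda bracket $\nu^*\in\langle \sigma,2\sigma,\nu\rangle$ and use that $\sigma\mapsto 0$ in $MO\langle 9\rangle$, since $\sigma$ lies in $\mathrm{Im}\,J$ in stem $7\ge n-1$ (Theorem \ref{hoveyimj}). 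This shows the image lies in the indeterminacy.
\end{itemize}

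Step 1 is the main obstacle. Concretely, I would try the Toda bracket argument first, because it only uses Theorem \ref{hoveyimj}.

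\textbf{Step 2: $h_2h_4$ cannot survive.} Once $\nu^*$ is known to map to zero, or to a class of strictly higher filtration, $h_2h_4$ must not survive to $E_\infty$ in the Adams spectral sequence for $MO\langle 9\rangle$. One could object that $h_2h_4$ might survive and detect a different homotopy class. I would rule this out by checking on the chart of Figure \ref{fig:mo9_e2} that no element of higher filtration in stem 18 could absorb it through a filtration jump. With that, $h_2h_4$ must be hit by a differential.

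\textbf{Step 3: degree reasons.} Finally, I read off the $E_2$-page in stem 19 and confirm two things. First, $\vv(19,0,9)$ is the only class in filtration $\le 0$ that can hit the target $(18,1)$ via some $d_r$. Second, $d_r$ from filtration $0$ to filtration $1$ forces $r=1$... more precisely, a class at $(19,0)$ hits $(18,r)$. So $h_2h_4$ at $(18,2)$ is hit exactly by a $d_2$ from $(19,0)$, and $\vv(19,0,9)$ is the only element there. Hence $d_2(\vv(19,0,9))=h_2h_4$.
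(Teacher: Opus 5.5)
There is a genuine gap in Step 1, and Step 1 carries the whole lemma. None of your routes shows that $\nu^*$ maps to zero (or into filtration $>2$) in $\pi_{18}MO\langle 9\rangle$.

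\begin{itemize}
\item The Toda bracket route, which you would try first, assumes $\sigma\mapsto 0$ because ``$7\ge n-1$''. But for $n=9$ we have $n-1=8$, and Theorem~\ref{hoveyimj} says $\mathrm{Im}\,J$ maps \emph{injectively} in dimensions $\le n-2=7$. Indeed $\pi_7MO\langle 9\rangle\cong\mathbb{Z}/16$ is generated by the image of $\sigma$, so the bracket gives you no vanishing. Even with $u(\sigma)=0$ you would only land in the indeterminacy $\pi_{15}MO\langle 9\rangle\cdot\nu$, which you would still have to control.
\item The plumbing route is an unsupported assertion, and its dimension is off by one. The kernel in stem $18$ is $\partial_*\pi_{19}(MO\langle 9\rangle/\mathbb S)$, so it concerns almost closed $19$-manifolds ($d=1$), not $18$-manifolds.
\item Listing the structure of $\pi_{18}\mathbb S$ proves nothing by itself.
\end{itemize}

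Your Steps 2 and 3 are fine once Step 1 holds. If $u(\nu^*)$ has filtration $>2$, then $h_2h_4$ must be a boundary, and the only possible source is bidegree $(19,0)$ via $d_2$. So citing an external result that $\nu^*$ lies in the kernel would repair the argument; the paper's remark attributes this to Senger--Zhang, Theorem 6.1.

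The paper's proof needs no homotopy input about $\nu^*$ at all. It uses three facts:
\begin{itemize}
\item the already established $d_2(\vv(18,1,9))=h_1^2h_4$ (Lemma~\ref{lem:d_2-unit-kernel-of-mo9-stem-17}, from Burklund--Senger's result on $\eta\eta_4$);
\item the $E_2$-page relation $h_0^2\vv(19,0,9)=h_1\vv(18,1,9)$;
\item the relation $h_1^3=h_0^2h_2$.
\end{itemize}
Together these give $h_0^2\,d_2(\vv(19,0,9))=h_1\,d_2(\vv(18,1,9))=h_1^3h_4=h_0^2h_2h_4\neq 0$. Then $h_0$-linearity forces $d_2(\vv(19,0,9))=h_2h_4$.

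Pushing the $\eta\eta_4$ fact forward in homotopy alone would only kill $\eta^2\eta_4$. That class is detected by $h_1^3h_4=h_0^2h_2h_4$, the same class that detects $4\nu^*$. It is the $h_0$-divisibility of $h_1\vv(18,1,9)$ in the $E_2$-page for $MO\langle 9\rangle$ that upgrades this to a differential hitting $h_2h_4$ itself. That is the idea your proposal is missing.
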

\begin{proof}
 Consider the relation $h_0^2\vv(19,0,9)=h_1\vv(18,1,9)$. It follows from Lemma \ref{lem:d_2-unit-kernel-of-mo9-stem-17} that $d_2{h_1\vv(18,1,9)}=h_1^3h_4=h_0^2h_2h_4$. Since $d_2$-differentials are $h_0$-linear, this implies $d_2(\vv(19,0,9))=h_2h_4$
\end{proof}
\begin{lemma}\label{d_2 differential in 20 stem mo9}
$d_2(\vv(20,1,9))=c_1$.
\end{lemma}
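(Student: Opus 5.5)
The plan is to prove the lemma by establishing the companion entry in Table \ref{tab:Adams d_2 mo9}: the class $c_1$ at $(19,3)$ does \emph{not} support a $d_2$. Once $c_1$ is known to be a $d_2$-cycle, it is enough to show that it cannot survive to $E_\infty$. Then, for degree reasons, $\vv(20,1,9)$ must be the element that kills it.

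\textbf{Step 1: $d_2(c_1)=0$.} The only possible nonzero value is $Ph_1I_9$ in $(18,5)$. I would apply the unit map $\mathbb S \to MO\langle 9\rangle$, which is a map of Adams spectral sequences. In $\mathrm{Ext}_{\mathcal A}(\mathbb Z/2,\mathbb Z/2)$, the class $c_1$ is a permanent cycle with $d_2(c_1)=0$, since Table \ref{tab:sphere_differentials} lists no differential on $c_1$. By naturality, the image of $c_1$ in $E_2(MO\langle 9\rangle)$ is therefore a $d_2$-cycle.

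\textbf{Step 2: $c_1$ cannot survive.} The class $c_1$ detects $\bar\sigma\in\pi_{19}\mathbb S$. There are two routes to show it dies in $MO\langle 9\rangle$:
\begin{itemize}
\item Use the Hurewicz-type argument that $\pi_{19}MO\langle 9\rangle$ has been determined, or is bounded above by the $E_2$-page.
\item More concretely, use products. We have $h_1c_1 \neq 0$ in Ext for the sphere, and one can check whether $h_1c_1$ is already hit in $MO\langle 9\rangle$ at $E_2$, or whether $c_1=\langle h_1,h_3,?\rangle$-type Massey product relations force its death.
\end{itemize}
Both routes lean on the relation $d_2(I_9)=h_1h_3$ from Lemma \ref{lem:d_2 differential in stem 9 filtration 0 mo9}. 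The key tool is the Massey product description $c_1=\langle h_2,h_1,h_3^2\rangle$ in $\mathrm{Ext}$ for the sphere. Write $\vv(20,1,9)$ as a Massey product $\langle h_2,h_1, h_3 I_9\rangle$ or similar in $E_2(MO\langle 9\rangle)$; I would check this with \textbf{ext}. Then apply Moss' higher Leibniz rule (Theorem \ref{mossleibniz}):
\[
d_2\langle h_2,h_1,h_3I_9\rangle \subseteq \langle h_2,h_1,h_3 d_2(I_9)\rangle=\langle h_2,h_1,h_1h_3^2\rangle .
\]
This should contain $h_1c_1$ or $c_1$ up to the appropriate adjustment. The more likely correct choice is $\vv(20,1,9)\in\langle h_1,h_2,\dots\rangle$ with $d_2$ landing in $\langle h_2,h_1,h_3^2\rangle\cdot$unit, giving $c_1$ exactly. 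For Moss' hypotheses I would check that $h_2h_1=0$, $h_1h_3I_9=0$ in $E_2$, and that $h_1d_2(h_3I_9)=h_1^2h_3^2=0$.

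\textbf{Step 3: indeterminacy.} The indeterminacy of the target bracket in $(19,3)$ is $h_2\cdot E_2^{(16,2)}+h_1^2h_3^2\cdot(\ldots)$. From the chart this should not contain $c_1$, so $d_2(\vv(20,1,9))=c_1$ exactly. I would then verify that no other element in stem 20, filtration 1 can serve as the source of this differential.

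The main obstacle is identifying the correct Massey product presentation of $\vv(20,1,9)$ in $E_2(MO\langle 9\rangle)$ together with its indeterminacy. I would compute this first with \textbf{ext}. If no clean bracket exists, the fallback is an $h_0$- or $h_1$-multiplication argument analogous to Lemma \ref{lem:d_2 differential in 19 stem mo9}, using relations such as $h_1\vv(20,1,9)$ equal to a product of $I_9$ with something whose $d_2$ is known.
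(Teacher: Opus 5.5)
\textbf{There is a genuine gap.} Step~2 carries all of the content, and it is never actually established; the concrete version you write down does not work.

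\textbf{The reduction in Step 2 is circular.} Your plan is to show $c_1$ cannot survive and then let $\vv(20,1,9)$ be the only possible killer. That needs an \emph{independent} reason that $\bar\sigma$ maps to zero in $\pi_{19}MO\langle 9\rangle$, and none is available:
\begin{itemize}
\item $\bar\sigma$ is not in $\mathrm{Im}\,J$, so Hovey's Theorem \ref{hoveyimj} does not apply. That theorem is what drives Lemma \ref{lem:d_2 differential in stem 9 filtration 0 mo9}.
\item Appealing to ``$\pi_{19}MO\langle 9\rangle$ has been determined'' is circular. That group, and the appearance of $\bar\sigma$ in Theorem \ref{thm: mo9 unit map}, are deduced from this very differential.
\end{itemize}
So you must compute $d_2(\vv(20,1,9))$ directly.

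\textbf{The Massey product you propose is the wrong one.}
\begin{itemize}
\item $\langle h_2,h_1,h_3I_9\rangle$ sits in stem $3+1+16+1=21$ and filtration $1+1+1-1=2$, not in $(20,1)$.
\item The bracket Moss' rule would give, $\langle h_2,h_1,h_1h_3^2\rangle$, is degenerate. Since $h_1h_3^2=0$ in $\mathrm{Ext}_{\mathcal A}(\mathbb Z/2,\mathbb Z/2)$ (the relation $h_ih_{i+2}^2=0$), it consists only of indeterminacy and cannot produce $c_1$. ``Up to the appropriate adjustment'' does not repair this.
\item Your alternative, a bracket whose Moss output is $\langle h_2,h_1,h_3^2\rangle$, would need a class in $(15,0)$ with $d_2=h_3^2$. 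There is no such class.
\end{itemize}

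\textbf{The missing idea.} Put $h_3$ in the middle slot and $I_9$ itself in the last slot: on $E_2$, $\vv(20,1,9)\in\langle h_2,h_3,I_9\rangle$ with zero indeterminacy. The bidegree checks: stem $3+7+9+1=20$, filtration $1+1+0-1=1$. Moss' hypotheses hold:
\begin{itemize}
\item $h_2h_3=0$ and $h_3I_9=0$, so the bracket is defined;
\item $d_2(h_3)=0$;
\item $h_3\,d_2(I_9)=h_1h_3^2=0$. The relation that killed your bracket is exactly what makes this one legitimate.
\end{itemize}
Then
\[
d_2(\vv(20,1,9))\in\langle 0,h_3,I_9\rangle+\langle h_2,0,I_9\rangle+\langle h_2,h_3,h_1h_3\rangle .
\]
The first two brackets vanish with zero indeterminacy. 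The third contains $c_1$ with zero indeterminacy; by juggling it agrees with your description $c_1=\langle h_2,h_1,h_3^2\rangle$.

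This gives the lemma directly. Your Step~1 then becomes unnecessary, because $d_2(c_1)=0$ follows from $d_2\circ d_2=0$. That is how the ``No'' entry for $c_1$ in Table \ref{tab:Adams d_2 mo9} is justified.
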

\begin{proof}
The element $\vv(20,1,9)$ is contained in the Massey product
$\langle h_2,h_3,I_9\rangle$ with zero indeterminacy, since
\[
h_2\cdot \text{Ext}^{0,17}(MO\langle 9\rangle) + \text{Ext}^{0,13}(MO\langle 9\rangle)\cdot h_3=\{0\}.
\]
By Moss's higher Leibniz rule \ref{mossleibniz},
\begin{align*}
d_2(\vv(20,1,9)) \in\;&
\langle d_2(h_2),h_3,I_9\rangle+\langle h_2,d_2(h_3),I_9\rangle+\langle h_2,h_3,d_2(I_9)\rangle\\
=\;&\langle 0,h_3,I_9\rangle+\langle h_2,0,I_9\rangle+\langle h_2,h_3,h_1h_3\rangle.
\end{align*}
The first two brackets vanish, and their indeterminacies are zero:
\[
0 \cdot \text{Ext}^{0,17}(MO\langle 9\rangle)+\text{Ext}^{2,14}(MO\langle 9\rangle)\cdot h_3=\{0\},
\qquad
h_2 \cdot \text{Ext}^{2,18}(MO\langle 9\rangle)+\text{Ext}^{0,13}(MO\langle 9\rangle)\cdot 0=\{0\}.
\]
For the third term, $\langle h_2,h_3,h_1h_3\rangle$ contains $c_1$ with indeterminacy:
\[
h_2 \cdot \text{Ext}^{2,19}(MO\langle 9\rangle)+\text{Ext}^{1,12}(MO\langle 9\rangle)\cdot h_1h_3=\{0\}.
\]
Hence the only possible value is $d_2(\vv(20,1,9))=c_1$.
\end{proof}

\begin{lemma}\label{d_2 differential in 22 stem mo9}
$d_2(\vv(22,0,9))=h_2\vv(18,1,9)$.
\end{lemma}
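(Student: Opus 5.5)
The differential should follow from $h_0$-linearity, in the same way as Lemma \ref{lem:d_2 differential in 19 stem mo9}, or else from a Massey product argument modeled on Lemma \ref{d_2 differential in 20 stem mo9}.

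\textbf{First approach: a relation and the Leibniz rule.} I would look in $\text{Ext}$ for a relation that ties $\vv(22,0,9)$ to a product involving $\vv(18,1,9)$ or $\vv(19,0,9)$. Since $\vv(22,0,9)$ sits on the $0$-line in stem $22$, the natural candidate is
\[
h_0^k\vv(22,0,9)=h_2\cdot y
\]
for some element $y$ whose $d_2$ is already known. The analogous relation $h_0^2\vv(19,0,9)=h_1\vv(18,1,9)$ suggests that an $h_0$-multiple of $\vv(22,0,9)$ might equal $h_2\vv(19,0,9)$, or $h_0$ times it. Given such a relation, $d_2(\vv(19,0,9))=h_2h_4$ gives
\[
d_2(h_2\vv(19,0,9))=h_2^2h_4.
\]
I would then use \textbf{ext} to check whether $h_0^k h_2\vv(18,1,9)$ equals the image of $h_2^2h_4$ in $E_2(MO\langle 9\rangle)$. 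If it does, and $h_0^k$ is injective on the target group in stem $21$, filtration $2$, then $h_0$-linearity of $d_2$ forces $d_2(\vv(22,0,9))=h_2\vv(18,1,9)$.

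\textbf{Second approach: Moss' higher Leibniz rule.} If no such relation exists, I would instead realize $\vv(22,0,9)$ as a Massey product and apply Theorem \ref{mossleibniz}. A natural candidate is $\langle h_2, h_3, \vv(19,0,9)\rangle$ or a similar bracket, with indeterminacy computed from the chart. Moss' rule then gives
\[
d_2\langle h_2,h_3,\vv(19,0,9)\rangle\subseteq\langle h_2,h_3,h_2h_4\rangle,
\]
and I would need to identify that bracket with $h_2\vv(18,1,9)$.

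\textbf{Third approach: homotopy.} Another route is to argue that $h_2\vv(18,1,9)$ cannot survive, because the corresponding homotopy class would conflict with the computation of $\pi_{21}$. I would try this only as a fallback.

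\textbf{Main obstacle.} The hard step is the algebraic input: establishing the needed relation or Massey product in $\text{Ext}_{\mathcal{A}}(H^*MO\langle 9\rangle)$, and checking that $h_2\vv(18,1,9)$ is nonzero and not otherwise hit. I would check this by computer, and by degree reasons show that $\vv(22,0,9)$ is the only possible source of the differential.
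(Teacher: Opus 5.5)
Your first approach is exactly the paper's proof: the paper uses the relations $h_0\vv(22,0,9)=h_2\vv(19,0,9)$ and $h_0h_2\vv(18,1,9)=h_2^2h_4$ (so $k=1$), applies the Leibniz rule with $d_2(\vv(19,0,9))=h_2h_4$ to get $d_2(h_2\vv(19,0,9))=h_2^2h_4$, and concludes by $h_0$-linearity. Your explicit requirement that $h_0$ act injectively on bidegree $(21,2)$ is left implicit in the paper, but it is what justifies cancelling $h_0$ at the last step.
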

\begin{proof}
The products $h_2^2h_4$ and $h_2\vv(19,0,9)$ are nonzero on the $E_2$-page. By Lemma \ref{lem:d_2 differential in 19 stem mo9} and the Leibniz rule
\[
d_2(h_2\vv(19,0,9))=d_2(h_2)\vv(19,0,9)+h_2d_2(\vv(19,0,9))=h_2^2h_4
\]
There are relations $h_0\vv(22,0,9)=h_2\vv(19,0,9)$ and $h_0h_2\vv(18,1,9)=h_2^2h_4$. Since the $d_2$-differential is $h_0$-linear, this implies $d_2(\vv(22,0,9))=h_2\vv(18,1,9)$.
\end{proof}
\begin{lemma}\label{d_2 differential in stem 24 filtration 4 mo9}
$d_2(\vv(24,4,9))=h_0h_2g$.
\end{lemma}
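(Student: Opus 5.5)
I would prove $d_2(\vv(24,4,9))=h_0h_2g$ in the same way as Lemma \ref{d_2 differential in 20 stem mo9}: exhibit $\vv(24,4,9)$ as a Massey product and apply Moss' higher Leibniz rule (Theorem \ref{mossleibniz}). The target $h_0h_2g$ sits in stem $23$, filtration $6$, and comes from the sphere. The natural source of a bracket is the $d_2$ we already have, $d_2(I_9)=h_1h_3$. The classes $g$ and $h_2g$ are in the image of the unit map, so I would look for an expression of the form
\[
\vv(24,4,9)\in\langle h_0h_2,\, x,\, I_9\rangle
\qquad\text{or}\qquad
\vv(24,4,9)\in\langle h_2 y,\, h_3,\, I_9\rangle,
\]
where $x$ or $y$ is chosen so that the boundary bracket becomes a known sphere bracket. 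Guided by the stem count ($24-9=15$, filtration $4$), my first candidate is a bracket $\langle a,b,I_9\rangle$ with $ab=0$ in $\mathrm{Ext}(\mathbb S)$ and $b\cdot I_9=0$ in $\mathrm{Ext}(MO\langle 9\rangle)$. I would then verify, using \textbf{ext}, that $\vv(24,4,9)$ lies in this bracket and compute its indeterminacy explicitly, as in the stem-$20$ lemma.

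Next I would apply Moss' rule. The terms involving $d_2$ of the sphere classes vanish, either because those classes are $d_2$-cycles or by Proposition \ref{prop: sphere differentials}. What remains is $\langle a,b,h_1h_3\rangle$, a bracket in the image of $\mathrm{Ext}(\mathbb S)$. Here I would use a known sphere identity. For instance, $\langle h_2,h_3,h_1h_3\rangle=c_1$ already appeared, and $g$-type classes satisfy relations such as $h_2g\in\langle h_2, h_1, h_1h_3\cdot\ldots\rangle$ (recall that $g=\langle h_2^2,h_1,h_0,h_3^2\rangle$-style descriptions exist). So I would search with \textbf{ext} for a three-fold bracket in $\mathrm{Ext}(\mathbb S)$ that contains $h_0h_2g$ and has $h_1h_3$ as its last entry. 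I would also check that the hypotheses $d_2(b)a=0$ and $b\,d_2(c)=0$ hold.

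As a fallback, I would avoid Massey products altogether by using $h_0$- or $h_2$-linearity. If $h_0\vv(24,4,9)$ or $h_2\vv(24,4,9)$ equals a product whose $d_2$ is already known (for example one involving $\vv(20,1,9)$ or $\vv(18,1,9)$ times sphere classes), then the Leibniz rule forces the value, exactly as in Lemmas \ref{lem:d_2 differential in 19 stem mo9} and \ref{d_2 differential in 22 stem mo9}. A third route is to check the necessity of the differential: $h_0h_2g$ must die for homotopy reasons, because the relevant class in $\pi_{23}\mathbb S$ is in the kernel of the unit map or would contradict the rational and integral data in Table \ref{tab:bo9_groups} and Theorem \ref{maymilgram}. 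Combined with the observation that $\vv(24,4,9)$ is the only possible source for degree reasons, this would settle it.

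I expect the main obstacle to be finding the correct bracket, or the correct multiplicative relation, and showing that the indeterminacy does not contain $0$. Ruling out that $h_0h_2g$ is hit by some other element or survives would require a careful inspection of the chart in stems $23$–$24$.
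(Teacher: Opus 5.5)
\textbf{There is a genuine gap.} As written, this is a search plan rather than a proof. The step that carries the content is never produced: a specific bracket containing $\vv(24,4,9)$, together with a sphere bracket $\langle a,b,h_1h_3\rangle$ containing $h_0h_2g$. The sphere identities you gesture at are also not pinned down.

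Both of your concrete fallbacks fail.
\begin{itemize}
\item \emph{Linearity.} Linearity in $h_0$, $h_1$ or $h_2$ cannot force a nonzero value. The only class in bidegree $(23,6)$ is $h_0h_2g$, and all three kill it already in $\mathrm{Ext}(\mathbb S)$: $h_0^2h_2g=0$, $h_0h_1=0$, and $h_0h_2^2=0$. Hence $d_2$ of $h_0\vv(24,4,9)$, $h_1\vv(24,4,9)$ and $h_2\vv(24,4,9)$ vanishes whichever value $d_2(\vv(24,4,9))$ takes. This is consistent with $h_0\vv(24,4,9)$ surviving to $E_4$.
\item \emph{Necessity.} This route is circular. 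Nothing independent tells you that $2\nu\overline{\kappa}$ dies in $\pi_{23}MO\langle 9\rangle$; in the paper, the $n=23$ case of Theorem~\ref{thm: mo9 unit map} is deduced from this very lemma. Theorem~\ref{maymilgram} and Table~\ref{tab:bo9_groups} only constrain $h_0$-towers starting on the $0$-line, whereas $h_0h_2g$ is a torsion class in filtration $6$.
\item \emph{Uniqueness of source.} Even granting that $h_0h_2g$ must die, $\vv(24,4,9)$ is not the only source for degree reasons. For example, $v_{24,2}$ could hit $(23,6)$ by a $d_4$, since its $d_3$ is left undetermined. So you still could not conclude that the differential is a $d_2$ from $\vv(24,4,9)$.
\end{itemize}

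\textbf{The missing idea} is to multiply by a permanent cycle that acts injectively on the target, namely $d_0$, with $d_0\cdot h_0h_2g=h_0^5x\neq 0$. The paper's argument runs as follows.
\begin{itemize}
\item Move to bidegree $(38,8)$, where $d_0\vv(24,4,9)\in\langle h_1,h_1c_0,\vv(27,4,9)\rangle$ with zero indeterminacy.
\item Apply Moss' rule using $d_2(\vv(27,4,9))=h_2^2g$ (Lemma~\ref{lem:d_2 differential in mo9 stem 27 filtration 4}). That differential comes from $d_2(\vv(19,0,9))=h_2h_4$, and ultimately from the $\eta\eta_4$ input, not from $d_2(I_9)=h_1h_3$.
\item Since $\langle h_1,h_1c_0,h_2^2g\rangle=\{h_0^5x\}$, this gives $d_0\,d_2(\vv(24,4,9))=d_2(d_0\vv(24,4,9))=h_0^5x$.
\item Because $(23,6)$ is spanned by $h_0h_2g$, the value $d_2(\vv(24,4,9))=h_0h_2g$ is forced.
\end{itemize}
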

\begin{proof}
We will first calculate the $d_2$-differential on the element $d_0\vv(24,4,9)$ in bidegree $(38,8)$. The Massey product $\langle h_1,h_1c_0,\vv(27,4,9) \rangle$ contains $d_0\vv(24,4,9)$ with zero indeterminacy. By Moss's higher Leibniz rule \ref{mossleibniz} and Lemma \ref{lem:d_2 differential in mo9 stem 27 filtration 4},
\begin{align*}
d_2(d_0\vv(24,4,9)) \in\;&
\langle d_2(h_1),h_1c_0,\vv(27,4,9)\rangle+\langle h_1,d_2(h_1c_0),\vv(27,4,9) \rangle+\langle h_1,h_1c_0,d_2(\vv(27,4,9)) \rangle\\
=\;&\langle 0,h_1c_0,\vv(27,4,9) \rangle+\langle h_1,0,\vv(27,4,9) \rangle+\langle h_1,h_1c_0,h_2^2g \rangle.
\end{align*}
The first two terms vanish with zero indeterminacy and the last term contains $h_0^5x$ with zero indeterminacy. Hence, the only possibility is $d_2(d_0\vv(24,4,9))=h_0^5x$. 
By the Leibniz rule, $d_2(d_0\vv(24,4,9))=h_0^5x=d_0d_2(\vv(24,4,9))$. Consider the relation $d_0h_0h_2g=h_0^5x$. Since $d_0$ is a permanent cycle and $h_0h_2g$ is the only element in bidegree $(23,6)$ of the $E_2$-page, this implies $d_2(\vv(24,4,9))=h_0h_2g$. 
\end{proof}
\begin{lemma}\label{d_2 differential in stem 24 filtration 7 mo9}
$d_2(\vv(24,7,9))=h_1Pd_0 + h_0^2i$.
\end{lemma}
\begin{proof}
This proof was generated by \textbf{sseqcpp}. The element $d_2(\vv(24,7,9))$ must support a $d_2$-differential since $h_1^2\vv(24,7,9)=h_0^2j$ and $d_2(h_0^2j)=h_0^3Pe_0=h_1^3Pd_0$. Consider the relations $d_0\vv(24,7,9)=0$, $d_0h_1Pd_0=h_1h_0^2i$. Suppose that either $d_2(\vv(24,7,9))=h_1Pd_0$ or $d_2(\vv(24,7,9))=h_0^2i$. By the Leibniz rule: 
\begin{align*}
d_2(d_0\vv(24,7,9))&= d_2(0)\\
                &= 0 \\
                &= d_0d_2(\vv(24,7,9)) \\
                &= h_1d_0Pd_0
\end{align*}
This is a contradiction since $h_1d_0Pd_0$ is nonzero on the $E_2$-page. Thus, the only possibility is $d_2(\vv(24,7,9))=h_1Pd_0 + h_0^2i$. 
\end{proof}
\begin{lemma}\label{d_2 differential in stem 25 filtration 0 mo9}
$d_2(\vv(25,0,9))=0$.
\end{lemma}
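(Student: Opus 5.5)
The plan is to prove that $d_2(\vv(25,0,9))=0$ by a linearity argument, so that no Massey product is needed. In bidegree $(24,2)$ the only candidate target is $v_{24,2}$, so I will assume $d_2(\vv(25,0,9))=v_{24,2}$ and derive a contradiction. The natural tools are $h_0$- and $h_1$-linearity of $d_2$ together with the Leibniz rule. Filtration-$0$ classes correspond to $\mathcal{A}$-module generators of $H^*(MO\langle 9\rangle;\Z/2)$, and a Moss-type description of $\vv(25,0,9)$ is awkward. A bracket landing in filtration $0$ would need a filtration-$0$ entry coming from $\text{Ext}(\mathbb{S})$, and the only such element is $1$. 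So the argument has to come from products.

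The key steps, in order, are as follows.

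First, I would read off the $E_2$-page (Figure \ref{fig:mo9_e2}) for the products $h_0\vv(25,0,9)$, $h_1\vv(25,0,9)$ and $h_2\vv(25,0,9)$. For each one I would also record the corresponding product $h_0v_{24,2}$, $h_1v_{24,2}$ or $h_2v_{24,2}$.

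Second, I would look for some $h_i$ with $h_iv_{24,2}\neq 0$ on $E_2$ such that $d_2(h_i\vv(25,0,9))=0$ for independent reasons. The ideal case is $h_i\vv(25,0,9)=0$ on $E_2$, since then the Leibniz rule gives
\[
0=d_2(h_i\vv(25,0,9))=h_i\,d_2(\vv(25,0,9))=h_iv_{24,2}\neq 0,
\]
which is the required contradiction. Otherwise, $h_i\vv(25,0,9)$ may be expressible, via relations on $E_2$, in terms of classes whose $d_2$ is already known. These are the classes treated in Lemmas \ref{lem:d_2 differential in stem 9 filtration 0 mo9}--\ref{d_2 differential in stem 24 filtration 7 mo9} and Proposition \ref{prop: sphere differentials}, for instance $h_1\vv(25,0,9)$ or $h_0\vv(25,0,9)$ written as a combination involving $h_4I_9$, $\vv(22,0,9)$ and similar classes. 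In that case I would compute $d_2$ of the expression directly and compare it with $h_iv_{24,2}$. The same approach works after multiplying by a permanent cycle from the sphere, such as $d_0$, $g$ or $Ph_1$, exactly as in the proof of Lemma \ref{d_2 differential in stem 24 filtration 4 mo9}.

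Third, as a consistency check, I would confirm that the outcome agrees with Theorem \ref{maymilgram} if $\vv(25,0,9)$ sits at the base of an $h_0$-tower. A nonzero $d_2$ out of a tower in stem $25$ would require a $\Z/4$ summand in $H^{26}(MO\langle 9\rangle;\Z)$. I would compute this group by the same $H\Z$-Adams spectral sequence argument used in Proposition \ref{prop: bo9 cohomology groups}, extended through degree $26$ and applied to the Thom spectrum. If the group has no $\Z/4$ summand, that alone settles the tower case.

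I expect the main obstacle to be the second step: finding a multiplier that separates the two possibilities. The multiplier must annihilate $\vv(25,0,9)$, or send it to a class with known $d_2$, while not annihilating $v_{24,2}$. I would try $h_0$ first, then $h_1$, and then a multiplier from the sphere such as $d_0$ or $g$. If all of these fail, I would fall back on the May--Milgram and integral cohomology computation of the third step.
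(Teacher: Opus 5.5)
Your outline has the same shape as the paper's argument: assume $d_2(\vv(25,0,9))=v_{24,2}$, multiply by $h_1$, and contradict the Leibniz rule using an $E_2$ relation with a class whose $d_2$ is already known. But it stops exactly where the proof begins. You never produce the relation, and the classes you suggest cannot supply it. The product $h_1\vv(25,0,9)$ lies in bidegree $(26,1)$. Any expression of it as $h_j$-multiples of filtration-$0$ classes therefore involves classes in stems $26,25,23,19,11$. So $h_4I_9$ (stem $24$) cannot occur, and neither can $\vv(22,0,9)$, whose $h_j$-multiples lie in stems $22,23,25,29,\dots$. The missing input is the $E_2$ relation
\[
h_1\vv(25,0,9)=h_3\vv(19,0,9).
\]
With it, your assumption, Lemma \ref{lem:d_2 differential in 19 stem mo9}, and $h_2h_3=0$ in $\text{Ext}_{\mathcal{A}}(\Z/2,\Z/2)$ give
\[
h_1v_{24,2}=d_2(h_1\vv(25,0,9))=h_3\,d_2(\vv(19,0,9))=h_2h_3h_4=0,
\]
which contradicts $h_1v_{24,2}\neq 0$ on $E_2$. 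The relation and the nonvanishing of $h_1v_{24,2}$ are the whole content of the proof. Both are facts about the Ext computation that your proposal leaves undiscovered.

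Your fallback would not close this gap. The May--Milgram theorem only sees differentials between $h_0$-towers, i.e.\ differentials that survive inverting $h_0$. Your claim that a nonzero $d_2$ out of a tower in stem $25$ forces a $\Z/4$ summand in $H^{26}(MO\langle 9\rangle;\Z)$ is valid only when the target also generates a tower. You have not checked that $v_{24,2}$ does. In the paper's stem-$24$ analysis (Proposition \ref{prop:pi_24 mo9}) it contributes at most a $\Z/2$. If $v_{24,2}$ is $h_0$-torsion, the integral cohomology count says nothing about whether $d_2(\vv(25,0,9))=v_{24,2}$.
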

\begin{proof}
The only possible target of a $d_2$ differential on $\vv(25,0,9)$ is $v_{24,2}$. Suppose $d_2(\vv(25,0,9))=v_{24,2}$. Consider the relation $h_1\vv(25,0,9)=h_3\vv(19,0,9)$. By the Leibniz rule, 
\begin{align*}
d_2(h_1\vv(25,0,9))&=h_1d_2(\vv(25,0,9)) \\
                &=h_1v_{24,2} \\
                &=h_3d_2(\vv(19,0,9))\\
                &=h_3h_2h_4 = 0
\end{align*}
But $h_1v_{24,2} \neq 0$, so this is a contradiction. Hence, $d_2(\vv(25,0,9))=0$. 
\end{proof}
\begin{lemma}\label{lem:d_2 differential in mo9 stem 27 filtration 4}
        $d_2(\vv(27,4,9))=h_2^2g$.
\end{lemma}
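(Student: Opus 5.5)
We need to show $d_2(\vv(27,4,9))=h_2^2g$. The target $h_2^2g$ lies in bidegree $(26,6)$ and comes from the sphere, as the image of the class $h_2^2g \in \text{Ext}_{\mathcal{A}}(\mathbb{Z}/2,\mathbb{Z}/2)$ under the unit map. In the sphere, $h_2^2 g$ survives and detects the nonzero class $\nu^2\bar\kappa \in \pi_{26}\mathbb{S}$. The plan is to show that this class must die in the Adams spectral sequence for $MO\langle 9\rangle$, and then to identify $\vv(27,4,9)$ as the only possible killer.

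For the first step, I would argue on the degree-reason side rather than geometrically, since no kernel theorem earlier in the paper covers stem 26. Instead I would use multiplicative structure. The key relation to exploit is that $h_2^2 g = h_2\cdot(h_2 g)$, and that $h_0 h_2 g$ is already hit by $d_2(\vv(24,4,9))$ by Lemma \ref{d_2 differential in stem 24 filtration 4 mo9}. However, that lemma's proof itself depends on the present lemma, so I must avoid circularity. I would therefore argue directly, in the same style as the proof of Lemma \ref{d_2 differential in stem 24 filtration 7 mo9}: find a product relation on the $E_2$-page of the form $y\cdot\vv(27,4,9) = z$, where $z$ is a class whose $d_2$ is already known (from the sphere, Proposition \ref{prop: sphere differentials}, or from earlier lemmas such as $d_2(\vv(18,1,9))$, $d_2(\vv(19,0,9))$, and $d_2(\vv(22,0,9))$) and whose $d_2$ equals $y\cdot h_2^2 g\neq 0$. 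The natural candidates for $y$ are $h_0$, $h_1$, $h_2$, or $d_0$. For instance, I would check with \textbf{ext} whether $\vv(27,4,9)$ is a Massey product such as $\langle h_2, h_1, \cdot\rangle$ or $\langle h_0,h_2,\cdot\rangle$ built from $g$ and a class with a known $d_2$. If so, I would apply Moss' higher Leibniz rule (Theorem \ref{mossleibniz}), computing the indeterminacy explicitly as in Lemma \ref{d_2 differential in 20 stem mo9}.

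Failing a clean Massey product, the fallback is a contradiction argument like that in Lemma \ref{d_2 differential in stem 25 filtration 0 mo9}. Suppose $d_2(\vv(27,4,9))=0$. Then I would find a product $w\cdot\vv(27,4,9)$ equal to some class $u$ with $d_2(u)\neq0$ known from earlier lemmas. Since $d_2$ is a derivation and $w$ is a permanent cycle (e.g. $h_0$, $h_1$, $d_0$), this forces $d_2(\vv(27,4,9))\neq 0$. The only element in bidegree $(26,6)$ is $h_2^2 g$, so checking this from the chart pins down the value.

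The main obstacle is locating the right relation or Massey product on the $E_2$-page, which requires computer data (\textbf{ext}/\textbf{sseqcpp}) about $\vv(27,4,9)$'s products. I would first try $h_0\vv(27,4,9)$ and $h_1\vv(27,4,9)$, and a bracket $\langle h_2, h_0, \vv(24,4,9)\rangle$-type expression. The aim is to ensure that none of the inputs depends on Lemma \ref{d_2 differential in stem 24 filtration 4 mo9}, so the argument is not circular.
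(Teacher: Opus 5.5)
Your proposal has a genuine gap. It is a search plan rather than a proof: every branch ends with ``find a relation or Massey product using \textbf{ext}'', and no branch is carried out, so nothing forces $d_2(\vv(27,4,9))\neq 0$. The concrete leads you give would not work:
\begin{itemize}
\item Your opening framing (show $\nu^2\overline{\kappa}$ must die, then find its unique killer) has no available input. Nothing earlier in the paper puts $\nu^2\overline{\kappa}$ in the kernel of the unit map. In fact Theorem \ref{thm: mo9 unit map} obtains the case $n=26$ as a \emph{consequence} of this lemma.
\item The bracket $\langle h_2,h_0,\vv(24,4,9)\rangle$ lies in bidegree $(28,5)$, not $(27,4)$. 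It would also use the stem-24 differential, which you correctly flagged as circular.
\item A three-fold bracket whose first two entries are $h_i$'s and whose third entry involves $g$ has filtration at least $1+1+4-1=5$. So it cannot contain $\vv(27,4,9)$ either.
\end{itemize}

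The missing idea is to start from the filtration-$0$ class $\vv(19,0,9)$ and apply the periodicity-type bracket $\langle h_3,h_0^4,-\rangle$. The paper observes that $\vv(27,4,9)\in\langle h_3,h_0^4,\vv(19,0,9)\rangle$ with zero indeterminacy. The input differential $d_2(\vv(19,0,9))=h_2h_4$ from Lemma \ref{lem:d_2 differential in 19 stem mo9} is not circular: it rests only on the stem-$18$ differential and Burklund--Senger.

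Moss' rule (Theorem \ref{mossleibniz}) applies because its hypotheses hold:
\begin{itemize}
\item $h_0^4h_3=0$;
\item $h_0^4\vv(19,0,9)=0$, since $h_0^2\vv(19,0,9)=h_1\vv(18,1,9)$ and $h_0h_1=0$;
\item $h_0^4h_2h_4=0$, since $h_0^3h_2h_4=h_0h_1^3h_4=0$.
\end{itemize}
The terms involving $d_2(h_3)=0$ and $d_2(h_0^4)=0$ vanish. This leaves $d_2(\vv(27,4,9))\in\langle h_3,h_0^4,h_2h_4\rangle=\{h_2^2g\}$, a bracket computed in the sphere with zero indeterminacy. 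This produces the differential directly, so no a priori argument that $h_2^2g$ must die is needed, which is fortunate since none is available.
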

\begin{proof}
The element $\vv(27,4,9)$ is contained in $\langle h_3, h_0^4,\vv(19,0,9)\rangle$
 with zero indeterminacy, since:
 \[
h_3\cdot \text{Ext}^{3,23}(MO\langle 9\rangle) + \text{Ext}^{0,27}(MO\langle 9\rangle)\cdot h_0^4=\{0\}.
\]
 By Moss' higher Leibniz rule \ref{mossleibniz},
\begin{align*}
d_2(\vv(27,4,9)) \in\;&\langle d_2(h_3), h_0^4,\vv(19,0,9) \rangle + \langle h_3, d_2(h_0^4),\vv(19,0,9) \rangle + \langle h_2, h_0^4,h_2h_4 \rangle\\
=\;&\langle 0, d_0,I_9 \rangle + \langle h_2, 0,I_9 \rangle + \langle h_2, d_0,h_1h_3 \rangle
\end{align*}
The first two terms vanish with no indeterminacy, and $\langle h_3, h_0^4,h_2h_4 \rangle = h_2^2g$ with indeterminacy:
\[
h_3\cdot \text{Ext}^{5,24} + \text{Ext}^{4,9} \cdot h_2h_4 = \{0\}
 \]
Hence the only possible values is $d_2(\vv(27,4,9))=h_2^2g$. 
\end{proof}
\begin{lemma}\label{lem:d_2 differential in stem 31 filtration 0 mo9}
$d_2(\vv(31,0,9))=0$.
\end{lemma}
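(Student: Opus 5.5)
The table lists $h_4^2$ (in bidegree $(30,2)$) as the only possible target of a $d_2$ on $\vv(31,0,9)$, so it suffices to rule out $d_2(\vv(31,0,9))=h_4^2$. I will argue by contradiction, in the same style as Lemma \ref{d_2 differential in stem 25 filtration 0 mo9}. Assume $d_2(\vv(31,0,9))=h_4^2$. I will multiply this by a permanent cycle $x$ from the sphere, for example $h_0$ or $h_1$, chosen so that:
\begin{itemize}[label={}]
\item \emph{(a)} $x\vv(31,0,9)$ can be rewritten, via a relation in $\text{Ext}_{\mathcal{A}}(H^*(MO\langle 9 \rangle),\Z/2)$ computed by \textbf{ext}, as a product $y\cdot\vv(k,s,9)$ of elements whose $d_2$ is already known;
\item \emph{(b)} $x h_4^2$ is nonzero on the $E_2$-page of $MO\langle 9\rangle$.
\end{itemize}

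The first candidate is $x=h_0$. In the sphere $h_0h_4^2\neq 0$, and since the unit map is injective on $\text{Ext}$ in low filtration in this range, its image should be nonzero. On the other hand, $\vv(31,0,9)$ sits on the $0$-line. By the integral cohomology computation in Proposition \ref{prop: bo9 cohomology groups}, and the analogous computation for the Thom spectrum, I would check whether $\vv(31,0,9)$ lies in an $h_0$-tower, and whether $h_0\vv(31,0,9)$ equals a product such as $h_2\vv(28,\ast,9)$ or $h_3\vv(24,\ast,9)$, or is zero. If $h_0\vv(31,0,9)=0$, then $h_0$-linearity of $d_2$ forces $0=d_2(h_0\vv(31,0,9))=h_0h_4^2$, which contradicts (b).

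Alternatively, following the proof of Lemma \ref{d_2 differential in 22 stem mo9}, I would use $x=h_1$ and look for a relation such as $h_1\vv(31,0,9)=h_3\vv(25,0,9)$ or $h_4\vv(17,\ast,9)$. Then the Leibniz rule together with Lemma \ref{d_2 differential in stem 25 filtration 0 mo9} gives $d_2(h_1\vv(31,0,9))=0$, while $h_1h_4^2\neq 0$.

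A complementary route uses homotopy. The class $h_4^2$ detects $\theta_4\in\pi_{30}\mathbb{S}$. If the image of $\theta_4$ in $\pi_{30}MO\langle 9\rangle$ is detected nontrivially, for example via a map $MO\langle 9\rangle\to MO\langle n\rangle$ for larger $n$, or via comparison with $tmf$ through $MO\langle 8\rangle$, then $h_4^2$ cannot be a boundary. I expect this route to be harder.

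The main obstacle is finding the right multiplicative relation in $\text{Ext}$ for $MO\langle 9\rangle$ in stems 31–32 and confirming that the relevant product with $h_4^2$ is nonzero on $E_2$. I would first try $h_0$, then $h_1$, using the \textbf{ext} product tables.
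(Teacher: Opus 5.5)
Your $h_1$ route is the paper's proof. The decisive relation there is simply $h_1\vv(31,0,9)=0$ on $E_2$, not a nontrivial identity such as $h_1\vv(31,0,9)=h_3\vv(25,0,9)$. Combined with $h_1h_4^2\neq 0$ on the $E_2$-page of $MO\langle 9\rangle$, $h_1$-linearity of $d_2$ rules out $d_2(\vv(31,0,9))=h_4^2$. So the whole argument reduces to reading those two facts off the $E_2$-page, and you should lead with $h_1$. As written, your proposal is a search plan that never commits to the relation that actually holds.

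The $h_0$ route, which you list first, has a gap at step (b). There is no general principle that the map $\text{Ext}_{\mathcal{A}}(\mathbb{Z}/2,\mathbb{Z}/2)\to\text{Ext}_{\mathcal{A}}(H^*(MO\langle 9\rangle;\mathbb{Z}/2),\mathbb{Z}/2)$ is injective in low filtration. Here there is specific reason to doubt that $h_0h_4^2\neq 0$:
\begin{itemize}
\item the paper records that bidegree $(30,3)$ contains only the class $\vv(30,3,9)$;
\item $h_5$ does not appear among the possible $d_2$-sources for $MO\langle 9\rangle$, unlike the tables for $MO\langle 12\rangle$, $MO\langle 16\rangle$ and $MO\langle 17\rangle$.
\end{itemize}
If $h_5$ maps to zero on $E_2$, naturality gives that the image of $h_0h_4^2=d_2(h_5)$ is $d_2(0)=0$, and the $h_0$ argument yields no contradiction.

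Similarly, in your homotopy route, a nonzero image of $\theta_4$ in $\pi_{30}MO\langle 9\rangle$ would not by itself stop $h_4^2$ from being a boundary, because that image could be detected in higher Adams filtration. To make that route work you would need to control filtration, or compare spectral sequences directly using the fact that boundaries map to boundaries.
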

\begin{proof}
The only possible target is $h_4^2$, however $h_1h_4^2$ is nonzero while $h_1\vv(31,0,9)=0$. Since the $d_2$-differential is $h_1$-linear, $d_2(\vv(31,0,9))$ must be zero. 
\end{proof}
\begin{lemma}\label{lem:d_2 differential in stem 31 filtration 1 mo9}
$d_2(\vv(31,1,9))=\vv(30,3,9)$.
\end{lemma}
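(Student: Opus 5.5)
Following the pattern of the earlier $d_2$ lemmas, I would prove $d_2(\vv(31,1,9))=\vv(30,3,9)$ by finding a product or Massey product relation that forces it. The chart suggests two routes, and I would try both in this order.

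The first route is a \emph{Leibniz argument with a permanent cycle or known differential}. I would search the $E_2$-page for a multiplicative relation linking $\vv(31,1,9)$ to a class whose $d_2$ is already determined. Likely candidates are $h_0\vv(31,1,9)$ or $h_1\vv(31,1,9)$ expressed in terms of $\vv(19,0,9)$, $\vv(20,1,9)$, $\vv(22,0,9)$ or $\vv(27,4,9)$ times $h_i$, $c_0$ or $d_0$. The plan is to find $x\in\{h_0,h_1,h_2\}$ such that:
\begin{itemize}
\item $x\vv(31,1,9)=y\,z$, with $d_2(z)$ known from Lemmas \ref{lem:d_2 differential in 19 stem mo9}, \ref{d_2 differential in 20 stem mo9}, \ref{d_2 differential in 22 stem mo9} or \ref{lem:d_2 differential in mo9 stem 27 filtration 4};
\item $x\vv(30,3,9)\neq 0$ on $E_2$.
\end{itemize}
Computing $d_2(yz)$ by the Leibniz rule would then give $x\,d_2(\vv(31,1,9))=x\vv(30,3,9)$. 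Provided $\vv(30,3,9)$ is the only element in bidegree $(30,3)$, or $x$ acts injectively there, this yields the claim. This mirrors the proofs of Lemmas \ref{lem:d_2 differential in 19 stem mo9} and \ref{d_2 differential in 22 stem mo9}.

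The second route, if no such product relation exists, uses Moss' higher Leibniz rule (Theorem \ref{mossleibniz}):
\begin{itemize}
\item Use \textbf{ext} to exhibit $\vv(31,1,9)$ in a Massey product with zero indeterminacy whose third entry is a class with known nonzero $d_2$. Natural choices are $\langle h_3,h_0,\vv(24,4,9)\rangle$, or something like $\langle h_3,h_4,I_9\rangle$, in the spirit of $\vv(20,1,9)\in\langle h_2,h_3,I_9\rangle$; since $I_9$ lies in stem 9 and $h_3,h_4$ in stems 7 and 15, the stems sum to 31 as required.
\item Taking $\langle h_3,h_4,I_9\rangle$: $d_2(h_3)=0$, $d_2(h_4)=h_0h_3^2$ and $d_2(I_9)=h_1h_3$, so Moss gives
\[
d_2(\vv(31,1,9))\in\langle h_3,h_0h_3^2,I_9\rangle+\langle h_3,h_4,h_1h_3\rangle .
\]
\item Verify the hypotheses $h_3\,d_2(h_4)=0$ and $h_4\,d_2(I_9)=0$ on $E_2$.
\item Evaluate each bracket in $\operatorname{Ext}(MO\langle 9\rangle)$, together with its indeterminacy, to show the sum is exactly $\{\vv(30,3,9)\}$.
\end{itemize}

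I expect the main obstacle to be the Massey product bookkeeping: confirming the bracket containing $\vv(31,1,9)$ is defined with zero indeterminacy, and that the resulting bracket on the right contains $\vv(30,3,9)$ rather than $0$. If the brackets come out ambiguous, I would fall back on a contradiction argument. Suppose $d_2(\vv(31,1,9))=0$; then $\vv(30,3,9)$ must either survive to $E_3$ or be hit by another $d_2$. I would try to rule out survival by showing it would produce a homotopy class contradicting Theorem \ref{hoveyimj} or the rational computation of Proposition \ref{prop: rational bo9}. Alternatively, I would exhibit an $h_0$- or $h_1$-multiple of $\vv(30,3,9)$ that must die, where $\vv(31,1,9)$ is the only available source for a differential in bidegree $(31,1)$.
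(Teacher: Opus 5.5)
There is a genuine gap. Your proposal is a search plan rather than an argument, and the two concrete computations it commits to cannot work. First, a Massey product $\langle a,b,c\rangle$ lives in stem $|a|+|b|+|c|+1$ and filtration $s_a+s_b+s_c-1$. Compare your own model $\vv(20,1,9)\in\langle h_2,h_3,I_9\rangle$, where the stems sum to $19$. So $\langle h_3,h_4,I_9\rangle$ would lie in bidegree $(32,1)$ and $\langle h_3,h_0,\vv(24,4,9)\rangle$ in $(32,5)$, and neither can contain $\vv(31,1,9)$. Second, neither bracket is even defined. We have $h_0h_3\neq 0$, and $h_4I_9=h_1\vv(23,0,9)$ is a nonzero class in $(24,1)$ that in fact survives to $E_\infty$. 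Your first route restricts to multipliers $x\in\{h_0,h_1,h_2\}$ and to nonzero products factoring through $MO\langle 9\rangle$-classes with known $d_2$, but you never exhibit such a relation. The fallback via Theorem~\ref{hoveyimj} or rational cohomology names no specific homotopy class or contradiction, so it does not constrain $\vv(30,3,9)$.

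The missing idea is to multiply by a \emph{sphere} class that has a known nonzero $d_2$ and that \emph{annihilates} $\vv(31,1,9)$. On $E_2$ one has $f_0\vv(31,1,9)=0$. By Proposition~\ref{prop: sphere differentials}, $d_2(f_0)=h_0^2e_0$, and $h_0^2e_0\,\vv(31,1,9)\neq 0$. The Leibniz rule then gives
\[
0=d_2\bigl(f_0\vv(31,1,9)\bigr)=f_0\,d_2(\vv(31,1,9))+h_0^2e_0\,\vv(31,1,9),
\]
so $f_0\,d_2(\vv(31,1,9))\neq 0$, and hence $d_2(\vv(31,1,9))\neq 0$. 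Since $\vv(30,3,9)$ is the only nonzero element in bidegree $(30,3)$, the value is forced. This is the paper's argument. It reverses the logic of your first route: instead of deducing $x\,d_2(\vv(31,1,9))$ from a known differential on a factor, it uses a vanishing product and the nonzero differential on the multiplier $x=f_0$.
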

\begin{proof}
Suppose $d_2(\vv(31,1,9))=0$. The product of $\vv(31,1,9)$ and $f_0$ is zero. Applying the Leibniz rule:
\begin{align*}
d_2(\vv(31,1,9)f_0)&=d_2(\vv(31,1,9))f_0+\vv(31,1,9)d_2(f_0) \\
                &=\vv(31,1,9)h_0^2e_0
\end{align*}
The lefthand side is not zero on the $E_2$-page and the righthand side equals $d_2(0)=0$, a contradiction. There is only one element in bidegree $(30,3)$, $\vv(30,3,9)$. Hence, $d_2(\vv(31,1,9))=\vv(30,3,9)$. 
\end{proof}
\begin{lemma}\label{lem:d_2 differential in stem 31 filtration 5 mo9}
$d_2(\vv(31,5,9))=h_0\Delta h_2^2$.
\end{lemma}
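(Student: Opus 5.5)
Following the pattern of the earlier lemmas, I will force this differential by a contradiction with the Leibniz rule, or by naturality against a known differential in the sphere. The target $h_0\Delta h_2^2$ lies in bidegree $(30,6)$ and comes from $\mathrm{Ext}_{\mathcal{A}}(\mathbb{Z}/2,\mathbb{Z}/2)$ via the unit map. In the sphere, $\Delta h_2^2$ is hit by $d_3(h_0^3h_5)=h_0\Delta h_2^2$ (Proposition \ref{prop: sphere differentials}). For $MO\langle 9\rangle$, however, the element $h_0^3h_5$ may itself already be dead or altered by this stage. So I will not rely on this, and will instead find a product of $\vv(31,5,9)$ with a permanent cycle $a$ that lets the Leibniz rule rule out $d_2(\vv(31,5,9))=0$.

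First I will check, from the $E_2$-chart (Figure \ref{fig:mo9_e2}), which classes lie in bidegree $(30,7)$. I expect $h_0\Delta h_2^2$ to be the only nonzero one, or else the only one compatible with $h_0$- and $h_1$-linearity. This reduces the problem to showing that $d_2(\vv(31,5,9))\neq 0$.

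Second, I will look for a relation of the form $a\cdot \vv(31,5,9)=b$, where $a$ is one of $h_0$, $h_1$, $h_2$, $d_0$ and $b$ is a class whose $d_2$ is already known. Candidates for $b$ are products involving $\vv(24,7,9)$, $j$, $k$, or $i$, using the differentials $d_2(i)=h_0Pd_0$ and $d_2(j)=h_0Pe_0$. The aim is to find such a relation where $d_2(b)$ is nonzero on $E_2$. Then $a\,d_2(\vv(31,5,9))=d_2(b)\neq0$ forces $d_2(\vv(31,5,9))$ to be the unique candidate.

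The most natural attempt is to mimic Lemma \ref{d_2 differential in stem 31 filtration 1 mo9}. There, multiplying by $f_0$ gave $\vv(31,5,9)f_0$ with $d_2(f_0)=h_0^2e_0$. If $\vv(31,5,9)f_0=0$ but $\vv(31,5,9)h_0^2e_0\neq 0$, then the Leibniz rule gives a contradiction unless $d_2(\vv(31,5,9))f_0=\vv(31,5,9)h_0^2e_0$, which forces $d_2(\vv(31,5,9))\neq 0$.

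The main obstacle is locating a suitable product relation that actually holds in this high bidegree and is nonzero on $E_2$. This must be read off the machine computation (\textbf{ext}/\textbf{sseqcpp}). If no product works, I will fall back on a Massey product description of $\vv(31,5,9)$ with zero indeterminacy, for example $\langle h_0, \cdot, \cdot\rangle$ built from $\vv(24,7,9)$ or $\vv(27,4,9)$. I would then apply Moss' higher Leibniz rule \ref{mossleibniz} as in Lemma \ref{lem:d_2 differential in mo9 stem 27 filtration 4}. In that case the argument would reduce to evaluating an Ext-bracket containing $h_0\Delta h_2^2$, which I expect to be pulled back from the sphere.
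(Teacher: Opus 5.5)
There is a genuine gap: you describe a search but never produce the relation that proves the lemma. Your template is the right one, and it is the shape of the paper's argument: express a multiple of $\vv(31,5,9)$ as a product whose $d_2$ is already known, then cancel. But the whole content of the lemma is which relation to use, and you leave that open. The relation that works is
\[
h_0^3\,\vv(31,5,9)=h_3\,\vv(24,7,9).
\]
Combine it with the already established differential $d_2(\vv(24,7,9))=h_1Pd_0+h_0^2i$ of Lemma~\ref{d_2 differential in stem 24 filtration 7 mo9}. The Leibniz rule, with $h_0$ and $h_3$ both $d_2$-cycles, then gives
\[
h_0^3\,d_2(\vv(31,5,9))=h_3\bigl(h_1Pd_0+h_0^2i\bigr)=h_0^4\Delta h_2^2\neq 0 .
\]
Hence $d_2(\vv(31,5,9))\neq 0$. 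Your first step then fixes the value: either $h_0\Delta h_2^2$ is the only class in bidegree $(30,7)$, or $h_0^3$ acts injectively there.

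As formulated, your search would likely miss this relation. You only allow a single multiplier $a\in\{h_0,h_1,h_2,d_0\}$ on $\vv(31,5,9)$, but the working relation needs $h_0^3$ on one side and $h_3$ on the other. It also depends on the specific mixed value $h_1Pd_0+h_0^2i$ from the stem-24 lemma, not on any of $d_2(i)$, $d_2(j)$, $d_2(k)$. Your ``most natural attempt'' with $f_0$ is unverified. You give no evidence that $\vv(31,5,9)f_0=0$ while $\vv(31,5,9)h_0^2e_0\neq 0$, and those products sit in stems 49 and 46, far from anything the argument needs. The Massey-product fallback names no actual bracket. One small correction: in the sphere it is $h_0\Delta h_2^2$ that is hit by $d_3(h_0^3h_5)$, while $\Delta h_2^2$ itself supports $d_3$. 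In any case naturality can never give a $d_2$ here, so you were right to set that aside.
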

\begin{proof}
Consider the relation $h_0^3\vv(31,5,9)=h_3\vv(24,7,9)$. By the Leibniz rule, 
\begin{align*}
d_2(h_3\vv(24,7,9))&=h_3d_2(\vv(24,7,9))\\
                &=h_3(h_1Pd_0+h_0^2i) \\
                &= h_0^4\Delta h_2^2
\end{align*}
Since $h_0$ is a permanent cycle, this implies that $d_2(\vv(31,5,9))=h_0\Delta h_2^2$.
\end{proof}

\begin{lemma}\label{lem:d_2 differential in stem 32 filtration 3 mo9}
$d_2(\vv(32,3,9))=n$
\end{lemma}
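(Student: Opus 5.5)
The plan is to argue as in Lemma \ref{d_2 differential in stem 24 filtration 4 mo9}: identify $\vv(32,3,9)$ as a Massey product built from elements whose $d_2$-differentials are already known, apply Moss' higher Leibniz rule (Theorem \ref{mossleibniz}), and use the fact that the target bidegree $(31,5)$ of $E_2(MO\langle 9\rangle)$ is small.

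The first step is to list $E_2(MO\langle 9\rangle)$ in bidegree $(31,5)$. The image of the sphere class $n$ should be nonzero there; besides $n$, I expect at most $\vv(31,5,9)$ and possibly $h_0^4$ times a stem-31 class. The second step is to locate $\vv(32,3,9)$ in a bracket whose last entry has a known $d_2$. The natural first candidate is $\langle h_2, h_3, \vv(25,0,9)\rangle$ or $\langle h_3,h_2,\vv(22,0,9)\rangle$. This mirrors the construction of $\vv(20,1,9)\in\langle h_2,h_3,I_9\rangle$ and of $c_1\in\langle h_2,h_3,h_1h_3\rangle$ in the sphere. With the computer program \textbf{ext}, I would check the defining products vanish and compute the indeterminacy, for instance
\[
h_2\cdot \mathrm{Ext}^{1,30}(MO\langle 9\rangle)+\mathrm{Ext}^{1,12}(MO\langle 9\rangle)\cdot \vv(25,0,9),
\]
hoping it is zero as it was in stem $20$.

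Then Moss' rule gives
\[
d_2\vv(32,3,9)\in \langle h_2,h_3,d_2(\text{last entry})\rangle ,
\]
since $h_2$ and $h_3$ are $d_2$-cycles. Here I would use $d_2(\vv(22,0,9))=h_2\vv(18,1,9)$, or $d_2\vv(25,0,9)=0$, which would make that bracket vacuous and force a different choice. I would then check that the resulting bracket in $E_2(MO\langle 9\rangle)$ contains $n$, which is the image of the sphere's own bracket description of $n$.

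The main obstacle is finding a bracket presentation with zero indeterminacy whose Moss output is actually nonzero. If that fails, the fallback is a Leibniz or linearity contradiction argument, as in Lemma \ref{lem:d_2 differential in stem 31 filtration 1 mo9}. First, assume $d_2\vv(32,3,9)=0$. Then find a permanent-cycle multiplier, such as $h_1$, $d_0$, $f_0$ or $h_0$, for which the product with $\vv(32,3,9)$ equals an element with a known nonzero $d_2$, or vanishes while the Leibniz rule forces something nonzero. For example, $f_0\vv(32,3,9)$ together with $d_2(f_0)=h_0^2e_0$ would give a contradiction. Second, use linearity to rule out the other candidates in $(31,5)$; for instance, $h_1 n\neq 0$ in $E_2$ while the other candidates behave differently under $h_1$. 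This pins down the value $n$.
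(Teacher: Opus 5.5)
There is a genuine gap: neither of your two routes determines the differential.

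\textbf{The Massey-product route.} This fails for a degree reason. A bracket $\langle a,b,c\rangle$ lies in stem (sum of the stems)$+1$ and filtration (sum of the filtrations)$-1$.
\begin{itemize}
\item $\langle h_2,h_3,\vv(25,0,9)\rangle$ lies in $(36,1)$, and $\langle h_3,h_2,\vv(22,0,9)\rangle$ lies in $(33,1)$. Neither can contain $\vv(32,3,9)$.
\item Correspondingly, the Moss output $\langle h_3,h_2,h_2\vv(18,1,9)\rangle$ sits in $(32,3)$, not in the target bidegree $(31,5)$.
\item The indeterminacy you display is not homogeneous either: its two summands live in $(32,2)$ and $(36,1)$.
\end{itemize}
The analogy with $\vv(20,1,9)\in\langle h_2,h_3,I_9\rangle$ works only because $3+7+9+1=20$ and $I_9$ has filtration $0$. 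To reach $(32,3)$ with $h_2,h_3$ in front, the third entry would have to lie in $(21,2)$. You have not produced such a class or its $d_2$.

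\textbf{The fallback.} This is a template rather than an argument, and it starts from the wrong candidate set. The possible values of $d_2(\vv(32,3,9))$ in $(31,5)$ are the eight linear combinations of $\vv(31,5,9)$, $h_2\vv(28,4,9)$ and $n$. You omitted $h_2\vv(28,4,9)$, and that class is what makes the problem nontrivial: $h_0$-linearity only removes the four combinations involving $\vv(31,5,9)$.

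The paper's proof is a pure case elimination with no Massey products.
\begin{itemize}
\item Multiplying by the $d_2$-cycle $h_3$ relates $\vv(32,3,9)$ to $g\vv(19,0,9)$. That class supports a nonzero $d_2$ by Lemma \ref{lem:d_2 differential in 19 stem mo9}. This rules out both $0$ and $h_2\vv(28,4,9)$, since $h_2h_3=0$.
\item Multiplying by $Ph_2$ rules out $h_2\vv(28,4,9)+n$, because the Leibniz rule would give $0=Pc_0\vv(26,3,9)\neq 0$.
\end{itemize}

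Your $f_0$ example would need $f_0\vv(32,3,9)=0$ and $h_0^2e_0\vv(32,3,9)\neq 0$ in $E_2$, and you check neither. Also, $f_0$ is not a permanent cycle, as your own use of $d_2(f_0)=h_0^2e_0$ shows. Even if the argument worked, it would only exclude $0$. Nothing in your plan, including the unsupported appeal to $h_1$-linearity, separates $n$ from $n+h_2\vv(28,4,9)$.
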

\begin{proof}
This proof was generated by \textbf{sseqcpp}. The differential $d_2(\vv(32,3,9))$ is one of the eight linear combination of the classes $\vv(31,5,9)$, $h_1\vv(28,4,9)$, and $n$. 
The linear combinations $\vv(31,5,9)$, $\vv(31,5,9)+n$, $\vv(31,5,9) + \vv(28,4,9)$, $\vv(31,5,9) + \vv(28,4,9)+n$ are ruled out by $h_0$-linearity 
of the $d_2$-differential. 

\smallskip
\noindent Suppose $d_2(\vv(32,3,9))=0$.
Using the Leibniz rule with $h_3$ and $d_2(h_3)=0$, we have
\[
d_2(g\vv(19,0,9))=0
\]
But $g\vv(19,0,9)$ already supports a nonzero $d_2$, a contradiction.

\smallskip
\noindent Suppose $d_2(\vv(32,3,9))=h_2\vv(28,4,9)$.
Again using $h_3$ with $d_2(h_3)=0$, the Leibniz rule forces
\[
d_2(g\vv(19,0,9))=0,
\]
But $g\vv(19,0,9)$ already supports a nonzero $d_2$, a contradiction.

\smallskip
\noindent Suppose $d_2(\vv(32,3,9))=h_2\vv(28,4,9)+n$.
Using the Leibniz rule with $P h_2$ and $d_2(P h_2)=0$, we have
\[
d_2(0)=Pc_0\vv(26,3,9)
\]
A contradiction.
\smallskip
The only remaining possibility is $d_2(\vv(32,3,9))=n$, as claimed.
\end{proof}
\clearpage 
The $d_2$-differentials above have the following consequence. 
\begin{theorem}
The spectrum $MO \langle 9 \rangle$ is indecomposable through degree 22.
\end{theorem}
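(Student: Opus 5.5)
The plan is to argue by contradiction. Suppose that through degree 22 there is a splitting of $MO\langle 9\rangle$ (2-completed, in the range of degrees up to 22) as a wedge $X\vee Y$ with both $X$ and $Y$ nontrivial. On mod 2 cohomology this gives an $\mathcal{A}$-module decomposition $H^*X\oplus H^*Y$. Such a decomposition must be compatible with the splitting of $H^*(MO\langle 9\rangle;\mathbb{Z}/2)$ into the four cyclic summands recorded above:
\[
\mathcal{A}//\mathcal{A}(3),\quad \Sigma^9 \mathcal{A}/\mathcal{A}(Sq^2,Sq^{10},Sq^{12},Sq^{26}),\quad \Sigma^{19}\mathcal{A}/\mathcal{A}(Sq^{5},Sq^{21}),\quad \Sigma^{22}\mathcal{A}/\mathcal{A}(Sq^{7},Sq^{23}).
\]
These summands are cyclic, with generators in degrees $0,9,19,22$, so they are indecomposable. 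By a Krull--Schmidt type argument for bounded-below finite-type modules in this range, $H^*X$ and $H^*Y$ are each sums of some of these four summands, up to isomorphism. In particular, the summand containing the Thom class $U$ (degree 0) lies in one factor, say $X$. Then $Y$ has cohomology built from some nonempty subset of the summands generated in degrees $9$, $19$, $22$.

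Next, the Adams spectral sequence of $X\vee Y$ is the direct sum of the Adams spectral sequences for $X$ and $Y$. In particular, every differential preserves the decomposition
\[
\text{Ext}(H^*MO\langle 9\rangle)=\text{Ext}(H^*X)\oplus\text{Ext}(H^*Y).
\]
The unit map $\mathbb{S}\to MO\langle 9\rangle$ factors through the summand containing $U$. Hence all classes in the image of $\text{Ext}_{\mathcal{A}}(\mathbb{Z}/2,\mathbb{Z}/2)$, namely $h_1h_3$, $h_1^2h_4$, $h_2h_4$, $c_1$ and so on, lie in $\text{Ext}(H^*X)$. The $d_2$-differentials established above cross between summands:
\begin{itemize}[label={}]
\item $d_2(I_9)=h_1h_3$ (Lemma \ref{lem:d_2 differential in stem 9 filtration 0 mo9}) has source the generator of the $\Sigma^9$ summand, which is nontrivial on $\text{Ext}^0$.
\item $d_2(\vv(19,0,9))=h_2h_4$ (Lemma \ref{lem:d_2 differential in 19 stem mo9}) and $d_2(\vv(22,0,9))=h_2\vv(18,1,9)$ (Lemma \ref{d_2 differential in 22 stem mo9}) have sources in filtration 0 in stems 19 and 22. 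These are exactly the $\text{Ext}^0$ classes dual to the bottom generators of the $\Sigma^{19}$ and $\Sigma^{22}$ summands.
\end{itemize}
So each of the three non-unit summands has its bottom class hitting, via $d_2$, a class in the summand of $U$. That cannot happen if the two lie in different wedge factors. This forces all four summands into $X$, so $Y$ is trivial in this range, a contradiction.

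The main obstacle is the bookkeeping in the last step. I must check that each of $I_9$, $\vv(19,0,9)$ and $\vv(22,0,9)$ genuinely lies in the Ext of the corresponding summand rather than in $\text{Ext}$ of $\mathcal{A}//\mathcal{A}(3)$. For $\text{Ext}^0$ this should be immediate: $\text{Ext}^0$ of a cyclic module is one-dimensional, at the generator degree, and $\mathcal{A}//\mathcal{A}(3)$ has no $\text{Ext}^0$ in stems $9,19,22$. I must also check that each target is nonzero in the $U$-summand. For $h_1h_3$ and $h_2h_4$ this holds because they are in the image from the sphere. For $h_2\vv(18,1,9)$ I would check that $\vv(18,1,9)$ lies in $\text{Ext}(\mathcal{A}//\mathcal{A}(3))$, using the fact that it is the class killing $h_1^2h_4$. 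If that is delicate, I would instead use $h_0\vv(22,0,9)=h_2\vv(19,0,9)$: any splitting separating the degree-22 summand from the degree-19 one would contradict this $h_0$-extension relation in Ext, and the degree-19 summand is already tied to $U$.

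A second point to handle carefully is that the splitting is of spectra and not merely of modules. This is why differentials, not just Ext, give the obstruction. Indeed, on the level of $E_2$ the four-fold splitting does exist, so the $d_2$'s are the essential input.
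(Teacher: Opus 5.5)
Your strategy is the paper's: its proof simply observes that the nonzero $d_2$'s on $I_9$, $\vv(19,0,9)$ and $\vv(22,0,9)$ cannot respect a splitting of spectra that realizes the four module summands. Your bookkeeping for the first two differentials is right, since $h_1h_3$ and $h_2h_4$ come from the sphere and $\text{Ext}^{0,9}$, $\text{Ext}^{0,19}$ are one-dimensional.

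The third step fails as written. The target $h_2\vv(18,1,9)$ does not lie in the $U$-summand. By change of rings, $\text{Ext}_{\mathcal{A}}(\mathcal{A}//\mathcal{A}(3),\mathbb{Z}/2)\cong\text{Ext}_{\mathcal{A}(3)}(\mathbb{Z}/2,\mathbb{Z}/2)$, and its $\text{Ext}^1$ is spanned by $h_0,h_1,h_2,h_3$, so it has nothing in $\text{Ext}^{1,19}$. The $\Sigma^{19}$ and $\Sigma^{22}$ summands have no Ext in stem 18 at all. So $\vv(18,1,9)$ is the $\text{Ext}^1$ class of the $\Sigma^9$ summand dual to the relation $Sq^{10}$, in internal degree $9+10=19$. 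The check you propose, that $\vv(18,1,9)\in\text{Ext}(\mathcal{A}//\mathcal{A}(3))$, would therefore come out false.

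Your fallback is not an argument either. Ext of a direct sum of $\mathcal{A}$-modules splits as a module over $\text{Ext}_{\mathcal{A}}(\mathbb{Z}/2,\mathbb{Z}/2)$. Under the stated splitting, a relation such as $h_0\vv(22,0,9)=h_2\vv(19,0,9)$ between classes from different summands can only hold with both sides zero. An $E_2$-level relation cannot obstruct a splitting that, as you note yourself, already exists on $E_2$.

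The fix is to chain the summands rather than tie each one directly to $U$.
Once $d_2(I_9)=h_1h_3$ has put the degree-9 summand into $X$, $H^*Y$ is generated only in degrees $19$ and $22$. So $\text{Ext}(H^*Y)$ vanishes in stems below $19$, and all of $\text{Ext}^{1,19}$, in particular $\vv(18,1,9)$, lies in $\text{Ext}(H^*X)$. Hence $h_2\vv(18,1,9)$ lies in $\text{Ext}(H^*X)$ too. The nonzero differential $d_2(\vv(22,0,9))=h_2\vv(18,1,9)$ then forces the one-dimensional $\text{Ext}^{0,22}$, and with it the degree-22 summand, into $X$.

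Your idea of using $d_2(\vv(18,1,9))=h_1^2h_4$ also works, because all of $\text{Ext}^{1,19}$ comes from a single module summand and so lies in a single factor. But what it shows is that $\vv(18,1,9)$ lies in the \emph{spectrum} factor containing $U$, not in $\text{Ext}(\mathcal{A}//\mathcal{A}(3))$. With this correction your argument is a correct, spelled-out version of the paper's one-line proof.
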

\begin{proof}
This follows from the nonzero $d_2$-differentials on $I_{9}$, $\vv(19,0,9)$, $\vv(22,0,9)$.
\end{proof}
\subsection{$d_3$-differentials}
\begin{proposition}
Table \ref{tab:Adams d_3 mo9} describes the non-zero $d_3$-differentials in the Adams spectral sequence for $MO\langle 9 \rangle$  on all indecomposables on $E_3$ through stem 31, and on select elements in stems $> 31$.
\end{proposition}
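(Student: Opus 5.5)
The statement to prove is the proposition immediately above: Table \ref{tab:Adams d_3 mo9} lists the nonzero $d_3$-differentials on the indecomposables of $E_3$ through stem 31, together with selected ones above stem 31. I will follow the same template used for the $d_2$ table. The first step is to compute the $E_3$-page from the $E_2$-page and the $d_2$-differentials already established, including the Leibniz-rule consequences on decomposables. Reading off the chart, I will then list every indecomposable of $E_3$ in stems $\le 31$ that has a nonzero class of $E_3$ in bidegree $(t-s-1,s+3)$. Each such candidate becomes one row of the table, to be settled by a separate lemma.

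Next come the differentials imported from the sphere. By naturality of the unit map $\mathbb{S}\to MO\langle 9\rangle$ and Proposition \ref{prop: sphere differentials}, whenever the image of $d_3(h_0h_4)=h_0d_0$, $d_3(\Delta h_2^2)=h_0^2k$, $d_3(e_1)=h_1t$ or $d_3(h_0^3h_5)=h_0\Delta h_2^2$ is nonzero on $E_3(MO\langle 9\rangle)$, the corresponding differential holds there. Some of these targets may already be zero on $E_3$. For example, $h_0\Delta h_2^2$ is hit by $d_2(\vv(31,5,9))$, so the image of $d_3(h_0^3h_5)$ must be rechecked rather than copied.

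The remaining candidates are handled with the same tools as in the $d_2$ section.
\begin{enumerate}[label=(\roman*)]
\item \textbf{Homotopy constraints.} Theorem \ref{hoveyimj} forces $\mathrm{Im}\,J$ in stems $\ge 8$ to die. In particular the $h_0$-tower classes in stem 15 detecting $\mathrm{Im}\,J$ must be killed, and this pins down $d_3$ sources by degree, as in Lemma \ref{lem:d_2 differential in stem 9 filtration 0 mo9}.
\item \textbf{Integral cohomology.} Theorem \ref{maymilgram}, applied with Table \ref{tab:bo9_groups} via the Thom isomorphism, decides whether $h_0$-towers on the $0$-line support $d_3$. The torsion in $H^{*}(BO\langle 9\rangle;\Z)$ is only $\Z/2$ in this range, so no $d_3$ can leave a $0$-line tower, and $d_2$ is the only length possible there.
\item \textbf{Rational cohomology.} Propositions \ref{prop: rational bo9} and \ref{prop: rational bo8} fix the ranks of the $\Z$-summands, hence how many towers must survive.
\item \textbf{Linearity and Massey products.} The remaining cases are settled by $h_0$-, $h_1$- and $h_2$-linearity, the Leibniz rule against classes with known differentials such as $d_0$, $g$, $Ph_1$ and $Ph_2$, and Moss's higher Leibniz rule \ref{mossleibniz}. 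For Moss's rule I write the source as a Massey product $\langle a,b,\vv(\cdot,\cdot,9)\rangle$ with $a,b$ from the sphere, and compute the brackets on $E_3$ with \textbf{ext}.
\end{enumerate}
Where a case is purely combinatorial I will cite an \textbf{sseqcpp}-generated proof.

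The main obstacle is Moss's rule, whose hypotheses and brackets live on the $E_3$-page rather than on $E_2$. Indeterminacies and relations there must be recomputed after the $d_2$'s, and this is especially delicate in stems 24--31, where the $MO\langle 9\rangle$ classes mix with $g$, $\Delta h_2^2$ and $Pd_0$. For any candidate that resists these tools, I would first try multiplying by $d_0$ or $g$ to move to a bidegree where the target is detected by the sphere. If that fails, I would leave the entry marked ``?'', as was done for $\vv(26,0,9)$ in the $d_2$ table.
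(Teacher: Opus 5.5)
You use the same tools as the paper: naturality from $\mathbb{S}$, Hovey plus Mahowald to force $\mathrm{Im}\,J$ classes to die, $h_i$-linearity and Leibniz, and Moss's rule. The gap is that the proposal never settles a single row, and the proposition consists of nothing but those rows.

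The one mechanism you do spell out in (i) also falls short as stated. Knowing that an $\mathrm{Im}\,J$ class must die only says that \emph{some} $d_r$ hits it; degree alone does not make that differential a $d_3$. In the paper, $Pc_0$ could a priori be hit by $d_3(e_0)$ or by $d_4(h_1^2h_4)$, and $P^2h_2$ could be hit by $d_5(g)$. The conclusions $d_3(v_{17,4}+e_0)=Pc_0$ and $d_3(\vv(20,6,9))=P^2h_2$ need three further inputs:
\begin{itemize}
\item $d_2(e_0)=h_1^2d_0$, so $e_0$ itself is not on $E_3$;
\item $d_2(\vv(18,1,9))=h_1^2h_4$, so $h_1^2h_4$ is gone by $E_3$;
\item $d_5(g)=0$, by naturality from $\mathbb{S}$.
\end{itemize}
Your stem-15 illustration also points the wrong way. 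No row of the table has its target in stem 15; the only stem-15 row is the imported $d_3(h_0h_4)=h_0d_0$, whose source lies in the tower. The $\mathrm{Im}\,J$ targets of $d_3$'s here are $Ph_2$, $Pc_0$, $P^2h_2$, $P^2c_0$ and $P^3h_2$.

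The other three rows rest on specific relations your outline does not supply.
\begin{itemize}
\item $d_3(\vv(12,2,9))=Ph_2$ follows by $h_2$-linearity from $h_2\vv(12,2,9)=h_0^2h_4$, $h_2Ph_2=h_0^2d_0$ and the imported $d_3(h_0^2h_4)=h_0^2d_0$.
\item $d_3(h_1\vv(24,7,9)+Pe_0)=P^2c_0$ follows from the relation $h_1(h_1\vv(24,7,9)+Pe_0)=Ph_1(v_{17,4}+e_0)$, Leibniz with $Ph_1$ and the $Pc_0$ differential, and then $h_1$-linearity.
\item $d_3(\vv(28,10,9))=P^3h_2$ follows from Moss's rule on $\langle h_0,h_0^3h_3,\vv(20,6,9)\rangle$. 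This needs the $P^2h_2$ differential first, and a separate check that the indeterminacy $h_0^6\vv(28,4,9)$ is a $d_3$-cycle (sparsity for $h_0^5\vv(28,4,9)$, then $h_0$-linearity).
\end{itemize}

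Your items (ii) and (iii) do no work for this table. Every candidate on the $E_3$ list through stem 31 has positive filtration, and May--Milgram only controls differentials between towers. Moreover, Table \ref{tab:bo9_groups} stops in degree 23, so it could not cover stems 23--31 in any case.

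Finally, $d_3(v_{24,2})$ (possible target $h_2g$) is left undetermined in the paper, so your ``?'' fallback is consistent with what is actually proved.
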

The following table lists the only indecomposables through stem 31 that, for degree reasons, can support a $d_3$-differential. It also includes select differentials in stems $>31$.
 \begin{longtable}{llllc}
    \caption[Possible $d_3$-differentials $MO\langle 9 \rangle$]{Possible non-zero $d_3$-differentials on indecomposable elements
    \label{tab:Adams d_3 mo9}
    } \\
    \toprule
    $x$ & $(t-s,s)$ & $d_3(x)$ & Occurs & Proof\\
    \midrule \endfirsthead
    \caption[]{Possible non-zero $d_3$-differentials on indecomposable elements} \\
    \toprule
    $x$ & $(t-s,s)$ & $d_3(x)$ & Occurs & Proof\\
    \midrule \endhead
    \bottomrule \endfoot
        $\vv(12,2,9)$ & $(12, 2)$ & $Ph_2$ & Yes & \ref{lem:d_3 differential in stem 12 filtration 2 mo9}\\
        $h_0h_4$ & $(15,2)$ & $h_0d_0$ & Yes & \ref{prop: sphere differentials}\\
        $h_1h_4$ & $(16,2)$ & $h_1d_0$ & No & \ref{prop: sphere differentials}\\
        $v_{17,4}+e_0$ & $(17, 4)$ & $Pc_0$ & Yes & \ref{lem:d_3-differentials-imj-mo9}\\
        $\vv(20,6,9)$ & $(20, 6)$ & $P^2h_2$ & Yes & \ref{lem:d_3-differentials-imj-mo9}\\
        $v_{24,2}$ & $(24, 2)$ & $h_2g$ & ? & \\
        $h_1\vv(24,7,9)+Pe_0$ & $(25, 8)$ & $P^2c_0$ & Yes & \ref{lem:d_3 differential in stem 25 filtration 8 mo9}\\
        $\vv(28,10,9)$ & $(28, 10)$ & $P^3h_2$ & Yes & \ref{lem:d_3 differential in stem 28 filtration 10 mo9}\\
        $\Delta h_2^2$ & $(30,6)$ & $h_0^2k$ & Yes & \ref{prop: sphere differentials}\\
    \end{longtable}
    \begin{proof}
        For proofs of these differentials, see the following series of Lemmas. 
    \end{proof}

\begin{lemma}\label{lem:d_3-differentials-imj-mo9}
\begin{enumerate}
\item $d_3(v_{17,4}+e_0)=Pc_0$.
\item $d_3(\vv(20,6,9))=P^2h_2$.
\end{enumerate}
\end{lemma}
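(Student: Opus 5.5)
The plan is to use the same mechanism as Lemma \ref{lem:d_2 differential in stem 9 filtration 0 mo9}: the classes $Pc_0$ and $P^2h_2$ detect elements of $\text{Im}\, J$ in $\pi_*\mathbb S$. By Mahowald's Theorem \ref{mahowaldimj}, $Pc_0$ detects the image of $J$ in stem 16 and $P^2h_2$ detects it in stem 19. Since $16, 19 \geq 9-1$, Hovey's Theorem \ref{hoveyimj} says these elements map to zero in $\pi_*MO\langle 9\rangle$.

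Next I check that the images of $Pc_0$ and $P^2h_2$ on the $E_2$-page for $MO\langle 9\rangle$ are nonzero (read off the $E_2$ chart) and survive to $E_3$. Surviving to $E_3$ means they are not hit by any $d_2$ in Table \ref{tab:Adams d_2 mo9}, and they are permanent cycles, being images of permanent cycles under the unit map. Then they cannot survive to $E_\infty$ as nonzero classes detecting the images of those $\text{Im}\, J$ elements, unless those images were detected in higher filtration. I would rule that out by noting there is nothing above them in the relevant stems that could detect them. So each must be the target of some differential $d_r$ with $r\ge 3$.

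The remaining step is a degree count. I list all elements of $E_3$ in stem 17 at filtrations $s\le 5$, and in stem 20 at filtrations $s\le 8$, that could hit $(16,7)$ and $(19,9)$ respectively. For $Pc_0$ at $(16,7)$, a $d_3$ source lies in $(17,4)$. The $E_2$-page there contains $e_0$ and $v_{17,4}$. Since $d_2(e_0)=h_1^2d_0\neq 0$ in the sphere and also here (Table \ref{tab:Adams d_2 mo9}), while $v_{17,4}$ presumably has the same nonzero $d_2$, the $d_2$-cycle is $v_{17,4}+e_0$. I must also exclude sources of $d_4,d_5,\dots$, which would sit in filtrations $3,2,\dots$ of stem 17. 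I check these are empty or are already accounted for, for instance $h_1^2h_4$ is killed by $d_2$ and $h_1h_4 \cdot$ stuff is a permanent cycle from the sphere. Similarly, for $P^2h_2$ at $(19,9)$, the only $E_3$ class in $(20,6)$ is $\vv(20,6,9)$, and lower-filtration candidates in stem 20 either support the already determined $d_2$ (for example $\vv(20,1,9)$) or are permanent cycles coming from $\mathbb S$ (for example $g$, $h_0g$). Hence the differentials must be $d_3(v_{17,4}+e_0)=Pc_0$ and $d_3(\vv(20,6,9))=P^2h_2$.

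The main obstacle is the exhaustive exclusion of longer differentials and of alternative sources. This needs careful reading of the $E_3$ chart in stems 17 and 20, and for $d_{\ge4}$ possibly $h_0$- or $h_1$-linearity arguments. As a cross-check, I would also try the $h_0$-multiplication relation $h_0\vv(20,6,9)$ versus $P(v_{17,4}+e_0)$-type relations, to derive the second differential from the first via $P$-linearity or Moss' rule with $\langle h_2,h_0^4,-\rangle$.
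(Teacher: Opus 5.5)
Your proposal is correct and is essentially the paper's argument. Mahowald's theorem puts $Pc_0$ and $P^2h_2$ in $\text{Im}\,J$, Hovey's theorem forces their images to die, and the competing sources are excluded just as in the paper: $h_1^2h_4$, already a $d_2$-boundary, for $Pc_0$, and $g$, a permanent cycle by naturality from $\mathbb S$ (as is $h_0g$), for $P^2h_2$. Your worry about detection in higher filtration and the closing cross-check are unnecessary; the cross-check is also misdirected, since $P(v_{17,4}+e_0)$ lies in stem $25$, not stem $20$.
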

\begin{proof}
By Theorem \ref{mahowaldimj}, the target of each of these differentials is in $\text{Im }J$. Since $\text{Im } J$ is contained in the kernel of the unit map $\mathbb{S} \rightarrow MO\langle n \rangle$, these elements must not survive the Adams spectral sequence for $MO\langle 9 \rangle$. 
For (1), the other possible sources of a differential with target $Pc_0$ are $d_3(e_0)$ and $d_4(h_1^2h_3)$, but the former already supports a $d_2$-differential and the ladder is a target of a $d_2$-differential (see Table \ref{tab:Adams d_2 mo9}). For (2), the only other possible sources of a differential with target $P^2h_2$ is $g$, but $d_5(g)$ is zero by comparing with the sphere spectrum $\mathbb{S}$ and using naturality. Thus, $d_3(v_{17,4}+e_0)=Pc_0$ and $d_3(\vv(20,6,9))=P^2h_2$.
\end{proof}
\begin{lemma}\label{lem:d_3 differential in stem 12 filtration 2 mo9}
$d_3(\vv(12,2,9))= Ph_2$.
\end{lemma}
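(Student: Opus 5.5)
The plan follows the pattern of Lemma \ref{lem:d_3-differentials-imj-mo9}. The target $Ph_2$ in bidegree $(11,5)$ lies in $\mathrm{Ext}_{\mathcal{A}}(\mathbb{Z}/2,\mathbb{Z}/2)$ and, by Theorem \ref{mahowaldimj}, detects the generator of $\mathrm{Im}\,J$ in $\pi_{11}\mathbb{S}$ (the element $\zeta_{11}$ of order $8$).

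\textbf{Step 1: $Ph_2$ must die.} Since $11 \geq 9-1$, Theorem \ref{hoveyimj} says the composite $\mathrm{Im}\,J \to \pi_{11}\mathbb{S} \to \pi_{11}MO\langle 9\rangle$ is zero. The image of $Ph_2$ in the Adams spectral sequence for $MO\langle 9\rangle$ is nonzero on $E_2$; this can be read off the $E_2$ chart and is consistent with the unit map being an isomorphism of $\mathcal{A}$-modules through degree $8$ on the $\mathcal{A}//\mathcal{A}(3)$ summand. So $Ph_2$ is either hit by a differential or detects a product of higher filtration. But $\zeta_{11}$ has filtration $5$ in the sphere and maps to zero. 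Its image cannot be detected in higher filtration in stem $11$ unless there are such classes there. The $E_2$-page has no classes of filtration $>5$ in stem $11$ that survive, since the tower is $h_0^iPh_2$ with $i \leq 2$. So $Ph_2$ itself must be the target of a differential. Note also that $h_0Ph_2$ and $h_0^2Ph_2$ must die, which constrains the picture.

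\textbf{Step 2: enumerate the possible sources.} A differential $d_r$ hitting $(11,5)$ comes from $(12,5-r)$ for $r \geq 2$, so from filtrations $0$ through $3$ in stem $12$. From the $E_2$ chart, the candidates are $\vv(12,2,9)$ and possibly some $h_0$-tower classes on the $\mathbb{Z}$ in stem $12$ coming from $H^{12}(BO\langle 9\rangle)$ and the Thom class of $\Sigma^9$-summands.

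Consider first a $d_2$ into $(11,5)$ from filtration $3$. Table \ref{tab:Adams d_2 mo9} lists no $d_2$ in stem $12$, so no $d_2$ hits $Ph_2$.

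Next consider a $d_4$ from filtration $1$ or a longer differential from filtration $0$. I would rule these out by $h_0$-linearity. The classes in stem $12$ of low filtration belong to the $h_0$-tower detecting the $\mathbb{Z}$ in $\Omega^{\langle 9\rangle}_{12}$, which is rationally nonzero by Proposition \ref{prop: rational bo9} (the class $p_3$). A differential on a tower class would force differentials on all its $h_0$-multiples. By May--Milgram (Theorem \ref{maymilgram}), that would require a $\mathbb{Z}/2^r$ summand in $H^{13}(MO\langle 9\rangle;\mathbb{Z})$, which vanishes by Table \ref{tab:bo9_groups} via the Thom isomorphism ($H^{13}=0$). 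Hence the tower is permanent.

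\textbf{Step 3: conclude.} The only remaining source is $\vv(12,2,9)$ via $d_3$. Since $Ph_2$ must die, $d_3(\vv(12,2,9))=Ph_2$. Consistency with $h_0$-linearity then also kills $h_0Ph_2$ and $h_0^2Ph_2$ via $h_0\vv(12,2,9)$ and $h_0^2\vv(12,2,9)$, which should be checked against the chart.

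The main obstacle is Step 2: confirming from the $E_2$ chart exactly which classes sit in stem $12$ at filtrations $0$ through $3$, and distinguishing $\vv(12,2,9)$ from tower classes. If a tower class sits in filtration $2$, one must instead use the May--Milgram argument, or a $d_3$ comparison with rational information, to show the differential lands on the non-tower class $\vv(12,2,9)$.
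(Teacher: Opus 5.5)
Your overall strategy is sound: Im $J$ plus Hovey's theorem forces $Ph_2$ to be a boundary, and then you identify the unique possible source. Step 1 is fine in substance. The talk of ``higher filtration'' is beside the point, though: the image of $Ph_2$ is a permanent cycle, and if it survived to $E_\infty$ it would detect the image of $\zeta_{11}$, which is zero.

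The gap is in Step 2.

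\textbf{The permanence claim is false.} Your conclusion that the $h_0$-tower in stem $12$ is permanent is wrong, and the argument for it does not work. If $d_r(x)=Ph_2$ for a tower class $x$, then $d_r(h_0^kx)=h_0^kPh_2$, which vanishes for $k\geq 3$. So such a differential only truncates the bottom of the tower. That is compatible with $\pi_{12}MO\langle 9\rangle\otimes\mathbb{Q}\neq 0$ and with $H^{13}=0$. May--Milgram concerns differentials between $h_0$-towers and says nothing here.

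\textbf{It excludes the source you need.} $\vv(12,2,9)$ is itself the bottom of that tower. Through internal degree $15$ the cohomology is $\mathbb{F}_2\{U\}\oplus\Sigma^9\mathcal{A}/\mathcal{A}(Sq^2,\dots)$. The first summand contributes nothing to stem $12$ there, since $\mathrm{Ext}_{\mathcal{A}}$ vanishes in that stem. The first class of the $\Sigma^9$ summand in stem $12$ sits in filtration $2$. Your own Step 3 uses the nonzero classes $h_0\vv(12,2,9)$ and $h_0^2\vv(12,2,9)$. So Step 2 as written rules out exactly the class you need, and the closing hedge does not fix this.

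\textbf{The repair needs no May--Milgram.} Stem $12$ has nothing in filtrations $0$ and $1$. Filtrations $2$ and $3$ contain only $\vv(12,2,9)$ and $h_0\vv(12,2,9)$. Since $E_2$ is zero in bidegree $(11,4)$, both are $d_2$-cycles. So the only differential that can hit $Ph_2$ is $d_3$ on $\vv(12,2,9)$. The tower from $h_0^3\vv(12,2,9)$ upward then survives and gives $\Omega^{\langle 9\rangle}_{12}\cong\mathbb{Z}$.

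\textbf{Comparison with the paper.} With this repair your proof is a valid alternative to the paper's, which does not use Im $J$ here. The paper uses the relations $h_2\vv(12,2,9)=h_0^2h_4$ and $h_2Ph_2=h_0^2d_0$, together with the sphere differential $d_3(h_0^2h_4)=h_0^2d_0$. Then $h_2\,d_3(\vv(12,2,9))=h_2Ph_2\neq 0$, which forces $d_3(\vv(12,2,9))=Ph_2$. The paper's route needs a multiplicative relation read off the chart but no enumeration of possible sources. Yours needs only the Im $J$ input and an inspection of stem $12$.
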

\begin{proof}
There are relations $h_2\vv(12,2,9)=h_0^2h_4$ and $h_2Ph_2=h_0^2d_0$. By Proposition \ref{prop: sphere differentials}, $d_3(h_0^2h_4)=h_0^2d_0$. Since the $d_3$-differential is $h_2$-linear, this implies $d_3(\vv(12,2,9))=Ph_2$. 
\end{proof}
\begin{lemma}\label{lem:d_3 differential in stem 25 filtration 8 mo9}
$d_3(h_1\vv(24,7,9)+Pe_0)= P^2c_0$.
\end{lemma}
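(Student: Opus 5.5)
The target $P^2c_0$ lies in stem $24$, filtration $11$. I will argue exactly as in Lemma \ref{lem:d_3-differentials-imj-mo9}. By Theorem \ref{mahowaldimj}, $P^2c_0$ detects an element of $\text{Im }J$ in dimension $24\equiv 0 \pmod 8$. Theorem \ref{hoveyimj} says $\text{Im }J$ maps to zero in $\pi_*MO\langle 9\rangle$ in dimensions $\geq 8$. Since $P^2c_0$ is a permanent cycle in the Adams spectral sequence for $\mathbb S$ and the unit map induces a map of spectral sequences, its image must be killed by some differential in the spectral sequence for $MO\langle 9\rangle$.

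First I will check on the $E_2$-page that $P^2c_0$ is nonzero, so that something genuinely has to kill it. I will also check that it is not already a $d_2$-boundary, using Table \ref{tab:Adams d_2 mo9} together with the Leibniz rule applied to products of the listed differentials with $h_0,h_1,h_2,P$.

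Next I will list all classes in stem $25$ that could hit bidegree $(24,11)$ by some $d_r$ with $r\geq 3$. These are classes in filtration $11-r$. By inspecting the $E_3$-page, I expect the filtration-$8$ candidates to be $h_1\vv(24,7,9)$ and $Pe_0$, and I expect lower filtrations to contribute only classes that are already gone or are permanent for other reasons.

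Now I treat the filtration-$8$ classes individually. In the sphere, $Pe_0$ supports $d_2(Pe_0)=h_1^2Pd_0$, since $d_2(e_0)=h_1^2d_0$ and $P$ is a $d_2$-cycle. Likewise, $d_2(h_1\vv(24,7,9))=h_1(h_1Pd_0+h_0^2i)=h_1^2Pd_0$, because $h_1h_0=0$, using Lemma \ref{d_2 differential in stem 24 filtration 7 mo9}. The two values agree, so the sum $h_1\vv(24,7,9)+Pe_0$ is a $d_2$-cycle, and it is the only $d_2$-cycle in that bidegree. In particular, neither summand survives to $E_3$ by itself.

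For the lower-filtration candidates (which would give $d_r$ with $r\geq 4$), I will rule them out in one of three ways:
\begin{enumerate}
\item by naturality from the sphere, using Proposition \ref{prop: sphere differentials} for classes in the image of the unit;
\item by showing they already support or receive earlier differentials, via Tables \ref{tab:Adams d_2 mo9} and \ref{tab:Adams d_3 mo9};
\item by $h_0$- or $h_1$-linearity, for example comparing against $d_3(v_{17,4}+e_0)=Pc_0$ multiplied by $P$, or $d_3(\vv(20,6,9))=P^2h_2$.
\end{enumerate}
A useful consistency check is that the $d_3$ we are proving is "$P$ times" Lemma \ref{lem:d_3-differentials-imj-mo9}(1). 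If I can verify that $h_1\vv(24,7,9)+Pe_0$ equals $P(v_{17,4}+e_0)$ on $E_3$, then the differential follows directly from $P$-linearity (Massey product $\langle h_3,h_0^4,-\rangle$) via Moss' rule.

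Once the alternatives are eliminated, $P^2c_0$ can only die as the target of $d_3(h_1\vv(24,7,9)+Pe_0)$, and this proves the claim.

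The main obstacle will be the exhaustive elimination of other possible sources of a longer differential hitting $P^2c_0$, since this depends on detailed knowledge of the $E_3$-page in stem $25$. If a stray candidate remains, I would try to settle it with $h_1$-multiplication: $h_1P^2c_0\neq 0$ must also die, so a candidate whose $h_1$-multiple vanishes cannot be the source.
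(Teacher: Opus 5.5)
Your main argument (``$P^2c_0$ detects $\text{Im } J$, so it must die, and $h_1\vv(24,7,9)+Pe_0$ is the only possible killer'') breaks down at the elimination step, and the tools you list cannot repair it. The class $h_1v_{24,2}$ in bidegree $(25,3)$ survives to $E_4$; it appears as a candidate source in Table~\ref{tab:Adams d_4 mo9}. A priori it could support a $d_8$ into $(24,11)$ and hit $P^2c_0$.
\begin{itemize}
\item Naturality does not rule this out, because $h_1v_{24,2}$ is not in the image of the unit map.
\item $h_0$-linearity does not rule it out either, because source and target are both $h_0$-torsion ($h_0h_1=0$ and $h_0c_0=0$).
\item Your $h_1$-test would need $h_1^2v_{24,2}=0$ on $E_8$, and you have no way to know that.
\item You cannot cite Lemma~\ref{lem:permanent cycle in stem 25 filtration 3 mo9} instead. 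The paper's only argument that $h_1v_{24,2}$ is a permanent cycle uses that stem $24$ is $h_0$-torsion free in filtrations $\geq 7$ on $E_4$. That holds only because $P^2c_0$ has already been removed by the very $d_3$ you are trying to prove, so citing it would be circular.
\end{itemize}
So the ``must die'' reasoning gives at most that $P^2c_0$ is hit by \emph{some} differential. It does not show that it is hit by $d_3$ of $h_1\vv(24,7,9)+Pe_0$.

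The missing idea is to multiply the source by $h_1$ rather than enumerate all possible sources; this is what the paper does.
\begin{itemize}
\item The intended relation is $h_1\cdot(h_1\vv(24,7,9)+Pe_0)=P^2h_1I_9+h_0^2j=Ph_1\cdot(v_{17,4}+e_0)$. This is an ordinary product with $Ph_1$, which is a permanent cycle coming from the sphere.
\item The ordinary Leibniz rule and Lemma~\ref{lem:d_3-differentials-imj-mo9}(1) then give $h_1\,d_3(h_1\vv(24,7,9)+Pe_0)=Ph_1\cdot Pc_0=h_1P^2c_0\neq 0$.
\item This forces the $d_3$ to be nonzero with $h_1$-multiple $h_1P^2c_0$, hence equal to $P^2c_0$. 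Nothing about longer differentials is needed.
\end{itemize}
Your ``consistency check'' points in this direction, but as written it rests on an unverified identification $h_1\vv(24,7,9)+Pe_0=P(v_{17,4}+e_0)$. That would require membership in $\langle h_3,h_0^4,v_{17,4}+e_0\rangle$ on $E_3$ with controlled indeterminacy, and $h_0^4(v_{17,4}+e_0)=0$ there. Multiplying by $h_1$ avoids the Massey product and Moss' theorem entirely.
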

\begin{proof}
This proof was generated by \textbf{sseqcpp}. The $d_3$-differential on the product $h_1(\vv(24,7,9)+Pe_0)$ will be calculated using the relations: 
\begin{align*}
h_1(\vv(24,7,9)+Pe_0)&=P^2h_1I_9+h_0^2j \\
Ph_1(v_{17,4}+e_0)&=P^2h_1I_9+h_0^2j  \\
Ph_1 Pc_0 = h_1P^2c_0 
\end{align*}
Applying the relations above, the Leibniz rule, and Lemma \ref{lem:d_3-differentials-imj-mo9},
\begin{align*}
d_3(h_1(\vv(24,7,9)+Pe_0))&= d_3(P^2h_1I_9+h_0^2j)\\
                       &=d_3(Ph_1(v_{17,4}+e_0)) \\
                       &=Ph_1d_3(v_{17,4}+e_0) \\
                       &=Ph_1Pc_0 \\
                       &=h_1P^2c_0 
\end{align*}
Since the $d_3$-differential is $h_1$-linear, this implies $d_3(h_1\vv(24,7,9)+Pe_0)= P^2c_0$. 
\end{proof}
\begin{lemma}\label{lem:d_3 differential in stem 28 filtration 10 mo9}
$d_3(\vv(28,10,9))=P^3h_2$.
\end{lemma}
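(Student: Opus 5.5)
The plan is to treat this the way the paper treats Lemma \ref{lem:d_3-differentials-imj-mo9}: show that $P^3h_2$ has to die, then show that $\vv(28,10,9)$ is the only class on the $E_3$-page that can kill it.

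First, the forcing step. The class $P^3h_2$ in bidegree $(27,13)$ is the top of the image-of-$J$ pattern in stem $27 \equiv 3 \pmod 8$. By Theorem \ref{mahowaldimj} it detects an element of $\text{Im } J$. Since $27 \geq 9-1$, Theorem \ref{hoveyimj} shows that this element maps to zero in $\pi_{27}MO\langle 9\rangle$. So the image of $P^3h_2$ under the unit map cannot survive to a nonzero class on $E_\infty$.

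Next, I check that it is still present on $E_3$. The unit map is natural with respect to differentials, and $P^3h_2$ is a permanent cycle in $E_r(\mathbb S)$ (Proposition \ref{prop: sphere differentials}). Hence it supports no differential in $MO\langle 9\rangle$ either. It is not hit by a $d_2$: the $d_2$ table has no such target, and the only $d_2$'s landing in stem 27 come from classes we have already accounted for. So $P^3h_2$ is nonzero on $E_3$ and must be the target of some $d_r$ with $r\geq 3$.

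Then I enumerate the possible sources. A $d_r$ hitting bidegree $(27,13)$ starts from $(28,13-r)$. On the $E_3$-page in stem 28, the candidates in filtration $\leq 10$ are $\vv(28,10,9)$ (for $r=3$) and lower-filtration classes such as $\vv(28,4,9)$, $d_0^2$, and products with $g$ (for $r\geq 4$). I expect the main obstacle to be ruling out these longer differentials, and I will handle them in two ways:
\begin{enumerate}
\item by comparison with $\mathbb S$: the classes $d_0^2$, $h_1^2 \cdot$(stem 26 classes) and $g$-multiples are permanent cycles there, so by naturality their differentials are zero or at least do not hit $P^3h_2$;
\item by $h_0$- and $h_1$-linearity: $h_1P^3h_2=0$ and $h_0P^3h_2\neq 0$, so a source $x$ with $h_0x=0$ cannot hit $P^3h_2$.
\end{enumerate}

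An alternative, and probably cleaner, route mirrors Lemma \ref{lem:d_3 differential in stem 25 filtration 8 mo9}. I expect $\vv(28,10,9)$ to be $P$-periodic with respect to $\vv(20,6,9)$, via a relation such as $P\vv(20,6,9)=\vv(28,10,9)$, possibly up to terms that are $d_3$-cycles; this should be checked in \textbf{ext}. Given such a relation, the Leibniz rule and Lemma \ref{lem:d_3-differentials-imj-mo9}(2) give
\[
d_3(\vv(28,10,9))=P\,d_3(\vv(20,6,9))=P\cdot P^2h_2=P^3h_2 .
\]
If $\vv(28,10,9)$ is not literally $P\vv(20,6,9)$, I would instead multiply by $h_0$ or $h_2$. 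Using a relation like $h_2\vv(28,10,9)=P(h_2\vv(20,6,9))$ together with $h_2$-linearity, as in Lemma \ref{lem:d_3 differential in stem 12 filtration 2 mo9}, gives $h_2d_3(\vv(28,10,9))=h_2P^3h_2\neq 0$, which pins down the differential. I would try the $P$-relation first and fall back on the forcing argument.
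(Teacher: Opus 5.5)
Your preferred route, $P$-periodicity from $\vv(20,6,9)$, is the right idea and is essentially what the paper does. The step ``the Leibniz rule gives $d_3(P\vv(20,6,9))=P\,d_3(\vv(20,6,9))$'' is not valid, though.

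There is no class $P$ in $\text{Ext}^{4,12}_{\mathcal{A}}(\mathbb{Z}/2,\mathbb{Z}/2)$, because that group is zero. So $P$ is a Massey-product operator, and ``$P\vv(20,6,9)$'' is a coset, not a product. Lemma \ref{lem:d_3 differential in stem 25 filtration 8 mo9} is different: there $Ph_1$ is an honest class, so the Leibniz rule applies.

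The paper writes $\vv(28,10,9)\in\langle h_0,h_0^3h_3,\vv(20,6,9)\rangle$ and applies Moss's theorem, which puts $d_3$ of the whole bracket inside $\langle h_0,h_0^3h_3,P^2h_2\rangle=\{P^3h_2\}$. That determines $d_3$ only on the coset. The indeterminacy contains $h_0^6\vv(28,4,9)$, a second class in bidegree $(28,10)$. The step missing from your proposal is $d_3(h_0^6\vv(28,4,9))=0$. The paper obtains it from $d_3(h_0^5\vv(28,4,9))=0$ (bidegree $(27,12)$ is empty) together with $h_0$-linearity.

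Your $h_2$-fallback does not avoid this. Writing $h_2\vv(28,10,9)=P(h_2\vv(20,6,9))$ still requires commuting $d_3$ past $P$. Lemma \ref{lem:d_3 differential in stem 12 filtration 2 mo9} works only because $h_2\vv(12,2,9)=h_0^2h_4$ is a sphere class with a known $d_3$.

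The forcing argument has the same blind spot: $\vv(28,10,9)$ is not the only possible $d_3$-source for $P^3h_2$, since $h_0^6\vv(28,4,9)$ also lies in $(28,10)$. Your tools also cannot exclude longer differentials. The $h_0$-tower on $\vv(28,4,9)$ is neither in the image of the unit map nor $h_0$-torsion. So neither naturality with $\mathbb{S}$ nor your criterion ``$h_0x=0$ implies $x$ cannot hit $P^3h_2$'' says anything about it.

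The multiples $h_0^{9-r}\vv(28,4,9)$ with $4\le r\le 8$ can be disposed of using the emptiness of $(27,12)$ and $h_0$-linearity. Nothing in your plan rules out $d_9(\vv(28,4,9))=P^3h_2$, however. The paper leaves $d_2(\vv(28,4,9))$ undetermined, so this class may survive that long. Knowing only that $P^3h_2$ must die therefore does not pin the differential on $\vv(28,10,9)$. The Moss-theorem argument avoids this by computing $d_3$ on the bracket directly.
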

\begin{proof}
The Massey product $\langle h_0, h_0^3h_3, \vv(20,6,9) \rangle$ contains the element $\vv(28,10,9)$  with indeterminacy $\{h_0^6\vv(28,4,9)\}$. By Moss's higher Leibniz rule \ref{mossleibniz}:
\begin{align*}
d_3(\langle h_0, h_0^3h_3, \vv(20,6,9) \rangle) \in\;&
\langle d_3(h_0),h_0^3h_3,\vv(20,6,9)\rangle+\langle h_0,d_3(h_0^3h_3),\vv(20,6,9)\rangle+\langle h_0,h_0^3h_3,d_3(\vv(20,6,9))\rangle\\
=\;&\langle 0,h_3,\vv(20,6,9)\rangle+\langle h_0,0,\vv(20,6,9)\rangle+\langle h_0,h_0^3h_3,P^2h_2\rangle.
\end{align*}
The first two terms vanish with zero indeterminacy and the third term contains $P^3h_2$ with zero indeterminacy. Hence
\begin{align*}
        d_3(\langle h_0, h_0^3h_3, \vv(20,6,9) \rangle)=d_3(\vv(28,10,9))+d_3(\langle h_0^6\vv(28,4,9)\rangle) \subset \{P^3h_2\}
\end{align*}
By sparsity, $d_3(h_0^5\vv(28,4,9))=0$. Since the $d_3$-differential is $h_0$-linear, this implies $d_3(h_0^6\vv(28,4,9))=0$. Thus, we conclude that $d_3(\vv(28,10,9))=P^3h_2$. 
\end{proof}
\subsection{$d_4$-differentials}
\begin{proposition}
Table \ref{tab:Adams d_4 mo9} describes the non-zero $d_4$-differentials in the Adams spectral sequence for $MO\langle 9 \rangle$  on all indecomposables on $E_4$ through stem $31$, and on select elements in stems $>31$. 
\end{proposition}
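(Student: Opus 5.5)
The proof will follow the template of the $d_2$ and $d_3$ propositions. First, from the $E_4$-page chart (Figure \ref{fig:mo9_e4_0_31}) I will list every indecomposable through stem 31 that can support a $d_4$ for degree reasons, i.e. whose bidegree $(t-s+1,s+4)$ is nonzero on $E_4$. I expect a short list. On the sphere side it should contain $d_0e_0$, $e_0g$, the surviving part of $h_0^3\Delta h_3^2$, and $h_3h_5$ where relevant. On the $MO\langle 9\rangle$ side it should contain the $v$-classes that survived $E_3$, for instance $v_{24,2}$, whose $d_3$ is undetermined, and $\vv(26,0,9)$ and $\vv(28,4,9)$, whose $d_2$ are open. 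I will also include selected classes in stems $>31$ that are needed for the lower-stem arguments. Each entry is then a separate lemma.

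For classes in the image of the unit map, I will use naturality with Proposition \ref{prop: sphere differentials}, which gives $d_4(d_0e_0)=P^2d_0$, $d_4(e_0g)=d_0Pd_0$, and so on. Before applying this I must check, via the $d_2$/$d_3$ tables, that the source survives to $E_4$ in $MO\langle 9\rangle$ and the target is not already hit. In several cases it is hit (e.g. $e_0$ supports a $d_2$), so the sphere differential becomes vacuous and is recorded as ``No''.

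For the genuinely new classes, I will first try to force the differential using known kernel elements. By Theorem \ref{hoveyimj} and Theorem \ref{mahowaldimj}, image-of-$J$ classes in stems $\ge 8$ (such as $P^2c_0$, the towers in stems $8j-1$, and $P^kh_1c_0$) must die. If such a class is still alive on $E_4$ and the only source in range is a given $v$, the differential is forced, exactly as in Lemma \ref{lem:d_3-differentials-imj-mo9}. Where the May--Milgram theorem \ref{maymilgram} applies, I will use Table \ref{tab:bo9_groups} via the Thom isomorphism. For example, the absence of $\Z/16$ summands in $H^{*}(MO\langle 9\rangle;\Z)$ rules out $d_4$'s between $h_0$-towers based on the $0$-line, which settles the $\Z$ summands in stems 12, 16, 20 and 24. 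The remaining cases will be handled by $h_0,h_1,h_2$-linearity and the Leibniz rule against already determined differentials, or by Moss's rule (Theorem \ref{mossleibniz}) applied to a Massey product description computed with \textbf{ext}, such as $\langle h_0,h_0^3h_3,-\rangle$ or $\langle h_3,h_0^4,-\rangle$. If needed, I will also use the \textbf{sseqcpp} deductions.

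The main obstacle is the stem 23--25 region around $v_{24,2}$ and $h_2g$, together with the stem 28--31 region, where $\vv(28,4,9)$ and $\vv(26,0,9)$ remain undetermined. There the candidate targets lie in multi-dimensional groups and no image-of-$J$ argument applies. My first approach will be to multiply by $d_0$ or $g$ to move into higher stems where the product's differential is known or forced. Failing that, I will mark the entry ``?'', matching the dotted lines on the $E_\infty$ chart.
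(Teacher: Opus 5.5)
Your toolkit (sphere naturality, $h_i$-linearity, multiplying by $d_0$, and ``?'' for open cases) is the one the paper uses. However, the proposal never pins down the table, and where it does commit to specifics it goes wrong.

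\emph{The candidate list.} The $E_4$ candidates that matter are products whose cofactor died on $E_2$. The paper's list is $h_0e_0$, $h_0\vv(24,4,9)$ (indecomposable on $E_4$ because $d_2(\vv(24,4,9))=h_0h_2g$), $h_1v_{24,2}$, $h_1\vv(26,0,9)$ and $d_0e_0$, plus $h_0^3\Delta h_3^2$ in stem $38$. Your guesses $v_{24,2}$, $\vv(26,0,9)$, $\vv(28,4,9)$ are not entries. For instance, $v_{24,2}$ cannot support a $d_4$: its target bidegree $(23,6)$ contained only $h_0h_2g$, which $d_2$ has already hit.

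\emph{The missing key argument.} The one genuinely new nonzero differential is $d_4(h_0\vv(24,4,9))=h_0^2i$, and your plan does not reach it. The paper argues as follows. One has $d_0\cdot h_0\vv(24,4,9)=h_0^3\Delta h_3^2$ with $d_0$ a permanent cycle, and the sphere has $d_4(h_0^3\Delta h_3^2)=h_0^2d_0i$. Hence $h_0\vv(24,4,9)$ cannot be a permanent cycle, and $h_0^2i$ is its only possible target. This is why a stem-$38$ class appears in the table. Your image-of-$J$ template does not substitute for this. It shows $h_0^2i$ must die, but not what kills it: $v_{24,2}$, whose $d_3$ is undetermined, could a priori hit bidegree $(23,9)$ with a $d_7$, so ``the only source in range'' fails.

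\emph{The remaining entries.} These are settled more simply than you suggest. $h_0e_0$ is ``No'' because it is a permanent cycle in $\mathbb S$, not because a sphere differential became vacuous. $h_1v_{24,2}$ is a permanent cycle by $h_0$-linearity: it is $h_0$-torsion, while every class of stem $24$ in filtration $>3$ on $E_4$ is $h_0$-torsion-free. This rules out the target $h_0^3\vv(24,4,9)$.

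\emph{The May--Milgram step.} This step would not do the work you assign it, for three reasons.
\begin{itemize}
\item Theorem \ref{maymilgram}, in the form quoted, concerns towers based on the $0$-line. Through degree $22$ the $0$-line of $MO\langle 9\rangle$ consists only of the four module generators in stems $0,9,19,22$. So the $\Z$-towers in stems $12,16,20$ cannot start on the $0$-line, and those in stem $24$ are generated by $\vv(24,4,9)$ and $\vv(24,7,9)$ in filtrations $4$ and $7$.
\item Table \ref{tab:bo9_groups} stops at $H^{23}$, short of what stem $24$ would need.
\item The sources that actually threaten these towers here, $h_0e_0$ and $h_1v_{24,2}$, are $h_0$-torsion classes, not towers.
\end{itemize}
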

The following table lists the only indecomposables through stem 31 that, for degree reasons, can support a $d_4$-differential. It also includes select differentials in stems $>31$
 \begin{longtable}{llllc}
    \caption[Possible $d_4$-differentials $MO\langle 9 \rangle$]{Possible non-zero $d_4$-differentials on indecomposable elements.
    \label{tab:Adams d_4 mo9}
    } \\
    \toprule
    $x$ & $(t-s,s)$ & $d_4(x)$ & Occurs & Proof\\
    \midrule \endfirsthead
    \caption[]{Possible non-zero $d_4$-differentials on indecomposable elements} \\
    \toprule
    $x$ & $(t-s,s)$ & $d_4(x)$ & Occurs & Proof\\
    \midrule \endhead
    \bottomrule \endfoot
        $h_0e_0$ & $(17, 5)$ & $h_0v_{16,8}$ & No & \ref{prop: sphere differentials}\\
        $h_0\vv(24,4,9)$ & $(24,5)$ & $h_0^2i$ & Yes & \ref{lem:d_4 differential in stem 24 filtration 5 mo9} \\
        $h_1v_{24,2}$ & $(25,3)$ & $h_0^3\vv(24,4,9)$ & No & \ref{lem:permanent cycle in stem 25 filtration 3 mo9} \\
        $h_1\vv(26,0,9)$ & $(27,1)$ & $v_{26,5}$ & ? &  \\
        $d_0e_0$ & $(31,8)$ & $P^2d_0$ & Yes & \ref{prop: sphere differentials} \\
        $h_0^3 \Delta h_3^2$ & $(38,9)$ & $h_0^2d_0i$ & Yes & \ref{prop: sphere differentials} \\
    \end{longtable}
\begin{proof}
For proofs of these differentials see the following series of Lemmas. 
\end{proof}
\begin{lemma}\label{lem:d_4 differential in stem 24 filtration 5 mo9}
$d_4(h_0\vv(24,4,9))=h_0^2i$. 
\end{lemma}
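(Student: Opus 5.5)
We need to show that $d_4(h_0\vv(24,4,9))=h_0^2i$. The plan is to argue that $h_0^2i$ must die in the Adams spectral sequence for $MO\langle 9 \rangle$, and that $h_0\vv(24,4,9)$ is the only class that can kill it.

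\emph{Step 1: $h_0^2i$ survives to $E_4$.} In the sphere, $d_2(i)=h_0Pd_0$ by Proposition \ref{prop: sphere differentials}. By naturality the same differential holds in $E_r(MO\langle 9 \rangle)$, since $d_2(\vv(24,7,9))=h_1Pd_0+h_0^2i$ is a separate differential. So $i$ itself does not survive. However, $h_0^2i$ in bidegree $(23,9)$ is a $d_2$-cycle: $d_2(h_0^2i)=h_0^3Pd_0$, and I expect $h_0^3Pd_0=0$ on $E_2$, as in the sphere.

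I also need $h_0^2i$ not to be a $d_2$-boundary. Table \ref{tab:Adams d_2 mo9} gives $d_2(\vv(24,7,9))=h_1Pd_0+h_0^2i$, so $h_0^2i \equiv h_1Pd_0$ on $E_3$, and this class is nonzero. I would then check from the $E_3$ chart that nothing in stem 24, filtration 6 supports a $d_3$ hitting it. The candidate $h_0\vv(24,4,9)$ sits in filtration 5, so it can only reach filtration 9 via $d_4$. Thus $h_0^2i$ (equivalently $h_1Pd_0$) is nonzero on $E_4$.

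\emph{Step 2: $h_0^2i$ must die.} In the sphere, $h_1Pd_0$ detects $\eta\cdot\eta\kappa$-type image-of-$J$ elements. More precisely, $Pd_0$ in stem 22 is killed or detects part of $\text{Im }J$ in stem 23. Here $h_0^2i$ detects the element of order dividing 8 in the $\text{Im }J$ tower in $\pi_{23}\mathbb S$, since $i$, $h_0i$, $h_0^2i$ sit on the Adams edge in stem $23=8\cdot 3-1$ (Theorem \ref{mahowaldimj}). By Theorem \ref{hoveyimj}, $\text{Im } J$ maps to zero in $\pi_{23}MO\langle 9 \rangle$ because $23\ge 8$. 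Hence $h_0^2i$ (or $h_1Pd_0$, which is the same class on $E_3$) cannot survive to $E_\infty$ as a nonzero class. It could only survive as an $E_\infty$ class detecting a higher-filtration element in the image of the unit map, and that is excluded because the unit's image of this element is zero and nothing lies above it in higher filtration.

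\emph{Step 3: uniqueness of the source.} A class at $(23,9)$ can be hit by $d_r$ from $(24,9-r)$ with $r\ge 4$. The possible sources are in filtrations $\le 5$ of stem 24 on $E_4$. By the $E_4$ chart these are $h_0\vv(24,4,9)$ and possibly $v_{24,2}$ and the filtration-0 and filtration-1 classes. Note that $\vv(24,4,9)$ itself died via $d_2$ onto $h_0h_2g$, but $h_0\vv(24,4,9)$ survives since $h_0^2h_2g=0$.

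I would rule out the other sources as follows. $v_{24,2}$ would need a $d_7$; its $h_1$-multiple $h_1v_{24,2}$ is a permanent cycle (Lemma \ref{lem:permanent cycle in stem 25 filtration 3 mo9}), while $h_1h_0^2i=0$ gives no contradiction. Instead I would use $h_0$-linearity, which requires $h_0^3i=0$ on $E_r$, or argue via the $d_3(v_{24,2})=h_2g$ possibility, or use degree/sparsity. Filtration-0 classes would need to be compared with integral cohomology via May--Milgram: $H^{25}(BO\langle 9\rangle)$ data concerns only towers, and $h_0^2i$ is not on a tower from filtration 0, so those sources are excluded by $h_0$-divisibility considerations.

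\emph{Main obstacle.} The main obstacle is this uniqueness step, in particular excluding a longer differential from $v_{24,2}$ or from low-filtration classes. My first attempt would be $h_0$-linearity: the target $h_0^2i$ is $h_0$-divisible on $E_4$, while a source like $v_{24,2}$ is not an $h_0$-multiple. If $d_r(v_{24,2})=h_0^2i$, then $d_r(h_0v_{24,2})=h_0^3i$, and I would check whether $h_0^3i$ is zero or already dead, to decide whether this gives a contradiction. If both $h_0\vv(24,4,9)$ and another source remain, I would use the shortest-differential principle: $d_4$ from $h_0\vv(24,4,9)$ occurs first whenever it is forced nonzero. To force it, I would use the Leibniz rule with $d_0$: $d_0\cdot h_0\vv(24,4,9)$ versus $d_0h_0^2i=h_0^4 \cdot(\ldots)$, paralleling the argument in Lemma \ref{d_2 differential in stem 24 filtration 4 mo9}.
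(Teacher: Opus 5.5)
Your Steps 1 and 2 are essentially sound. Two small corrections: $i$ and $h_0i$ do not survive in the sphere, so the image-of-$J$ tower in stem 23 starts at $h_0^2i$. Also, Step 2 is just naturality: if the image of $h_0^2i$ were nonzero in $E_\infty(MO\langle 9\rangle)$, it would detect the image of an $\mathrm{Im}\,J$ class, which is zero by Theorem \ref{hoveyimj}.

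The genuine gap is Step 3. Knowing that the class in bidegree $(23,9)$ must die does not tell you it dies by a $d_4$ from $h_0\vv(24,4,9)$ rather than by a longer differential. The obvious rival is a $d_7$ from $v_{24,2}$. Its $d_3$ is left undetermined in the paper, so you cannot discard it via $d_3(v_{24,2})=h_2g$. None of your tools closes this:
\begin{itemize}
\item $h_1$-linearity gives nothing, as you note.
\item $h_0$-linearity gives nothing either. On $E_3$ one has $h_0^2i=h_1Pd_0$, so $h_0^3i=h_0h_1Pd_0=0$ there, and $d_r(h_0v_{24,2})=0$ is no contradiction.
\item The ``shortest-differential principle'' is not a principle unless you have independently shown $d_4(h_0\vv(24,4,9))\neq 0$, which is exactly what is at stake.
\end{itemize}
You would also have to exclude $h_4I_9$. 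The paper does this separately: $h_4I_9=h_1\vv(23,0,9)$ is a permanent cycle.

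The missing idea is to force the source to support a differential directly. Your last sentence gestures at this but does not carry it out, and $d_0h_0^2i=h_0^4\cdot(\ldots)$ is not the relevant relation. The argument runs as follows.
\begin{itemize}
\item On $E_2$ one has $d_0\cdot h_0\vv(24,4,9)=h_0^3\Delta h_3^2$.
\item The class $h_0^3\Delta h_3^2$ comes from the sphere, where $d_4(h_0^3\Delta h_3^2)=h_0^2d_0i$ (Proposition \ref{prop: sphere differentials}). This differential remains nonzero in $MO\langle 9\rangle$, as recorded in Table \ref{tab:Adams d_4 mo9}.
\item Since $d_0$ is a permanent cycle, the Leibniz rule gives $d_0\cdot d_4(h_0\vv(24,4,9))=h_0^2d_0i\neq 0$.
\item Hence $h_0\vv(24,4,9)$ cannot be a $d_4$-cycle. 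Its only possible target is $h_0^2i$ (which equals $h_1Pd_0$ on $E_3$).
\end{itemize}
This is the paper's argument. It makes your Steps 2 and 3 unnecessary, because the death of $h_0^2i$ and the identity of its killer come out together.
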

\begin{proof}
This proof was generated by $\textbf{sseqcpp}$. Suppose $h_0\vv(24,4,9)$ is a permanent cycle. Since $d_0h_0\vv(24,4,9)=h_0^3 \Delta h_3^2$ where $d_0$ is a permanent cycle, this implies that $h_0^3 \Delta h_3^2$ is a permanent cycle. This is a contradiction because $h_0^3 \Delta h_3^2$ supports a $d_4$ differential by Proposition \ref{prop: sphere differentials}. 
Thus, $h_0\vv(24,4,9)$ must support a differential. The only possible target is $h_0^2i$, and so $d_4(h_0\vv(24,4,9))=h_0^2i$.
\end{proof}
\begin{lemma}\label{lem:permanent cycle in stem 25 filtration 3 mo9}
The element $h_1v_{24,2}$ is a permanent cycle. 
\end{lemma}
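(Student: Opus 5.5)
The plan is to show that every possible differential $d_r(h_1v_{24,2})$ with $r\geq 2$ vanishes. By sparsity, the only candidate target listed in Table \ref{tab:Adams d_4 mo9} is $d_4(h_1v_{24,2})=h_0^3\vv(24,4,9)$; higher differentials land in bidegrees that are empty on the relevant page. So the work reduces to excluding this single $d_4$ and then checking that $d_r$ for $r\geq 5$ has no targets.

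First I would check that $h_0^3\vv(24,4,9)$ is already zero on $E_4$. By Lemma \ref{d_2 differential in stem 24 filtration 4 mo9}, $\vv(24,4,9)$ supports a $d_2$, so it does not survive to $E_3$. By Lemma \ref{lem:d_4 differential in stem 24 filtration 5 mo9}, $h_0\vv(24,4,9)$ does survive to $E_4$ and supports $d_4(h_0\vv(24,4,9))=h_0^2i$. Then $h_0$-linearity of $d_4$ gives
\[
d_4(h_0^3\vv(24,4,9))=h_0^4 i .
\]
If $h_0^4i$ is nonzero on $E_4$, then $h_0^3\vv(24,4,9)$ supports a nonzero $d_4$. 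It therefore cannot simultaneously be the target of a $d_4$, because on $E_4$ an element that is hit must be a $d_4$-cycle. This rules out the differential.

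If instead $h_0^4i$ has already been killed, say by $d_2(h_0^2\vv(24,7,9))$-type classes, I would fall back on a multiplicative argument. On $E_2$ one has $h_1\cdot h_0=0$, so $h_0\cdot h_1v_{24,2}=0$. Then
\[
0=d_4(h_0h_1v_{24,2})=h_0^4\vv(24,4,9),
\]
and I would compare this with the value of $h_0^4\vv(24,4,9)$ on $E_4$. A second route is to multiply by $d_0$, which is a permanent cycle. Using the relation $d_0h_0\vv(24,4,9)=h_0^3\Delta h_3^2$ from Lemma \ref{lem:d_4 differential in stem 24 filtration 5 mo9}, a nonzero $d_4(h_1v_{24,2})$ would give
\[
d_4(d_0h_1v_{24,2})=h_0^5\Delta h_3^2 .
\]
Here $d_0h_1v_{24,2}$ could be computed in $\mathrm{Ext}$ by machine. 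If that product is zero, or its differential is forced by comparison with $\mathbb{S}$ (Proposition \ref{prop: sphere differentials}), we obtain a contradiction.

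The main obstacle is pinning down the state of $h_0^3\vv(24,4,9)$ and $h_0^4 i$ on the $E_3$ and $E_4$ pages, that is, deciding which of the two arguments above applies. Which one works depends on the exact $h_0$-tower structure in stems 23 and 24, which must be read from the machine-computed charts. I would try the $d_0$-multiplication argument first, since it reduces to known sphere differentials.
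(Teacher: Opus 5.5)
There is a genuine gap, and it starts with your sparsity claim. Stem 24 of the $E_\infty$-page carries two $h_0$-towers, generated by $h_0^2\vv(24,4,9)$ and $h_0^4\vv(24,7,9)$. This is how the paper obtains the two $\mathbb{Z}$ summands of $\pi_{24}MO\langle 9\rangle$, matching the rank-two rational cohomology spanned by $p_3^2$ and $p_6$. Hence for every $r\geq 4$ the bidegree $(24,3+r)$ contains the nonzero class $h_0^{r-1}\vv(24,4,9)$ on $E_r$. So $d_r(h_1v_{24,2})$ for $r\geq 5$ is not excluded by degree, and the problem does not reduce to a single $d_4$.

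Your primary argument against the $d_4$ also fails. From $d_2(\vv(24,7,9))=h_1Pd_0+h_0^2i$ and $h_0h_1=0$ you get $d_2(h_0^2\vv(24,7,9))=h_0^4i$. So $h_0^4i=0$ already on $E_3$, and $h_0^3\vv(24,4,9)$ is a $d_4$-cycle. The $d_0$-multiplication route depends on facts you have not established: the value of $d_0h_1v_{24,2}$, and whether $h_0^5\Delta h_3^2$ is still nonzero on $E_4$. Even if it worked, it would only dispose of $d_4$.

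The idea that closes the gap is the one you wrote down as a fallback but did not finish, applied uniformly in $r$. Since $h_0h_1=0$, $h_0$-linearity gives $h_0\,d_r(h_1v_{24,2})=d_r(0)=0$. On the other hand, every class in stem 24 of filtration greater than $3$ on $E_4$ is $h_0$-torsion free, since they lie in the two towers; the same holds on later pages. Therefore $d_r(h_1v_{24,2})=0$ for all $r\geq 4$. In particular, your equation $0=h_0^4\vv(24,4,9)$ is already the desired contradiction, because that class is nonzero on $E_4$.

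The cases $r=2,3$ follow from the Leibniz rule, using $d_2(v_{24,2})=0$, $d_3(v_{24,2})\in\{0,h_2g\}$, and $h_1h_2=0$. This uniform $h_0$-torsion argument is exactly the paper's proof. The towers you saw as the obstacle are precisely what make it work.
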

\begin{proof}
All elements in stem 24 and filtration greater than 3 of the $E_4$-page are $h_0$-torsion free while $h_1\vv(24,4,9)$ is $h_0$-torsion. 
\end{proof}
\begin{lemma}
The element $\vv(23,0,9)$ is a permanent cycle. 
\end{lemma}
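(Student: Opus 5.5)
We need to show that $\vv(23,0,9)$, in bidegree $(23,0)$, supports no nonzero $d_r$ for any $r\geq 2$. A $d_r$ on it lands in stem $22$, filtration $r$. The plan is to list all possible targets in stem $22$ on each page $E_r$, and then eliminate them one page at a time. From the $E_2$-page chart, the classes in stem $22$ of positive filtration are few: $h_2\vv(18,1,9)$ in filtration $2$, classes of the form $h_2c_1$ and $h_1\vv(20,1,9)$-type products, and $Pd_0$-related classes in high filtration.

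\textbf{Step 1: the $d_2$.} I would first use $h_0$- and $h_1$-linearity together with the relations on the $E_2$-page. The main candidate target is $h_2\vv(18,1,9)$. But by Lemma \ref{d_2 differential in 22 stem mo9}, $h_2\vv(18,1,9)=d_2(\vv(22,0,9))$ is itself a $d_2$-boundary. So if $d_2(\vv(23,0,9))$ were nonzero, it would have to be a different class in that bidegree.

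To rule this out I would check the products $h_0\vv(23,0,9)$ and $h_1\vv(23,0,9)$ on $E_2$. If $h_0\vv(23,0,9)=0$ while $h_0$ times the candidate target is nonzero, then $h_0$-linearity, exactly as in Lemma \ref{lem:d_2 differential in stem 31 filtration 0 mo9}, forces $d_2(\vv(23,0,9))=0$. Alternatively, if $\vv(23,0,9)$ is a product or Massey product of permanent cycles, I would apply Moss' higher Leibniz rule (Theorem \ref{mossleibniz}). Natural candidates are $\langle h_3,h_0,\vv(16,\ast,9)\rangle$, or an expression in $h_i$ and $I_9$, for instance lying in $\langle h_4, h_1, I_9\rangle$ or $\langle h_2,\ldots\rangle$. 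The point is that all the inputs are $d_2$-cycles once $h_1h_3=d_2(I_9)$ is accounted for, so the resulting bracket lands in zero.

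\textbf{Step 2: higher differentials.} After the $d_2$-differentials are run, the $E_3$-page in stem $22$ should be nearly empty. The differentials $d_2(\vv(22,0,9))$ and $d_2$ on $c_1$-related classes kill the low-filtration classes. What remains in stem $22$ of filtration $\geq 3$ should be things like $h_0^k$-multiples or $Pd_0$-type classes. Those in the image of the unit map from $\mathbb S$ are either permanent and nonzero, or killed by sphere differentials.

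For each survivor I would argue in one of two ways. Either it is $h_0$-divisible or $h_1$-divisible in a way incompatible with $\vv(23,0,9)$ being in filtration $0$ with the corresponding product zero. Or, as in Lemma \ref{lem:permanent cycle in stem 25 filtration 3 mo9}, a torsion comparison applies: $\vv(23,0,9)$ is $h_0$-torsion (or its $h_0$-tower is short), while the potential targets are $h_0$-torsion-free, or conversely. A differential would then violate $h_0$-linearity.

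Finally, I would cross-check with Theorem \ref{maymilgram} and Table \ref{tab:bo9_groups}. If $\vv(23,0,9)$ is the base of an $h_0$-tower, then $H^{23}$ of the Thom spectrum would have to contain a $\Z/2^r$ summand for a differential on the tower. The Thom isomorphism with $H^{23}(BO\langle 9\rangle;\Z)\cong(\Z/2)^2$ restricts this to $r=1$, which only matters on $E_2$.

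\textbf{Main obstacle.} I expect the hardest step to be eliminating a possible $d_3$ or $d_4$ into a surviving high-filtration class in stem $22$. There linearity arguments may not suffice, and I would first try multiplying by $d_0$ or $g$ and comparing with known sphere differentials, as in Lemma \ref{lem:d_4 differential in stem 24 filtration 5 mo9}.
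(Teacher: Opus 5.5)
Your outline never isolates the actual obstruction, and the tools it relies on cannot remove it. Step 1 is aimed at the wrong stem. The class $h_2\vv(18,1,9)$ lies in bidegree $(21,2)$, since it is the target of the $d_2$ out of stem $22$. A differential $d_2(\vv(23,0,9))$ would instead land in $(22,2)$.

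Once the shorter differentials are run (e.g.\ $h_2c_1=d_2(h_2\vv(20,1,9))$), the only differential $\vv(23,0,9)$ could support is a $d_8$ with target $Pd_0\in(22,8)$, a class coming from the sphere. This is precisely the case your methods do not reach. On $E_8$ one already has $h_0Pd_0=0$, because it equals $d_2(i)$. One also has $h_1Pd_0=0$: it equals $h_0^2i$ modulo $d_2(\vv(24,7,9))$, and $h_0^2i=d_4(h_0\vv(24,4,9))$. So $h_0$- and $h_1$-linearity and torsion comparisons place no constraint on $d_8(\vv(23,0,9))$. For instance, $h_1\vv(23,0,9)=h_4I_9$ is nonzero, and $h_1$-linearity only yields $d_8(h_4I_9)=h_1Pd_0=0$.

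Moss' rule is unavailable too. Brackets involving $I_9$ or $h_4$ cannot be formed on $E_8$, since neither is even a $d_2$-cycle, contrary to what you write. May--Milgram concerns $h_0$-towers and relates stem $k$ to $H^{k+1}$, not $H^{k}$, so it says nothing here.

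The missing idea is an input from outside the spectral sequence showing that $Pd_0$ cannot be hit. Your sentence that sphere classes are ``either permanent and nonzero, or killed by sphere differentials'' is not a valid criterion. Being a nonzero permanent cycle for $\mathbb S$ says nothing about being a boundary for $MO\langle 9\rangle$: $h_1h_3$, $h_1^2h_4$, $h_2h_4$, $c_1$ and $Pc_0$ are all such classes, and all are killed in this very spectral sequence.

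What must be decided is whether $\eta^2\overline{\kappa}$, which $Pd_0$ detects, lies in the kernel of the unit map. The paper settles this with the Ando--Hopkins--Rezk orientation. The composite $\pi_{22}\mathbb S\to\pi_{22}MO\langle 9\rangle\to\pi_{22}MO\langle 8\rangle\to\pi_{22}tmf$ sends $\eta^2\overline{\kappa}$ to a nonzero class (Bruner--Rognes, Theorem~11.80). Hence $Pd_0$ cannot be a boundary, and $d_8(\vv(23,0,9))=0$. Without some such comparison, your proposal does not rule out the one differential that actually needs ruling out.
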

\begin{proof}
The only possible differential with source $\vv(23,0,9)$ is a $d_8$-differential with target $Pd_0$. To rule out this possibility, consider the composition of maps:
\begin{align*}
\pi_{n} \mathbb{S} \rightarrow \pi_{n} MO \langle 9 \rangle  \rightarrow \pi_{n} MO \langle 8 \rangle \rightarrow \pi_{n} tmf 
\end{align*}
According to Theorem 11.80 of Bruner-Rognes \cite{BrunerRognestmf}, $\eta^2\overline{\kappa}\in \pi_{22} \mathbb{S}$ and $\eta\mu_{25} \in  \pi_{26} \mathbb{S}$ have nonzero images under this composition. The element $Pd_0 \in \text{Ext}^{8,30}_{\mathbb{A}}(\mathbb{Z}/2,\mathbb{Z}/2)$ detects $\eta^2\overline{\kappa}$ and $h_1P^3h_1 \in \text{Ext}^{14,40}_{\mathbb{A}}(\mathbb{Z}/2,\mathbb{Z}/2)$ detects $\eta\mu_{25}$.
Thus, if $Pd_0$ and $h_1P^3h_1$ were boundaries in the Adams spectral sequence for $MO\langle 9 \rangle$, $\eta^2\overline{\kappa}$ and $\eta\mu_{25}$ would map to zero in $\pi_{*} tmf$ under the above composition. Hence, $d_8(\vv(23,0,9))=0$ and $\vv(23,0,9)$ is a permanent cycle. 
\end{proof}
\begin{lemma}
The element $h_4I_9$ is a permanent cycle.
\end{lemma}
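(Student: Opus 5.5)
The element $h_4I_9$ sits in bidegree $(24,1)$, since $I_9$ is in $(9,0)$ and $h_4$ in $(15,1)$. The plan is to show that every possible differential $d_r(h_4I_9)$, $r\geq 2$, vanishes, either because the target bidegree is empty on $E_r$ or because the candidate target is ruled out by multiplicative structure. First I will list the targets in stem 23 and filtration $\geq 3$ on each page $E_r$, using the charts in Figures \ref{fig:mo9_e2}--\ref{fig:mo9_einf} and the differentials already established. The candidates I expect are $h_0h_2g$, $h_2g$, classes near $h_0^2i$, $Pd_0$, and the $h_0$-tower classes.

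For $d_2$ the Leibniz rule is the starting point: $d_2(h_4I_9)=d_2(h_4)I_9+h_4d_2(I_9)=h_0h_3^2I_9+h_1h_3h_4$. On the $E_2$-page for $MO\langle 9\rangle$ I would check from the $\mathrm{Ext}$ computation that both summands vanish. The term $h_1h_3h_4$ should be zero or cancel, and $h_0h_3^2I_9$ should be zero in $\mathrm{Ext}^{3,26}$. If instead they cancel against each other, that is equally good. Note that $h_4I_9$ must survive to $E_3$ as a $d_2$-cycle, or else the statement would be false, so this step is a direct check.

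For $d_r$ with $r\geq 3$ I would use linearity arguments like those in the preceding lemmas. The class $h_0h_2g$ is already killed by $d_2(\vv(24,4,9))$, and $h_0^2i$ is hit by $d_4(h_0\vv(24,4,9))$. So on $E_3$ and beyond, the remaining candidates should be few: most likely $h_2g$ at filtration 4 for $d_3$, and $Pd_0$ at filtration 8 for $d_7$.

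To rule out $Pd_0$, I would use the argument from the previous lemma. $Pd_0$ detects $\eta^2\bar\kappa$, which maps nontrivially to $\pi_{22}tmf$ through $MO\langle 9\rangle\to MO\langle 8\rangle\to tmf$, so $Pd_0$ cannot be a boundary.

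To rule out $h_2g$, I would multiply by $h_1$ or $h_0$. If $h_0h_4I_9$ or $h_1h_4I_9$ vanishes on $E_3$ while $h_0h_2g$ or $h_1h_2g$ does not, then $h_0$- or $h_1$-linearity forces $d_3(h_4I_9)\neq h_2g$. Here $h_1h_2g=0$ is a concern, so $h_0$ is the better multiplier to try. Alternatively, I would compare with $d_3(v_{24,2})$, whose value is listed as undetermined.

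The main obstacle is exactly this $h_2g$ possibility. If the linearity arguments fail, I would try expressing $h_4I_9$ as a Massey product, such as $\langle h_0,h_3^2,I_9\rangle$ or $\langle h_3,h_0h_3,I_9\rangle$, and applying Moss' higher Leibniz rule (Theorem \ref{mossleibniz}) to conclude that the $d_3$ lands in a bracket with zero indeterminacy that does not contain $h_2g$.
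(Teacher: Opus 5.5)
There is a genuine gap, and it sits where you expected: the case where the differential on $h_4I_9$ hits $h_2g$.

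First, a bookkeeping point. $h_2g\in\mathrm{Ext}^{5,28}$ lies in bidegree $(23,5)$, so it is a potential $d_4$-target, not a $d_3$-target. It can survive to $E_4$, because $d_3(v_{24,2})$ is left undetermined in the paper.

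None of the linearity tests you propose rules this case out:
\begin{itemize}
\item $h_1h_2g=0$ already in $\mathrm{Ext}$, because $h_1h_2=0$.
\item $h_0h_2g$ is hit by $d_2(\vv(24,4,9))$, as you note yourself, so it is zero on $E_3$ and every later page. Multiplying by $h_0$ therefore yields no contradiction.
\item $h_2\cdot h_2g=h_2^2g$ is hit by $d_2(\vv(27,4,9))$, so $h_2$-linearity fails in the same way.
\end{itemize}
The Massey-product fallback is not substantiated. You would have to show that $h_4I_9$ actually lies in $\langle h_0,h_3^2,I_9\rangle$ or a similar bracket, defined on the relevant $E_r$-page, with controlled indeterminacy.

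There is also a bidegree error in your $Pd_0$ step. $Pd_0\in\mathrm{Ext}^{8,30}$ lies in stem $22$, so it can never be the target of a differential on a class in stem $24$. The $tmf$ argument you recall is the one that excludes $d_8(\vv(23,0,9))=Pd_0$, a differential from stem $23$ to stem $22$.

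The missing idea is the $E_2$-relation $h_4I_9=h_1\vv(23,0,9)$. With it, the preceding lemma finishes the proof immediately. That lemma shows $\vv(23,0,9)$ is a permanent cycle: its only possible differential is $d_8$ to $Pd_0$, which the $tmf$ comparison excludes. Since $h_1$ is also a permanent cycle, the Leibniz rule gives $d_r(h_4I_9)=h_1\,d_r(\vv(23,0,9))=0$ for every $r$. This disposes of $h_2g$ and every other candidate target in stem $23$ at once. It needs no case-by-case inspection of the chart and no knowledge of $d_3(v_{24,2})$.
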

\begin{proof}
Consider the relation $h_1\vv(23,0,9)=h_4I_9$. Since $h_1$ and $\vv(23,0,9)$ are permanent cycles, $h_4I_9$ is also a permanent cycle. 
\end{proof}

\subsection{The abutment}
The $E_\infty$-page is the associated graded of the Adams filtration for $MO\langle 9 \rangle$. To obtain the  group structures, we must determine if there are any hidden extensions. The first hidden $2$-extension in the Adams spectral sequence for $\mathbb{S}$ occurs in stem 40 (see Figure 11.14 Bruner-Rognes \cite{BrunerRognestmf}), and so there are no hidden $2$-extensions between elements that are in the image 
of the map of Adams spectral sequences induced by the unit map $\mathbb{S} \rightarrow MO \langle 9 \rangle$. For a reference for claims about products in $\pi_{*} \mathbb{S}$ that are used in the following Propositions, see Theorem 11.61 of Bruner-Rognes \cite{BrunerRognestmf}.
\begin{proposition}\label{prop:extension problem stem 16 mo9}
$\pi_{16}MO \langle 9 \rangle \cong \Z \oplus \Z/2$
\end{proposition}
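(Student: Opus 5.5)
The plan is to read off the $E_\infty$-page in stem 16 and then solve the resulting extension problem. The first task is to identify which classes survive to $E_\infty$ in stem 16. On the $E_2$-page there is an $h_0$-tower starting in low filtration, which corresponds rationally to the class $p_4$. There is also the usual sphere-image classes $h_1h_4$, $h_0^k h_4$ (coming from stem 15 sources), $h_1 d_0$, $Pc_0$-type classes, and whatever comes from the $\Sigma^9$ summand (for instance $h_1\vv(15,\ast,9)$-type classes or $I_9$-multiples).

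The differentials tabulated earlier settle most of these. The class $h_1h_4$ does not support a $d_3$ by Proposition \ref{prop: sphere differentials}, so it survives; it detects $\eta_4$, which maps nontrivially since Theorem \ref{lem:mo9_17_kernel} only kills $\eta\eta_4$ in stem 17. The class $h_1d_0$ is hit by $d_3(h_1h_4)$ in the sphere, but that differential does not occur here. Instead, $h_1 d_0$ is killed by $d_2(e_0)=h_1^2d_0$-related classes or by $d_3$ from the stem 17 column. I will check this against the $E_4$- and $E_\infty$-charts. The class $Pc_0$ lies in stem 16 and is killed by $d_3(v_{17,4}+e_0)$ by Lemma \ref{lem:d_3-differentials-imj-mo9}. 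Hovey's Theorem \ref{hoveyimj} confirms that image-of-$J$ classes must die. The $h_0$-tower must survive as an infinite tower, with its bottom shifted if needed, because $\pi_{16}\otimes\mathbb Q\cong \mathbb Q$ by Proposition \ref{prop: rational bo9}: the Thom isomorphism applied to $H^{16}(BO\langle 9\rangle;\mathbb Q)=\mathbb Q\{p_4\}$ gives exactly one rational class. The net result should be that $E_\infty$ in stem 16 consists of one $h_0$-tower plus the single class $h_1h_4$.

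Next comes the extension problem: I need to show there is no hidden $2$-extension from $h_1h_4$ into the tower. The first approach is naturality. The class $\eta_4\in\pi_{16}\mathbb S$ has order $2$, and since $h_1h_4$ lies in the image of the unit map, its image in $\pi_{16}MO\langle 9\rangle$ has order dividing $2$. It is nonzero, so it generates a $\mathbb Z/2$. The extension question then reduces to whether this torsion element splits off. The element $2\eta_4=0$ holds already in the sphere, and the remark preceding the proposition (no hidden $2$-extensions on classes in the image of the unit map) gives this directly. As a result, the torsion subgroup is exactly $\mathbb Z/2$ and the tower contributes a $\mathbb Z$.

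Finally, since $\mathbb Z$ is free, the abelian group extension $0\to\mathbb Z/2\to\pi_{16}\to\mathbb Z\to 0$ splits, giving $\pi_{16}MO\langle 9\rangle \cong \mathbb Z\oplus\mathbb Z/2$. I expect the main obstacle to be the first step: making sure no other stem-16 classes survive. These include classes from the $\Sigma^9$ summand, $h_1d_0$, and tower-truncating differentials. I would handle them by checking the stem-17 sources in the $d_2$, $d_3$ and $d_4$ tables, and by using May–Milgram (Theorem \ref{maymilgram}) together with Table \ref{tab:bo9_groups} to control any differentials out of or into the tower.
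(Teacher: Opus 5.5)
Your extension argument is the paper's. There, too, $h_1h_4$ is said to detect the image of $\eta_4$ (written $\eta^*$), and $\pi_{16}MO\langle 9\rangle$ is written as an extension of $\Z/2\{\eta^*\}$ by the $\Z$ coming from the surviving $h_0$-tower. The nonsplit case $\pi_{16}\cong\Z$ is then excluded because $2\eta^*=0$ in $\pi_{16}\mathbb{S}$. Your formulation is equivalent and slightly cleaner: the image of $\eta_4$ is a nonzero element of order $2$, so the torsion subgroup is $\Z/2$ and the torsion-free quotient splits off. Your rational check via $H^{16}(BO\langle 9\rangle;\mathbb{Q})=\mathbb{Q}\{p_4\}$ is a sensible supplement to reading the tower off the chart, which is all the paper does.

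Your justification of the $E_\infty$-page, however, has errors you should fix.

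\begin{itemize}
\item \emph{Nonvanishing of the image of $\eta_4$.} Your splitting argument genuinely needs this, and it does not follow from Burklund--Senger. That result concerns the kernel in stem $17$, and a zero image of $\eta_4$ would be perfectly consistent with it; it would merely force $\eta\eta_4\mapsto 0$ as well. The correct reason is that $h_1h_4$ is a permanent cycle by naturality and cannot be a boundary. A $d_r$ landing in filtration $2$ must start in filtration $2-r\le 0$ of stem $17$. But $\mathrm{Ext}^{0,17}$ for $MO\langle 9\rangle$ vanishes, since the $\mathcal{A}$-module generators of $H^*(MO\langle 9\rangle;\Z/2)$ in this range sit in degrees $0,9,19,22$.
\item \emph{Stem-$16$ inventory.} The classes $h_0^kh_4$ and $h_1d_0$ live in stem $15$. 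Also, $d_3(h_1h_4)=h_1d_0$ does not occur in the sphere, where $h_1d_0$ detects $\eta\kappa$. The sphere classes in stem $16$ are $h_1h_4$, $h_1^2d_0$ and $Pc_0$. In $MO\langle 9\rangle$ the last two are killed by $d_2(e_0)$ and $d_3(v_{17,4}+e_0)$ respectively.
\item \emph{The hidden-extension remark.} The paper's remark covers only pairs of classes that are both in the image of the unit map. It therefore says nothing about an extension from $h_1h_4$ into the tower. Your direct argument via $2\eta_4=0$ is what actually handles that case.
\end{itemize}
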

\begin{proof}
Note that $h_1h_4$ detects $\eta^{*} \in \pi_{16} \mathbb{S}$. The group $\pi_{16}MO \langle 9 \rangle$ lies in the short exact sequence
\begin{align*}
0 \rightarrow \mathbb{Z}\rightarrow \pi_{16}MO \langle 9 \rangle \rightarrow  \mathbb{Z}/2\{\eta^{*}\} \rightarrow 0 
\end{align*}
Suppose this short exact sequence does not split and $\pi_{16}MO \langle 9 \rangle \cong \mathbb{Z}$. This would imply $2\eta^{*} \neq 0$, which is a contradiction since $2\eta^{*}=0\in \pi_{16} \mathbb{S}$.
\end{proof}

\begin{proposition}\label{prop:extension problem stem 20 mo9}
$\pi_{20}MO \langle 9 \rangle \cong \Z \oplus \Z/8$
\end{proposition}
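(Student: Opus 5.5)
The plan is to read off stem 20 of the $E_\infty$-page (Figure \ref{fig:mo9_einf}) and then resolve the possible hidden $2$-extensions. First I identify the surviving classes in stem 20. There is an $h_0$-tower that represents the torsion-free summand; rationally, Proposition \ref{prop: rational bo9} gives $H^{20}(BO\langle 9\rangle;\mathbb{Q})\cong\mathbb{Q}\{p_5\}$ (note $p_3p_2$ is absent), so $\pi_{20}MO\langle 9\rangle\otimes\mathbb{Q}\cong\mathbb{Q}$ and exactly one $\Z$ appears. The remaining torsion classes I expect are those coming from the sphere via the unit map: $h_0^2g$, $h_0g$, and $g$ (where $g$ detects $\overline{\kappa}\in\pi_{20}\mathbb{S}\cong\Z/8$). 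The differentials in Table \ref{tab:Adams d_3 mo9} must be checked to confirm that $g$ and its $h_0$-multiples survive. In particular, $d_3(\vv(20,6,9))=P^2h_2$ has its source in stem 20, but it does not kill $g$. Also, the argument of Lemma \ref{lem:d_3-differentials-imj-mo9} shows $g$ is not the source of a differential onto $P^2h_2$. Any other stem-20 classes are either sources of differentials ($\vv(20,1,9)$ and $\vv(20,6,9)$) or sit in the tower.

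Second, I resolve the extensions. The classes $g,h_0g,h_0^2g$ lie in the image of the unit map $\mathbb{S}\to MO\langle 9\rangle$, and $\overline{\kappa}$ has order $8$ in $\pi_{20}\mathbb{S}$, with $2\overline{\kappa}$ and $4\overline{\kappa}$ detected by $h_0g$ and $h_0^2g$. By naturality, the image of $\overline{\kappa}$ in $\pi_{20}MO\langle 9\rangle$ has order exactly $8$, since $h_0^2g$ survives nontrivially. Its image is detected by $g$, and multiplication by $2$ and $4$ is detected by the visible $h_0$-multiplications. This gives a short exact sequence
\[
0\rightarrow \Z \rightarrow \pi_{20}MO\langle 9\rangle \rightarrow \Z/8\{\overline{\kappa}\}\rightarrow 0.
\]
This sequence splits because $8\overline{\kappa}=0$ already in $\pi_{20}\mathbb{S}$, and the unit map is a homomorphism. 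Hence the lift $\overline{\kappa}$ generates a $\Z/8$ subgroup mapping isomorphically onto the quotient. The argument is parallel to Proposition \ref{prop:extension problem stem 16 mo9}.

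The main obstacle is ruling out a hidden $2$-extension from the top of the $\Z/8$ (the class $h_0^2g$) into the $h_0$-tower, or from the tower into the torsion classes. Equivalently, one must make sure the quotient is exactly $\Z/8$ and that no tower element plus torsion has an unexpected order. Splitting via the unit map handles the first issue, since $8\overline{\kappa}=0$ holds on the nose. Additionally, I would check with Theorem \ref{maymilgram} and Table \ref{tab:bo9_groups} (through the Thom isomorphism, $H^{21}$ vanishes) that the tower in stem 20 supports no differentials. This confirms the $\Z$ summand.
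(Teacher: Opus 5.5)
Your proof is correct and is essentially the paper's argument. In both, $\pi_{20}MO\langle 9\rangle$ is an extension of $\Z/8\{\overline{\kappa}\}$ (detected by $g,h_0g,h_0^2g$) by the $\Z$ coming from the $h_0$-tower, and it splits because $8\overline{\kappa}=0$ already in $\pi_{20}\mathbb{S}$; your rational rank count and your observation that the image of $\overline{\kappa}$ has order exactly $8$ (because $h_0^2g$ survives) make explicit what the paper leaves implicit. The closing May--Milgram check is superfluous, and in the form stated in the paper it is vacuous because there is no $0$-line class in stem $20$, while the rational computation already guarantees the $\Z$ summand.
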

\begin{proof}
The element $g$ detects the class $\overline{\kappa} \in \pi_{20} \mathbb{S}$. The group $\pi_{20}MO \langle 9 \rangle$ lies in the short exact sequence
\begin{align*}
0 \rightarrow \mathbb{Z}\rightarrow \pi_{20}MO \langle 9 \rangle \rightarrow  \mathbb{Z}/8\{\overline{\kappa}\} \rightarrow 0 
\end{align*}
Suppose this extension does not split and $\pi_{20}MO \langle 9 \rangle \cong \mathbb{Z}$. This would imply $8\overline{\kappa} \neq 0$, which is a contradiction since $8\overline{\kappa}=0\in \pi_{20} \mathbb{S}$.  
\end{proof}
We next consider degree 28. While we technically do not have stem 28 of the $E_\infty$-page completely calculated, the ambiguity ($d_2(\vv(28,4,9))=?$) does not impact the group $\pi_{28}(MO\langle 9 \rangle)$ since $\vv(28,4,9)$ supports an $h_0$-tower. The possible hidden $2$-extension also does not involve $\vv(28,4,9)$.

\begin{proposition}\label{prop:extension problem stem 28 mo9}
$\pi_{28} MO\langle 9 \rangle \cong \Z \oplus \Z \oplus \Z/2$
\end{proposition}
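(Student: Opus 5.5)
We follow the pattern of Propositions \ref{prop:extension problem stem 16 mo9} and \ref{prop:extension problem stem 20 mo9}: read off the associated graded from the $E_\infty$-page in stem 28, then resolve the extension problems.

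\textbf{Step 1: the associated graded.} First we list the $E_\infty$-page in stem 28. Through the computed differentials, the surviving classes should be:
\begin{itemize}[label={}]
\item two $h_0$-towers, one of which is the tower on $\vv(28,4,9)$ (the one whose $d_2$ is undetermined);
\item a single isolated class of order $2$, e.g.\ one detecting $\epsilon\overline{\kappa}$ or $\eta^2 \cdot(\text{stem }26)$, possibly coming from the sphere.
\end{itemize}
As remarked just before the statement, the ambiguity $d_2(\vv(28,4,9))=Ph_1\vv(18,1,9)$ or $0$ only changes whether the bottom class of that tower survives. Either way a $\Z$ is detected, so the ambiguity affects only Adams filtration, not the group. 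Rationally, Proposition \ref{prop: rational bo9} gives $H^{28}(BO\langle 9\rangle;\mathbb{Q})$ spanned by $p_3p_4$ and $p_7$, so $\pi_{28}MO\langle 9\rangle\otimes\mathbb{Q}\cong\mathbb{Q}^2$. This confirms that exactly two towers survive, and that no tower is truncated by a differential from stem 29. We can double-check the latter with Theorem \ref{maymilgram} using the integral cohomology in degree 29, or just by degree reasons on the chart.

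\textbf{Step 2: the extension problem.} This leaves a short exact sequence
\[
0 \rightarrow \Z\oplus\Z \rightarrow \pi_{28}MO\langle 9\rangle \rightarrow \Z/2\{x\} \rightarrow 0 .
\]
If $x$ is in the image of the unit map $\mathbb{S}\to MO\langle 9\rangle$, we argue exactly as before. The torsion class detecting $x$ comes from a class $\alpha\in\pi_{28}\mathbb{S}$ (namely $\epsilon\overline{\kappa}$, detected by $h_1$-multiples/$d_0g$-type classes). Then $2\alpha=0$ in $\pi_{28}\mathbb{S}$ by Theorem 11.61 of Bruner-Rognes \cite{BrunerRognestmf}, so the image of $\alpha$ is an order-$2$ lift of $x$ and the sequence splits.

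If instead $x$ is not from the sphere, the remaining worry is a hidden $2$-extension from $x$ into one of the towers. I would rule this out with $\eta$-multiplication: exhibit $x=\eta y$ with $y$ a permanent cycle in stem 27. Then $2x=\eta(2y)=0$, since $2\eta=0$. Alternatively, use the relation $h_1x\ne0$ on $E_\infty$, whereas every element of the towers has $h_1$-multiple zero.

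\textbf{Main obstacle.} The main obstacle is this hidden extension step, and specifically identifying the correct torsion class on $E_\infty$ together with a homotopy lift of order $2$. That lift is either a sphere element or an $\eta$-multiple, since naturality alone does not control extensions for classes outside the image of the unit map.
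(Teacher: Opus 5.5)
Your first branch is exactly the paper's argument. Stem $28$ of $E_\infty$ gives $0\to\Z\oplus\Z\to\pi_{28}MO\langle 9\rangle\to\Z/2\to 0$, and this splits because the $\Z/2$ is detected by the image of a sphere class of order $2$.

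\textbf{The gap.} You never identify the torsion class, so your proof is conditional on which branch applies. The paper reads the class off the chart. It is $d_0^2$ in bidegree $(28,8)$, the image of the class detecting $\kappa^2\in\pi_{28}\mathbb{S}$. This class survives the spectral sequence for $MO\langle 9\rangle$ (see also Lemma~\ref{lem: kappa2 mo12}), and $2\kappa^2=0$ then supplies the order-$2$ lift. Your candidate $\epsilon\overline{\kappa}$ is in fact the same homotopy class: the $2$-primary part of $\pi_{28}\mathbb{S}$ is $\Z/2$, and both $\epsilon\overline{\kappa}$ and $\kappa^2$ are nonzero there. However, its detector is $d_0^2$. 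It is not an $h_1$-multiple, and it is not a class of $d_0g$ type, since $d_0g$ lies in stem $34$.

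\textbf{The second branch.} Once the class is identified, you do not need it. Its $h_1x\neq 0$ variant would not work anyway. Suppose $2\tilde x$ were detected by a tower class $t$, for a lift $\tilde x$ of $x$. Then $\eta\cdot 2\tilde x=0$ holds automatically, so $h_1x\neq0$ gives no contradiction. Excluding such an extension would instead require $h_1t\neq 0$, which is the opposite of what you assert about the towers.

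\textbf{The rational check.} Your computation that $H^{28}(BO\langle 9\rangle;\mathbb{Q})$ is spanned by $p_3p_4$ and $p_7$ is correct, and it is a useful supplement the paper does not spell out. But it only fixes the free rank. It cannot rule out an extra tower that is truncated by a differential from stem $29$ and leaves finite torsion behind. Indeed, $MO\langle 12\rangle$ has the same rational cohomology, yet $\pi_{28}MO\langle 12\rangle$ acquires a $\Z/2^l$ summand in exactly this way. So the count of towers and the absence of truncation still have to come from the chart. Your proposed May--Milgram cross-check would also need $H^{29}(BO\langle 9\rangle;\Z)$, which the paper does not tabulate.
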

\begin{proof}
The element $d_0^2$ detects $\kappa^{2} \in \pi_{28} \mathbb{S}$. The group $\pi_{28}MO \langle 9 \rangle$ lies in the short exact sequence
\begin{align*}
0 \rightarrow \mathbb{Z} \oplus \mathbb{Z} \rightarrow \pi_{28}MO \langle 9 \rangle \rightarrow \mathbb{Z}/2\{\overline{\kappa}^2\} \rightarrow 0 
\end{align*}
Suppose this extension does not split and $\pi_{28}MO \langle 9 \rangle \cong \Z \oplus \Z$. This would imply $2\kappa^{2} \neq 0$, which is a contradiction since $2\kappa^{2}=0\in \pi_{28} \mathbb{S}$.  
\end{proof}

\begin{proposition}\label{prop:pi_24 mo9}
$\pi_{24} MO \langle 9 \rangle \cong \Z \oplus \Z \oplus (\Z/2)^2 \text{ or } \Z \oplus \Z \oplus \Z/2$
\end{proposition}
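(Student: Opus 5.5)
From the $E_\infty$-page in stem 24 (Figure \ref{fig:mo9_einf}), I will assemble the group by the same method used in Propositions \ref{prop:extension problem stem 16 mo9}, \ref{prop:extension problem stem 20 mo9} and \ref{prop:extension problem stem 28 mo9}. The work splits into three parts: settling what survives, solving the $2$-extensions, and isolating the one ambiguity.

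\emph{What survives.} I first record the stem 24 survivors after the differentials of Tables \ref{tab:Adams d_2 mo9}, \ref{tab:Adams d_3 mo9}, \ref{tab:Adams d_4 mo9}. There are two $h_0$-towers. One is the tower on $\vv(24,4,9)$, truncated by $d_2(\vv(24,4,9))=h_0h_2g$ (Lemma \ref{d_2 differential in stem 24 filtration 4 mo9}) and $d_4(h_0\vv(24,4,9))=h_0^2i$ (Lemma \ref{lem:d_4 differential in stem 24 filtration 5 mo9}); what remains is still an infinite tower beginning slightly higher. The other is the tower beginning in filtration $0$ or $1$. Rationally these must account for $\mathbb{Q}$ in rank two, since by Proposition \ref{prop: rational bo9} $H^{24}(BO\langle 9\rangle;\mathbb{Q})$ is spanned by $p_3^2$ and $p_6$. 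I would check this rank against the chart as a consistency check.

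The torsion survivors are $h_1\vv(23,0,9)=h_4I_9$ in stem $24$, since it is a permanent cycle by the lemmas above. There is also the image of $h_1 g$ from the sphere if it is not killed; in fact $h_1g$ is hit or sits on a tower, and I would verify which from the chart. Finally there is the element $v_{24,2}$, whose fate is governed by the undetermined $d_3(v_{24,2})=h_2g$ in Table \ref{tab:Adams d_3 mo9}.

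\emph{Extensions.} The torsion classes coming from $\pi_{24}\mathbb S$ are $2$-torsion in the sphere, namely $\eta\bar\kappa$-type and $\eta_4\eta$-type classes, and they are detected in the image of the unit map. Hence, exactly as in the earlier propositions, no $\Z/2$ class can be glued onto a $\Z$ tower. Doing so would make its $2$-multiple nonzero in $\pi_{24}\mathbb S$, contradicting the known relation $2x=0$ from Theorem 11.61 of Bruner--Rognes. The same argument rules out an extension between two $\Z/2$'s. For $h_4I_9$, which is not in the image from the sphere, I would use $h_4I_9=h_1\vv(23,0,9)$: it detects $\eta$ times a class, so twice it is $2\eta\cdot(-)=0$. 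This makes its summand split.

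\emph{The ambiguity.} The main obstacle is $v_{24,2}$. If $d_3(v_{24,2})=h_2g$, it dies, and the torsion is a single $\Z/2$. If $d_3(v_{24,2})=0$, then $v_{24,2}$ survives, since $h_1v_{24,2}$ is a permanent cycle by Lemma \ref{lem:permanent cycle in stem 25 filtration 3 mo9} and no longer differential is possible for degree reasons. Its class is then $2$-torsion; I would argue this by $h_0$-multiplication and the absence of room above it, and I expect a small argument may be needed there. That gives $(\Z/2)^2$.

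I would try to decide $d_3(v_{24,2})$ by naturality with $tmf$, using whether $\nu\bar\kappa$ (detected by $h_2g$) maps nontrivially. I expect this will fail to resolve the question, which is why the statement records both possibilities.
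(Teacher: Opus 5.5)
Your route is the paper's: you split on whether $d_3(v_{24,2})=h_2g$, and you dispose of the filtration-one class using $h_4I_9=h_1\vv(23,0,9)$. That class therefore detects an $\eta$-multiple, which has order $2$.

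The step that fails is in the case $d_3(v_{24,2})=0$. There you claim the class $x$ detected by $v_{24,2}$ has order $2$ ``by $h_0$-multiplication and the absence of room above it.'' Multiplication by $h_0$ on $E_\infty$ only rules out non-hidden extensions, and there is room. The two surviving $h_0$-towers in stem $24$, generated by $h_0^2\vv(24,4,9)$ (filtration $6$) and $h_0^4\vv(24,7,9)$ (filtration $11$), lie above $v_{24,2}$. So $2x$ could be detected by the bottom of a tower.

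Neither of your extension tools reaches $x$. Since $\mathrm{Ext}^{s,s+24}_{\mathcal A}(\Z/2,\Z/2)=0$ for $s\le 2$, every class of $\pi_{24}\mathbb S$ has Adams filtration at least $3$, so $x$ is not in the image of the unit map. And $v_{24,2}$ is not known to be an $h_1$-multiple. The paper does not settle this hidden extension; it is exactly why its second case yields either group.

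You do not need to settle it. Let $T\subset\pi_{24}MO\langle 9\rangle$ be the subgroup detected by the two towers. It is torsion-free, because doubling a class detected by $h_0^k y$ gives one detected by $h_0^{k+1}y\neq 0$, so $T\cong\Z^2$. Since nothing else survives in stem $24$ above filtration $2$, we have $2x\in T$.
\begin{itemize}
\item If $2x=2t$ with $t\in T$, then $x-t$ has order $2$, giving $\Z^2\oplus(\Z/2)^2$.
\item If $2x\notin 2T$, then $\langle x\rangle+T$ is torsion-free of rank $2$, giving $\Z^2\oplus\Z/2$.
\end{itemize}
In either case the $h_4I_9$ summand splits off by the $\eta$ argument. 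Both outcomes lie in the stated disjunction, so the ``or'' records two ambiguities (the $d_3$ and this extension), not just the $d_3$.

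Some bookkeeping also needs fixing:
\begin{itemize}
\item $h_1g$ lies in stem $21$.
\item $\eta\bar\kappa$ and $\eta\eta_4$ lie in stems $21$ and $17$, not stem $24$.
\item The second tower starts in filtration $11$, not $0$ or $1$.
\item Your paragraph about torsion classes coming from the sphere is vacuous: the only torsion survivors are $h_4I_9$ and possibly $v_{24,2}$.
\item $h_1v_{24,2}$ being a permanent cycle does not imply that $v_{24,2}$ is one; only the degree argument gives that.
\end{itemize}
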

\begin{proof}
The ``or'' statement depends on whether the differential $d_3(\vv(24,2,9))=h_2g$ or $0$. \\

Case 1: Suppose $d_3(\vv(24,2,9))=h_2g$. Stem 24 of the $E_{\infty}$-page contains $h_4I_9$ in filtration $1$, $h_0^2\vv(24,4,9)$, $h_0^4\vv(24,7,9)$, where the latter two generate $h_0$-towers. 
There is a possible hidden $2$-extension involving the element $h_4I_{9}$. But there is the relation $h_4I_{9} = h_1\vv(23,0,9)$. Thus, $h_4I_{9}$ detects an class in homotopy that is an $\eta$-multiple. Since $2\eta=0$, this rules out a hidden $2$-extension. Thus, $\pi_{24} MO \langle 9 \rangle \cong \Z \oplus \Z \oplus \Z/2$. \\

Case 2: Suppose $d_3(\vv(24,2,9))=0$. Stem 24 of the $E_{\infty}$-page is then identical to Case 1 with the addition of the element $\vv(24,2,9)$. As before, there is no possible hidden $2$-extension involving $h_4I_{9}$, but there is possibly a hidden $2$-extension involving $\vv(24,2,9)$. 
If such a $2$-extension exists, then  $\pi_{24} MO \langle 9 \rangle \cong \Z \oplus \Z \oplus \Z/2$. If not,  $\pi_{24} MO \langle 9 \rangle \cong \Z \oplus \Z \oplus (\Z/2)^2$
\end{proof}

\begin{theorem}\label{thm: fivebrane groups}
Table \ref{tab:fivebrane groups} describes the 2-primary component of the Fivebrane bordism groups
$\Omega^{\mathrm{Fivebrane}}_n$ for the values of $n$ listed.
\end{theorem}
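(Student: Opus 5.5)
The plan is to assemble the table from the machinery already developed in this chapter, stem by stem. We run the Adams spectral sequence
\[
E_2^{s,t}=\text{Ext}^{s,t}_{\mathcal{A}}(H^*(MO\langle 9\rangle;\Z/2),\Z/2)\Rightarrow \pi_{t-s}MO\langle 9\rangle
\]
and compute the $E_2$-page by machine through the relevant range using \textbf{ext}. Through degree 23 we use the $\mathcal{A}$-module splitting of $H^*(MO\langle 9\rangle;\Z/2)$ given earlier. The bottom summand $\mathcal{A}/\!/\mathcal{A}(3)$ contributes, by change of rings, $\text{Ext}_{\mathcal{A}(3)}(\Z/2,\Z/2)$. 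This agrees with $\text{Ext}_{\mathcal{A}}(\Z/2,\Z/2)$ in a range, which lets us name classes such as $h_i$, $c_0$, $d_0$, $e_0$, $g$, $P^kh_j$. The remaining summands contribute the classes $I_9$, $\vv(18,1,9)$, $\vv(19,0,9)$, and so on.

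Next we read off $E_\infty$ by applying, in order:
\begin{itemize}[label={}]
\item the $d_2$-differentials of Table \ref{tab:Adams d_2 mo9};
\item the $d_3$-differentials of Table \ref{tab:Adams d_3 mo9};
\item the $d_4$-differentials of Table \ref{tab:Adams d_4 mo9};
\item the permanent-cycle lemmas for $\vv(23,0,9)$, $h_4I_9$ and $h_1v_{24,2}$.
\end{itemize}
All of these are propagated to decomposables via the Leibniz rule together with $h_0$-, $h_1$- and $h_2$-linearity. Any remaining classes in the range must be shown to be permanent cycles or boundaries. We do this by sparsity, and by naturality along $\mathbb{S}\to MO\langle 9\rangle$ using Proposition \ref{prop: sphere differentials}. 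For $h_0$-towers in stems $4k$ we also use May--Milgram (Theorem \ref{maymilgram}) together with the integral cohomology in Table \ref{tab:bo9_groups}, passed through the Thom isomorphism. The surviving $\Z$ summands should then match the rational count from Proposition \ref{prop: rational bo9}: the monomials in $p_3,p_4,\dots$ give one $\Z$ each in degrees $12,16,20$, two in $24$, and two in $28$. This matching serves as a consistency check on the towers.

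Once $E_\infty$ is known stem by stem, we solve the additive extension problems. In most stems $E_\infty$ has at most one class in each filtration, or a lone $h_0$-tower, so the group can be read off directly. In stems $16$, $20$, $24$ and $28$ we invoke Propositions \ref{prop:extension problem stem 16 mo9}, \ref{prop:extension problem stem 20 mo9}, \ref{prop:pi_24 mo9} and \ref{prop:extension problem stem 28 mo9}. For the remaining torsion classes that are in the image of the unit map, the fact that $\pi_*\mathbb{S}$ has no hidden $2$-extensions below stem 40 rules out hidden extensions among them. For classes that are $\eta$-multiples, we argue as for $h_4I_9$, using $2\eta=0$.

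The main obstacle is the stems where differentials remain undetermined. The clearest instance is stem 23. There, whether $v_{24,2}$ supports $d_3$ onto $h_2g$ decides the order (2 or 4), and a possible hidden extension further affects stem 24. The same issue arises with $\vv(26,0,9)$ and $\vv(28,4,9)$ in the higher stems. For these we would first try to settle each case by Leibniz or Massey-product arguments. Failing that, we record the resulting ambiguity exactly as the table does, including the ``$2$ or $4$'' entry and the two options in stem 24. We would leave blank the stems (e.g.\ 25--27, 29, 31) where the ambiguity prevents a determination.
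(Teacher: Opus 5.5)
Your plan is essentially the paper's argument. The paper also reads the table off the $E_\infty$-page obtained from the established $d_2$, $d_3$, $d_4$ tables and permanent-cycle lemmas, and resolves extensions via Propositions \ref{prop:extension problem stem 16 mo9}, \ref{prop:extension problem stem 20 mo9}, \ref{prop:pi_24 mo9}, \ref{prop:extension problem stem 28 mo9}, together with the unit-map and $\eta$-multiple arguments. Your attribution of the stem 23/24 ambiguity to the undetermined $d_3(v_{24,2})$, plus a possible hidden extension on $v_{24,2}$, matches Proposition \ref{prop:pi_24 mo9}; the rational-rank and May--Milgram checks you add are consistency checks, not a different route.
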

\begin{center}
\begin{minipage}{\textwidth}
\captionof{table}[The 2-primary component of the Fivebrane bordism groups]{The 2-primary component of $\Omega^{\mathrm{Fivebrane}}_n$ for the values of $n$ listed.}
\label{tab:fivebrane groups}
\tablehead{\hline%
$n$ & $\Omega^{\mathrm{Fivebrane}}_n$ & $n$ & $\Omega^{\mathrm{Fivebrane}}_n$\\%
\hline}
\tabletail{\hline%
\multicolumn{4}{r}{%
\small\slshape to be continued on the next page}\\}
\tablelasttail{\hline}
\begin{supertabular}{|c|p{5.2cm}|c|p{5.2cm}|}
$8$  & $\Z/2$ & $18$ & $\Z/2$\\
$9$  & $\Z/2 \oplus \Z/2$ & $19$ & $(0)$\\
$10$ & $\Z/2$ & $20$ & $\Z \oplus \Z/8$\\
$11$ & $(0)$ & $21$ & $\Z/2$\\
$12$ & $\Z$ & $22$ & $\Z/2$\\
$13$ & $(0)$ & $23$ & $|\Omega_{23}^{\mathrm{Fivebrane}}| = 2 \text{ or } 4$\\
$14$ & $\Z/2 \oplus \Z/2$ & $24$ & $\Z \oplus \Z \oplus (\Z/2)^2 \text{ or } \Z \oplus \Z \oplus \Z/2$\\
$15$ & $\Z/2$ & $28$ & $\Z \oplus \Z \oplus \Z/2$\\
$16$ & $\Z \oplus \Z/2$ & $30$ & $\Z/2$\\
$17$ & $\Z/2 \oplus \Z/2$ & & \\
\end{supertabular}
\end{minipage}
\end{center}
\begin{proof}
These groups follow from the $E_{\infty}$-page of the Adams spectral sequence for $MO \langle 9 \rangle$ and the $2$-extension problems in this Section. The ``or'' statements in degree 
23 and 24 depend on the values of $d_3(\vv(24,2,9))$ and $d_2(\vv(26,0,9))$.
\end{proof}


\clearpage

\section{$O\langle n \rangle$-Cobordism}
\label{chap:o_n_cobordism}
The purpose of this chapter is to calculate the cobordism groups $\Omega^{\langle n \rangle}_{*}$ in a range of degrees for $n=10,12,16,17$. The calculations proceed in a similar fashion to 
Chapter \ref{chap:fivebrane}.

\subsection{$MO\langle 10 \rangle$}
\subsubsection{The cohomology of $MO\langle 10\rangle$} 
\begin{proposition}[\cite{Stong1963Determination}]
    $H^{*}(BO\langle10 \rangle;\mathbb{Z}/2)$ is isomorphic to:
    \[
        H^{*}(K(\mathbb{Z}/2,10))/\langle Sq^4 i_{10} \rangle \otimes \mathbb{Z}/2[\theta_{i} | L(i) > 6]
    \]
\end{proposition}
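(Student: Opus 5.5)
The statement is Stong's computation, specialized to $n=10$. I would derive it by running the Serre spectral sequence up the Whitehead tower of Figure \ref{bo_whitehead}, starting from the already-stated description of $H^*(BO\langle 9\rangle;\mathbb{Z}/2)$. The relevant fibration is $K(\mathbb{Z}/2,8)\to BO\langle 10\rangle\to BO\langle 9\rangle$, which is the pullback of the path fibration along $x_9\colon BO\langle 9\rangle\to K(\mathbb{Z}/2,9)$. Here $x_9$ restricts, on the Eilenberg--MacLane factor $H^*(K(\mathbb{Z}/2,9))/\langle Sq^2 i_9\rangle$, to the fundamental class $i_9$.

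\emph{Step 1: the $E_2$-page and transgressions.} The fibre is $K(\mathbb{Z}/2,8)$, whose cohomology is polynomial on $Sq^I i_8$ with $I$ admissible of excess $<8$. The fundamental class transgresses as $\tau(i_8)=x_9=i_9$. By Kudo transgression, $\tau(Sq^I i_8)=Sq^I i_9$ for every such generator. The polynomial part $\mathbb{Z}/2[\theta_i\mid L(i)>5]$ of the base is a tensor factor on which the fibration is trivial, so it passes through unchanged.

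\emph{Step 2: killing classes.} For each admissible $I$ whose first entry is not $2$ and which is not a consequence of $Sq^2i_9=0$, the class $Sq^I i_9$ is nonzero in the base. It is killed by $\tau(Sq^Ii_8)$, and the squares $(Sq^Ii_8)^2=Sq^{|I|+8}Sq^Ii_8$ transgress to generators of the same form. This is Borel's transgression theorem, applied degree by degree. The outcome is that the base's $K(\mathbb{Z}/2,9)$ factor is eliminated up to the relation. Some fibre classes $Sq^J i_8$ survive, precisely those transgressing into the ideal generated by $Sq^2 i_9$; the first of these is $\iota=Sq^2 i_8$ in degree $10$, whose transgression is $Sq^2 i_9=0$. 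The surviving fibre classes assemble to $H^*(K(\mathbb{Z}/2,10))$ modulo $Sq^4$ on its fundamental class. The reason is that the first "secondary" relation arises from the Adem relation $Sq^4Sq^2=Sq^6+Sq^5Sq^1$ applied to $Sq^2 i_8$, whose transgression lies in the ideal. This identifies $x_{10}$, via Proposition \ref{prop: sati_x9_x10}, as the fundamental class $i_{10}$.

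\emph{Step 3: the $\theta$ factor.} I would check that passing from $\phi(0,9)$ to $\phi(0,10)$ changes the condition from $L(i)>5$ to $L(i)>6$. The classes $\theta_i$ with $L(i)=6$ should become expressible through the new Eilenberg--MacLane generators, i.e. they are images of $Sq^I$ applied to $x_{10}$. Equivalently, I would follow Stong's argument in the homology of the $\Omega$-spectrum/$ko$ picture: $H^*(BO\langle n\rangle)$ is the quotient $\mathcal{A}/\!/\mathcal{A}_{\phi}$-type extended by the $K(\pi,n)$ part. Then I would count Poincar\'e series to confirm the answer.

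The main obstacle is Step 2, namely controlling the transgression exactly: showing that the ideal $\langle Sq^2 i_9\rangle$ forces precisely the relation $Sq^4 i_{10}=0$ and nothing else, and that no extension problems arise in the multiplicative structure. I would handle this by a Poincar\'e series comparison and by appealing directly to Stong \cite{Stong1963Determination}, who carries out this induction for general $n$. For the thesis the citation suffices, with the Sage computation as an independent check in low degrees.
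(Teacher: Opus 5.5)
Your plan (the Serre spectral sequence of $K(\mathbb{Z}/2,8)\to BO\langle 10\rangle\to BO\langle 9\rangle$ with Kudo transgression) is essentially Stong's method; the paper itself gives no argument and only cites Stong. The gap is in Step 2, exactly where you produce the relation. First, $Sq^4Sq^2=Sq^6+Sq^5Sq^1$ is not an Adem relation: $Sq^4Sq^2$ is admissible, and the relation you have in mind is $Sq^2Sq^4=Sq^6+Sq^5Sq^1$. More seriously, your own spectral sequence refutes $Sq^4 i_{10}=0$:
\begin{itemize}
\item Since $x_{10}$ restricts to $Sq^2 i_8$ on the fibre, $Sq^4x_{10}$ restricts to $Sq^4Sq^2 i_8$. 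This is a nonzero polynomial generator of $H^*(K(\mathbb{Z}/2,8))$, so $Sq^4x_{10}\neq 0$.
\item $Sq^3x_{10}$ restricts to $Sq^3Sq^2 i_8=0$. In total degree $13$ the only $E_2$-terms are $E_2^{0,13}=\langle Sq^5 i_8,\,Sq^4Sq^1 i_8\rangle$ and $E_2^{13,0}=\langle Sq^4x_9\rangle$. The latter is killed by $\tau(Sq^4 i_8)$. Both generators of the former transgress to the same nonzero class $Sq^5x_9=Sq^4Sq^1x_9$, which holds because $Sq^2Sq^3x_9=0$.
\item Hence $Sq^3x_{10}=0$, and $H^{13}(BO\langle 10\rangle;\mathbb{Z}/2)\cong\mathbb{Z}/2$, spanned by $Sq^2Sq^1x_{10}$, which restricts to $(Sq^5+Sq^4Sq^1)i_8$.
\end{itemize}
The displayed right-hand side, by contrast, contains both $Sq^3 i_{10}$ and $Sq^2Sq^1 i_{10}$ in degree $13$, so it is two-dimensional there. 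In degree $14$ it is one-dimensional, while the actual group is two-dimensional, spanned by $Sq^4x_{10}$ and $Sq^3Sq^1x_{10}$.

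So the relation should be $Sq^3 i_{10}$, not $Sq^4 i_{10}$. It is the mod $2$ reduction of the Postnikov invariant $\beta Sq^2 i_{10}$ joining $\pi_{10}=\mathbb{Z}/2$ to $\pi_{12}=\mathbb{Z}$. This invariant is nonzero because under $\Omega^8$ it becomes the $k$-invariant of $BSO\simeq\Omega^8BO\langle 10\rangle$. The paper's own Table \ref{tab:bo10_groups} agrees: by universal coefficients, $H^{13}(BO\langle 10\rangle;\mathbb{Z})=0$ and $H^{14}(BO\langle 10\rangle;\mathbb{Z})=\mathbb{Z}/2$ force a one-dimensional mod $2$ group in degree $13$. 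No argument can deliver the proposition as printed.

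With the relation corrected, your route is the right one, but Step 3 still needs an actual argument rather than the assertion that the $\theta_i$ with $L(i)=6$ ``should become expressible.'' For example, a Poincar\'e series comparison could show that these classes, together with the multiplicative extensions on squares of surviving fibre classes, fill out exactly $H^*(K(\mathbb{Z}/2,10))/\langle Sq^3 i_{10}\rangle$.
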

where $L(i)$ is one plus the number of ones in the binary expansion of $i-1$. We calculate the $\mathcal{A}$-module structure of $H^{*}(MO\langle10 \rangle;\mathbb{Z}/2)$ through degree $31$ using a computer program in Sage.

The $2$-adic cohomology of $BO \langle 10 \rangle$ will be useful in proving certain differentials later in this Section. The space $BO \langle 10 \rangle$
sits in the fibration $K(\mathbb{Z}/2,8) \rightarrow BO\langle 10\rangle \rightarrow BO \langle 9 \rangle$. The 2-adic cohomology of $BO \langle 9 \rangle$ in our degrees of interest was already calculated in Proposition \ref{prop: bo9 cohomology groups}. Alain Cl\'ement's Ph.D. thesis gives an algorithm for calculating the integral cohomology groups of $K(\mathbb{Z}/2^m,n)$. Cl\'ement has also provided a convenient computer program that implements the algorithm. The program is aptly named
the ``Eilenberg-MacLane machine'', and the code is available at the repository \url{https://github.com/aclemen1/EMM}. Using the Eilenberg-MacLane Machine, we obtain the following result.
\begin{proposition}\label{prop: kz2_8 cohomology groups}
Table \ref{tab:k_z2_8_groups} describes the integral cohomology groups $H^n(K(\mathbb{Z}/2,8);\mathbb{Z})$ for $0\leq n \leq 23$. 
\end{proposition}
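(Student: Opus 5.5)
I will compute $H^n(K(\mathbb{Z}/2,8);\mathbb{Z})$ for $0\leq n\leq 23$ by the same route used for Proposition \ref{prop: bo9 cohomology groups}, and cross-check the answer with the Eilenberg--MacLane Machine. By Serre's theorem, $H^*(K(\mathbb{Z}/2,8);\mathbb{Z}/2)$ is the polynomial algebra on the classes $Sq^I i_8$, where $I$ runs over admissible sequences of excess $<8$. First I will list every monomial in these generators through degree $24$. Since the first nontrivial product $i_8^2$ sits in degree $16$, this is a modest list: the indecomposables $Sq^I i_8$ with $|I|\leq 16$, together with a few products such as $i_8^2$, $i_8Sq^1 i_8$, $i_8 Sq^2 i_8$, and so on up to degree $24$. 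Next I will record the action of $Sq^1$ on each basis element. On indecomposables this comes from the Adem relations; for example, $Sq^1 Sq^{2k} = Sq^{2k+1}$ and $Sq^1 Sq^{2k+1}=0$. On products it comes from the Cartan formula.

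Then I will run the Adams spectral sequence for $H\mathbb{Z}\wedge \Sigma^\infty K(\mathbb{Z}/2,8)$. After the change of rings, its $E_2$-page is $\mathrm{Ext}_{E(Sq^1)}(H^*(K(\mathbb{Z}/2,8);\mathbb{Z}/2),\mathbb{Z}/2)$. As an $E(Q_0)$-module, the cohomology splits into free summands, which contribute classes in filtration $0$ with no $h_0$-extension, and trivial summands, which contribute $h_0$-towers. Since the $Sq^1$-homology is nonzero in positive degrees (the class $i_8^2$ is a $Sq^1$-cycle but not a boundary), rationally torsion-free $h_0$-towers cannot occur in positive degrees; $K(\mathbb{Z}/2,8)$ is rationally trivial. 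So every nontrivial $Sq^1$-homology class must be hit by, or support, a differential between $h_0$-towers. These differentials encode higher torsion $\mathbb{Z}/2^r$, through the higher Bocksteins.

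This is exactly the step where the argument departs from Proposition \ref{prop: bo9 cohomology groups}, which collapsed at $E_2$. Here the spectral sequence does not collapse, and I expect the main obstacle to be determining the higher Bockstein differentials. Their pattern is known from Browder's theorem on the Bockstein spectral sequence of an H-space. If $x$ is in odd degree, then $\beta_{r+1}(x\beta_r x)$ and $\beta_{r+1}(x^2)$ are determined in the standard way, giving $\beta_2(i_8 Sq^1 i_8)$-type classes in even degrees: $\beta_2 (Sq^{2k+1}\cdots)$-type relations arise from squares of odd-degree classes, such as $(Sq^1 i_8)^2$ in degree $18$ and $(Sq^3 i_8)^2$ in degree $22$, which produce $\mathbb{Z}/4$ summands. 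I would work these out by hand in the range, and check each $\mathbb{Z}/4$ summand against Cl\'ement's algorithm, which is precisely an implementation of this bookkeeping.

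Having obtained the integral homology $H_n(K(\mathbb{Z}/2,8);\mathbb{Z})$ (all of it torsion in positive degrees), I will finish with the Universal Coefficient Theorem, exactly as in Proposition \ref{prop: bo9 cohomology groups}. Because $\mathrm{Hom}$ into $\mathbb{Z}$ vanishes on torsion, this gives $H^n\cong \mathrm{Ext}(H_{n-1},\mathbb{Z})\cong H_{n-1}$. The resulting shifted groups are the entries to be placed in Table \ref{tab:k_z2_8_groups}.
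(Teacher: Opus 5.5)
\paragraph{Verdict.} There is a genuine gap: the higher-Bockstein step is misidentified. Moreover, carrying out your plan correctly does not reproduce the table; it contradicts the entries in degrees $19$, $21$, $22$.

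\paragraph{The Bockstein step.} Your overall route is sound: Serre's theorem, the Adams/Bockstein spectral sequence for $H\mathbb{Z}\wedge K(\mathbb{Z}/2,8)$ with towers forced to be truncated by rational triviality, then the Universal Coefficient Theorem. It is also more than the paper gives, since its proof consists only of quoting the output of Cl\'ement's Eilenberg--MacLane Machine. The problem is the one nontrivial step.

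\begin{itemize}
\item A square of an odd-degree class never survives to $E_2$: if $|x|$ is odd, then $x^2=Sq^{|x|}x=Sq^1Sq^{|x|-1}x$.
\item In particular $(Sq^1i_8)^2=Sq^1(Sq^8Sq^1i_8)$ and $(Sq^3i_8)^2=Sq^1(Sq^{10}Sq^3i_8)$. These produce no $\mathbb{Z}/4$; had they done so, the $\mathbb{Z}/4$'s would sit in $H^{19}$ and $H^{23}$.
\item Also $i_8Sq^1i_8$ is not a $Sq^1$-cycle, since $Sq^1(i_8Sq^1i_8)=(Sq^1i_8)^2$. So ``$\beta_2(i_8Sq^1i_8)$'' is undefined.
\end{itemize}

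What your description of the $Sq^1$-action misses are the generators $Sq^Ii_8$ of excess $7$ with even leading entry, namely $Sq^8Sq^1i_8$, $Sq^{10}Sq^3i_8$ and $Sq^{10}Sq^2Sq^1i_8$ in this range. Their $Sq^1$ is a square rather than a generator.

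Taking them into account, $E_2=H(H^*;Sq^1)$ through degree $24$ is one-dimensional in degrees $16,17,20,21,24$. It is spanned by $i_8^2$, $i_8Sq^1i_8+Sq^8Sq^1i_8$, $(Sq^2i_8)^2$, $Sq^2i_8\,Sq^3i_8+Sq^{10}Sq^3i_8$, and $(Sq^4i_8)^2$. The class $(Sq^2Sq^1i_8)^2$ does not survive, because it is hit by $Sq^{10}Sq^2Sq^1i_8$. For $|x|$ even, $\beta_2(x^2)=x\,Sq^1x+Sq^{|x|}Sq^1x$. This gives exactly two $\mathbb{Z}/4$'s in the range, in $H^{17}$ and $H^{21}$, and $E_3=0$ below degree $24$.

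\paragraph{The table itself.} The number of $\mathbb{Z}/2$ summands of $H^n(K(\mathbb{Z}/2,8);\mathbb{Z})$ is the rank $r_n$ of $Sq^1\colon H^{n-1}\to H^n$ on mod $2$ cohomology. These ranks satisfy $\dim H^n=r_n+r_{n+1}+\dim E_2^n$.

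\begin{itemize}
\item Serre's theorem gives $\dim H^n(K(\mathbb{Z}/2,8);\mathbb{Z}/2)=4,5,6,7,9,11,14,19$ for $n=16,\dots,23$.
\item Starting from $r_{16}=2$, this forces $r_{17},\dots,r_{23}=1,3,3,4,4,6,8$.
\item Hence $H^{19}\cong(\mathbb{Z}/2)^3$, $H^{21}\cong(\mathbb{Z}/2)^4\oplus\mathbb{Z}/4$ and $H^{22}\cong(\mathbb{Z}/2)^6$. The table instead lists $(\mathbb{Z}/2)^4$, $(\mathbb{Z}/2)^5\oplus\mathbb{Z}/4$ and $(\mathbb{Z}/2)^7$. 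All other entries agree.
\end{itemize}

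Here is an independent check using universal coefficients. $\dim H^{18}(K;\mathbb{Z}/2)$ must equal the number of cyclic summands of $H^{18}(K;\mathbb{Z})$ plus that of $H^{19}(K;\mathbb{Z})$. The table makes this $3+4=7$. But degree $18$ contains only six classes: $Sq^8Sq^2i_8$, $Sq^7Sq^3i_8$, $Sq^7Sq^2Sq^1i_8$, $Sq^6Sq^3Sq^1i_8$, $i_8Sq^2i_8$ and $(Sq^1i_8)^2$.

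So your final step, placing the computed groups into the table, cannot succeed as stated; the table needs correcting in degrees $19$, $21$, $22$. The only later use, in the $BO\langle 10\rangle$ computation, needs just that $H_{20}(K(\mathbb{Z}/2,8);\mathbb{Z})$ has exponent $4$, and that survives the correction.
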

\begin{center}
\begin{minipage}{\textwidth}
\centering
\captionof{table}[Integral cohomology of $K(\mathbb{Z}/2,8)$]{Integral cohomology groups of $K(\mathbb{Z}/2,8)$ through degree $23$.}
\label{tab:k_z2_8_groups}
\tablehead{\hline%
$n$ & $H^n(K(\mathbb{Z}/2,8),\Z)$ & $n$ & $H^n(K(\mathbb{Z}/2,8),\Z)$\\%
\hline}
\tabletail{\hline%
\multicolumn{4}{r}{%
\small\slshape to be continued on the next page}\\}
\tablelasttail{\hline}
\begin{supertabular}{|c|p{3.5cm}|c|p{3.5cm}|}
$0$  & $\Z$    & $12$ & $\Z/2$\\
$1$  & $(0)$   & $13$ & $\Z/2$\\
$2$  & $(0)$   & $14$ & $\Z/2$\\
$3$  & $(0)$   & $15$ & $(\Z/2)^{2}$\\
$4$  & $(0)$   & $16$ & $(\Z/2)^{2}$\\
$5$  & $(0)$   & $17$ & $\Z/2\oplus\Z/4$\\
$6$  & $(0)$   & $18$ & $(\Z/2)^{3}$\\
$7$  & $(0)$   & $19$ & $(\Z/2)^{4}$\\
$8$  & $(0)$   & $20$ & $(\Z/2)^{4}$\\
$9$  & $\Z/2$  & $21$ & $(\Z/2)^{5}\oplus\Z/4$\\
$10$ & $(0)$   & $22$ & $(\Z/2)^{7}$\\
$11$ & $\Z/2$  & $23$ & $(\Z/2)^{8}$\\
\end{supertabular}
\end{minipage}
\end{center}
\begin{proposition}\label{prop:bo10 cohomology groups}
Table \ref{tab:bo10_groups} describes the 2-adic cohomology groups $H^n(BO\langle 10 \rangle;\Z)$ for $0\leq n \leq 23$. 
\end{proposition}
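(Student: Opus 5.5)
We compute the 2-adic cohomology of $BO\langle 10\rangle$ in the same way as Proposition \ref{prop: bo9 cohomology groups} was computed for $BO\langle 9\rangle$. We pass to integral homology $H_*(BO\langle 10\rangle;\Z)\cong \pi_*(H\Z\wedge \Sigma^\infty BO\langle 10\rangle)$ and run the Adams spectral sequence
\[
E_2^{s,t}=\text{Ext}^{s,t}_{\mathcal{A}}(\mathcal{A}/\mathcal{A}Sq^1\otimes H^*(BO\langle 10\rangle;\Z/2),\Z/2)\cong \text{Ext}^{s,t}_{\mathcal{E}(0)}(H^*(BO\langle 10\rangle;\Z/2),\Z/2),
\]
where $\mathcal{E}(0)$ is the exterior algebra on $Sq^1$. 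The first step is therefore to record the $Sq^1$-action on $H^*(BO\langle 10\rangle;\Z/2)$ through degree $24$. For this we use Stong's description: the quotient of $H^*(K(\Z/2,10);\Z/2)$ by $Sq^4 i_{10}$, tensored with the polynomial classes $\theta_i$ with $L(i)>6$. The latter begin in high degree (the first is $\theta_{32}$), so in our range only the Eilenberg--MacLane factor contributes. We already have this module from the Sage computation. As an $\mathcal{E}(0)$-module it decomposes into free summands $\mathcal{E}(0)$ and trivial summands $\Z/2$. Each trivial summand contributes an $h_0$-tower, i.e. a $\Z$ in homology, and each free summand contributes a single class in filtration $0$, i.e. a $\Z/2$. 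Since all $E_2$ classes lie either in filtration $0$ or in $h_0$-towers starting in filtration $0$, every differential and hidden extension is excluded by sparsity and $h_0$-linearity.

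There is a subtlety. Differentials between towers would detect higher torsion $\Z/2^r$ with $r\geq 2$, and the $E_2$-page cannot see these if they are present. So the step I expect to be the main obstacle is justifying that no $\Z/4$ or higher torsion appears. The tool is the Serre spectral sequence of $K(\Z/2,8)\to BO\langle 10\rangle\to BO\langle 9\rangle$ with $\Z_{(2)}$-coefficients, using Table \ref{tab:bo9_groups} for the base and Table \ref{tab:k_z2_8_groups} for the fiber.

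First, I would use this spectral sequence as an order and rank bound. Rationally $BO\langle 10\rangle\simeq_{\mathbb Q} BO\langle 9\rangle$, so the free part is $\mathbb{Q}[p_3,p_4,\dots]$ by Proposition \ref{prop: rational bo9}, together with the rational class in degree $10$ from Table \ref{tab:bo9_groups}. This fixes the ranks: $\Z$ in degrees $0,12,16,20$, and possibly $10$. The ranks must match the number of $h_0$-towers, which gives a consistency check that pins down any potential Bockstein (tower-truncating) differentials.

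Second, for the torsion, the transgression sends the fundamental class of the fiber to $x_{10}$ (Proposition \ref{prop: sati_x9_x10}). The $\Z/4$ summands in the fiber, in degrees $17$ and $21$, can only survive if they are not hit and do not transgress. I would check these two spots by hand. Where the fiber's $\Z/4$ lies outside the range that affects $H^{\leq 23}$, or is killed, no exotic torsion occurs. Then the $\mathcal{E}(0)$ computation gives the exact answer.

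Finally, we apply the Universal Coefficient Theorem exactly as in Proposition \ref{prop: bo9 cohomology groups}:
\[
H^n(BO\langle 10\rangle;\Z)\cong (\Z/2)^{a}\oplus \Z^{b},
\]
where $a$ is the number of $\Z/2$ summands of $H_{n-1}$ and $b$ is the number of $\Z$ summands of $H_n$. Reading these off the chart fills in the table for $0\leq n\leq 23$.
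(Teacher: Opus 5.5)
The gap is at exactly the step you flag as a subtlety, and you resolve it in the wrong direction.

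You aim to show that no $\Z/4$ or higher torsion occurs, so that the $\mathcal{E}(0)$-computation is the final answer and $H^n\cong(\Z/2)^a\oplus\Z^b$. But the table has, for instance, $H^{21}(BO\langle 10\rangle;\Z)\cong\Z/4\oplus\Z/2$, which no formula of that shape can produce. Nor does $h_0$-linearity exclude anything here. The $E_2$-page has one $h_0$-tower in stem $21$ and two in stem $20$, and the rank check you mention actually forces a tower-to-tower differential. Since $BO\langle 10\rangle\simeq_{\mathbb{Q}}BO\langle 9\rangle$ has rational cohomology $\mathbb{Q}[p_3,p_4,\dots]$ (Proposition \ref{prop: rational bo9}), $H_{20}$ has rank $1$ and $H_{21}$ has rank $0$. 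So the stem-$21$ tower must support a nonzero $d_r$ into the stem-$20$ towers, truncating one of them after a change of basis. That truncated tower is a $\Z/2^r$ summand of $H_{20}(BO\langle 10\rangle;\Z)$, which reappears in $H^{21}$ through $\mathrm{Ext}(H_{20},\Z)$. Relatedly, there is no rational class in degree $10$: $H_{10}(BO\langle 10\rangle;\Z)\cong\pi_{10}\cong\Z/2$ by Hurewicz, so drop the ``possibly $10$''.

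So the missing idea is to establish this differential and determine its length $r$. The paper uses the homology Serre spectral sequence of $K(\Z/2,8)\to BO\langle 10\rangle\to BO\langle 9\rangle$. It argues that the $\Z$ from $H_{20}(BO\langle 9\rangle;\Z)$ survives, and that the rest of $H_{20}(BO\langle 10\rangle;\Z)$ comes from a quotient of $H_{20}(K(\Z/2,8);\Z)\cong\Z/4\oplus(\Z/2)^5$. The torsion therefore has exponent at most $4$, and since $r\ge 2$ the differential is a $d_2$.

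A more direct check is Browder's formula in the Bockstein spectral sequence: $\beta_2(x^2)=x\,Sq^1x+Sq^{10}Sq^1x$ for $x=x_{10}$. The right-hand side contains the product term $x\,Sq^1x$, and no $Sq^1$-image of a degree-$20$ class contains such a term, so it is not a $Sq^1$-boundary. Only after this should you apply the Universal Coefficient Theorem, now to groups such as $H_{20}\cong\Z\oplus\Z/4\oplus\cdots$.

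Finally, in this fibration the fiber class $i_8$ transgresses to the $k$-invariant $x_9$, not to $x_{10}$. Proposition \ref{prop: sati_x9_x10} concerns the other fibrations in the tower, the ones with Eilenberg--MacLane base.
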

\begin{center}
\begin{minipage}{\textwidth}
\centering
\captionof{table}[2-adic cohomology of $BO\langle 10 \rangle$]{The 2-adic cohomology groups of $BO\langle 10 \rangle;\Z$ through degree $23$.}
\label{tab:bo10_groups}
\tablehead{\hline%
$n$ & $H^n(BO\langle 10 \rangle; \Z)$ & $n$ & $H^n(BO\langle 10 \rangle; \Z)$\\%
\hline}
\tabletail{\hline%
\multicolumn{4}{r}{%
\small\slshape to be continued on the next page}\\}
\tablelasttail{\hline}
\begin{supertabular}{|c|p{3.5cm}|c|p{3.5cm}|}
$0$  & $\Z$    & $12$ & $\Z$\\
$1$  & $(0)$   & $13$ & $(0)$\\
$2$  & $(0)$   & $14$ & $\Z/2$\\
$3$  & $(0)$   & $15$ & $\Z/2$\\
$4$  & $(0)$   & $16$ & $\Z$\\
$5$  & $(0)$   & $17$ & $\Z/2$\\
$6$  & $(0)$   & $18$ & $(\Z/2)^2$\\
$7$  & $(0)$   & $19$ & $(\Z/2)^2$\\
$8$  & $(0)$   & $20$ & $\Z \oplus \Z/4$\\
$9$  & $(0)$  & $21$ & $\Z/4 \oplus \Z/2$\\
$10$ & $0$   & $22$ & $\Z/2 \oplus \Z/2$\\
$11$ & $\Z/2$  & $23$ & $(\Z/2)^3$\\
\end{supertabular}
\end{minipage}
\end{center}
\begin{proof}
Consider the Adams spectral sequence with signature\footnote{Note that the $E_2$-page description here applies the change-of-rings isomorphism as in the proof of Proposition \ref{prop: bo9 cohomology groups}.}
\[
E^{s,t}_{2} =  \text{Ext}^{s,t}_{\mathcal{A}/Sq^1\mathcal{A}}(H^*(\Sigma^{\infty} BO\langle 10 \rangle;\mathbb{Z}/2),\mathbb{Z}/2)
\Rightarrow \pi_{t-s}(H\mathbb{Z}\wedge \Sigma^{\infty} BO\langle 10 \rangle) \cong H\mathbb{Z}_{t-s}(\Sigma^{\infty} BO\langle 10 \rangle)
\]
The $E_2$-page is shown in Figure \ref{fig:hz_bo10}. The only possible nonzero differential is a $d_r$-differential between the $h_0$-towers in stems $21$ and $20$. By Lemma \ref{lem:d_2 differenital stem 21 filtration 0 bo10}, such a $d_2$-differential does occur. Like the $E_2$-page, all multiplicative generators of the $E_{\infty}$-page are in filtration $0$. 
As a result, a hidden $2$-extension would not impact the group structure in a given degree, it would instead only affect which elements generate the group. Thus, we have calculated the homology groups $H_{n}(BO \langle 10 \rangle; \Z)$. The cohomology groups $H^{n}(BO \langle 10 \rangle; \Z)$ listed in Table \ref{tab:bo10_groups} are then obtained using the Universal Coefficient Theorem as in the proof of Proposition 
\ref{prop: bo9 cohomology groups}.
\begin{figure}
\includegraphics[scale=0.7]{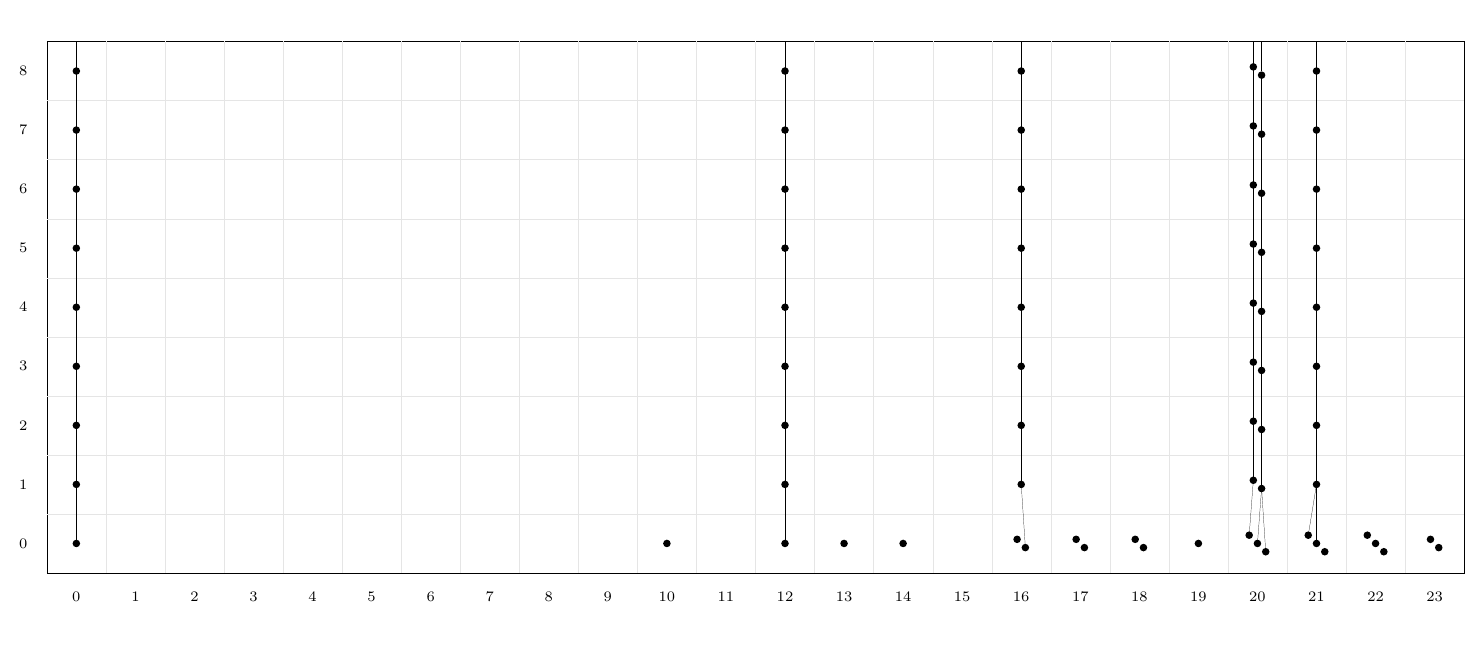}
\caption{The $E_2$-page of the Adams spectral sequence for $HZ_{*}BO\langle 10 \rangle$.\label{fig:hz_bo10}}
\end{figure}

\end{proof}
\begin{lemma}\label{lem:d_2 differenital stem 21 filtration 0 bo10}
There is a nonzero $d_2$-differential from the $h_0$-tower in stem 21 to an $h_0$-tower in stem 20 of the Adams spectral sequence for $H\mathbb{Z}(\Sigma^{\infty} BO\langle 10 \rangle)$.
\end{lemma}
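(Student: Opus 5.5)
We need to show a nonzero $d_2$ from the stem 21 $h_0$-tower to a stem 20 $h_0$-tower in the Adams spectral sequence for $H\mathbb{Z}\wedge\Sigma^\infty BO\langle 10\rangle$. By Theorem \ref{maymilgram}, such a differential is equivalent to a $\Z/4$ summand in $H^{21}(BO\langle 10\rangle;\Z)$. So the plan is to produce a $\Z/4$ summand in $H^{21}(BO\langle 10\rangle;\Z)$ (or in rational/torsion terms, to show the stem 21 tower cannot survive), independently of the Adams spectral sequence. We then read off that the differential must be a $d_2$.

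First, a rational argument shows the stem 21 tower cannot survive. A surviving $h_0$-tower in stem 21 would give a $\Z$ summand in $H_{21}(BO\langle 10\rangle;\Z)$, hence a nonzero class in $H^{21}(BO\langle 10\rangle;\mathbb{Q})$. Since $K(\Z/2,8)$ is rationally trivial, the fibration $K(\Z/2,8)\to BO\langle 10\rangle\to BO\langle 9\rangle$ gives $H^*(BO\langle 10\rangle;\mathbb{Q})\cong H^*(BO\langle 9\rangle;\mathbb{Q})\cong \mathbb{Q}[p_3,p_4,\dots]$ by Proposition \ref{prop: rational bo9}. This ring is concentrated in degrees divisible by $4$, so it vanishes in degree $21$. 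Hence the stem 21 tower must support or receive a differential. Receiving is impossible because stem 22 has no towers on the chart. Its only possible target is then the $h_0$-tower in stem 20, since the chart shows no other classes there. A tower in degree 20 must still survive, because $p_5$, or a multiple of $p_3$-type classes in degree 20, is rationally nonzero. The chart shows two towers in stem 20, so exactly one rank survives, consistent with Table \ref{tab:bo10_groups}.

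Next we pin down the length of the differential, which is the main obstacle. By Theorem \ref{maymilgram}, a $d_r$ from the tower in stem 21 on the $0$-line corresponds to a $\Z/2^r$ summand in $H^{21}(BO\langle 10\rangle;\Z)$ (with the paper's indexing convention). So we need the torsion exponent in degree 21 to be exactly $4$, that is $r=2$. I would get this from the integral Serre spectral sequence of $K(\Z/2,8)\to BO\langle 10\rangle\to BO\langle 9\rangle$, using Table \ref{tab:bo9_groups} and Proposition \ref{prop: kz2_8 cohomology groups}.

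In total degree 21, the $\Z/4$ appears in $H^{21}(K(\Z/2,8);\Z)\cong(\Z/2)^5\oplus\Z/4$, and also $H^{17}(K(\Z/2,8);\Z)$ contains $\Z/4$. We would track which of these survive. The main input is that the transgression of the fundamental class corresponds to $x_{10}$ (Proposition \ref{prop: sati_x9_x10}, integrally the Bockstein class of $Sq^2 i_9$-type in degree 10/11), and that $H^{*}(BO\langle 9\rangle;\Z)$ has only $\Z/2$-torsion and $\Z$'s through degree 20. From this we would argue that the $\Z/4$ in fibre degree 21 cannot be hit, because the sources of differentials into $E^{0,21}$-type positions are at most $2$-torsion images. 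We would also argue it cannot transgress nontrivially, by degree and order considerations.

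A less fiddly alternative is to avoid exact group bookkeeping. There is at least one differential from the stem 21 tower, and the element of filtration $0$ cannot support a differential of length $r\ge 3$ if $h_0^{r-1}$ times the stem 20 tower base would then have to survive while being forced torsion by the $E_\infty$ count. Comparing the $E_2$ chart's filtration of the stem 20 towers, the shortest-length option consistent with the rational rank count is $d_2$. I would try the Serre spectral sequence exponent argument first, falling back on the chart bookkeeping if needed.
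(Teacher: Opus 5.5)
Your rational argument for the \emph{existence} of the differential is sound, and it is a tidy substitute for the paper's rank count in the Serre spectral sequence. Since $K(\Z/2,8)$ is rationally trivial, $H^*(BO\langle 10\rangle;\mathbb{Q})\cong\mathbb{Q}[p_3,p_4,\dots]$ vanishes in degree $21$ and is one-dimensional in degree $20$, so the stem $21$ tower must support some $d_r$ into the stem $20$ towers.

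The gap is the length. Both towers start in filtration $0$ and Adams differentials have $r\ge 2$, so in every case you get a $\Z/2^r$ summand in $H_{20}(BO\langle 10\rangle;\Z)$ with $r\ge 2$. The whole content of ``$d_2$'' is therefore the \emph{upper} bound $r\le 2$: the torsion of $H_{20}(BO\langle 10\rangle;\Z)$, equivalently of $H^{21}$, must have exponent at most $4$.

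Your Serre spectral sequence sketch aims at the wrong inequality. Showing that the $\Z/4$ in $H^{21}(K(\Z/2,8);\Z)$ survives to $E_\infty^{0,21}$ only gives an element of order at least $4$ in $H^{21}(BO\langle 10\rangle;\Z)$, which is already automatic. The restriction to the fibre is merely a surjection onto $E_\infty^{0,21}$. Whether a lift of the $\Z/4$ generator has order $4$ or more is an extension problem against the $\Z/2$-layers $E_\infty^{s,21-s}$ with $s\ge 9$ (for example $E_2^{12,9}=H^{12}(BO\langle 9\rangle;\Z/2)\neq 0$), and you do not address it.

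Two smaller confusions: in the cohomological Serre spectral sequence $E^{0,21}$ is never a target, and $\iota_8$ transgresses to $x_9$. Sati's statement about $x_{10}$ concerns a different fibration. The ``chart bookkeeping'' fallback cannot work either, because the rank data are identical for every $r\ge 2$.

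The paper supplies the missing upper bound from the \emph{homology} Serre spectral sequence of the same fibration. It identifies the torsion of $H_{20}(BO\langle 10\rangle;\Z)$ with $E^\infty_{0,20}$, a quotient of $H_{20}(K(\Z/2,8);\Z)\cong\Z/4\oplus(\Z/2)^5$. Hence the exponent is at most $4$, forcing $r=2$.

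If you adopt this route, note that the paper's assertion that only $(0,20)$ and $(20,0)$ contribute ``for degree reasons'' is not correct at $E^2$. The terms $E^2_{s,t}=H_s(BO\langle 9\rangle;\Z/2)$ are nonzero at $(9,11)$, $(10,10)$ and $(12,8)$. You would still have to show these classes die, or cannot lengthen the extension, before the exponent bound follows.

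Finally, Theorem~\ref{maymilgram} as stated relates Adams differentials for $X$ to $H^*(X;\Z)$. What you actually use is the more elementary fact that for $H\Z\wedge Y$ the truncated towers are exactly the higher $2$-torsion of $H_*(Y;\Z)$, transported to $H^{21}$ by universal coefficients.
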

\begin{proof}
Consider the Serre spectral sequence associated to the fibration $K(\mathbb{Z}/2,8) \rightarrow BO \langle 10 \rangle \rightarrow BO \langle 9 \rangle$
\[
E^2_{s,t}\cong H_s(BO\langle 9 \rangle;H_t(K(\Z/2,8);\Z)) \Rightarrow H_{s+t}(BO\langle 10 \rangle; \Z)
\]
The $E_2$-page can be calculated using Proposition \ref{prop: bo9 cohomology groups} and Proposition \ref{prop: kz2_8 cohomology groups}. For degree reasons, the only bidegrees that contribute to $H_{20}(BO\langle 10 \rangle; \Z)$ are $(0,20)$ and $(20,0)$.
These bidegrees contain $H_{20}(K(\mathbb{Z}/2,8);\Z) \cong \mathbb{Z}/4 \oplus (\mathbb{Z}/2)^5$ and $H_{20}(BO\langle 9 \rangle;\Z) \cong \mathbb{Z}$. Figure \ref{fig:hz_bo10} shows one $h_0$-tower in stem 21 and two $h_0$-towers in stem 20, which implies one $h_0$-tower from stem 20 must survive and detect a $\Z$ summand in $H_{20}(BO \langle 10 \rangle; \Z)$. 
Thus, in the Serre spectral sequence above, the $\Z$-summand in bidegree $(20,0)$ must survive. We then have an extension problem of the form
\[
0 \rightarrow E_{20,0}^{\infty} \cong \Z \rightarrow H_{20}(BO \langle 10 \rangle; \Z) \rightarrow E_{0,20}^{\infty} \rightarrow 0 
\]
where $E_{0,20}^{\infty}$ is a quotient of $\mathbb{Z}/4 \oplus (\mathbb{Z}/2)^5$. It is clear that $H_{20}(BO \langle 10 \rangle; \Z)$ will only contain one $\Z$-summand. Consequently, there must be a nonzero $d_r$-differential, $r\geq 2$, from the $h_0$-tower in stem 21 that truncates one of the $h_0$-towers in stem 20 on a later page. Since the extension problem above implies that $H_{20}(BO \langle 10 \rangle; \Z)$ has at most 4-torsion, it must be a $d_2$-differential.
\end{proof}
\subsubsection{Adams charts}
This section contains charts for the Adams spectral sequence for $MO\langle 10 \rangle$. The $E_r$-page is displayed along with the $d_r$-differentials. The $y$-axis is the Adams filtration $s$ and the $x$ axis is the stem $t-s$. The elements $h_0$, $h_1$, and $h_2$ have had their names suppressed. The dotted lines on the $E_{\infty}$-page represent possibly nonzero differentials that have yet to be determined. 
\clearpage
\FloatBarrier
\begin{figure}[!p]
\centering

\begin{subfigure}{\linewidth}
  \centering
  \includegraphics[width=\linewidth,
    trim=0.5cm 0cm 0cm 0cm,clip]{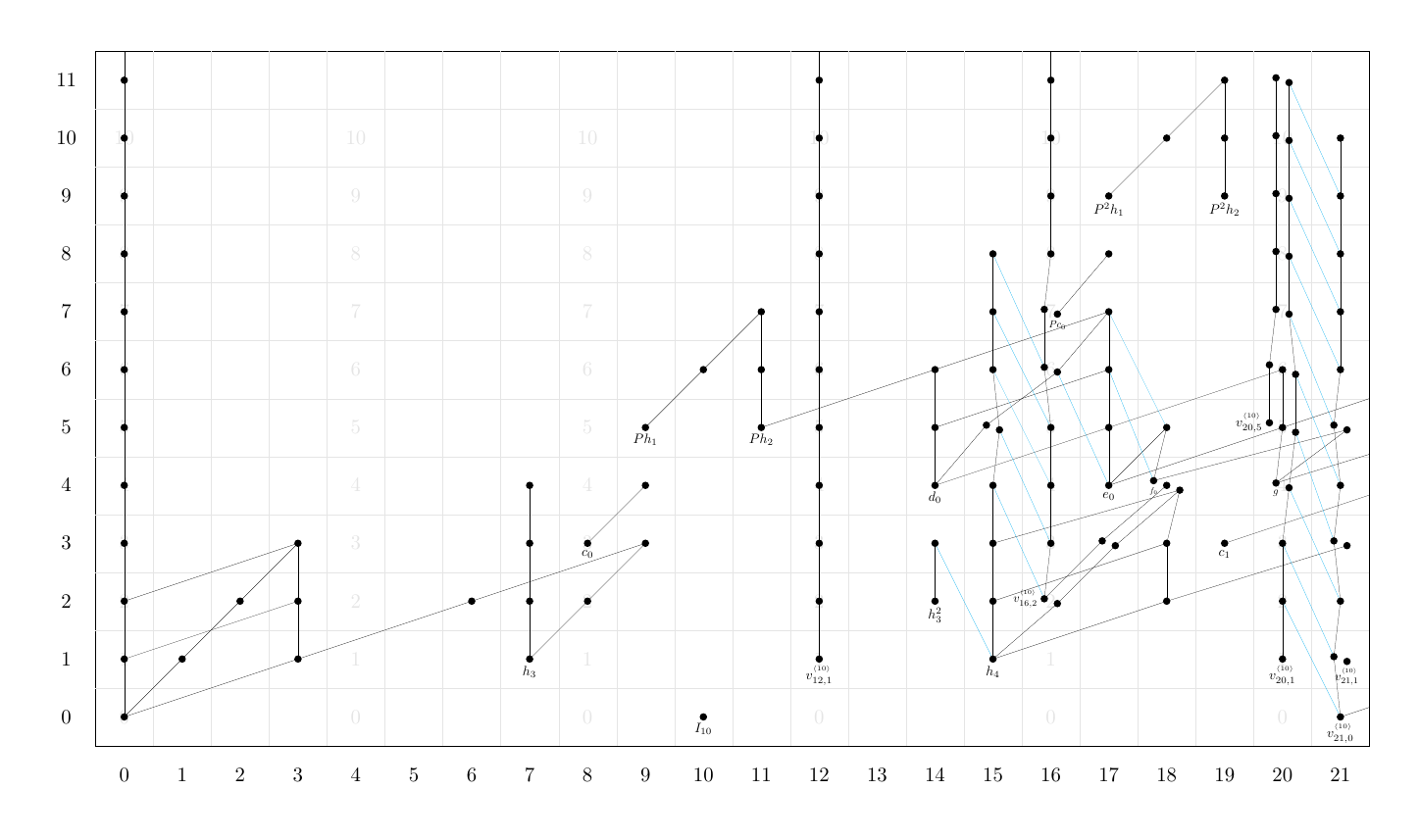}
  \caption{$E_2$-page $MO \langle 10 \rangle $ (stems 0-21)}
  \label{fig:mo10_e2}
\end{subfigure}

\vspace{-0.4\baselineskip}

\begin{subfigure}{\linewidth}
  \centering
  \includegraphics[width=\linewidth,
    trim=0.5cm 0cm 0cm 0cm,clip]{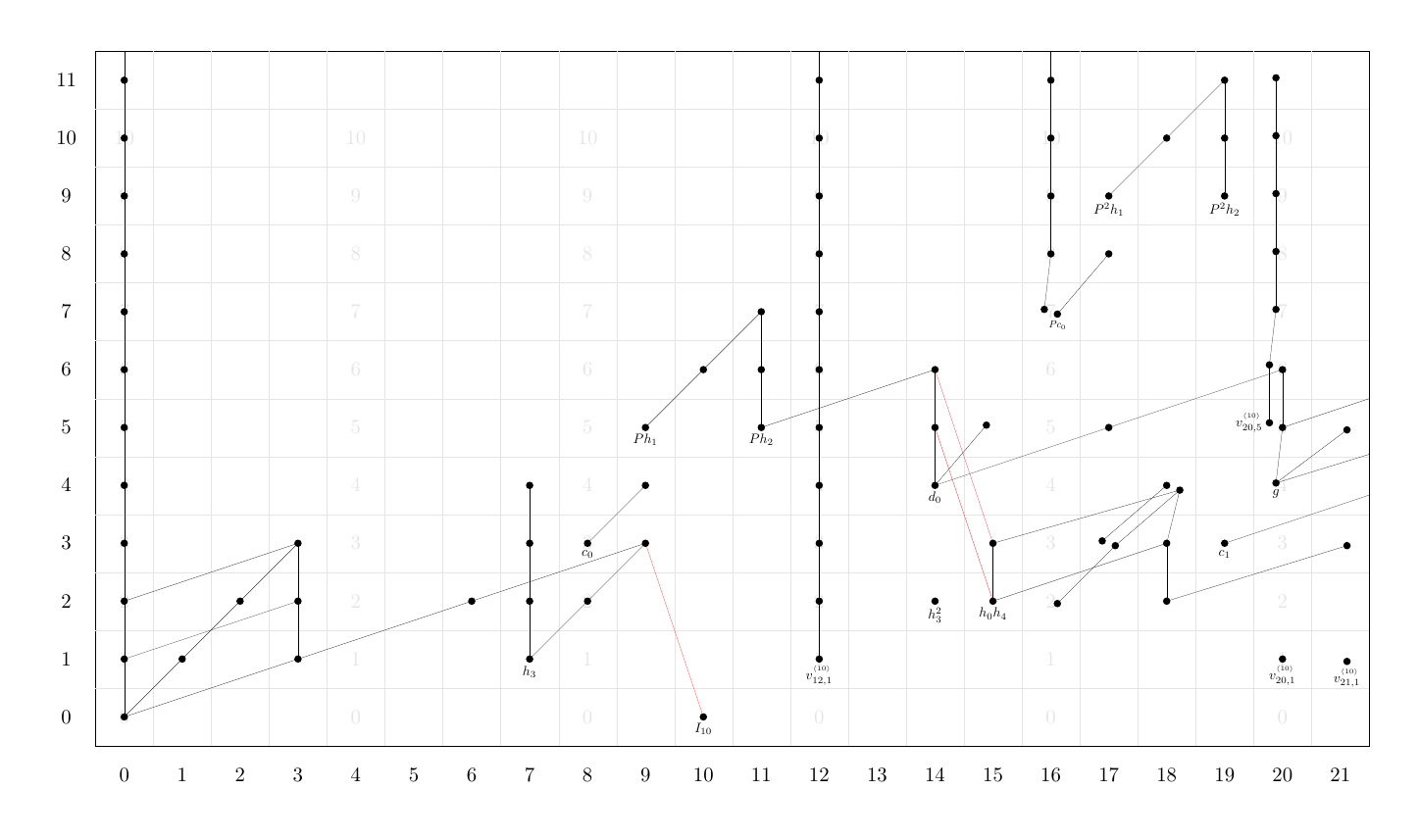}
  \caption{$E_3$-page $MO \langle 10 \rangle $ (stems 0-21)}
  \label{fig:mo10_e3}
\end{subfigure}

\caption{$E_2$/$E_3$-page $MO\langle 10 \rangle$ \label{fig:mo10_e2_e3}}
\end{figure}

\begin{figure}[!p]
\centering
\begin{subfigure}{\linewidth}
  \centering
  \includegraphics[width=\linewidth,
    trim=0.5cm 0cm 0cm 0cm,clip]{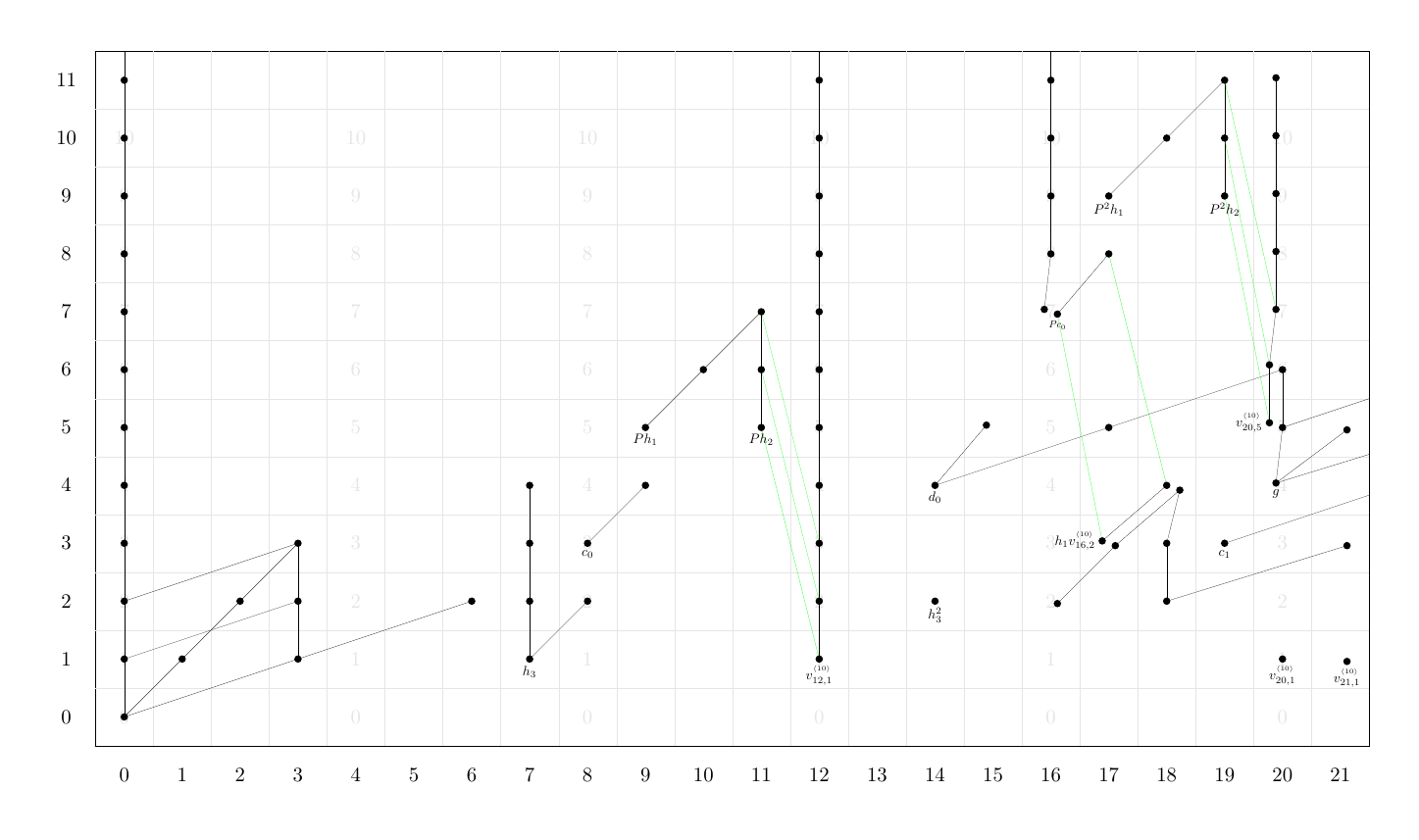}
  \caption{$E_4$-page $MO \langle 10 \rangle $ (stems 0-21)}
  \label{fig:mo10_e4_0_24}
\end{subfigure}

\vspace{-0.4\baselineskip}

\begin{subfigure}{\linewidth}
  \centering
  \includegraphics[width=\linewidth,
    trim=0.5cm 0cm 0cm 0cm,clip]{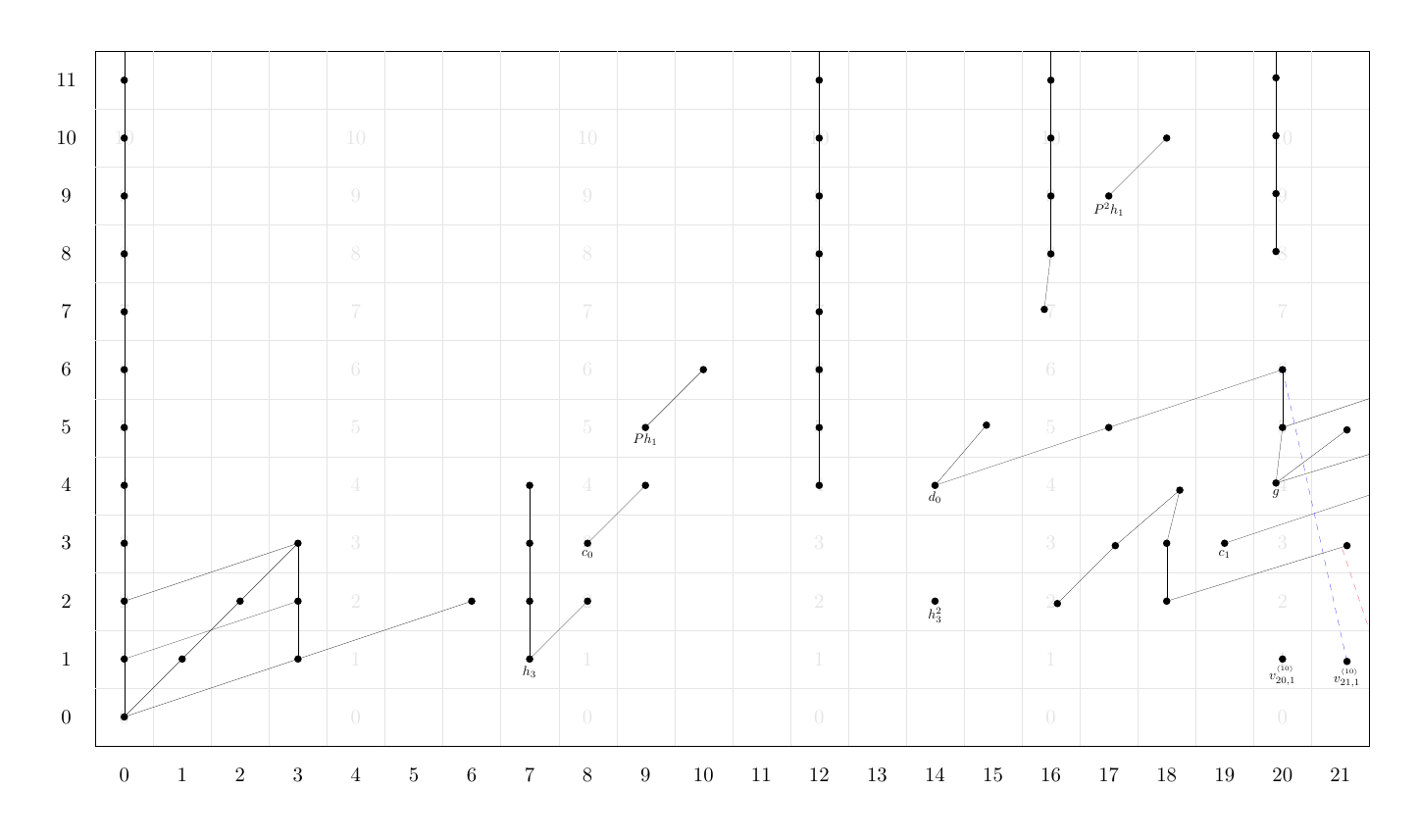}
  \caption{$E_{\infty}$-page $MO \langle 10 \rangle $ (stems 0-21)}
  \label{fig:mo10_einf_0_21}
\end{subfigure}

\caption{$E_4$/$E_{\infty}$-page $MO\langle 10 \rangle$ \label{fig:mo10_e4_einf}}
\end{figure}
\FloatBarrier
\subsubsection{$d_2$-differentials}
\begin{proposition}
Table \ref{tab:Adams d_2 mo10} describes the non-zero $d_2$-differentials in the Adams spectral sequence for $MO\langle 10 \rangle$  on all indecomposables on $E_2$ through stem 21. 
\end{proposition}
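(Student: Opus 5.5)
The statement is a table-driven claim, and it has the same structure as the $d_2$-tables for $MO\langle 9\rangle$. So the plan has two parts. First, I enumerate from the $E_2$-page (Figure~\ref{fig:mo10_e2}) every indecomposable through stem 21 that has a nonzero class in the target bidegree $(t-s-1,s+2)$. Second, I decide each such differential by one of a small set of standard arguments. Indecomposables coming from the sphere, namely $h_1$, $h_4$, $e_0$ and $f_0$, are handled by naturality along the unit map $\mathbb S\to MO\langle 10\rangle$ together with Proposition~\ref{prop: sphere differentials}. This gives $d_2(h_1)=0$, and $d_2(h_4)=h_0h_3^2$, $d_2(e_0)=h_1^2d_0$, $d_2(f_0)=h_0^2e_0$ whenever those targets remain nonzero on $E_2(MO\langle 10\rangle)$.

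For the classes not coming from the sphere, I would reuse the $MO\langle 9\rangle$ arguments.

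\begin{itemize}
\item \emph{Low-degree generator.} The lowest such generator plays the role of $I_9$. Its differential should be forced by Theorem~\ref{hoveyimj}: the class $\eta\sigma$, detected by $h_1h_3$, lies in $\mathrm{Im}\,J$ in dimension $8\ge n-1=9$? It does not; since $8<9$, $\eta\sigma$ must survive instead. I would therefore check which $\mathrm{Im}\,J$ classes in dimensions $\ge 9$ must die, for example $\mu_9$-adjacent classes and $Pc_0$.
\item \emph{Classes killing $\eta\eta_4$.} I would use the map $MO\langle 10\rangle\to MO\langle 9\rangle$. Since $\eta\eta_4$ already dies in $MO\langle 9\rangle$ (Theorem~\ref{lem:mo9_17_kernel}), it must die in $MO\langle 10\rangle$ as well, and in the same way I would force a differential hitting $h_1^2h_4$.
\item \emph{Remaining classes.} I would propagate differentials using $h_0$-, $h_1$- and $h_2$-linearity, as in Lemmas~\ref{lem:d_2 differential in 19 stem mo9} and \ref{d_2 differential in 22 stem mo9}, and Moss's rule~\ref{mossleibniz} on Massey product presentations such as $\langle h_2,h_3,-\rangle$, as in Lemma~\ref{d_2 differential in 20 stem mo9}.
\end{itemize}

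Naturality with respect to $MO\langle 10\rangle\to MO\langle 9\rangle$ is a key tool. The $E_2$-page map, combined with the $d_2$'s already established for $MO\langle 9\rangle$, transports differentials whenever the source has a nonzero image. A nonzero $d_2$ downstairs forces one upstairs, and a zero $d_2$ restricts the possible targets to the kernel.

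Integral cohomology enters through May--Milgram (Theorem~\ref{maymilgram}). Table~\ref{tab:bo10_groups} gives $H^{20}$ and $H^{21}$ of $BO\langle 10\rangle$, and via the Thom isomorphism these control $d_r$ between $h_0$-towers. The $\mathbb Z/4$ summands, shifted by the Thom class, either force a $d_2$ from a tower in stem 20 or 21 or rule one out.

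I expect the main obstacle to be stem 20–21. That is where a $d_2$ between towers has to be decided, and where the Massey product indeterminacies are largest. There I would first try May--Milgram with the $\mathbb Z/4$ in degree 21, and fall back on Leibniz arguments with $d_0$ or $g$ multiplication.
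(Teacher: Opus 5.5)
Your toolkit is the right one: sphere naturality, $\mathrm{Im}\,J$ together with Theorem~\ref{hoveyimj}, linearity, and May--Milgram. However, your second bullet runs naturality in the wrong direction, and carrying it out would put false entries into the table.

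The unit of $MO\langle 9\rangle$ factors as $\mathbb S\to MO\langle 10\rangle\to MO\langle 9\rangle$. Hence $\ker(\pi_*\mathbb S\to\pi_*MO\langle 10\rangle)\subseteq\ker(\pi_*\mathbb S\to\pi_*MO\langle 9\rangle)$. A class that \emph{survives} in $MO\langle 9\rangle$ must survive in $MO\langle 10\rangle$; this is the direction the paper uses for $\kappa^2$ and $MO\langle 12\rangle$. A class that dies in $MO\langle 9\rangle$ need not die in $MO\langle 10\rangle$, and $\eta\eta_4$ in fact does not. The class $h_1^2h_4$ is a permanent cycle coming from the sphere, and no differential hits it. This is why $\Omega^{\langle 10\rangle}_{17}\cong(\Z/2)^3$ while $\Omega^{\langle 9\rangle}_{17}\cong(\Z/2)^2$. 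So there is no $d_2$ with target $h_1^2h_4$ for you to ``force''.

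The same trap appears if you copy the $MO\langle 9\rangle$ Massey-product template in stem 20. There, $\vv(20,1,9)\in\langle h_2,h_3,I_9\rangle$ hits $c_1$ only because $d_2(I_9)=h_1h_3$. In $MO\langle 10\rangle$, $\eta\sigma$ survives (as you noticed) and $I_{10}$ supports no $d_2$; it supports $d_3(I_{10})=h_1^2h_3$. Note also that $\mu_9$ is \emph{not} in $\mathrm{Im}\,J$ by Theorem~\ref{adamsimj}. In fact $d_2(\vv(20,1,10))=0$: we have $h_2c_1\neq 0$ while $h_2\vv(20,1,10)=0$, and $d_2$ is $h_2$-linear.

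Beyond the sphere classes, the table rests on three arguments.
\begin{enumerate}
\item $d_2(\vv(16,2,10))=h_0^3h_4$. The $h_0$-tower in stem 15 ending at the Adams edge lies in $\mathrm{Im}\,J$ by Theorem~\ref{mahowaldimj}. It must therefore die by Theorem~\ref{hoveyimj}, since $15\geq 9$. The class $h_0^3h_4$ cannot support a differential, and $\vv(16,2,10)$ is its only possible killer. Your heuristic of listing $\mathrm{Im}\,J$ classes in dimensions $\geq 9$ would find this. But for $d_2$ the relevant target is this tower, not $Pc_0$, which dies later via $d_4(h_1\vv(16,2,10))=Pc_0$.
\item $d_2(\vv(20,1,10))=0$, by the $h_2$-linearity argument above.
\item $d_2(\vv(21,0,10))=h_0\vv(20,1,10)$, from Theorem~\ref{maymilgram} applied to the single $\Z/4$ summand of $H^{21}(BO\langle 10\rangle;\Z)$, transported by the Thom isomorphism. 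You anticipated this one correctly.
\end{enumerate}
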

The following table lists the only indecomposables through stem 21 that, for degree reasons, can support a $d_2$-differential. 
 \begin{longtable}{llllc} 
    \caption[Possible $d_2$-differentials $MO\langle 10 \rangle$]{Possible non-zero $d_2$-differentials on indecomposable elements
    \label{tab:Adams d_2 mo10}
    } \\
    \toprule
    $x$ & $(t-s,s)$ & $d_2(x)$ & Occurs & Proof\\
    \midrule \endfirsthead
    \caption[]{Possible non-zero $d_2$-differentials on indecomposable elements} \\
    \toprule
    $x$ & $(t-s,s)$ & $d_2(x)$ & Occurs & Proof\\
    \midrule \endhead
    \bottomrule \endfoot
        $h_1$ & $(1, 1)$ & $h_0^3$ & No & \ref{prop: sphere differentials}\\
        $h_4$ & $(15, 1)$ & $h_0h_3^2$ & Yes & \ref{prop: sphere differentials}\\
        $e_0$ & $(15, 4)$ & $h_1^2d_0$ & Yes & \ref{prop: sphere differentials}\\
        $\vv(16,2,10)$ & $(16, 2)$ & $h_0^3h_4$ & Yes & \ref{lem:d_2-differentials-imj-mo10}\\
        $f_0$ & $(18, 4)$ & $h_0^2e_0$ & Yes & \ref{prop: sphere differentials}\\
        $c_1$ & $(19, 3)$ & $h_0f_0$ & No & \ref{prop: sphere differentials}\\
        $\vv(20,1,10)$ & $(20, 1)$ & $c_1$ & No & \ref{lem:d2 differential in stem 20 filtration 1 mo10} \\
        $\vv(21,0,10)$ & $(21, 0)$ & $h_0\vv(20,1,10)$ & Yes & \ref{lem:d2 differential in stem 21 filtration 0 mo10}
    \end{longtable}

For proofs of these differentials, see the following series of Lemmas. 
\begin{lemma}\label{lem:d_2-differentials-imj-mo10}
$d_2(\vv(16,2,10))=h_0^3h_4$
\end{lemma}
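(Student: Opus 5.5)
The plan is to argue exactly as in Lemma \ref{lem:d_3-differentials-imj-mo9}: show that the target must die, then show that $\vv(16,2,10)$ is the only class that can kill it. The class $h_0^3h_4$ lies in stem $15$. By Proposition \ref{prop: sphere differentials}, in the Adams spectral sequence for $\mathbb S$ we have $d_3(h_0h_4)=h_0d_0$, so $h_0^3h_4$ is a permanent cycle there. It detects an element of order $2$ times a generator of the $h_0$-tower that ends at the Adams edge in stem $15=8\cdot 2-1$. By Theorem \ref{mahowaldimj}, this tower represents $\text{Im } J$ in that degree, so $h_0^3h_4$ detects a nonzero element of $\text{Im } J\subset\pi_{15}\mathbb S$. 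Since $15\geq 10-1$, Theorem \ref{hoveyimj} says this element maps to zero in $\pi_{15}MO\langle 10\rangle$. The unit map induces a map of Adams spectral sequences, so the image of $h_0^3h_4$ in $E_2(MO\langle 10\rangle)$ must therefore not survive to $E_\infty$.

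First I check that the image of $h_0^3h_4$ is nonzero on $E_2(MO\langle 10\rangle)$; this is read off the chart in Figure \ref{fig:mo10_e2}. Otherwise the homotopy class would have to be detected in higher filtration, and the argument would have to be adjusted. Next I rule out the ways $h_0^3h_4$ could fail to survive other than being hit by $\vv(16,2,10)$. It cannot support a differential itself: it is the image of a permanent cycle of $\mathbb S$, hence a permanent cycle by naturality. So it must be a boundary, and its source must lie in stem $16$ with filtration at most $2$. The possible sources are $d_2$ from filtration $2$, or $d_3$, $d_4$ from filtrations $1$ and $0$. From the chart, stem $16$ in filtrations $\le 2$ contains $\vv(16,2,10)$ and $h_1h_4$, and possibly a filtration-$0$ class.

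The class $h_1h_4$ is excluded by naturality: in $\mathbb S$ it survives (it detects $\eta^*$), so it is a permanent cycle. Any filtration-$0$ or filtration-$1$ class in stem $16$ would have to be excluded by $h_0$-, $h_1$- or $h_2$-linearity, or simply by being absent from the chart. I expect this case check against the $E_2$ chart to be the main obstacle, since I have to be sure that no other source exists and that no earlier differential has already killed $h_0^3h_4$. I would also check consistency using $h_0$-linearity. If $h_0\vv(16,2,10)\neq 0$, then $d_2(h_0\vv(16,2,10))=h_0^4h_4$ must be compatible with the $h_0$-tower in stem $15$; this matches the fact that only a $\Z/8$ survives in the $15$-stem image of $J$ pattern, modulo the $d_3$ on $h_0h_4$.

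Once all other sources are excluded, the only remaining possibility is $d_2(\vv(16,2,10))=h_0^3h_4$. If the chart permits $d_3$ or $d_4$ from lower filtration, I would rule those out before concluding that the differential is a $d_2$.
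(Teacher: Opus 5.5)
Your proposal is correct and follows essentially the same route as the paper. In both, Mahowald's theorem puts $h_0^3h_4$ in $\text{Im } J$, Hovey's theorem forces it to die in $MO\langle 10\rangle$, it cannot support a differential (it is a permanent cycle in $\mathbb S$), and $\vv(16,2,10)$ is the only available source. Your explicit exclusion of $h_1h_4$ by naturality and of sources in filtrations $0$ and $1$ spells out what the paper compresses into ``the only possibility.''

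One harmless slip in your consistency aside: the surviving stem-$15$ tower $h_0^3h_4,\dots,h_0^7h_4$ in $\mathbb S$ detects $\text{Im } J\cong\Z/32$, not $\Z/8$, and all of it must die in $MO\langle 10\rangle$. This plays no role in the argument.
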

\begin{proof}
The element $h_0^3h_4$ is in the $h_0$-tower that ends at the ``Adams edge'' in stem 15. By Theorem \ref{mahowaldimj}, it is in the image of $J$. By Theorem \ref{hoveyimj}, it must not survive the Adams spectral sequence for $MO \langle 10 \rangle$. 
It cannot support a differential since $d_2(h_0^3h_4)=0$ in the Adams spectral sequence for $\mathbb{S}$. Thus, it must be the target of a differential. The only possibility is $d_2(\vv(16,2,10))=h_0^3h_4$. 
\end{proof}

\begin{lemma}\label{lem:d2 differential in stem 20 filtration 1 mo10}
$d_2(\vv(20,1,10))=0$
\end{lemma}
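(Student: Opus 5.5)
We need to show $d_2(\vv(20,1,10))=0$ in the Adams spectral sequence for $MO\langle 10\rangle$. By the table, the only candidate target is $c_1$ in bidegree $(19,3)$, so it suffices to rule out $d_2(\vv(20,1,10))=c_1$. Unlike $MO\langle 9\rangle$, the class $I_9$ is absent here, since $MO\langle 10\rangle$ is $9$-connected in the relevant sense. So the Massey product argument of Lemma \ref{d_2 differential in 20 stem mo9} is unavailable. In its place I would use a linearity argument via the row in Table \ref{tab:Adams d_2 mo10} recording $d_2(c_1)=h_0f_0$.

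The plan has three steps.

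\textbf{Step 1.} I would first check, from the $E_2$-page computed by \textbf{ext}, whether $c_1$ survives to $E_2$ of $MO\langle 10\rangle$ as a nonzero class. I would also check whether the sphere differential $d_2(c_1)=h_0f_0$ (if that is the correct value) maps to a nonzero class. Here I would use naturality along the unit map $\mathbb S\to MO\langle 10\rangle$.

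\textbf{Step 2.} The key observation is that a $d_2$-boundary must itself be a $d_2$-cycle, since $d_2\circ d_2=0$. So if $d_2(c_1)\neq 0$ on $E_2(MO\langle 10\rangle)$, then $c_1$ cannot be hit by $d_2$. This forces $d_2(\vv(20,1,10))=0$.

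\textbf{Step 3 (backup).} If instead $d_2(c_1)=0$ on this $E_2$-page, I would fall back on multiplicative structure. I would look for an element $h\in\{h_0,h_1,h_2\}$ with $h\,\vv(20,1,10)=0$ but $h\,c_1\neq 0$ on $E_2(MO\langle 10\rangle)$, and then argue by $h$-linearity as in Lemma \ref{lem:d_2 differential in stem 31 filtration 0 mo9}. The natural first candidate is $h_1$: the products $h_1c_1$ and $h_2c_1$ are often nonzero in low stems. Failing that, I would relate $\vv(20,1,10)$ to the $MO\langle 9\rangle$ class of the same name through the map $MO\langle 10\rangle\to MO\langle 9\rangle$. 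There I would compare with the nonzero $MO\langle 9\rangle$ differential, which lands on $c_1$ because $I_9$ forces $c_1$ into the bracket $\langle h_2,h_3,h_1h_3\rangle$. In $MO\langle 10\rangle$, by contrast, $h_1h_3$ survives; indeed Hovey's Theorem \ref{hoveyimj} keeps $\eta\sigma$ alive, since $8\le 10-2$.

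\textbf{Main obstacle.} The hardest part is confirming the precise $E_2$ data. In particular, I need the exact value of $d_2(c_1)$ as mapped into $MO\langle 10\rangle$ (the sphere table gives no $d_2$ on $c_1$, so the entry must come from the $MO\langle 10\rangle$ module structure). I also need the products of $c_1$ and $\vv(20,1,10)$ with $h_0,h_1,h_2$. I would read these off the \textbf{ext} output before committing to Step 2 or Step 3.
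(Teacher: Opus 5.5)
Your main step (Step 2) cannot work, because it misreads Table \ref{tab:Adams d_2 mo10}. The row pairing $c_1$ with $h_0f_0$ is marked \emph{No}: it lists a candidate differential that does \emph{not} occur. In fact $c_1$ is a permanent cycle in the Adams spectral sequence for $\mathbb S$, since it detects $\overline{\sigma}$. The class $c_1\in E_2(MO\langle 10\rangle)$ is its image under the unit map, so naturality forces $d_2(c_1)=0$ there. The module structure of $H^*(MO\langle 10\rangle;\mathbb{Z}/2)$ cannot produce a nonzero $d_2$ on the image of a $d_2$-cycle, contrary to your parenthetical guess. Consequently the observation that a $d_2$-boundary must be a $d_2$-cycle never applies here.

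The $MO\langle 9\rangle$ fallback is also not an argument as sketched. The nonzero differential $d_2(\vv(20,1,9))=c_1$ says nothing about vanishing in $MO\langle 10\rangle$. Naturality would help only if you knew that $\vv(20,1,10)$ maps to a $d_2$-cycle of $E_2(MO\langle 9\rangle)$, and you have not established this. In particular, the lemma implies that $\vv(20,1,10)$ cannot map to $\vv(20,1,9)$.

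The argument that works is your Step 3, but with $h_2$ rather than $h_1$. Your first candidate $h_1$ gives nothing, because $h_1c_1=\mathrm{Sq}^0(h_0c_0)=0$ already holds in $\mathrm{Ext}_{\mathcal{A}}(\mathbb{Z}/2,\mathbb{Z}/2)$, so $h_1$ annihilates $c_1$ on $E_2(MO\langle 10\rangle)$ too. By contrast $h_2c_1\neq 0$, since it detects $\nu\overline{\sigma}$. On the $E_2$-page for $MO\langle 10\rangle$ one has $h_2\vv(20,1,10)=0$ while $h_2c_1\neq 0$. Since $h_2$ is a $d_2$-cycle, $d_2$ is $h_2$-linear, so $d_2(\vv(20,1,10))=c_1$ would force $0=h_2c_1$, a contradiction. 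This one-line linearity argument is exactly the paper's proof. Your Steps 1 and 2 and the $MO\langle 9\rangle$ comparison are unnecessary.
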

\begin{proof}
The only possible target for this differential is $c_1$, but $h_2c_1 \neq 0$ while $h_2\vv(20,1,10)=0$. The $d_2$-differential is $h_2$-linear ($h_2$ is a $d_2$-cycle), so $d_2(\vv(20,1,10))$ must equal zero. 
\end{proof}

\begin{lemma}\label{lem:d2 differential in stem 21 filtration 0 mo10}
$d_2(\vv(21,0,10))=h_0\vv(20,1,10)$
\end{lemma}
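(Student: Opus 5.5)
The cleanest route is through the integral cohomology of $BO\langle 10\rangle$, using Theorem \ref{maymilgram} (May--Milgram). The element $\vv(21,0,10)$ sits on the $0$-line in stem $21$ and generates an $h_0$-tower. The only possible target of a $d_2$ is in bidegree $(20,2)$, and the candidate $h_0\vv(20,1,10)$ lies on the $h_0$-tower generated by $\vv(20,1,10)$ in stem $20$. So any nonzero $d_2$ on $\vv(21,0,10)$ is a differential between $h_0$-towers originating on the $0$-line (after shifting by filtration one in the target). May--Milgram therefore reduces the question to detecting a $\Z/4$ summand in the degree-$22$ integral cohomology of the Thom spectrum.

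\textbf{Step 1.} By the Thom isomorphism, $H^{22}(MO\langle 10\rangle;\Z) \cong H^{22}(BO\langle 10\rangle;\Z)$, which Proposition \ref{prop:bo10 cohomology groups} gives as $\Z/2\oplus\Z/2$. This has no $\Z/4$, which is the wrong shape. I should therefore instead use degree $21$, via the topological degree $k$ with $k+1 = 21+1$. Degree $21$ of $BO\langle 10\rangle$ is $\Z/4\oplus\Z/2$, so I would recheck the indexing convention in Theorem \ref{maymilgram} as phrased in Debray et al. The intended match is that a $d_r$ out of a tower in stem $k$ corresponds to a $\Z/2^r$ in $H^{k+1}$, which is $H^{22}$ here. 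Alternatively, with the Thom class shift in degree, the $\Z/4$ summand of $H^{21}(BO\langle 10\rangle)$ is the relevant one.

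\textbf{Step 2.} Concretely, I would use the integral homology of $MO\langle 10\rangle$. A $\Z/4$ in $H_{20}$ means, by the universal coefficient theorem, a $\Z/4$ in $H^{21}$. This torsion is produced by a $d_2$ between the towers in stems $21$ and $20$, exactly as established in Lemma \ref{lem:d_2 differenital stem 21 filtration 0 bo10}. The bijection of Theorem \ref{maymilgram} then forces a nonzero $d_2$ out of the $0$-line tower in stem $21$ of the Adams spectral sequence for $MO\langle 10\rangle$. The cleanest statement to invoke is that the Hurewicz-type map $MO\langle 10\rangle \to H\Z\wedge MO\langle 10\rangle$ identifies $0$-line towers with $H\Z$-towers. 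Under this identification the Adams spectral sequence for $MO\langle 10\rangle$ restricts, on $0$-line towers, to the one for $H\Z\wedge BO\langle 10\rangle$ via the Thom isomorphism.

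\textbf{Step 3.} Given that a $d_2$ on $\vv(21,0,10)$ is nonzero, I would pin down its value. It lands in bidegree $(20,2)$, where the only classes of the right $h_0$-divisibility are $h_0\vv(20,1,10)$, possibly plus decomposables from the sphere. I would rule out the extra terms by $h_1$- and $h_2$-linearity, in the style of Lemma \ref{lem:d2 differential in stem 20 filtration 1 mo10}. In particular, $h_0$-linearity is needed since the target must generate a tower truncated by the source tower.

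I expect the main obstacle to be the indexing bookkeeping in Steps 1 and 2, namely getting the Thom shift and the $k$ versus $k+1$ convention right so that the $\Z/4$ found in Proposition \ref{prop:bo10 cohomology groups} lines up with stem $21 \to 20$.
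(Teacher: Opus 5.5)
Your argument is the paper's own: May--Milgram (Theorem~\ref{maymilgram}) applied directly to $X=MO\langle 10\rangle$, the single $\Z/4$ summand of $H^{21}(MO\langle 10\rangle;\Z)\cong H^{21}(BO\langle 10\rangle;\Z)\cong\Z/4\oplus\Z/2$ from Proposition~\ref{prop:bo10 cohomology groups}, the fact that $\vv(21,0,10)$ generates the only tower in stem $21$, and the fact that $h_0\vv(20,1,10)$ is the only available target in bidegree $(20,2)$. Your Step~2 gets the indexing right ($k=20$ is the stem of the truncated target tower, so the relevant group is $H^{k+1}=H^{21}$, and the Thom isomorphism for the Thom spectrum has no degree shift), so discard three things: the Step~1 reading that points to $H^{22}$, the appeal to a Thom-class shift, and the unneeded claim that the Adams spectral sequence of $MO\langle 10\rangle$ restricts to that of $H\Z\wedge BO\langle 10\rangle$.
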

\begin{proof}
The element $\vv(21,0,10)$ supports an $h_0$-tower in stem 21. By the May-Milgram theorem \ref{maymilgram}, $d_2$-differentials out of $h_0$-towers in stem 20 are in bijective correspondence
with $\Z/4$ direct summands in $H^{21}(MO\langle 10 \rangle; \Z)$. By Proposition \ref{prop:bo10 cohomology groups} and the Thom isomorphism, $H^{21}(MO\langle 10 \rangle; \Z)$ has a single $\Z/4$ direct summand. Furthermore, the $h_0$-tower generated by $\vv(21,0,10)$ is the only such tower in stem $21$. 
Thus, $d_2(\vv(21,0,10))$ is nonzero and the only possible target is $h_0\vv(20,1,10)$. 
\end{proof}

\subsubsection{$d_3$-differentials}
\begin{proposition}
Table \ref{tab:Adams d_3 mo10} describes the non-zero $d_3$-differentials in the Adams spectral sequence for $MO\langle 10 \rangle$  on all indecomposables on $E_3$ through stem 20. 
\end{proposition}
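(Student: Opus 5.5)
The statement to prove is that Table \ref{tab:Adams d_3 mo10} lists all nonzero $d_3$-differentials on indecomposables of the $E_3$-page through stem 20. I would follow the pattern of the $MO\langle 9\rangle$ analysis.

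First, I will read off the $E_3$-page, i.e.\ the $E_2$-page with the $d_2$-differentials of Table \ref{tab:Adams d_2 mo10} removed. For each indecomposable class $x$ in stems $\le 20$, I then list the bidegrees $(t-s-1,s+3)$. Most of these are empty, so $d_3(x)=0$ for degree reasons. What remains is a short list of candidates: the classes coming from the sphere, such as $h_0h_4$ and $h_1h_4$, together with the few classes $\vv(t-s,s,10)$ that have something three filtrations higher one stem to the left.

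For classes in the image of the unit map $\mathbb S\to MO\langle 10\rangle$, I will use naturality together with Proposition \ref{prop: sphere differentials}. This gives $d_3(h_0h_4)=h_0d_0$, provided $h_0d_0$ is still nonzero on $E_3$. It also gives $d_3(h_1h_4)=0$, since $h_1d_0$ is not hit in $\mathbb S$.

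For the remaining $\vv$-classes whose potential targets are image-of-$J$ classes, I would argue as in Lemma \ref{lem:d_3-differentials-imj-mo9}. By Theorem \ref{mahowaldimj}, classes such as $Ph_2$, $Pc_0$, and $P^2h_2$ represent $\text{Im}\,J$. By Theorem \ref{hoveyimj}, with $n=10$, their images must die in dimensions $\ge 9$. I then check that no other source, on $E_2$ or on a later page, can kill them, which leaves the $\vv$-class as the unique $d_3$ source. Where a class can instead be written as a product with $h_0,h_1,h_2$, I would use linearity, as in Lemma \ref{lem:d_3 differential in stem 12 filtration 2 mo9}: relations of the form $h_2\vv=h_0^2h_4$ transfer the sphere differential $d_3(h_0^2h_4)=h_0^2d_0$ to $\vv$. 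Whenever a $d_3$ is ruled out rather than forced, I would find a product such as $h_1x$ or $h_2x$ that vanishes while the corresponding multiple of the target does not.

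The main obstacle is the classes in stems 17--20, where $MO\langle 10\rangle$ differs from $MO\langle 9\rangle$; note that $\vv(20,1,10)$ survives $E_2$. For these, linearity alone may not decide the value. There I would try Moss' higher Leibniz rule (Theorem \ref{mossleibniz}), writing the class as a Massey product such as $\langle h_2,h_3,-\rangle$ or $\langle h_0,h_0^3h_3,-\rangle$ and computing the indeterminacy. If that is inconclusive, I would fall back on integral cohomology. The May--Milgram theorem (Theorem \ref{maymilgram}) combined with Proposition \ref{prop:bo10 cohomology groups} and the Thom isomorphism bounds the $\Z/2^r$ summands in $H^{*}(MO\langle 10\rangle;\Z)$. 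This determines which $h_0$-towers can be truncated by $d_3$ rather than $d_2$. Since the table shows no $\Z/8$ summands through degree 21, no $d_3$ between towers occurs.
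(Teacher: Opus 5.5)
Your proposal has a genuine gap. It never finds the one $d_3$ in Table \ref{tab:Adams d_3 mo10} that does not come from the sphere, namely $d_3(I_{10})=h_1^2h_3$, and the tools you list would not force it.

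The bottom class $I_{10}$ in bidegree $(10,0)$ is a $d_2$-cycle, since bidegree $(9,2)$ is empty, and its only possible $d_3$-target is $h_1^2h_3$. You correctly note that Theorem \ref{hoveyimj} kills $\operatorname{Im}J$ in dimensions $\geq 9$, but you never apply this in stem 9. Your way of recognizing $\operatorname{Im}J$ classes, Theorem \ref{mahowaldimj}, only covers $P^ic_0$, $P^ih_1c_0$ and $P^ih_2$ with $i\ge 1$, so it says nothing in stem 9. There the sphere has $h_1^2h_3$, $h_1c_0$ and $Ph_1$, so you must know which class is the $\operatorname{Im}J$ class. The missing input is that $h_1^2h_3$ detects $\eta^2\sigma$, which spans $\operatorname{Im}J$ in stem 9. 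The paper gets this from Adams' Theorem \ref{adamsimj}: $Ph_1$ detects $\mu_9$, which spans the summand complementary to $\operatorname{Im}J$. Hence $h_1^2h_3$ must be a boundary in $MO\langle 10\rangle$. Since $I_{10}$ is the only possible source, this forces $d_3(I_{10})=h_1^2h_3$.

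Conversely, the $d_3$'s you anticipate do not occur. In $MO\langle 10\rangle$ the $\operatorname{Im}J$ classes $Ph_2$, $Pc_0$ and $P^2h_2$ are hit by $d_4$'s from $\vv(12,1,10)$, $h_1\vv(16,2,10)$ and $\vv(20,5,10)$ (Table \ref{tab:Adams d_4 mo10}). These sources sit one filtration lower than their $MO\langle 9\rangle$ counterparts. For the same reason, the linearity relation $h_2\vv(12,2,9)=h_0^2h_4$ has no analogue here. Your degree filter would discard those classes, so this is a misdirection rather than an error. It does mean your Massey-product and May--Milgram fallbacks for stems 17--20 are unnecessary. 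On $E_3$ the only candidates through stem 20 are $I_{10}$, $h_0h_4$ and $h_1h_4$, and your naturality argument for the last two matches the paper.
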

The following table lists the only indecomposables through stem 20 that, for degree reasons, can support a $d_3$-differential. 
 \begin{longtable}{llllc}
    \caption[Possible $d_3$-differentials $MO\langle 10 \rangle$]{Possible non-zero $d_3$-differentials on indecomposable elements.
    \label{tab:Adams d_3 mo10}
    } \\
    \toprule
    $x$ & $(t-s,s)$ & $d_3(x)$ & Occurs & Proof\\
    \midrule \endfirsthead
    \caption[]{Possible non-zero $d_3$-differentials on indecomposable elements} \\
    \toprule
    $x$ & $(t-s,s)$ & $d_3(x)$ & Occurs & Proof\\
    \midrule \endhead
    \bottomrule \endfoot
        $I_{10}$ & $(10,0)$ & $h_1^2h_3$ & Yes & \ref{lem:d_3 differential in stem 10 filtration 0 mo10}\\
        $h_0h_4$ & $(15,2)$ & $h_0d_0$ & Yes & \ref{prop: sphere differentials}\\
        $h_1h_4$ & $(16,2)$ & $h_1d_0$ & No & \ref{prop: sphere differentials}\\
    \end{longtable}
\begin{proof}
The differentials on $h_0h_4$ and $h_1h_4$ were already proven in \ref{prop: sphere differentials}. For the differential on $I_{10}$ see the following Lemma.
\end{proof}
\begin{lemma}\label{lem:d_3 differential in stem 10 filtration 0 mo10}
$d_3(I_{10})=h_1^2h_3$
\end{lemma}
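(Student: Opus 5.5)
The plan is to mirror the argument used for Lemma \ref{lem:d_2 differential in stem 9 filtration 0 mo9}. There the key input was that a class detected in $\text{Ext}_{\mathcal{A}}(\mathbb{Z}/2,\mathbb{Z}/2)$ lies in $\text{Im } J$ in a degree where Theorem \ref{hoveyimj} forces it to die in $MO\langle n \rangle$.

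The element $h_1^2h_3$ in bidegree $(9,3)$ detects $\eta^2\sigma \in \pi_9\mathbb{S}$. By Theorem \ref{adamsimj}, $\pi_9\mathbb{S}$ contains $\mathbb{Z}/2\{\mu_9\}\oplus \text{Im } J$. In the Adams spectral sequence for $\mathbb{S}$, the image of $J$ in stem 9 is the class detected by $h_1^2h_3$, since $\mu_9$ is detected by $Ph_1$. This is consistent with Theorem \ref{mahowaldimj}, which places $\text{Im } J$ in dimension $8j+1$ in the classes $P^{i}h_1c_0$-type elements and $h_1$-multiples of the dimension $8j$ generators, here $h_1\cdot h_1h_3$. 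I would cite Bruner-Rognes \cite{BrunerRognestmf} (Lemma 11.46, Theorem 11.61) for the precise statement that $\eta^2\sigma \in \text{Im } J$.

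Since $9 \geq 10-1$, Theorem \ref{hoveyimj} says $\eta^2\sigma$ maps to zero in $\pi_9 MO\langle 10 \rangle$. The image of $h_1^2h_3$ under the unit map on $E_2$-pages is nonzero; I would check this on the chart of Figure \ref{fig:mo10_e2}. So $h_1^2h_3$ must not survive to $E_\infty$. Note that it cannot be detecting zero through a higher-filtration class, because it is the only element in stem 9 above filtration 3 apart from $Ph_1$, and $Ph_1$ detects $\mu_9$, which survives. This justifies saying the class itself must be killed.

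Next I would show that $h_1^2h_3$ is a permanent cycle, since it is a product of permanent cycles $h_1$ and $h_3$ coming from the sphere. Hence it must be the target of a differential. I would then inspect stem 10 of the $E_2$- and $E_3$-pages for $MO\langle 10 \rangle$ to see which classes could hit $(9,3)$:
\begin{enumerate}
\item A $d_2$ from filtration 1, or a $d_3$ from filtration 0.
\item In the sphere, $h_1^2h_3$ is not hit, because $h_1h_3$ in stem 8 does not support a $d_2$.
\end{enumerate}
For $MO\langle 10 \rangle$ the new class is $I_{10}$ in bidegree $(10,0)$, which corresponds to the bottom cell of $\Sigma^{10}$ coming from $x_{10}$. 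I expect the chart to show nothing in $(10,1)$. Unlike the $MO\langle 9 \rangle$ case, $h_1h_3$ is not killed by $d_2$ in stem 8, because Hovey's theorem only forces $\text{Im } J$ to die in dimensions $\geq 9$; this matches $\pi_8 = (\mathbb{Z}/2)^2$ in the table. Moreover $d_2(I_{10})$ would have to land in $(9,2)$, which is empty or contains only classes that must survive. So $I_{10}$ is a $d_2$-cycle, and $d_3(I_{10})=h_1^2h_3$ is the only possibility.

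The main obstacle is the degree-reasons bookkeeping: confirming from the $E_2$-page that $(10,1)$ is empty and that $d_2(I_{10})=0$. If $(9,2)$ contains a nonzero class, I would rule out $d_2(I_{10})$ being nonzero by $h_0$- or $h_1$-linearity. Specifically, $h_0I_{10}=0$ or $h_1I_{10}$ can be compared against $h_1$ times the candidate target, as in Lemma \ref{lem:d2 differential in stem 20 filtration 1 mo10}.
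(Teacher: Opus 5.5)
Your proposal is correct and follows essentially the same route as the paper: you identify $h_1^2h_3$ as detecting the image-of-$J$ class $\eta^2\sigma$ (using Adams' theorem and the fact that $\mu_9$ is detected by $Ph_1$), use Hovey's theorem in stem $9 \geq 10-1$ to force it to die, and observe that $d_3(I_{10})$ is the only differential that can hit it. One small correction to a remark your argument does not depend on: stem 9 also contains $h_1c_0$ in filtration 4 (detecting $\eta\epsilon$), and Mahowald's theorem as stated (with $i\geq 1$) says nothing about stem 9, so the detecting class is pinned down by your direct citation that $\eta^2\sigma\in\mathrm{Im}\,J$, not by those remarks.
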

\begin{proof}
Stem 9 contains the elements $h_1^2h_3$ and $Ph_1$, where $h_1^2h_3$ and $Ph_1$ detect $\eta^2 \sigma$ and $\mu$ in $\pi_{9} \mathbb{S}$. But $\mu=\mu_{9}$ (see Remark 11.41 \cite{BrunerRognestmf}), so by Theorem \ref{adamsimj}, $Ph_1$ does not detect an element in $\text{Im } J$. This implies $h_1^2h_3$ must detect an element in $\text{Im } J$, which by Theorem \ref{hoveyimj}  implies it detects a class in the kernel of the unit map $\pi_{9} \mathbb{S} \rightarrow \pi_{9} MO \langle 10 \rangle$. Thus, $h_1^2h_3$ must not survive the Adams spectral sequence for $MO \langle 10 \rangle$. The only possible differential with target $h_1^2h_3$ is $d_3(I_{10})$. 
\end{proof}

\subsubsection{$d_4$-differentials}
\begin{proposition}
Table \ref{tab:Adams d_4 mo10} describes the non-zero $d_4$-differentials in the Adams spectral sequence for $MO\langle 10 \rangle$  on all indecomposables on $E_4$ through stem 20. 
\end{proposition}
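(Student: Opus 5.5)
The statement to prove is the $d_4$ analogue for $MO\langle 10\rangle$ of the tables proved for $MO\langle 9\rangle$. The plan is to read off from the $E_4$-page (Figure \ref{fig:mo10_e4_0_24}) every indecomposable through stem 20 that could support a $d_4$ for degree reasons. For each one, either prove the differential or rule it out. The expected list is short. It should contain the classes imported from the sphere, namely $h_0e_0$ with possible target $h_0v_{16,8}$, and any $d_0$-, $g$- or $Pd_0$-type classes that survive to $E_4$, together with at most one or two classes coming from the bottom cells $I_{10}$ and $\vv(16,2,10)$ of the Thom class.

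For classes in the image of the unit map $\mathbb S\to MO\langle 10\rangle$, I would use naturality with Proposition \ref{prop: sphere differentials}. If the $d_4$ on such a class vanishes in $\mathbb S$ and its candidate target is not hit by a $d_2$ or $d_3$ from a class outside the image, then it also vanishes here. This settles $h_0e_0$ exactly as in Table \ref{tab:Adams d_4 mo9}.

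For the genuinely new classes, I would follow the pattern of Lemmas \ref{lem:d_4 differential in stem 24 filtration 5 mo9} and \ref{lem:permanent cycle in stem 25 filtration 3 mo9}. The first tool is $h_0$-, $h_1$- and $h_2$-linearity against the known $E_4$ products. The second is multiplication by the permanent cycle $d_0$, which pushes a class into a stem where the sphere differentials are known. The third is Theorem \ref{maymilgram} together with Table \ref{tab:bo10_groups} and the Thom isomorphism. These groups show which $\Z/2^r$ summands of $H^{*}(MO\langle 10\rangle;\Z)$ exist, so they control any $d_4$ between $h_0$-towers. In particular, $H^{21}$ has no $\Z/16$ summand, so no $d_4$ can leave a tower in stem 20.

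Finally, I would use the image of $J$ (Theorems \ref{hoveyimj} and \ref{mahowaldimj}). Every class in $\mathrm{Im}\,J$ in stems $\geq 9$ that is still alive on $E_4$, for example a $P^2h_2$ or $Pc_0$ type class if it has not yet been killed, must be the target of some differential. If a $d_4$ is the only possibility, this forces it. I would rerun the sparsity argument of Lemma \ref{lem:d_3-differentials-imj-mo9} to confirm that no other source exists.

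I expect the main obstacle to be a possible $d_4$ originating from a class built on $\vv(16,2,10)$ or $\vv(20,1,10)$ and landing in low filtration, for instance near $h_0^k d_0$ or $h_1 d_0$. Neither naturality nor May--Milgram applies directly there. For this case I would first try to express the source as a Massey product, such as $\langle h_0, h_0^2h_3, \cdot\rangle$ or $\langle h_2,h_3,\cdot\rangle$, with zero indeterminacy, and then apply Moss's rule (Theorem \ref{mossleibniz}). If that fails, I would use the comparison $MO\langle 10\rangle\to MO\langle 9\rangle$ and the already computed $E_\infty$ of $MO\langle 9\rangle$, where the corresponding class is known to be a permanent cycle or a boundary.
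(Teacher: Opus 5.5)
Your toolkit overlaps the paper's, and for three of the four table entries the paper argues exactly as you describe:
\begin{itemize}
\item $h_1^2h_4$ is a permanent cycle by naturality from $\mathbb S$.
\item $d_4(\vv(12,1,10))=Ph_2$ and $d_4(h_1\vv(16,2,10))=Pc_0$ are forced because $Ph_2,Pc_0\in\mathrm{Im}\,J$ (Theorems \ref{mahowaldimj}, \ref{hoveyimj}). Once $h_1^2h_4$ is known to be a permanent cycle, these are the only possible sources.
\end{itemize}
However, the proposal never carries out the case analysis, and where it does commit to specifics it points the wrong way. $I_{10}$ is not on $E_4$ at all, since $d_3(I_{10})=h_1^2h_3$. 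The $h_0e_0$, $d_0$-, $g$- and $Pd_0$-type classes play no role. The obstacle you anticipate, differentials into $h_0^kd_0$ or $h_1d_0$, never arises. You also do not anticipate $\vv(12,1,10)$ or $\vv(20,5,10)$.

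The real gap is the one entry that needs more than ``$\mathrm{Im}\,J$ plus a unique source'': $d_4(\vv(20,5,10))=P^2h_2$. The $\mathrm{Im}\,J$ argument only tells you that something must kill $P^2h_2$. It does not single out $\vv(20,5,10)$, because bidegree $(20,5)$ is not one-dimensional: it also contains $h_0g$ and possibly $h_0^4\vv(20,1,10)$. Sparsity alone does not settle which class, or which page, is responsible.

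The paper handles this as follows. It places $\vv(20,5,10)\in\langle h_0,h_0^3h_3,\vv(12,1,10)\rangle$ on $E_4$ and applies Moss's rule (Theorem \ref{mossleibniz}). This transports the already proved $d_4(\vv(12,1,10))=Ph_2$ to
$d_4(\vv(20,5,10))\in\langle h_0,h_0^3h_3,Ph_2\rangle=\{P^2h_2\}$.
It also checks that the indeterminacy classes $h_0g$ and $h_0^4\vv(20,1,10)$ have zero $d_4$. So the order of the argument matters: the $\vv(12,1,10)$ differential must be established first, and then pushed up by $\langle h_0,h_0^3h_3,\cdot\rangle$, which acts here like the periodicity operator $P$.

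Your suggested fallback bracket $\langle h_0,h_0^2h_3,\cdot\rangle$ is not even defined, since $h_0\cdot h_0^2h_3=h_0^3h_3\neq 0$. It is the relation $h_0^4h_3=0$ that makes $\langle h_0,h_0^3h_3,\cdot\rangle$ the right operator.
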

The following table lists the only indecomposables through stem 20 that, for degree reasons, can support a $d_4$-differential. 
 \begin{longtable}{llllc}
    \caption[Possible $d_4$-differentials $MO\langle 10 \rangle$]{Possible non-zero $d_4$-differentials on indecomposable elements
    \label{tab:Adams d_4 mo10}
    } \\
    \toprule
    $x$ & $(t-s,s)$ & $d_4(x)$ & Occurs & Proof\\
    \midrule \endfirsthead
    \caption[]{Possible non-zero $d_4$-differentials on indecomposable elements} \\
    \toprule
    $x$ & $(t-s,s)$ & $d_4(x)$ & Occurs & Proof\\
    \midrule \endhead
    \bottomrule \endfoot
        $\vv(12,1,10)$ & $(12, 1)$ & $Ph_2$ & Yes & \ref{lem:d_4-differentials-imj-mo10}\\
        $h_1\vv(16,2,10)$ & $(17,3)$ & $Pc_0$ & Yes & \ref{lem:d_4-differentials-imj-mo10} \\
        $h_1^2h_4$ & $(17,3)$ & $Pc_0$ & No & \ref{lem:d_4-differentials-sphere-mo10} \\
       $\vv(20,5,10)$ & $(20,5)$ & $P^2h_2$ & Yes & \ref{d_4 differential in stem 20 filtration 5 mo10} \\
    \end{longtable}
\begin{proof}
For proofs of these differentials, see the following Lemmas:
\end{proof}

\begin{lemma}\label{lem:d_4-differentials-sphere-mo10}
The element $h_1^2h_4$ is a permanent cycle. 
\end{lemma}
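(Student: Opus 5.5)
The plan is to show that $h_1^2h_4$ supports no nonzero differential in the Adams spectral sequence for $MO\langle 10\rangle$. I would do this by naturality along the unit map $\mathbb{S}\rightarrow MO\langle 10\rangle$, together with a homotopy-theoretic argument that rules out the single degree-possible target $Pc_0$.

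First, record the sphere behaviour. In the Adams spectral sequence for $\mathbb{S}$ the class $h_1^2h_4$ is a permanent cycle detecting $\eta\eta_4\in\pi_{17}\mathbb{S}$. By Proposition \ref{prop: sphere differentials}, no differential leaves $h_1^2h_4$ there. Since the map of spectral sequences induced by the unit is compatible with all $d_r$, the image of $h_1^2h_4$ in $E_r(MO\langle 10\rangle)$ must satisfy $d_r(h_1^2h_4)=\text{image of } d_r^{\mathbb{S}}(h_1^2h_4)$, modulo the usual caveat: an element that is zero or a boundary upstream may map to something that no longer vanishes on the later page. Concretely, naturality gives $d_r(h_1^2h_4)=0$ for every page on which the image of $h_1^2h_4$ is a nonzero $d_r$-cycle representing the image class.

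The cleanest route, and the one I expect actually to carry the argument, is more direct. The only candidate target through stem $16$ in the correct bidegree is $Pc_0$, via a $d_4$. I would argue that $d_4(h_1^2h_4)=Pc_0$ is impossible as follows. Note that $h_1^2h_4=h_1\cdot h_1h_4$ and that $h_1$ is a permanent cycle. It would then suffice to know $d_4(h_1h_4)=0$, and from Table \ref{tab:Adams d_3 mo10} the element $h_1h_4$ is already recorded as not supporting a $d_3$ (to $h_1d_0$). Its only possible $d_4$-targets lie in stem $15$, filtration $6$, of $E_4(MO\langle 10\rangle)$. That position is either empty after the $d_3(h_0h_4)=h_0d_0$ truncation, or consists of $h_0$-tower elements, which are divisible by $h_0$ and hence are annihilated by $h_1$ after multiplication. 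Either way $h_1\,d_4(h_1h_4)=0$, and by the Leibniz rule $d_4(h_1^2h_4)=0$.

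As a backup for this step, I would use that $Pc_0$ must already be hit by $h_1\vv(16,2,10)$ (the companion entry of the table, Lemma \ref{lem:d_4-differentials-imj-mo10}). Then $d_4(h_1^2h_4)\in\{0,Pc_0\}$ can be replaced by $h_1^2h_4+h_1\vv(16,2,10)$ if needed, but the statement is about $h_1^2h_4$ itself. So I would still need the Leibniz argument above.

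Finally, for higher $d_r$ with $r\geq 5$, sparsity in stem $16$ above filtration $7$ of the $E_5$-page, where only $h_0$-tower elements and image-of-$J$ classes already killed remain, rules out further targets. The main obstacle is the $d_4$ step: verifying from the chart that $h_1$ annihilates every potential $d_4$-target of $h_1h_4$. I would check this carefully against the $E_4$-page, since a nontrivial $h_1$-multiplication there would break the argument.
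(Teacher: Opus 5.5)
Your first paragraph is, in substance, exactly the paper's proof, and it is complete on its own. A morphism of spectral sequences $f_r\colon E_r(\mathbb{S})\to E_r(MO\langle 10\rangle)$ commutes with $d_r$ and induces $f_{r+1}$ on homology. So if $[x]_r$ is a $d_r$-cycle, then so is $f_r[x]_r$. If instead $[x]_r=0$ because $x$ was hit earlier, then $f_r[x]_r=0$ as well. Hence the image of a permanent cycle is always a permanent cycle. The caveat you attach (an element zero or a boundary upstream mapping to something that no longer vanishes) does not occur. The genuine caveats of naturality run the other way: a nonzero differential in $\mathbb{S}$ can map to zero, and a permanent cycle can map to zero or to a boundary. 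Neither affects the conclusion that $h_1^2h_4$ supports no differential, so naturality gives $d_r(h_1^2h_4)=0$ on every page, with nothing further needed.

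The Leibniz route that you say ``actually carries the argument'' is therefore unnecessary, and it is weaker than what you set aside.

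\begin{itemize}
\item Its first input, that $h_1h_4$ supports no $d_3$, is justified in Table \ref{tab:Adams d_3 mo10} by comparison with $\mathbb{S}$, i.e.\ by the same naturality principle you hedged on.
\item Once you accept that principle for $h_1h_4$, it already makes $h_1h_4$ a permanent cycle, since $h_1h_4$ is one in $\mathbb{S}$. Then $h_1^2h_4=h_1\cdot h_1h_4$ is a product of permanent cycles, and neither the stem-$15$ chart inspection nor the $r\ge 5$ sparsity argument is needed. As written, both of those steps are unverified, as you yourself note.
\item The backup via $h_1\vv(16,2,10)$ would be circular. In the paper, Lemma \ref{lem:d_4-differentials-imj-mo10} deduces $d_4(h_1\vv(16,2,10))=Pc_0$ from the present lemma, because $h_1\vv(16,2,10)$ and $h_1^2h_4$ are the only two candidate sources for killing $Pc_0$.
\end{itemize}
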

\begin{proof}
The element $h_1^2h_4$  is a permanent cycle in the Adams spectral sequence for $\mathbb{S}$, and so by naturality is a permanent cycle in the Adams spectral sequence for $MO \langle 10 \rangle$. 
\end{proof}
\begin{lemma}\label{lem:d_4-differentials-imj-mo10}
$d_4(\vv(12,1,10))=Ph_2$ and $d_4(h_1\vv(16,2,10))=Pc_0$. 
\end{lemma}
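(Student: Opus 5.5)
The plan is to follow the pattern of Lemma \ref{lem:d_3-differentials-imj-mo9} and Lemma \ref{lem:d_2-differentials-imj-mo10}. We first show that the targets $Ph_2$ (stem 11, filtration 5) and $Pc_0$ (stem 16, filtration 7) must be killed in the Adams spectral sequence for $MO\langle 10\rangle$. Then we check that the stated sources are the only elements able to kill them on the $E_4$-page.

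\textbf{Step 1: the targets lie in $\text{Im } J$ and must die.} By Theorem \ref{mahowaldimj}, $Ph_2$ and $Pc_0$ detect elements of $\text{Im } J$, namely the generator of the image of $J$ in $\pi_{11}\mathbb{S}$ and the element $\eta\rho$ in stem 16. Both stems satisfy $11,16 \geq n-1 = 9$. Hence Theorem \ref{hoveyimj} says these classes map to zero in $\pi_*MO\langle 10\rangle$. Both classes are permanent cycles in $\mathbb{S}$, so by naturality their images are permanent cycles in the spectral sequence for $MO\langle 10\rangle$. Since the homotopy classes they detect vanish, these images must be hit by differentials. One subtlety is that $Ph_2$ is an $h_0$-multiple of nothing here and sits at the top of its $\text{Im } J$ pattern. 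To say that the \emph{detecting element} $Ph_2$ must be a boundary, rather than that the class is detected in lower filtration, we use that $Ph_2$ is the lowest-filtration element in its bidegree's stem that is in the image from $\mathbb{S}$. The same argument applies to $Pc_0$. We also check that neither element is already killed by a $d_2$ or $d_3$: the tables of $d_2$- and $d_3$-differentials for $MO\langle 10\rangle$ show no such differential, and the elements are nonzero on the $E_4$-page chart.

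\textbf{Step 2: identify the unique possible source.} For $Ph_2$ in bidegree $(11,5)$, a $d_r$ into it has source in $(12,5-r)$. Inspecting the $E_4$-page (Figure \ref{fig:mo10_e4_0_24}), the candidates are $(12,1)$ for $r=4$ and the $(12,0)$ and $(12,3)$ positions for $r=5$ and $r=2$. Only $\vv(12,1,10)$ remains nonzero on $E_4$; note that $(12,3)$ would correspond to $d_2$, which has already been handled. So $d_4(\vv(12,1,10))=Ph_2$. For $Pc_0$ in $(16,7)$, the sources lie in $(17,7-r)$. On $E_4$ the candidates in stem 17 are $h_1\vv(16,2,10)$ and $h_1^2h_4$, both in filtration 3. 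By Lemma \ref{lem:d_4-differentials-sphere-mo10}, $h_1^2h_4$ is a permanent cycle, so it cannot hit $Pc_0$. Hence the differential comes from $h_1\vv(16,2,10)$ (modulo $h_1^2h_4$) with $r=4$. As a consistency check we can use $h_1$-linearity: $Pc_0=h_1\cdot Ph_1$-type relations are not available, so we rely on elimination.

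\textbf{Main obstacle.} The hardest part is ruling out all other sources in every filtration, including longer differentials $d_r$ with $r\geq 5$ from filtrations 0–2 and elements such as $e_0$-multiples in stem 17. Confirming that $h_1\vv(16,2,10)$ survives to $E_4$ is also part of this. It does, because $d_2(h_1\vv(16,2,10))=h_1h_0^3h_4=0$ and there is no $d_3$ by the $d_3$ table. I would settle this by reading the $E_4$-chart carefully and, where necessary, using $h_0$- and $h_1$-linearity to show that any alternative source is torsion-incompatible with the target.
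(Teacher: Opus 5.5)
Your proposal is correct and takes essentially the same route as the paper. Theorems \ref{mahowaldimj} and \ref{hoveyimj} force $Ph_2$ and $Pc_0$ to die, $\vv(12,1,10)$ is the only available source for $Ph_2$, and for $Pc_0$ the only competing source $h_1^2h_4$ is eliminated because it is a permanent cycle (Lemma \ref{lem:d_4-differentials-sphere-mo10}).

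Your ``lowest filtration'' subtlety is unnecessary, and $Ph_2$ is the bottom of the $\Z/8$ pattern in stem 11, not the top. Since the class detected by $Ph_2$ maps to zero, naturality of the Adams filtration directly shows that the image of $Ph_2$ vanishes in $E_\infty$ for $MO\langle 10\rangle$.
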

\begin{proof}
By Theorem \ref{mahowaldimj}, the target of each of these differentials is in the image of $J$. By Theorem \ref{hoveyimj}, they then detect elements in the kernel of the unit map $\mathbb{S} \rightarrow MO\langle n \rangle$ and so they must not survive the spectral sequence. 
The element $\vv(12,1,10)$ is the only possible source of a differential with target $Ph_2$, and so $d_4(\vv(12,1,10))=Ph_2$. Other than $h_1\vv(16,2,10)$, the element $h_1^2h_4$ is the only other possible source of a differential with target $Pc_0$. By Lemma \ref{lem:d_4-differentials-sphere-mo10}, $d_4(h_1^2h_4)=0$, and so we conclude $d_4(h_1\vv(16,2,10))=Pc_0$.
\end{proof}
\begin{lemma}\label{d_4 differential in stem 20 filtration 5 mo10}
$d_4(\vv(20,5,10))=P^2h_2$
\end{lemma}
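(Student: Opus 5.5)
The plan follows the template of Lemma \ref{lem:d_4-differentials-imj-mo10}: show that $P^2h_2$ must die in the Adams spectral sequence for $MO\langle 10 \rangle$, and then show that $\vv(20,5,10)$ is the only class that can kill it on the $E_4$-page.

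First, $P^2h_2$ lies in stem $19$. By Theorem \ref{mahowaldimj} it detects a class in $\text{Im } J$, since $19 \equiv 3 \pmod 8$ and the family $P^ih_2$ represents the image of $J$ there. Because $19 \geq 10-1$, Theorem \ref{hoveyimj} says this class maps to zero in $\pi_{19}MO\langle 10 \rangle$. Its image under the map of spectral sequences induced by the unit therefore cannot survive to $E_\infty$ as a nonzero class detecting a nonzero element. Moreover, $P^2h_2$ is a permanent cycle in the Adams spectral sequence for $\mathbb{S}$, so by naturality it is a permanent cycle for $MO\langle 10\rangle$. It follows that $P^2h_2$ must be hit by some differential.

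Next, I would check that $P^2h_2$ is still nonzero on the $E_4$-page, i.e.\ it is not already hit by a $d_2$ or $d_3$. This is read off Tables \ref{tab:Adams d_2 mo10} and \ref{tab:Adams d_3 mo10}: none of the listed differentials (or their products with $h_0,h_1,h_2,P$) has target $P^2h_2$.

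Then I would enumerate the possible sources in stem $20$. These are classes in filtration $s$ with $2 \leq s \leq 9-4$ surviving to $E_4$, or in filtration at most $9-r$ for a $d_r$ with $r\geq 4$. From the $E_2$/$E_3$ charts, the candidates in stem $20$ should be $g$ (filtration $4$), the $h_0$-tower on $\vv(20,1,10)$, $h_0^k\vv(20,5,10)$, and possibly $h_0$-multiples of $\vv(21,0,10)$-related classes. Each alternative is ruled out as follows.
\begin{enumerate}
\item The class $g$ is a permanent cycle by naturality, since it detects $\overline{\kappa}$ in $\mathbb{S}$ and no differential on it is possible there; in fact $d_5(g)=0$ as in Lemma \ref{lem:d_3-differentials-imj-mo9}.
\item The $h_0$-tower classes in low filtration are ruled out by $h_0$-linearity, since $h_0P^2h_2\neq 0$ while the relevant tower elements would force incompatible targets. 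Alternatively, the tower surviving in stem $20$ must detect the $\Z$ summand, consistent with the May--Milgram bookkeeping of Lemma \ref{lem:d2 differential in stem 21 filtration 0 mo10}.
\end{enumerate}
Filtration $5$ is exactly $9-4$, so a $d_4$ from $\vv(20,5,10)$ lands in the right bidegree, and the differential is forced: $d_4(\vv(20,5,10))=P^2h_2$.

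The main obstacle is the last step, exhaustively excluding other sources, especially the $h_0$-tower classes in stem $20$ on later pages. If degree and $h_0$-linearity do not suffice, I would fall back on a Massey product argument mirroring Lemma \ref{lem:d_3 differential in stem 28 filtration 10 mo9}. I would write $\vv(20,5,10)$ as an element of a bracket such as $\langle h_0, h_0^3h_3, \vv(12,1,10)\rangle$ and apply Moss's rule (Theorem \ref{mossleibniz}) with $d_4(\vv(12,1,10))=Ph_2$ from Lemma \ref{lem:d_4-differentials-imj-mo10}. This gives $d_4(\vv(20,5,10))\in\langle h_0,h_0^3h_3,Ph_2\rangle\ni P^2h_2$, after checking that the indeterminacy vanishes.
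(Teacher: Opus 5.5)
Your main line has a genuine gap at the step you flag yourself. The $\mathrm{Im}\,J$ argument shows that $P^2h_2$, $h_0P^2h_2$ and $h_0^2P^2h_2$ must all be boundaries. It does not show which differentials kill them, and your exclusion of competing sources does not go through.

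Since $d_2(\vv(21,0,10))=h_0\vv(20,1,10)$, the class $\vv(20,1,10)$ survives to $E_3$ as an $h_0$-torsion class in bidegree $(20,1)$, and a $d_8$ on it lands in $(19,9)$. Consider the pattern $d_5(\vv(20,5,10))=h_0P^2h_2$ (hence $d_5(h_0\vv(20,5,10))=h_0^2P^2h_2$) together with $d_8(\vv(20,1,10))=P^2h_2$. This pattern:
\begin{itemize}
\item kills all of $\mathrm{Im}\,J$ in stem $19$;
\item leaves exactly one $h_0$-tower in stem $20$;
\item never involves $g$;
\item is compatible with $h_0$-linearity, because $h_0\vv(20,1,10)=0$ on $E_3$ while $h_0P^2h_2$ is already zero on $E_6$.
\end{itemize}
Your remark that $h_0P^2h_2\neq 0$ forces an incompatibility uses nonvanishing on $E_2$, not on the page where the differential would act. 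So degree, $h_0$-linearity and naturality for $g$ do not single out $d_4(\vv(20,5,10))$. This is why the paper does not argue by elimination here, unlike the $MO\langle 9\rangle$ and $MO\langle 12\rangle$ analogues, where $g$ was the only competitor.

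Your fallback is exactly the paper's proof, and it should be the proof rather than a contingency. One correction is needed. The indeterminacy of $\langle h_0,h_0^3h_3,\vv(12,1,10)\rangle$ is not zero; the paper finds $\{0,h_0^4\vv(20,1,10),h_0g\}$. Before concluding $d_4(\vv(20,5,10))\in\langle h_0,h_0^3h_3,Ph_2\rangle$, you must check that this indeterminacy consists of $d_4$-cycles:
\begin{itemize}
\item $h_0g$ is a permanent cycle by naturality from $\mathbb{S}$;
\item $h_0^4\vv(20,1,10)=d_2(h_0^3\vv(21,0,10))$ by $h_0$-linearity, so it is already zero on $E_3$.
\end{itemize}
It is the target bracket $\langle h_0,h_0^3h_3,Ph_2\rangle$ that contains $P^2h_2$ with zero indeterminacy. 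The other two terms in Moss's rule vanish because $h_0$ and $h_0^3h_3$ are $d_4$-cycles. As in the paper, you should also note that the bracket may be used on $E_4$ because all three entries survive to $E_4$; in particular $\vv(12,1,10)$ first supports a $d_4$.
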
 
\begin{proof}
The element $\vv(20,5,10)$ is contained in the Massey product $\langle h_0, h_0^3h_3, \vv(12,1,10) \rangle$  on the $E_2$-page with indeterminacy $\{0,h_0^4\vv(20,1,10), h_0g\}$. Note that we do not know if $h_0^4\vv(20,1,10)$ survives to the $E_4$-page. We assume it does since if it does not the proof of this lemma is a simpler version of what is recorded here. The Massey product $\langle h_0, h_0^3h_3, \vv(12,1,10) \rangle$ is in fact a Massey product on the $E_4$-page since all three elements survive to the $E_4$-page. 
By Moss's higher Leibniz rule \ref{mossleibniz},
\begin{align*}
d_4(\langle h_0, h_0^3h_3, \vv(12,1,10) \rangle)=d_4(\vv(20,5,10)) \in\;&
\langle d_4(h_0),h_0^3h_3,\vv(12,1,10)\rangle+\langle h_0,d_4(h_0^3h_3),\vv(12,1,10)\rangle+\langle h_0,h_0^3h_3,d_4(\vv(12,1,10))\rangle\\
=\;&\langle 0,h_0^3h_3,\vv(12,1,10)\rangle+\langle h_0,0,\vv(12,1,10)\rangle+\langle h_0,h_0^3h_3,Ph_2\rangle.
\end{align*}
where the first equality holds since $d_4(h_0g)=0$ and $d_4(h_0^4\vv(20,1,10))=0$. The first two terms vanish and the third contains $P^2h_2$ with zero indeterminacy. Thus, the only possibility is $d_4(\vv(20,5,10))=P^2h_2$
\end{proof}
For degree reasons, $P^2h_1$ is the only element that may possibly support a differential of length greater than 4. It does not:

\begin{lemma}
    $P^2h_1$ is a permanent cycle.
\end{lemma}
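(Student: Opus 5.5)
The plan is to prove this by naturality along the unit map $\mathbb S \rightarrow MO\langle 10 \rangle$. This mirrors the argument for $h_1^2h_4$ in Lemma \ref{lem:d_4-differentials-sphere-mo10}.

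The class $P^2h_1 \in \text{Ext}^{9,26}_{\mathcal{A}}(\mathbb{Z}/2,\mathbb{Z}/2)$ lies in stem 17. In the Adams spectral sequence for $\mathbb S$ it is a permanent cycle: it detects $\mu_{17}$, and Proposition \ref{prop: sphere differentials} lists no differential on it. The unit map $\mathbb S \rightarrow MO\langle 10\rangle$ induces a map of Adams spectral sequences that commutes with all $d_r$. Hence the image of $P^2h_1$ is a permanent cycle in the spectral sequence for $MO\langle 10 \rangle$.

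This settles the question, with one caveat: the element of the $MO\langle 10\rangle$ chart called $P^2h_1$ must actually be the image of the sphere's $P^2h_1$. This holds because $P$ acts through the periodicity operator, or equivalently as the Massey product $\langle h_3, h_0^4, -\rangle$, which is compatible with the $E_2$-map. I would confirm it with \textbf{ext} by checking that the class in bidegree $(17,9)$ is hit by the unit map.

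If that identification were in doubt, I would fall back on a direct argument. Any differential on $P^2h_1$ is $h_1$-linear, and the only candidate targets in stem 16 lie in high filtration. They would have to be compatible with the relation $h_1 \cdot P^2h_1 = P^2h_1^2$, and that product again comes from the sphere. Still, the naturality argument suffices.

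The only real obstacle is identifying the chart element with the image of the sphere class. Everything else is formal.
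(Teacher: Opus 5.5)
Your proposal is correct and matches the paper's proof. The paper uses the same one-line naturality argument: $P^2h_1$ is a permanent cycle in the Adams spectral sequence for $\mathbb S$, so its image under the unit map $\mathbb S \rightarrow MO\langle 10\rangle$ is a permanent cycle. The paper takes the identification of the chart class with the image of the sphere class for granted, so your caveat and fallback argument are not needed.
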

\begin{proof}
The element $P^2h_1$ is a permanent cycle in the Adams spectral sequence for $\mathbb{S}$, and so by naturality is a permanent cycle in the Adams spectral sequence for $MO \langle 10 \rangle$. 
\end{proof}
\subsubsection{The abutment}
Recall that there are no hidden $2$-extensions between elements in the image of the unit map in this range of degrees. The first possible hidden $2$-extension is in stem 16. 

\begin{proposition}
$\pi_{16}MO\langle 10 \rangle \cong \Z \oplus \Z/2$
\end{proposition}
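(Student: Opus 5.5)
The plan follows the proofs of Propositions \ref{prop:extension problem stem 16 mo9} and \ref{prop:extension problem stem 20 mo9}: read off stem 16 of the $E_\infty$-page of the Adams spectral sequence for $MO\langle 10 \rangle$, set up the resulting extension, and resolve it using the unit map $\mathbb{S} \rightarrow MO\langle 10\rangle$.

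\textbf{Step 1: identify the surviving classes in stem 16.} On $E_2$, stem 16 contains $h_1h_4$ and the $h_0$-tower generated by $\vv(16,2,10)$. I will check the following against Tables \ref{tab:Adams d_2 mo10}, \ref{tab:Adams d_3 mo10} and \ref{tab:Adams d_4 mo10}:
\begin{itemize}
\item $d_2(\vv(16,2,10))=h_0^3h_4$ by Lemma \ref{lem:d_2-differentials-imj-mo10}, so the bottom class $\vv(16,2,10)$ dies.
\item The classes $h_0^k\vv(16,2,10)$ with $k\geq 1$ are $d_2$-cycles, since $h_0^{k+3}h_4=0$. They therefore leave an $h_0$-tower starting in filtration $3$.
\item A differential $d_r$ from stem 17 into this tower would need a source in a stem-17 $h_0$-tower. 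There is no such tower, and sparsity rules out anything else.
\item No differential leaves this tower: there is no infinite $h_0$-tower in stem 15 to hit that survives. The remaining stem-15 classes are $h_0^kh_4$, which are targets or sources of the sphere differentials, together with $h_1d_0$-related classes in the wrong stem.
\item $h_1h_4$ survives, because $d_3(h_1h_4)=h_1d_0=0$ by Proposition \ref{prop: sphere differentials}. It detects $\eta^*\in\pi_{16}\mathbb{S}$. It is not hit, since $h_1^2h_3$-type classes in stem 17 are not in a position to kill it.
\end{itemize}
The $E_\infty$-page in stem 16 is then one $h_0$-tower plus the single class $h_1h_4$ in filtration 2.

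\textbf{Step 2: set up the extension.} From Step 1 there is a short exact sequence
\[
0 \rightarrow \mathbb{Z} \rightarrow \pi_{16}MO\langle 10\rangle \rightarrow \mathbb{Z}/2\{\eta^*\} \rightarrow 0 .
\]
Here the $\mathbb{Z}$ is detected by the tower, whose classes lie in filtration $\geq 3$, above $h_1h_4$. The only question is whether a hidden $2$-extension sends $h_1h_4$ into the tower.

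\textbf{Step 3: rule out the hidden extension.} The class $h_1h_4$ lies in the image of the unit map and detects the image of $\eta^*$. Since $2\eta^*=0$ in $\pi_{16}\mathbb{S}$, naturality gives $2\cdot\mathrm{image}(\eta^*)=0$ in $\pi_{16}MO\langle 10\rangle$. The element of order two detected by $h_1h_4$ therefore splits the sequence, and $\pi_{16}MO\langle 10\rangle\cong\mathbb{Z}\oplus\mathbb{Z}/2$.

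\textbf{Expected obstacle.} The only delicate point is Step 1: confirming from the charts that no longer differential truncates or kills the stem-16 tower. I will settle this with $h_0$-linearity and the absence of a stem-17 tower. As a cross-check, Theorem \ref{maymilgram} applied to $H^{17}(MO\langle 10\rangle;\mathbb{Z})$ should show no $\mathbb{Z}/2^r$ summand corresponding to such a differential. The extension step itself is immediate.
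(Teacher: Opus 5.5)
Your Steps 2 and 3 are the paper's argument. The paper proves this by referring to Proposition \ref{prop:extension problem stem 16 mo9}: it forms the short exact sequence $0\to\Z\to\pi_{16}MO\langle 10\rangle\to\Z/2\{\eta^*\}\to 0$ and splits it using $2\eta^*=0$, and it reads the $E_\infty$-page off the chart. Your conclusion about stem 16 of $E_\infty$ is also correct: one $h_0$-tower lying above $h_1h_4$, which sits in filtration 2. So the proof goes through. Several justifications in your Step 1 are wrong, though.

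\textbf{The stem-16 tower.} You say $h_0^{k+3}h_4=0$ for $k\ge 1$, but $h_0^jh_4\neq 0$ for all $j\le 7$. The classes $h_0^3h_4,\dots,h_0^7h_4$ detect the entire $\Z/32$ image of $J$ in stem 15. By Theorem \ref{hoveyimj} none of it can survive in $MO\langle 10\rangle$. By $h_0$-linearity, $d_2(h_0^k\vv(16,2,10))=h_0^{k+3}h_4\neq 0$ for $0\le k\le 4$. So the surviving tower starts at $h_0^5\vv(16,2,10)$ in filtration 7, not 3. Your claim that no differential leaves the tower is false for its bottom five classes.

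\textbf{The class $Pc_0$.} You omitted $Pc_0$, which sits in bidegree $(16,7)$, the same bidegree as $h_0^5\vv(16,2,10)$. It is killed by $d_4(h_1\vv(16,2,10))=Pc_0$ (Lemma \ref{lem:d_4-differentials-imj-mo10}). $h_0$-linearity shows this differential has no tower component, since $h_0h_1=0$ while $h_0^6\vv(16,2,10)\neq 0$.

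\textbf{Survival of $h_1h_4$.} The equation $d_3(h_1h_4)=h_1d_0=0$ is wrong, because $h_1d_0\neq 0$ (it detects $\eta\kappa$). The correct reason is that $h_1h_4$ is a permanent cycle for $\mathbb{S}$, hence for $MO\langle 10\rangle$ by naturality. It is not a boundary, since the only possible source is a $d_2$ from bidegree $(17,0)$, and Table \ref{tab:Adams d_2 mo10} shows none.

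\textbf{A cleaner tower argument.} The simplest way to see that exactly one tower survives is rational. $H^{16}(BO\langle 10\rangle;\mathbb{Q})\cong\mathbb{Q}\{p_4\}$, so $\pi_{16}MO\langle 10\rangle$ has rank one.

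These corrections move the bottom of the tower up to filtration 7 but do not change the group-level conclusion of Step 1. The tower still lies above $h_1h_4$, so Steps 2 and 3 apply verbatim.
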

\begin{proof}
The proof is identical to the proof of Proposition \ref{prop:extension problem stem 16 mo9}.
\end{proof}

\begin{proposition}
$\pi_{20}MO\langle 10 \rangle \cong \Z \oplus F$ where $F$ is a finitely-generated abelian group such that $0 \leq |F| \leq 16$.
\end{proposition}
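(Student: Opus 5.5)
The plan is to read off stem 20 of the $E_\infty$-page for $MO\langle 10\rangle$ and then control the extension problem with the unit map. The $E_\infty$-page in stem 20 is determined by the differentials already established: $d_2(\vv(21,0,10))=h_0\vv(20,1,10)$ (Lemma \ref{lem:d2 differential in stem 21 filtration 0 mo10}), $d_4(\vv(20,5,10))=P^2h_2$ (Lemma \ref{d_4 differential in stem 20 filtration 5 mo10}), and $d_2(\vv(20,1,10))=0$ (Lemma \ref{lem:d2 differential in stem 20 filtration 1 mo10}).

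First, I will identify the survivors. The $d_2$ on the stem-21 tower truncates the $h_0$-tower on $\vv(20,1,10)$, leaving only $\vv(20,1,10)$ itself in filtration 1; it is a permanent cycle by sparsity. The tower beginning at $\vv(20,5,10)$ survives as the single $h_0$-tower in stem 20, since $d_4$ from it lands in stem 19 and no differential can hit it. Its presence is consistent with Lemma \ref{lem:d_2 differenital stem 21 filtration 0 bo10} and the rational computation, which say exactly one $\Z$ survives. The remaining classes come from the sphere: $g$, $h_0g$ and $h_0^2g$, which detect $\overline{\kappa},2\overline{\kappa},4\overline{\kappa}$. I will check that $g$ receives no differential. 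The candidate sources in stem 21 are $\vv(21,0,10)$, already used, and possibly $h_1$-multiples, and these should be ruled out by $h_0$-linearity.

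This gives an associated graded consisting of one $\Z$ (the tower), together with at most four $\Z/2$'s from $\vv(20,1,10)$, $g$, $h_0g$, $h_0^2g$, some of which may already be truncated. Hence the torsion part has order at most $2^4=16$. I will phrase this as a short exact sequence
\[
0\rightarrow \Z \rightarrow \pi_{20}MO\langle 10\rangle \rightarrow Q\rightarrow 0,
\]
where $Q$ is finite with $|Q|\le 16$. I then split off the $\Z$ exactly as in Proposition \ref{prop:extension problem stem 20 mo9}. The class $\overline{\kappa}$ satisfies $8\overline{\kappa}=0$ in $\pi_{20}\mathbb S$, and $\vv(20,1,10)$ lies in filtration 1, below the tower, so any hidden extension from it would land in higher filtration. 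This shows the torsion cannot be absorbed into a $\Z$.

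\textbf{Main obstacle.} The hard step is the possible hidden $2$-extension from $\vv(20,1,10)$ into the $h_0$-tower. Such an extension would make the $\Z$ not split off, and it cannot be settled from naturality with $\mathbb S$. I would try to rule it out with the integral cohomology of Proposition \ref{prop:bo10 cohomology groups} via the Hurewicz map to $H\Z$: the $\Z$ in $H_{20}$ is detected in filtration 0, which pins down the tower generator. Failing that, I would simply allow $F$ to be any finite group of order at most $16$. That is all the statement claims, since a non-split extension of $\Z$ by a finite group is still $\Z\oplus F'$ with $|F'|\le|Q|$. So the bound $|F|\le 16$ follows regardless, and I would present the argument in that robust form.
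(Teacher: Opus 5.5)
Your proposal is correct and is essentially the paper's argument: the paper simply reads the bound off stem 20 of the $E_\infty$-page, which has one surviving $h_0$-tower plus at most the $h_0$-torsion classes $\vv(20,1,10)$, $g$, $h_0g$, $h_0^2g$, and you make this explicit by noting that the torsion subgroup of a rank-one group injects into the finite quotient. Two small corrections do not affect the conclusion. First, $\vv(20,5,10)$, $h_0\vv(20,5,10)$ and $h_0^2\vv(20,5,10)$ themselves support $d_4$'s onto $P^2h_2$, $h_0P^2h_2$, $h_0^2P^2h_2$, so the $\Z$ you quotient by should be the filtration $\geq 8$ part of the tower, starting at $h_0^3\vv(20,5,10)$. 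Second, the remark that a hidden extension from $\vv(20,1,10)$ ``lands in higher filtration'' does not rule out absorption into the tower, but your final robust step makes that question irrelevant, just as the survival of $g$ is irrelevant for an upper bound.
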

\begin{proof}
Immediate from the $E_{\infty}$-page. 
\end{proof}

\begin{theorem}\label{thm: mo10 groups}
Table \ref{tab:mo10 groups} describes the 2-primary component of the cobordism groups $\Omega^{\langle 10 \rangle}_n$ for the values of $n$ listed. Here, $F$ is an finite abelian group such that $0\leq|F|\leq 16$.
\end{theorem}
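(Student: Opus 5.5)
The plan is to assemble Table \ref{tab:mo10 groups} from the $E_\infty$-page of the Adams spectral sequence for $MO\langle 10\rangle$ through stem 20, exactly as in the proof of Theorem \ref{thm: fivebrane groups}. First I will check that the $E_2$-page is correct through stem 21. This page is computed from the $\mathcal{A}$-module structure of $H^*(MO\langle 10\rangle;\Z/2)$, which Stong's description and the Sage computation provide. Then I will verify that the tables of $d_2$, $d_3$ and $d_4$-differentials list every indecomposable that can support a differential for degree reasons. The $d_2$'s come from Proposition \ref{prop: sphere differentials} together with Lemmas \ref{lem:d_2-differentials-imj-mo10}, \ref{lem:d2 differential in stem 20 filtration 1 mo10} and \ref{lem:d2 differential in stem 21 filtration 0 mo10}. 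The $d_3$ is Lemma \ref{lem:d_3 differential in stem 10 filtration 0 mo10}. The $d_4$'s are Lemmas \ref{lem:d_4-differentials-sphere-mo10}, \ref{lem:d_4-differentials-imj-mo10} and \ref{d_4 differential in stem 20 filtration 5 mo10}.

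Next I will propagate these differentials to decomposables using the Leibniz rule with the permanent cycles $h_0,h_1,h_2,P$, which yields the $E_3$, $E_4$ and $E_5$ pages. After that I will argue that $E_5=E_\infty$ through stem 20. The only candidate for a longer differential is $P^2h_1$, and the final lemma shows it is a permanent cycle by naturality along the unit map. Every other class is excluded by sparsity: above the $h_0$-towers there are no nonzero targets in filtration at least $s+5$ in the adjacent stem.

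From $E_\infty$ I read off each group. Below stem 16, every surviving class is either in the image of the unit map $\mathbb S\to MO\langle 10\rangle$ or is an isolated $h_0$-tower. The first hidden $2$-extension for $\mathbb S$ occurs in stem 40, so there are no hidden extensions among image classes, and the groups are immediate. In stem 16 I invoke the preceding proposition: the extension of $\Z/2\{\eta^*\}$ by $\Z$ splits because $2\eta^*=0$. In stem 18 I will inspect $E_\infty$ for an $h_0$-chain of length three, which gives the $\Z/8$, together with a separate $h_1$-multiple giving the $\Z/2$. In stem 20 I use the preceding proposition, which gives $\Z\oplus F$ with $|F|\le 16$.

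The main obstacle is stem 18, and more generally any possible hidden $2$-extension involving classes coming from $I_{10}$-multiples that are not in the image of the unit map. For these I will first try the same trick used in the proof of Proposition \ref{prop:pi_24 mo9}. If a class is an $\eta$-multiple in homotopy, then $2\eta=0$ rules out an extension. Otherwise I will compare with $MO\langle 9\rangle$ via the map $MO\langle 10\rangle\to MO\langle 9\rangle$ and use the known groups of Theorem \ref{thm: fivebrane groups}. If neither argument settles stem 18, I will fall back to recording only the order of that group.
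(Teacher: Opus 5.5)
Your plan is essentially the paper's argument. The table is read off the $E_\infty$-page produced by the listed $d_2$, $d_3$, $d_4$ lemmas and the permanent-cycle lemma for $P^2h_1$; hidden extensions among classes in the image of the unit map are excluded; stem 16 is resolved by $2\eta^*=0$; and stem 20 is recorded only as $\Z\oplus F$ with $|F|\le 16$. The paper does not treat stem 18 as an obstacle (it names stem 16 as the first place a hidden extension can occur and addresses only stems 16 and 20), so your first argument---the $h_0$-chain $h_2h_4, h_0h_2h_4, h_0^2h_2h_4$ plus $h_1P^2h_1$, all coming from $\pi_{18}\mathbb{S}$---is what is being used, and the fallback is unnecessary (comparing with $MO\langle 9\rangle$ would not help there anyway, since $\nu^*$ dies in $MO\langle 9\rangle$).
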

\begin{center}
\captionof{table}[The 2-primary component of the $O\langle 10\rangle$-cobordism groups]{The 2-primary component of $\Omega^{\langle 10\rangle}_n$ for the values of $n$ listed.}
\label{tab:mo10 groups}
\tablehead{\hline%
$n$ & $\Omega^{\langle 10\rangle}_n$ & $n$ & $\Omega^{\langle 10\rangle}_n$\\%
\hline}
\tabletail{\hline}
\tablelasttail{\hline}
\begin{supertabular}{|c|p{5.2cm}|c|p{5.2cm}|}
$0$  & $\Z$ & $10$ & $\Z/2$  \\
$1$  & $\Z/2$ & $11$ & $(0)$  \\
$2$  & $\Z/2$ & $12$ & $\Z$ \\
$3$  & $\Z/8$ & $13$ & $(0)$ \\
$4$  & $(0)$ & $14$ & $\Z/2 \oplus \Z/2$\\
$5$  & $(0)$ & $15$ & $\Z/2$\\
$6$  & $\Z/2$ & $16$ & $\Z \oplus \Z/2$\\
$7$  & $\Z/16$ & $17$ & $(\Z/2)^3$\\
$8$  & $\Z/2\oplus\Z/2$ & $18$ & $\Z/8 \oplus \Z/2$\\
$9$  & $\Z/2 \oplus \Z/2$ & $19$ &  \\
    & & $20$ & $\Z \oplus F$ \\
\end{supertabular}
\end{center}

\subsection{$MO\langle 12 \rangle$}
\subsubsection{The cohomology of $MO\langle 12\rangle$} 

\begin{proposition}[\cite{Stong1963Determination}]
    $H^{*}(BO\langle12 \rangle;\mathbb{Z}/2)$ is isomorphic to:
    \[
        H^{*}(K(\mathbb{Z},12); \Z/2)/\langle Sq^5 i_{12}) \rangle \otimes \mathbb{Z}/2[\theta_{i} | L(i) > 7]
    \]
\end{proposition}
where $L(i)$ is one plus the number of ones in the binary expansion of $i-1$. We calculate the $\mathcal{A}$-module structure of $H^{*}(MO\langle12 \rangle;\mathbb{Z}/2)$ through degree $70$ using a computer program in Sage.

\begin{proposition}\label{prop:bo_9_10_12_equiv}
The spaces $BO\langle 9\rangle$, $BO\langle 10 \rangle$, and $BO\langle 12\rangle$ are rationally homotopy equivalent.
\end{proposition}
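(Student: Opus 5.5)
The plan is to show that each map in the tower $BO\langle 12\rangle = BO\langle 11\rangle \to BO\langle 10\rangle \to BO\langle 9\rangle$ is a rational equivalence. These spaces are simply connected, hence nilpotent. For such spaces, a map is a rational homotopy equivalence if and only if it induces an isomorphism on $\pi_*\otimes\mathbb{Q}$. Equivalently, it suffices to check an isomorphism on $H^*(-;\mathbb{Q})$. I will give the homotopy-group argument first and then the Serre spectral sequence argument as a cross-check, since the latter matches the style of Propositions \ref{prop: rational bo8} and \ref{prop: rational bo9}.

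\textbf{Homotopy argument.} By construction of the Whitehead tower (Figure \ref{bo_whitehead}), the map $BO\langle n\rangle\to BO\langle n-1\rangle$ induces an isomorphism on $\pi_i$ for $i\ge n$. It kills only $\pi_{n-1}(BO)$. For $BO\langle 10\rangle\to BO\langle 9\rangle$ the group killed is $\pi_9(BO)\cong\Z/2$. For $BO\langle 11\rangle\to BO\langle 10\rangle$ it is $\pi_{10}(BO)\cong\Z/2$. For $BO\langle 12\rangle \to BO\langle 11\rangle$ it is $\pi_{11}(BO)=0$, so that map is an equivalence, consistent with $BO\langle 11\rangle = BO\langle 12\rangle$ in the figure. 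Both nonzero groups are torsion, so they vanish after tensoring with $\mathbb{Q}$. Hence $\pi_*\otimes\mathbb{Q}$ is unchanged along the tower, and the composite $BO\langle 12\rangle\to BO\langle 9\rangle$ is a rational equivalence.

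\textbf{Cohomology cross-check.} Consider the fibrations $K(\Z/2,8)\to BO\langle 10\rangle\to BO\langle 9\rangle$ and $K(\Z/2,9)\to BO\langle 11\rangle\to BO\langle 10\rangle$. The fibers satisfy $\tilde H^*(K(\Z/2,m);\mathbb{Q})=0$, since the rational homology of a $K(\pi,m)$ with $\pi$ finite is trivial; this also follows from Table \ref{tab:k_z2_8_groups} being all torsion in positive degrees. The bases are simply connected, so there are no local coefficient issues. The rational Serre spectral sequence is therefore concentrated on the bottom row and collapses. This gives
\[
H^*(BO\langle 12\rangle;\mathbb{Q})\cong H^*(BO\langle 10\rangle;\mathbb{Q})\cong H^*(BO\langle 9\rangle;\mathbb{Q})\cong\mathbb{Q}[p_3,p_4,\dots],
\]
using Proposition \ref{prop: rational bo9} for the last identification.

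I do not expect a real obstacle. The only point needing care is justifying that an isomorphism on rational cohomology (or on rational homotopy) implies rational homotopy equivalence. This is the rational Whitehead theorem, which applies because the spaces are simply connected.
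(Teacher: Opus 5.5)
Your proof is correct, and your homotopy argument is essentially the paper's. The maps in the Whitehead tower from $BO\langle 12\rangle$ down to $BO\langle 9\rangle$ only kill the torsion groups $\pi_9(BO)\cong\Z/2$, $\pi_{10}(BO)\cong\Z/2$ and $\pi_{11}(BO)=0$, so tensoring the long exact sequences with $\mathbb{Q}$ gives isomorphisms on $\pi_*\otimes\mathbb{Q}$. Your explicit appeal to the rational Whitehead theorem for simply connected spaces states a step the paper leaves implicit, and your Serre spectral sequence cross-check is an extra that gives the paper's next result, $H^*(BO\langle 12\rangle;\mathbb{Q})\cong\mathbb{Q}[p_3,p_4,\dots]$, directly.
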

\begin{proof}
We will prove a rational homotopy equivalence between $BO\langle 9 \rangle$ and $BO \langle 10 \rangle$. The proof of such an equivalence between $BO\langle 10 \rangle$ and $BO\langle 12\rangle$ 
is identical. 
Let $ f: BO\langle 10 \rangle \rightarrow BO\langle 9 \rangle$ be the fibration in the Whitehead tower in Figure \ref{bo_whitehead}. The fiber of $f$ is $K(\mathbb{Z}/2,8)$, which only has torsion in its homotopy groups (and only in a single degree).
The fibration $f$ induces a long exact sequence of homotopy groups 
\[
    \dots \rightarrow \pi_{k}(K(\mathbb{Z}/2,8)) \rightarrow \pi_{k}(BO\langle 10 \rangle) \rightarrow \pi_{k}(BO\langle 9 \rangle) \rightarrow \pi_{k-1}(K(\mathbb{Z}/2,8)) \rightarrow \dots
\]

Tensoring with $\mathbb{Q}$, which causes any torsion groups to vanish, we obtain 
\[
0 \rightarrow \pi_{k}(BO\langle 10 \rangle) \otimes \mathbb{Q} \rightarrow \pi_{k}(BO\langle 9 \rangle) \otimes \mathbb{Q} \rightarrow 0
\]
Hence, $ \pi_{k}(BO\langle 10 \rangle)\otimes \mathbb Q \cong \pi_{k}(BO\langle 9 \rangle) \otimes \mathbb Q$ for all $k$. 
\end{proof}

\begin{proposition}\label{prop:rational bo12}
The rational cohomology of $BO \langle 12 \rangle$ is isomorphic to 
\[
 \mathbb{Q}[p_3,p_4,p_5,\dots]
\]
where $|p_i|=4i$. 
\end{proposition}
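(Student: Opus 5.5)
The plan is to combine the two preceding results: the rational cohomology of $BO\langle 9\rangle$ computed in Proposition \ref{prop: rational bo9}, and the rational equivalence of $BO\langle 9\rangle$, $BO\langle 10\rangle$, and $BO\langle 12\rangle$ from Proposition \ref{prop:bo_9_10_12_equiv}.

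First, I would upgrade the statement of Proposition \ref{prop:bo_9_10_12_equiv} from an isomorphism of rational homotopy groups to an isomorphism of rational cohomology rings. I would do this directly with the Serre spectral sequence rather than appeal to rational homotopy theory.

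Consider the fibration $K(\mathbb{Z}/2,8)\to BO\langle 10\rangle \to BO\langle 9\rangle$. Its $E_2$-page is $H^s(BO\langle 9\rangle; H^t(K(\mathbb{Z}/2,8);\mathbb{Q}))$, and no local coefficients are needed because the base is simply connected. Since $\tilde H^*(K(\mathbb{Z}/2,8);\mathbb{Q})=0$ (all reduced integral homology of $K(\mathbb{Z}/2,n)$ is $2$-torsion), the spectral sequence is concentrated on the row $t=0$. Hence the pullback $H^*(BO\langle 9\rangle;\mathbb{Q})\to H^*(BO\langle 10\rangle;\mathbb{Q})$ is a ring isomorphism. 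Equivalently, the map is a mod-$\mathcal{C}$ equivalence for the Serre class of torsion groups.

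The same argument applies to $K(\mathbb{Z}/2,9)\to BO\langle 11\rangle\to BO\langle 10\rangle$. The step $BO\langle 12\rangle = BO\langle 11\rangle$ is trivial, since the fiber $K(0,10)$ is a point, as in Figure \ref{bo_whitehead}.

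Composing these isomorphisms with Proposition \ref{prop: rational bo9} gives
\[
H^*(BO\langle 12\rangle;\mathbb{Q})\cong H^*(BO\langle 9\rangle;\mathbb{Q})\cong \mathbb{Q}[p_3,p_4,p_5,\dots],
\]
where the $p_i$ are pulled back along the tower maps and so keep $|p_i|=4i$.

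The only point needing care is the vanishing of the rational cohomology of $K(\mathbb{Z}/2,n)$. I would justify it via Serre's theorem that the homology of a space with finite homotopy groups is torsion in positive degrees. Alternatively, the rational Hurewicz theorem makes $K(\mathbb{Z}/2,n)$ rationally contractible. Beyond this the argument is routine, so I do not expect a genuine obstacle.
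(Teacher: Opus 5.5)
Your argument is correct, and it reaches the result by a somewhat different mechanism than the paper.

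\textbf{The paper's route.} It proves Proposition \ref{prop:bo_9_10_12_equiv} by tensoring the long exact homotopy sequence of each tower fibration with $\mathbb{Q}$. This shows the maps $BO\langle 12\rangle \to BO\langle 10\rangle \to BO\langle 9\rangle$ are isomorphisms on $\pi_*\otimes\mathbb{Q}$. It then combines this with Proposition \ref{prop: rational bo9}. The step from rational homotopy groups to rational cohomology rings is left implicit; it is the mod-$\mathcal{C}$ Whitehead theorem for simply connected spaces.

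\textbf{Your route.} You stay in cohomology throughout. The rational Serre spectral sequence of each fibration lies on the row $t=0$, because $\tilde H^*(K(\mathbb{Z}/2,n);\mathbb{Q})=0$. So the edge map, which is induced by the projection, is a ring isomorphism. One small wording point: the reduced integral homology of $K(\mathbb{Z}/2,8)$ is $2$-primary torsion, not annihilated by $2$, since the paper's table has $\mathbb{Z}/4$ summands. Only torsion-ness matters here, so this is harmless.

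\textbf{Comparison.} The paper's route is shorter and gives a space-level statement: the three spaces are rationally equivalent. Yours is self-contained at the level of the cohomology statement actually being proved. It makes multiplicativity and the identification of the $p_i$ as pullbacks automatic. It also matches the Serre spectral sequence arguments the paper already uses for $BO\langle 8\rangle$ and $BO\langle 9\rangle$. Both routes rest on one Serre-class input. For you it is that $K(\mathbb{Z}/2,n)$ has torsion homology in positive degrees; for the paper it is the rational Whitehead theorem.
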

\begin{proof}
This result follows from combining Proposition \ref{prop: rational bo9} and Proposition \ref{prop:bo_9_10_12_equiv}.
\end{proof}

\subsubsection{Adams charts}
This section contains charts for the Adams spectral sequence for $MO\langle 12 \rangle$. The $E_r$-page is displayed along with the $d_r$-differentials. The $y$-axis is the Adams filtration $s$ and the $x$ axis is the stem $t-s$. The elements $h_0$, $h_1$, and $h_2$ have had their names suppressed. The dotted lines on the $E_{\infty}$-page represent possibly nonzero differentials that have yet to be determined. 
\clearpage
\FloatBarrier
\begin{figure}[!p]
\centering

\begin{subfigure}{\linewidth}
  \centering
  \includegraphics[width=\linewidth,
    trim=0.5cm 0cm 0cm 0cm,clip]{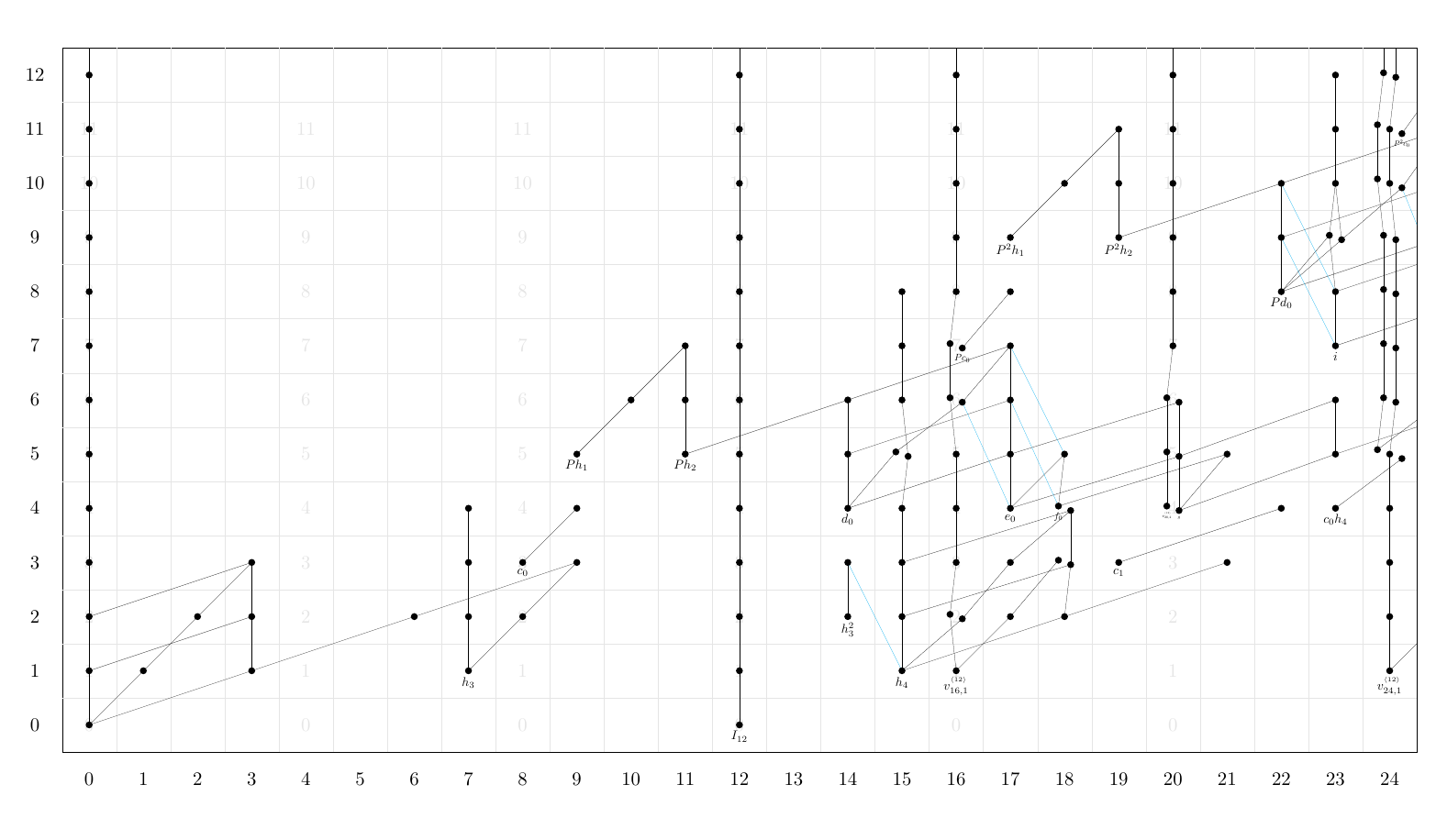}
  \caption{$E_2$-page $MO \langle 12 \rangle $ (stems 0-24)}
  \label{fig:mo12_e2_0_24}
\end{subfigure}

\vspace{-0.4\baselineskip}

\begin{subfigure}{\linewidth}
  \centering
  \includegraphics[width=\linewidth,
    trim=0.5cm 0cm 0cm 0cm,clip]{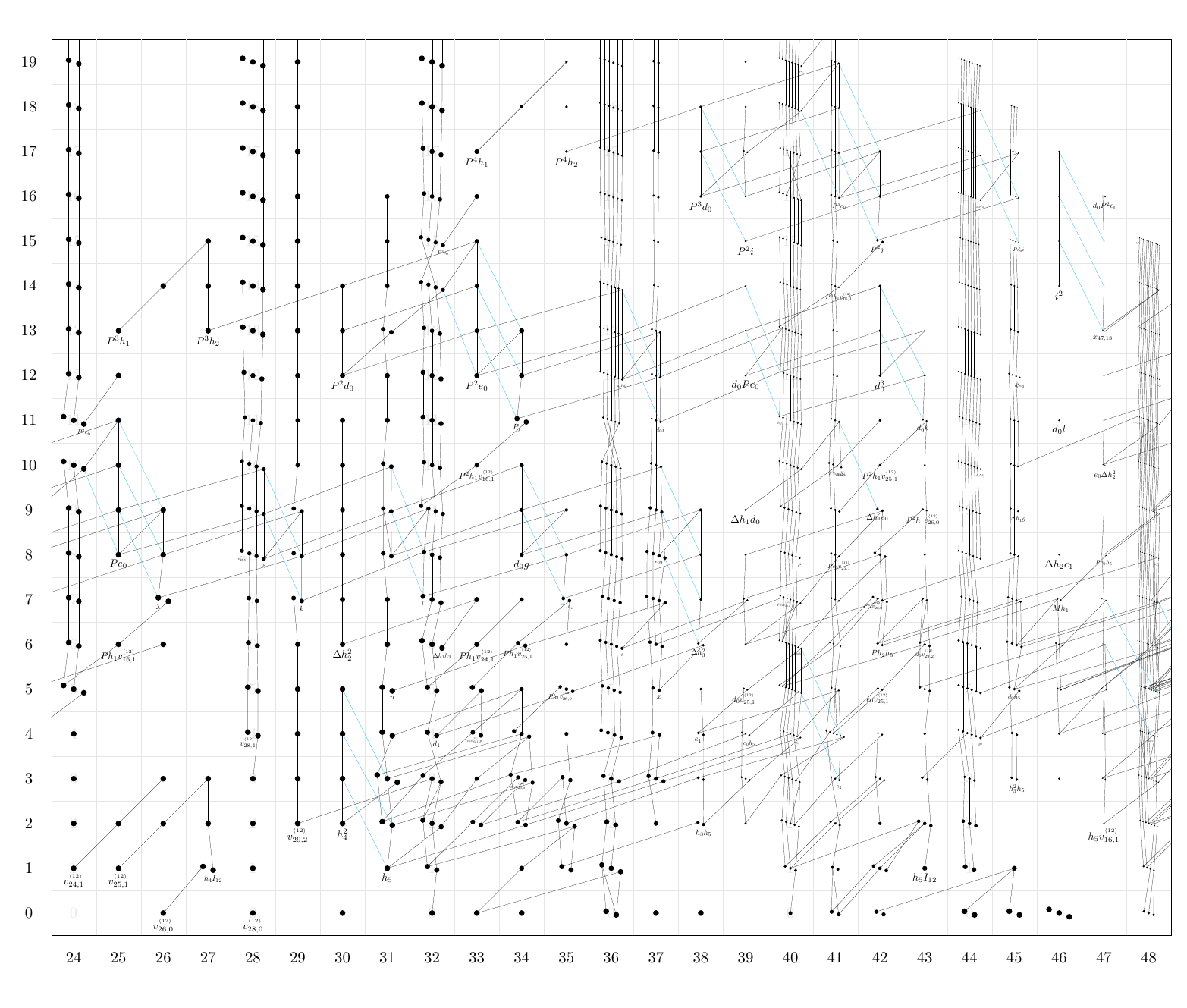}
  \caption{$E_2$-page $MO \langle 12 \rangle $ (stems 24-48)}
  \label{fig:mo12_e2_24_48}
\end{subfigure}

\caption{$E_2$-page $MO\langle 12 \rangle$ \label{fig:mo12_e2}}

\end{figure}

\begin{figure}[!p]
\centering
\begin{subfigure}{\linewidth}
  \centering
  \includegraphics[width=\linewidth,
    trim=0.5cm 0cm 0cm 0cm,clip]{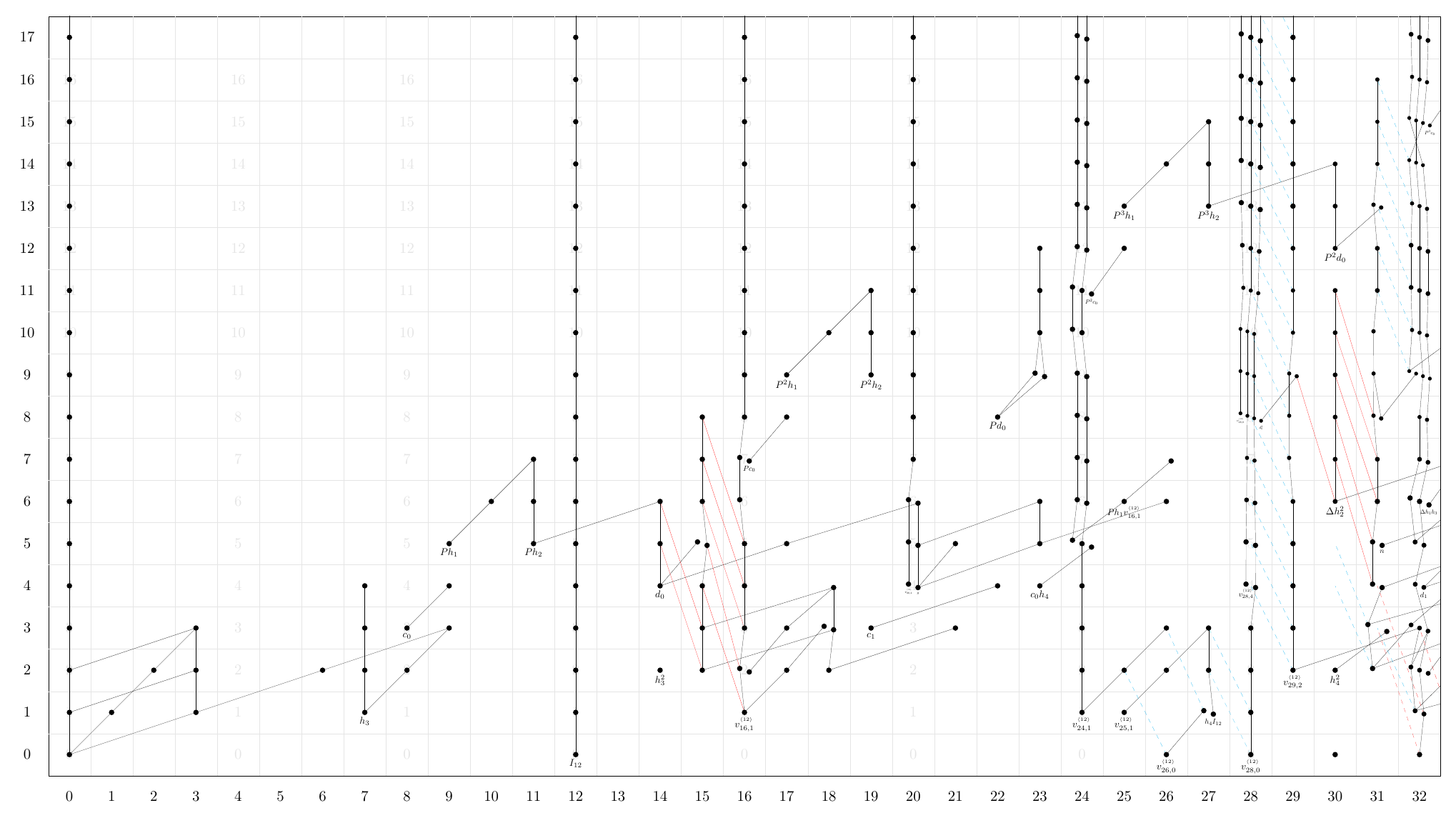}
  \caption{$E_3$-page $MO \langle 12 \rangle $ (stems 0-32)}
  \label{fig:mo12_e3}
\end{subfigure}

\vspace{-0.4\baselineskip}

\begin{subfigure}{\linewidth}
  \centering
  \includegraphics[width=\linewidth,
    trim=0.5cm 0cm 0cm 0cm,clip]{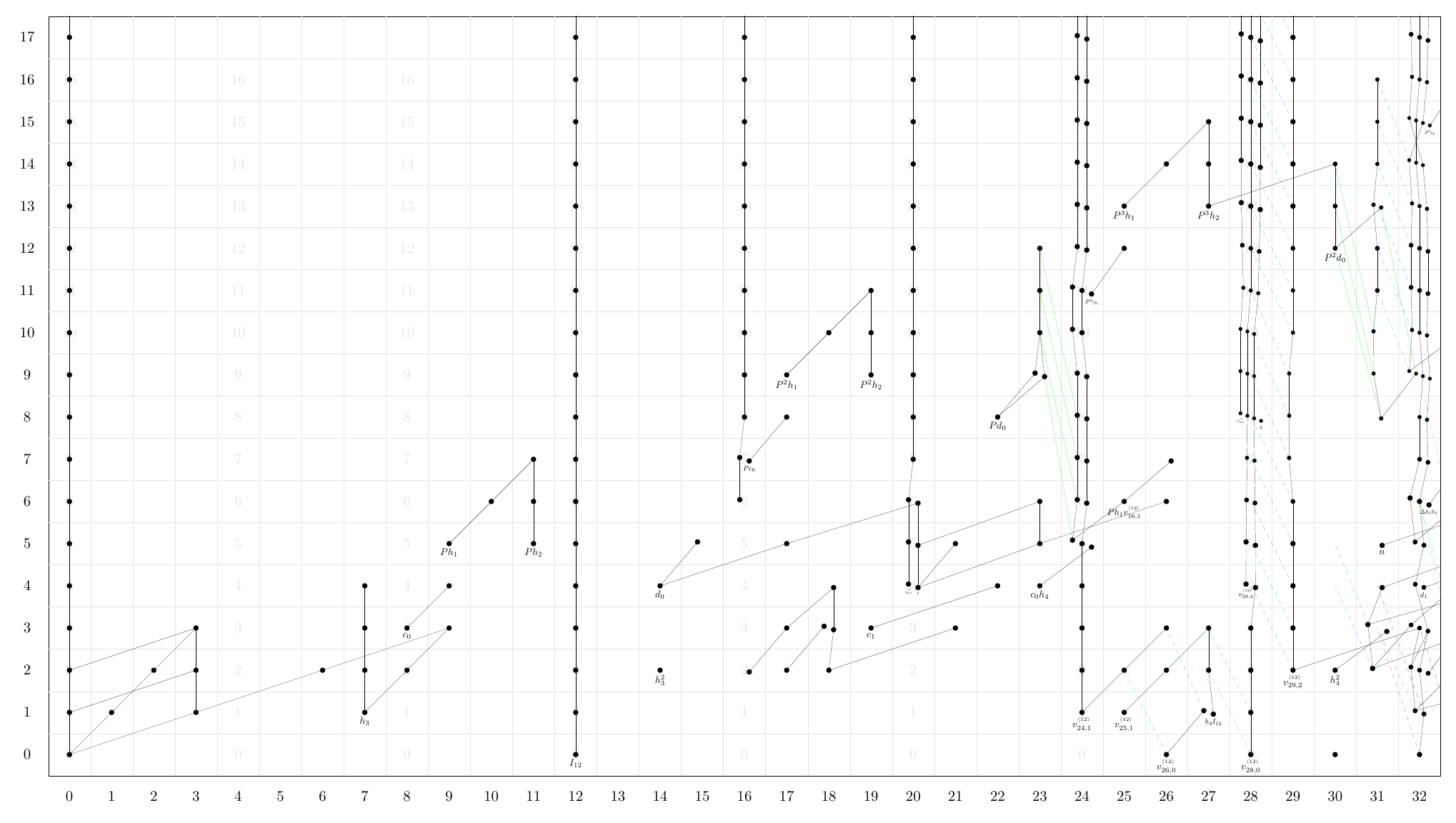}
  \caption{$E_4$-page $MO \langle 12 \rangle $ (stems 0-32)}
  \label{fig:mo12_e4}
\end{subfigure}

\caption{$E_3$/$E_4$-page $MO\langle 12 \rangle$ \label{fig:mo12_e3_e4}}
\end{figure}

\begin{figure}[!p]
\centering
\begin{subfigure}{\linewidth}
  \centering
  \includegraphics[width=\linewidth,
    trim=0.5cm 0cm 0cm 0cm,clip]{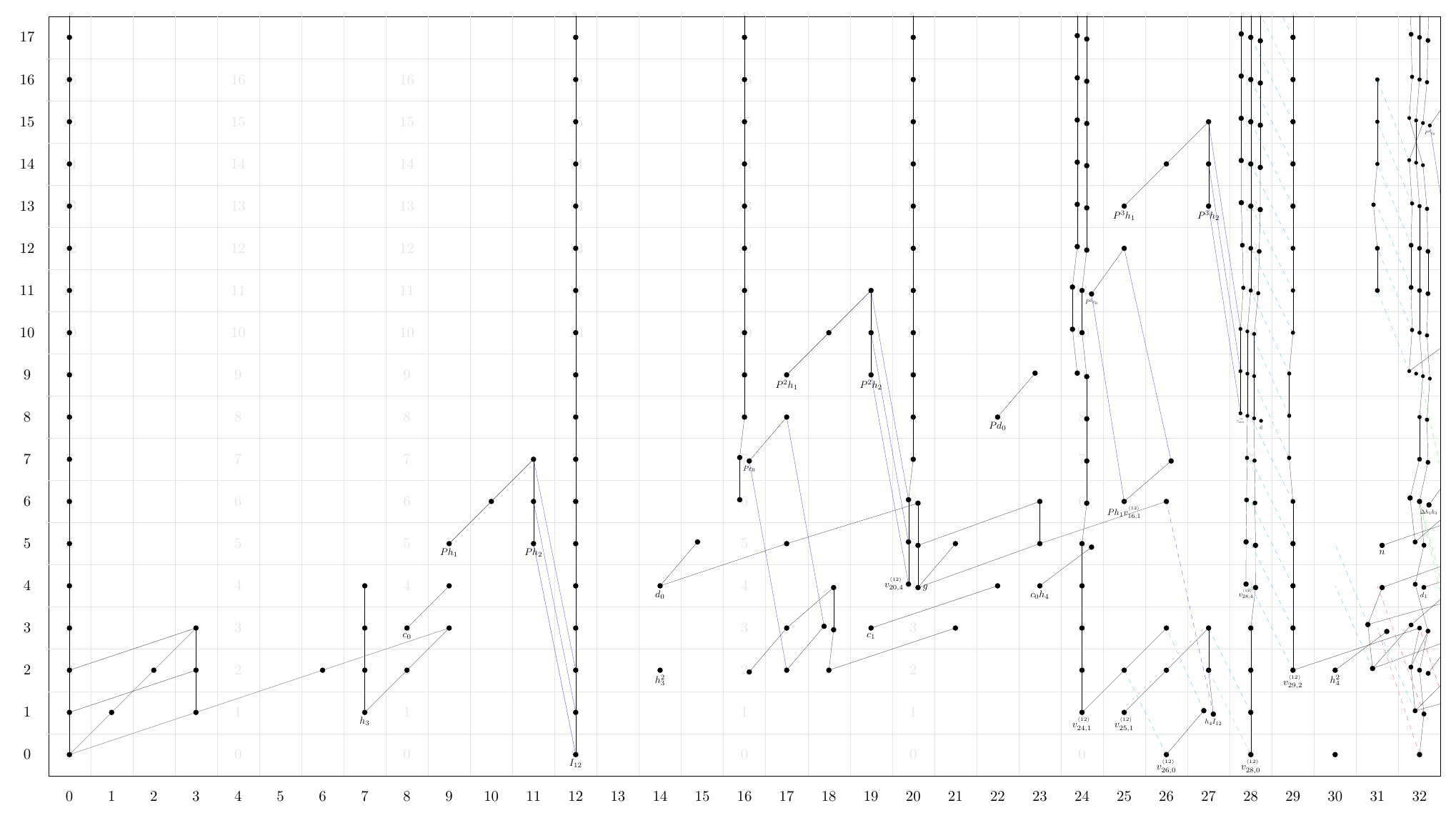}
  \caption{$E_5$-page $MO \langle 12 \rangle $ (stems 0-32)}
  \label{fig:mo12_e5}
\end{subfigure}

\vspace{-0.4\baselineskip}

\begin{subfigure}{\linewidth}
  \centering
  \includegraphics[width=\linewidth,
    trim=0.5cm 0cm 0cm 0cm,clip]{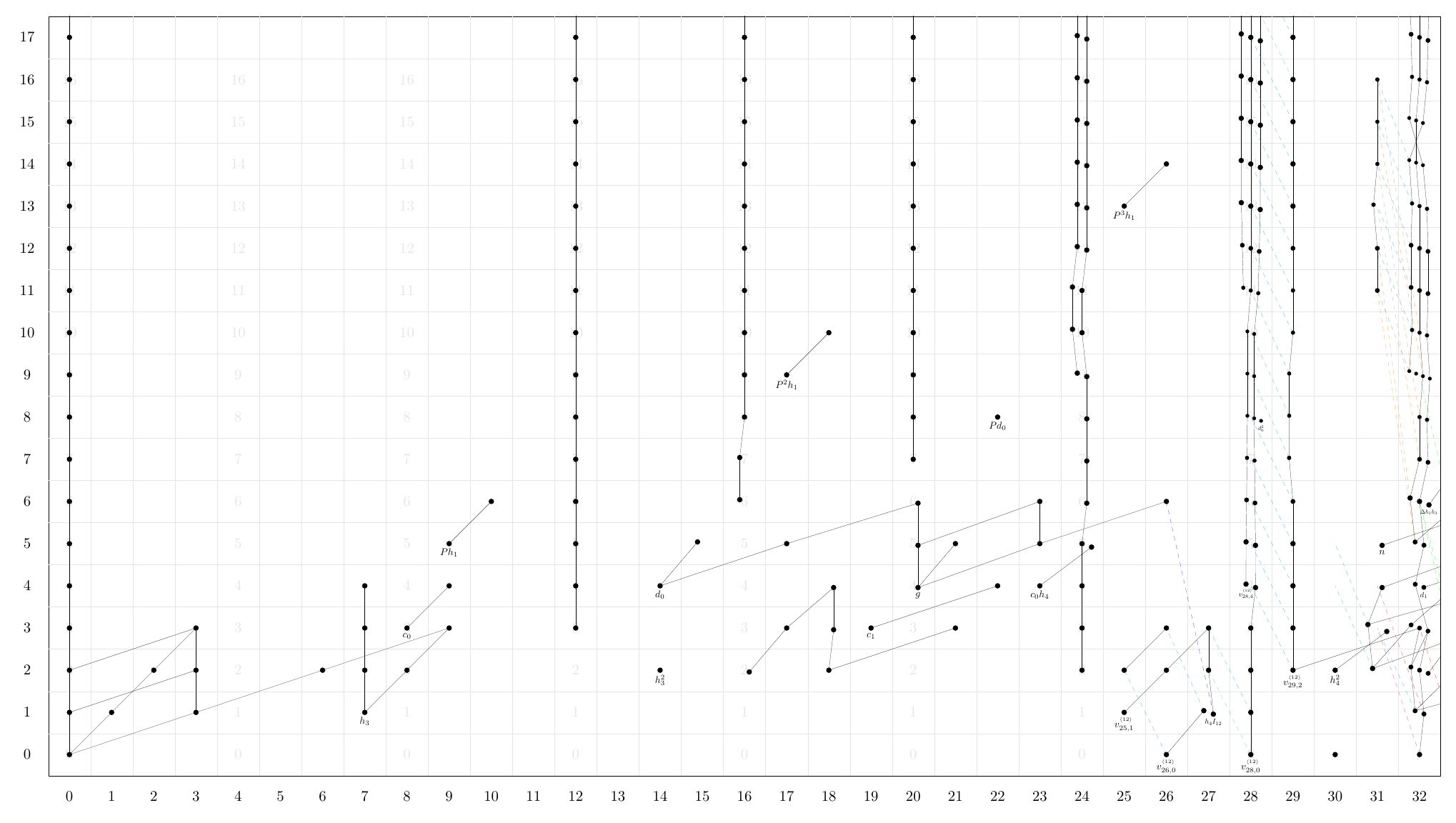}
  \caption{$E_{\infty}$-page $MO \langle 12 \rangle $ (stems 0-32)}
  \label{fig:mo12_einf}
\end{subfigure}

\caption{$E_5$/$E_{\infty}$-page $MO\langle 12 \rangle$ \label{fig:mo12_e5_einf}}

\end{figure}

\FloatBarrier

\subsubsection{$d_2$-differentials}
\begin{proposition}
Table \ref{tab:Adams d_2 mo12} describes the non-zero $d_2$-differentials in the Adams spectral sequence for $MO\langle 12 \rangle$  on all indecomposables on $E_2$ through stem 31, and select elements in stems $> 31$. 
\end{proposition}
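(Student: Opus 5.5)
The table of $d_2$-differentials for $MO\langle 12 \rangle$ will be justified in the same way as for $MO\langle 9 \rangle$ and $MO\langle 10 \rangle$. First, using the Sage-computed $\mathcal{A}$-module structure of $H^*(MO\langle 12\rangle;\mathbb{Z}/2)$ and \textbf{ext}, I would list every indecomposable on the $E_2$-page through stem 31 whose bidegree admits a nonzero $d_2$-target. These are the only elements on which a $d_2$ needs to be decided; differentials on decomposables then follow from the Leibniz rule.

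The indecomposables split into three groups.

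\emph{Classes in the image of the unit map.} These are $h_1, h_4, e_0, f_0, c_1, i, j, k, l, \dots$. For them I would invoke Proposition \ref{prop: sphere differentials} together with naturality of $\mathbb{S}\to MO\langle 12\rangle$. Occasionally the sphere's target is zero in $E_2(MO\langle 12\rangle)$, and the differential is then recorded as not occurring, as with $d_2(h_1)$.

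\emph{Classes forced by the kernel of the unit map.} Theorem \ref{hoveyimj} says the composite is injective on $\mathrm{Im}\,J$ in dimensions $\le 10$ and zero in dimensions $\ge 11$. By Theorem \ref{mahowaldimj}, the classes detecting $\mathrm{Im}\,J$ in stems $\ge 11$ must be killed: the edge of the $h_0$-tower in stem 15 (e.g.\ $h_0^3h_4$), $Pc_0$, $P^2h_2$, $P^2c_0$, $P^3h_2$, and so on. For each such class I would check which elements of $E_2(MO\langle 12\rangle)$ could hit it. Whenever the unique candidate has length $2$, this produces a nonzero $d_2$ on a new class $v_{t-s,s}$, as in Lemma \ref{lem:d_2-differentials-imj-mo10}. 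I would also check whether Theorem \ref{thm: 2n mo_n kernel} (degree $24=2\cdot 12$) or analogues of Theorem \ref{lem:mo9_17_kernel} force further targets.

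\emph{Remaining new indecomposables.} These include the bottom class $I_{12}$ (whose $d_2$ is zero for degree reasons, since the relevant $\mathrm{Im}\,J$ classes are killed by longer differentials) and the classes in stems 20--31. Here I would use the same toolkit as before:
\begin{itemize}[label={}]
\item $h_0$-, $h_1$-, $h_2$-linearity and relations, to propagate known differentials (compare Lemma \ref{lem:d_2 differential in 19 stem mo9});
\item multiplicative contradictions with $d_0$, $g$, $f_0$, to exclude candidates (compare Lemma \ref{lem:d_2 differential in stem 31 filtration 1 mo9});
\item Moss' higher Leibniz rule (Theorem \ref{mossleibniz}) on Massey product presentations such as $\langle h_0, h_0^3h_3, -\rangle$ or $\langle h_3, h_0^4, -\rangle$, computing indeterminacies with \textbf{ext}.
\end{itemize}
For $h_0$-towers in adjacent stems, I would apply May--Milgram (Theorem \ref{maymilgram}) with the $2$-adic integral cohomology of $BO\langle 12\rangle$. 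That cohomology would be obtained as in Proposition \ref{prop:bo10 cohomology groups}: the fibration $K(\mathbb{Z}/2,9)\to BO\langle 12\rangle\to BO\langle 10\rangle$, the Eilenberg--MacLane machine, and Proposition \ref{prop:rational bo12} to fix the torsion-free ranks. For the "select elements in stems $>31$", I would compute only those needed as inputs to the lower-stem arguments, using the same methods.

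The main obstacle will be the higher-stem classes (roughly 24--31), where the $E_2$-page is dense and several linear combinations are possible targets. There I would first try the \textbf{sseqcpp} automated deduction to eliminate candidates by linearity. Any survivors I would resolve with Moss' rule applied to a product with $d_0$ or $g$, as in Lemma \ref{d_2 differential in stem 24 filtration 4 mo9}. Entries that resist all of these would be marked "?" in the table, as was done for $MO\langle 9\rangle$.
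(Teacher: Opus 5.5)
Your first group is the paper's entire proof. Every decided entry in Table~\ref{tab:Adams d_2 mo12} ($h_1,h_4,e_0,f_0,i,j,k,h_5,l$) is a sphere class handled by naturality from Proposition~\ref{prop: sphere differentials}, and the only new indecomposables that could support a $d_2$ ($\vv(26,0,12)$, $\vv(28,0,12)$, $\vv(29,2,12)$) are left as ``?''. So your Im~$J$, Moss and May--Milgram steps contribute nothing here; for $MO\langle 12\rangle$ the Im~$J$ classes are all killed by $d_r$ with $r\ge 3$ (e.g.\ Lemmas~\ref{d_3 differential in stem 16 filtration 1 mo12} and~\ref{lem:d_5-differentials-imj-mo12}).

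One correction: the ``No'' for $h_1$ is not because the target vanishes in $E_2(MO\langle 12\rangle)$, since $h_0^3$ is nonzero there. It is because $d_2(h_1)=0$ already in the sphere ($h_1$ is a permanent cycle detecting $\eta$), and naturality transports that.
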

The following table lists the only indecomposables through stem 31 that, for degree reasons, can support a $d_2$-differential. 
 \begin{longtable}{llllc}
    \caption[Possible $d_2$-differentials $MO\langle 12 \rangle$]{Possible non-zero $d_2$-differentials on indecomposable elements.
    \label{tab:Adams d_2 mo12}
    } \\
    \toprule
    $x$ & $(t-s,s)$ & $d_2(x)$ & Occurs & Proof\\
    \midrule \endfirsthead
    \caption[]{Possible non-zero $d_2$-differentials on indecomposable elements} \\
    \toprule
    $x$ & $(t-s,s)$ & $d_2(x)$ & Occurs & Proof\\
    \midrule \endhead
    \bottomrule \endfoot
        $h_1$ & $(1, 1)$ & $h_0^3$ & No & \ref{prop: sphere differentials}\\
        $h_4$ & $(15, 1)$ & $h_0h_3^2$ & Yes & \ref{prop: sphere differentials}\\
        $e_0$ & $(15, 4)$ & $h_1^2d_0$ & Yes & \ref{prop: sphere differentials}\\
        $f_0$ & $(18, 4)$ & $h_0^2e_0$ & Yes & \ref{prop: sphere differentials}\\
        $i$ & $(23, 7)$ & $h_0Pd_0$ & Yes & \ref{prop: sphere differentials}\\
        $\vv(26,0,12)$ & $(26,0)$ & $h_1\vv(24,1,12)$ & ? & \\
        $j$ & $(26, 7)$ & $h_0Pe_0$ & Yes & \ref{prop: sphere differentials}\\
        $\vv(28,0,12)$ & $(28,0)$ & $h_0h_4\vv(12,0,12)$ & ? & \\
        $\vv(29,2,12)$ & $(29,2)$ & $\{h_0^4\vv(28,0,12),\vv(28,4,12)\}$ & ? & \\
        $k$ & $(29, 7)$ & $h_0d_0^2$ & Yes & \ref{prop: sphere differentials}\\
         $h_5$ & $(31, 1)$ & $h_0h_4^2$ & Yes & \ref{prop: sphere differentials}\\
        $l$ & $(32, 7)$ & $h_0d_0e_0$ & Yes & \ref{prop: sphere differentials}\\
    \end{longtable}
    \begin{proof}
    See Proposition \ref{prop: sphere differentials}. 
    \end{proof}

\subsubsection{$d_3$-differentials}
\begin{proposition}
Table \ref{tab:Adams d_3 mo12} describes the non-zero $d_3$-differentials in the Adams spectral sequence for $MO\langle 12 \rangle$  on all indecomposables on $E_3$ through stem 31.
\end{proposition}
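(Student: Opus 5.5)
We need to propose a proof for the $d_3$-differentials table for $MO\langle 12 \rangle$, even though the table itself is not shown. Our best guess at its contents comes from comparing with $MO\langle 9\rangle$ and $MO\langle 10\rangle$. Those tables contain $d_3(h_0h_4)=h_0d_0$ and $h_1h_4$ not supporting a differential, both from the sphere. They also contain Im $J$ targets, here expected to include $h_1^2h_3$, $Ph_2$, $Pc_0$ and $P^2h_2$, and possibly $\Delta h_2^2$.

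The plan follows the template already used for $MO\langle 9\rangle$ and $MO\langle 10\rangle$. First, from the $E_3$-page chart, I would list the indecomposables through stem 31 that can support a $d_3$ for degree reasons. I would split them into three kinds: classes in the image of the unit map $\mathbb S\to MO\langle 12\rangle$, new classes whose potential targets lie in $\text{Im } J$, and the remaining new classes.

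For the first kind (e.g.\ $h_0h_4$, $h_1h_4$, $\Delta h_2^2$), I would invoke naturality with Proposition \ref{prop: sphere differentials}. A differential $d_3$ in $\mathbb S$ maps to the same formula in $MO\langle 12\rangle$ provided the target is nonzero on $E_3$. Permanent cycles in $\mathbb S$, such as $h_1h_4$, remain permanent cycles.

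For the second kind, I would use Hovey's Theorem \ref{hoveyimj}. Since $12\ge n-1$ fails for stems $\le 10$ but holds for stems $\ge 11$, $\text{Im } J$ classes in stems $\ge 11$ must die. By Mahowald's Theorem \ref{mahowaldimj}, the relevant detecting classes are $Ph_2$, $Pc_0$, $P^2h_2$, $h_0$-towers at the Adams edge in stems $8j-1$, and the $P^iPh_1c_0$-type classes. Each of these must be hit. They cannot support differentials themselves, since they are permanent in $\mathbb S$. So for each I would check on the $E_3$-page that a unique source of a $d_3$ exists, ruling out shorter and competing sources exactly as in Lemmas \ref{lem:d_3-differentials-imj-mo9} and \ref{lem:d_4-differentials-imj-mo10}. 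Wherever a $d_3$ would follow from an earlier one through an $h_0$, $h_1$ or $h_2$ relation, I would instead use linearity, as in Lemma \ref{lem:d_3 differential in stem 12 filtration 2 mo9}. For the $P^2h_2$ and $P^3h_2$ targets I would use the Massey product $\langle h_0,h_0^3h_3,-\rangle$ with Moss' Theorem \ref{mossleibniz}, mirroring Lemma \ref{lem:d_3 differential in stem 28 filtration 10 mo9}.

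For the third kind, around stems 24–31 where new $MO\langle 12\rangle$ classes appear, I would apply the Leibniz rule against permanent cycles such as $d_0$, $g$ and $Ph_1$, comparing with known $d_2$ and $d_4$ behaviour from $\mathbb S$ to derive contradictions or force the value. Where $h_0$-towers are involved, I would invoke May–Milgram (Theorem \ref{maymilgram}), together with the rational computation Proposition \ref{prop:rational bo12} and the integral cohomology of $BO\langle 12\rangle$, to count $\mathbb{Z}/8$ summands.

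The main obstacle is the third kind: new classes in stems $26$–$31$ whose $d_2$ behaviour was itself left undetermined (the ``?'' entries for $\vv(26,0,12)$, $\vv(28,0,12)$ and $\vv(29,2,12)$). There the $E_3$-page depends on those unresolved $d_2$'s. I would first try to show that the candidate $d_3$ is independent of the choice, by a product with $d_0$ or $h_1$ landing in a bidegree where $\mathbb S$ data decides it. Failing that, I would leave the entry marked undetermined, as the paper does elsewhere.
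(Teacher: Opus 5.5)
Your first two kinds match the paper. The entries $h_0h_4$, $h_1h_4$, $h_2h_4$, $\Delta h_2^2$ and $h_0^3h_5$ are settled by naturality with Proposition~\ref{prop: sphere differentials}. The single $\text{Im } J$ entry, $d_3(\vv(16,1,12))=h_0^3h_4$ (the Adams-edge tower in stem $15$), follows from Theorems~\ref{mahowaldimj} and~\ref{hoveyimj}, because $\vv(16,1,12)$ is the only possible source.

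Your guessed contents, however, are wrong for $MO\langle 12\rangle$ and would send your effort to the wrong places:
\begin{itemize}
\item $h_1^2h_3$ lies in stem $9\le n-2=10$. There Theorem~\ref{hoveyimj} gives injectivity rather than vanishing, so nothing forces it to be hit.
\item $Ph_2$, $Pc_0$, $P^2h_2$ and $P^3h_2$ are hit by $d_5$'s, from $\vv(12,0,12)$, $h_1\vv(16,1,12)$, $\vv(20,4,12)$ and $\vv(28,8,12)$ respectively. So the Massey product/Moss argument belongs to the $d_5$ analysis, not this one.
\item The undetermined $d_2$'s in stems $26$--$29$ do not enter the $d_3$ table at all.
\end{itemize}

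The genuine gap is the one entry that needs non-formal input. This is $\vv(24,1,12)$, whose only possible $d_3$-target is $c_0h_4$, a $\text{Coker } J$ class. None of your tools decides it:
\begin{itemize}
\item Naturality does not apply, since the source is new.
\item Theorem~\ref{hoveyimj} says nothing about non-$\text{Im } J$ classes.
\item Theorem~\ref{maymilgram} is irrelevant, since $c_0h_4$ is $h_0$-torsion and this is not a differential between $h_0$-towers.
\item You name no product relation that would force the answer.
\end{itemize}
Your fallback of marking the entry undetermined would not prove the table's ``No.'' Your three-way classification is missing a category: new classes whose only possible target detects a $\text{Coker } J$ element known, from external results on the unit map, not to be in its kernel. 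The paper supplies exactly this. It cites Senger--Zhang (kernel of the unit map generated by $\text{Im } J$) to conclude that the class detected by $c_0h_4$ cannot be killed, hence $d_3(\vv(24,1,12))=0$.

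If you adopt this, watch the stem. $c_0h_4$ is in stem $23$, while the quoted statement concerns $\pi_{24}=\pi_{2n}$. In stem $23$ the kernel is strictly larger than $\text{Im } J$: it contains Burklund--Senger's $\eta^3\overline{\kappa}$. Two ways to make the argument land:
\begin{itemize}
\item Multiply by $h_1$. Then $d_3(\vv(24,1,12))=c_0h_4$ would kill $h_1c_0h_4$, the detector of $\epsilon\eta^*\in\pi_{24}\mathbb S$, which is the class the paper protects when treating $\vv(25,1,12)$.
\item Note that the detector of $\eta^3\overline{\kappa}$ must be hit, and $\vv(24,1,12)$ is its only possible source (at $E_8$). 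So $\vv(24,1,12)$ must survive to $E_8$, and in particular $d_3(\vv(24,1,12))=0$.
\end{itemize}
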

The following table lists the only indecomposables through stem 31 that, for degree reasons, can support a $d_3$-differential. 
 \begin{longtable}{llllc}
    \caption[Possible $d_3$-differentials $MO\langle 12 \rangle$]{Possible non-zero $d_3$-differentials on indecomposable elements.
    \label{tab:Adams d_3 mo12}
    } \\
    \toprule
    $x$ & $(t-s,s)$ & $d_3(x)$ & Occurs & Proof\\
    \midrule \endfirsthead
    \caption[]{Possible non-zero $d_3$-differentials on indecomposable elements} \\
    \toprule
    $x$ & $(t-s,s)$ & $d_3(x)$ & Occurs & Proof\\
    \midrule \endhead
    \bottomrule \endfoot
        $h_0h_4$ & $(15,2)$ & $h_0d_0$ & Yes & \ref{prop: sphere differentials}\\
        $\vv(16,1,12)$ & $(16,1)$ & $h_0^3h_4$ & Yes & \ref{d_3 differential in stem 16 filtration 1 mo12} \\
        $h_1h_4$ & $(16,2)$ & $h_1d_0$ & No & \ref{prop: sphere differentials}\\
        $h_2h_4$ & $(31,4)$ & $h_0e_0$ & No & \ref{prop: sphere differentials}\\
        $\vv(24,1,12)$ & $(24,1)$ & $c_0h_4$ & No & \ref{d_3 differential in stem 24 filtration 1 mo12} \\
        $\Delta h_2^2$ & $(30,6)$ & $h_0^2k$ & Yes & \ref{prop: sphere differentials}\\
        $h_0^3h_5$ & $(31,4)$ & $h_0\Delta h_2^2$ & Yes & \ref{prop: sphere differentials} \\
    \end{longtable}
    \begin{proof}
        For proofs of these differentials, see Proposition \ref{prop: sphere differentials} and the following series of Lemmas. 
    \end{proof}
\begin{lemma}\label{d_3 differential in stem 16 filtration 1 mo12}
$d_3(\vv(16,1,12))=h_0^3h_4$
\end{lemma}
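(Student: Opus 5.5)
We prove the differential the same way as Lemma \ref{lem:d_2-differentials-imj-mo10} in the $MO\langle 10\rangle$ case: by an image-of-$J$ argument. The plan has three steps.

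\emph{Step 1: the class $h_0^3h_4$ must die.} The element $h_0^3h_4$ lies in the $h_0$-tower in stem $15$ that ends at the Adams edge. By Theorem \ref{mahowaldimj}, this tower detects $\text{Im } J$ in dimension $15 = 8\cdot 2-1$. Since $15 \geq 12-1$, Theorem \ref{hoveyimj} shows that the composite $\text{Im } J \rightarrow \pi_{15}\mathbb{S} \rightarrow \pi_{15}MO\langle 12\rangle$ is zero. Hence the image of $h_0^3h_4$ under the map of Adams spectral sequences induced by the unit cannot survive to $E_\infty$ and detect a nonzero class.

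\emph{Step 2: it must be a target, not a source.} By Proposition \ref{prop: sphere differentials}, $h_0^3h_4$ is a permanent cycle in the spectral sequence for $\mathbb{S}$. By naturality of the unit map it is therefore a permanent cycle for $MO\langle 12\rangle$, so it must be hit by some differential.

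\emph{Step 3: identify the source and the length.} We will read off the $E_2$-page (Figure \ref{fig:mo12_e2_0_24}) that stem $16$ contains, in filtrations $\leq 2$, only $\vv(16,1,12)$ and $h_1h_4$ (together with products from the sphere). Three checks then remain.
\begin{itemize}
\item $h_1h_4$ does not hit $h_0^3h_4$: it supports no $d_2$, and its $d_3$ is the sphere differential to $h_1d_0$, which is not the target we need.
\item $h_0^3h_4$ is not hit by a $d_2$: Table \ref{tab:Adams d_2 mo12} records no $d_2$ on $\vv(16,1,12)$. By degree, a $d_2$ from $\vv(16,1,12)$ would land in filtration $3$, where we will check there is no class; $h_0^3h_4$ sits in filtration $4$.
\item $h_0^3h_4$ survives to $E_3$: the only other potential source is $h_0h_4$, and $d_2$ on the $h_4$-tower was already accounted for by $d_2(h_4)=h_0h_3^2$, which kills nothing in filtration $4$ of stem $15$.
\end{itemize}
It follows that the unique possible differential killing $h_0^3h_4$ is $d_3(\vv(16,1,12))=h_0^3h_4$.

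The main obstacle is Step 3: confirming from the chart that no other element of stem $16$ can reach filtration $4$ by a $d_r$, and that $h_0^3h_4$ really survives to $E_3$. In particular we must check that the $d_3(h_0h_4)=h_0d_0$ behaviour does not interfere with the stem-$15$ tower. If necessary, we will also confirm $d_3(\vv(16,1,12))$ via $h_0$-linearity, using a relation of the form $h_0\vv(16,1,12)$ versus the tower.
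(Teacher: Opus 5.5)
Your Steps 1 and 2 are the paper's argument. Theorem \ref{mahowaldimj} puts $h_0^3h_4$ in $\text{Im } J$, and Theorem \ref{hoveyimj} forces it to die. It is also a permanent cycle by naturality, so it must be hit; the paper leaves this implicit here but states it in the $MO\langle 10\rangle$ analogue. The paper then simply reads off the chart that $\vv(16,1,12)$ is the only possible source.

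The trouble is your Step 3 justification that this source survives to $E_3$, because one of its checks would fail. Bidegree $(15,3)$ is not empty: it contains $h_0^2h_4$, which is nonzero on $E_2$ since $h_0\cdot h_0^2h_4=h_0^3h_4\neq 0$. So $d_2(\vv(16,1,12))=0$ does not follow by degree, and this case genuinely has to be excluded. If $d_2(\vv(16,1,12))=h_0^2h_4$, then $h_0^3h_4=d_2(h_0\vv(16,1,12))$ would already die on $E_2$. Step 1 would still be satisfied, but the lemma as stated would be false.

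One way to exclude it is naturality. In $\mathbb{S}$ one has $d_3(h_0^2h_4)=h_0^2d_0$. So if $h_0^2h_4$ were a $d_2$-boundary for $MO\langle 12\rangle$, the image of $h_0^2d_0$ would also have to vanish on $E_3$. It therefore suffices to check on the chart that $h_0^2d_0$ is nonzero on $E_2$ and is not hit by a $d_2$ from $(15,4)$. The class $h_0^3h_4$ there has $d_2=h_0^4h_3^2=0$, so the question is whether anything else sits in that bidegree. This is a chart check, not a degree count.

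Two smaller slips do not affect the conclusion but should be fixed.

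First, $h_1h_4$ does not support a $d_3$ in $\mathbb{S}$. It is a permanent cycle detecting $\eta^{*}$, and the table entry $h_1d_0$ is marked ``No''. In any case a $d_3$ on it lands in $(15,5)$, so the only relevant check is $d_2(h_1h_4)=h_1h_0h_3^2=0$.

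Second, $h_0h_4$ lies in stem $15$, so it is not a candidate source for a differential into stem $15$. Showing that $h_0^3h_4$ survives to $E_3$ means checking two things: $d_2(h_0^3h_4)=h_0^4h_3^2=0$, and nothing in $(16,2)$ hits it. The classes in $(16,2)$ include $h_1h_4$ and also $h_0\vv(16,1,12)$, which your list omits. The latter is nonzero because $\vv(16,1,12)$ generates an $h_0$-tower; the paper's $d_4$-table refers to $h_0^8\vv(16,1,12)$. Hence this check again reduces to showing $d_2(\vv(16,1,12))=0$.
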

\begin{proof}
By Theorem \ref{mahowaldimj} $h_0^3h_4$ is in the image of $J$. By Theorem \ref{hoveyimj}, $h_0^3h_4$ detects an element in the kernel of the unit map $\mathbb{S} \rightarrow MO\langle 12 \rangle$, so it must be killed in the Adams spectral sequence for $MO\langle 12\rangle$. 
The only possible source of a differential with target $h_0^3h_4$ is $\vv(16,1,12)$, so $d_3(\vv(16,1,12))=h_0^3h_4$
\end{proof}

\begin{lemma}\label{d_3 differential in stem 24 filtration 1 mo12}
$d_3(\vv(24,1,12))=0$
\end{lemma}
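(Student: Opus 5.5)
The plan is to rule out the only possible nonzero value, $d_3(\vv(24,1,12))=h_4c_0$, by a linearity argument of the kind used in Lemma \ref{lem:d_2 differential in stem 31 filtration 0 mo9} and Lemma \ref{lem:d2 differential in stem 20 filtration 1 mo10}. By degree considerations on the $E_3$-page, the target bidegree $(23,4)$ contains only $h_4c_0$, which comes from the sphere. So it suffices to show that $d_3(\vv(24,1,12))$ cannot equal $h_4c_0$.

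The first attempt uses $h_1$-linearity. On $E_2(\mathbb{S})$ the product $h_1h_4c_0$ is nonzero in bidegree $(24,5)$, and I expect it to survive to $E_3$ in the spectral sequence for $MO\langle 12\rangle$. Indeed, no $d_2$ in Table \ref{tab:Adams d_2 mo12} hits it: the $d_2$ on $e_0$ and $f_0$ land in other stems, and $i$ hits $h_0Pd_0$ in stem $22$. I will then check from the $E_2$-chart (Figure \ref{fig:mo12_e2_0_24}) whether $h_1\vv(24,1,12)=0$ on $E_3$. If it is zero, the argument finishes as follows. Since $h_1$ is a permanent cycle, $d_3$ is $h_1$-linear, and a nonzero differential would give
\[
0=d_3(h_1\vv(24,1,12))=h_1h_4c_0\neq 0,
\]
which is a contradiction.

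However, Table \ref{tab:Adams d_2 mo12} lists $d_2(\vv(26,0,12))=h_1\vv(24,1,12)$ as a possibility, so $h_1\vv(24,1,12)$ is likely nonzero on $E_2$. In that case the backup is $h_2$-linearity. I will show that $h_2\vv(24,1,12)=0$ on $E_3$, while $h_2h_4c_0$, in bidegree $(26,5)$, is nonzero on $E_3$. Alternatively I can use $h_0$: since $h_0h_4c_0=0$, this test is useless, so $h_2$ or $h_1$ is the right choice.

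A different approach avoids the charts, using naturality through the map $MO\langle 12\rangle\to MO\langle 9\rangle$. Suppose $h_4c_0$ were killed. Then the class $\eta_4\epsilon$-type element it detects would map to zero in $\pi_{23}MO\langle 12\rangle$, and hence also in $\pi_{23}MO\langle 9\rangle$. I would then argue from the $MO\langle 9\rangle$ computation that this class survives there, since the $E_\infty$-page for $MO\langle 9\rangle$ in stem 23 has order $2$ or $4$. This comparison is the harder step, and I would use it only if the linearity checks fail.

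The main obstacle is determining from the $E_2$-page which products with $h_1$ and $h_2$ vanish in stems 25 and 26, and whether the relevant products with $h_4c_0$ survive to $E_3$.
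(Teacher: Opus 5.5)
The linearity strategy, which you make your main line, cannot be completed.

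\begin{itemize}
\item The $h_2$ backup fails already in $\operatorname{Ext}_{\mathcal{A}}(\mathbb{Z}/2,\mathbb{Z}/2)$. There $h_2c_0=0$, since stem $11$ has nothing in filtration $4$; this is the Ext-level form of $\nu\epsilon=0$. So $h_2h_4c_0=0$, and $h_2$-linearity gives no information, just like the $h_0$ test you discarded.
\item The $h_1$ test needs $h_1\vv(24,1,12)=0$ on $E_3$. As you observed, $h_1\vv(24,1,12)$ is nonzero on $E_2$, and whether it survives depends on the undetermined $d_2(\vv(26,0,12))$ in Table \ref{tab:Adams d_2 mo12}.
\item If $h_1\vv(24,1,12)$ does survive to $E_3$, then $d_3(\vv(24,1,12))=h_4c_0$ merely forces $d_3(h_1\vv(24,1,12))=h_1h_4c_0$. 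That is not a contradiction.
\end{itemize}

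So the multiplicative structure of the $MO\langle 12\rangle$ spectral sequence does not decide this differential. You need input from homotopy.

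The paper supplies that input through the unit map. It argues that $h_4c_0$ detects a class outside $\text{Im } J$, and cites Senger--Zhang for the kernel of the unit map being generated by $\text{Im } J$, so $h_4c_0$ cannot be killed. As written, that citation concerns $\pi_{24}$, whereas $h_4c_0$ lies in stem $23$, so a stem-$23$ statement is what is really needed.

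Your third route is of exactly this type, lands in the right stem, and is the one to pursue. It needs two repairs.

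\begin{itemize}
\item Phrase it correctly. Being hit by a differential does not make the detected class map to zero; it only pushes its image into higher filtration. Use naturality of the map of Adams spectral sequences induced by $MO\langle 12\rangle\to MO\langle 9\rangle$ instead. If $h_4c_0=d_3(\vv(24,1,12))$, then the image of $h_4c_0$ is $d_3$ of the image of $\vv(24,1,12)$. So it suffices to know that $h_4c_0$ is nonzero on $E_4$ of the $MO\langle 9\rangle$ spectral sequence.
\item Establish that fact properly. The order $2$ or $4$ of $\pi_{23}MO\langle 9\rangle$ does not tell you which classes survive in stem $23$. The $n=23$ case of Theorem \ref{thm: mo9 unit map} tells you that the class detected by $h_4c_0$ maps nontrivially to $\pi_{23}MO\langle 9\rangle$. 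For the naturality argument, however, you need its image to be detected by $h_4c_0$ itself in filtration $4$. You must read that off from the $MO\langle 9\rangle$ computation directly.
\end{itemize}
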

\begin{proof}
The only possible target of a $d_3$-differential with source $\vv(24,1,12)$ is $c_0h_4$. By Theorem 1.7 of Senger-Zhang \cite{SengerZhang2025Inertia}, the kernel of the unit map $\pi_{24}\mathbb{S} \rightarrow \pi_{24}MO\langle 12 \rangle$ is generated by the image of $J$. 
The element $c_0h_4$ detects an element that is not in the image of $J$. Thus, $d_3(\vv(24,1,12))=0$. 
\end{proof}
\subsubsection{$d_4$-differentials}
\begin{proposition}
Table \ref{tab:Adams d_4 mo12} describes the non-zero $d_4$-differentials in the Adams spectral sequence for $MO\langle 12 \rangle$  on all indecomposables on $E_4$ through stem 31, and select elements in stems $> 31$. 
\end{proposition}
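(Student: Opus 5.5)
The statement to be proved is the $d_4$ proposition for $MO\langle 12\rangle$. The plan follows the same pattern as the $d_4$ tables for $MO\langle 9\rangle$ and $MO\langle 10\rangle$.

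\textbf{Step 1: list the candidates.} From the $E_4$-page chart of $MO\langle 12\rangle$, I will list every indecomposable through stem 31 that can support a $d_4$ for degree reasons, together with its possible targets. I expect three kinds of entries:
\begin{itemize}
\item sphere classes such as $h_0e_0$ and $d_0e_0$;
\item classes whose targets lie in $\text{Im}\,J$, namely $Ph_2$, $Pc_0$, $P^2h_2$, and the $h_0$-tower in stem 23 ending at the Adams edge;
\item a few $MO\langle 12\rangle$-specific classes in stems 24--31.
\end{itemize}
For the sphere classes, naturality along the unit map $\mathbb S\to MO\langle 12\rangle$ and Proposition \ref{prop: sphere differentials} settle them directly. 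This gives $d_4(d_0e_0)=P^2d_0$ and $d_4(h_0^3\Delta h_3^2)=h_0^2d_0i$, and shows that $h_0e_0$ supports no $d_4$.

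\textbf{Step 2: kill the image of $J$.} By Theorem \ref{hoveyimj}, $\text{Im}\,J$ maps to zero in $\pi_*MO\langle 12\rangle$ in stems $\geq 11$. By Theorem \ref{mahowaldimj}, the classes $Ph_2$, $Pc_0$, $P^2h_2$, $P^2c_0$, $P^3h_2$ and the top of the stem-23 tower all detect $\text{Im}\,J$, so each must be hit by some differential. For each one that has survived the $d_2$ and $d_3$ tables:
\begin{itemize}
\item I will identify the unique candidate source on $E_4$, in analogy with Lemma \ref{lem:d_4-differentials-imj-mo10}, and conclude that a $d_4$ occurs.
\item To rule out competing sources, I will use naturality: for example, $h_1^2h_4$ is a permanent cycle in $\mathbb S$.
\end{itemize}
Where a source is not forced by sparsity, I will instead proceed as in Lemma \ref{d_4 differential in stem 20 filtration 5 mo10}. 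I write it as a Massey product $\langle h_0,h_0^3h_3,x\rangle$, where $x$ is the class with $d_4(x)=Ph_2$, and apply Moss's higher Leibniz rule (Theorem \ref{mossleibniz}). The indeterminacy must be checked to consist of $d_4$-cycles.

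\textbf{Step 3: the remaining candidates in stems 24--31.} These are classes such as those built from $\vv(24,1,12)$ or $\vv(28,0,12)$. I will use three tools:
\begin{itemize}
\item $h_i$-linearity, as in Lemma \ref{lem:permanent cycle in stem 25 filtration 3 mo9}: if $h_1x=0$ but $h_1y\neq 0$, then $d_4(x)\neq y$.
\item Leibniz arguments against known permanent cycles such as $d_0$ and $g$, including the trick of Lemma \ref{lem:d_4 differential in stem 24 filtration 5 mo9}: multiply by $d_0$ and compare with a sphere differential.
\item Rational information: by Proposition \ref{prop:rational bo12}, the number of $h_0$-towers surviving in each stem is the rank of $\mathbb Q[p_3,p_4,\dots]$ in that degree.
\end{itemize}
The rational count forces or forbids differentials between towers. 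For example, it bounds how many towers can survive in stems 24 and 28.

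\textbf{Main obstacle.} I expect the hardest cases to be the high-stem $MO\langle 12\rangle$-specific classes near stems 28--31. There the $E_4$-page is crowded, and the $d_2$ ambiguities already left open (on $\vv(28,0,12)$ and $\vv(29,2,12)$) propagate. I would first try to decouple them using the rational tower count together with May--Milgram (Theorem \ref{maymilgram}) applied to $H^*(BO\langle 12\rangle;\mathbb Z)$. If that fails, I would record those entries as undetermined, marked ``?'', as was done for $MO\langle 9\rangle$.
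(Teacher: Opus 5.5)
Your Step 1 handles the rows coming from the sphere correctly by naturality. Step 2, however, does not produce the remaining rows, and the two rows that are new for $MO\langle 12\rangle$ need an input your plan never invokes.

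The $\text{Im}\,J$ targets you list are not $d_4$-targets here. The stem-15 tower is hit by a $d_3$ (Lemma \ref{d_3 differential in stem 16 filtration 1 mo12}). The classes $Ph_2$, $Pc_0$, $P^2h_2$, $P^3h_2$ are hit from $\vv(12,0,12)$, $h_1\vv(16,1,12)$, $\vv(20,4,12)$, $\vv(28,8,12)$, i.e.\ by $d_5$'s (Lemmas \ref{lem:d_5-differentials-imj-mo12} and \ref{lem:d_5 differential in stem 28 filtration 8 mo12}). So ``conclude that a $d_4$ occurs'' would add false rows. Bidegree bookkeeping repairs that, but then the $\text{Im}\,J$ argument alone settles no row of this table.

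The one new nonzero $d_4$ is $d_4(\vv(24,5,12))=h_1Pd_0+h_0^2i$, and your unique-source argument breaks there. Bidegree $(23,9)$ contains two classes, $h_0^2i$ and $h_1Pd_0$, and there are two possible sources: $\vv(24,5,12)$ via $d_4$ and $\vv(24,1,12)$ via $d_8$. Theorem \ref{hoveyimj} only forces the $\text{Im}\,J$ class to die. The scenario $d_4(\vv(24,5,12))=0$, $d_8(\vv(24,1,12))=h_0^2i$ is compatible with every tool you list: the rational rank of stem $24$ is $2$ either way, and May--Milgram only concerns towers on the $0$-line.

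What decides it is kernel information beyond $\text{Im}\,J$. By Burklund--Senger, $\eta^3\overline{\kappa}$ also lies in the kernel of $\pi_{23}\mathbb S\to\pi_{23}MO\langle 12\rangle$ (cf.\ Lemma \ref{d_8 differential in stem 24 filtration 1 mo12}). Hence both classes in $(23,9)$ must be hit, and $\vv(24,5,12)$ must support a nonzero $d_4$. Only then do $d_0\vv(24,5,12)=0$ together with $d_0h_1Pd_0\neq 0$ and $d_0h_0^2i\neq 0$ single out the sum. That is the target-excluding use of $d_0$, as in Lemma \ref{d_2 differential in stem 24 filtration 7 mo9}. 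The differential-forcing version from Lemma \ref{lem:d_4 differential in stem 24 filtration 5 mo9} that you cite is vacuous here, because $d_0\vv(24,5,12)=0$.

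Dually, the row $d_4(\vv(25,1,12))=0$ requires excluding the target $h_1c_0h_4$. $h_0$-linearity cannot do this, since both source and target are $h_0$-torsion. The paper uses Theorem \ref{thm: 2n mo_n kernel} with $n=12$ to know that $\epsilon\eta^{*}$, detected by $h_1c_0h_4$, is not in the kernel.

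Neither of these statements about what is, and is not, in the kernel of the unit map appears in your plan. Without them the table cannot be pinned down.
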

The following table lists the only indecomposables through stem 31 that, for degree reasons, can support a $d_4$-differential. It also includes select differentials in stems $>31$. 
 \begin{longtable}{llllc}
    \caption[Possible $d_4$-differentials $MO\langle 12 \rangle$]{Possible non-zero $d_4$-differentials on indecomposable elements.
    \label{tab:Adams d_4 mo12}
    } \\
    \toprule
    $x$ & $(t-s,s)$ & $d_4(x)$ & Occurs & Proof\\
    \midrule \endfirsthead
    \caption[]{Possible non-zero $d_4$-differentials on indecomposable elements} \\
    \toprule
    $x$ & $(t-s,s)$ & $d_4(x)$ & Occurs & Proof\\
    \midrule \endhead
    \bottomrule \endfoot
        $h_0e_0$ & $(17, 5)$ & $h_0^8\vv(16,1,12)$ & No & \ref{prop: sphere differentials}\\
        $\vv(24,5,12)$ & $(24,5)$ & $h_1Pd_0+h_0^2i$ & Yes & \ref{lem:d_4 differential in stem 24 filtration 5 mo12} \\
        $\vv(25,1,12)$ & $(25,1)$ & $\{h_1c_0h_4,h_0^4\vv(24,1,12)\}$ & No & \ref{d_4 differential in stem 25 filtration 1 mo12} \\
        $d_0e_0+h_0^7h_5$ & $(31,8)$ & $P^2d_0$ & Yes & \ref{prop: sphere differentials} \\
        $h_0^3 \Delta h_3^2$ & $(38,9)$ & $h_0^2d_0i$ & Yes & \ref{prop: sphere differentials}\\
    \end{longtable}
\begin{proof}
For proofs of these differentials, see the following series of Lemmas. 
\end{proof}
\begin{lemma}\label{lem:d_4 differential in stem 24 filtration 5 mo12}
$d_4(\vv(24,5,12))=h_1Pd_0+h_0^2i$
\end{lemma}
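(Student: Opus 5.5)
We need $d_4(\vv(24,5,12))=h_1Pd_0+h_0^2i$ in the Adams spectral sequence for $MO\langle 12\rangle$. I would first show that the target must be killed, using a comparison with the sphere exactly as in the analogous $MO\langle 9\rangle$ arguments (Lemmas \ref{d_2 differential in stem 24 filtration 7 mo9} and \ref{lem:d_4 differential in stem 24 filtration 5 mo9}). In $\mathbb{S}$, $d_2(i)=h_0Pd_0$, so $h_0^2i$ survives to $E_3$. Since $h_1Pd_0$ and $h_0^2 i$ are both permanent cycles in $\mathbb{S}$ by Proposition \ref{prop: sphere differentials}, their images in $MO\langle 12\rangle$ are permanent cycles by naturality along the unit map. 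Unlike the $MO\langle 9\rangle$ case, there is no element in filtration $7$ to hit them by a $d_2$, and by Table \ref{tab:Adams d_2 mo12} and Table \ref{tab:Adams d_3 mo12} they are not hit by any $d_2$ or $d_3$. So the real question is whether the class they detect dies by $E_\infty$.

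Next I would pin down which linear combination must die, using $h_1$ and $d_0$ multiplications as in the \textbf{sseqcpp}-style argument for $\vv(24,7,9)$. First, the relation $d_0h_1Pd_0=h_1h_0^2 i$ and its consequences on $E_4$ determine which sums can be boundaries. Second, $d_4(d_0e_0+h_0^7h_5)=P^2d_0$ and $d_4(h_0^3\Delta h_3^2)=h_0^2d_0i$ (Table \ref{tab:Adams d_4 mo12}) identify which $d_0$-multiples are killed in higher stems. The aim is to show that $d_0(h_1Pd_0+h_0^2 i)$, or an $h_1$- or $h_0$-multiple of it, is the target of a $d_4$ that can only be accounted for by multiplying a $d_4$ on a stem-24 class by the permanent cycle $d_0$.

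Concretely, I would locate a product $d_0\vv(24,5,12)$, or $h_0^k d_0\vv(24,5,12)$, which equals $h_0^3\Delta h_3^2$ (or $h_0^{3+k}\Delta h_3^2$) on $E_2$ via the Ext computation, mirroring the relation $d_0h_0\vv(24,4,9)=h_0^3\Delta h_3^2$ used for $MO\langle 9\rangle$. Since $h_0^3\Delta h_3^2$ supports a $d_4$ (Proposition \ref{prop: sphere differentials}), $\vv(24,5,12)$ cannot be a $d_4$-cycle. It survives to $E_4$ because Tables \ref{tab:Adams d_2 mo12} and \ref{tab:Adams d_3 mo12} list no $d_2$ or $d_3$ on it, and nothing hits it.

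Finally, by degree reasons the possible $d_4$-targets in bidegree $(23,9)$ are spanned by $h_1Pd_0$ and $h_0^2 i$. I would rule out $h_1Pd_0$ alone and $h_0^2 i$ alone with the Leibniz rule: multiply by $d_0$, or by $h_1$ using $h_1^2\vv(24,5,12)$ expressed via $h_0^2j$ or similar, and check which choice is compatible with $d_0\cdot d_4(\vv(24,5,12))=h_0^2d_0 i$ and with $h_1$-linearity, exactly as in Lemma \ref{d_2 differential in stem 24 filtration 7 mo9}, where $h_1d_0Pd_0\neq 0$ excluded the single terms.

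The main obstacle is verifying the required $E_2$ relations, such as $d_0\vv(24,5,12)=h_0^3\Delta h_3^2$ and the $d_0$- and $h_1$-products on $h_1Pd_0$ and $h_0^2 i$, for $MO\langle 12\rangle$ rather than $MO\langle 9\rangle$. I would check these with \textbf{ext} products, and fall back on a Moss Massey-product argument if a direct product relation fails.
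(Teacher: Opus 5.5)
There is a genuine gap. Your nonvanishing step depends on a relation that does not hold for $MO\langle 12\rangle$, and your whole $d_0$-argument points the wrong way.

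\textbf{The relation you rely on fails.} The paper's proof uses the product relation $d_0\vv(24,5,12)=0$. So neither $d_0\vv(24,5,12)$ nor any $h_0^kd_0\vv(24,5,12)$ can equal a nonzero class such as $h_0^{3+k}\Delta h_3^2$. The $MO\langle 9\rangle$ relation $d_0h_0\vv(24,4,9)=h_0^3\Delta h_3^2$ (Lemma \ref{lem:d_4 differential in stem 24 filtration 5 mo9}) does not transfer.

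\textbf{It would refute the lemma if it held.} Suppose $d_0\vv(24,5,12)=h_0^3\Delta h_3^2$. Then $d_0\cdot d_4(\vv(24,5,12))=d_4(h_0^3\Delta h_3^2)=h_0^2d_0i$. Substituting the claimed target gives $h_1Pd_0^2+h_0^2d_0i=h_0^2d_0i$, so $h_1Pd_0^2=0$. This contradicts $h_1Pd_0^2\neq 0$ on $E_4$. So the constraint you propose to check in your last step would exclude $h_1Pd_0+h_0^2i$ and select a single term. That is exactly the $MO\langle 9\rangle$ situation, where the answer is $h_0^2i$ alone.

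\textbf{The reason for death is missing.} Your first paragraph shows the classes in $(23,9)$ are permanent cycles not hit by $d_2$ or $d_3$. It never supplies a reason they must die; you leave that as ``the real question.''

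What is needed instead has two parts.
\begin{enumerate}
\item Nonvanishing by counting. Both classes in bidegree $(23,9)$ detect elements of the kernel of $\pi_{23}\mathbb S\to\pi_{23}MO\langle 12\rangle$: the image of $J$ by Theorem \ref{hoveyimj}, and $\eta^3\overline{\kappa}$ by Burklund--Senger. The source $\vv(24,5,12)$ is one of only two possible sources of differentials into that bidegree; the other is $\vv(24,1,12)$, cf.\ Lemma \ref{d_8 differential in stem 24 filtration 1 mo12}. Since a two-dimensional space must be killed, $d_4(\vv(24,5,12))\neq 0$.
\item Choosing the combination. With $d_0\vv(24,5,12)=0$ and $d_0$ a permanent cycle, the Leibniz rule forces $d_0\cdot d_4(\vv(24,5,12))=0$ on $E_4$. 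This excludes $h_1Pd_0$, since $h_1Pd_0^2\neq 0$, and excludes $h_0^2i$, since $h_0^2d_0i\neq 0$. Only the sum remains.
\end{enumerate}
This is the pattern of Lemma \ref{d_2 differential in stem 24 filtration 7 mo9}, which you cite. The input, though, must be the vanishing product $d_0\vv(24,5,12)=0$, not a nonzero one.
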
 
\begin{proof}
This proof was generated by \textbf{sseqcpp}. The possible targets of a $d_4$-differential with source $\vv(24,5,12)$ are $h_1Pd_0$, $h_0^2i$, and $h_1Pd_0+h_0^2i$. Note that since there are two elements in bidegree $(23,9)$ that detect elements in the kernel of the unit map, and the element $\vv(24,5,12)$ is one of the two possible sources of a differential landing in that bidegree, we know that $d_4(\vv(24,5,12)) \neq 0$. 
Suppose $d_4(\vv(24,5,12))=h_1Pd_0$. Using the relation $d_0\vv(24,5,12)=0$ and the Leibniz rule, 
\begin{align*}
d_4(d_0\vv(24,5,12))&=d_4(0)\\
                    &= 0 \\
                    &=d_4(d_0)\vv(24,5,12)+d_0d_4(\vv(24,5,12))\\
                    &=d_0h_1Pd_0\\
                    &=h_1Pd_0^2
\end{align*}
where $h_1Pd_0^2$ is nonzero. This is a contradiction.\\
Suppose $d_4(\vv(24,5,12))=h_0^2i$. Again using the relation $d_0\vv(24,5,12)=0$ and the Leibniz rule,
\begin{align*}
d_4(d_0\vv(24,5,12))&=d_4(0) \\
                    &= 0\\
                    &=d_4(d_0)\vv(24,5,12)+d_0d_4(\vv(24,5,12))\\
                    &=d_0h_0^2i
\end{align*}
where $d_0h_0^2i$ is nonzero. This is a contradiction. Hence, the only possibility is $d_4(\vv(24,5,12))=h_1Pd_0+h_0^2i$.
\end{proof}

\begin{lemma}\label{d_4 differential in stem 25 filtration 1 mo12}
$d_4(\vv(25,1,12))=0$
\end{lemma}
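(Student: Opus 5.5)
The target bidegree is $(24,5)$, where the table lists the possible values $h_1c_0h_4$ and $h_0^4\vv(24,1,12)$. I will treat these two candidates separately and rule out each one, along with their sum.

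\textbf{Ruling out $h_0^4\vv(24,1,12)$.} By Lemma \ref{d_3 differential in stem 24 filtration 1 mo12}, $\vv(24,1,12)$ survives to $E_4$, and it supports an $h_0$-tower there. The class $\vv(25,1,12)$, by contrast, is $h_0$-torsion: the relevant part of stem 25 on $E_4$ is $h_0$-torsion, which is the same kind of observation as in Lemma \ref{lem:permanent cycle in stem 25 filtration 3 mo9}. Suppose $d_4(\vv(25,1,12))$ had a nonzero component $h_0^4\vv(24,1,12)$. Apply $h_0$-linearity of $d_4$ a suitable number of times. The source $h_0^k\vv(25,1,12)$ becomes zero, while the target $h_0^{4+k}\vv(24,1,12)$ stays nonzero on $E_4$ because it lies in a tower. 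This is a contradiction. For this step I need to check from the $E_4$ chart that nothing in stem 24 hits the tower above filtration 5, so that it really is a tower on $E_4$.

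\textbf{Ruling out $h_1c_0h_4$.} Here I would use naturality with the sphere. The class $h_1c_0h_4$ is in the image of $\mathrm{Ext}(\mathbb{S})$, and it detects $\eta\,\rho$ or $\eta\cdot\{c_0h_4\}$ in $\pi_{25}\mathbb{S}$, which is not in $\mathrm{Im}\,J$. A differential $d_4(\vv(25,1,12))=h_1c_0h_4$ would kill this class. Then either the homotopy class it detects maps to zero in $\pi_{25}MO\langle 12\rangle$, or it maps to something of higher filtration.

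The cleaner route is $h_1$-linearity. By Lemma \ref{d_3 differential in stem 24 filtration 1 mo12}, $c_0h_4$ survives to $E_4$, and $c_0h_4=d_4$-cycle in the relevant sense. I would check on $E_2$ whether there is a relation $\vv(25,1,12)=h_1\vv(24,1,12)$, or a product such as $h_1\vv(25,1,12)$ or $h_2\vv(25,1,12)$ that vanishes while the corresponding multiple of $h_1c_0h_4$ does not. For example, $h_1^2c_0h_4\neq 0$ in $\mathrm{Ext}(\mathbb{S})$, and it maps nontrivially.

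If $\vv(25,1,12)=h_1\vv(24,1,12)$, the argument is immediate. Lemma \ref{d_3 differential in stem 24 filtration 1 mo12}, extended to $d_4$ (the same Senger--Zhang kernel argument shows $c_0h_4$ cannot be hit by $d_4$ either, and $\vv(24,1,12)$ has no other possible target), gives $d_4(\vv(24,1,12))=0$. The Leibniz rule then gives $d_4(h_1\vv(24,1,12))=0$.

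\textbf{Main obstacle.} I expect the main obstacle to be confirming which product relation holds on $E_4$. If neither holds, the fallback is the Senger--Zhang type argument. A nonzero $d_4$ would force $\eta\{c_0h_4\}$ into the kernel of $\pi_{25}\mathbb{S}\to\pi_{25}MO\langle 12\rangle$, up to higher filtration. Comparing with $\pi_{25}$ of $MO\langle 8\rangle\to tmf$, as in the proof that $\vv(23,0,9)$ is permanent, would then show the class survives.
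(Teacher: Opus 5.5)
Your treatment of $h_0^4\vv(24,1,12)$, and of any sum involving it, by $h_0$-linearity is the paper's argument. The paper also lists $\vv(24,5,12)$ in bidegree $(24,5)$ and disposes of it the same way. The gap is the other candidate, $h_1c_0h_4$, which your proposal never actually rules out.

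\textbf{Wrong stem.} A $d_4$ out of stem $25$ lands in stem $24$. So $h_1c_0h_4\in(24,5)$ detects $\epsilon\eta^{*}\in\pi_{24}\mathbb{S}$, as the paper notes when computing $\pi_{24}MO\langle 12\rangle$. It does not detect a class in $\pi_{25}\mathbb{S}$. Also, the $\text{Im } J$ class $\eta\rho_{23}$ in stem $24$ is detected by $P^2c_0$, not by $h_1c_0h_4$.

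\textbf{The missing step.} Once the stem is right, the input you need is already in the paper. Since $24=2\cdot 12$, Theorem \ref{thm: 2n mo_n kernel} says the kernel of $\pi_{24}\mathbb{S}\to\pi_{24}MO\langle 12\rangle$ is generated by $\text{Im } J$. Since $\epsilon\eta^{*}\notin\text{Im } J$, it is not in that kernel, so $h_1c_0h_4$ cannot be a boundary. That is the paper's entire argument for this case. The paper, like you, does not dwell on the higher-filtration caveat you raise.

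\textbf{Why your alternative routes fail.}
\begin{itemize}
\item The relation $\vv(25,1,12)=h_1\vv(24,1,12)$ is impossible for degree reasons. The product $h_1\vv(24,1,12)$ lies in filtration $2$, and $\vv(25,1,12)$ is an indecomposable in filtration $1$.
\item The $h_1$-linearity test cannot succeed. Bidegree $(10,5)$ of $\mathrm{Ext}_{\mathcal{A}}(\mathbb{Z}/2,\mathbb{Z}/2)$ is zero, so $h_1^2c_0=0$ and hence $h_1^2c_0h_4=0$, contrary to your claim. Together with $h_0h_1=h_1h_2=0$, multiplication by any of $h_0$, $h_1$, $h_2$ annihilates $h_1c_0h_4$. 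No such product can separate it from zero.
\item The $tmf$ fallback is only sketched, is set in the wrong stem, and is not clearly viable. Since $h_4$ maps to zero in $\mathrm{Ext}_{\mathcal{A}(2)}$, $h_1c_0h_4$ maps to zero on the $E_2$-page for $tmf$. You would need an independent proof that $\epsilon\eta^{*}$ has nonzero image in $\pi_{24}tmf$.
\end{itemize}
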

\begin{proof}
The possible targets of a $d_4$-differential with source $\vv(25,1,12)$ are $\vv(24,5,12)$, $h_0^4\vv(24,1,12)$, and $h_1c_0h_4$. Note that $\vv(25,1,12)$ is $h_0$-torsion while $\vv(24,5,12)$ and $h_0^4\vv(24,1,12)$ are $h_0$-power torsion free. The $d_4$-differential is $h_0$-linear, so the first two possibilities are ruled out. By Theorem 1.7 of Senger-Zhang \cite{SengerZhang2025Inertia}, the kernel of the unit map $\pi_{24}\mathbb{S} \rightarrow \pi_{24}MO\langle 12 \rangle$ is generated by the image of $J$. 
The element $h_1c_0h_4$ detects an element that is not in the image of $J$. Thus, $d_4(\vv(25,1,12))=0$. 
\end{proof}
\subsubsection{$d_5$-differentials}
\begin{proposition}
Table \ref{tab:Adams d_5 mo12} describes the non-zero $d_5$-differentials in the Adams spectral sequence for $MO\langle 12 \rangle$  on all indecomposables on $E_5$ through stem 31. 
\end{proposition}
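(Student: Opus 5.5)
We identify the $d_5$-candidates through stem 31 by reading the $E_5$-page (Figure \ref{fig:mo12_e5}) and listing the indecomposables that survive to $E_5$ and have a nonzero group five filtrations up and one stem to the left. We expect very few: likely $\vv(20,s,12)$-type classes sitting over $P^2h_2$ in stem 19, and possibly classes in stems 28--29 sitting over $P^3h_2$ or $P^3h_1$. The statement would be established, as for $d_2$--$d_4$, by a table with one lemma per entry.

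For targets in the image of $J$, the argument follows Lemma \ref{lem:d_3-differentials-imj-mo9}. The classes $P^kh_2$, $P^kc_0$ and $P^kh_1c_0$, and the tops of $h_0$-towers at the Adams edge in stems $8j-1$, detect $\text{Im } J$ by Theorem \ref{mahowaldimj}. By Theorem \ref{hoveyimj} these map to zero in $\pi_*MO\langle 12\rangle$ in stems $\geq 11$, so they must be hit. If no earlier page kills such a target and the chart offers a unique source of length $5$, the $d_5$ is forced. Where the source is not unique, we would use a Massey product presentation such as $\langle h_0,h_0^3h_3,-\rangle$ shifting the previous differential by $8$ stems, together with Moss' higher Leibniz rule (Theorem \ref{mossleibniz}), exactly as in Lemma \ref{d_4 differential in stem 20 filtration 5 mo10}. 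For instance, $d_5$ on the class in $\langle h_0,h_0^3h_3,x\rangle$ equals $\langle h_0,h_0^3h_3,P^2h_2\rangle\ni P^3h_2$ whenever $d_5(x)=P^2h_2$.

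Elements such as $P^2h_1$ and $P^3h_1$ are permanent cycles in $\mathbb S$ and hence, by naturality, permanent cycles here. Possible $d_5$'s out of elements in the image of the unit map would likewise be ruled out by naturality with Proposition \ref{prop: sphere differentials}. Where the target detects a class not in $\text{Im } J$, we would cite Theorem \ref{thm: 2n mo_n kernel} (Senger--Zhang) for $\pi_{24}$, as in Lemma \ref{d_3 differential in stem 24 filtration 1 mo12}. Otherwise we would use $h_0$-, $h_1$- and $h_2$-linearity against torsion/non-torsion mismatches, or $d_0$-multiplication into stems where the sphere differentials are known.

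The main obstacle is the Massey-product step. We need the product to be defined on $E_5$, meaning its entries survive and the relevant products vanish there, and we need to control its indeterminacy, including classes like $h_0^kg$ or $h_0^k\vv(28,0,12)$ whose fate under $d_2$ is still listed as unknown in Table \ref{tab:Adams d_2 mo12}. If the indeterminacy cannot be shown to be killed by or disjoint from $d_5$, we would instead try a Leibniz argument: multiply the candidate source by $d_0$ or $Ph_2$ and compare with a known differential in a higher stem.
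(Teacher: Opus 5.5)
Your plan follows the paper's argument. In the paper, $d_5(\vv(12,0,12))=Ph_2$, $d_5(h_1\vv(16,1,12))=Pc_0$ and $d_5(\vv(20,4,12))=P^2h_2$ are forced by Mahowald plus Hovey together with uniqueness of the source, where the other class $g$ in bidegree $(20,4)$ is excluded by naturality with $\mathbb S$, exactly your unit-map argument. The paper gets $d_5(\vv(28,8,12))=P^3h_2$ from $\langle h_0,h_0^3h_3,\vv(20,4,12)\rangle$ and Moss' rule, and disposes of the indeterminacy $\{h_0^4\vv(28,4,12),h_0^8\vv(28,0,12)\}$ by degree reasons and $h_0$-linearity, so your fallback is not needed.

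Your guessed candidate list omits the stem $12$ and stem $17$ entries, though your $\text{Im } J$ argument covers them. It also omits $h_4\vv(12,0,12)\to h_2^2g$. That target detects $\nu^2\overline{\kappa}\in\pi_{26}\mathbb S$, which neither the $\text{Im } J$ argument nor Senger--Zhang (stem $24$ only) reaches, and the paper likewise leaves this entry marked ``?''.
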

The following table lists the only indecomposables through stem 31 that, for degree reasons, can support a $d_5$-differential. 
 \begin{longtable}{llllc}
    \caption[Possible $d_5$-differentials $MO\langle 12 \rangle$]{Possible non-zero $d_5$-differentials on indecomposable elements.
    \label{tab:Adams d_5 mo12}
    } \\
    \toprule
    $x$ & $(t-s,s)$ & $d_5(x)$ & Occurs & Proof\\
    \midrule \endfirsthead
    \caption[]{Possible non-zero $d_5$-differentials on indecomposable elements} \\
    \toprule
    $x$ & $(t-s,s)$ & $d_5(x)$ & Occurs & Proof\\
    \midrule \endhead
    \bottomrule \endfoot
        $\vv(12,0,12)$ & $(12, 0)$ & $Ph_2$ & Yes & \ref{lem:d_5-differentials-imj-mo12}\\
        $h_1\vv(16,1,12)$ & $(17, 2)$ & $Pc_0$ & Yes & \ref{lem:d_5-differentials-imj-mo12}\\
        $g$ & $(20, 4)$ & $P^2h_2$ & No & \\
        $\vv(20,4,12)$ & $(20, 4)$ & $P^2h_2$ & Yes & \ref{lem:d_5-differentials-imj-mo12}\\
         $h_4\vv(12,0,12)$ & $(27,1)$ & $h_2^2g$ & ? &  \\
        $\vv(28,8,12)$ & $(28, 8)$ & $P^3h_2$ & Yes & \ref{lem:d_5 differential in stem 28 filtration 8 mo12} \\
    \end{longtable}
\begin{proof}
The differential on $g$ is zero by comparison with $\mathbb{S}$. For reference, see Theorem 11.52 of Bruner-Rognes\cite{BrunerRognestmf}. For proofs of the other differentials, see the following series of Lemmas. 
\end{proof}

\begin{lemma}\label{lem:d_5-differentials-imj-mo12}
    \begin{enumerate}
    \item $d_5(\vv(12,0,12))=Ph_2$. 
    \item $d_5(h_1\vv(16,1,12))=Pc_0$. 
    \item $d_5(\vv(20,4,12))=P^2h_2$.
    \end{enumerate}
\end{lemma}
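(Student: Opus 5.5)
The plan follows the template of Lemma \ref{lem:d_3-differentials-imj-mo9} and Lemma \ref{lem:d_4-differentials-imj-mo10}. By Theorem \ref{mahowaldimj}, the three classes $Ph_2$, $Pc_0$ and $P^2h_2$ detect elements of $\text{Im } J$, in stems $11$, $16$ and $19$ respectively. Each of these stems is $\geq 12-1$, so Theorem \ref{hoveyimj} says these elements lie in the kernel of the unit map $\pi_*\mathbb{S}\to\pi_*MO\langle 12\rangle$. Hence the images of $Ph_2$, $Pc_0$ and $P^2h_2$ in the Adams spectral sequence for $MO\langle 12\rangle$ must not survive to $E_\infty$.

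I then argue that none of them can support a differential. Each is a permanent cycle in the Adams spectral sequence for $\mathbb{S}$ (Proposition \ref{prop: sphere differentials} lists no differential on them), so by naturality along the unit map each is a permanent cycle for $MO\langle 12\rangle$. Each must therefore be hit by some differential.

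Next I use the tables of $d_2$, $d_3$ and $d_4$-differentials already established for $MO\langle 12\rangle$. They show that none of the three classes is hit before $E_5$, and that each is still nonzero on $E_5$. I then read off the $E_5$-chart the possible sources in stems $12$, $17$ and $20$ whose $d_r$ could land on these classes.

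\emph{Stem 12.} The only candidate is $\vv(12,0,12)$. A $d_r$ with $r\geq 6$ cannot reach filtration $5$ from filtration $0$, and no other element sits in the relevant bidegrees. This forces $d_5(\vv(12,0,12))=Ph_2$.

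\emph{Stem 17.} The competitors to $h_1\vv(16,1,12)$ must be ruled out. The element $h_1^2h_4$ is a permanent cycle by naturality from $\mathbb{S}$. The element $h_0e_0$ was shown not to support a $d_4$ and is a permanent cycle in $\mathbb{S}$ after $e_0$ supports its $d_2$. Any remaining element of stem $17$ has already died or has the wrong filtration. This leaves $d_5(h_1\vv(16,1,12))=Pc_0$. As a consistency check, I would also compare with $h_1$-linearity applied to $d_3(\vv(16,1,12))=h_0^3h_4$.

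\emph{Stem 20.} Both $g$ and $\vv(20,4,12)$ sit in bidegree $(20,4)$. I invoke the comparison with $\mathbb{S}$ from Theorem 11.52 of Bruner--Rognes: $g$ is a $d_5$-cycle, since it detects $\overline{\kappa}$ and $P^2h_2$ is not hit from $g$ in the sphere. Therefore the differential killing $P^2h_2$ must come from $\vv(20,4,12)$, or from $\vv(20,4,12)+g$. Since $d_5$ is linear, this still gives $d_5(\vv(20,4,12))=P^2h_2$. I must also exclude sources in stem $20$ of lower filtration supporting longer differentials, for instance the filtration $0$ or $1$ classes.

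The main obstacle is this last exclusion, namely that $P^2h_2$ (filtration $9$) is not instead the target of a $d_r$ with $r>5$ from a low-filtration class in stem $20$. I would handle it with $h_0$-linearity, since $P^2h_2$ is $h_0$-torsion of order $4$ while the low-filtration classes support $h_0$-towers. If that fails, I would use a Massey product argument $\langle h_0,h_0^3h_3,\vv(12,0,12)\rangle\ni\vv(20,4,12)$ via Moss's rule \ref{mossleibniz}, as in Lemma \ref{d_4 differential in stem 20 filtration 5 mo10}, to derive (3) directly from (1).
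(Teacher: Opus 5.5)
Your proposal is essentially the paper's argument. Mahowald's and Hovey's theorems force $Ph_2$, $Pc_0$ and $P^2h_2$ to be hit, the named classes are the only possible sources on $E_5$, and $g$ is excluded because $d_5(g)=0$ by comparison with $\mathbb{S}$.

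One caution about your fallback for low-filtration sources in stem $20$: $h_0$-linearity cannot rule out a class carrying a longer $h_0$-tower from hitting the $h_0$-torsion class $P^2h_2$. That exclusion therefore has to come from inspecting the $E_5$-chart, which is what the paper does when it asserts $g$ is the only other source. Alternatively it can come from your Moss-rule argument with $\langle h_0,h_0^3h_3,\vv(12,0,12)\rangle$.
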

\begin{proof}
By Theorem \ref{mahowaldimj}, the target of each of these differentials lie in the image of $J$. By Theorem \ref{hoveyimj}, they detect elements in the kernel of the unit map $\mathbb{S}\rightarrow MO\langle 12\rangle$.
On the $E_5$-page, the first two differentials are the only possibilities with their respective targets. The other possible source of a differential with target  $P^2h_2$ is $g$, but $d_5(g)=0$. 
Thus, $d_5(\vv(20,4,12))=P^2h_2$.
\end{proof}

\begin{lemma}\label{lem:d_5 differential in stem 28 filtration 8 mo12}
$d_5(\vv(28,8,12))=P^3h_2$.
\end{lemma}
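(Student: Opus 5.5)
The plan is to mirror the proof of Lemma \ref{lem:d_3 differential in stem 28 filtration 10 mo9} for $MO\langle 9\rangle$, now one page later, using Moss's higher Leibniz rule (Theorem \ref{mossleibniz}) on the $E_5$-page. First I will locate $\vv(28,8,12)$ in a Massey product built from $\vv(20,4,12)$. By analogy with the $MO\langle 9\rangle$ and $MO\langle 10\rangle$ cases, the natural candidate is
\[
\vv(28,8,12)\in\langle h_0,\, h_0^3h_3,\, \vv(20,4,12)\rangle .
\]
This is defined because $h_0\cdot h_0^3h_3=0$ and $h_0^3h_3\cdot\vv(20,4,12)=0$. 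I will verify the containment and compute the indeterminacy
\[
h_0\cdot \mathrm{Ext}^{7,35}+\mathrm{Ext}^{4,12}\cdot \vv(20,4,12)
\]
with \textbf{ext}, as was done for $MO\langle 9\rangle$. I expect the indeterminacy to consist at most of $h_0$-multiples of classes on an $h_0$-tower in stem 28, for example $h_0^{k}\vv(28,0,12)$ or $h_0^4\vv(28,4,12)$.

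Next I must check that the bracket makes sense on $E_5$. The three entries need to survive to $E_5$, and the relevant products must still vanish there. For $h_0$ and $h_0^3h_3$ this holds because they are permanent cycles coming from $\mathbb{S}$. For $\vv(20,4,12)$ it holds because, by Lemma \ref{lem:d_5-differentials-imj-mo12}, it supports a $d_5$ and is therefore a $d_r$-cycle for $r<5$. Moss's rule then gives
\[
d_5\vv(28,8,12)\in \langle 0,h_0^3h_3,\vv(20,4,12)\rangle+\langle h_0,0,\vv(20,4,12)\rangle+\langle h_0,h_0^3h_3,P^2h_2\rangle .
\]
The first two brackets vanish. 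The last is the classical bracket in $\mathrm{Ext}_{\mathcal A}(\mathbb{Z}/2,\mathbb{Z}/2)$ that realizes the periodicity operator, so it contains $P^3h_2$; its image under the unit map has indeterminacy zero in bidegree $(27,11)$.

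The remaining step is to deal with the indeterminacy of the source bracket. The left side of Moss's rule is $d_5$ applied to the whole coset $\vv(28,8,12)+(\text{indeterminacy})$, so I must show that $d_5$ vanishes on each indeterminacy element. I will argue this by $h_0$-linearity and sparsity: each such element is an $h_0$-multiple of a class whose $d_5$ would land in an empty bidegree or in an $h_0$-torsion class, exactly as in the $MO\langle 9\rangle$ argument. It is also possible that those classes were already killed on an earlier page, since the $d_2$ on $\vv(29,2,12)$ is undetermined. I expect this bookkeeping to be the main obstacle. The ambiguous differentials on $\vv(28,0,12)$ and $\vv(29,2,12)$ from Table \ref{tab:Adams d_2 mo12} mean I may have to treat the two cases separately, noting as in Lemma \ref{d_4 differential in stem 20 filtration 5 mo10} that the case where those classes die earlier is strictly simpler.

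As a fallback, I will use the image of $J$. By Theorem \ref{mahowaldimj} and Theorem \ref{hoveyimj}, $P^3h_2$ must be killed. The plan is to rule out every other source of a differential hitting it, for instance the element $Pg$-type classes by comparison with $\mathbb{S}$, and any classes already supporting shorter differentials. That would force $d_5(\vv(28,8,12))=P^3h_2$ by elimination.
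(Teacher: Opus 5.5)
Your proposal is correct and follows the paper's proof. The paper uses the same bracket $\langle h_0, h_0^3h_3, \vv(20,4,12)\rangle$, with indeterminacy $\{h_0^4\vv(28,4,12), h_0^8\vv(28,0,12)\}$, and applies Moss's rule with the third term $\langle h_0,h_0^3h_3,P^2h_2\rangle$ containing $P^3h_2$; it disposes of the indeterminacy by noting that $d_5$ vanishes on $h_0^3\vv(28,4,12)$ and $h_0^7\vv(28,0,12)$ for degree reasons and then using $h_0$-linearity. One small slip: $P^3h_2$, the target of a $d_5$ out of $(28,8)$, lies in bidegree $(27,13)$, not $(27,11)$.
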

\begin{proof}
The element $\vv(28,8,12)$ is contained in the Massey product $\langle h_0, h_0^3h_3, \vv(20,4,12)\rangle$ with indeterminacy $\{h_0^4\vv(28,4,12),h_0^8\vv(28,0,12)\}$. By Moss's Higher Leibniz rule \ref{mossleibniz},
\begin{align*}
d_5(\langle h_0, h_0^3h_3, \vv(20,4,12)\rangle) \in\;&
\langle d_5(h_0),h_0^3h_3,\vv(20,4,12)\rangle+\langle h_0,d_5(h_0^3h_3),\vv(20,4,12)\rangle+\langle h_0,h_0^3h_3,d_5(\vv(20,4,12))\rangle\\
=\;&\langle 0,h_3,\vv(20,4,12)\rangle+\langle h_0,0,\vv(20,4,12)\rangle+\langle h_0,h_0^3h_3,P^2h_2\rangle.
\end{align*}
The first two terms vanish with zero indeterminacy and the third contains $P^3h_2$ with zero indeterminacy. Observe that, for degree reasons, the differentials $d_5(h_0^3\vv(28,4,12))$ and $d_5(h_0^7\vv(28,0,12))$ equal zero. By $h_0$-linearity of the $d_5$-differential and the Leibniz rule, this implies $d_5(h_0^4\vv(28,4,12))$ and  $d_5(h_0^8\vv(28,0,12))$ also equal zero. 
Hence, $d_5(\langle h_0, h_0^3h_3, \vv(20,4,12)\rangle)=d_5(\vv(28,8,12))=P^3h_2$.
\end{proof}

\begin{lemma}\label{d_8 differential in stem 24 filtration 1 mo12}
$d_8(\vv(24,1,12))=h_0^2i$.
\end{lemma}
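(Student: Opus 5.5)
The plan is to argue exactly as in Lemmas \ref{d_3 differential in stem 16 filtration 1 mo12} and \ref{lem:d_5-differentials-imj-mo12}: exhibit a class in stem 23 that must die, then show that $\vv(24,1,12)$ is the only element that can kill it. In the sphere, $i$ supports $d_2(i)=h_0Pd_0$, so $h_0^2i$ survives to $E_3$ there. Bidegree $(23,9)$ of the $E_2$-page for $MO\langle 12\rangle$ contains $h_1Pd_0$ and $h_0^2i$, both images of the unit map. By Lemma \ref{lem:d_4 differential in stem 24 filtration 5 mo12}, $d_4(\vv(24,5,12))=h_1Pd_0+h_0^2i$. On $E_5$ this identifies $h_1Pd_0$ with $h_0^2i$, leaving one nonzero class there, represented by $h_0^2i$.

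First I will show that this surviving class detects an element of the kernel of $\pi_{23}\mathbb S\to\pi_{23}MO\langle 12\rangle$. The $h_0$-tower in stem 23 ending at the Adams edge ($h_0^2i$, $h_1Pd_0$ sitting over $h_0Pd_0$ and so on) is in $\mathrm{Im}\,J$ by Theorem \ref{mahowaldimj}. Since $23\ge 12-1$, Theorem \ref{hoveyimj} says this image of $J$ maps to zero in $\pi_{23}MO\langle 12\rangle$. The element of $\mathrm{Im}\,J$ detected by $h_0^2i$ in the sphere (2 times a generator, up to the known hidden structure) therefore maps to zero. So its image $h_0^2i$ must vanish on $E_\infty$ for $MO\langle 12\rangle$, or else detect an element of higher filtration. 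The second option is excluded by checking that stem 23 has nothing in filtration above 9 that survives: the tower classes above are already killed by the $d_2$'s from the sphere and by $h_0$-multiples of $d_4(\vv(24,5,12))$. Next, $h_0^2i$ cannot support a differential: it is a permanent cycle in the sphere, hence in $MO\langle 12\rangle$ by naturality. It must therefore be hit.

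Next I will enumerate the possible sources in stem 24. These are the elements of filtration $s$ with $s+r=9$, $r\ge5$, that survive to $E_r$. Reading off the $E_5$ chart (Figure \ref{fig:mo12_e5}), I expect only $\vv(24,1,12)$ and possibly some low-filtration classes such as $h_1\vv(23,\cdot,12)$ or $c_0h_4$-type products. Each of the latter can be ruled out in one of three ways. Some are permanent cycles by naturality from $\mathbb S$, for example $h_4c_0$, which detects a non-$\mathrm{Im}\,J$ class. Some are $h_1$-multiples of permanent cycles. For the rest I will use $h_0$-linearity: $h_0^2 i$ is $h_0$-divisible and lies in an $h_0$-tower, whereas $h_0$-torsion sources cannot hit it. Lemma \ref{d_3 differential in stem 24 filtration 1 mo12} and Lemma \ref{d_4 differential in stem 25 filtration 1 mo12} show that $\vv(24,1,12)$ survives to $E_5$, and I will check that it supports no $d_5,d_6,d_7$ because the relevant target bidegrees $(23,6),(23,7),(23,8)$ are empty on those pages. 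Then $d_8(\vv(24,1,12))=h_0^2i$ is forced.

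The main obstacle is the bookkeeping on the intermediate pages. I must confirm that $h_0^2i$ (equivalently $h_1Pd_0$) is still nonzero on $E_8$ and not already hit by some $d_5$–$d_7$. I also need $\vv(24,1,12)$ to support no shorter differential, and the $h_0$-tower structure must make the $d_8$ consistent: by $h_0$-linearity, $h_0^k\vv(24,1,12)$ hits the rest of the tower $h_0^{k+2}i$. If $h_0$-linearity clashes with the chart, I would instead use the May–Milgram theorem \ref{maymilgram} on the integral cohomology $H^{24}(MO\langle12\rangle;\Z)$ as a cross-check on the length of this tower differential.
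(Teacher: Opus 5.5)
Your proof has a genuine gap. It lies in the step where you use Hovey's theorem to conclude that the surviving class on $E_5$ must die.

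After $d_4(\vv(24,5,12))=h_1Pd_0+h_0^2i$ (Lemma \ref{lem:d_4 differential in stem 24 filtration 5 mo12}), bidegree $(23,9)$ of $E_5$ is one-dimensional, spanned by $[h_0^2i]=[h_1Pd_0]$. In the sphere, the generator $\rho_{23}$ of $\mathrm{Im}\,J$ is detected in filtration $9$. This is the generator itself, not twice it, since $i$ and $h_0i$ support $d_2$'s. The detecting class is either $h_0^2i$ or $h_0^2i+h_1Pd_0$. Mahowald's theorem as quoted cannot tell these apart: $h_0\cdot h_1Pd_0=0$, so both generate the same $h_0$-tower ending at the Adams edge.

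Suppose $\rho_{23}$ is detected by $h_0^2i+h_1Pd_0$, which is how it is recorded in the standard tables, e.g.\ Isaksen's. Then the image of $\mathrm{Im}\,J$ in $MO\langle 12\rangle$ is already a $d_4$-boundary. Hovey's theorem then places no constraint on the remaining class $[h_0^2i]$ on $E_5$. So your sentence ``the element of $\mathrm{Im}\,J$ detected by $h_0^2i$ maps to zero, so $h_0^2i$ must vanish on $E_\infty$'' assumes exactly the identification that needs proof. With the standard detection, the $\mathrm{Im}\,J$ argument gives nothing beyond the $d_4$.

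The missing ingredient is a kernel element outside $\mathrm{Im}\,J$. The class $[h_1Pd_0]$ is the image of the detector of $\eta^3\overline{\kappa}=4\nu\overline{\kappa}$. You need the fact that $\eta^3\overline{\kappa}$ maps to zero in $\pi_{23}MO\langle 12\rangle$. This is Theorem 4.1 of Burklund--Senger, and it is the reason $n=12$ is an exceptional case in their Theorem 1.4: the kernel in stem $23$ is strictly larger than $\mathrm{Im}\,J$.

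The paper's proof is exactly this. Since $\eta^3\overline{\kappa}$ lies in the kernel, its detecting class $[h_0^2i]=[h_1Pd_0]$ must be hit, and $\vv(24,1,12)$ is the only possible source.

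With that input, your page-by-page check of $d_5$ through $d_7$ on $\vv(24,1,12)$ becomes unnecessary. If the class must be hit and $\vv(24,1,12)$ is the only candidate source, then it automatically survives to $E_8$. This matters because your claim that $(23,6)$ is empty on $E_5$ is doubtful: the image of $h_0h_2g$ sits there.
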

\begin{proof}
By Theorem 4.1 of Burklund-Senger \cite{BurklundSenger2024Geography}, the element $\eta^3\overline{\kappa}$ is contained in the kernel of the unit map $\pi_{23} \mathbb{S} \rightarrow \pi_{23} MO\langle 12 \rangle$. The element $h_0^2i$ detects $\eta^3\overline{\kappa}$ and therefore must not survive the Adams spectral sequence for $MO \langle 12 \rangle$.
The only possible source of a differential is $\vv(24,1,12)$, and so $d_8(\vv(24,1,12))=h_0^2i$. 
\end{proof}

\begin{proposition}\label{prop: pi_29 mo12}
    $\pi_{29}MO\langle 12\rangle \otimes \mathbb{Q} \cong 0$
\end{proposition}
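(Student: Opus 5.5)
We need $\pi_{29}MO\langle 12\rangle\otimes\mathbb{Q}=0$. The approach is the rational Hurewicz/Thom argument: for any connective spectrum $X$ of finite type, the rational Hurewicz map gives $\pi_*X\otimes\mathbb{Q}\cong H_*(X;\mathbb{Q})$. This holds because $X_{\mathbb{Q}}$ is a wedge of rational Eilenberg--MacLane spectra, equivalently because $\pi_*\mathbb{S}\otimes\mathbb{Q}$ is $\mathbb{Q}$ concentrated in degree $0$. So it suffices to show $H_{29}(MO\langle 12\rangle;\mathbb{Q})=0$.

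The key steps are as follows.

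First, the Thom isomorphism. $MO\langle 12\rangle$ is the Thom spectrum of an oriented virtual bundle, since $BO\langle 12\rangle$ is simply connected and $w_1=0$. Hence
\[
H_{n}(MO\langle 12\rangle;\mathbb{Q})\cong H_{n}(BO\langle 12\rangle;\mathbb{Q}).
\]

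Second, compute the right-hand side with Proposition \ref{prop:rational bo12}. It gives $H^*(BO\langle 12\rangle;\mathbb{Q})\cong\mathbb{Q}[p_3,p_4,\dots]$ with $|p_i|=4i$. In particular, rational cohomology is concentrated in degrees divisible by $4$ (indeed by $12,16,20,\dots$ combinations). Since the rational cohomology is of finite type, rational homology is dual to it, so $H_n(BO\langle 12\rangle;\mathbb{Q})=0$ whenever $4\nmid n$. As $29$ is odd, $H_{29}=0$, and therefore $\pi_{29}MO\langle 12\rangle\otimes\mathbb{Q}=0$.

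One can also phrase this in terms of the Adams spectral sequence: the number of $h_0$-towers surviving to $E_\infty$ in stem $29$ must be zero. This is presumably why the result is stated. It rules out the unresolved $d_2$ on $\vv(28,0,12)$/$\vv(29,2,12)$ leaving a tower in stem 29, forcing some differential from stem 29 to truncate towers, or towers in 29 to be killed. But that consequence is not needed for the proposition itself.

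The only step needing care is justifying that rationalization of a connective finite-type spectrum is detected by rational homology. This is standard (Serre's theorem that $\pi_*\mathbb{S}\otimes\mathbb{Q}=\mathbb{Q}$ in degree $0$). Beyond that, the main obstacle is merely trusting Proposition \ref{prop:rational bo12}, which rests on the rational equivalence $BO\langle 12\rangle\simeq_{\mathbb{Q}}BO\langle 9\rangle$. That equivalence holds since the intervening fibers $K(\mathbb{Z}/2,8)$, $K(\mathbb{Z}/2,9)$ are rationally trivial, and $BO\langle 11\rangle=BO\langle 12\rangle$.
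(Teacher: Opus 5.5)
Your proposal is correct and follows essentially the same route as the paper. Both arguments use the rational Hurewicz isomorphism ($\mathbb{S}_{\mathbb{Q}}\simeq H\mathbb{Q}$) to identify $\pi_{29}MO\langle 12\rangle\otimes\mathbb{Q}$ with $H_{29}(MO\langle 12\rangle;\mathbb{Q})$, then apply the Thom isomorphism and Proposition~\ref{prop:rational bo12} to see that this group vanishes in odd degree. You additionally spell out orientability and the homology/cohomology duality, which the paper leaves implicit.
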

\begin{proof}
There is a stable equivalence $\mathbb{S}_{\mathbb{Q}} \cong H\mathbb{Q}$, so 
\[
\pi_{29}MO\langle 12\rangle \otimes \mathbb{Q} \cong H_{29}(MO\langle 12 \rangle; \mathbb{Q}) \cong H^{29}(MO \langle 12 \rangle; \mathbb{Q})
\]   
The group $H^{29}(MO \langle 12 \rangle; \mathbb{Q})$ is trivial by Proposition \ref{prop:rational bo12} and the Thom isomorphism. Hence, $\pi_{29}MO\langle 12\rangle \otimes \mathbb{Q} \cong 0$. 
\end{proof}

\begin{lemma}\label{lem:stem 29 differential mo12}
The differential $d_r(\vv(29,2,12))$ is nonzero for some $r\geq 2$. 
\end{lemma}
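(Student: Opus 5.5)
The plan is to use Proposition \ref{prop: pi_29 mo12}: the rationalization of $\pi_{29}MO\langle 12\rangle$ vanishes, so stem 29 of the $E_\infty$-page can contain no $h_0$-tower that survives to infinite filtration. The element $\vv(29,2,12)$ should generate an $h_0$-tower on the $E_2$-page, since Table \ref{tab:Adams d_2 mo12} lists its possible $d_2$-targets $h_0^4\vv(28,0,12)$ and $\vv(28,4,12)$, which lie in towers. Suppose, for contradiction, that $\vv(29,2,12)$ is a permanent cycle. I would then argue that some infinite portion of its $h_0$-tower survives to $E_\infty$, which would produce a $\mathbb{Z}$ summand in $\pi_{29}MO\langle 12\rangle$ and contradict Proposition \ref{prop: pi_29 mo12}.

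To carry this out, I would first read off from the $E_2$-chart (Figure \ref{fig:mo12_e2}) exactly which $h_0$-towers occur in stems 29 and 30. The tower on $\vv(29,2,12)$ can be truncated only in two ways: it can be the \emph{source} of differentials into towers in stem 28, or it can be the \emph{target} of differentials from towers in stem 30. Towers in stem 30 can only be hit by, or support, differentials involving towers in stems 29 and 31. Consider the case where stem 29 contains the single tower on $\vv(29,2,12)$. A tower in stem 30 that kills it from above would itself have to survive rationally in degree 30, or else be paired with a tower in stem 31. Rationally, $H^{30}(MO\langle 12\rangle;\mathbb{Q})=0$ and $H^{31}(MO\langle 12\rangle;\mathbb{Q})=0$ by Proposition \ref{prop:rational bo12} and the Thom isomorphism. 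Hence an Euler-characteristic count of towers, or equivalently a May--Milgram style pairing, forces every tower in stems 29--31 to be paired off. The simplest version of the count is this: the number of towers in stem $k$, minus the number killed, must equal $\dim H^k(MO\langle 12\rangle;\mathbb{Q})$.

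The key step is to rule out the alternative in which the tower on $\vv(29,2,12)$ is killed by an incoming differential from stem 30 rather than supporting one. I would do this by counting towers. In stem 28 the rational rank is $\dim H^{28}(MO\langle 12\rangle;\mathbb{Q})=2$, coming from $p_3p_4$-type and $p_7$ classes in degree 28 after the Thom shift. So if the $E_2$-page shows three towers in stem 28 (those on $\vv(28,0,12)$, $\vv(28,4,12)$ and the sphere-related one), exactly one of them must be truncated. The only possible source for that truncation is a tower in stem 29, which by degree reasons must be $\vv(29,2,12)$, possibly up to its $h_0$-multiples on a later page. Hence $d_r(\vv(29,2,12))\neq 0$ for some $r$, with target a nonzero combination of $h_0^4\vv(28,0,12)$ and $\vv(28,4,12)$ or their $h_0$-multiples.

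The main obstacle is the precise tower bookkeeping from the chart, namely confirming the number of $\mathbb{Z}$-towers in stems 28 and 29 and checking that no stem-30 tower exists to absorb the stem-29 tower. If the stem-28 count does not work out, I would fall back on the stem-29 argument alone. Since $\pi_{29}\otimes\mathbb{Q}=0$ and, by the chart, stem 30 has no tower in low enough filtration to hit $\vv(29,2,12)$ (its tower starts at filtration 2), $\vv(29,2,12)$ cannot be a permanent cycle.
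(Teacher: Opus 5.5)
Your starting point is exactly the paper's proof: $\pi_{29}MO\langle 12\rangle\otimes\mathbb{Q}=0$, so the $h_0$-tower on $\vv(29,2,12)$ cannot survive, and $h_0$-linearity then gives a differential on $\vv(29,2,12)$. You are right that this silently excludes the tower being truncated \emph{from above} by a differential out of stem 30. The paper never addresses this. However, neither of your two ways of closing that hole works as written.

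\textbf{The fallback uses the wrong criterion.} If $\vv(29,2,12)$ were a permanent cycle, the danger is not that $\vv(29,2,12)$ itself gets hit. A single tower in stem 30, starting in \emph{any} filtration, could hit $h_0^{k}\vv(29,2,12)$ for all $k\ge k_0$ by $h_0$-linearity. That would leave $\vv(29,2,12)$ a permanent cycle detecting a $\mathbb{Z}/2^{k_0}$, which is perfectly consistent with $\pi_{29}\otimes\mathbb{Q}=0$. What you need is that stem 30 of $E_2$ contains \emph{no} $h_0$-torsion-free class at all. Rational vanishing in degrees 30 and 31 does not give this, because higher $2$-torsion in $H^*(MO\langle 12\rangle;\mathbb{Z})$ also produces towers, so it must be read off the chart. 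Relatedly, a stem-30 tower that hits the stem-29 tower is simply paired with it; it need not ``survive rationally or pair with stem 31.'' Your Euler-characteristic count only sees that towers pair off, not the direction of each pairing, which is precisely the issue.

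\textbf{The stem-28 count is a genuinely different and better-aimed idea,} since it can pin down the direction of the pairing rather than just its existence. But the step ``the only possible source for that truncation is a tower in stem 29'' omits a case. The excess stem-28 tower could instead be truncated by \emph{supporting} a differential into a tower in stem 27. So you must check that stem 27 has no $h_0$-torsion-free class.

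Note that the bad scenario requires towers in both stem 27 and stem 30: the stem-28 tower exits into 27 while the stem-29 tower is hit from 30. Hence verifying either absence from the chart finishes the proof.

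Two smaller corrections:
\begin{itemize}
\item $\text{Ext}_{\mathcal{A}}(\mathbb{Z}/2,\mathbb{Z}/2)$ has $h_0$-towers only in stem 0, so there is no ``sphere-related'' tower in stem 28. The third tower must be an $MO\langle 12\rangle$ class, presumably the one through $\vv(28,8,12)$, which the indeterminacy computation distinguishes from $h_0^4\vv(28,4,12)$ and $h_0^8\vv(28,0,12)$.
\item You cannot infer that $\vv(29,2,12)$ is $h_0$-torsion-free from its merely \emph{possible} $d_2$-targets. That fact also has to come from the $E_2$-chart.
\end{itemize}
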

\begin{proof}
The element $\vv(29,2,12)$ generates an $h_0$-tower in stem 29. By Proposition \ref{prop: pi_29 mo12}, this $h_0$-tower cannot survive to give a $\mathbb{Z}$ summand in homotopy. By the $h_0$-linearity of the $d_r$-differential, this implies
$\vv(29,2,12)$ must support a nonzero differential. 
\end{proof}

\subsubsection{The abutment}
Recall that there are no hidden $2$-extensions between elements in the image of the unit map in this range of degrees.
\begin{proposition}
$\pi_{16}MO\langle 12 \rangle \cong \Z \oplus \Z/2$.
\end{proposition}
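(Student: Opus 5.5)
The plan is to follow the proof of Proposition \ref{prop:extension problem stem 16 mo9} for $MO\langle 9\rangle$. First, read off stem 16 of the $E_\infty$-page for $MO\langle 12\rangle$ from the differentials already established. In stem 16 the $E_2$-page contains:
\begin{itemize}
\item the sphere classes $h_1h_4$ and $h_0^kh_4$-type classes;
\item the $h_0$-tower generated by $\vv(16,1,12)$;
\item the classes coming from $h_1 d_0$-type elements in neighboring stems.
\end{itemize}
The $d_2$ on $h_4$ and the $d_3$ on $h_0h_4$ (Proposition \ref{prop: sphere differentials}) remove the $h_4$-tower from stem 15. The differential $d_3(\vv(16,1,12))=h_0^3h_4$ (Lemma \ref{d_3 differential in stem 16 filtration 1 mo12}) means $\vv(16,1,12)$ itself does not survive. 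I expect instead a surviving $h_0$-tower in stem 16 that starts in higher filtration, namely $h_0\vv(16,1,12)$ and its $h_0$-multiples. Any would-be source or target for this tower is excluded by degree reasons and $h_0$-linearity.

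Next, check that $h_1h_4$ survives. It is a $d_2$-cycle, and $d_3(h_1h_4)=0$ by naturality from $\mathbb{S}$, as recorded in the $d_3$ table. No element of stem 17 can hit it: the differentials out of stem 17, namely $d_5(h_1\vv(16,1,12))=Pc_0$ and the sphere differential on $h_0e_0$, land in stem 16 but in higher filtration, and not on $h_1h_4$. Since $h_1h_4$ detects $\eta^*\in\pi_{16}\mathbb{S}$, it is a permanent cycle. Its image in $MO\langle 12\rangle$ is nonzero because it is not a boundary. I would also confirm from the $E_\infty$-chart (Figure \ref{fig:mo12_einf}) that nothing else survives in stem 16. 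Rationally this matches: $H^{16}(MO\langle 12\rangle;\mathbb{Q})$ is one-dimensional, spanned by $p_4U$, by Proposition \ref{prop:rational bo12} and the Thom isomorphism, which accounts for exactly one tower.

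This gives a short exact sequence
\[
0\rightarrow \Z \rightarrow \pi_{16}MO\langle 12\rangle \rightarrow \Z/2\{\eta^*\}\rightarrow 0.
\]
To split it, argue as before. If the extension were nontrivial, the image of $\eta^*$ would satisfy $2\eta^*\neq 0$ in $\pi_{16}MO\langle 12\rangle$. But $2\eta^*=0$ already in $\pi_{16}\mathbb{S}$, and the unit map is a homomorphism, so this is a contradiction. Equivalently, the class detected by $h_1h_4$ is the image of a class of order 2.

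The main obstacle is the bookkeeping in the first step. One must make sure the $E_\infty$-page in stem 16 really consists of exactly one $h_0$-tower plus $h_1h_4$. In particular, the truncation of the tower by $d_3(\vv(16,1,12))$ and the differential $d_5(h_1\vv(16,1,12))=Pc_0$ into stem 16 must not affect $h_1h_4$ or create extra torsion. Once this is settled, the extension argument is immediate.
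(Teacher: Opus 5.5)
Your proposal is correct and uses the same argument as the paper. The paper simply says the proof is identical to that of Proposition \ref{prop:extension problem stem 16 mo9}: stem 16 of $E_\infty$ consists of one $h_0$-tower plus $h_1h_4$, and the extension splits because $h_1h_4$ detects the image of $\eta^*$, which already has order $2$ in $\pi_{16}\mathbb{S}$. There is one harmless bookkeeping slip: by $h_0$-linearity $d_3(h_0^k\vv(16,1,12))=h_0^{k+3}h_4\neq 0$ for $k\le 4$, so the surviving tower starts at $h_0^5\vv(16,1,12)$, not at $h_0\vv(16,1,12)$, and the $h_0^kh_4$ classes you list lie in stem 15, not 16; neither point changes the resulting group.
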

\begin{proof}
The proof is identical to the proof of Proposition \ref{prop:extension problem stem 16 mo9}. 
\end{proof}

\begin{proposition}\label{prop:stem 20 extension}
$\pi_{20} MO \langle 12 \rangle \cong \Z \oplus \Z/8$.
\end{proposition}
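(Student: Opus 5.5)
The plan mirrors Proposition \ref{prop:extension problem stem 20 mo9}. First I will read stem 20 off the $E_\infty$-page for $MO\langle 12\rangle$. By Lemma \ref{lem:d_5-differentials-imj-mo12}, $\vv(20,4,12)$ supports $d_5(\vv(20,4,12))=P^2h_2$. The class $g$ is a permanent cycle, since $d_5(g)=0$ by comparison with $\mathbb{S}$ and no longer differential on $g$ is possible in this range. So stem 20 of $E_\infty$ should consist of $g$, $h_0g$, $h_0^2g$ together with one $h_0$-tower. I would check on the charts that the tower is the one generated in low filtration by the class detecting the rational generator. Its presence is forced: by Proposition \ref{prop:rational bo12} and the Thom isomorphism, $H^{20}(MO\langle 12\rangle;\mathbb{Q})\cong\mathbb{Q}\{p_5\}$, so $\pi_{20}MO\langle 12\rangle$ has rank exactly one. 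Any towers in stem 20 other than this one (for instance the one through $\vv(20,4,12)$) are truncated or killed by the differentials already recorded.

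Next I will identify the torsion part. The classes $g,h_0g,h_0^2g$ are the images under the unit map of the classes detecting $\overline{\kappa},2\overline{\kappa},4\overline{\kappa}\in\pi_{20}\mathbb{S}$. None of them is hit by a differential, since all differentials into these bidegrees have been determined. Because there are no hidden $2$-extensions among classes in the image of the unit map in this range, the image of $\overline{\kappa}$ generates a subgroup of $E_\infty$-associated graded of order $8$. This gives a short exact sequence
\[
0 \rightarrow \mathbb{Z} \rightarrow \pi_{20}MO\langle 12\rangle \rightarrow \mathbb{Z}/8\{\overline{\kappa}\} \rightarrow 0 .
\]
The order of the maps may be the other way around, depending on filtrations, but the argument is the same either way.

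Finally, to split the sequence, I note that $8\overline{\kappa}=0$ already in $\pi_{20}\mathbb{S}$. So the image of $\overline{\kappa}$ in $\pi_{20}MO\langle 12\rangle$ is an element of order exactly $8$ mapping onto the generator of the $\mathbb{Z}/8$. It therefore gives a section, and $\pi_{20}MO\langle 12\rangle\cong\mathbb{Z}\oplus\mathbb{Z}/8$.

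The main obstacle is the first step: confirming that no extra $E_\infty$ classes in stem 20 (for example $h_0^k\vv(20,4,12)$ or classes in higher filtration) survive to enlarge the torsion, and that the tower is not glued to $g$ by a hidden $2$-extension from the tower side. The second point is harmless, since an extension from a torsion-free tower cannot alter the torsion part. For the first, I would rely on the explicit $E_5$- and $E_\infty$-charts and $h_0$-linearity of the differentials listed in Tables \ref{tab:Adams d_2 mo12}--\ref{tab:Adams d_5 mo12}.
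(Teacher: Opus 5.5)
Your proposal is correct and is essentially the paper's argument. You read stem 20 off the $E_\infty$-page to get $0\to\Z\to\pi_{20}MO\langle 12\rangle\to\Z/8\{\overline{\kappa}\}\to 0$, and you split it because $8\overline{\kappa}=0$ already in $\pi_{20}\mathbb{S}$; your rational rank count via $p_5$ is a harmless extra check.

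One small correction to your first paragraph: $d_5(\vv(20,4,12))=P^2h_2$ lands on an $h_0$-torsion class, so it can only truncate a tower through $\vv(20,4,12)$ from below, not remove it. What your argument actually needs, and what the rank count supplies, is just that exactly one infinite tower survives alongside $g,h_0g,h_0^2g$.
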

\begin{proof}
Note that $h_0^2g$ detects $4\overline{\kappa} \in \pi_{20} \mathbb{S}$. The group $\pi_{20}(MO \langle 12 \rangle)$ lies in the short exact sequence
\begin{align*}
0 \rightarrow \Z \rightarrow \pi_{20}(MO \langle 12 \rangle) \rightarrow \mathbb{Z}/8\{\overline{\kappa}\} \rightarrow 0 
\end{align*}
Suppose this short exact sequence did not split. This would imply $8\overline{\kappa} \neq 0$, which is a contradiction since $8\overline{\kappa} = 0 \in \pi_{20} \mathbb{S}$. 
\end{proof}

\begin{proposition}\label{prop:stem 24 extension}
$\pi_{24}MO\langle 12 \rangle \cong \Z \oplus \Z \oplus \Z/2$.
\end{proposition}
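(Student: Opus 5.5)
The plan is to read stem 24 off the $E_\infty$-page for $MO\langle 12\rangle$ and then settle the hidden extensions, as in Propositions \ref{prop:extension problem stem 28 mo9} and \ref{prop:pi_24 mo9}. First I would list the survivors in stem 24. Lemma \ref{d_3 differential in stem 24 filtration 1 mo12} and Lemma \ref{d_4 differential in stem 24 filtration 1 mo12} say $\vv(24,1,12)$ survives $d_3$ and $d_4$. However, Lemma \ref{d_8 differential in stem 24 filtration 1 mo12} gives $d_8(\vv(24,1,12))=h_0^2i$. So only the part of the $h_0$-tower on $\vv(24,1,12)$ above where the truncation is forced survives, and it still contributes an infinite tower. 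By Lemma \ref{lem:d_4 differential in stem 24 filtration 5 mo12}, $\vv(24,5,12)$ supports a $d_4$. Its $h_0$-multiples are killed or truncated accordingly, but the tower persists above. I would then check, using the charts and the $d_r$ tables, that the remaining classes are exactly two $h_0$-towers and one $h_0$-torsion class.

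The torsion class should be $h_1c_0h_4$ or $c_0h_4$ in filtration around 3–4. Both are images from the sphere: $c_0h_4$ survives by Lemma \ref{d_3 differential in stem 24 filtration 1 mo12}, and $h_1c_0h_4$ by Lemma \ref{d_4 differential in stem 25 filtration 1 mo12} together with Senger–Zhang. More likely the survivor is $h_1c_0h_4$ in stem 25, so in stem 24 the candidate is $c_0h_4$, which detects $\epsilon\eta_4$ or a similar class in $\pi_{24}\mathbb S$.

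Rationally, Proposition \ref{prop:rational bo12} and the Thom isomorphism give $H^{24}(MO\langle 12\rangle;\mathbb Q)$ spanned by $p_3^2$ and $p_6$. This has rank $2$, consistent with exactly two surviving towers, as in the proof of Proposition \ref{prop: pi_29 mo12}. This cross-check confirms the free rank is $2$.

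Finally, for the extension I would argue as in Proposition \ref{prop:extension problem stem 28 mo9}. The $E_\infty$-page yields a short exact sequence
\[
0 \rightarrow \Z\oplus\Z \rightarrow \pi_{24}MO\langle 12\rangle \rightarrow \Z/2 \rightarrow 0,
\]
where the $\Z/2$ is generated by the image of a class $x\in\pi_{24}\mathbb S$ detected by the torsion survivor. Since $2x=0$ already in $\pi_{24}\mathbb S$ (no hidden extensions there in this range, as recalled at the start of the abutment), the unit map sends $2x$ to $0$. Hence the image of $x$ lifts to an order-$2$ element and the sequence splits. Alternatively, if the survivor is an $\eta$-multiple, $2\eta=0$ finishes it, as in Proposition \ref{prop:pi_24 mo9}.

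The main obstacle is the first step: confirming from the charts that no other stem-24 class survives, in particular that no undetermined differential (e.g. on $\vv(26,0,12)$ or $\vv(25,1,12)$) hits stem 24. I would first rule these out by $h_0$-linearity and the torsion/torsion-free mismatch used in Lemma \ref{d_4 differential in stem 25 filtration 1 mo12}.
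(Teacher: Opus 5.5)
Your overall route matches the paper's proof. The paper writes $0\to\Z\oplus\Z\to\pi_{24}MO\langle 12\rangle\to\Z/2\to 0$ and splits it because the $\Z/2$ is generated by the image of an order-$2$ class of $\pi_{24}\mathbb S$. Your rational check is correct and is a useful confirmation of the free rank, which the paper simply reads off the chart: by Proposition \ref{prop:rational bo12} and the Thom isomorphism, $H^{24}(MO\langle 12\rangle;\mathbb Q)$ is spanned by $p_3^2U$ and $p_6U$.

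The gap is in identifying the torsion class, which is the one input the splitting argument needs. Your bidegrees are wrong.
\begin{itemize}
\item Since $c_0\in(8,3)$, $h_4\in(15,1)$ and $h_1\in(1,1)$, the class $c_0h_4$ lies in $(23,4)$. That is why it appears as the candidate $d_3$-target of $\vv(24,1,12)$ in Lemma \ref{d_3 differential in stem 24 filtration 1 mo12}. So $c_0h_4$ is not a stem-$24$ class at all.
\item The class $h_1c_0h_4$ lies in $(24,5)$, not in stem $25$. It is the candidate $d_4$-target of $\vv(25,1,12)$, and Lemma \ref{d_4 differential in stem 25 filtration 1 mo12} says exactly that this stem-$24$ torsion survivor is not killed.
\end{itemize}

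So the torsion class is $h_1c_0h_4$, which detects $\epsilon\eta^{*}$. The homotopy class you wrote down is essentially right, but it is detected by $h_1c_0h_4$, not by $c_0h_4$. The splitting then follows because $2\epsilon\eta^{*}=0$ in $\pi_{24}\mathbb S$, since $2\epsilon=0$. Cite that relation directly rather than the remark about hidden extensions. Alternatively, $h_1c_0h_4=h_1\cdot c_0h_4$ with $c_0h_4$ coming from a permanent cycle of $\mathbb S$, so your $\eta$-multiple argument as in Proposition \ref{prop:pi_24 mo9} also works.

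The same stem confusion appears in your list of worries. An undetermined differential on $\vv(26,0,12)$ cannot hit stem $24$, because Adams differentials lower the stem by exactly one. The only stem-$25$ source to check is $\vv(25,1,12)$, and the lemma above already handles it. With the torsion class corrected to $h_1c_0h_4$, your argument is the paper's proof.
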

\begin{proof}
Note that $h_1c_0h_4$ detects $\epsilon\eta^{*} \in \pi_{24} \mathbb{S}$. The group $\pi_{24}(MO \langle 12 \rangle)$ lies in the short exact sequence
\begin{align*}
0 \rightarrow \Z \oplus \Z \rightarrow \pi_{24}(MO \langle 12 \rangle) \rightarrow \mathbb{Z}/2\{\epsilon\eta^{*}\} \rightarrow 0 
\end{align*}
Suppose this short exact sequence did not split. This would imply $2\epsilon\eta^{*} \neq 0$, which is a contradiction since $2\epsilon\eta^{*} = 0 \in \pi_{24} \mathbb{S}$. 
\end{proof}

\begin{proposition}
$\pi_{28} MO\langle 12 \rangle \cong \Z \oplus \Z \oplus \mathbb{Z}/2^{n} \oplus \mathbb{Z}/2$ for some $n>2$ and $\pi_{29} MO\langle 12 \rangle \cong 0$.
\end{proposition}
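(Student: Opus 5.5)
The plan is to read both groups off the $E_\infty$-page of the Adams spectral sequence for $MO\langle 12\rangle$ and then resolve the remaining extension problems by comparison with $\pi_*\mathbb{S}$.

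\emph{Stem 29.} By Proposition \ref{prop: pi_29 mo12}, $\pi_{29}MO\langle 12\rangle$ is torsion. Lemma \ref{lem:stem 29 differential mo12} forces $\vv(29,2,12)$ to support a nonzero differential. Table \ref{tab:Adams d_2 mo12} lists the only candidate targets as $h_0^4\vv(28,0,12)$ and $\vv(28,4,12)$.

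\begin{itemize}
\item The differential on $\vv(29,2,12)$ truncates an $h_0$-tower in stem 28. By $h_0$-linearity, the whole tower on $\vv(29,2,12)$ dies, leaving at most finitely many classes at its base.
\item The remaining classes in stem 29 are $k$ and its $h_0$-multiples, together with whatever is left after $d_2(k)=h_0d_0^2$. These are either sources of differentials already recorded or are killed.
\item I would check on the $E_5$- and $E_\infty$-charts that no other class survives in stem 29. Any remaining low-filtration class would be eliminated by Leibniz arguments with $h_0,h_1$, as in earlier lemmas.
\end{itemize}

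This gives $\pi_{29}=0$.

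\emph{Stem 28, free part.} The rational computation from Proposition \ref{prop:rational bo12} and the Thom isomorphism gives $H^{28}(MO\langle12\rangle;\mathbb{Q})$ of rank $2$, spanned by $p_3p_4\cdot$ (excluded, degree 28 is $p_7$ and $p_3p_4$). So exactly two $h_0$-towers survive in stem 28, giving $\Z\oplus\Z$.

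\emph{Stem 28, torsion.}
\begin{itemize}
\item The tower truncated by $d_r(\vv(29,2,12))$ survives only as a finite $h_0$-string. Its length is the $2$-order of the corresponding summand. The possible targets lie in filtration $\ge 4$ while the tower starts lower, so the string has length at least $3$, which gives $\Z/2^n$ with $n>2$.
\item The remaining class is $d_0^2$, detecting $\kappa^2$. No differential hits it, since by naturality its only potential sources support other differentials.
\item There is no hidden extension from $d_0^2$, because $2\kappa^2=0$ in $\pi_{28}\mathbb{S}$, exactly as in Proposition \ref{prop:extension problem stem 28 mo9}.
\end{itemize}

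\emph{Main obstacle.} Pinning down exactly which differential $\vv(29,2,12)$ supports is the hardest step. Since that is left ambiguous, I only claim $n>2$ rather than a precise value. I would also need to rule out a hidden $2$-extension from the truncated string into $\Z$-towers. If one occurred, it would merge the string into a $\Z$ and reduce the free rank below $2$, contradicting the rational count. This rank argument is what keeps the splitting $\Z\oplus\Z\oplus\Z/2^n\oplus\Z/2$.
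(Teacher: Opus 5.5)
Your outline matches the paper's proof. The rational vanishing of $\pi_{29}$ forces the tower on $\vv(29,2,12)$ to die into the stem-$28$ towers, nothing else survives in stem $29$, and the truncated tower leaves a torsion summand. The paper itself only concludes ``a $\Z/2^n$ left over for some $n$, depending on the linear combination.'' Your rank count ($p_7$ and $p_3p_4$ span degree $28$, so exactly two infinite towers survive) is correct. The steps you add to pin down the torsion are where the proof has gaps.

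\emph{The bound $n>2$ is not established.} You assume the differential truncates the tower on $\vv(28,0,12)$, which starts in filtration $0$. But the other listed target, $\vv(28,4,12)$, is not in that tower: it begins its own $h_0$-family in filtration $4$ (the paper uses $h_0^4\vv(28,4,12)\neq 0$).

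If $d_2(\vv(29,2,12))=\vv(28,4,12)$, then $h_0$-linearity kills that whole family. The tower on $\vv(28,0,12)$ is untouched, so no finite string remains at all.

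If $d_2(\vv(29,2,12))=\vv(28,4,12)+h_0^4\vv(28,0,12)$, change basis. The killed tower is the one generated by this sum, which also starts in filtration $4$, so again no string is left.

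A $d_3$ or $d_4$ hitting $h_0\vv(28,4,12)$ or $h_0^2\vv(28,4,12)$ would leave only $\Z/2$ or $\Z/4$.

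So ``the targets lie in filtration $\ge 4$ while the tower starts lower'' is an unproved assumption about \emph{which} tower is hit. The length of the differential alone cannot decide this, since a $d_2$ to $\vv(28,4,12)$ and a $d_2$ to $h_0^4\vv(28,0,12)$ have the same length. You would need an additional argument excluding $\vv(28,4,12)$ from the target.

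\emph{The extension arguments do not work.} A hidden $2$-extension from the string into a surviving tower does not lower the rank. $\pi_{28}\otimes\mathbb{Q}$ has rank $2$ regardless, and $E_\infty$ still has exactly two infinite towers. If $\alpha$ is detected at the bottom of a string of length $n$ and $2^n\alpha$ is detected in a tower, then $\alpha$ simply has infinite order. The $\Z/2^n$ is absorbed into a $\Z$ summand while the rank stays $2$, so your rational count cannot rule this out.

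Similarly, $2\kappa^2=0$ only shows that the image of $\kappa^2$ has order $2$, not that it splits off. If $2^n\alpha$ were detected by $d_0^2$, the torsion would be $\Z/2^{n+1}$ rather than $\Z/2^n\oplus\Z/2$.

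Since $\alpha$ is not in the image of $\pi_*\mathbb{S}$, the paper's standing remark about extensions covers neither case. Each needs its own argument.
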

\begin{proof}
While we technically do not have a complete calculation of the $E_\infty$-page in these stems, there is enough information to deduce these groups. The nonzero differential in stem 29 established in Lemma \ref{lem:stem 29 differential mo12} kills the $h_0$-tower in that stem. There are no other elements in that stem and so $\pi_{29}MO\langle 12 \rangle=0$. 
The target of the $d_r$-differential in Lemma \ref{lem:stem 29 differential mo12} is a linear combination of the $h_0$-towers in stem 28. Depending on what the linear combination is, and the length of the differential, there will be a $\mathbb{Z}/2^n$ left over. Hence, $\pi_{28} MO\langle 12 \rangle \cong \Z \oplus \Z \oplus \mathbb{Z}/2^{n} \oplus \mathbb{Z}/2$ for some $n$.
\end{proof}

\begin{theorem}\label{thm: mo12 groups}
Table \ref{tab:mo12 groups} describes the 2-primary component of the cobordism groups $\Omega^{\langle 12 \rangle}_n$ for the values of $n$ listed. 
\end{theorem}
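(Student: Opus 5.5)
The table for $\Omega^{\langle 12\rangle}_*$ is read off from the $E_\infty$-page of the Adams spectral sequence for $MO\langle 12\rangle$. That page is obtained from the $E_2$-page, computed by \textbf{ext} from the Sage calculation of the $\mathcal{A}$-module $H^*(MO\langle 12\rangle;\Z/2)$, by applying the $d_2$ through $d_5$ differentials of the preceding tables together with $d_8(\vv(24,1,12))=h_0^2i$. The plan is to go stem by stem through the range listed and do three things in each degree $m$: list the survivors on $E_\infty$, check that no further differentials are possible, and resolve any $2$-extensions.

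First, the low stems $m\le 15$. Here the only non-sphere classes are the bottom class, $\vv(12,0,12)$, $\vv(16,1,12)$ and their $h_0$-, $h_1$-multiples. The image-of-$J$ differentials (Lemmas \ref{lem:d_5-differentials-imj-mo12} and \ref{d_3 differential in stem 16 filtration 1 mo12}) truncate the towers, and the remaining sphere classes survive by naturality from $\mathbb{S}$ via Proposition \ref{prop: sphere differentials}. In stems $9$, $12$ and $15$ this leaves groups such as $(\Z/2)^3$ and $\Z$. In these stems every survivor is in filtration high enough, or is the image of a sphere class, so extensions follow from the absence of hidden $2$-extensions in the image of the unit map below stem $40$.

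Second, the middle stems. For $m=16,20,24$ the groups are supplied by the extension propositions just proved, and for $m=28,29$ by the final proposition. For $m=17,18,19,21,22,23$ I would read off the surviving classes directly. In each such stem I need to argue that every survivor is either the image of a sphere class, so has no hidden $2$-extension, or lies in a bidegree where the neighbouring filtrations are empty on $E_\infty$. For $\Z/8$ in stem $18$ and $\Z/4$ in stem $23$ I would identify the $h_0$-chains that are visible on $E_\infty$, so that no hidden extension is needed. In stem $25$ the unknown $d_2(\vv(26,0,12))=h_1\vv(24,1,12)$ leaves either one or two classes, which gives the entry ``$2$ or $4$''. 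Similarly, in stem $30$ a possible hidden $2$-extension between the two surviving $\Z/2$ classes gives ``$\Z/2\oplus\Z/2$ or $\Z/4$''. I would state both alternatives rather than resolve them.

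The main obstacle is stem $28$. The group $\Z^2\oplus\Z/2^l\oplus\Z/2$ depends on the unknown source and length of the differential on $\vv(29,2,12)$ from Lemma \ref{lem:stem 29 differential mo12}. The value of $l$ cannot be determined, so I would record only that it is finite and at least $2$. I would justify this by $h_0$-linearity, which forces the tower in stem $28$ to be truncated, and by the rational rank computed from Proposition \ref{prop:rational bo12}: $H^{28}(BO\langle 12\rangle;\mathbb{Q})$ has rank $2$ (spanned by $p_3p_4$ and $p_7$), and this rank of $2$ fixes the two $\Z$ summands.
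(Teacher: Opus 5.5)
Your route is the paper's own: read the groups off the $E_\infty$-page and quote the abutment propositions for stems $16,20,24,28,29$. But the plan cannot be completed as written, because one entry it sets out to confirm is impossible.

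In stem $15$ you, following the table, obtain a $\Z$. The rational input you yourself use in stem $28$ rules this out. By Proposition~\ref{prop:rational bo12}, the Thom isomorphism and $\pi_*(-)\otimes\mathbb{Q}\cong H_*(-;\mathbb{Q})$ (exactly as in Proposition~\ref{prop: pi_29 mo12}), one gets $\pi_{15}MO\langle 12\rangle\otimes\mathbb{Q}\cong H^{15}(BO\langle 12\rangle;\mathbb{Q})=0$. So $\Omega^{\langle 12\rangle}_{15}$ is finite.

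The spectral sequence agrees. The only $h_0$-chain in stem $15$ is the finite image-of-$J$ chain $h_0^kh_4$ ($k\le 7$). The sphere differentials remove $h_4,h_0h_4,h_0^2h_4$, and $d_3(h_0^k\vv(16,1,12))=h_0^{k+3}h_4$ removes the rest, so no tower survives. Degree $15$ therefore carries a finite group. It is nonzero, since by Theorem~\ref{thm: mo9 unit map} $\eta\kappa$ already maps nontrivially to $\pi_{15}MO\langle 9\rangle$, and that unit map factors through $MO\langle 12\rangle$. This is an error in the table that your proposal inherits; a proof of the statement as written has to break at this degree.

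Two further steps need repair even where the entries are plausible.

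First, ``a sphere class has no hidden $2$-extension'' is not a valid principle in this range. The sphere already has one in stem $23$: $4\nu\overline{\kappa}=\eta^3\overline{\kappa}$, a hidden extension from $h_0h_2g$ to $h_1Pd_0$. More generally, the image of $2\alpha$ can be detected in $MO\langle 12\rangle$ by a class not coming from the sphere once the class detecting it in $\mathbb S$ is killed. For the $\Z/4$ in stem $23$ you need stem-specific input: $h_1Pd_0$ and $h_0^2i$ with its $h_0$-multiples die, via $d_4(\vv(24,5,12))$ and $d_8(\vv(24,1,12))$. The latter rests on $\eta^3\overline{\kappa}\mapsto 0$. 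Only then does nothing survive above $h_0h_2g$.

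Second, in stem $28$ nothing supports $l\ge 2$. Suppose the differential on the stem-$29$ tower (forced by $\pi_{29}\otimes\mathbb{Q}=0$ or by your rank count, not by $h_0$-linearity) hits $\vv(28,4,12)$ or $\vv(28,4,12)+h_0^4\vv(28,0,12)$. Then the number of towers drops by one and no torsion is left over. Otherwise $l$ depends on the unknown target and length. Your argument, like the paper's, only yields a finite, possibly trivial, $\Z/2^l$.
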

\begin{center}
\captionof{table}[The 2-primary component of the $O\langle 12 \rangle$-cobordism groups]{ $\Omega^{\langle 12 \rangle}_n$ through degree $30$.}
\label{tab:mo12 groups}
\tablehead{\hline%
$n$ & $\Omega^{\langle 12 \rangle}_n$ & $n$ & $\Omega^{\langle 12 \rangle}_n$\\%
\hline}
\tabletail{\hline%
\multicolumn{4}{r}{%
\small\slshape to be continued on the next page}\\}
\tablelasttail{\hline}
\begin{supertabular}{|c|p{3.5cm}|c|p{3.5cm}|}
$0$  & $\Z$ & $16$ & $\Z\oplus \Z/2$\\
$1$  & $\Z/2$ & $17$ & $(\Z/2)^3$\\
$2$  & $\Z/2$ & $18$ & $\Z/8 \oplus \Z/2$\\
$3$  & $\Z/8$ & $19$ & $\Z/2$\\
$4$  & $(0)$ & $20$ & $\Z/8 \oplus \Z$\\
$5$  & $(0)$ & $21$ & $\Z/2 \oplus \Z/2$\\
$6$  & $\Z/2$ & $22$ & $\Z/2 \oplus \Z/2$\\
$7$  & $\Z/16$ & $23$ & $\Z/2 \oplus \Z/4$\\
$8$  & $\Z/2 \oplus \Z/2$ & $24$ & $\Z/2 \oplus (\Z)^2$\\
$9$  & $(\Z/2)^{3}$ & $25$ & 2|4 \\
$10$ & $\Z/2$ & $26$ & \\
$11$ & $(0)$ & $27$ & \\
$12$ & $\Z$ & $28$ & $\Z \oplus \Z \oplus \mathbb{Z}/2^{n} \oplus \mathbb{Z}/2$ \\
$13$ & $(0)$ & $29$ & $(0)$\\
$14$ & $\Z/2 \oplus \Z/2$ & $30$ & $\Z/2 \oplus \Z/2\text{ or } \Z/4$\\
$15$ & $\Z$ & & \\
\end{supertabular}
\end{center}
\subsection{$MO\langle 16 \rangle$}
\subsubsection{The cohomology of $MO\langle 16 \rangle$}

\begin{proposition}[\cite{Stong1963Determination}]
    $H^{*}(BO\langle16 \rangle;\mathbb{Z}/2)$ is isomorphic to:
    \[
        H^{*}(K(\mathbb{Z}/2,16);\Z/2)/\langle Sq^2 i_{16} \rangle \otimes \mathbb{Z}/2[\theta_{i} | L(i) > 7]
    \]
\end{proposition}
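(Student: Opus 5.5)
The plan is to obtain the statement as a specialization of Stong's theorem \cite{Stong1963Determination}, quoted in the preliminaries, in the same way as the propositions for $BO\langle 9\rangle$, $BO\langle 10\rangle$ and $BO\langle 12\rangle$. I would also re-derive it by a Serre spectral sequence argument up the Whitehead tower of Figure \ref{bo_whitehead}. The first step is bookkeeping: I would compute the relevant value of $\phi(0,n)$ by counting the integers $i\equiv 0,1,2,4 \pmod 8$ below the connectivity level at which $BO\langle 16\rangle$ sits.

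Since $BO\langle 13\rangle = BO\langle 16\rangle$, the relevant stage is the one added after $BO\langle 12\rangle$. The levels killed there are $1,2,4,8,9,10,12$, which gives $\phi = 7$ and matches the condition $L(i)>7$ in the statement. I would then identify which Eilenberg--MacLane space the theorem attaches, and which single Steenrod operation on its fundamental class must be quotiented out. That operation is the one that transgresses to the $k$-invariant at that stage, by analogy with Proposition \ref{prop: sati_x9_x10}, where $Sq^2 i_9$ plays this role for $BO\langle 9\rangle$.

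For the direct argument I would run the mod $2$ Serre spectral sequence of the principal fibration $K(\mathbb{Z},11)\to BO\langle 13\rangle \to BO\langle 12\rangle$ (equivalently, the path--loop version over $K(\mathbb{Z},12)$), using the already-known description of $H^*(BO\langle 12\rangle;\mathbb{Z}/2)$. The key steps, in order, are as follows.
\begin{enumerate}
\item Identify the transgression of $i_{11}$ with the reduction of $\tfrac{1}{240}p_3$, that is, with the fundamental class of the $K(\mathbb{Z},12)$ factor.
\item Use the Kudo transgression theorem to transgress the admissible operations $Sq^I i_{11}$ onto the corresponding $Sq^I$ of that class.
\item Check that the polynomial classes $\theta_i$ with $L(i)=7$ are exactly those killed by this transgression, while those with $L(i)>7$ survive.
\end{enumerate}
What remains after these steps is a Hopf algebra of the stated form. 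The first operation on the fundamental class of the new Eilenberg--MacLane factor that is \emph{not} hit, and hence not quotiented out, is the relation $Sq^2$ in the statement. Checking that it is the only relation in the relevant range requires a Poincar\'e series comparison, which I would carry out with the same Sage computation used for the $\mathcal{A}$-module structure.

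The main obstacle is the transgression and Kudo bookkeeping in the Serre spectral sequence. One has to confirm that the spectral sequence is governed entirely by transgressions, with no extra differentials between products, and that the kernel and cokernel match Stong's description exactly. In particular, the indexing of the Eilenberg--MacLane factor, and whether its homotopy group is $\mathbb{Z}$ or $\mathbb{Z}/2$, must be pinned down against $\pi_*(BO)$. I would first try to sidestep this by citing Stong's general theorem verbatim and only verifying the numerical inputs ($\phi$ and the relation), checking these against a low-degree Sage computation of $H^*(BO\langle 16\rangle;\mathbb{Z}/2)$.
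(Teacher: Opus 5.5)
Citing Stong is all the paper does, so your overall route is the expected one. The problem is the question you leave open: whether the Eilenberg--MacLane factor has coefficients $\mathbb{Z}$ or $\mathbb{Z}/2$. That is exactly where the statement as printed fails, yet your plan assumes the answer comes out ``of the stated form''.

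Bott periodicity gives $\pi_{16}(BO)\cong\mathbb{Z}$. So any specialization of Stong's theorem produces $H^*(K(\mathbb{Z},16);\mathbb{Z}/2)$, not $H^*(K(\mathbb{Z}/2,16);\mathbb{Z}/2)$. Your own Serre spectral sequence says the same thing: the degree-$16$ class comes from $Sq^5 i_{11}$, which transgresses to $Sq^5 i_{12}=0$ and is the reduction of the integral class $\beta Sq^4 i_{11}$.

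With $\mathbb{Z}/2$ the printed isomorphism is false. The right-hand side has $Sq^1 i_{16}\neq 0$ in degree $17$, and nothing else lives there. But $BO\langle 16\rangle$ is $15$-connected, with $\pi_{16}=\mathbb{Z}$ and $\pi_{17}=\mathbb{Z}/2$ joined by the $k$-invariant $Sq^2 i_{16}$ (because $\eta$ acts nontrivially). Hence $H_{16}=\mathbb{Z}$, $H_{17}=0$, and $H^{17}(BO\langle 16\rangle;\mathbb{Z}/2)=0$. This agrees with the paper's own Adams computation, where $I_{16}$ carries an $h_0$-tower.

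So what can actually be proved is the version with $K(\mathbb{Z},16)$. In that version the relation $Sq^2 i_{16}$ is the mod $2$ reduction of this $k$-invariant; compare $H^*(ko)\cong\mathcal{A}/\mathcal{A}(Sq^1,Sq^2)$. It is not, as you describe it, ``the first operation that is not hit''.

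There are two smaller corrections.

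First, in step 3 you expect polynomial classes with $L(i)=7$ to be killed. In the paper's description of $H^*(BO\langle 12\rangle;\mathbb{Z}/2)$, the polynomial factor already consists of $\theta_i$ with $L(i)>7$. What the transgressions $Sq^I i_{11}\mapsto Sq^I i_{12}$ kill is the $K(\mathbb{Z},12)$ factor. Some of these transgressions vanish, starting with $Sq^5 i_{12}$, so the spectral sequence is not of Borel type: the surviving fiber classes build the new factor. That is precisely the delicate part Stong handles, not a matter of checking that the spectral sequence is ``governed entirely by transgressions''.

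Second, your count $\phi=7$ uses the levels strictly below $16$. That is not the convention behind the paper's companion statements. For $BO\langle 9\rangle$, $BO\langle 10\rangle$ and $BO\langle 12\rangle$ it would give $4,5,6$ instead of the printed $5,6,7$, so its agreement with $L(i)>7$ is not a check. Whether the indecomposables with $L(i)=8$ (starting with $\theta_{128}$) sit in the polynomial factor or are absorbed into the Eilenberg--MacLane factor depends on exactly which ideal of $H^*(K(\mathbb{Z},16);\mathbb{Z}/2)$ is divided out. This is invisible below degree $128$, but it has to be settled from Stong's normalization, not read off the statement you are trying to prove.
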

where $L(i)$ is one plus the number of ones in the binary expansion of $i-1$. We calculate the $\mathcal{A}$-module structure of $H^{*}(MO\langle16 \rangle;\mathbb{Z}/2)$ through degree $70$ using a computer program in Sage.

\subsubsection{Adams charts}
This section contains charts for the Adams spectral sequence for $MO\langle 16 \rangle$. The $E_r$-page is displayed along with the $d_r$-differentials. The $y$-axis is the Adams filtration $s$ and the $x$ axis is the stem $t-s$. The elements $h_0$, $h_1$, and $h_2$ have had their names suppressed. The dotted lines on the $E_{\infty}$-page represent possibly nonzero differentials that have yet to be determined. Classes in red on the $E_{\infty}$-page do not survive. 
\clearpage
\FloatBarrier
\begin{figure}[!p]
\centering

\begin{subfigure}{\linewidth}
  \centering
  \includegraphics[width=\linewidth,
    trim=0.5cm 0cm 0cm 0cm,clip]{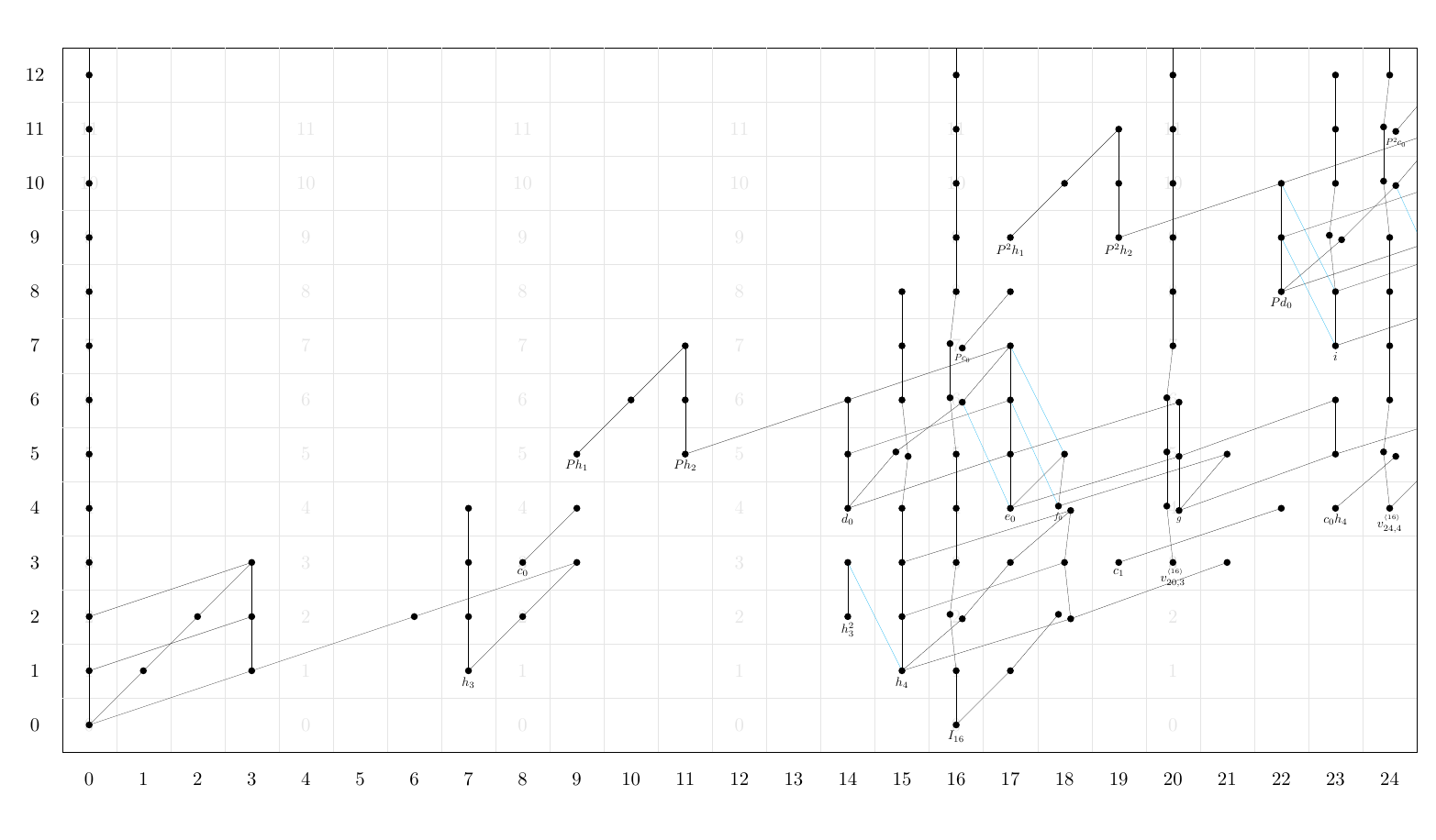}
  \caption{$E_2$-page $MO \langle 16 \rangle $ (stems 0-24)}
  \label{fig:mo16_e2_0_24}
\end{subfigure}

\vspace{-0.4\baselineskip}

\begin{subfigure}{\linewidth}
  \centering
  \includegraphics[height=14cm,width=\linewidth,
    trim=0.5cm 0cm 0cm 0cm,clip]{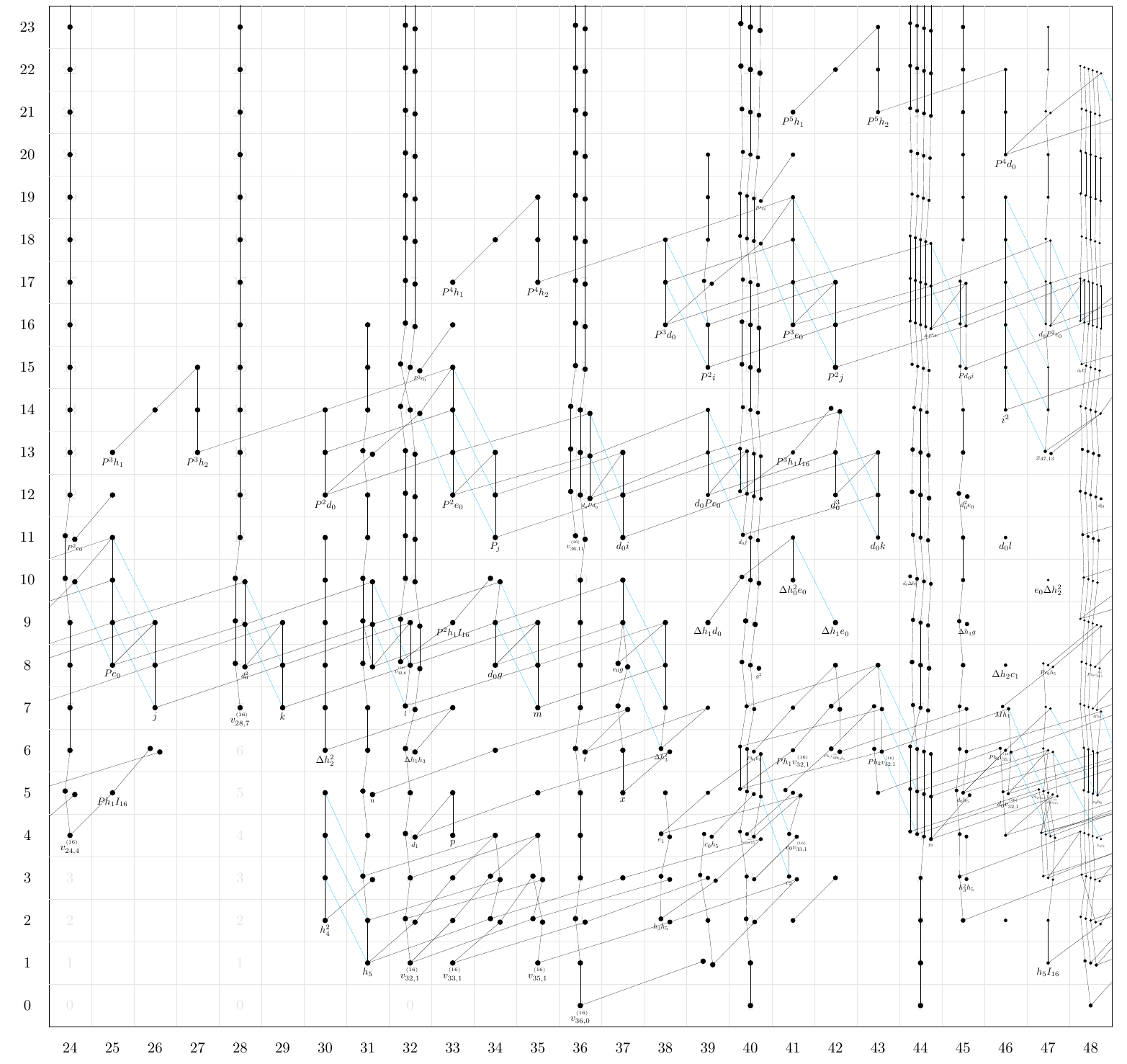}
  \caption{$E_2$-page $MO \langle 16 \rangle $ (stems 24-48)}
  \label{fig:mo16_e2_24_48}
\end{subfigure}

\caption{$E_2$-page $MO\langle 16 \rangle$ \label{fig:mo16_e2}}
\end{figure}

\begin{figure}[!p]
\centering
\begin{subfigure}{\linewidth}
  \centering
  \includegraphics[width=\linewidth,
    trim=0.5cm 0cm 0cm 0cm,clip]{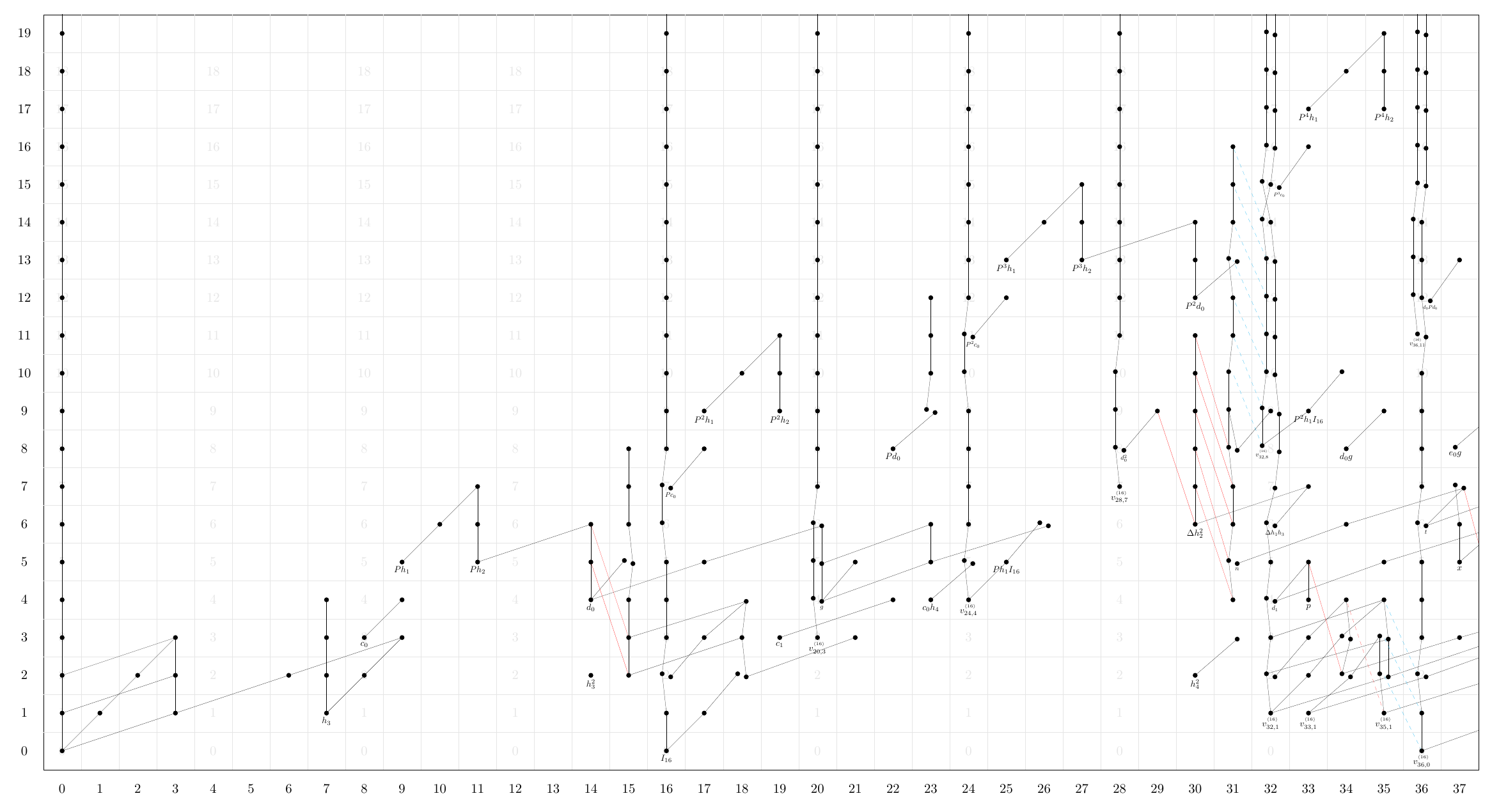}
  \caption{$E_3$-page $MO \langle 16 \rangle $ (stems 0-37)}
  \label{fig:mo16_e3}
\end{subfigure}

\vspace{-0.4\baselineskip}

\begin{subfigure}{\linewidth}
  \centering
  \includegraphics[width=\linewidth,
    trim=0.5cm 0cm 0cm 0cm,clip]{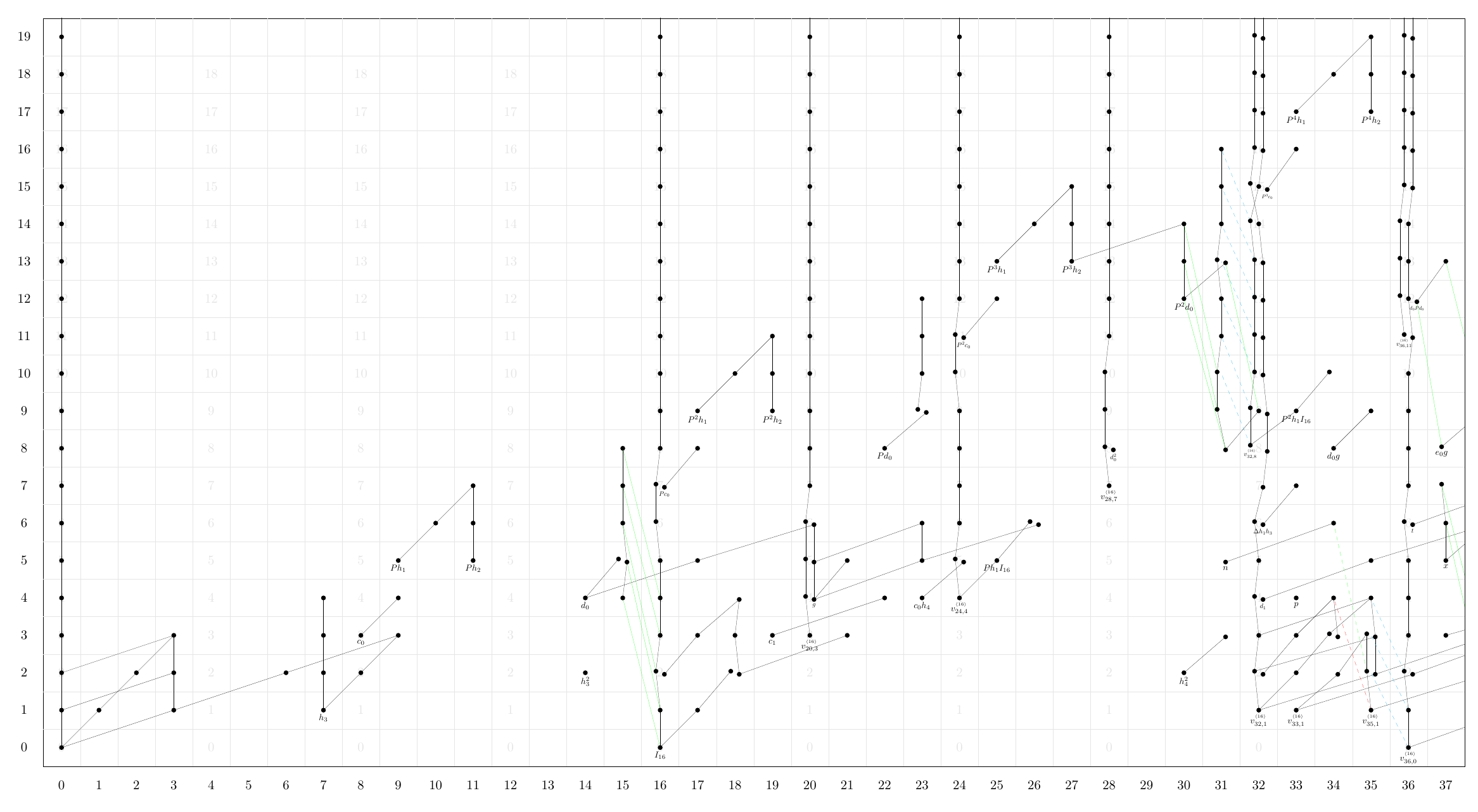}
  \caption{$E_4$-page $MO \langle 16 \rangle $ (stems 0-37)}
  \label{fig:mo16_e4}
\end{subfigure}

\caption{$E_3$/$E_4$-page $MO\langle 16 \rangle$ \label{fig:mo16_e3_e4}}
\end{figure}

\begin{figure}[!p]
\centering
\begin{subfigure}{\linewidth}
  \centering
  \includegraphics[width=\linewidth,
    trim=0.5cm 0cm 0cm 0cm,clip]{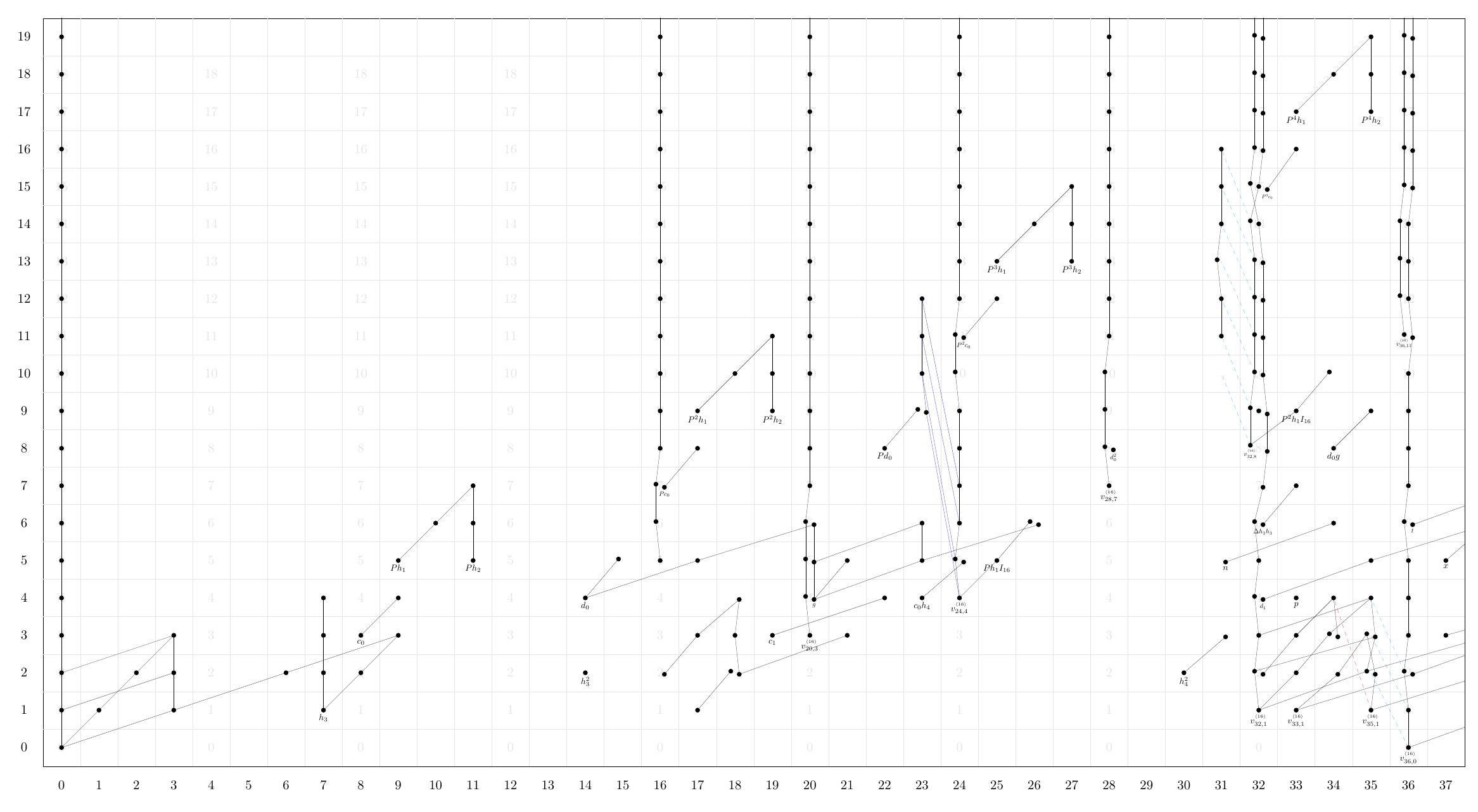}
  \caption{$E_5$-page $MO \langle 16 \rangle $ (stems 0-37)}
  \label{fig:mo16_e5}
\end{subfigure}

\vspace{-0.4\baselineskip}

\begin{subfigure}{\linewidth}
  \centering
  \includegraphics[width=\linewidth,
    trim=0.5cm 0cm 0cm 0cm,clip]{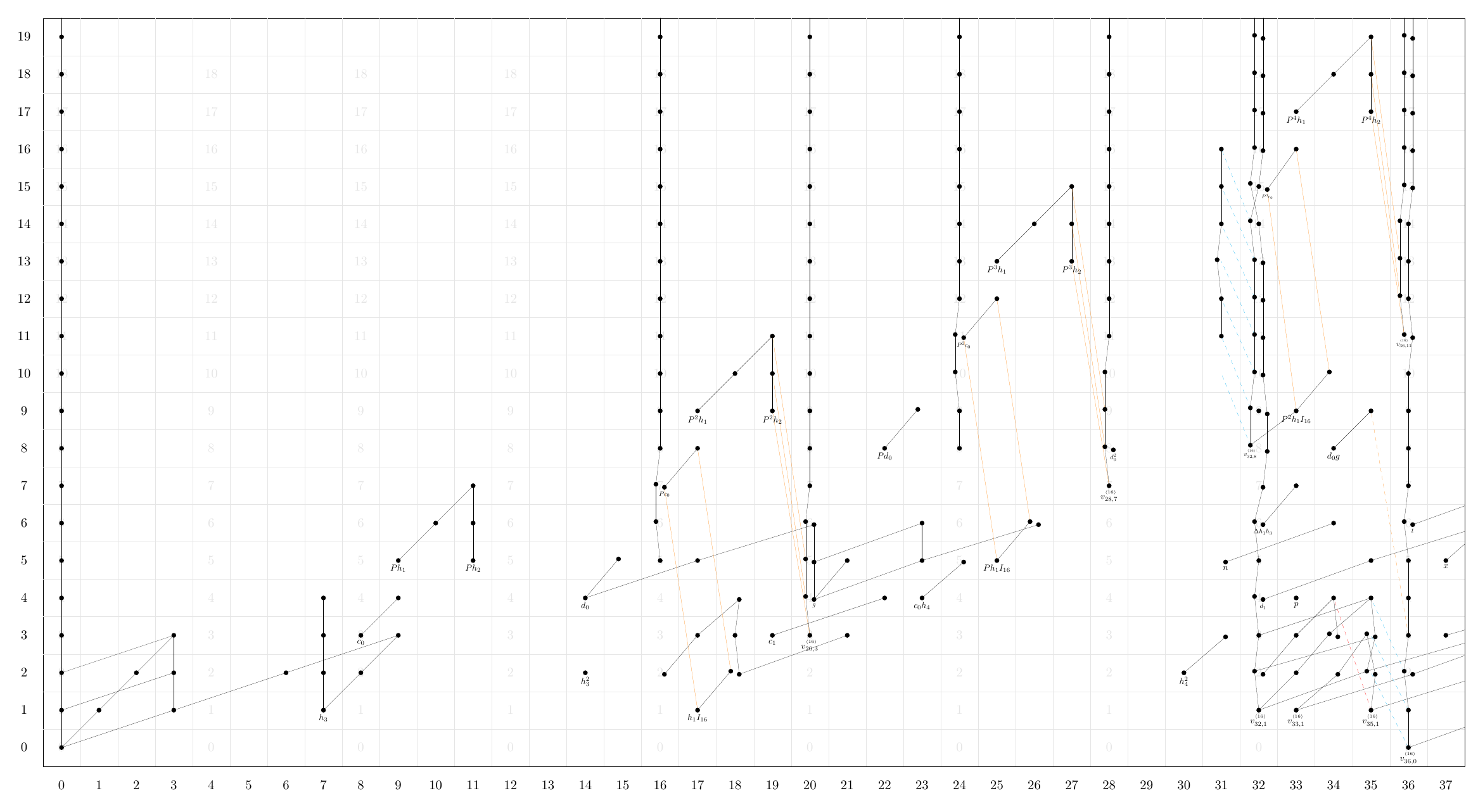}
  \caption{$E_{6}$-page $MO \langle 16 \rangle $ (stems 0-37)}
  \label{fig:mo16_e6}
\end{subfigure}

\caption{$E_5$/$E_6$-page $MO\langle 16 \rangle$ \label{fig:mo16_e5_e6}}
\label{fig:mo16_e5_e6}
\end{figure}

\begin{figure}
  \centering
  \includegraphics[width=\linewidth,
    trim=0.5cm 0cm 0cm 0cm,clip]{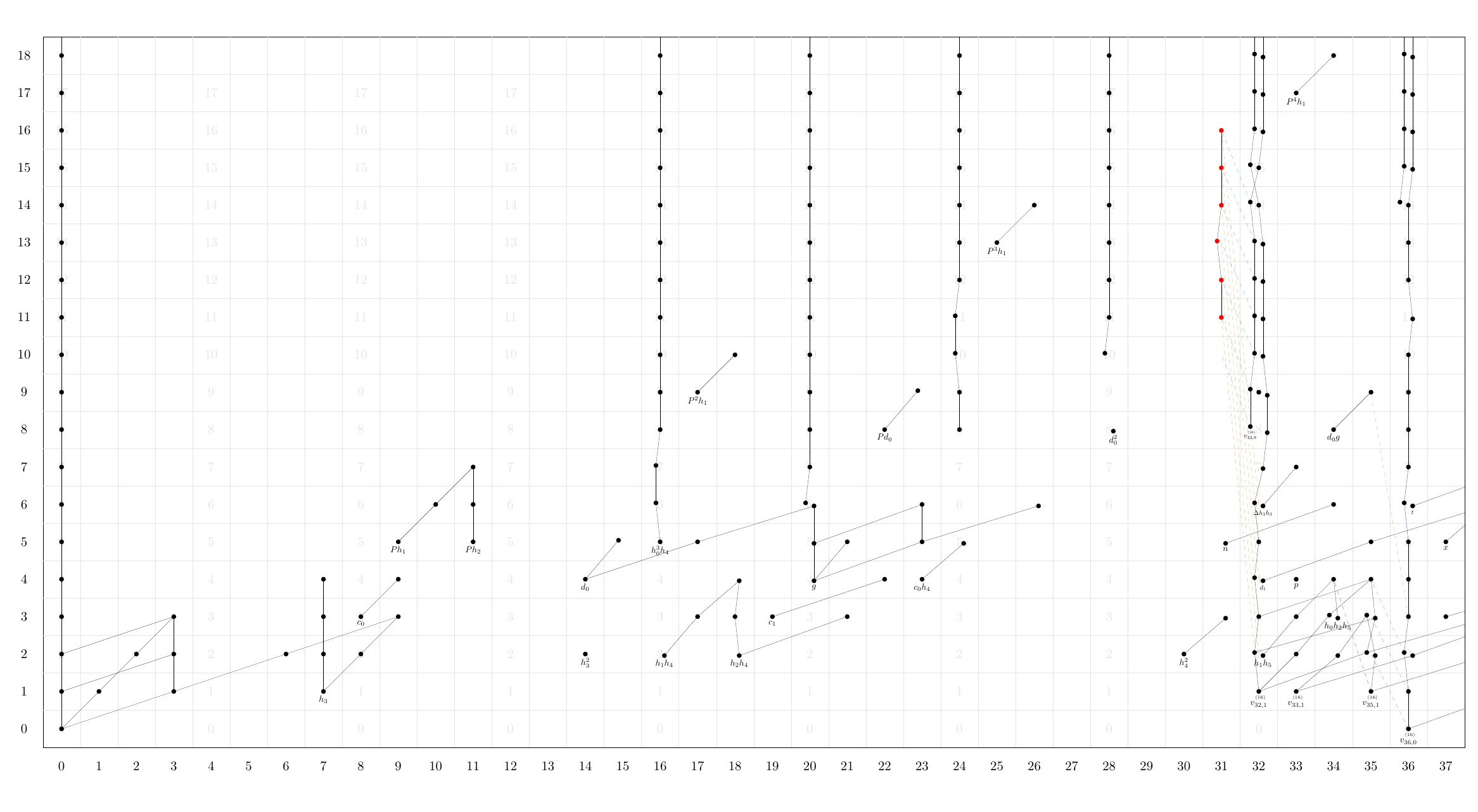}
  \caption{$E_{\infty}$-page $MO\langle 16 \rangle$ \label{fig:mo16_einf}}
\end{figure}

\FloatBarrier

\subsubsection{$d_2$-differentials}
\begin{proposition}
Table \ref{tab:Adams d_2 mo16} describes the non-zero $d_2$-differentials in the Adams spectral sequence for $MO\langle 16 \rangle$  on all indecomposables on $E_2$ through stem 37.
\end{proposition}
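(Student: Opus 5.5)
The statement concerns $d_2$-differentials through stem 37 for $MO\langle 16 \rangle$. I would follow the template used for $MO\langle 9 \rangle$, $MO\langle 10 \rangle$ and $MO\langle 12 \rangle$. The first step is to read off from the $E_2$-page (Figure \ref{fig:mo16_e2}) every indecomposable through stem 37 that admits a potential $d_2$-target. The sphere classes $h_1$, $h_4$, $e_0$, $f_0$, $c_1$, $i$, $j$, $k$, $h_5$ and $l$ are then handled at once by naturality along the unit map $\mathbb S \rightarrow MO\langle 16 \rangle$ and Proposition \ref{prop: sphere differentials}. One check is needed in each such case: the image of the sphere target in $\mathrm{Ext}(H^*MO\langle 16\rangle)$ must still be nonzero, which is read from the unit map on $E_2$. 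Below degree $16$ the cohomology agrees with $\mathcal{A}//\mathcal{A}(3)$-type pieces plus the bottom class, so through stem 15 everything is sphere-induced, and no new $d_2$ can occur there because $\mathrm{Ext}$ agrees with that of $tmf$-like quotients.

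For the genuinely new classes, I would apply the tools in roughly the order the earlier chapters did.

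\begin{enumerate}
\item \emph{Image of $J$.} Use Theorems \ref{hoveyimj} and \ref{mahowaldimj}: image-of-$J$ classes in stems $\geq 15$ must die. When the unique possible source sits one filtration level below, this forces a $d_2$, exactly as in Lemma \ref{lem:d_2-differentials-imj-mo10}. An example is the class in stem $16$ hitting $h_0^3h_4$.
\item \emph{Leibniz and linearity.} Propagate these differentials using $h_0$-, $h_1$- and $h_2$-linearity and the Leibniz rule with permanent cycles $d_0, g, P$.
\item \emph{Massey products.} Apply Moss' higher Leibniz rule (Theorem \ref{mossleibniz}) to brackets such as $\langle h_2,h_3,x\rangle$ or $\langle h_0,h_0^3h_3,x\rangle$, computing indeterminacy with \textbf{ext}.
\item \emph{Ruling out differentials.} Exclude candidate differentials when source and target have incompatible $h_i$-multiples, as in Lemma \ref{lem:d_2 differential in stem 31 filtration 0 mo9}.
\end{enumerate}

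For the towers in stems near $32$ and $36$, I would use May--Milgram (Theorem \ref{maymilgram}) together with rational cohomology. $BO\langle 16\rangle$ is rationally $\mathbb{Q}[p_4,p_5,\dots]$, by the argument of Propositions \ref{prop:bo_9_10_12_equiv} and \ref{prop:rational bo12} applied to the extra $K(\mathbb{Z},11)$ stage. This tells us which towers must be truncated or killed. Specifically, a tower in stem $4k+1$ must support a differential, and the integral cohomology torsion determines whether that differential is a $d_2$.

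For the kernel of the unit map in stems $32$ and $33$, Theorem \ref{thm: 2n mo_n kernel} (with $n=16$) says the kernel in stem 32 is exactly image of $J$. Hence sphere classes there that are not in image of $J$ cannot be $d_2$-targets, which prunes the remaining ambiguities.

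The main obstacle will be the new classes in stems $31$--$37$, where $\mathrm{Ext}$ becomes dense and several sources and targets share bidegrees. Here I would first attempt the approach of the \textbf{sseqcpp}-style arguments. For each candidate target combination, multiply by a permanent cycle ($d_0$, $g$, $Ph_2$ or $h_3$) and check whether the Leibniz rule produces a contradiction with an already-established nonzero product. Any cases that survive this elimination would be recorded as undetermined (``?'') in the table.
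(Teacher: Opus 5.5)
Your first step matches the paper: every ``Yes'' in this table, and the ``No'' entries for $h_1$ and $h_2^2h_5$, comes from naturality along $\mathbb S\to MO\langle 16\rangle$ and Proposition~\ref{prop: sphere differentials}. The problem is your image-of-$J$ step. It is the only place you make a concrete non-sphere prediction, and that prediction is wrong for $MO\langle 16\rangle$. Because $BO\langle 16\rangle$ is $15$-connected, the new classes begin with $I_{16}$ in bidegree $(16,0)$, and there is no analogue of $\vv(16,2,10)$. The only possible differential into $h_0^3h_4\in(15,4)$ is therefore $d_4(I_{16})=h_0^3h_4$ (Lemma~\ref{lem:d_4 differential in stem 16 filtration 0 mo16}), not a $d_2$. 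The other image-of-$J$ targets behave the same way: $h_0^2i$, $Pc_0$ and $P^kh_2$ for $k\geq 2$ are killed by $d_5$'s and $d_6$'s from low-filtration classes. So Hovey's theorem forces no $d_2$'s at all here, and transplanting Lemma~\ref{lem:d_2-differentials-imj-mo10} as you propose would insert a false entry into the table. There are also smaller slips: $c_1$ supports no $d_2$ in $\mathbb S$, and you omit $P_j$, $m$ and $h_2^2h_5$, which are likewise settled by naturality.

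What is missing is the single non-sphere entry the paper actually decides. This is $\vv(24,4,16)$ in stem $24$, not the stems $31$--$37$ where you locate the difficulty. Its only possible target is $h_0h_2g$, and the answer is that this $d_2$ does \emph{not} occur. The argument is exactly your ``multiply by a permanent cycle'' test, using $d_0$. On $E_2$ one has $d_0\vv(24,4,16)=0$ while $d_0\cdot h_0h_2g=h_0^5x\neq 0$. So if $d_2(\vv(24,4,16))=h_0h_2g$, the Leibniz rule with $d_2(d_0)=0$ would give $0=d_2(d_0\vv(24,4,16))=h_0^5x$, a contradiction. Be careful with ``following the $MO\langle 9\rangle$ template'' here. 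In that case $d_0\vv(24,4,9)\neq 0$, and the corresponding differential $d_2(\vv(24,4,9))=h_0h_2g$ \emph{does} occur (Lemma~\ref{d_2 differential in stem 24 filtration 4 mo9}), so the analogy points the wrong way. The only other new candidate, $\vv(36,0,16)$, is left undetermined in the paper. The Massey-product, May--Milgram, rational-cohomology and Senger--Zhang inputs you list are not needed for this table.
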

The following table lists the only indecomposables through stem 37 that, for degree reasons, can support a $d_2$-differential. 
 \begin{longtable}{llllc}
    \caption[Possible $d_2$-differentials $MO\langle 16 \rangle$]{Possible non-zero $d_2$-differentials on indecomposable elements.
    \label{tab:Adams d_2 mo16}
    } \\
    \toprule
    $x$ & $(t-s,s)$ & $d_2(x)$ & Occurs & Proof\\
    \midrule \endfirsthead
    \caption[]{Possible non-zero $d_2$-differentials on indecomposable elements} \\
    \toprule
    $x$ & $(t-s,s)$ & $d_2(x)$ & Occurs & Proof\\
    \midrule \endhead
    \bottomrule \endfoot
        $h_1$ & $(1, 1)$ & $h_0^3$ & No & \ref{prop: sphere differentials}\\
        $h_4$ & $(15, 1)$ & $h_0h_3^2$ & Yes & \ref{prop: sphere differentials}\\
        $e_0$ & $(15, 4)$ & $h_1^2d_0$ & Yes & \ref{prop: sphere differentials}\\
        $f_0$ & $(18, 4)$ & $h_0^2e_0$ & Yes & \ref{prop: sphere differentials}\\
        $i$ & $(23, 7)$ & $h_0Pd_0$ & Yes & \ref{prop: sphere differentials}\\
         $\vv(24,4,16)$ & $(24, 4)$ & $h_0h_2g$ & No & \ref{lem:d_2 differential in stem 24 filtration 4 mo16}\\
        $j$ & $(26, 7)$ & $h_0Pe_0$ & Yes & \ref{prop: sphere differentials}\\
        $k$ & $(29, 7)$ & $h_0d_0^2$ & Yes & \ref{prop: sphere differentials}\\
        $h_5$ & $(31, 1)$ & $h_0h_4^2$ & Yes & \ref{prop: sphere differentials}\\
        $l$ & $(32, 7)$ & $h_0d_0e_0$ & Yes & \ref{prop: sphere differentials}\\
        $P_j$ & $(34, 11)$ & $h_0^2P^2e_0$ & Yes & \ref{prop: sphere differentials} \\
        $m$ & $(35, 7)$ & $h_0d_0g$ & Yes & \ref{prop: sphere differentials} \\
        $\vv(36,0,16)$ & $(36, 0)$ & $\{h_2\vv(32,1,16),h_0\vv(35,1,16)\}$ & ? & \\
        $h_2^2h_5$ & $(37, 3)$ & $\{t, h_0^6 \vv(36,0,16)\}$ & No & \ref{prop: sphere differentials} \\
    \end{longtable}
\begin{proof}
For proof of the differential on $\vv(24,4,16)$, see Lemma \ref{lem:d_2 differential in stem 24 filtration 4 mo16}. For proofs of the other differentials, see Proposition \ref{prop: sphere differentials}. 
\end{proof}

\begin{lemma}\label{lem:d_2 differential in stem 24 filtration 4 mo16}
 $d_2(\vv(24,4,16))=0$.
\end{lemma}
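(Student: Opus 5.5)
We need to show $d_2(\vv(24,4,16))=0$; the only possible nonzero target in bidegree $(23,6)$ is $h_0h_2g$. The plan is to argue by contradiction, mirroring the $MO\langle 9\rangle$ case of Lemma \ref{d_2 differential in stem 24 filtration 4 mo9} but with the opposite outcome. In that lemma the differential on $\vv(24,4,9)$ was forced by multiplying by $d_0$ and identifying $d_0\vv(24,4,9)$ with a Massey product whose boundary was $h_0^5x$. For $MO\langle 16\rangle$ I expect no such forcing, because the class $\vv(27,4,9)$ with $d_2=h_2^2g$ has no analogue in low degrees (the bottom cell of the non-unit summands now sits around degree $16$). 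Instead, I will look for a product that kills the putative target but not the source.

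First, I will check on the $E_2$-page (read off from the $\textbf{ext}$ computation of Figure \ref{fig:mo16_e2}) how $h_0h_2g$ and $\vv(24,4,16)$ behave under multiplication by $h_0$, $h_1$, $h_2$, and $d_0$. The cleanest route would be a linearity argument as in Lemmas \ref{lem:d_2 differential in stem 31 filtration 0 mo9} and \ref{lem:d2 differential in stem 20 filtration 1 mo10}. Suppose some $h_i$ or $d_0$ annihilates $\vv(24,4,16)$ but not $h_0h_2g$; a natural candidate is $d_0h_0h_2g=h_0^5x$, which is nonzero in the sphere. Then $d_2$-linearity with respect to that permanent cycle gives a contradiction. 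Concretely, if $d_0\vv(24,4,16)$ is zero (or lies in a bidegree where $d_2$ is already determined to miss $h_0^5x$), then $d_0\cdot d_2(\vv(24,4,16))=d_2(d_0\vv(24,4,16))\neq h_0^5x$, which forces $d_2(\vv(24,4,16))=0$.

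If that fails, the fallback is naturality with the sphere. The class $h_0h_2g$ comes from $\mathbb S$, where it is a nonzero permanent cycle detecting $4\nu\overline{\kappa}$, or more precisely the relevant element of $\pi_{23}\mathbb S$. Since $23<n-1=15$ fails, Hovey's Theorem \ref{hoveyimj} does not protect it. So instead I would argue that this element is not in $\mathrm{Im}\,J$, and that it maps nontrivially to $\pi_{23}tmf$ under $\mathbb S\to MO\langle 16\rangle\to MO\langle 8\rangle\to tmf$. This is the same device used for $\vv(23,0,9)$, citing Theorem 11.80 of Bruner--Rognes. Such a nontrivial image means $h_0h_2g$ cannot be a boundary, and hence $d_2(\vv(24,4,16))=0$.

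The main obstacle is verifying the needed $E_2$ product, or else the $tmf$-detection of $h_0h_2g$. In $tmf$, $h_0h_2g$ may itself be hit (there $\nu\overline\kappa$ has order $4$, and $h_0h_2g$ detects $2\nu\overline{\kappa}$ only up to filtration), so I would check Bruner--Rognes carefully. If neither route works, I would use $h_0$-linearity: $\vv(24,4,16)$ likely supports an $h_0$-tower while $h_0^kh_0h_2g=0$. Then $h_0$-linearity alone does not decide the question, and I would instead compare with $MO\langle 12\rangle$ via the map $MO\langle 16\rangle\to MO\langle 12\rangle$, where $\vv(24,5,12)$ supports a $d_4$ and $h_0h_2g$ is not hit by $d_2$.
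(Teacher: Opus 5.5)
Your primary route is the paper's argument. The paper reads off from the $E_2$-page that $d_0\vv(24,4,16)=0$ while $d_0h_0h_2g=h_0^5x\neq 0$; if $d_2(\vv(24,4,16))=h_0h_2g$, the Leibniz rule with the permanent cycle $d_0$ would then give $0=d_2(d_0\vv(24,4,16))=h_0^5x$, a contradiction.

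Your fallbacks are therefore unnecessary. The $tmf$ fallback would not have worked, since $\nu\overline{\kappa}$, and hence $2\nu\overline{\kappa}$, maps to zero in $\pi_{23}tmf$. The comparison with $MO\langle 12\rangle$, where $h_0h_2g$ survives, would have been a valid alternative.
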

\begin{proof}
This proof was generated by \textbf{sseqcpp}. The only possible target of $d_2(\vv(24,4,16))$ is $h_0h_2g$. Suppose  $d_2(\vv(24,4,16))=h_0h_2g$. The are relations $d_0\vv(24,4,16)=0$ and $d_0h_0h_2g=h_0^5x$. Apply the Leibniz rule
\begin{align*}
d_2(d_0\vv(24,4,16)) &= 0\\
                     &=d_0d_2(\vv(24,4,16))+d_2(d_0)\vv(24,4,16) \\
                     &=d_0h_0h_2g = h_0^5x
\end{align*}
which is a contradiction. Hence,  $d_2(\vv(24,4,16))=0$.
\end{proof}
\subsubsection{$d_3$-differentials}
\begin{proposition}
Table \ref{tab:Adams d_3 mo16} describes the non-zero $d_3$-differentials in the Adams spectral sequence for $MO\langle 16 \rangle$  on all indecomposables on $E_3$ through stem 37, and on select elements in stems $> 37$.
\end{proposition}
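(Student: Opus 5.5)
The proposition has the same form as the $d_3$-tables for $MO\langle 9\rangle$ and $MO\langle 12\rangle$. The plan is to begin from the $E_3$-page produced by the $d_2$-differentials of Table \ref{tab:Adams d_2 mo16}. Through stem 37, I would list every indecomposable that can support a $d_3$ for degree reasons, meaning its target bidegree $(t-s-1,s+3)$ on $E_3$ is nonzero. Each candidate then falls into one of three classes:
\begin{itemize}
\item classes in the image of the unit map $\mathbb S\to MO\langle 16\rangle$, handled by naturality and Proposition \ref{prop: sphere differentials};
\item classes whose potential targets are forced to die by Theorem \ref{hoveyimj} together with Theorem \ref{mahowaldimj};
\item genuinely new classes, handled by linearity and Massey product arguments.
\end{itemize}

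The sphere classes should behave exactly as before. We get $d_3(h_0h_4)=h_0d_0$, $d_3(h_1h_4)=0$ (since $h_1d_0$ survives in $\mathbb S$), $d_3(\Delta h_2^2)=h_0^2k$ and $d_3(h_0^3h_5)=h_0\Delta h_2^2$, as well as $d_3(e_1)=h_1t$ in stem 38 if it is needed for the ``select elements.''

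For the $MO\langle 16\rangle$-specific classes, the main inputs are the image-of-$J$ classes. By Theorem \ref{hoveyimj}, Im $J$ must die in stems $\ge 15$. So $h_0^3h_4$, and the classes $Pc_0$, $P^2h_2$, $P^2c_0$, $P^3h_2$ and the $h_0$-tower tops in stems $23$ and $31$, must each be hit whenever they survive to $E_3$. For each such target I would locate the unique possible source in the chart. If the source lives at distance $3$, I conclude the differential is $d_3$; if it lives at a longer distance, it belongs to later tables.

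For candidates whose target is not forced, I would first try linearity. Examples are $h_0$-towers versus $h_0$-torsion classes, and $h_1$- or $h_2$-multiplication that is nonzero on the target but zero on the source, as in Lemmas \ref{lem:d_2 differential in stem 31 filtration 0 mo9} and \ref{d_4 differential in stem 25 filtration 1 mo12}. If linearity does not settle it, I would use Leibniz arguments with $d_0$ or $g$ multiplication, following the pattern of Lemma \ref{lem:d_2 differential in stem 24 filtration 4 mo16}. For the nonzero differentials further up a $v_1$-periodic family, I would express the source as a Massey product $\langle h_0,h_0^3h_3,x\rangle$ and apply Moss's rule, Theorem \ref{mossleibniz}, as in Lemma \ref{lem:d_3 differential in stem 28 filtration 10 mo9}.

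Two further inputs can decide a differential when the preceding tools do not. One is Theorem \ref{thm: 2n mo_n kernel}, or the Senger--Zhang result in stem 32, which says the kernel of the unit map is only Im $J$; that would forbid killing non-$J$ sphere classes such as $h_1h_4c_0$-type elements. The other is the rational cohomology of $BO\langle 16\rangle$ via the Thom isomorphism, which would force or forbid $h_0$-tower truncations.

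I expect the main obstacle to be the new classes in stems 32--37, where the $E_3$-page is crowded and indeterminacies in the Massey products are nontrivial, for example the unresolved $\vv(36,0,16)$. There I would first try sparsity combined with $h_0$-, $h_1$- and $h_2$-linearity. If that fails, I would record the differential as undetermined (``?'') rather than force it, consistent with the earlier tables.
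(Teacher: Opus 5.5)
Your plan is essentially the paper's argument. The entries for $h_0h_4$, $h_1h_4$, $h_2h_4$, $\Delta h_2^2$, $h_0^3h_5$ and $e_1$ all come from the sphere by naturality. The one new class the paper settles, $\vv(33,1,16)$, is shown to be a $d_3$-cycle using exactly your tools: $h_0$-linearity excludes $h_0^3\vv(32,1,16)$, and the Senger--Zhang stem-$32$ kernel theorem forbids killing the Coker $J$ class $d_1$. The other new candidate, $\vv(35,1,16)\mapsto h_0^2h_2h_5$, is left as ``?'' in the paper, as you anticipate for stems 32--37.
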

The following table lists the only indecomposables through stem 37 that, for degree reasons, can support a $d_3$-differential. It also includes differentials on select elements in stems $>37$. 
 \begin{longtable}{llllc}
    \caption[Possible $d_3$-differentials $MO\langle 16 \rangle$]{Possible non-zero $d_3$-differentials on indecomposable elements
    \label{tab:Adams d_3 mo16}
    } \\
    \toprule
    $x$ & $(t-s,s)$ & $d_3(x)$ & Occurs & Proof\\
    \midrule \endfirsthead
    \caption[]{Possible non-zero $d_3$-differentials on indecomposable elements} \\
    \toprule
    $x$ & $(t-s,s)$ & $d_3(x)$ & Occurs & Proof\\
    \midrule \endhead
    \bottomrule \endfoot
        $h_0h_4$ & $(15,2)$ & $h_0d_0$ & Yes & \ref{prop: sphere differentials}\\
        $h_1h_4$ & $(16,2)$ & $h_1d_0$ & No & \ref{prop: sphere differentials}\\
        $h_2h_4$ & $(31,4)$ & $h_0e_0$ & No & \ref{prop: sphere differentials}\\
        $\Delta h_2^2$ & $(30,6)$ & $h_0^2k$ & Yes & \ref{prop: sphere differentials}\\
        $h_0^3h_5$ & $(31,4)$ & $h_0\Delta h_2^2$ & Yes & \ref{prop: sphere differentials}\\
        $\vv(33,1,16)$ & $(33,1)$ & $\{d_1,h_0^3\vv(32,1,16)\}$ & No & \\
        $\vv(35,1,16)$ & $(35,1)$ & $h_0^2h_2h_5$ & ? & \\
        $e_1$ & $(38,4)$ & $h_1t$ & Yes & \ref{prop: sphere differentials}\\
    \end{longtable}
    \begin{proof}
        For proofs of these differentials, see Proposition \ref{prop: sphere differentials}. 
    \end{proof}

\begin{lemma}\label{lem:d_3 differential in stem 33 filtration 1 mo16}
$d_3(\vv(33,1,16))=0$.
\end{lemma}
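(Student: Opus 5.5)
To prove that $d_3(\vv(33,1,16))=0$, I would treat separately the two candidate targets listed in Table \ref{tab:Adams d_3 mo16}, $d_1$ and $h_0^3\vv(32,1,16)$. Any nonzero $d_3$ must hit a linear combination of these, so I would rule out each nonzero combination.

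\emph{The components involving $h_0^3\vv(32,1,16)$.} The plan is to use $h_0$-linearity, as in Lemma \ref{d_4 differential in stem 25 filtration 1 mo12}. If $\vv(33,1,16)$ is $h_0$-torsion on $E_3$ (I expect $h_0\vv(33,1,16)=0$ from the $E_2$-chart), then $h_0 d_3(\vv(33,1,16))=0$. On the other hand, $h_0^3\vv(32,1,16)$ lies on an $h_0$-tower and is $h_0$-torsion free. The class $d_1$ comes from the sphere, and there $h_0d_1=0$. Hence any combination containing $h_0^3\vv(32,1,16)$ has nonzero $h_0$-multiple, which is a contradiction.

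If instead $\vv(33,1,16)$ supports an $h_0$-multiple, I would use $h_1$-linearity. I would compare $h_1\vv(33,1,16)$ with $h_1 d_1$, which is nonzero in $\mathrm{Ext}$ for $\mathbb S$ in stem 33 and should map nontrivially. I would also compare it with $h_1h_0^3\vv(32,1,16)=0$.

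\emph{The component $d_1$.} This is the main obstacle, and I would handle it by naturality and homotopy. In $\pi_{32}\mathbb S$ the class $d_1$ detects $\eta_5$-type elements not in $\mathrm{Im}\,J$; by Mahowald's Theorem \ref{mahowaldimj}, $\mathrm{Im}\,J$ in stem 32 is detected by $P^4\!$-family classes, not by $d_1$. By Theorem \ref{thm: 2n mo_n kernel} applied with $n=16$, the kernel of $\pi_{32}\mathbb S\to\pi_{32}MO\langle 16\rangle$ is exactly $\mathrm{Im}\,J$. Therefore the element detected by $d_1$ has nonzero image. I also need to exclude that it is detected in higher filtration after adding an element of $\mathrm{Im}\,J$.

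The key point is this: if $d_1$ were hit by a differential, then the element $x$ it detects would map to something of Adams filtration $\geq 5$ in $MO\langle 16\rangle$. I would check from the $E_3$-chart in stem 32 that all surviving classes of filtration $>4$ are images of $\mathrm{Im}\,J$ classes, say $P^3 h_1$-type or $h_0^k$-tower classes. Then $x$ plus an $\mathrm{Im}\,J$ element would lie in the kernel, forcing $x\in\mathrm{Im}\,J$, which is a contradiction.

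Combining the two steps, and noting that $d_1$ cannot be killed by $d_2$ (it is absent from Table \ref{tab:Adams d_2 mo16}), I conclude that $d_3(\vv(33,1,16))=0$. The delicate step is the filtration-jump argument in the $d_1$ case. If the chart makes it awkward, I would fall back on a Leibniz argument: multiply by $h_1$ or $h_2$ and compare with a known nonzero sphere product.
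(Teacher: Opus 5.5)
Your core argument is the paper's. You use $h_0$-linearity (with $h_0\vv(33,1,16)=0$ and $h_0^4\vv(32,1,16)\neq 0$) to exclude the tower target. You then use Mahowald's description of $\mathrm{Im}\,J$ together with Senger--Zhang at $n=16$ (Theorem \ref{thm: 2n mo_n kernel}) to show that the class $x\in\pi_{32}\mathbb S$ detected by $d_1$ has nonzero image in $\pi_{32}MO\langle 16\rangle$. You are more careful than the paper in two places:
\begin{itemize}
\item Via $h_0d_1=0$ you also exclude the mixed target $d_1+h_0^3\vv(32,1,16)$, which the paper's proof never mentions.
\item You notice that ``$x$ is not in the kernel'' does not by itself give ``$d_1$ survives'', because $e(x)$ could be nonzero but detected in higher filtration. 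The paper passes from one to the other without comment.
\end{itemize}

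The step you propose for closing that filtration gap would fail as stated. Above filtration $4$ in stem $32$ of the Adams spectral sequence for $MO\langle 16\rangle$ there are two kinds of classes that are not images of $\mathrm{Im}\,J$:
\begin{itemize}
\item $\Delta h_1h_3$ (classically $q$), a $\mathrm{Coker}\,J$ class that survives; it is one of the $d_5$-targets of $\vv(33,1,16)$ that the paper rules out.
\item The classes $h_0^k\vv(32,1,16)$ on the $h_0$-tower. These detect elements of infinite order, so they cannot be images of anything in the finite group $\pi_{32}\mathbb S$.
\end{itemize}
There are also some misidentifications: $\mathrm{Im}\,J$ in stem $32$ is detected by $P^3c_0$, not by $P^3h_1$- or $P^4$-family classes, and $\eta_5$ is detected by $h_1h_5$, not $d_1$.

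The repair goes as follows.
\begin{itemize}
\item Since $\pi_{32}\mathbb S$ is all $2$-torsion, $e(x)$ cannot be detected by any $E_\infty$-class $a$ with $h_0a\neq 0$. This excludes the tower classes and any sum involving them.
\item If $e(x)$ is detected by $\Delta h_1h_3$, replace $x$ by $x+x_q$, where $x_q$ is the sphere class detected by $\Delta h_1h_3$. This sum is still detected by $d_1$, because $x_q$ has filtration $6>4$.
\item Suppose no other $h_0$-torsion class survives in stem $32$ above filtration $4$ ($P^3c_0$ must die by Theorem \ref{hoveyimj}). Then $e(x+x_q)=0$, so $x+x_q\in\mathrm{Im}\,J$. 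This is impossible for an element detected by $d_1$, since $\mathrm{Im}\,J$ there is detected in filtration $15$.
\end{itemize}
So the criterion to verify on the chart is not that every surviving class above filtration $4$ is an image of $\mathrm{Im}\,J$. It is that every $h_0$-torsion $E_\infty$-class in stem $32$ above filtration $4$ is the image of a sphere class of at least the same filtration.
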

\begin{proof}
The possible targets of a $d_3$-differential with source $\vv(33,1,16)$ are $d_1$ and $h_0^3\vv(32,1,16)$. Observe that $h_0^4\vv(32,1,16) \neq 0$ while $h_0\vv(33,1,16)=0$. Since 
the $d_3$-differential is $h_0$-linear, this implies $d_3(\vv(33,1,16)) \neq h_0^3\vv(32,1,16)$. By Theorem \ref{mahowaldimj}, the element $d_1$ is in $\text{Coker } J$. By Theorem 
\ref{thm: 2n mo_n kernel}, $d_1$ does not detect an element in the kernel of the unit map $\pi_{32} \mathbb S \rightarrow \pi_{32} MO\langle 16 \rangle$. Thus, it must survive the Adams spectral sequence 
for $MO\langle 16 \rangle$. Hence, $d_3(\vv(33,1,16))\neq d_1$ and we conclude $d_3(\vv(33,1,16))=0$. 
\end{proof}
  
\subsubsection{$d_4$-differentials}
\begin{proposition}
Table \ref{tab:Adams d_4 mo16} describes the non-zero $d_4$-differentials in the Adams spectral sequence for $MO\langle 16 \rangle$  on all indecomposables on $E_4$ through stem 37, and on select elements in stems $> 37$. 
\end{proposition}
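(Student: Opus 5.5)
Following the pattern of the $d_4$-tables for $MO\langle 9\rangle$ and $MO\langle 12\rangle$, I will first identify, from the $E_4$-page chart of Figure \ref{fig:mo16_e4}, every indecomposable through stem 37 that can support a $d_4$ for degree reasons. I will then settle each candidate by one of three tools, reusing the $MO\langle 12\rangle$ arguments where possible.

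\textbf{Candidates detected by the unit map.} For classes in the image of the map of $E_2$-pages induced by the unit $\mathbb S\to MO\langle 16\rangle$, I will use naturality with Proposition \ref{prop: sphere differentials}. This gives $d_4(d_0e_0)=P^2d_0$ (possibly after correcting the source by an $h_0$-multiple such as $h_0^7h_5$, as happened for $MO\langle 12\rangle$). It also gives $d_4(h_0^3\Delta h_3^2)=h_0^2d_0i$ in stem 38, and $d_4(e_0g)=d_0Pd_0$ if $e_0g$ survives to $E_4$. Naturality also shows that $h_0e_0$ and the other sphere permanent cycles (e.g.\ $h_1h_4$-multiples, $P^2h_1$) support no $d_4$.

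\textbf{Image of $J$ classes.} Since $16-1=15$, Theorem \ref{hoveyimj} says every $\text{Im } J$ class in stems $\geq 15$ must die in the spectral sequence for $MO\langle 16\rangle$. By Theorem \ref{mahowaldimj}, these are $Pc_0$, $P^2h_2$, $P^2c_0$, $P^3h_2$, and the tops of the $h_0$-towers in stems $8j-1$. Any of these still present on $E_4$ must be hit. When a single $MO\langle 16\rangle$-specific class can reach the target with a $d_4$, that differential is forced, exactly as in Lemma \ref{lem:d_5-differentials-imj-mo12}. Below stem 15 these classes must survive, which rules out $d_4$'s hitting them.

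\textbf{Remaining ambiguous cases.} For classes not settled above, I will first try $h_0$-, $h_1$- and $d_0$-linearity together with the Leibniz rule, as in Lemmas \ref{lem:d_4 differential in stem 24 filtration 5 mo12} and \ref{lem:d_2 differential in stem 24 filtration 4 mo16}. If that fails, I will use Moss' higher Leibniz rule on a Massey product such as $\langle h_0,h_0^3h_3,x\rangle$, as in Lemma \ref{lem:d_5 differential in stem 28 filtration 8 mo12}.

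\textbf{Expected obstacle.} I expect the hardest case to be stem 32--33, near $d_1$ and $\vv(32,1,16)$. There I will invoke Theorem \ref{thm: 2n mo_n kernel}: in stem $32=2\cdot 16$ the kernel of the unit map is exactly $\text{Im } J$, so any $d_4$ hitting a non-$\text{Im } J$ sphere class such as $d_1$ is excluded. Any candidate that none of these tools resolves will be recorded in the table with a ``?''.
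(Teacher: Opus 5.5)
Your plan is essentially the paper's proof. The paper imports the remaining rows from $\mathbb S$ by naturality (and the stem 24--25 rows from the $MO\langle 12\rangle$ lemmas, as you propose), and it proves the one new differential $d_4(I_{16})=h_0^3h_4$ exactly by your image-of-$J$ step: by Theorems \ref{mahowaldimj} and \ref{hoveyimj} the whole stem-15 $h_0$-tower beginning at $h_0^3h_4$ (not only its top) must die, and $I_{16}$ is its only possible source. The stem 32--33 obstacle you anticipate does not arise on $E_4$, since Theorem \ref{thm: 2n mo_n kernel} is needed for $\vv(33,1,16)$ only in the $d_3$ and $d_5$ tables.
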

The following table lists the only indecomposables through stem 37 that, for degree reasons, can support a $d_4$-differential. It also includes select differentials in stems $> 37$.  
 \begin{longtable}{llllc}
    \caption[Possible $d_4$-differentials $MO\langle 16 \rangle$]{Possible non-zero $d_4$-differentials on indecomposable elements
    \label{tab:Adams d_4 mo16}
    } \\
    \toprule
    $x$ & $(t-s,s)$ & $d_4(x)$ & Occurs & Proof\\
    \midrule \endfirsthead
    \caption[]{Possible non-zero $d_4$-differentials on indecomposable elements} \\
    \toprule
    $x$ & $(t-s,s)$ & $d_4(x)$ & Occurs & Proof\\
    \midrule \endhead
    \bottomrule \endfoot
        $I_{16}$ & $(16,0)$ & $h_0^3h_4$ & Yes & \ref{lem:d_4 differential in stem 16 filtration 0 mo16} \\
        $h_0e_0$ & $(17, 5)$ & $h_0^9 I_{16}$ & No & \ref{prop: sphere differentials}\\
        $\vv(24,5,12)$ & $(24,5)$ & $h_1Pd_0+h_0^2i$ & Yes & \ref{lem:d_4 differential in stem 24 filtration 5 mo12} \\
        $\vv(25,1,12)$ & $(25,1)$ & $\{h_1c_0h_4,h_0^4\vv(24,1,12)\}$ & No & \ref{d_4 differential in stem 25 filtration 1 mo12} \\
        $d_0e_0+h_0^7h_5$ & $(31,8)$ & $P^2d_0$ & Yes & \ref{prop: sphere differentials} \\
        $e_0g$ & $(37,8)$ & $d_0Pd_0$ & Yes & \ref{prop: sphere differentials}\\
        $h_3h_5$ & $(38,2)$ & $h_0x$ & Yes &  \ref{prop: sphere differentials} \\
        $h_0^3 \Delta h_3^2$ & $(38,9)$ & $h_0^2d_0i$ & Yes & \ref{prop: sphere differentials}\\
    \end{longtable}
    \begin{proof}
        For proof of the differential on $I_{16}$, see Lemma \ref{lem:d_4 differential in stem 16 filtration 0 mo16}.  For the other differentials, see Proposition \ref{prop: sphere differentials}.
    \end{proof}
\begin{lemma}\label{lem:d_4 differential in stem 16 filtration 0 mo16}
$d_4(I_{16})=h_0^3h_4$.
\end{lemma}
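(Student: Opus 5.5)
The plan follows the pattern of Lemma \ref{lem:d_2-differentials-imj-mo10} and Lemma \ref{d_3 differential in stem 16 filtration 1 mo12}. First, I will argue that the element $h_0^3h_4$ is in the image of $J$. It lies in the $h_0$-tower in stem $15$ that ends at the ``Adams edge'', so Theorem \ref{mahowaldimj} applies. Since $15 \geq 16-1$, Theorem \ref{hoveyimj} then says the class it detects maps to zero under the unit map $\pi_{15}\mathbb{S}\rightarrow \pi_{15}MO\langle 16\rangle$. This class is the image-of-$J$ generator of order $32$, or the appropriate multiple of it detected in filtration $4$. Consequently $h_0^3h_4$ cannot survive to $E_\infty$ in the Adams spectral sequence for $MO\langle 16\rangle$, or it must at least be a boundary after accounting for the $d_3(h_0h_4)=h_0d_0$ differential. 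Note that $h_0^3h_4$ survives to $E_4$: $d_2(h_0^3h_4)=h_0^4h_3^2=0$ and $d_3(h_0^3h_4)=h_0^3d_0=0$ in Ext.

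Second, I will rule out $h_0^3h_4$ supporting a differential itself. It is a permanent cycle in the Adams spectral sequence for $\mathbb{S}$, since it detects an element of $\text{Im } J$ (see Proposition \ref{prop: sphere differentials}). By naturality of the unit map, it is therefore a permanent cycle for $MO\langle 16\rangle$, so it must be hit.

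Third, I will identify the source from the $E_2$-page in Figure \ref{fig:mo16_e2}. Stem $16$ contains $I_{16}$ in filtration $0$, generating an $h_0$-tower, together with the sphere classes $h_1h_4$, $Ph_1$ and so on. A differential hitting filtration $4$ from stem $16$ must be a $d_r$ from filtration $4-r$. I will go through the candidates:
\begin{itemize}
\item $h_1h_4$ in filtration $2$ supports no $d_2$ to $h_0^3h_4$, by naturality and Proposition \ref{prop: sphere differentials}; it is a permanent cycle detecting $\eta^*$.
\item $h_0^k I_{16}$ with $k\geq 1$ would force $d(I_{16})$ to be an $h_0$-division of $h_0^3h_4$. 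This is impossible unless it equals the differential on $I_{16}$ itself, by $h_0$-linearity.
\item The remaining candidate is $I_{16}$ via $d_4$.
\end{itemize}
I also need $d_2(I_{16})$ and $d_3(I_{16})$ to vanish so that $I_{16}$ survives to $E_4$. The possible targets are $h_0h_4$ in filtration $2$ and $h_0^2h_4$ in filtration $3$. Neither can be hit: $h_0h_4$ supports $d_3(h_0h_4)=h_0d_0$, which would be inconsistent with being a $d_2$-boundary. The harder case is $d_3(I_{16})=h_0^2h_4$, for which I will use $h_0$-linearity. It would give $d_3(h_0I_{16})=h_0^3h_4$ and so on up the tower. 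But $h_0^2h_4$ detects an element of $\text{Im } J$, namely $4$ times the generator. Hovey's theorem kills all of $\text{Im } J$ in stem $15$, so the whole tower must die. I will then invoke the May--Milgram theorem (Theorem \ref{maymilgram}) to pin down the length. The differential from the $I_{16}$-tower to the $h_4$-tower has length $r$ exactly when $H^{16}(MO\langle 16\rangle;\Z)\cong H^{16}(BO\langle 16\rangle;\Z)$ has a $\Z/2^r$ summand. That summand should be $\Z/16$, coming from $\pi_{16}BO=\Z$ and the $k$-invariant; its top class is $i_{16}$ restricted, with divisibility matching the order of $J$ in stem $15$.

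The main obstacle is this step: confirming $H^{16}(BO\langle16\rangle;\Z)$, or its relevant torsion, to get length exactly $4$. If the direct computation is hard, I will instead argue by elimination. The classes $h_0h_4$ and $h_0^2h_4$ are already accounted for, since $h_4$ and $h_0h_4$ support $d_2$ and $d_3$ differentials, so the first surviving element of the $h_4$-tower on $E_4$ is $h_0^3h_4$. The $I_{16}$-tower is the only available source, and a differential between towers must hit the bottom surviving element first. That forces $d_4(I_{16})=h_0^3h_4$.
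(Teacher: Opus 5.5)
Your elimination argument is essentially the paper's proof. Mahowald's theorem puts $h_0^3h_4$, which detects the generator $\rho_{15}$ of the $2$-primary $\mathrm{Im}\,J\cong\mathbb{Z}/32$, in the image of $J$. Hovey's theorem forces it to die since $15\geq 16-1$, and naturality from $\mathbb{S}$ makes it a permanent cycle. On $E_4$ the only class that can hit bidegree $(15,4)$ is $I_{16}$, via $d_4$. Your explicit check that $I_{16}$, $h_0I_{16}$, $h_0^2I_{16}$ and $h_1h_4$ cannot kill $h_0^3h_4$ on an earlier page is a reasonable addition; the paper leaves that to the chart.

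The step you call the main route, however, fails and should be discarded.

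\begin{itemize}
\item \emph{No torsion to find.} $BO\langle 16\rangle$ is $15$-connected with $\pi_{16}\cong\mathbb{Z}$, and $\eta\colon\pi_{16}BO\to\pi_{17}BO$ is onto. Hence $H^{16}(BO\langle16\rangle;\mathbb{Z})\cong\mathbb{Z}$ and $H^{17}(BO\langle16\rangle;\mathbb{Z})=0$, so there is no $\mathbb{Z}/16$ summand. Note also that the theorem as stated refers to $H^{k+1}$, not $H^k$.
\item \emph{Why May--Milgram is silent here.} It only sees differentials between infinite $h_0$-towers, i.e.\ what survives after inverting $h_0$. The $h_4$-tower is finite ($h_0^8h_4=0$), so $d_4(I_{16})=h_0^3h_4$ merely truncates the $I_{16}$-tower from below. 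The structural reason for length $4$ is different: the $16$-cell of $MO\langle16\rangle$ is attached to the bottom cell by $J$ of a generator of $\pi_{15}O$, namely $\rho_{15}$, which has Adams filtration $4$.
\item \emph{Misidentified class.} $h_0^2h_4$ does not detect $4\rho_{15}$; it detects nothing, since it is not a permanent cycle in $\mathbb{S}$.
\end{itemize}

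The clean exclusions of the earlier differentials are as follows. $d_3(I_{16})=h_0^2h_4$ is impossible because $d_3(h_0^2h_4)=h_0^2d_0\neq 0$ on $E_3$. Thus $h_0^2h_4$ is not a $d_3$-cycle, so it cannot be a $d_3$-boundary. $d_2(I_{16})=h_0h_4$ is impossible because, by naturality, $h_0h_4$ must survive to $E_3$ to support $d_3(h_0h_4)=h_0d_0$. Moreover $h_0d_0$ is not hit by any $d_2$, since the only class in $(15,3)$ is $h_0^2h_4$, a $d_2$-cycle. With these replacements your argument is complete and coincides with the paper's.
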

\begin{proof}
The element $h_0^3h_4$ is in the $h_0$-tower that ends at the ``Adams edge'' in stem 15. By Theorem \ref{mahowaldimj}, it is in the image of $J$. By Theorem \ref{hoveyimj}, it must not survive the Adams spectral sequence for $MO \langle 16 \rangle$. 
It cannot support a differential, and the only possible differential with target $h_0^3h_4$ is $d_4(I_{16})$. Hence, $d_4(I_{16})=h_0^3h_4$.
\end{proof}

\subsubsection{$d_5$-differentials}
\begin{proposition}
Table \ref{tab:Adams d_5 mo16} describes the non-zero $d_5$-differentials in the Adams spectral sequence for $MO\langle 16 \rangle$  on all indecomposables on $E_5$ through stem 37. 
\end{proposition}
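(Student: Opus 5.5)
The statement asks for the table of nonzero $d_5$-differentials on indecomposables of the $E_5$-page for $MO\langle 16\rangle$ through stem 37. I will follow the template used for $MO\langle 12\rangle$ (Lemmas \ref{lem:d_5-differentials-imj-mo12} and \ref{lem:d_5 differential in stem 28 filtration 8 mo12}).

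The first step is to read off from the $E_5$-page chart (Figure \ref{fig:mo16_e5}) the indecomposables that can support a $d_5$ for degree reasons. I expect these to be the bottom classes responsible for killing the image-of-$J$ classes $Ph_2$, $Pc_0$, $P^2h_2$, $P^2c_0$, $P^3h_2$ and the $h_0$-tower tops in stems $23$, $31$, $35$. Also on the list should be $g$ and possibly a few classes in stems $32$–$37$ near $d_1$, $h_0^k h_5$ and $x$.

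For the image-of-$J$ targets, the argument combines Theorem \ref{mahowaldimj} and Theorem \ref{hoveyimj}. Since $16-1=15$, every image-of-$J$ class in stems $\geq 15$, and also $Ph_2$ in stem 11 once stems $\geq 15$ are handled, must die in the spectral sequence for $MO\langle 16\rangle$. When the unique candidate source on $E_5$ has the right bidegree, the differential is forced. Where $g$ competes with another source for $P^2h_2$, I rule it out by naturality with $\mathbb{S}$: $d_5(g)=0$ by Theorem 11.52 of Bruner-Rognes. Higher periodic families such as $P^3h_2$ I obtain by Moss's higher Leibniz rule (Theorem \ref{mossleibniz}) applied to $\langle h_0,h_0^3h_3,x\rangle$, where $x$ is the previous source. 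I then check that the indeterminacy classes are $d_5$-cycles using $h_0$-linearity and sparsity, as in Lemma \ref{lem:d_5 differential in stem 28 filtration 8 mo12}.

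For the remaining candidates, I rule out differentials in one of three ways.
\begin{enumerate}
\item By $h_0$- or $h_1$-linearity: a source that is $h_0$-torsion cannot hit an $h_0$-torsion-free target.
\item By Theorem \ref{thm: 2n mo_n kernel} in stem $32=2\cdot 16$: classes such as $d_1$ or $q$, which detect elements of $\mathrm{Coker}\,J$, must survive.
\item By naturality from the sphere for classes coming from $\mathbb{S}$, using Proposition \ref{prop: sphere differentials}, which lists no $d_5$ in this range.
\end{enumerate}

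I expect the main obstacle to be the stems $33$–$37$. There the $E_5$-page contains classes tied to the still-undetermined $d_2(\vv(36,0,16))$ and $d_3(\vv(35,1,16))$, so some entries may have to be recorded as ``?''. For those, my first attempt would be to multiply by $d_0$ or $g$ and compare with known $d_4$-differentials ($e_0g$, $h_0^3\Delta h_3^2$) via the Leibniz rule, in the spirit of the \textbf{sseqcpp} arguments used earlier.
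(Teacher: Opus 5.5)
Your toolkit matches the paper's: Mahowald plus Hovey with competing sources excluded by naturality from $\mathbb{S}$, $h_0$-linearity, and Theorem~\ref{thm: 2n mo_n kernel} in stem $32$. However, the $E_5$-page content you expect is transplanted from $MO\langle 12\rangle$ and is wrong for $MO\langle 16\rangle$, in ways that break the argument as written.

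First, $Ph_2$ does not die. With $n=16$, Theorem~\ref{hoveyimj} says $\mathrm{Im}\,J$ injects in dimensions $\leq 14$. Indeed $BO\langle 16\rangle$ is $15$-connected, so the unit map is an isomorphism on $\pi_*$ below stem $15$, and nothing in stem $12$ could hit $Ph_2$.

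Second, the periodic image-of-$J$ families are not killed at $E_5$. Their sources $h_1I_{16}$, $\vv(20,3,16)$, $\vv(28,7,16)$, $\vv(36,11,16)$ sit one filtration lower than the $MO\langle 12\rangle$ analogues. So $Pc_0$, $P^2h_2$, $P^3h_2$, $P^4h_2$ are hit by $d_6$'s (Table~\ref{tab:Adams d_6 mo16}), and your Moss-rule step belongs on $E_6$, not $E_5$.

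Relatedly, your inference ``when the unique candidate source on $E_5$ has the right bidegree, the differential is forced'' is not valid. Hovey only tells you an image-of-$J$ target is hit at some page. To conclude it is a $d_5$, you must rule out every other source on every page, as the paper does.

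In fact the table has only two entries.
\begin{itemize}
\item $\vv(24,4,16)$: the target $h_0^2i$ lies in $\mathrm{Im}\,J$ in stem $23\geq 15$, so it must be hit. The only other conceivable source, $h_1c_0h_4$, is a permanent cycle by naturality from $\mathbb{S}$. Hence $d_5(\vv(24,4,16))=h_0^2i$ (Lemma~\ref{lem:d_5 differential in stem 24 filtration 4 mo16}).
\item $\vv(33,1,16)$: there are two possible targets. The target $h_0^5\vv(32,1,16)$ is excluded by $h_0$-linearity, since $h_0\vv(33,1,16)=0$ while the target lies on an $h_0$-tower. The target $\Delta h_1h_3$ (your $q$) detects a $\mathrm{Coker}\,J$ class in stem $32=2\cdot 16$, so it must survive by Theorem~\ref{thm: 2n mo_n kernel}. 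This repeats the argument of Lemma~\ref{lem:d_3 differential in stem 33 filtration 1 mo16} at $E_5$.
\end{itemize}
Your items (1)--(3) handle both entries. The repair is to apply them to these two candidates rather than to the $MO\langle 12\rangle$ list. Recording $g$ as ``No'' via naturality is harmless.
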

The following table lists the only indecomposables through stem 37 that, for degree reasons, can support a $d_5$-differential. 
 \begin{longtable}{llllc}
    \caption[Possible $d_5$-differentials $MO\langle 16 \rangle$]{Possible non-zero $d_5$-differentials on indecomposable elements.
    \label{tab:Adams d_5 mo16}
    } \\
    \toprule
    $x$ & $(t-s,s)$ & $d_5(x)$ & Occurs & Proof\\
    \midrule \endfirsthead
    \caption[]{Possible non-zero $d_6$-differentials on indecomposable elements} \\
    \toprule
    $x$ & $(t-s,s)$ & $d_5(x)$ & Occurs & Proof\\
    \midrule \endhead
    \bottomrule \endfoot
        $\vv(24,4,16)$ & $(24, 4)$ & $\{h_0^2i,h_1Pd_0\}$ & Yes & \ref{lem:d_5 differential in stem 24 filtration 4 mo16}\\
        $\vv(33,1,16)$ & $(33, 1)$ & $\{\Delta h_1 h_3,h_0^5\vv(32,1,16)\}$ & No & \\
    \end{longtable}
\begin{proof}
The proof of $d_5(\vv(33,1,16))=0$ is identical to the proof of Lemma \ref{lem:d_3 differential in stem 33 filtration 1 mo16}. 
\end{proof}

\begin{lemma}\label{lem:d_5 differential in stem 24 filtration 4 mo16}
$d_5(\vv(24,4,16))=h_0^2i$
\end{lemma}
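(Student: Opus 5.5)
To prove $d_5(\vv(24,4,16))=h_0^2i$ I would follow the pattern of Lemma \ref{lem:d_4 differential in stem 24 filtration 5 mo9} and Lemma \ref{lem:d_4 differential in stem 24 filtration 5 mo12}. The argument has two steps: first show that $\vv(24,4,16)$ must support a nonzero differential, then pin down the target using $d_0$-multiplication.

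\textbf{Step 1: $\vv(24,4,16)$ cannot be a permanent cycle.} By Lemma \ref{lem:d_2 differential in stem 24 filtration 4 mo16}, $\vv(24,4,16)$ survives to $E_3$. Inspecting Tables \ref{tab:Adams d_3 mo16} and \ref{tab:Adams d_4 mo16}, it supports no $d_3$ or $d_4$, so it survives to $E_5$. I would then look for a product relation on $E_2$ of the form $d_0\cdot h_0\vv(24,4,16)=h_0^3\Delta h_3^2$, or the analogous relation in $MO\langle 16\rangle$; this is the relation used in the $MO\langle 9\rangle$ case and should be checked with \textbf{ext}. The class $h_0^3\Delta h_3^2$ is the image of the sphere class, and it supports $d_4(h_0^3\Delta h_3^2)=h_0^2d_0i$ by Proposition \ref{prop: sphere differentials}, as recorded in Table \ref{tab:Adams d_4 mo16}. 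Since $d_0$ is a permanent cycle, if $\vv(24,4,16)$ were permanent then so would be $h_0^3\Delta h_3^2$, which is a contradiction. The filtrations do not line up with a $d_5$ directly, so this step needs care. The cleaner alternative is a kernel argument: the bidegree $(23,9)$ contains $h_1Pd_0$ and $h_0^2i$, and $h_0^2i$ detects $\eta^3\overline{\kappa}$ (by analogy with Lemma \ref{d_8 differential in stem 24 filtration 1 mo12}). By Hovey/Burklund--Senger-type results this class lies in the kernel of the unit map $\pi_{23}\mathbb S\to\pi_{23}MO\langle 16\rangle$, so something must hit that bidegree. Checking the chart for other possible sources in stem 24, $\vv(24,4,16)$ should be the only one surviving to $E_5$. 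That forces $d_5(\vv(24,4,16))\neq 0$, with target in $\{h_0^2i,\,h_1Pd_0,\,h_1Pd_0+h_0^2i\}$.

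\textbf{Step 2: identify the target.} I would use the relation $d_0\vv(24,4,16)=0$ from the proof of Lemma \ref{lem:d_2 differential in stem 24 filtration 4 mo16}, together with the Leibniz rule and the fact that $d_0$ is a permanent cycle. This gives $0=d_5(d_0\vv(24,4,16))=d_0\,d_5(\vv(24,4,16))$ on $E_5$. In the sphere, $d_0h_1Pd_0=h_1Pd_0^2$ and $d_0h_0^2i=h_0^2d_0i$. The first, $h_1Pd_0^2$, should still be nonzero on $E_5$ of $MO\langle 16\rangle$. The second, $h_0^2d_0i$, is hit by $d_4(h_0^3\Delta h_3^2)$ and hence is zero on $E_5$. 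Therefore $d_5(\vv(24,4,16))$ cannot involve $h_1Pd_0$, which leaves $h_0^2i$. One also has to check that $h_1Pd_0$ and $h_0^2i$ are still distinct nonzero classes on $E_5$, i.e.\ that neither was killed by $d_2$--$d_4$ (for instance $d_2(i)=h_0Pd_0$ concerns a different bidegree).

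\textbf{Main obstacle.} The hardest part is the $E_5$-page bookkeeping in stems $37$--$38$. I need to confirm that $h_1Pd_0^2$ survives to $E_5$ while $h_0^2d_0i$ does not, and that no other source in stem 24 can kill the kernel class in $(23,9)$. I would verify both points directly against the $E_4$/$E_5$ charts, Figures \ref{fig:mo16_e4} and \ref{fig:mo16_e5}.
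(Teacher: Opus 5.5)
Your Step 2 fails on exactly the page where it is needed.

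The $d_4$-table for $MO\langle 16\rangle$ records $d_4(e_0g)=d_0Pd_0$, imported from the sphere. Since $h_1$ is a permanent cycle, on $E_4$ you get $d_0\cdot h_1Pd_0=h_1\,d_0Pd_0=d_4(h_1e_0g)$. So $h_1Pd_0^2$ is a $d_4$-boundary and vanishes on $E_5$, just as $h_0^2d_0i$ does. Multiplication by $d_0$ therefore annihilates all of bidegree $(23,9)$ on $E_5$, and the relation $d_0\vv(24,4,16)=0$ says nothing about $d_5(\vv(24,4,16))$. The $MO\langle 12\rangle$ argument you are imitating works only because there the differential is a $d_4$, tested on $E_4$ before these products die.

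Step 1(a) is also unusable. The paper records $d_0\vv(24,4,16)=0$, so $d_0h_0\vv(24,4,16)\neq h_0^3\Delta h_3^2$. Had that $MO\langle 9\rangle$ relation held, $d_4(h_0^3\Delta h_3^2)=h_0^2d_0i\neq 0$ would force $\vv(24,4,16)$ to support a differential of length at most $4$, contradicting the lemma.

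The missing idea is that the kernel argument, done correctly, names the target by itself, so Step 2 is unnecessary. First, fix its input.
\begin{itemize}
\item Hovey's theorem concerns $\mathrm{Im}\,J$, and $\eta^3\overline{\kappa}=4\nu\overline{\kappa}$ is not in $\mathrm{Im}\,J$. It is $\eta$ times the class detected by $Pd_0$, so it is detected by $h_1Pd_0$, not $h_0^2i$.
\item Burklund--Senger concerns $MO\langle 12\rangle$, and $\ker(\pi_*\mathbb S\to\pi_*MO\langle 16\rangle)\subseteq\ker(\pi_*\mathbb S\to\pi_*MO\langle 12\rangle)$, not conversely. So it cannot put anything into the $MO\langle 16\rangle$ kernel.
\end{itemize}
What you need is Mahowald's theorem. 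The class $h_0^2i$ is the bottom of the Adams-edge $h_0$-tower in stem $23$, so it detects an element of $\mathrm{Im}\,J$. By Hovey that element dies in $\pi_{23}MO\langle 16\rangle$, since $23\geq 15$.

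By naturality, the image of $h_0^2i$ itself must then be a boundary, not merely some class in $(23,9)$. The only candidate sources are $h_1c_0h_4$ and $\vv(24,4,16)$. A $d_4$ on $h_1c_0h_4$ vanishes because $h_1c_0h_4$ is a permanent cycle in the sphere. So the boundaries in $(23,9)$ are spanned by the single differential on $\vv(24,4,16)$, which must therefore equal $h_0^2i$. This gives $d_5(\vv(24,4,16))=h_0^2i$, and it is exactly the paper's proof.
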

\begin{proof}
By Theorem \ref{mahowaldimj}, the element $h_0^2i$ is in $\text{Im } J$. By Theorem \ref{hoveyimj}, they are in the kernel of the unit map $\pi_{*} \mathbb S \rightarrow MO \langle 16 \rangle$ and so they must 
not survive the Adams spectral sequence for $MO\langle 16 \rangle$. The element $h_1c_0h_4$ does not support a nonzero differential by Proposition \ref{prop: sphere differentials}, so the only possible 
source of a differential with target $h_0^2i$ is $\vv(24,4,16)$. Hence, $d_5(\vv(24,4,16))=h_0^2i$. 
\end{proof}

\subsubsection{$d_6$-differentials}
\begin{proposition}
Table \ref{tab:Adams d_6 mo16} describes the non-zero $d_6$-differentials in the Adams spectral sequence for $MO\langle 16 \rangle$  on all indecomposables on $E_6$ through stem 37.
\end{proposition}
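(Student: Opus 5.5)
The plan is to follow the same template used for the $d_2$ through $d_5$ tables for $MO\langle 16 \rangle$. First, I will read off from the $E_6$-page (Figure \ref{fig:mo16_e5_e6}) the list of indecomposables through stem 37 that can support a $d_6$ for degree reasons. A $d_6$ from bidegree $(t-s,s)$ lands in $(t-s-1,s+6)$, so only classes with a nonzero $E_6$-group six filtrations higher in the previous stem can occur. By Propositions \ref{prop: sphere differentials} and earlier lemmas, all $h_0$-towers and image-of-$J$ classes in low stems ($Ph_2$, $Pc_0$, $P^2h_2$, $h_0^2i$, and so on) have already been killed or truncated. So I expect only a handful of candidates, concentrated in stems 32--37, for example $\vv(33,1,16)$, $\vv(35,1,16)$ and $\vv(36,0,16)$, together with the image-of-$J$ targets $P^3h_2$ and $P^4h_1$-type classes in stems 27--35.

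For each candidate, I will decide the differential by one of three tools, used in order of preference.

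\textbf{Naturality along the unit map.} If the candidate or its target is in the image of $\mathrm{Ext}_{\mathcal{A}}(\mathbb{Z}/2,\mathbb{Z}/2)$, Proposition \ref{prop: sphere differentials} gives the answer: there are no $d_6$'s in the sphere in this range, so such sources are $d_6$-cycles.

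\textbf{Survival or non-survival forced by known homotopy.} Targets in $\mathrm{Im}\,J$ must die by Theorem \ref{hoveyimj} and Theorem \ref{mahowaldimj}. Targets detecting non-$J$ classes must survive in degree $2n=32$ by Theorem \ref{thm: 2n mo_n kernel}. For $h_0$-towers I will use rational information from Proposition \ref{prop:rational bo12}, adapted to $BO\langle 16 \rangle$ where $p_3$ is now killed, so the rational cohomology starts at $p_4$, combined with Theorem \ref{maymilgram}.

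\textbf{Linearity and Massey-product arguments.} I will use $h_0$-, $h_1$- and $d_0$-linearity, as in Lemma \ref{lem:d_3 differential in stem 33 filtration 1 mo16}, to rule out targets of the wrong torsion type. Where the source lies in a bracket, I will instead use Moss' rule (Theorem \ref{mossleibniz}), as was done for $\vv(28,8,12)$.

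Concretely, I expect the main nonzero $d_6$ to be the one killing the remaining image-of-$J$ class $P^3h_2$ (stem 27) from the $h_0$-tower source in stem 28. To prove it, I will write that source as $\langle h_0,h_0^3h_3,\vv(20,\ast,16)\rangle$ and apply Moss' rule together with the lower-length differential on $P^2h_2$. If that class was already hit earlier, I will instead expect the only nonzero $d_6$ to occur in stems 32--36.

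The main obstacle will be the candidates in stems 33--36, namely $\vv(35,1,16)$ and $\vv(36,0,16)$. Their lower differentials are still marked unknown, so the $E_6$-page there is ambiguous. For these I would first try $h_0$-torsion comparisons, and then the constraint that $\pi_{35}MO\langle 16\rangle\otimes\mathbb{Q}=0$, which forces any surviving $h_0$-tower in stem 35 to support a differential. If neither settles the matter, I would record the entry as undetermined with a ``?'', consistent with the paper's earlier tables.
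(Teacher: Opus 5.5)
Your toolkit is the one the paper uses: Theorems \ref{mahowaldimj} and \ref{hoveyimj} to force $\mathrm{Im}\,J$ classes to die, only-possible-source arguments, and Moss' rule. But the list of candidates you plan to apply it to rests on a false premise, so the table you would produce is wrong. The classes $Pc_0$ and $P^2h_2$ are \emph{not} dead before $E_6$: no $d_2$--$d_5$ differential for $MO\langle 16\rangle$ hits them. The differentials killing $Pc_0$ and the $P^kh_2$ lengthen with the connectivity ($d_3$, $d_4$, $d_5$, $d_7$ for $n=9,10,12,17$). For $n=16$ they are exactly the content of the table:
\begin{itemize}
\item $d_6(h_1I_{16})=Pc_0$,
\item $d_6(\vv(20,3,16))=P^2h_2$,
\item $d_6(\vv(28,7,16))=P^3h_2$,
\item $d_6(\vv(36,11,16))=P^4h_2$.
\end{itemize}
The first three need nothing more than this: the target lies in $\mathrm{Im}\,J$ in a stem $\geq 15$, so it must die, and the listed class is the only possible source on $E_6$. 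Your expected outcome omits the first two outright. The stem $32$--$36$ candidates you name ($\vv(33,1,16)$, $\vv(35,1,16)$, $\vv(36,0,16)$) are not the relevant sources; the stem-$36$ source is the higher-filtration class $\vv(36,11,16)$. Also, the $\mathrm{Im}\,J$ target in stem $35$ is $P^4h_2$; $P^4h_1$ detects $\mu_{33}$, which is not in $\mathrm{Im}\,J$.

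Your Massey-product step is also set up inconsistently. Moss' rule (Theorem \ref{mossleibniz}) is a statement about a single page: the bracket, its entries, and the differential must all live on $E_r$ with the same $d_r$. If $P^2h_2$ were killed by a shorter differential, neither it nor $\vv(20,\ast,16)$ would be present on $E_6$. The bracket $\langle h_0,h_0^3h_3,P^2h_2\rangle$ you need would then not be defined there; the argument works only because the differential onto $P^2h_2$ is itself a $d_6$. In stem $28$ the paper does not need a bracket at all, since the only-possible-source argument suffices. It uses one in stem $36$: $\vv(36,11,16)\in\langle h_0,h_0^3h_3,\vv(28,7,16)\rangle$, with indeterminacy $h_0^{11}\vv(36,0,16)$. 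That indeterminacy is a $d_6$-cycle, because $d_6(h_0^{10}\vv(36,0,16))=0$ for degree reasons and $d_6$ is $h_0$-linear. Moss' rule with $d_6(\vv(28,7,16))=P^3h_2$ then gives $d_6(\vv(36,11,16))\in\langle h_0,h_0^3h_3,P^3h_2\rangle=\{P^4h_2\}$.
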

The following table lists the only indecomposables through stem 37 that, for degree reasons, can support a $d_6$-differential. 
 \begin{longtable}{llllc}
    \caption[Possible $d_6$-differentials $MO\langle 16 \rangle$]{Possible non-zero $d_6$-differentials on indecomposable elements.
    \label{tab:Adams d_6 mo16}
    } \\
    \toprule
    $x$ & $(t-s,s)$ & $d_6(x)$ & Occurs & Proof\\
    \midrule \endfirsthead
    \caption[]{Possible non-zero $d_6$-differentials on indecomposable elements} \\
    \toprule
    $x$ & $(t-s,s)$ & $d_4(x)$ & Occurs & Proof\\
    \midrule \endhead
    \bottomrule \endfoot
        $h_1I_{16}$ & $(17, 1)$ & $Pc_0$ & Yes & \ref{lem:d_6-differentials-imj-mo16}\\
        $\vv(20,3,16)$ & $(20, 3)$ & $P^2h_2$ & Yes & \ref{lem:d_6-differentials-imj-mo16}\\
        $\vv(28,7,16)$ & $(28, 7)$ & $P^3h_2$ & Yes & \ref{lem:d_6-differentials-imj-mo16} \\
        $\vv(36,11,16)$ & $(36, 11)$ & $P^4h_2$ & Yes & \ref{lem:d_6 differential in stem 36 filtration 11 mo16} \\
    \end{longtable}

\begin{lemma}\label{lem:d_6-differentials-imj-mo16}
\begin{enumerate}
\item $d_6(\vv(20,3,16))=P^2h_2$.
\item $d_6(\vv(28,7,16))=P^3h_2$.
\end{enumerate}
\begin{proof}
The target of each of these differentials is in $\text{Im } J$ by Theorem \ref{mahowaldimj}. By Theorem \ref{hoveyimj}, they must not survive the Adams spectral sequence for $MO\langle 16 \rangle$. 
The differentials listed are the only possible differentials with their respective targets. 
\end{proof}
\end{lemma}

\begin{lemma}\label{lem:d_6 differential in stem 36 filtration 11 mo16} 
$d_6(\vv(36,11,16)) = P^4 h_2$.
\end{lemma}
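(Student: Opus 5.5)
The plan is to argue exactly as in the earlier Massey-product arguments, namely Lemma \ref{lem:d_3 differential in stem 28 filtration 10 mo9} and Lemma \ref{lem:d_5 differential in stem 28 filtration 8 mo12}. The aim is to propagate the differential $d_6(\vv(28,7,16))=P^3h_2$ of Lemma \ref{lem:d_6-differentials-imj-mo16} up one periodicity step. A direct image-of-$J$ argument probably also works: by Theorem \ref{mahowaldimj}, $P^4h_2$ (stem 35) lies in $\text{Im } J$, and by Theorem \ref{hoveyimj} it must be killed. However, the source is not obviously unique there, since $\vv(36,0,16)$ and $h_0$-multiples in stem 36 are potential sources. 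So I would pin the differential down by Moss' higher Leibniz rule.

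The first step is to check on the $E_2$-page (via \textbf{ext}) that $\vv(36,11,16)$ lies in the Massey product $\langle h_0, h_0^3h_3, \vv(28,7,16)\rangle$, and to record its indeterminacy. This indeterminacy is $h_0\cdot\text{Ext}^{10,46}+\text{Ext}^{4,39}\cdot\vv(28,7,16)$, and I expect it to consist only of $h_0$-multiples of tower elements in stem 36 (for instance multiples of $\vv(36,0,16)$ or of a class like $h_0^k\vv(36,\ast,16)$).

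Next I would verify that all three entries survive to $E_6$, so that the bracket is defined on $E_6$. For $h_0$ and $h_0^3h_3$ this follows by comparison with $\mathbb S$, and $\vv(28,7,16)$ survives since it is the source of a $d_6$. I would also check the hypotheses of Theorem \ref{mossleibniz}: $h_0\cdot d_6(h_0^3h_3)=0$ and $h_0^3h_3\cdot d_6(\vv(28,7,16)) = h_0^3h_3P^3h_2=0$.

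Moss' rule then gives
\[
d_6(\vv(36,11,16)) \in \langle 0,h_0^3h_3,\vv(28,7,16)\rangle+\langle h_0,0,\vv(28,7,16)\rangle+\langle h_0,h_0^3h_3,P^3h_2\rangle .
\]
The first two brackets vanish. For the third, the shuffle $\langle h_0,h_0^3h_3,P^kh_2\rangle \ni P^{k+1}h_2$ is the same one used in the $MO\langle 9\rangle$ and $MO\langle 12\rangle$ arguments (an Ext computation for $\mathbb S$, essentially $P$ as $\langle h_3,h_0^4,-\rangle$). Mapping via the unit, it contains $P^4h_2$, with indeterminacy that should vanish in bidegree $(35,17)$.

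The main obstacle is the indeterminacy of the original bracket. I need $d_6$ to vanish on the indeterminacy elements so that the statement holds for $\vv(36,11,16)$ itself rather than for some coset. I would argue as in Lemma \ref{lem:d_5 differential in stem 28 filtration 8 mo12}: each indeterminacy element is $h_0^m y$ with $y$ in a tower in stem 36. By sparsity or degree reasons, $d_6(h_0^{m-1}y)=0$ because the target bidegree is empty on $E_6$ (the $P^4h_2$ column has nothing beneath filtration 17 that is hit compatibly). Then $h_0$-linearity gives $d_6(h_0^my)=0$. If the target bidegree turns out to be nonempty, the fallback is the image-of-$J$ argument: $P^4h_2$ must die, and all other candidate sources either are $h_0$-torsion-free towers whose differentials would violate $h_0$-linearity, or already support shorter differentials. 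This leaves $d_6(\vv(36,11,16))=P^4h_2$.
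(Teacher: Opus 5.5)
Your proposal is correct and follows the paper's proof essentially step for step. The paper also places $\vv(36,11,16)$ in $\langle h_0,h_0^3h_3,\vv(28,7,16)\rangle$ and applies Moss' rule, so that only $\langle h_0,h_0^3h_3,P^3h_2\rangle\ni P^4h_2$ contributes; it then disposes of the indeterminacy $h_0^{11}\vv(36,0,16)$ by noting that $d_6(h_0^{10}\vv(36,0,16))=0$ for degree reasons and using $h_0$-linearity. One small slip: the second indeterminacy term is $\text{Ext}^{4,12}\cdot\vv(28,7,16)$, not $\text{Ext}^{4,39}\cdot\vv(28,7,16)$, and it vanishes because stem 8 is empty in filtration 4.
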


\begin{proof}
The element $\vv(36,11,16)$ is contained in the Massey product
$\langle h_0,h_0^3h_3,\vv(28,7,16)\rangle$ with indeterminacy
$h_0^{11}\vv(36,0,16)$. By Moss's Higher Leibniz rule \ref{mossleibniz},
\begin{align*}
d_6(\langle h_0, h_0^3h_3, \vv(28,7,16)\rangle) \in\;&
\langle d_6(h_0),h_0^3h_3,\vv(28,7,16)\rangle
+\langle h_0,d_6(h_0^3h_3),\vv(28,7,16)\rangle
+\langle h_0,h_0^3h_3,d_6(\vv(28,7,16))\rangle\\
=\;&\langle 0,h_0^3h_3,\vv(28,7,16)\rangle
+\langle h_0,0,\vv(28,7,16)\rangle
+\langle h_0,h_0^3h_3,P^3h_2\rangle.
\end{align*}
The first two terms vanish with zero indeterminacy and the third contains $P^4h_2$
with zero indeterminacy. Observe that, for degree reasons, the differential
$d_6(h_0^{10}\vv(36,0,16))$ equals zero. By $h_0$-linearity of the $d_6$-differential
and the Leibniz rule, this implies $d_6(h_0^{11}\vv(36,0,16))$ also equals zero.
Hence,
\[
d_6(\langle h_0, h_0^3h_3, \vv(28,7,16)\rangle)=d_6(\vv(36,11,16))=P^4h_2,
\]
as claimed.
\end{proof}

\begin{lemma}
The element $\vv(33,1,16)$ is a permanent cycle. 
\end{lemma}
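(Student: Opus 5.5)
We need to show $\vv(33,1,16)$ is a permanent cycle in the Adams spectral sequence for $MO\langle 16\rangle$. We already know $d_2$ on it is not listed as possible (it is not in the $d_2$ table), $d_3(\vv(33,1,16))=0$ by Lemma \ref{lem:d_3 differential in stem 33 filtration 1 mo16}, $d_4$ is not possible for degree reasons (it does not appear in the $d_4$ table), and $d_5(\vv(33,1,16))=0$ by the same argument. So the plan is to rule out all $d_r$ with $r\geq 6$ by examining the targets in stem 32, filtration $1+r$, on the $E_r$-page.

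The first step is to list the surviving elements of stem 32 in filtrations $\geq 7$ on the $E_6$-page (Figure \ref{fig:mo16_e6}). These should consist of the $h_0$-tower on $\vv(32,1,16)$ (the elements $h_0^{k}\vv(32,1,16)$), possibly elements from the sphere such as $q$ or $h_1 \cdot$(stem 31 classes), and whatever survives of the $l$, $d_0e_0$ region after the $d_2$ on $l$.

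The second step is to handle each kind of target.

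\emph{Tower elements.} Any target of the form $h_0^{k}\vv(32,1,16)$, or a combination involving such an element, is ruled out by $h_0$-linearity. We have $h_0\vv(33,1,16)=0$, while $h_0^{k+1}\vv(32,1,16)\neq 0$ on every page, since the tower must survive to detect a $\Z$ summand; rationally, stem 32 contributes $\Z$ summands from $p_8$, $p_3p_5$ and so on.

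\emph{Sphere elements.} For any target coming from the sphere, we argue as in Lemma \ref{lem:d_3 differential in stem 33 filtration 1 mo16}. By Theorem \ref{thm: 2n mo_n kernel} with $n=16$, the kernel of $\pi_{32}\mathbb S\to\pi_{32}MO\langle 16\rangle$ is generated by $\mathrm{Im}\,J$. In stem 32, $\mathrm{Im}\,J$ is detected by the $P^3$-family (Theorem \ref{mahowaldimj}), which is already killed by shorter differentials or by the tower mechanism. Hence no Coker $J$ class such as $q$ or $d_1$ can be hit. Any class in the image of $E_r(\mathbb S)$ that detects a nonzero element outside $\mathrm{Im}\,J$ must therefore survive, so it cannot be the target.

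Finally, for large $r$ the relevant bidegrees are empty once the vanishing line of the $E_r$-page is reached. This gives a finite check.

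The main obstacle is confirming that every potential target in stem 32 falls into one of the two categories above. The delicate cases are elements of mixed type, for example a sum of a sphere class and a class from the cohomology summand, or classes like $h_1\vv(31,\ast,16)$ that are not $h_0$-towers. If such a class appears, I would try $h_1$- or $h_2$-linearity first: compare $h_1\vv(33,1,16)$ with $h_1$ times the candidate, using that $h_1$ times the candidate is nonzero on $E_r$. Failing that, I would use naturality along $MO\langle 16\rangle\to MO\langle 12\rangle$ to transport the information about that stem.
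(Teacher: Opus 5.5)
Your proposal is correct and is essentially the paper's own reasoning. The paper states this lemma without a separate proof; the intended justification is the extension to all remaining pages of the argument in Lemma~\ref{lem:d_3 differential in stem 33 filtration 1 mo16} and its $d_5$ analogue, namely $h_0$-linearity against the tower on $\vv(32,1,16)$, together with Theorem~\ref{thm: 2n mo_n kernel} for $n=16$ to protect sphere classes such as $d_1$ and $\Delta h_1h_3$. There are two harmless slips. First, the rational degree-$32$ classes of $BO\langle 16\rangle$ are $p_8$ and $p_4^2$, not $p_3p_5$, since $p_3$ dies rationally once $\frac{1}{240}p_3$ is killed. Second, the bidegrees $(32,1+r)$ never become empty, because of that very tower; so your vanishing-line ``finite check'' should be stated for the non-tower classes only, with the tower excluded for every $r$ by $h_0$-linearity.
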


\subsubsection{The abutment}
Recall that there are no hidden $2$-extensions between elements in the image of the unit map in this range of degrees.

\begin{proposition}
$\pi_{16}MO\langle 16 \rangle \cong \Z \oplus \Z/2$.
\end{proposition}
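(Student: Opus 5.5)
The plan follows the proof of Proposition \ref{prop:extension problem stem 16 mo9}. First, I read off stem 16 of the $E_\infty$-page for $MO\langle 16\rangle$ (Figure \ref{fig:mo16_einf}), using the differentials established above.

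\textbf{Stem 15.} The element $h_4$ supports $d_2(h_4)=h_0h_3^2$ and $h_0h_4$ supports $d_3(h_0h_4)=h_0d_0$, both by Proposition \ref{prop: sphere differentials}. By Lemma \ref{lem:d_4 differential in stem 16 filtration 0 mo16}, $d_4(I_{16})=h_0^3h_4$. Consequently $I_{16}$ itself dies, and the $h_0$-tower on $I_{16}$ is truncated along with the image-of-$J$ tower in stem 15.

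\textbf{Stem 16.} What remains in stem 16 is:
\begin{itemize}
\item the surviving $h_0$-tower beginning at $h_0I_{16}$, or whichever tower in stem 16 the chart shows as surviving; it is $h_0$-torsion free and contributes a single $\Z$;
\item the class $h_1h_4$, which is a permanent cycle since $d_3(h_1h_4)=0$ by naturality from $\mathbb S$ (Proposition \ref{prop: sphere differentials}).
\end{itemize}
I would check on the chart that $h_1h_4$ is not hit by any differential. This follows because $h_1h_4$ detects $\eta^*\in\pi_{16}\mathbb S$, which is not in $\mathrm{Im}\,J$ and lies below the range in which Theorem \ref{hoveyimj} forces classes to die. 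I would also check that no other classes survive in stem 16, which should be a direct inspection of the $E_6$/$E_\infty$ pages. Rationally, $\pi_{16}MO\langle 16\rangle\otimes\mathbb Q\cong H^{16}(MO\langle 16\rangle;\mathbb Q)$, which is one-dimensional (spanned by $p_4U$). This is consistent with exactly one $\Z$ summand.

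\textbf{Extension problem.} This gives a short exact sequence
\[
0\rightarrow \Z\rightarrow \pi_{16}MO\langle 16\rangle\rightarrow \Z/2\{\eta^*\}\rightarrow 0 .
\]
The class $h_1h_4$ is in the image of the unit map, so it detects the image of $\eta^*$. Since $2\eta^*=0$ in $\pi_{16}\mathbb S$, that image is an element of order $2$ lifting the generator of the quotient. Hence the sequence splits and $\pi_{16}MO\langle 16\rangle\cong\Z\oplus\Z/2$. Equivalently, a nonsplit extension would give $\pi_{16}\cong\Z$ and force $2\eta^*\neq 0$, a contradiction.

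\textbf{Main obstacle.} The only step needing care is confirming the $E_\infty$-page in stem 16. Specifically, I must confirm that the bottom of the stem-16 tower is correctly accounted for after $d_4(I_{16})$, and that no higher differential into or out of stem 16 (such as a $d_r$ from stem 17 onto $h_1h_4$) occurs. I would rule out the latter using naturality from $\mathbb S$ together with $h_0$- and $h_1$-linearity.
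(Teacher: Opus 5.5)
Your argument is correct and is the paper's argument. The paper's proof just defers to Proposition \ref{prop:extension problem stem 16 mo9}: it reads stem $16$ of $E_\infty$ as $h_1h_4$ plus a single $h_0$-tower, and it splits the extension because $h_1h_4$ detects the image of $\eta^*$, which has order $2$. Two points in your $E_\infty$ bookkeeping need fixing: by $h_0$-linearity $d_4(h_0^kI_{16})=h_0^{k+3}h_4\neq 0$ for $0\le k\le 4$, so the surviving tower starts at $h_0^5I_{16}$ rather than $h_0I_{16}$; and Theorem \ref{hoveyimj} says nothing about classes outside $\mathrm{Im}\,J$ (compare $\eta\eta_4$, which dies in $MO\langle 9\rangle$), so the correct reason $h_1h_4$ is not hit is sparseness: a differential into bidegree $(16,2)$ would have to be a $d_2$ out of $(17,0)$, which is empty for $MO\langle 16\rangle$.
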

\begin{proof}
The proof is identical to the proof of Proposition \ref{prop:extension problem stem 16 mo9}. 
\end{proof}

\begin{proposition}
$\pi_{20}MO\langle 16 \rangle \cong \Z \oplus \Z/8$.
\end{proposition}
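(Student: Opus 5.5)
The plan is to follow the template of Proposition \ref{prop:stem 20 extension} for $MO\langle 12\rangle$ and Proposition \ref{prop:extension problem stem 20 mo9}. The argument has three steps.

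First, I will read stem 20 off the $E_\infty$-page of the Adams spectral sequence for $MO\langle 16\rangle$ (Figure \ref{fig:mo16_einf}). I expect two pieces to remain there. The first is a surviving $h_0$-tower, which detects the rational class and so gives a $\Z$ summand. This class is forced rationally, since $H^{20}(MO\langle 16\rangle;\mathbb{Q})$ is nonzero via $p_5$ and the Thom isomorphism. The second piece is the image of the classes $g$, $h_0g$, $h_0^2g$ from the sphere.

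I must check that the $g$-family survives. The class $g$ is a $d_5$-cycle by comparison with $\mathbb{S}$, as cited for $MO\langle 12\rangle$. The only candidate to kill the $h_0^kg$ would be a differential from stem 21. In the $MO\langle 16\rangle$ tables no indecomposable in stem 21 supports such a differential. In addition, $\vv(20,3,16)$ is used up by $d_6(\vv(20,3,16))=P^2h_2$ (Lemma \ref{lem:d_6-differentials-imj-mo16}), which removes the tower that would otherwise interfere. So filtration-wise we get $\Z/8$ worth of classes $g, h_0g, h_0^2g$, which detect $\overline{\kappa}, 2\overline{\kappa}, 4\overline{\kappa}$ under the unit map.

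Second, I will resolve the additive extensions. Inside the $g$-family the $2$-extensions are not hidden: they are visible $h_0$-multiplications in the image of the unit map. By the remark that there are no hidden $2$-extensions among classes in the image of $\mathbb{S}\to MO\langle 16\rangle$ in this range, they detect $\Z/8\{\overline{\kappa}\}$. This yields a short exact sequence
\[
0\rightarrow \Z \rightarrow \pi_{20}MO\langle 16\rangle \rightarrow \Z/8\{\overline{\kappa}\}\rightarrow 0.
\]

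Third, I will show this sequence splits. The image of $\overline{\kappa}$ under the unit map is an honest element of $\pi_{20}MO\langle 16\rangle$, and $8\overline{\kappa}=0$ already in $\pi_{20}\mathbb{S}$. Hence the quotient map has a section $\Z/8\to\pi_{20}MO\langle 16\rangle$ sending the generator to the unit image of $\overline{\kappa}$, and the sequence splits. If it did not split, the group would be $\Z$, which is torsion-free and would force $8\overline{\kappa}\neq 0$, a contradiction.

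The main obstacle is the first step. I need to confirm from the $E_\infty$ chart that nothing in stem 21 truncates $h_0^kg$ and that the $\Z$ in stem 20 is carried by a single $h_0$-tower, not by a tower plus an extension-mixing class in high filtration. Once the $E_\infty$-page in stem 20 is $\{\text{tower}\}\oplus\{g,h_0g,h_0^2g\}$, the rest is the formal argument above.
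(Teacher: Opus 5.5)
Your proposal is correct and follows the paper's argument, which reruns the $MO\langle 12\rangle$ stem-20 proof: the sequence $0\to\Z\to\pi_{20}MO\langle 16\rangle\to\Z/8\{\overline{\kappa}\}\to 0$ splits because the unit image of $\overline{\kappa}$ has order dividing $8$, and your explicit section is the right way to say this. Your aside that a non-split extension ``would be $\Z$'' is inaccurate, since $\Z\oplus\Z/2$ and $\Z\oplus\Z/4$ are also non-split possibilities, but your section rules those out too.

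One correction to your chart reading: $d_6(\vv(20,3,16))=P^2h_2$ does not remove a tower, because $h_0^3P^2h_2=0$, so it only kills the bottom three classes of the single stem-20 tower, and the surviving classes $h_0^k\vv(20,3,16)$ for $k\geq 3$ are precisely the ones detecting the $\Z$.
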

\begin{proof}
The proof is identical to the proof of Proposition \ref{prop:stem 20 extension}. 
\end{proof}

\begin{proposition}
$\pi_{24}MO\langle 16 \rangle \cong \Z \oplus \Z/2$.
\end{proposition}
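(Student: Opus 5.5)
Following the pattern of Propositions \ref{prop:extension problem stem 16 mo9} and \ref{prop:stem 24 extension}, I will read the group off stem 24 of the $E_\infty$-page for $MO\langle 16\rangle$ and then settle the one extension question. First, list the survivors in stem 24. The differentials in Tables \ref{tab:Adams d_2 mo16}--\ref{tab:Adams d_6 mo16} clear most of the stem. By Lemma \ref{lem:d_2 differential in stem 24 filtration 4 mo16} and Lemma \ref{lem:d_5 differential in stem 24 filtration 4 mo16}, $\vv(24,4,16)$ survives to $E_5$ and then supports $d_5(\vv(24,4,16))=h_0^2i$. This truncates what would otherwise be an $h_0$-tower generated in that bidegree. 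The other candidate, $h_1c_0h_4$, is a permanent cycle by comparison with $\mathbb{S}$ (Proposition \ref{prop: sphere differentials}).

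Next, I must check that $h_1c_0h_4$ is not a boundary. Its only possible sources are classes in stem 25, and these are all accounted for in the tables, e.g.\ $\vv(25,1,12)$-type classes supporting zero differentials. Alternatively, since $h_1c_0h_4$ detects $\epsilon\eta^{*}$, which is not in $\text{Im}\, J$, Theorem \ref{thm: 2n mo_n kernel} does not apply directly in stem 24 for $n=16$. I would instead use the chart: stem 25 of $E_5$ has nothing in the right bidegree hitting it.

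Then compare with the rational count. By analogy with Proposition \ref{prop:rational bo12}, $H^*(BO\langle 16\rangle;\mathbb{Q})\cong\mathbb{Q}[p_4,p_5,\dots]$, since the fiber $K(\mathbb{Z},11)$ transgresses to $p_3$ up to a unit. So $H^{24}(MO\langle 16\rangle;\mathbb{Q})$ has rank one, spanned by $p_6$ (note $p_4$ lives in degree 16, $p_5$ in degree 20, and $p_6$ in degree 24). Hence exactly one $h_0$-tower survives in stem 24, which I would use to confirm the $E_\infty$ reading: one tower plus $h_1c_0h_4$.

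Finally, resolve the extension
\[
0\rightarrow \mathbb{Z}\rightarrow \pi_{24}MO\langle 16\rangle\rightarrow \mathbb{Z}/2\{\epsilon\eta^{*}\}\rightarrow 0 .
\]
Since $h_1c_0h_4$ is in the image of the unit map, its homotopy class is the image of $\epsilon\eta^{*}$, and $2\epsilon\eta^{*}=0$ in $\pi_{24}\mathbb{S}$. Therefore the extension splits, giving $\mathbb{Z}\oplus\mathbb{Z}/2$.

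The main obstacle is the first step: certifying that no leftover classes in stem 24 (e.g.\ $h_0$-multiples left over after the $d_5$) contribute torsion, and that no differential from stem 25 kills $h_1c_0h_4$. If the chart leaves any ambiguity there, I would first try $h_1$- and $h_0$-linearity arguments like those in Lemma \ref{d_4 differential in stem 25 filtration 1 mo12}.
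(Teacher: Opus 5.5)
Your proposal is correct and is essentially the paper's argument: the paper (by reference to its $MO\langle 12\rangle$ stem-24 proof) reads the short exact sequence $0\to\mathbb{Z}\to\pi_{24}MO\langle 16\rangle\to\mathbb{Z}/2\{\epsilon\eta^{*}\}\to 0$ off the $E_\infty$-page and splits it exactly as you do, because $h_1c_0h_4$ detects the image of $\epsilon\eta^{*}$ and $2\epsilon\eta^{*}=0$ in $\pi_{24}\mathbb{S}$. Your rational check ($H^{24}(BO\langle 16\rangle;\mathbb{Q})=\mathbb{Q}\{p_6\}$) is a harmless extra confirmation of the single $\mathbb{Z}$. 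The step you flagged (that $h_1c_0h_4$ is not hit from stem 25) can be supported without the chart: factor the unit through $MO\langle 16\rangle\to MO\langle 12\rangle$ and apply the Senger--Zhang/Stolz theorem with $n=12$, which shows that $\epsilon\eta^{*}\notin\mathrm{Im}\,J$ has nonzero image in $\pi_{24}MO\langle 16\rangle$.
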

\begin{proof}
The proof is identical to the proof of Proposition \ref{prop:stem 24 extension}. 
\end{proof}

\begin{proposition}\label{prop:stem 28 extension}
$\pi_{28}MO\langle 16 \rangle \cong \Z \oplus \Z/2$.
\end{proposition}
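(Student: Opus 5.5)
The plan follows the pattern of Propositions \ref{prop:extension problem stem 16 mo9}, \ref{prop:extension problem stem 28 mo9} and \ref{prop:stem 24 extension}.

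First, read stem 28 of the $E_\infty$-page for $MO\langle 16\rangle$ (Figure \ref{fig:mo16_einf}), using the differentials in Tables \ref{tab:Adams d_2 mo16} through \ref{tab:Adams d_6 mo16}. In particular, $d_6(\vv(28,7,16))=P^3h_2$ (Lemma \ref{lem:d_6-differentials-imj-mo16}) truncates the $h_0$-tower in stem 28 that carries $\vv(28,7,16)$. After this, one $h_0$-tower should survive, giving a single $\Z$ summand. The only other surviving class should be $d_0^2$, which detects $\kappa^2\in\pi_{28}\mathbb S$ under the unit map.

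To confirm that exactly one $\Z$ survives, I would cross-check rationally, as in Proposition \ref{prop: pi_29 mo12}. We have $\pi_{28}MO\langle 16\rangle\otimes\mathbb Q\cong H^{28}(MO\langle 16\rangle;\mathbb Q)$. The rational cohomology of $BO\langle 16\rangle$ is $\mathbb Q[p_4,p_5,\dots]$, by the argument of Proposition \ref{prop:bo_9_10_12_equiv} together with one more transgression $d_{12}(x_{11})=\frac{1}{240}p_3$. Its degree-28 part is spanned by $p_7$ alone. So the rank is one, matching the single surviving tower.

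This yields the short exact sequence
\[
0\rightarrow \Z \rightarrow \pi_{28}MO\langle 16\rangle \rightarrow \Z/2\{\kappa^2\}\rightarrow 0 .
\]
To split it, argue exactly as in Proposition \ref{prop:extension problem stem 28 mo9}. The class detected by $d_0^2$ is the image of $\kappa^2$ under the unit map. Since $2\kappa^2=0$ in $\pi_{28}\mathbb S$, its image has order at most two. A nonsplit extension would force $\pi_{28}MO\langle 16\rangle\cong\Z$, which is torsion free, so the image would have to be $0$. That contradicts $d_0^2$ being nonzero on $E_\infty$. Hence $\pi_{28}MO\langle 16\rangle\cong\Z\oplus\Z/2$.

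The main obstacle is the first step: verifying that nothing else in stem 28 survives. The candidates are $h_0$-multiples from the truncated tower and classes that could be hit from stem 29, such as $k$ and $\Delta h_2^2$-related classes. I would handle these with $h_0$-linearity and by comparison with the sphere differentials in Proposition \ref{prop: sphere differentials}. I would also check that $d_0^2$ is not hit, using naturality from $\mathbb S$: in stem 29 only the sphere class $k$ could reach it, and $d_2(k)=h_0d_0^2$ rather than $d_0^2$.
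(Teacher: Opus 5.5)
This is essentially the paper's argument: read off the surviving $h_0$-tower and $d_0^2$ in stem $28$ of $E_\infty$, form $0\to\Z\to\pi_{28}MO\langle 16\rangle\to\Z/2\{\kappa^2\}\to 0$, and split it because the image of $\kappa^2$ is a nonzero class killed by $2$; your rational rank check (only $p_7$ spans degree $28$ of $\mathbb{Q}[p_4,p_5,\dots]$) is a correct, optional addition. One slip in your side remark: $k$ lies in filtration $7$, so it could never hit $d_0^2$ in filtration $8$, and naturality from $\mathbb S$ cannot exclude $MO\langle 16\rangle$-specific stem-$29$ classes of filtration at most $6$. The survival of $d_0^2$ instead follows by mapping on to $MO\langle 9\rangle$, where $d_0^2$ survives in filtration exactly $8$ (compare Lemma~\ref{lem: kappa2 mo12}); Theorem~\ref{thm: 2n mo_n kernel} with $n=14$ (note $MO\langle 14\rangle=MO\langle 16\rangle$) also shows directly that the image of $\kappa^2$ is nonzero.
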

\begin{proof}
Note that $d_0^2$ detects $\kappa^{2} \in \pi_{28} \mathbb{S}$. The group $\pi_{28}(MO \langle 16 \rangle)$ lies in the short exact sequence
\begin{align*}
0 \rightarrow \Z \rightarrow \pi_{28}(MO \langle 16 \rangle) \rightarrow \mathbb{Z}/2\{\kappa^2\} \rightarrow 0 
\end{align*}
Suppose this short exact sequence did not split. This would imply $2\kappa^2 \neq 0$, which is a contradiction since $2\kappa^2 = 0 \in \pi_{28} \mathbb{S}$. 
\end{proof}

\begin{theorem}\label{thm: mo16 groups}
Table \ref{tab:mo16 groups} describes the 2-primary component of the cobordism groups
$\Omega^{\langle 16 \rangle}_n$ for the values of $n$ listed.
\end{theorem}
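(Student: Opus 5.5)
The table of groups $\Omega^{\langle 16\rangle}_n$ is read off the $E_\infty$-page of the Adams spectral sequence for $MO\langle 16\rangle$ in Figure \ref{fig:mo16_einf}, together with a resolution of the additive extension problems. The proof follows the pattern of Theorems \ref{thm: fivebrane groups} and \ref{thm: mo12 groups}. The $E_2$-page is the Ext computation from the Sage description of $H^*(MO\langle 16\rangle;\mathbb{Z}/2)$. We first confirm that every indecomposable able to support or receive a differential through stem $37$ appears in Tables \ref{tab:Adams d_2 mo16}--\ref{tab:Adams d_6 mo16}. Differentials on decomposables then follow from the Leibniz rule and from linearity over the permanent cycles $h_0,h_1,h_2,d_0,P$.

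Before invoking the abutment we must settle the remaining open entries. These are $\vv(36,0,16)$, $\vv(35,1,16)$, and the permanence of $\vv(33,1,16)$.
\begin{itemize}
\item For $\vv(33,1,16)$, the $h_0$-torsion and $\text{Coker } J$ arguments of Lemma \ref{lem:d_3 differential in stem 33 filtration 1 mo16} already rule out $d_3$ and $d_5$. For $r\geq 6$ we would apply the same reasoning. Every candidate target in stem $32$ is either $h_0$-torsion free (excluded by $h_0$-linearity) or detects a class not in $\text{Im } J$, which must survive by Theorem \ref{thm: 2n mo_n kernel} at $n=16$. Hence $\vv(33,1,16)$ is a permanent cycle.
\item The ambiguous differentials on $\vv(35,1,16)$ and $\vv(36,0,16)$ affect only stems $34$--$36$. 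We would record these as the undetermined entries (the blank in stem $35$ and the $F$ in stem $36$), bounding $F$ by counting surviving classes in the finite part.
\item Rationally, $\pi_*MO\langle 16\rangle\otimes\mathbb{Q}\cong H_*(BO\langle 16\rangle;\mathbb{Q})$. By the argument of Proposition \ref{prop:bo_9_10_12_equiv} this is $\mathbb{Q}[p_4,p_5,\dots]$. The rank in each degree therefore equals the number of partitions into parts $\geq 4$, giving ranks $1,1,1,2,2$ in degrees $16,20,24,32,36$. Any $h_0$-tower in excess of this rank must support or receive a differential, exactly as in Lemma \ref{lem:stem 29 differential mo12}. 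In stem $36$ this also forces the two $\Z$ summands.
\end{itemize}

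With $E_\infty$ in hand, we resolve the extensions. Classes in the image of the unit map have no hidden $2$-extensions in this range, because the first one for $\mathbb{S}$ lies in stem $40$. The extensions in stems $16$, $20$, $24$ and $28$ are handled by the preceding propositions. In stems $32$ and $36$ we apply the same argument: each torsion class sitting above a tower is in the image of $\pi_*\mathbb{S}$ (for example $d_1$, $h_1h_5$, $q$). Its order in $\pi_*\mathbb{S}$ therefore bounds its order in $\pi_*MO\langle 16\rangle$, so the sequence $0\to\Z^r\to\pi_n\to T\to 0$ splits.

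In stem $33$ only the order, $32$, is recorded. A hidden extension among classes not in the unit's image (such as $\vv(33,1,16)$) cannot be excluded, so we state the order only.

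The main obstacle is stems $32$--$36$, where non-unit classes such as $\vv(32,1,16)$ and $\vv(33,1,16)$ appear. Both the differentials on them and the hidden extensions are not controlled by comparison with $\mathbb{S}$. For the differentials we would try the $\text{Coker } J$ argument first, falling back on sseqcpp deductions via products with $d_0$ and $g$.
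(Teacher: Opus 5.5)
Your strategy is the paper's own. There, the theorem rests only on the $E_\infty$-chart for $MO\langle 16\rangle$ and the extension propositions in stems $16,20,24,28$. Your additions fill in steps the paper leaves implicit: permanence of $\vv(33,1,16)$ via Theorem \ref{thm: 2n mo_n kernel}, the rational rank count, and splitting stem $32$ by lifting $h_1h_5$, $d_1$, $q$ to $2$-torsion elements of $\pi_{32}\mathbb{S}$.

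\textbf{The gap.} ``Read the table off the chart'' does not prove the table as printed, and your own tools show this.
\begin{itemize}
\item Since $BO\langle 16\rangle$ is $15$-connected, so is $MO\langle 16\rangle/\mathbb{S}$. Hence the unit $\pi_n\mathbb{S}\to\pi_n MO\langle 16\rangle$ is an isomorphism for $n\le 14$ and onto for $n=15$.
\item This gives $\Omega^{\langle 16\rangle}_{11}\cong\pi_{11}\mathbb{S}\cong\mathbb{Z}/8$, consistent with Theorem \ref{hoveyimj} since $11\le 14$.
\item It also gives $\Omega^{\langle 16\rangle}_{12}\cong\pi_{12}\mathbb{S}=0$.
\item In degree $15$, the kernel of the unit is the image of $\pi_{16}(MO\langle 16\rangle/\mathbb{S})\cong\mathbb{Z}$, hence cyclic. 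It contains $\mathrm{Im}\,J\cong\mathbb{Z}/32$, so it equals $\mathrm{Im}\,J$, and $\Omega^{\langle 16\rangle}_{15}\cong\mathbb{Z}/2$, generated by the image of $\eta\kappa$.
\end{itemize}
The printed table instead has $(0)$, $\mathbb{Z}$, $\mathbb{Z}$ in rows $11$, $12$, $15$. Your rank count already rules out both $\mathbb{Z}$'s: there is no partition of $3$ into parts $\ge 4$, and there is no rational homology in degree $15$. Yet the proposal asserts that the whole table follows.

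These rows look copied from the $MO\langle 12\rangle$ column. Note that Theorem \ref{thm: combined bordism groups} has $(0)$ in row $12$ of its $\Omega^{\langle 16\rangle}$ column. Also, the two printed copies of the $MO\langle 16\rangle$ table disagree in row $9$; the correct value there is $\pi_9\mathbb{S}\cong(\mathbb{Z}/2)^3$. A sound proof must begin with this low-degree comparison to $\pi_*\mathbb{S}$ and correct those entries. Only stems $\ge 16$ are genuinely read off the $MO\langle 16\rangle$ chart.

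\textbf{Two smaller repairs.}
\begin{itemize}
\item The argument of Proposition \ref{prop:bo_9_10_12_equiv} does not give $H^*(BO\langle 16\rangle;\mathbb{Q})$. The fibre of $BO\langle 16\rangle\to BO\langle 12\rangle$ is $K(\mathbb{Z},11)$, which is rationally nontrivial. You need the transgression argument of Propositions \ref{prop: rational bo8} and \ref{prop: rational bo9}, with the degree-$11$ class transgressing to a nonzero multiple of $p_3$, to obtain $\mathbb{Q}[p_4,p_5,\dots]$.
\item ``The first hidden extension for $\mathbb{S}$ is in stem $40$'' does not by itself exclude hidden extensions on unit classes in $MO\langle 16\rangle$. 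An $h_0$-multiple of such a class can be a boundary there while the image of $2\alpha\neq 0$ is nonzero in higher filtration. The argument you give in stem $32$ is the one that actually works: the order of $\alpha\in\pi_*\mathbb{S}$ bounds the order of its image. It should be applied stem by stem.
\end{itemize}
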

\begin{center}
\captionof{table}[The 2-primary component of the $O\langle 16\rangle$-cobordism groups]{The 2-primary component of $\Omega^{\langle 16\rangle}_n$ for the values of $n$ listed.}
\label{tab:mo16 groups}
\tablehead{\hline%
$n$ & $\Omega^{\langle 16\rangle}_n$ & $n$ & $\Omega^{\langle 16\rangle}_n$\\%
\hline}
\tabletail{\hline%
\multicolumn{4}{r}{%
\small\slshape to be continued on the next page}\\}
\tablelasttail{\hline}
\begin{supertabular}{|c|p{5.2cm}|c|p{5.2cm}|}
$0$  & $\Z$                        & $18$ & $\Z/8 \oplus \Z/2$\\
$1$  & $\Z/2$                      & $19$ & $\Z/2$\\
$2$  & $\Z/2$                      & $20$ & $\Z/8 \oplus \Z$\\
$3$  & $\Z/8$                      & $21$ & $\Z/2 \oplus \Z/2$\\
$4$  & $(0)$                       & $22$ & $\Z/2 \oplus \Z/2$\\
$5$  & $(0)$                       & $23$ & $(\Z/2)^2 \oplus \Z/4$\\
$6$  & $\Z/2$                      & $24$ & $\Z/2 \oplus \Z$\\
$7$  & $\Z/16$                     & $25$ & $\Z/2$\\
$8$  & $\Z/2 \oplus \Z/2$          & $26$ & $\Z/2 \oplus \Z/2$\\
$9$  & $(\Z/2)^3$          & $27$ & $(0)$\\
$10$ & $\Z/2$                      & $28$ & $\Z/2 \oplus \Z$\\
$11$ & $(0)$                       & $29$ & $(0)$\\
$12$ & $\Z$                        & $30$ & $\Z/2 \oplus \Z/2$\\
$13$ & $(0)$                       & $31$ & $\Z/2 \oplus \Z/2$\\
$14$ & $\Z/2 \oplus \Z/2$          & $32$ & $\Z \oplus \Z \oplus (\Z/2)^3$\\
$15$ & $\Z$                        & $33$ & $|\Omega^{\langle 16\rangle}_{33}|=32$\\
$16$ & $\Z \oplus \Z/2$            & $36$ & $\Z \oplus \Z \oplus F$\\
$17$ & $(\Z/2)^3$                  &      & \\
\end{supertabular}
\end{center}

\begin{center}
\captionof{table}[The 2-primary component of the $O\langle 16\rangle$-cobordism groups]{The 2-primary component of $\Omega^{\langle 16\rangle}_n$ for the values of $n$ listed.}
\label{tab:mo16 groups}
\tablehead{\hline%
$n$ & $\Omega^{\langle 16\rangle}_n$ & $n$ & $\Omega^{\langle 16\rangle}_n$\\%
\hline}
\tabletail{\hline%
\multicolumn{4}{r}{%
\small\slshape to be continued on the next page}\\}
\tablelasttail{\hline}
\begin{supertabular}{|c|p{5.2cm}|c|p{5.2cm}|}
$9$  & $\Z/2 \oplus \Z/2$ & $20$ & $\Z/8 \oplus \Z$\\
$10$ & $\Z/2$             & $21$ & $\Z/2 \oplus \Z/2$\\
$11$ & $(0)$              & $22$ & $\Z/2 \oplus \Z/2$\\
$12$ & $\Z$           & $23$ & $(\Z/2)^2 \oplus \Z/4$\\
$13$ & $(0)$              & $24$ & $\Z/2 \oplus \Z$\\
$14$ & $\Z/2 \oplus \Z/2$ & $25$ & $\Z/2$\\
$15$ & $\Z$           & $26$ & $\Z/2 \oplus \Z/2$\\
$16$ & $\Z \oplus \Z/2$ & $27$ & $(0)$\\
$17$ & $(\Z/2)^3$         & $28$ & $\Z/2 \oplus \Z$\\
$18$ & $\Z/8 \oplus \Z/2$ & $29$ & $(0)$\\
$19$ & $\Z/2$             & $30$ & $\Z/2 \oplus \Z/2$\\
      &                    & $31$ & $\Z/2 \oplus \Z/2$\\
      &                    & $32$ & $\Z \oplus \Z \oplus (\Z/2)^3$\\
      &                    & $33$ & $|\Omega^{\langle 16\rangle}_{33}|=32$\\
      &                    & $36$ & $\Z \oplus \Z \oplus F$\\
\end{supertabular}
\end{center}

\subsection{$MO\langle 17 \rangle$}
\subsubsection{The cohomology of $MO \langle 17 \rangle$}

\begin{proposition}[\cite{Stong1963Determination}]
    $H^{*}(BO\langle17 \rangle;\mathbb{Z}/2)$ is isomorphic to:
    \[
        H^{*}(K(\mathbb{Z}/2,17); \Z/2)/\langle Sq^2 i_{17} )\rangle \otimes \mathbb{Z}/2[\theta_{i} | L(i) > 8]
    \]
\end{proposition}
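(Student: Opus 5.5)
This is Stong's theorem specialized to $n=17$, so the plan is to run Stong's inductive Serre spectral sequence argument up the Whitehead tower rather than invoke the general statement blindly. Since $\pi_{16}(BO)\cong\Z/2$, the relevant stage is the principal fibration
\[
K(\Z/2,15)\longrightarrow BO\langle 17\rangle \longrightarrow BO\langle 16\rangle ,
\]
classified by the generator of $H^{16}(BO\langle 16\rangle;\Z/2)$, which I denote $x_{16}$ (the restriction of $i_{16}$). The first step is to recall from the previous section's description that
\[
H^*(BO\langle 16\rangle;\Z/2)\cong H^*(K(\Z/2,16);\Z/2)/\langle Sq^2 i_{16}\rangle \otimes \Z/2[\theta_i \mid L(i)>7].
\]
Here the quotient piece is polynomial on the classes $Sq^I i_{16}$ with $I$ admissible, excess $<16$, and $I$ not beginning the ideal generated by $Sq^2$. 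With this in hand I run the mod 2 Serre spectral sequence of the fibration.

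The second step is the fiber computation. $H^*(K(\Z/2,15);\Z/2)$ is polynomial on $Sq^I i_{15}$ with admissible $I$ of excess $<15$. The transgression sends $i_{15}\mapsto x_{16}$. Because the fibration is principal and the transgression commutes with Steenrod operations, $Sq^I i_{15}$ transgresses to $Sq^I x_{16}$. By Kudo transgression and Borel's theorem, the spectral sequence is then determined by the simple system of transgressive generators. Concretely, the polynomial generators $Sq^I x_{16}$ of the $K(\Z/2,16)$-factor that are still nonzero in $H^*(BO\langle 16\rangle)$ are killed. The fiber classes $Sq^I i_{15}$ whose transgressions vanish, namely those with $Sq^I x_{16}=0$, survive. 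By the relation $Sq^2 i_{16}=0$ on the base, these are precisely the classes lying in $\mathcal{A}Sq^2\cdot i_{15}$, together with Kudo-type $p$-th powers.

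The third step is to identify what survives as the claimed answer:
\[
H^*(K(\Z/2,17);\Z/2)/\langle Sq^2 i_{17}\rangle\otimes \Z/2[\theta_i\mid L(i)>8].
\]
I reindex the surviving fiber classes starting from $Sq^2 i_{15}$, which lives in degree 17. The leading survivor of degree 17 plays the role of $i_{17}$; equivalently, I show the classes it generates under $\mathcal{A}$ modulo $Sq^2$ match $H^*(K(\Z/2,17))/\langle Sq^2 i_{17}\rangle$ as an algebra, by comparing admissible-sequence bases and excess counts. For the $\theta$ part, I use Stong's function $\phi(0,n)$. Passing from $n=16$ to $17$ raises the bound from $7$ to $8$. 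This is because the generators $\theta_i$ with $L(i)=8$ are exactly the classes that, modulo decomposables, equal the top squares $Sq^I x_{16}$ that were killed, so they drop out of the polynomial part.

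I expect the main obstacle to be this bookkeeping in the third step. Matching the surviving fiber classes with the admissible sequences for $K(\Z/2,17)$ modulo $Sq^2$, and showing that no extension problems mix them with the $\theta$'s, is essentially the whole content of Stong's determination. I would first try to cite Stong's general inductive lemma, which states that $\phi$ increases exactly when the relevant $\theta$-generators are absorbed. As a sanity check, I would verify the answer in low degrees, through about degree 40, against the Sage computation of the $\mathcal{A}$-module structure that the paper already uses.
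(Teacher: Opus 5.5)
The paper gives no argument of its own here; it simply quotes Stong. So rerunning the last stage of Stong's Serre spectral sequence argument is a reasonable plan. Your first step, however, is wrong: by Bott periodicity $\pi_{16}(BO)\cong\pi_{15}(O)\cong\mathbb{Z}$, not $\mathbb{Z}/2$. The relevant stage is
\[
K(\mathbb{Z},15)\longrightarrow BO\langle 17\rangle\longrightarrow BO\langle 16\rangle ,
\]
classified by an integral class, so its mod $2$ reduction $x_{16}$ satisfies $Sq^1x_{16}=0$.

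For the same reason, the description of $H^*(BO\langle 16\rangle;\mathbb{Z}/2)$ with a $K(\mathbb{Z}/2,16)$ factor, which you import from the paper, cannot be right: it would give $Sq^1x_{16}\neq 0$. The factor should be $H^*(K(\mathbb{Z},16);\mathbb{Z}/2)/\langle Sq^2\iota_{16}\rangle$.

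With your fiber $K(\mathbb{Z}/2,15)$ and the correct base, $Sq^1 i_{15}$ transgresses to $Sq^1x_{16}=0$ and survives. That produces a nonzero class in $H^{16}(BO\langle 17\rangle;\mathbb{Z}/2)$, which is impossible because $BO\langle 17\rangle$ is $16$-connected. You need $H^*(K(\mathbb{Z},15);\mathbb{Z}/2)$, which is polynomial on $Sq^I\iota_{15}$ with $I$ admissible, $e(I)<15$, and last entry $\geq 2$. Then $Sq^2\iota_{15}$ is the degree-$17$ survivor, and $Sq^2Sq^2\iota_{15}=Sq^3Sq^1\iota_{15}=0$ accounts for the relation. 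This is exactly how $x_9$ restricts to $Sq^2\iota_7$ one period lower.

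The second gap is your third step, which is where all the content lies, and the mechanism you describe is not what happens. The transgression kills exactly the image of the base's Eilenberg--MacLane factor; that is where the $w_i$ with $L(i)=8$ die. The base's $\theta$-part passes to $E_\infty$ untouched, so no $\theta$'s ``drop out''.

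Conversely, the surviving fiber classes are a proper quotient of $H^*(K(\mathbb{Z}/2,17))/\langle Sq^2 i_{17}\rangle$. Instability produces extra kernel for the map $i_{17}\mapsto Sq^2\iota_{15}$. Those kernel classes restrict to zero on the fiber, so they are base classes. The paper's own $n=9$ example shows this:
\[
(Sq^4Sq^2Sq^1+Sq^7)Sq^2\iota_7=0 \text{ in } H^*(K(\mathbb{Z},7)) \text{ because } Sq^9\iota_7=0,
\]
yet the paper's relation $Sq^{16}U=(Sq^4Sq^2Sq^1+Sq^7)i_9$ says this class is $w_{16}\neq 0$. In degree $16$ the quotient of $H^*(K(\mathbb{Z}/2,9))$ has three generators, while only two fiber classes survive. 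So comparing admissible-sequence counts will not produce a match. What must be proved is which $w_i$ the Eilenberg--MacLane factor absorbs.

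For $n=17$ this absorption starts at $w_{256}$. Below degree $256$ everything comes from $\mathcal{A}x_{17}$, and $w_{2^k}=0$ for $k\leq 7$. Hence Adams' decomposition of $Sq^{256}$ into secondary operations, applied to the Thom class, puts $w_{256}$ in the image of $H^*(K(\mathbb{Z}/2,17))$.

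Carried through, the argument gives the polynomial factor $\mathbb{Z}/2[\theta_i\mid L(i)>9]$. This fits the paper's own bounds $5,6,7$ for $n=9,10,12$. The printed $L(i)>8$ (and $L(i)>7$ for $BO\langle 16\rangle$) looks like an off-by-one slip. Your argument reproduces it only because the misprinted base bound and the ``killed by transgression'' step cancel. The discrepancy begins in degree $256$, so it affects none of the paper's computations and would not show up in your low-degree Sage check.
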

where $L(i)$ is one plus the number of ones in the binary expansion of $i-1$. We calculate the $\mathcal{A}$-module structure of $H^{*}(MO\langle17 \rangle;\mathbb{Z}/2)$ through degree $70$ using a computer program in Sage.

\subsubsection{Adams charts}
This section contains charts for the Adams spectral sequence for $MO\langle 17 \rangle$. The $E_r$-page is displayed along with the $d_r$-differentials. The $y$-axis is the Adams filtration $s$ and the $x$ axis is the stem $t-s$. The elements $h_0$, $h_1$, and $h_2$ have had their names suppressed. The dotted lines on the $E_{\infty}$-page represent possibly nonzero differentials that have yet to be determined. 
\clearpage
\FloatBarrier
\begin{figure}[!p]
\centering

\begin{subfigure}{\linewidth}
  \centering
  \includegraphics[width=\linewidth,
    trim=0.5cm 0cm 0cm 0cm,clip]{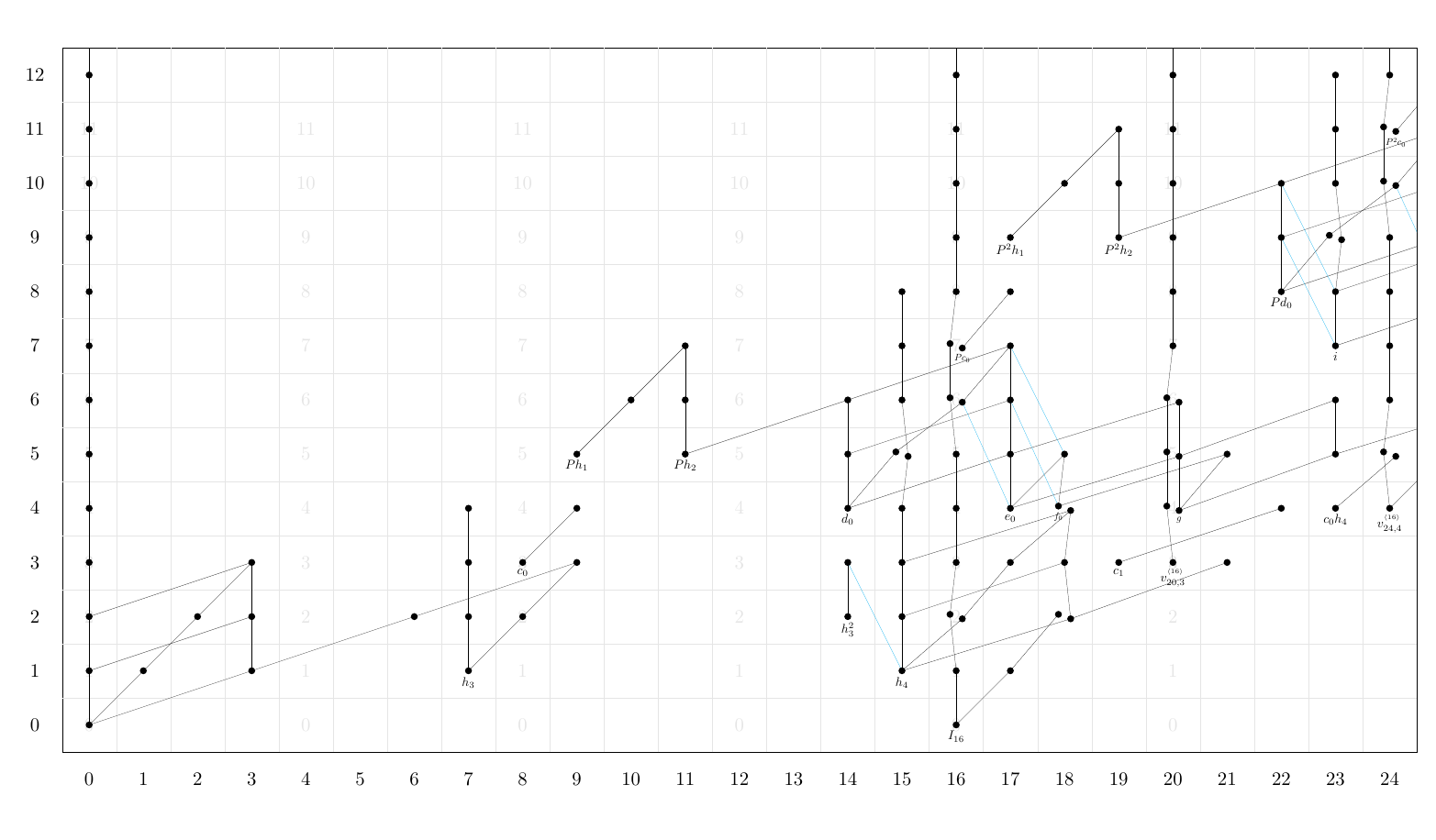}
  \caption{$E_2$-page $MO \langle 17 \rangle $ (stems 0-24)}
  \label{fig:mo17_e2_0_24}
\end{subfigure}

\vspace{-0.2\baselineskip}

\begin{subfigure}{\linewidth}
  \centering
  \includegraphics[width=0.8\linewidth,
    trim=0.5cm 0cm 0cm 0cm,clip]{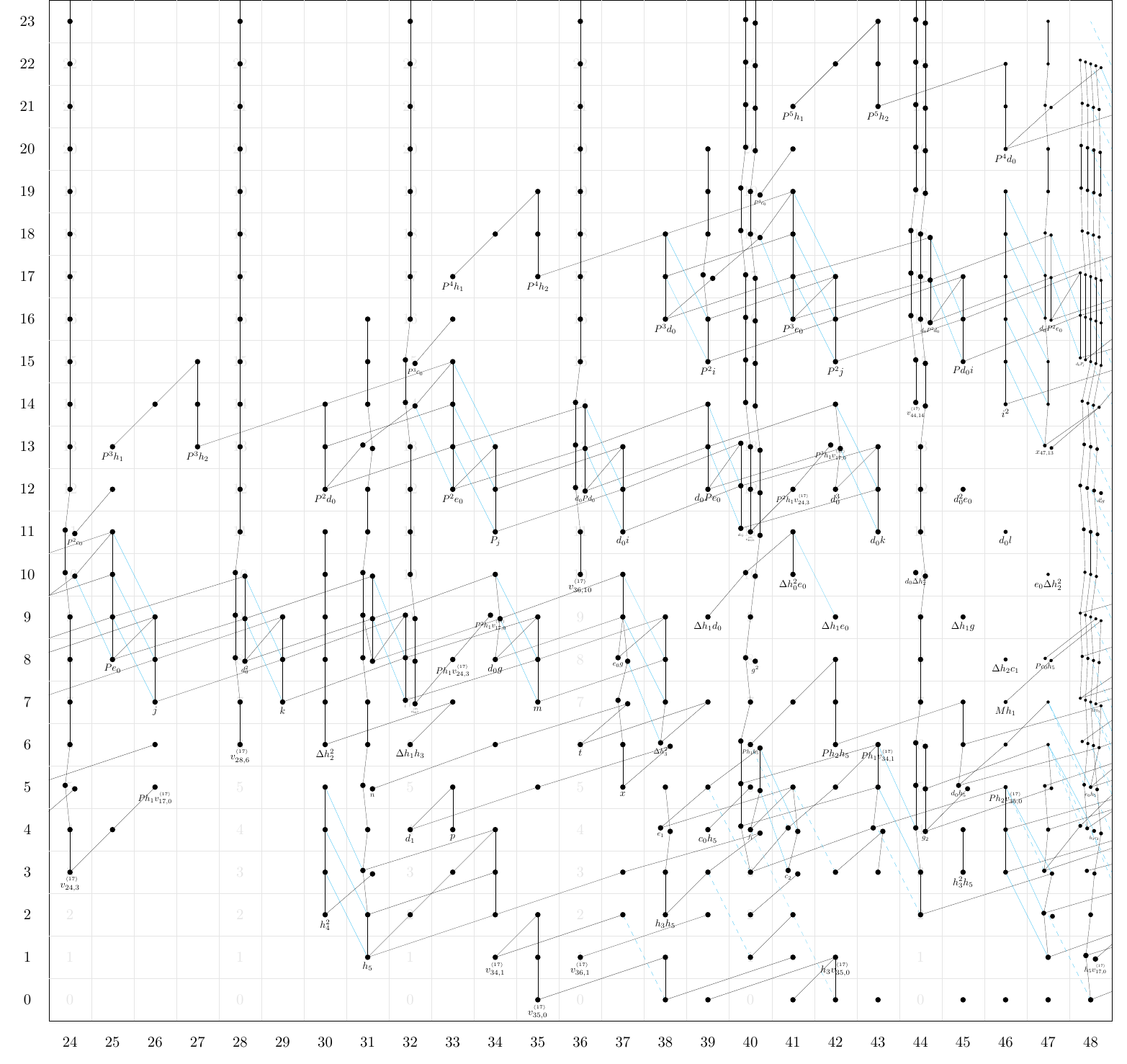}
  \caption{$E_2$-page $MO \langle 17 \rangle $ (stems 24-48)}
  \label{fig:mo17_e2_24_48}
\end{subfigure}

\caption{$E_2$-page $MO\langle 17 \rangle$ \label{fig:mo17_e2}}
\end{figure}

\begin{figure}[!p]
\centering
\begin{subfigure}{\linewidth}
  \centering
  \includegraphics[width=\linewidth,
    trim=0.5cm 0cm 0cm 0cm,clip]{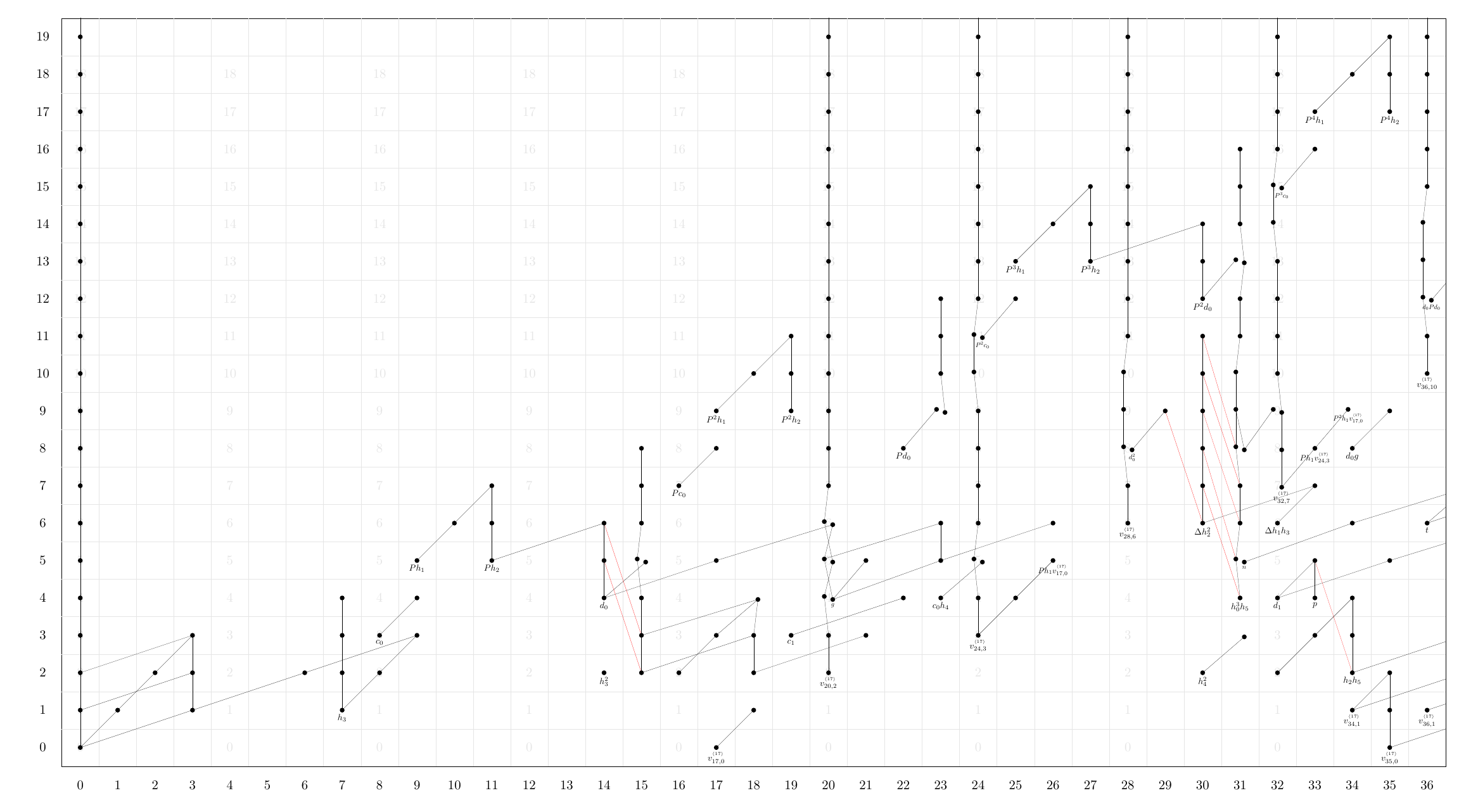}
  \caption{$E_3$-page $MO \langle 17 \rangle $ (stems 0-36)}
  \label{fig:mo17_e3}
\end{subfigure}

\vspace{-0.4\baselineskip}

\begin{subfigure}{\linewidth}
  \centering
  \includegraphics[width=\linewidth,
    trim=0.5cm 0cm 0cm 0cm,clip]{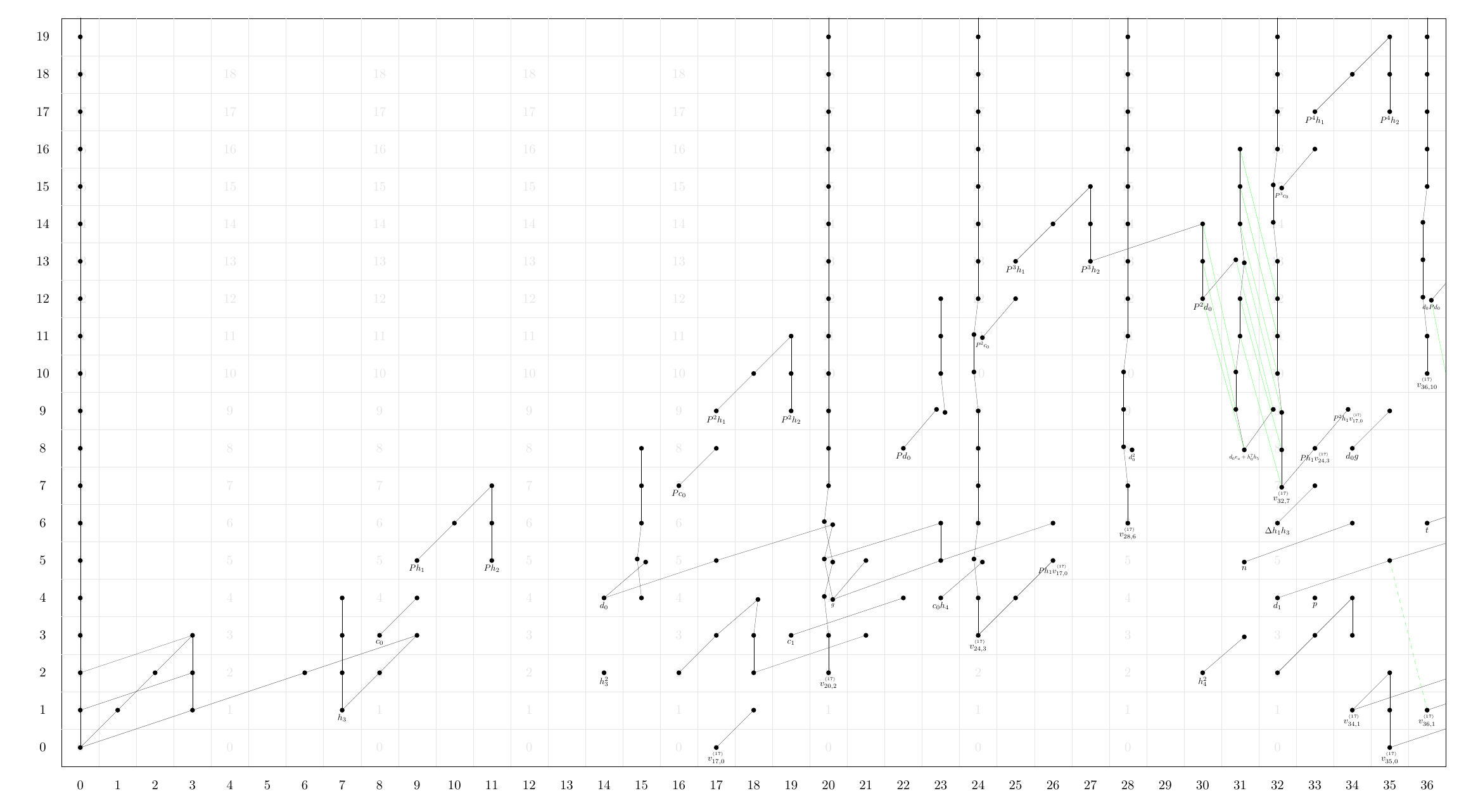}
  \caption{$E_4$-page $MO \langle 17 \rangle $ (stems 0-36)}
  \label{fig:mo17_e4}
\end{subfigure}

\caption{$E_3$/$E_4$-page $MO\langle 17 \rangle$ \label{fig:mo17_e3_e4}}
\end{figure}

\begin{figure}[!p]
\centering
\begin{subfigure}{\linewidth}
  \centering
  \includegraphics[width=\linewidth,
    trim=0.5cm 0cm 0cm 0cm,clip]{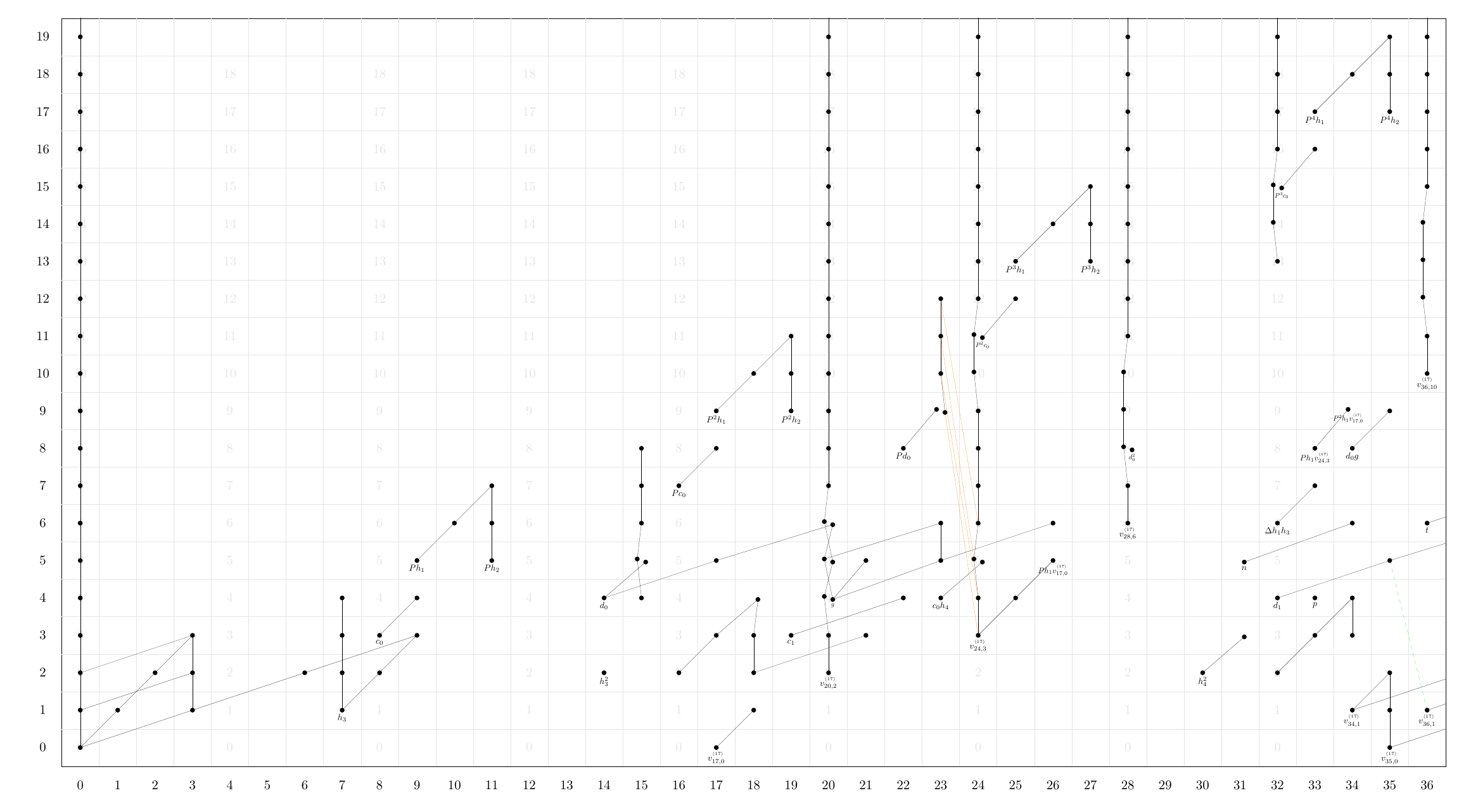}
  \caption{$E_5$=$E_6$-page $MO \langle 17 \rangle $ (stems 0-36)}
  \label{fig:mo17_e6}
\end{subfigure}

\vspace{-0.4\baselineskip}

\begin{subfigure}{\linewidth}
  \centering
  \includegraphics[width=\linewidth,
    trim=0.5cm 0cm 0cm 0cm,clip]{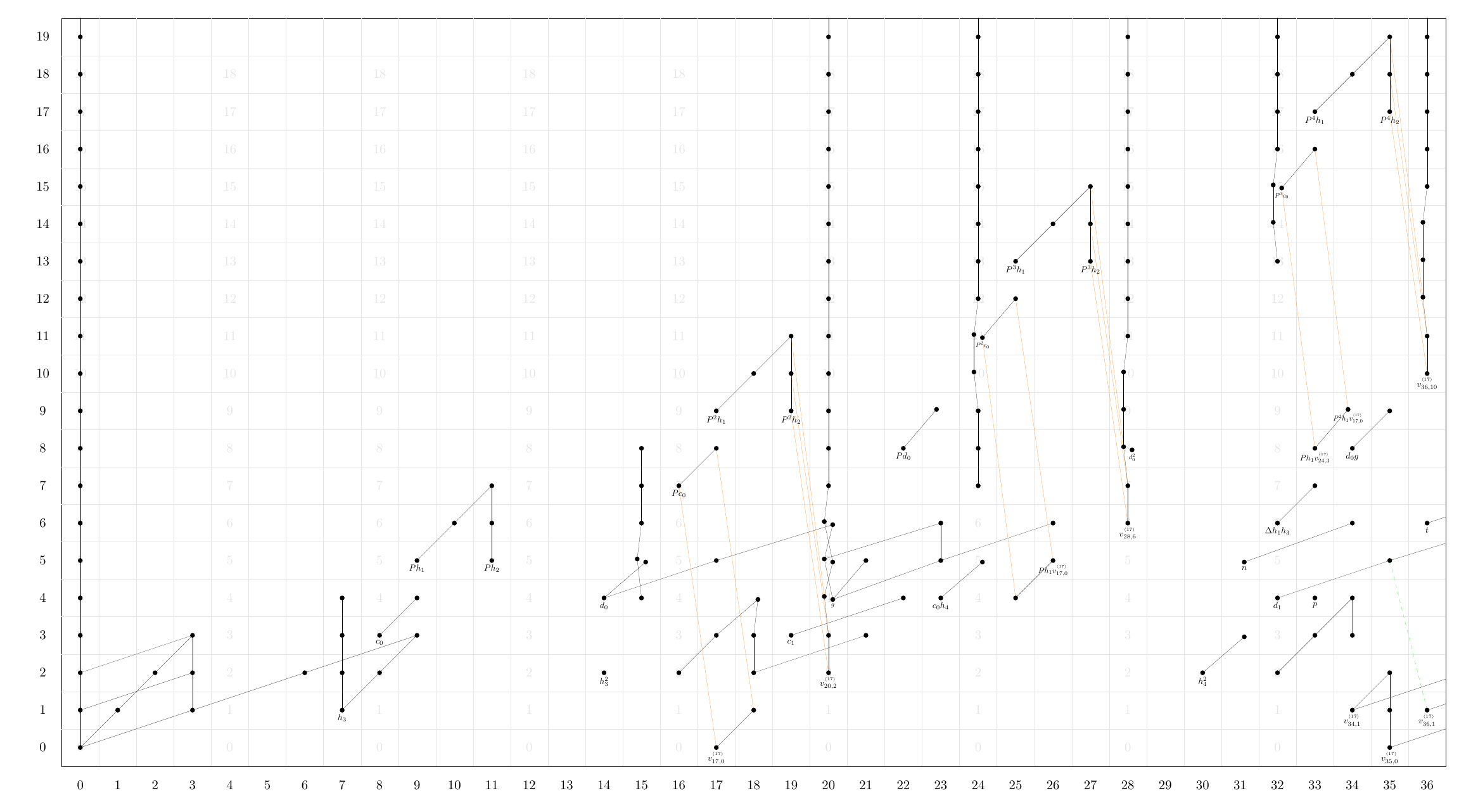}
  \caption{$E_{7}$-page $MO \langle 17 \rangle $ (stems 0-36)}
  \label{fig:mo17_e6}
\end{subfigure}

\caption{$E_5=E_6$/$E_7$-page $MO\langle 17 \rangle$ \label{fig:mo17_e6_e7}}
\end{figure}

\begin{figure}
  \centering
  \includegraphics[width=\linewidth,
    trim=0.5cm 0cm 0cm 0cm,clip]{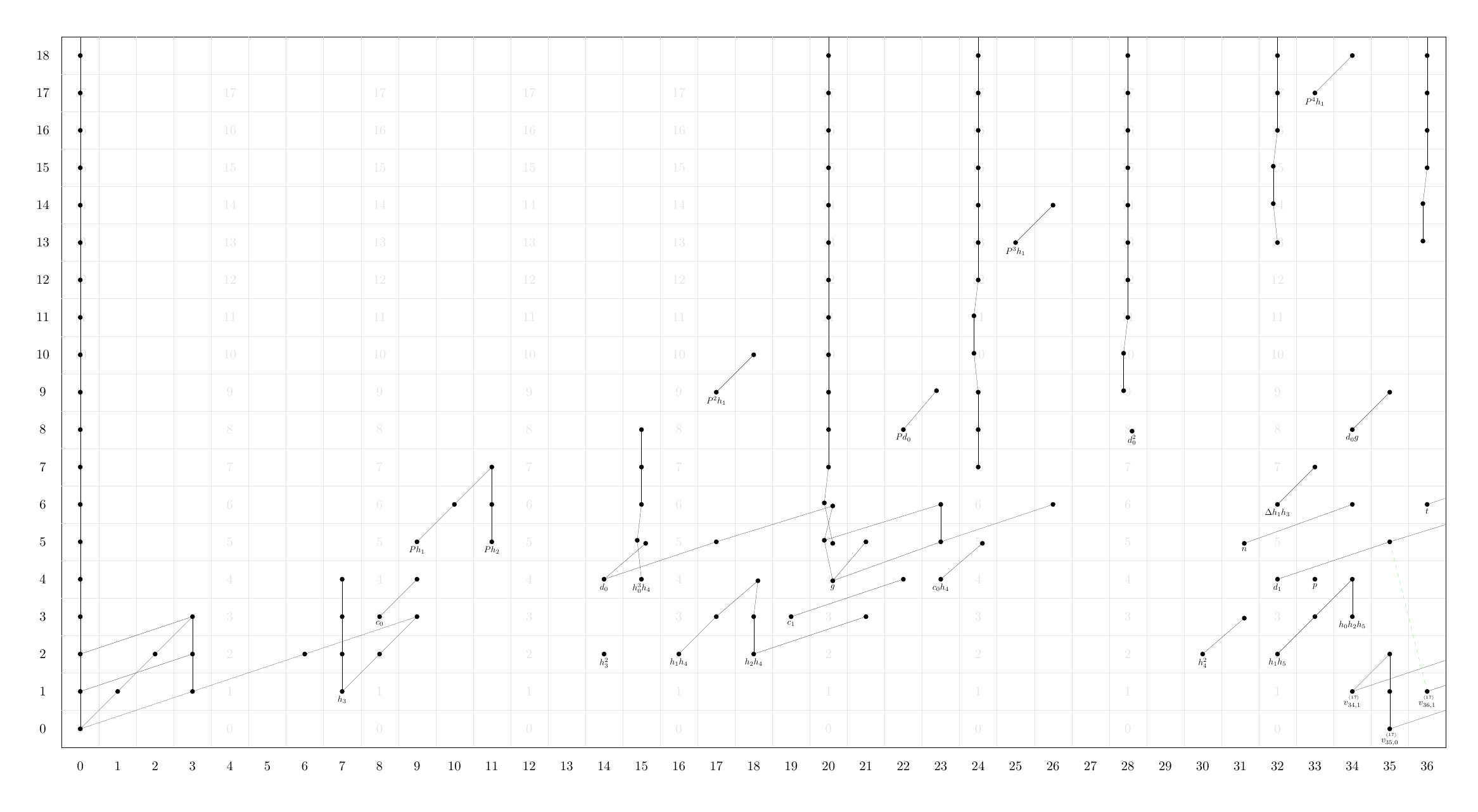}
  \caption{$E_{\infty}$-page $MO\langle 17 \rangle$ \label{fig:mo17_einf}}
\end{figure}
\FloatBarrier

\subsubsection{$d_2$-differentials}
\begin{proposition}
Table \ref{tab:Adams d_2 mo17} describes the non-zero $d_2$-differentials in the Adams spectral sequence for $MO\langle 17 \rangle$  on all indecomposables on $E_2$ through stem 36, and on select elements in higher stems.
\end{proposition}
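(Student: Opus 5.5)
The plan is to treat the $d_2$-differentials for $MO\langle 17 \rangle$ the same way as in the earlier chapters. First, read off from the $E_2$-page (Figure \ref{fig:mo17_e2}) the list of indecomposables through stem 36 that can support a $d_2$ at all, i.e.\ those with a nonzero group in bidegree $(t-s-1,s+2)$. For classes in the image of the unit map $\mathbb S\to MO\langle 17\rangle$, such as $h_1,h_4,e_0,f_0,i,j,k,h_5,l,P_j,m$ and $h_2^2h_5$, naturality and Proposition \ref{prop: sphere differentials} fix the differential. This holds provided the target still maps nontrivially, so each sphere target ($h_0h_3^2$, $h_1^2d_0$, $h_0^2e_0$, $h_0Pd_0$, etc.) has to be checked to be nonzero in $\text{Ext}(H^*MO\langle 17\rangle)$. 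Since $H^*(MO\langle 17\rangle;\Z/2)$ agrees with $\mathcal{A}//\mathcal{A}(3)$-like structure in low degrees, the bottom cell gives a copy of $\text{Ext}_{\mathcal{A}}(\Z/2,\Z/2)$ through roughly stem 16. Past that, the targets should be compared by the computer-generated module map.

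Second, handle the non-sphere generators. The first of these is the class $I_{17}$ in stem 17, together with classes built from it. For these I would use the same four tools as before, in this order.
\begin{enumerate}
\item Image of $J$: Theorems \ref{mahowaldimj} and \ref{hoveyimj} force the $\text{Im } J$ classes in stems $\geq 16$ (e.g.\ $h_0^3h_4$ for the $\Z/32$ in stem 15, and $Pc_0$, $P^2h_2$, \dots) to die. If a generator is the unique possible source in its bidegree, its differential is forced. Here I expect the $\text{Im } J$ kills mostly to occur on later pages, as for $MO\langle 16\rangle$.
\item Linearity: $h_0$-, $h_1$- and $h_2$-linearity, together with relations like $h_0^k v = h_j w$, transfer known differentials, as in Lemma \ref{lem:d_2 differential in 19 stem mo9}.
\item Moss: Moss's higher Leibniz rule \ref{mossleibniz}, applied to Massey product descriptions computed with \textbf{ext}, and checking that the indeterminacy vanishes each time.
\item Leibniz contradictions: multiplying by $d_0$ or $g$ and appealing to sphere differentials, as in Lemma \ref{lem:d_2 differential in stem 24 filtration 4 mo16}, to rule out candidate targets.
\end{enumerate}
For $h_0$-towers in stems $\equiv 3 \pmod 4$, the May--Milgram theorem \ref{maymilgram} combined with rational cohomology (Proposition \ref{prop:rational bo12}, extended via Proposition \ref{prop:bo_9_10_12_equiv} to $BO\langle 17\rangle$) decides which towers must support differentials.

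Third, for the select higher-stem elements, I would compute only those needed later, mainly for Massey-product indeterminacy arguments in stems $32$--$36$, using the same techniques.

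The main obstacle will be the generators in stems $32$--$36$ not coming from the sphere, where several targets are possible. Here kernel information is thin: beyond Theorem \ref{thm: 2n mo_n kernel}, which applies in stem $34=2\cdot 17$, and Hovey, little is known. For these I would first try Leibniz arguments with $d_0$, $g$ and $Ph_2$ via \textbf{sseqcpp}. Failing that, I would mark the entry ``?'' as in earlier tables.
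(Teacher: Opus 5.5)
Your core argument is the paper's: every entry of the table through stem 36 ($h_1,h_4,e_0,f_0,i,j,k,h_5,l,P_j,m$) is the image of a sphere class, so it is settled by naturality along the unit map together with Proposition \ref{prop: sphere differentials}, and the only non-sphere entry, $\vv(38,0,17)$, is left undetermined; since no non-sphere indecomposable through stem 36 can support a $d_2$ for degree reasons, your $\text{Im } J$/Moss/Leibniz contingency machinery (and your worry about stems 32--36) is never needed on this page. Two side remarks are wrong, though harmless here: for $n=17$ Hovey's theorem makes $\text{Im } J$ inject through stem $15$, so the tower on $h_0^3h_4$ survives and detects the $\Z/32$ rather than being killed; and $BO\langle 17\rangle$ is not rationally equivalent to $BO\langle 12\rangle$, because the stages $BO\langle 13\rangle\to BO\langle 12\rangle$ and $BO\langle 17\rangle\to BO\langle 16\rangle$ have rationally nontrivial fibers $K(\Z,11)$ and $K(\Z,15)$ that kill $p_3$ and $p_4$, so $H^*(BO\langle 17\rangle;\mathbb{Q})\cong\mathbb{Q}[p_5,p_6,\dots]$.
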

The following table lists the only indecomposables through stem 36 that, for degree reasons, can support a $d_2$-differential. It also includes select differentials in higher stems.
 \begin{longtable}{llllc}
    \caption[Possible $d_2$-differentials $MO\langle 17 \rangle$]{Possible non-zero $d_2$-differentials on indecomposable elements
    \label{tab:Adams d_2 mo17}
    } \\
    \toprule
    $x$ & $(t-s,s)$ & $d_2(x)$ & Occurs & Proof\\
    \midrule \endfirsthead
    \caption[]{Possible non-zero $d_2$-differentials on indecomposable elements} \\
    \toprule
    $x$ & $(t-s,s)$ & $d_2(x)$ & Occurs & Proof\\
    \midrule \endhead
    \bottomrule \endfoot
        $h_1$ & $(1, 1)$ & $h_0^3$ & No & \ref{prop: sphere differentials}\\
        $h_4$ & $(15, 1)$ & $h_0h_3^2$ & Yes & \ref{prop: sphere differentials}\\
        $e_0$ & $(15, 4)$ & $h_1^2d_0$ & Yes & \ref{prop: sphere differentials}\\
        $f_0$ & $(18, 4)$ & $h_0^2e_0$ & Yes & \ref{prop: sphere differentials}\\
        $i$ & $(23, 7)$ & $h_0Pd_0$ & Yes & \ref{prop: sphere differentials}\\
        $j$ & $(26, 7)$ & $h_0Pe_0$ & Yes & \ref{prop: sphere differentials}\\
        $k$ & $(29, 7)$ & $h_0d_0^2$ & Yes & \ref{prop: sphere differentials}\\
        $h_5$ & $(31, 1)$ & $h_0h_4^2$ & Yes & \ref{prop: sphere differentials}\\
        $l$ & $(32, 7)$ & $h_0d_0e_0$ & Yes & \ref{prop: sphere differentials} \\
        $P_j$ & $(34, 11)$ & $h_0^2P^2e_0$ & Yes & \ref{prop: sphere differentials} \\
        $m$ & $(35, 7)$ & $h_0d_0g$ & Yes & \ref{prop: sphere differentials} \\
        $\vv(38,0,17)$ & $(38, 0)$ & $h_2\vv(34,1,17)$ & ? & \\
    \end{longtable}
\subsubsection{$d_3$-differentials}
\begin{proposition}
Table \ref{tab:Adams d_3 mo17} describes the non-zero $d_3$-differentials in the Adams spectral sequence for $MO\langle 17 \rangle$  on all indecomposables on $E_3$ through stem 36, and on select elements in higher stems.
\end{proposition}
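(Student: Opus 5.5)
The statement is a table proposition: it asserts which indecomposables on the $E_3$-page of the Adams spectral sequence for $MO\langle 17\rangle$ through stem 36 (plus selected higher-stem elements) support nonzero $d_3$-differentials. I will follow the same template as for $MO\langle 16\rangle$. First, using the $E_3$-page chart (after the $d_2$-differentials in Table \ref{tab:Adams d_2 mo17}, all of which come from the sphere), I list the indecomposables through stem 36 that have a nonzero element in the bidegree $(t-s-1,s+3)$. Most of these are images of sphere classes under the unit map. For them, naturality with Proposition \ref{prop: sphere differentials} settles the question directly:
\[
d_3(h_0h_4)=h_0d_0,\qquad d_3(\Delta h_2^2)=h_0^2k,\qquad d_3(h_0^3h_5)=h_0\Delta h_2^2,
\]
together with the vanishing of $d_3(h_1h_4)$ and $d_3(h_2h_4)$, and $d_3(e_1)=h_1t$ in stem 38. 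In each case I must check that the target is still nonzero on the $E_3$-page of $MO\langle 17\rangle$.

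For classes coming from the Thom class side of $H^*(MO\langle 17\rangle)$, I expect these in the relevant range to lie at or beyond stem 17. The bottom exotic class $I_{17}$ is the analogue of $I_{16}$ for $MO\langle 16\rangle$. By Theorem \ref{hoveyimj}, the image of $J$ dies only in dimensions $\geq 16$. Hence $h_0^3h_4$ in stem 15 survives; this is consistent with $\Omega^{\langle 17\rangle}_{15}\cong \Z/32$. In stem 16, $I_{17}$ sits on the $0$-line in stem 17, so the first image of $J$ classes that must be killed are $Pc_0$, $P^2h_2$, and so on. I expect any $d_3$ hitting those to be ruled out, and the image of $J$ targets to be handled by longer differentials ($d_5$, $d_6$, $d_7$), matching the chart pages $E_5=E_6$ and $E_7$. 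For each remaining non-sphere candidate, for instance a class $v_{33,1}$ or $v_{35,1}$ analogous to the $MO\langle 16\rangle$ case, I will rule out targets using three tools. The first is $h_0$-linearity: the source is $h_0$-torsion while the target supports a longer $h_0$-tower. The second is $h_1$- and $h_2$-linearity. The third is Theorem \ref{thm: 2n mo_n kernel}: in stem $34=2\cdot 17$, the kernel of the unit map is exactly $\mathrm{Im}\,J$, so Coker $J$ classes such as $h_1d_1$ or $h_2h_5$-multiples must survive. Where a differential does occur, I will prove it with Moss' higher Leibniz rule on a Massey product $\langle h_0,h_0^3h_3,\cdot\rangle$ or $\langle h_2,h_3,\cdot\rangle$, as was done for $MO\langle 9\rangle$.

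The main obstacle will be the candidates in stems 33–36 whose targets are neither clearly in $\mathrm{Im}\,J$ nor clearly in Coker $J$. The clearest example is a possible $d_3$ on $v_{35,1}$ landing in the $h_0^2h_2h_5$ bidegree. For this I would first try the Massey-product and Leibniz approach using relations with $d_0$ or $g$, mirroring the \textbf{sseqcpp}-style arguments. Failing that, I will mark the entry as undetermined ("?") in the table, as the paper does elsewhere.
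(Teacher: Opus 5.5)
Your handling of the sphere-derived entries by naturality with Proposition \ref{prop: sphere differentials}, and your use of Theorem \ref{thm: 2n mo_n kernel} in stem $34=2\cdot 17$, match the paper. There is nonetheless a genuine gap: you never pin down the actual non-sphere candidates, and for one of them none of your tools applies.

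Through stem 36 the non-sphere indecomposables that can support a $d_3$ are:
\begin{itemize}
\item $\vv(24,3,17)$, with possible target $h_0h_2g$ in bidegree $(23,6)$;
\item $\vv(35,0,17)$, with possible target $h_0h_2h_5$.
\end{itemize}
There are no stem-33 classes or a $v_{35,1}$ modelled on $MO\langle 16\rangle$.

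The class $h_0h_2g$ detects $2\nu\overline{\kappa}\in\pi_{23}\mathbb{S}$. This is not in $\mathrm{Im}\,J$, so Theorems \ref{hoveyimj} and \ref{mahowaldimj} say nothing about it. It lies in stem 23, so Theorem \ref{thm: 2n mo_n kernel}, which concerns only stem $2n=34$, does not apply either. Nor do you give any linearity argument that would obstruct it.

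The missing idea is naturality along the map $MO\langle 17\rangle\to MO\langle 16\rangle$. In the already completed $MO\langle 16\rangle$ spectral sequence, $h_0h_2g$ survives to $E_\infty$; this is where $d_2(\vv(24,4,16))=0$ (Lemma \ref{lem:d_2 differential in stem 24 filtration 4 mo16}) is used. Hence $2\nu\overline{\kappa}$ has nonzero image in $\pi_{23}MO\langle 16\rangle$, and therefore in $\pi_{23}MO\langle 17\rangle$. Equivalently, a $d_3$ hitting $h_0h_2g$ for $MO\langle 17\rangle$ would map to a differential hitting it for $MO\langle 16\rangle$. This is how the paper proves $d_3(\vv(24,3,17))=0$. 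Without this comparison, or some substitute you would have to verify on the $MO\langle 17\rangle$ chart (for instance a $d_0$-multiplication relation), that entry is unproved.

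Conversely, the case you single out as the main obstacle is not one. The stem-35 class for $MO\langle 17\rangle$ is $\vv(35,0,17)$ on the $0$-line. Its $d_3$-target would be $h_0h_2h_5$; $h_0^2h_2h_5$ is its potential $d_4$-target, not a $d_3$-target. All of its possible targets ($h_0h_2h_5$, $h_0^2h_2h_5$, $h_2n$, $d_0g$, $h_1P^4h_1$) detect Coker $J$ classes in stem $34=2\cdot 17$. Theorem \ref{thm: 2n mo_n kernel} therefore forces them all to survive, so $\vv(35,0,17)$ is a permanent cycle. This is exactly the tool you listed but then did not apply. Falling back to a ``?'' entry would not establish the proposition, whose table records ``No'' for this class.

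Finally, no nonzero $d_3$ on a non-sphere class occurs in this range: every ``Yes'' entry is a sphere differential. The Moss/Massey-product arguments you plan to use are therefore not needed for this table.
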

The following table lists the only indecomposables through stem 36 that, for degree reasons, can support a $d_3$-differential. It also includes select differentials in higher stems.
 \begin{longtable}{llllc}
    \caption[Possible $d_3$-differentials $MO\langle 17 \rangle$]{Possible non-zero $d_3$-differentials on indecomposable elements
    \label{tab:Adams d_3 mo17}
    } \\
    \toprule
    $x$ & $(t-s,s)$ & $d_3(x)$ & Occurs & Proof\\
    \midrule \endfirsthead
    \caption[]{Possible non-zero $d_3$-differentials on indecomposable elements} \\
    \toprule
    $x$ & $(t-s,s)$ & $d_3(x)$ & Occurs & Proof\\
    \midrule \endhead
    \bottomrule \endfoot
        $h_0h_4$ & $(15,2)$ & $h_0d_0$ & Yes & \ref{prop: sphere differentials}\\
        $h_1h_4$ & $(16,2)$ & $h_1d_0$ & No & \ref{prop: sphere differentials}\\
        $h_2h_4$ & $(31,4)$ & $h_0e_0$ & No & \ref{prop: sphere differentials}\\
        $\vv(24,3,17)$ & $(24,3)$ & $h_0h_2g$ & No & \ref{lem:d_3 differential in stem 24 filtration 3 mo17}\\
        $\Delta h_2^2$ & $(30,6)$ & $h_0^2k$ & Yes & \ref{prop: sphere differentials}\\
        $h_0^3h_5$ & $(31,4)$ & $h_0h_2h_5$ & No & \ref{prop: sphere differentials}\\
        $\vv(35,0,17)$ & $(35,0)$ & $h_0h_2h_5$ & No & \ref{lem:perm cycle in stem 35 filtration 0 mo17} \\
        $e_1$ & $(38,4)$ & $h_1t$ & Yes & \ref{prop: sphere differentials}\\
    \end{longtable}
    \begin{proof}
        For proofs of these differentials, see Proposition \ref{prop: sphere differentials} and the following Lemma. 
    \end{proof}
\begin{lemma}\label{lem:d_3 differential in stem 24 filtration 3 mo17}
$d_3(\vv(24,3,17))=0$
\end{lemma}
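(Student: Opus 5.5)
We need to show $d_3(\vv(24,3,17))=0$ in the Adams spectral sequence for $MO\langle 17\rangle$. The only possible target, from Table \ref{tab:Adams d_3 mo17}, is $h_0h_2g$ in bidegree $(23,6)$. We rule it out by multiplying by the permanent cycle $d_0$ and applying the Leibniz rule, as in Lemma \ref{lem:d_2 differential in stem 24 filtration 4 mo16} for $MO\langle 16\rangle$ and Lemma \ref{d_2 differential in stem 24 filtration 4 mo9} for $MO\langle 9\rangle$. The relevant $E_2$ relation there is $d_0h_0h_2g=h_0^5x$.

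The steps are as follows.

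First, read off from the $E_2$-page of $MO\langle 17\rangle$ (computed via \textbf{ext}) two facts. One is that $d_0\vv(24,3,17)=0$ on $E_2$, or at least that it is zero or a $d_2$-boundary on $E_3$. The other is that $d_0\cdot h_0h_2g=h_0^5x\neq 0$ in bidegree $(37,10)$. The second relation is inherited from the sphere through the unit map, so it remains to check that $h_0^5x$ is nonzero in $\text{Ext}$ for $MO\langle 17\rangle$.

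Second, check that $h_0^5x$ survives to $E_3$. It must not be hit by a $d_2$. In this table the $d_2$'s near stem 37 come only from sphere classes such as $h_2^2h_5$, $m$ and $P_j$, whose targets are listed and do not include $h_0^5x$. For the sphere, $h_0^5x$ is nonzero on $E_3$; naturality plus a filtration count in $MO\langle 17\rangle$ should give the same here.

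Third, suppose $d_3(\vv(24,3,17))=h_0h_2g$. Since $d_0$ is a permanent cycle, the Leibniz rule gives
\[
0=d_3(d_0\vv(24,3,17))=d_0\,d_3(\vv(24,3,17))=h_0^5x\neq 0 \text{ on } E_3,
\]
which is a contradiction. Hence $d_3(\vv(24,3,17))=0$.

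As a backup, one could argue with $h_0$- or $h_2$-linearity. If $h_0\vv(24,3,17)$ and $h_0^2h_2g$ behave differently with respect to $h_0$-torsion, that would also rule out the differential.

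The main obstacle is verifying the $E_2$ relations for $MO\langle 17\rangle$, specifically that $d_0\vv(24,3,17)$ vanishes on $E_3$ and that $h_0^5x$ is nonzero on $E_3$. This depends on the machine computation of the module structure rather than on a conceptual argument. If $d_0\vv(24,3,17)$ turns out to be nonzero, I would instead try multiplying by $g$ or $h_1$, or use the Massey product $\langle h_3,h_0^4,-\rangle$ description as in Lemma \ref{lem:d_2 differential in mo9 stem 27 filtration 4}.
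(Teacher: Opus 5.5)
\textbf{There is a genuine gap.} Your contradiction in the third step needs two facts about $E_3$ of $MO\langle 17\rangle$ that you never establish: that $d_0\cdot\vv(24,3,17)=0$ on $E_3$, and that $h_0^5x\neq 0$ on $E_3$. The $d_0$-trick is also much less robust here than in Lemma \ref{lem:d_2 differential in stem 24 filtration 4 mo16}. There the excluded differential was a $d_2$, and $h_0^5x\neq 0$ on $E_2$ is a pure Ext statement.

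At $E_3$ the obstruction class is itself a $d_3$-boundary. In the sphere, $d_0\cdot h_0h_2g=(h_0d_0)(h_2g)$, with $d_3(h_0h_4)=h_0d_0$ and $h_2g$ a permanent cycle. So on $E_3$ we have $h_0^5x=d_3(h_0h_2h_4g)$, or $h_0^5x$ is already zero there. By naturality the same holds in $MO\langle 17\rangle$.

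Consequently a hypothetical $d_3(\vv(24,3,17))=h_0h_2g$ is fully compatible with the Leibniz rule for $d_0$ whenever $d_0\vv(24,3,17)$ agrees with $h_0h_2h_4g$ modulo $d_3$-cycles. Your argument therefore rests on the exact value of an Ext product you have not determined. It also needs $h_0^5x$ not to be a $d_2$-boundary in $MO\langle 17\rangle$. Compare $MO\langle 9\rangle$: there the analogous product $d_0\vv(24,4,9)$ is nonzero, $d_2(d_0\vv(24,4,9))=h_0^5x$, and the analogous differential does occur.

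Your fallback via linearity also fails. The class $\vv(24,3,17)$ is $h_0$-torsion free, since it is what eventually kills $h_0^2i$ and its $h_0$-multiples. Meanwhile $h_0\cdot h_0h_2g$, $h_1\cdot h_0h_2g$ and $h_2\cdot h_0h_2g$ all vanish. So $h_0$-, $h_1$- and $h_2$-linearity are all consistent with the differential.

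\textbf{The missing idea} is to compare with $MO\langle 16\rangle$ instead of looking for more structure inside $MO\langle 17\rangle$. The map $MO\langle 17\rangle\to MO\langle 16\rangle$ is compatible with the unit maps. Hence the induced map of Adams spectral sequences sends the sphere class $h_0h_2g$ to $h_0h_2g$. If $d_3(\vv(24,3,17))=h_0h_2g$, the image of $\vv(24,3,17)$ would support a $d_3$ hitting $h_0h_2g$ in $MO\langle 16\rangle$. But $h_0h_2g$, which detects $2\nu\overline{\kappa}$, survives to $E_\infty$ there (Figure \ref{fig:mo16_einf}). That survival is where the $d_0$-argument legitimately enters, at $E_2$, through Lemma \ref{lem:d_2 differential in stem 24 filtration 4 mo16}.

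This is the paper's proof, phrased there through the composite $\pi_{23}\mathbb S\to\pi_{23}MO\langle 17\rangle\to\pi_{23}MO\langle 16\rangle$. The naturality form above is the cleanest way to make it airtight. It needs no new $E_2$ data for $MO\langle 17\rangle$.
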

\begin{proof}
The only possible target of a $d_3$-differential with source $\vv(24,3,17)$ is $h_0h_2g$. The element $h_0h_2g$ detects $2\nu \overline{\kappa} \in \pi_{23} \mathbb{S}$. Consider the composition of maps
\begin{align*}
\pi_{23} \mathbb{S} \rightarrow \pi_{23} MO\langle 17 \rangle \rightarrow \pi_{23} MO \langle 16 \rangle 
\end{align*}
By calculations in the previous section (see Figure \ref{fig:mo16_einf}), $h_0h_2g$ survives the Adams spectral sequence for $MO \langle 16 \rangle$, and so $2\nu \overline{\kappa}$ is in the image of the composition of maps above. Thus, $2\nu \overline{\kappa}$ maps nontrivially to $\pi_{23} MO \langle 17 \rangle$, and $h_0h_2g$ must survive the Adams spectral seqeunce for $MO \langle 17 \rangle$. 
\end{proof}

\begin{lemma}\label{lem:perm cycle in stem 35 filtration 0 mo17}
The element $\vv(35,0,17)$ is a permanent cycle. 
\end{lemma}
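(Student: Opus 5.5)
We need to show that $\vv(35,0,17)$ supports no nonzero Adams differential in the spectral sequence for $MO\langle 17 \rangle$. The table lists only one candidate target for a $d_3$, namely $h_0h_2h_5$. We therefore have to rule out that $d_3$, and also any longer $d_r$ into stem 34. The plan mirrors Lemma \ref{lem:d_3 differential in stem 24 filtration 3 mo17}: show that the potential targets detect classes that survive to $\pi_{34}MO\langle 17\rangle$, since they are images of nontrivial elements of $\pi_{34}\mathbb S$ under the unit map.

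\textbf{Step 1 (list the targets).} Read off from the $E_2$-page (Figure \ref{fig:mo17_e2}) all elements in stem 34 of filtration $\geq 2$ that survive to each $E_r$. I expect these to be essentially the images of sphere classes: $h_0h_2h_5$, and possibly $h_1^2$-type and $d_0^2$-type classes such as $h_0h_2h_5$-multiples, $h_1 d_1$, and $P$-family or $d_0e_0$-related classes. The differentials recorded in Tables \ref{tab:Adams d_2 mo17} and \ref{tab:Adams d_3 mo17} tell us which of these are already gone by the relevant page.

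\textbf{Step 2 (naturality).} For each surviving candidate $y$, I would identify the class of $\pi_{34}\mathbb S$ it detects. Since $h_0h_2h_5$ detects $2\nu\theta_4$, the relevant element is $2\nu\theta_4$. Then I would argue that its image in $\pi_{34}MO\langle 17\rangle$ is nonzero, and I would try two routes.

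\begin{enumerate}
\item Use Theorem \ref{thm: 2n mo_n kernel} with $n=17$. This gives that the kernel of $\pi_{34}\mathbb S\to\pi_{34}MO\langle 17\rangle$ is generated by $\operatorname{Im} J$, and $34\equiv 2 \pmod 8$, so $\operatorname{Im} J$ is trivial in that stem. Hence the map is injective. Every sphere class in stem 34 detected in positive filtration is therefore nonzero in $\pi_{34}MO\langle 17\rangle$.
\item Detection is subtle, so I would also confirm that $y$ is not hit by another differential. If $y$ were hit by $d_r(\vv(35,0,17))$, the class $2\nu\theta_4$ would be detected in higher filtration or would vanish. To exclude the first possibility, compare the filtration of the image of $2\nu\theta_4$ with the $E_\infty$ candidates in higher filtration.
\end{enumerate}

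\textbf{Step 3 (conclude).} Injectivity of the unit map in stem 34 forces the whole image of $\pi_{34}\mathbb S$ to be detected in $E_\infty$. The map $\operatorname{Ext}_{\mathcal A}(\mathbb Z/2,\mathbb Z/2)\to E_2(MO\langle17\rangle)$ is injective in this range; I would check this on the chart. The counting, namely the sphere's $E_\infty$ in stem 34 mapping onto the corresponding filtration pieces, then shows that none of these classes can be a boundary. Hence every $d_r(\vv(35,0,17))$ vanishes.

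\textbf{Main obstacle.} The hardest step is the filtration-jump issue in Step 2(2). Injectivity in homotopy does not by itself prevent a sphere class from being detected in higher Adams filtration in $MO\langle 17\rangle$ while its $E_2$ representative is killed. To handle this I would first try the $h_0$-linearity argument used in Lemma \ref{lem:d_3 differential in stem 33 filtration 1 mo16}. If $h_0\vv(35,0,17)$ is nonzero while $h_0\cdot h_0h_2h_5=0$ (or conversely), that rules out the $d_3$ directly. If it fails, I would fall back on a direct count comparing the orders of the $E_\infty$ pieces.
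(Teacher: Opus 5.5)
\textbf{Same core argument, but a shared gap.} Your proof is the paper's: it lists the possible targets $h_0h_2h_5$, $h_0^2h_2h_5$, $h_2n$, $d_0g$, $h_1P^4h_1$. It then notes they detect classes outside $\operatorname{Im} J$; as you observe, $\pi_{34}(SO)=0$, so stem $34$ has no image of $J$. Finally it applies Theorem~\ref{thm: 2n mo_n kernel} with $n=17$ to conclude these classes survive. Your Step~1 guesses are off ($h_1d_1$ and $d_0e_0$-type classes are not in stem $34$), but reading the chart fixes that.

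\textbf{The filtration-jump gap.} The issue you flag is real, and the paper's proof does not address it either. It simply asserts that classes detecting Coker $J$ elements must survive. Injectivity of $\pi_{34}\mathbb S\to\pi_{34}MO\langle 17\rangle$ only says that the image of, say, the class detected by $h_0h_2h_5$ is nonzero. It does not say that this image is detected by $h_0h_2h_5$ rather than by something of higher filtration. Several surviving classes sit in filtration $\geq 4$ in stem $34$ ($h_0^2h_2h_5$, $h_2n$, $d_0g$, $h_1P^4h_1$, and possibly others) and could do that detecting.

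\textbf{Your proposed fixes do not close it.} The Step~3 order count only bounds $|\pi_{34}MO\langle17\rangle|$ below by $|\pi_{34}\mathbb S|$. Stem $34$ of $E_\infty(MO\langle 17\rangle)$ also contains non-sphere classes, e.g.\ $\vv(34,1,17)$. So an order count leaves room for one sphere class to be a boundary and cannot say which $E_2$ classes are hit. Your $h_0$-linearity test also only works in one direction. You need $h_0\vv(35,0,17)=0$ on $E_3$ while $h_0\cdot h_0h_2h_5=h_0^2h_2h_5\neq0$ there. The first case you state ($h_0\vv(35,0,17)\neq 0$, $h_0\cdot(\text{target})=0$) yields no contradiction. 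Even when the test works, it handles only $h_0h_2h_5$.

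\textbf{What a rigorous version needs.} You would have to argue target by target. For example, $h_1P^4h_1$ cannot be a boundary, by naturality along $MO\langle 17\rangle\to MO\langle 4\rangle\to ko$. The unit of $ko$ detects $\eta\mu_{33}$ in the same filtration, so $h_1P^4h_1$ maps to a nonzero class in the collapsing Adams spectral sequence for $ko$. The classes $h_0h_2h_5$, $h_0^2h_2h_5$ and $h_2n$ need an additional argument that neither you nor the paper supplies.
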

\begin{proof}
The possible targets of a differential with source $\vv(35,0,17)$ are $h_0h_2h_5$, $h_0^2h_2h_5$, $h_2n$, $d_0g$, and $h_1P^{4}h_1$. By
Theorem \ref{mahowaldimj}, each these elements is in $\text{Coker } J$. Thus, by Theorem \ref{thm: 2n mo_n kernel}, they must all survive 
the Adams spectral sequence for $MO\langle 17\rangle$. Hence, $\vv(35,0,17)$ is a permanent cycle. 
\end{proof}

\subsubsection{$d_4$-differentials}
\begin{proposition}
Table \ref{tab:Adams d_4 mo17} describes the non-zero $d_4$-differentials in the Adams spectral sequence for $MO\langle 17 \rangle$  on all indecomposables on $E_4$ through stem 37 and on select elements in higher stems. 
\end{proposition}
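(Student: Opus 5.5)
The plan is the same three-pronged strategy used for $MO\langle 9\rangle$, $MO\langle 12\rangle$ and $MO\langle 16\rangle$.

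\emph{Step 1: enumerate candidates.} From the $E_4$-page chart (Figure \ref{fig:mo17_e3_e4}), I will list every indecomposable through stem 37 that is nonzero on $E_4$. For each, I record the possible $d_4$-targets in bidegree $(t-s-1,s+4)$. Most of them will have no possible target for degree reasons and drop out immediately.

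\emph{Step 2: classes in the image of the unit map.} For surviving candidates that are images of classes in $\text{Ext}_{\mathcal{A}}(\mathbb{Z}/2,\mathbb{Z}/2)$, I will use naturality of the map of spectral sequences induced by $\mathbb{S}\to MO\langle 17\rangle$ together with Proposition \ref{prop: sphere differentials}. This gives the nonzero differentials
\[
d_4(d_0e_0)=P^2d_0,\quad d_4(e_0g)=d_0Pd_0,\quad d_4(h_3h_5)=h_0x,\quad d_4(h_0^3\Delta h_3^2)=h_0^2d_0i,
\]
whenever the targets are still nonzero on $E_4$. It also gives zero differentials on sphere classes that are permanent there, such as $h_0e_0$. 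In particular, $d_4(h_0e_0)=0$ holds even if an $MO\langle 17\rangle$-class such as $h_0^9 I_{17}$ sits in the target bidegree, because the image of a $d_4$-cycle from $\mathbb{S}$ must be a $d_4$-cycle.

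\emph{Step 3: the genuinely new classes $\vv(a,b,17)$, by detection arguments on the target.}
\begin{itemize}
\item If a candidate target detects an element of $\text{Im}\,J$ in degree $\ge 16$, Theorem \ref{hoveyimj} says it must die. Such targets are identified by Theorem \ref{mahowaldimj}; examples are $Pc_0$, $P^2h_2$, $h_0^2i$, $P^3h_2$ and $P^4h_2$, where these have not already been killed. I would then check on the chart that the $\vv$ class is the unique possible source on $E_4$ with a target of that length, exactly as in Lemmas \ref{lem:d_6-differentials-imj-mo16} and \ref{lem:d_5-differentials-imj-mo12}.
\item If a candidate target lies in $\text{Coker}\,J$ in stem $34=2\cdot 17$, Theorem \ref{thm: 2n mo_n kernel} forces it to survive, so the differential is zero (compare Lemma \ref{lem:perm cycle in stem 35 filtration 0 mo17}).
\item For targets in the image of $\pi_*\mathbb{S}$ that survive for $MO\langle 16\rangle$, I factor the unit map through $MO\langle 17\rangle\to MO\langle 16\rangle$, as in Lemma \ref{lem:d_3 differential in stem 24 filtration 3 mo17}. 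Since the class maps nontrivially to $\pi_*MO\langle 16\rangle$, it is nonzero in $\pi_*MO\langle 17\rangle$, so it cannot be a boundary.
\end{itemize}

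\emph{Step 4: the remaining ambiguous cases.} Where a target is a sum of classes, I would use $h_0$-, $h_1$- and $h_2$-linearity and the Leibniz rule with permanent cycles such as $d_0$ and $g$. This mirrors Lemma \ref{lem:d_4 differential in stem 24 filtration 5 mo12}: multiply by $d_0$ and compare with a known nonzero product to exclude all but one option. Classes in the $h_0$-towers over $\text{Im}\,J$ in stems $28$ and $36$ are handled by writing them as Massey products $\langle h_0,h_0^3h_3,x\rangle$ and applying Moss's rule (Theorem \ref{mossleibniz}), after checking that the indeterminacy consists of $d_4$-cycles.

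The main obstacle I expect is Step 4 in the high stems (roughly $32$ to $37$), where several $E_4$ classes share a bidegree. There the uniqueness-of-source arguments may fail. I would first try the $d_0$- and $g$-multiplication trick, and fall back on \textbf{sseqcpp}'s deductions where necessary, marking any genuinely undetermined entries with ``?'' in the table as was done for earlier spectra.
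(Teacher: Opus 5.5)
Your plan is essentially the paper's argument. The sphere-class entries ($h_0e_0$, $d_0e_0$, $e_0g$, $h_3h_5$, $h_0^3\Delta h_3^2$) come from naturality and Proposition \ref{prop: sphere differentials}. The only new nonzero $d_4$ is the one forced by $\text{Im}\,J$, namely $d_4(\vv(32,7,17))=h_0^{10}h_5$: the stem-31 $h_0$-tower at the Adams edge must die, and $\vv(32,7,17)$ is its only possible source. $\vv(35,0,17)$ is shown to be a permanent cycle using $\text{Coker}\,J$ in stem $34=2\cdot 17$ and Theorem \ref{thm: 2n mo_n kernel}, and $\vv(36,1,17)\mapsto h_2d_1$ is left undetermined.

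Two small corrections. The $\text{Im}\,J$ targets you list ($Pc_0$, $P^2h_2$, $h_0^2i$, $P^3h_2$, $P^4h_2$) are not $d_4$-targets for $MO\langle 17\rangle$; they die by $d_6$ and $d_7$. Also, the Massey-product and $MO\langle 16\rangle$-comparison steps are not needed on this page.
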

The following table lists the only indecomposables through stem 37 that, for degree reasons, can support a $d_4$-differential. It also includes select differentials in higher stems. 
 \begin{longtable}{llllc}
    \caption[Possible $d_4$-differentials $MO\langle 17 \rangle$]{Possible non-zero $d_4$-differentials on indecomposable elements
    \label{tab:Adams d_4 mo17}
    }\\
    \toprule
    $x$ & $(t-s,s)$ & $d_4(x)$ & Occurs & Proof\\
    \midrule \endfirsthead
    \caption[]{Possible non-zero $d_4$-differentials on indecomposable elements} \\
    \toprule
    $x$ & $(t-s,s)$ & $d_4(x)$ & Occurs & Proof\\
    \midrule \endhead
    \bottomrule \endfoot
        $h_0e_0$ & $(17, 5)$ & $h_0v_{16,8}$ & No &  \ref{prop: sphere differentials}\\
        $d_0e_0+h_0^7h_5$ & $(31,8)$ & $P^2d_0$ & Yes &  \ref{prop: sphere differentials}\\
        $\vv(32,7,17)$ & $(32,7)$ & $h_0^{10}h_5$ & Yes & \ref{lem:d_4 differential in stem 32 filtration 7 mo17}\\
        $\vv(35,0,17)$ & $(35,0)$ & $h_0^2h_2h_5$ & No & \ref{lem:perm cycle in stem 35 filtration 0 mo17} \\
        $\vv(36,1,17)$ & $(36,1)$ & $h_2d_1$ & ? & \\
        $e_0g$ & $(37,8)$ & $d_0Pd_0$ & Yes &  \ref{prop: sphere differentials} \\
        $h_3h_5$ & $(38,2)$ & $h_0x$ & Yes &  \ref{prop: sphere differentials} \\
        $h_0^3 \Delta h_3^2$ & $(38,9)$ & $h_0^2d_0i$ & Yes &  \ref{prop: sphere differentials} \\
    \end{longtable}

\begin{lemma}\label{lem:d_4 differential in stem 32 filtration 7 mo17}
$d_4(\vv(32,7,17))=h_0^{10}h_5$.
\end{lemma}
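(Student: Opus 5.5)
We need to prove $d_4(\vv(32,7,17))=h_0^{10}h_5$ in the Adams spectral sequence for $MO\langle 17\rangle$. The plan follows the image-of-$J$ pattern used repeatedly above, in Lemmas \ref{lem:d_2-differentials-imj-mo10}, \ref{d_3 differential in stem 16 filtration 1 mo12} and \ref{lem:d_4 differential in stem 16 filtration 0 mo16}.

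First, identify the target. In the sphere, $d_2(h_5)=h_0h_4^2$ leaves on $E_3$ the $h_0$-tower on $h_0^kh_5$ in stem 31. After $d_3(h_0^3h_5)=h_0\Delta h_2^2$ and the $d_4$ on $d_0e_0+h_0^7h_5$, the surviving portion of this tower is the one ending at the Adams edge. By Theorem \ref{mahowaldimj}, this portion detects $\mathrm{Im}\,J$ in stem $31=8\cdot 4-1$. In particular $h_0^{10}h_5$ detects a nonzero element of $\mathrm{Im}\,J\subset\pi_{31}\mathbb S$; it lies in the range where the relevant elements survive to $E_4$ and are not yet hit, and I would check this against the Bruner--Rognes chart. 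Since $31\geq 17-1$, Theorem \ref{hoveyimj} says this element maps to zero in $\pi_{31}MO\langle 17\rangle$. Hence the image of $h_0^{10}h_5$ in the Adams spectral sequence for $MO\langle 17\rangle$ cannot survive to $E_\infty$ as a nonzero class detecting it. It must therefore either support a differential or be hit by one.

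Next, rule out the alternatives. The class $h_0^{10}h_5$ cannot support a differential: it is the image of a permanent cycle in the sphere (it detects a homotopy class there), so by naturality it is a permanent cycle for $MO\langle 17\rangle$. So it must be a boundary. Then enumerate, from the $E_2$-chart for $MO\langle 17\rangle$ (Figure \ref{fig:mo17_e2}), the classes in stem 32 of filtration $<17$ that survive to $E_r$ and could hit bidegree $(31,17)$ by a $d_r$ with $r=17-s$.

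The key inputs for the enumeration are these. $h_0$-linearity rules out $h_0$-torsion sources such as $h_1$-multiples and $d_1$, $q$. Theorem \ref{thm: 2n mo_n kernel} does not directly apply in stem 32, but the classes $d_1$, $q$, $h_1x$-type elements are in $\mathrm{Coker}\,J$ and in any case are sphere classes with known (zero) differentials by Proposition \ref{prop: sphere differentials}. Any $h_0$-tower in stem 32 coming from the integral Thom classes ($\mathbb Z$ summands of $\pi_{32}$ per the rational computation, analogous to Proposition \ref{prop:rational bo12}) cannot support a differential truncating the stem-31 tower in a way that contradicts the rational rank. By May--Milgram (Theorem \ref{maymilgram}), such a tower-to-tower differential would in fact correspond to torsion in $H^{32}(MO\langle 17\rangle;\Z)$. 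I would need to check that $H^{32}$ has no summand of the relevant order, or alternatively that the only tower in stem 32 whose bottom lies at filtration 7 is $\vv(32,7,17)$ itself.

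Finally, the filtration of the target must be correct. The tower on $\vv(32,7,17)$ should hit the stem-31 tower starting exactly at $h_0^{10}h_5$, since $7+4=11$ and $h_0^{10}h_5$ has filtration 11. Lower $h_0^kh_5$ with $k<10$ must survive or be accounted for: the $\mathrm{Im}\,J$ group in stem 31 has order $2^6$ at $p=2$, and the sphere already truncates below via $d_0e_0+h_0^7h_5$. Establishing this, and that no shorter differential ($d_2,d_3$) from another stem-32 class hits $h_0^{10}h_5$ first, is the main obstacle. I would settle it by checking $E_4$ degreewise, using $h_0$-linearity: $\vv(32,7,17)$ and its $h_0$-multiples are the only $h_0$-torsion-free classes in stem 32 above filtration 6 not coming from the sphere. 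The first nonzero differential on that tower must then be $d_4(\vv(32,7,17))=h_0^{10}h_5$.
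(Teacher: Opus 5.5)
Your proposal is correct and follows the paper's argument. The paper uses Theorem \ref{mahowaldimj} to see that $h_0^{10}h_5$ detects an element of $\mathrm{Im}\,J$ and Theorem \ref{hoveyimj} to see that this element must die in $MO\langle 17\rangle$; it then simply asserts from the chart that $\vv(32,7,17)$ is the only possible source, whereas you also spell out the naturality argument that the target is a permanent cycle and use $h_0$-linearity to rule out torsion sources. One slip: where you write bidegree $(31,17)$ and $r=17-s$, the target actually sits in $(31,11)$, so the relevant differentials are $d_r$ with $r=11-s$, as you correctly use at the end.
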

\begin{proof}
By Theorem \ref{hoveyimj}, $\text{Im } J$ is contained in the kernel of the unit map $\mathbb{S} \rightarrow MO \langle 17 \rangle$. By Theorem \ref{mahowaldimj}, the $h_0$-tower ending at the ``Adams'' edge in stem 31 is in $\text{Im } J$, so there must be a differential with target $h_0^{10}h_5$. 
The only possible source of such a differential is $\vv(32,7,17)$. 
\end{proof}

\subsubsection{$d_5$-differentials}
There are no possible $d_5$-differentials on indecomposables on $E_5$ through stem 37.
\subsubsection{$d_6$-differentials}
\begin{proposition}
Table \ref{tab:Adams d_6 mo17} describes the non-zero $d_6$-differentials in the Adams spectral sequence for $MO\langle 17 \rangle$  on all indecomposables on $E_6$ through stem 37.
\end{proposition}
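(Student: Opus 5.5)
The statement is a table of $d_6$-differentials for $MO\langle 17\rangle$, and I would prove it on the same template as the $MO\langle 16\rangle$ case (Lemmas \ref{lem:d_6-differentials-imj-mo16} and \ref{lem:d_6 differential in stem 36 filtration 11 mo16}).

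\textbf{Step 1: list the candidates.} On the $E_6$-page through stem 37, I would first find the indecomposables that can support a $d_6$ for degree reasons. By analogy with $MO\langle 16\rangle$, I expect these to be the sources whose targets lie in $\text{Im } J$. The likely targets are $Pc_0$ in stem 16, $P^2h_2$ in stem 19, $P^3h_2$ in stem 27 and $P^4h_2$ in stem 35. Their sources should be $h_1$ times the bottom class, and classes in stems $20$, $28$ and $36$, respectively.

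\textbf{Step 2: targets forced by $\text{Im } J$.} By Theorem \ref{mahowaldimj}, each of these targets is in $\text{Im } J$. By Theorem \ref{hoveyimj}, $\text{Im } J$ maps to zero in $\pi_*MO\langle 17\rangle$ in dimensions $\geq 16$, so each target must be killed. Each target is a permanent cycle in the sphere, and hence in $MO\langle 17\rangle$ by naturality, so it cannot support a differential. It must therefore be hit. Earlier pages have been handled by the $d_2$--$d_4$ tables. So for each target it then suffices to check on the $E_6$-page that exactly one possible source remains. For the other candidate sources, such as $g$, one rules them out by comparison with $\mathbb{S}$ (naturality, since $d_5(g)=0$ and $g$ survives there) or by $h_0$-linearity.

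\textbf{Step 3: the top differential via Moss.} If the source in stem 36 is not the unique candidate, I would instead realize it as a Massey product $\langle h_0, h_0^3h_3, x_{28}\rangle$, where $d_6(x_{28})=P^3h_2$. Moss' higher Leibniz rule (Theorem \ref{mossleibniz}) then gives $d_6$ of the source as an element of $\langle h_0,h_0^3h_3,P^3h_2\rangle = P^4h_2$. To finish, I would kill the indeterminacy (an $h_0$-multiple of a tower class) using $h_0$-linearity and sparseness, exactly as in Lemma \ref{lem:d_6 differential in stem 36 filtration 11 mo16}.

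\textbf{Main obstacle.} I expect the hardest part to be the uniqueness bookkeeping on $E_6$, since the chart for $MO\langle 17\rangle$ differs from that of $MO\langle 16\rangle$, for example the new $\Z/8$ in stem 11. In particular one must verify that no surviving $h_0$-tower class could instead hit these targets. I would handle this with $h_0$-torsion versus $h_0$-torsion-free arguments.
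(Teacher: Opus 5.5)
Your proposal has a genuine gap at Step 1. The candidate list is carried over from $MO\langle 16\rangle$, and it is wrong for $MO\langle 17\rangle$.

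Since $BO\langle 17\rangle$ is $16$-connected, there is no class in bidegree $(16,0)$. The lowest filtration-zero class above the unit is $\vv(17,0,17)$ in bidegree $(17,0)$. The classes that kill $Pc_0$, $P^2h_2$, $P^3h_2$, $P^4h_2$ are $\vv(17,0,17)$, $\vv(20,2,17)$, $\vv(28,6,17)$, $\vv(36,10,17)$. Each sits one filtration lower than its $MO\langle 16\rangle$ counterpart, so these are $d_7$-differentials, not $d_6$'s. For instance, a $d_6$ out of $(20,2)$ lands in $(19,8)$, whereas $P^2h_2$ lies in $(19,9)$. Likewise $h_1\vv(17,0,17)$ sits in stem $18$, so it cannot hit $Pc_0$ in stem $16$. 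Your Steps 2 and 3, including the Moss argument, therefore address the $d_7$ table rather than this one.

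On $E_6$ through stem $37$, the actual candidates are $\vv(24,3,17)$, $\vv(34,1,17)$ and $\vv(35,0,17)$.

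\textbf{The nonzero differential.} Only the first of these supports a nonzero $d_6$, and it does fit your template. The class $h_0^2i$ lies in $\text{Im } J$ by Theorem \ref{mahowaldimj}, so it must die by Theorem \ref{hoveyimj}. The only other potential source, $h_1c_0h_4$, is a permanent cycle by comparison with $\mathbb S$. Hence $d_6(\vv(24,3,17))=h_0^2i$.

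\textbf{The two vanishing differentials.} The other two entries are \emph{zero} differentials. Your tools ($h_0$-linearity, comparison with $\mathbb S$) cannot settle them. The candidate targets come from classes that are permanent in $\mathbb S$, so the only question is whether they get hit, and comparison with $\mathbb S$ says nothing about that.
\begin{itemize}
\item For $\vv(34,1,17)$, the only possible target is $h_1\Delta h_1h_3$. Here you need naturality along $MO\langle 17\rangle\to MO\langle 16\rangle$. Since $h_1\Delta h_1h_3$ survives for $MO\langle 16\rangle$, it cannot be a boundary for $MO\langle 17\rangle$.
\item For $\vv(35,0,17)$, every possible target in stem $34=2\cdot 17$ (e.g.\ $h_0h_2h_5$, $h_2n$, $d_0g$) detects a $\text{Coker } J$ class. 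Theorem \ref{thm: 2n mo_n kernel} says the kernel of $\pi_{34}\mathbb S\to\pi_{34}MO\langle 17\rangle$ is generated by $\text{Im } J$. So none of these targets can be killed, and $\vv(35,0,17)$ is a permanent cycle.
\end{itemize}
Without these two inputs the table is not established.
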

The following table lists the only indecomposables through stem 37 that, for degree reasons, can support a $d_6$-differential.
 \begin{longtable}{llllc}
    \caption[Possible $d_6$-differentials $MO\langle 17 \rangle$]{Possible non-zero $d_6$-differentials on indecomposable elements
    \label{tab:Adams d_6 mo17}
    }\\
    \toprule
    $x$ & $(t-s,s)$ & $d_6(x)$ & Occurs & Proof\\
    \midrule \endfirsthead
      \caption[]{Possible non-zero $d_6$-differentials on indecomposable elements} \\
    \toprule
    $x$ & $(t-s,s)$ & $d_6(x)$ & Occurs & Proof\\
    \midrule \endhead
    \bottomrule \endfoot
       $\vv(24,3,17)$ & $(24,3)$ & $\{h_0^2i,h_1Pd_0\}$ & Yes & \ref{lem:d_6 differential in stem 24 filtration 4 mo17} \\
       $\vv(34,1,17)$ & $(34,1)$ & $h_1\Delta h_1h_3$ & No & \ref{lem:d_6 differential in stem 34 filtration 1 mo17}\\
      $\vv(35,0,17)$ & $(35,0)$ & $h_2n$ & No & \ref{lem:perm cycle in stem 35 filtration 0 mo17} \\
    \end{longtable}

\begin{lemma}\label{lem:d_6 differential in stem 24 filtration 4 mo17}
$d_6(\vv(24,3,17))=h_0^2i$. 
\end{lemma}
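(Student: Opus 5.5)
I will follow the template of Lemma \ref{lem:d_5 differential in stem 24 filtration 4 mo16}. The goal is to show that $h_0^2i$ must be killed in the Adams spectral sequence for $MO\langle 17\rangle$, and that $\vv(24,3,17)$ is the only element that can kill it by a $d_6$.

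First, $h_0^2i$ lies in $\text{Im } J$ by Theorem \ref{mahowaldimj}. It detects $\eta^3\overline{\kappa}$; Burklund--Senger and Lemma \ref{d_8 differential in stem 24 filtration 1 mo12} give the analogous fact for $MO\langle 12\rangle$. Stem $23$ is at least $n-1=16$, so by Theorem \ref{hoveyimj} the class it detects maps to zero in $\pi_{23}MO\langle 17\rangle$. It is a permanent cycle in the sphere, so by naturality it stays a cycle, and therefore it must be hit by some differential.

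Second, I enumerate the possible sources in stem $24$, working from the $E_6$-page chart for $MO\langle 17\rangle$.
\begin{enumerate}
\item The element $h_1c_0h_4$ is a permanent cycle coming from $\mathbb{S}$ (Proposition \ref{prop: sphere differentials}), so it supports no differential.
\item The $h_0$-tower classes in stem $24$ are $h_0$-torsion free. Any differential from them into $h_0^2i$ would either be incompatible with $h_0$-linearity, because $h_0^3i$ must also be hit consistently, or is excluded by filtration.
\item The elements $Pe_0$ and $h_1Pd_0$-type classes were already handled by $d_2$ and $d_4$, so they are gone before $E_6$.
\item Lemma \ref{lem:d_3 differential in stem 24 filtration 3 mo17} shows $d_3(\vv(24,3,17))=0$, and $d_4$, $d_5$ vanish on it for degree reasons (the tables list none). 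So $\vv(24,3,17)$ survives to $E_6$, and $h_0^2i$ survives to $E_6$ untouched.
\end{enumerate}

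Third, I must distinguish among the targets $h_0^2i$, $h_1Pd_0$, and their sum. Note $h_1Pd_0$ detects $\eta^2\overline{\kappa}$-type classes outside $\text{Im } J$. Its image in $tmf$ is nonzero by Theorem 11.80 of Bruner--Rognes, via the composite $\mathbb{S}\to MO\langle 17\rangle\to MO\langle 8\rangle\to tmf$, as in the $MO\langle 9\rangle$ permanent-cycle lemma for $\vv(23,0,9)$. Hence $h_1Pd_0$ must survive, which rules out the targets $h_1Pd_0$ and $h_1Pd_0+h_0^2i$, leaving $d_6(\vv(24,3,17))=h_0^2i$.

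The main obstacle is this third step, specifically checking that $h_1Pd_0$ detects an element mapping nonzero to $\pi_{23}tmf$. I would confirm that $\eta^3\overline{\kappa}\ne 0$ is detected by $h_1Pd_0$ rather than $h_0^2i$ in the sphere. If the roles are reversed, I would instead use $d_0$-multiplication as in Lemma \ref{lem:d_4 differential in stem 24 filtration 5 mo12}: $d_0\vv(24,3,17)=0$ while $d_0h_1Pd_0\neq 0$ forces the target to avoid $h_1Pd_0$.
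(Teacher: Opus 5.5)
Your Steps 1 and 2 are the paper's proof. Theorem \ref{mahowaldimj} and Theorem \ref{hoveyimj} force $h_0^2i$ to die, $h_1c_0h_4$ is a permanent cycle by comparison with $\mathbb{S}$, and $\vv(24,3,17)$ is the only class that can hit bidegree $(23,9)$.

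These two steps already pin down the value. The specific class $h_0^2i$ must vanish in $E_\infty$, and the only differential into $(23,9)$ is $d_6$ on $\vv(24,3,17)$. A value of $h_1Pd_0$ or $h_0^2i+h_1Pd_0$ would leave $h_0^2i$ nonzero on $E_7$, with nothing left to kill it. So Step 3, which you call the main obstacle, is not needed.

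One caution about Step 2. The principle that $h_0$-torsion-free classes cannot hit $h_0^2i$ ``by $h_0$-linearity'' is false. Compare $d_8(\vv(24,1,12))=h_0^2i$ in $MO\langle 12\rangle$, where the source is $h_0$-torsion free. Other sources have to be excluded by reading the $E_6$-page, as the paper does.

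As written, Step 3 would fail:
\begin{itemize}
\item The class $h_1Pd_0$ detects $\eta\cdot\eta^2\overline{\kappa}=\eta^3\overline{\kappa}=4\nu\overline{\kappa}$, not an ``$\eta^2\overline{\kappa}$-type'' class.
\item $\pi_{23}tmf=0$ at $p=2$: $\nu\overline{\kappa}$ and $4\nu\overline{\kappa}$ are hit by $d_5(\Delta)$ and $d_7(4\Delta)$ in the descent spectral sequence. So the composite to $tmf$ detects nothing in stem $23$. The Bruner--Rognes input used for $\vv(23,0,9)$ concerns $Pd_0$ in stem $22$ and $h_1P^3h_1$ in stem $26$.
\item Your fallback also fails. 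On $E_6$, both $d_0h_1Pd_0=h_1d_0Pd_0$ and $d_0h_0^2i$ are already zero, having been hit by $d_4(h_1e_0g)$ and $d_4(h_0^3\Delta h_3^2)$. So multiplying by $d_0$ cannot separate the candidate targets on this page. The $MO\langle 12\rangle$ argument worked because there the differential was a $d_4$, acting while those products were still nonzero.
\end{itemize}

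Finally, drop the assertion in Step 1 that $h_0^2i$ detects $\eta^3\overline{\kappa}$. That class is detected by $h_1Pd_0$ and does not lie in $\mathrm{Im}\,J$, so the claim contradicts your own use of $h_0^2i$ as an $\mathrm{Im}\,J$ class. It is not needed for the argument.
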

\begin{proof}
By Theorem \ref{mahowaldimj}, the element $h_0^2i$ and its $h_0$-multiples are in $\text{Im } J$. By Theorem \ref{hoveyimj}, they are in the kernel of the unit map $\pi_{*} \mathbb S \rightarrow MO \langle 17 \rangle$ and so they must 
not survive the Adams spectral sequence for $MO\langle 17 \rangle$. The element $h_1c_0h_4$ does not support a nonzero differential by Proposition \ref{prop: sphere differentials}, so the only possible 
source of a differential with target $h_0^2i$ is $\vv(24,3,17)$. Hence, $d_6(\vv(24,3,17))=h_0^2i$. 
\end{proof}

\begin{lemma}\label{lem:d_6 differential in stem 34 filtration 1 mo17}
    $d_6(\vv(34,1,17))=0.$
\end{lemma}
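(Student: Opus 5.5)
The natural approach is to argue, as in Lemma \ref{lem:d_3 differential in stem 33 filtration 1 mo16} and Lemma \ref{lem:perm cycle in stem 35 filtration 0 mo17}, that the only possible target of $d_6(\vv(34,1,17))$, namely $h_1\Delta h_1h_3$, must survive the Adams spectral sequence for $MO\langle 17\rangle$. It should then follow that $d_6(\vv(34,1,17))=0$.

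\emph{Step 1: identify the target.} I would read off from the $E_6$-page of $MO\langle 17\rangle$ (Figure \ref{fig:mo17_e6_e7}) that bidegree $(33,7)$ contains only $h_1\Delta h_1h_3$, as recorded in Table \ref{tab:Adams d_6 mo17}. This element lies in the image of the map of $E_2$-pages induced by the unit map $\mathbb S\to MO\langle 17\rangle$. I would also check that nothing else in $(33,7)$ survives to $E_6$, so that the differential is either zero or hits exactly this class.

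\emph{Step 2: show the target detects a nonzero class.} In $\mathrm{Ext}_{\mathcal A}(\mathbb Z/2,\mathbb Z/2)$, the element $h_1\Delta h_1h_3$ is a permanent cycle detecting $\eta\eta_5$, an $\eta$-multiple in $\pi_{33}\mathbb S$ (Bruner--Rognes, Theorem 11.61). It is not among Mahowald's $\mathrm{Im}\,J$ classes in Theorem \ref{mahowaldimj}, since in stem $33\equiv 1 \pmod 8$ those are $P^4h_1$ and $\mu_{33}$. For the kernel of $\pi_{33}\mathbb S\to\pi_{33}MO\langle 17\rangle$ I would use the Stolz / Senger--Zhang result, Theorem \ref{thm: 2n mo_n kernel}. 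That theorem is stated in degree $2n=34$, not $33$, so I would instead work one stem up.

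The relation $h_1\cdot h_1\Delta h_1h_3=h_1^2\Delta h_1h_3$ should be nonzero in stem $34$ and detect $\eta^2\eta_5$, which is not in $\mathrm{Im}\,J$. By Theorem \ref{thm: 2n mo_n kernel} with $n=17$, this class must map nontrivially to $\pi_{34}MO\langle17\rangle$. Therefore $\eta\eta_5$ maps nontrivially to $\pi_{33}MO\langle 17\rangle$.

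\emph{Step 3: conclude.} Suppose $d_6(\vv(34,1,17))=h_1\Delta h_1h_3$. Then the image of $\eta\eta_5$ in $\pi_{33}MO\langle 17\rangle$ would have Adams filtration strictly above $7$. I would check on the chart that nothing in stem $33$ above filtration $7$ survives that could detect it, so the image would be zero, a contradiction.

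A cleaner route avoids the filtration check. Multiplying by $h_1$, the differential would give
\[
d_6(h_1\vv(34,1,17))=h_1^2\Delta h_1h_3,
\]
which kills the class detecting $\eta^2\eta_5$, contradicting Step 2. This uses only $h_1$-linearity, so I would use it as the main argument.

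\emph{Main obstacle.} The hard step is confirming that $h_1^2\Delta h_1h_3$ is nonzero on the relevant page, both for $\mathbb S$ and on the $E_6$-page of $MO\langle17\rangle$. It is also necessary that no earlier differential or alternative source has already disposed of it. If it were zero, I would fall back on the filtration argument in stem $33$, possibly comparing with $MO\langle 16\rangle$ via the composite $\pi_{33}\mathbb S\to\pi_{33}MO\langle17\rangle\to\pi_{33}MO\langle16\rangle$ as in Lemma \ref{lem:d_3 differential in stem 24 filtration 3 mo17}, if the class survives there.
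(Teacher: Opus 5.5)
Your main argument breaks down at Step 2, because the product it relies on is zero. Here $\Delta h_1h_3$ is the class usually called $q$, in bidegree $(32,6)$. So $h_1\Delta h_1h_3=h_1q$ detects $\eta$ times the class detected by $q$. It does not detect $\eta\eta_5$, which is detected by $h_1^2h_5$ in filtration $3$; Chapter \ref{chap:5} lists $h_1^2h_5$ and $h_1\Delta h_1h_3$ as separate Coker $J$ detectors in stem $33$.

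More seriously, $h_1^2\Delta h_1h_3=0$ already in $\mathrm{Ext}_{\mathcal A}(\mathbb Z/2,\mathbb Z/2)$. Bidegree $(34,8)$ is spanned by $d_0g=e_0^2$, and $h_1^2\Delta h_1h_3$ cannot equal $d_0g$: $h_0h_1=0$, whereas $h_0d_0g\neq 0$ because it is the target of $d_2(m)$ in Table \ref{tab:sphere_differentials}. This agrees with the inventory of stem $34$ in the proof of Lemma \ref{lem:perm cycle in stem 35 filtration 0 mo17}, which contains no such class. The class $\eta^2\eta_5$ is detected by $h_1^3h_5=h_0^2h_2h_5$ instead. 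Hence $d_6(h_1\vv(34,1,17))=h_1\cdot h_1\Delta h_1h_3=0$ holds automatically, and Theorem \ref{thm: 2n mo_n kernel} in stem $34$ yields no contradiction.

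The filtration version of Step 3 has the same defect. It is phrased for $\eta\eta_5$, which lives in filtration $3$, and its only evidence that the relevant stem-$33$ class maps nontrivially came from the failed stem-$34$ argument. A minor point: $P^4h_1$ detects $\mu_{33}$, which is not in $\mathrm{Im}\,J$; the $\mathrm{Im}\,J$ class in stem $33$ is $P^3h_1c_0$.

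The argument that works is the one you mention only as a contingency, and it is the paper's proof: compare along $MO\langle 17\rangle\to MO\langle 16\rangle$. The class $h_1\Delta h_1h_3$ survives to $E_\infty$ in the Adams spectral sequence for $MO\langle 16\rangle$ (Figure \ref{fig:mo16_einf}). If $d_6(\vv(34,1,17))=h_1\Delta h_1h_3$, naturality would force the image of $\vv(34,1,17)$ to support a $d_6$ hitting $h_1\Delta h_1h_3$ in $MO\langle 16\rangle$, which is impossible. This needs no identification of homotopy classes and no filtration check, so it should be your main argument rather than a fallback.
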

\begin{proof}
The only possible target of a $d_6$-differential with source $\vv(34,1,17)$ is $h_1\Delta h_1h_3$. Consider the composition of maps 
\[
\pi_{33} \mathbb S \rightarrow \pi_{33} MO\langle 17 \rangle \rightarrow \pi_{33} MO\langle 16 \rangle 
\]
The element $h_1\Delta h_1h_3$ survives the Adams spectral sequence for $MO\langle 16 \rangle$ (see Figure \ref{fig:mo16_einf}), so it detects a class with nonzero image under the composition above. Thus, $h_1\Delta h_1h_3$ must survive 
the Adams spectral sequence for $MO\langle 17\rangle$. Hence, $d_6(\vv(34,1,17))=0$.
\end{proof}

\subsubsection{$d_7$-differentials}
\begin{proposition}
Table \ref{tab:Adams d_7 mo17} describes the non-zero $d_7$-differentials in the Adams spectral sequence for $MO\langle 17 \rangle$  on all indecomposables on $E_7$. 
\end{proposition}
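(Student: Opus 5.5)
The plan is to follow the pattern of the $d_4$, $d_6$ tables for $MO\langle 17\rangle$ and of the $d_5$, $d_6$ tables for $MO\langle 12\rangle$ and $MO\langle 16\rangle$. First, read off the $E_7$-page (Figure \ref{fig:mo17_e6_e7}) and list every indecomposable through stem $37$ that, for degree reasons, can support a $d_7$. I expect these candidates to be low-filtration classes sitting directly above the $\text{Im } J$ classes of Theorem \ref{mahowaldimj} that are still alive on $E_7$: $Pc_0$ in stem $16$, $P^2h_2$ in stem $19$, $P^2c_0$ in stem $24$, $P^3h_2$ in stem $27$, and possibly $P^4h_2$ in stem $35$. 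These are exactly the classes that Theorem \ref{hoveyimj} forces to die, since all of these stems are $\geq 17-1=16$.

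For each target $x$ in $\text{Im } J$ in stem $\geq 16$, the argument has three steps.
\begin{enumerate}
\item Check that $x$ is not the source of any differential. This follows by naturality along $\mathbb S\to MO\langle 17\rangle$ and Proposition \ref{prop: sphere differentials}, as in Lemma \ref{lem:d_4-differentials-sphere-mo10}.
\item Check that $x$ has not already been hit on an earlier page, using the $d_2$ through $d_6$ tables.
\item Show that the proposed source is the unique class on $E_7$ in the right bidegree. Competing sources such as $g$ or $h_1c_0h_4$ are eliminated because their differentials vanish in $\mathbb{S}$ (Theorem 11.52 of Bruner--Rognes), exactly as in Lemma \ref{lem:d_5-differentials-imj-mo12}.
\end{enumerate}
Together these force the differential, in the same way as Lemma \ref{lem:d_6-differentials-imj-mo16}. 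For the higher members of the $P^kh_2$ family I would instead write the source as a Massey product $\langle h_0,h_0^3h_3,y\rangle$, where $y$ is the previous source. I would then apply Moss's higher Leibniz rule (Theorem \ref{mossleibniz}) together with $\langle h_0,h_0^3h_3,P^kh_2\rangle=P^{k+1}h_2$, and check that the indeterminacy consists of $h_0$-multiples of towers that are $d_7$-cycles by sparsity and $h_0$-linearity, as in Lemma \ref{lem:d_6 differential in stem 36 filtration 11 mo16}.

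The remaining candidates in the table will not hit $\text{Im } J$ classes, and I will show they support no $d_7$. Possible targets detecting $\text{Coker } J$ classes in stem $34=2\cdot 17$ must survive by Theorem \ref{thm: 2n mo_n kernel}, as in Lemma \ref{lem:perm cycle in stem 35 filtration 0 mo17}. A target that survives in the $MO\langle 16\rangle$ spectral sequence (Figure \ref{fig:mo16_einf}) must survive for $MO\langle 17\rangle$, by mapping $\pi_*\mathbb S\to\pi_*MO\langle 17\rangle\to\pi_*MO\langle 16\rangle$ as in Lemmas \ref{lem:d_3 differential in stem 24 filtration 3 mo17} and \ref{lem:d_6 differential in stem 34 filtration 1 mo17}. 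Torsion mismatches are ruled out by $h_0$- and $h_1$-linearity.

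I expect the main obstacle to be the uniqueness step for the $\text{Im } J$ targets in stems $24$ and $27$. There both $P^2c_0$ and $P^3h_2$ sit in bidegrees that may admit several possible sources on $E_7$ whose differentials are not determined by comparison with $\mathbb S$. If a direct count does not give uniqueness, I would first try the Leibniz-rule argument with $d_0$ or $Ph_1$ used in Lemma \ref{lem:d_3 differential in stem 25 filtration 8 mo9}. That is, I would compute $d_7$ on $Ph_1\cdot(\text{source of }Pc_0)$ and use $h_1$-linearity to pin down which source hits $P^2c_0$.
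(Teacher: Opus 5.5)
Your proposal follows the paper's proof: the paper shows that $Pc_0$, $P^2h_2$, $P^3h_2$ and $P^4h_2$ lie in $\text{Im } J$ (Theorem \ref{mahowaldimj}), so by Theorem \ref{hoveyimj} they must die, and it argues that the listed class is the only possible source on $E_7$; the remaining entry, $d_7(\vv(44,14,17))=P^5h_2$, is proved exactly as you propose, via the Massey product $\langle h_0,h_0^3h_3,\vv(36,10,17)\rangle$, Moss's rule, and an $h_0$-linearity check on the indeterminacy $h_0^{12}\vv(44,2,17)$. The one thing to adjust is your anticipated candidate list: the table has no entry hitting $P^2c_0$ in stem $24$, so your uniqueness worry there does not arise for indecomposables; $P^3h_2$ is settled by a direct uniqueness count; and the table extends to stem $44$, beyond your stated cutoff of stem $37$.
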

The following table lists the only indecomposables that, for degree reasons, can support a $d_7$-differential. 
 \begin{longtable}{llllc}
    \label{tab:Adams d_7 mo17}
     \\
    \toprule
    $x$ & $(t-s,s)$ & $d_7(x)$ & Occurs & Proof\\
    \midrule \endfirsthead
    \caption[Possible $d_7$-differentials $MO \langle 17 \rangle$]{Possible non-zero $d_7$-differentials on indecomposable elements} \\
    \toprule
    $x$ & $(t-s,s)$ & $d_7(x)$ & Occurs & Proof\\
    \midrule \endhead
    \bottomrule \endfoot
        $\vv(17,0,17)$ & $(17,0)$ & $Pc_0$ & Yes & \ref{lem:imj d7 differentials mo17} \\
        $\vv(20,2,17)$ & $(20,2)$ & $P^2h_2$ & Yes & \ref{lem:imj d7 differentials mo17} \\
        $\vv(28,6,17)$ & $(28,6)$ & $P^3h_2$ & Yes & \ref{lem:imj d7 differentials mo17}\\
        $\vv(36,10,17)$ & $(36,10)$ & $P^4h_2$ & Yes & \ref{lem:imj d7 differentials mo17}\\
        $\vv(44,14,17)$ & $(44,14)$ & $P^5h_2$ & Yes & \ref{lem:d_7-differential in stem 44 filtration 14 of mo17} 
    \end{longtable}
\begin{lemma}\label{lem:imj d7 differentials mo17}
The following differentials hold:
\end{lemma}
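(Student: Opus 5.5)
The statement (Lemma \ref{lem:imj d7 differentials mo17}) asserts the first four $d_7$-differentials of Table \ref{tab:Adams d_7 mo17}: $d_7(\vv(17,0,17))=Pc_0$, $d_7(\vv(20,2,17))=P^2h_2$, $d_7(\vv(28,6,17))=P^3h_2$ and $d_7(\vv(36,10,17))=P^4h_2$. I will argue exactly as in Lemma \ref{lem:d_6-differentials-imj-mo16} and Lemma \ref{lem:d_5-differentials-imj-mo12}, with a ``forced target, unique source'' argument.

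\emph{Step 1: the targets must die.} By Theorem \ref{mahowaldimj}, each of $Pc_0$, $P^2h_2$, $P^3h_2$ and $P^4h_2$ detects an element of $\text{Im } J$, in stems $16,19,27,35$ respectively. All of these stems are $\geq 17-1=16$. So by Theorem \ref{hoveyimj} the corresponding homotopy classes lie in the kernel of the unit map $\pi_*\mathbb S\to\pi_*MO\langle 17\rangle$. Since no hidden extensions among unit-map images occur in this range, the four elements cannot survive to $E_\infty$ in the Adams spectral sequence for $MO\langle 17\rangle$.

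\emph{Step 2: they do not support differentials and are not hit earlier.} All four elements are permanent cycles in the Adams spectral sequence for $\mathbb S$ (Proposition \ref{prop: sphere differentials} lists no differential on them). By naturality they are permanent cycles for $MO\langle 17\rangle$, so each must be the target of some differential. Inspecting the $E_2$ through $E_7$ charts (Figures \ref{fig:mo17_e2}--\ref{fig:mo17_e6_e7}) together with Tables \ref{tab:Adams d_2 mo17}--\ref{tab:Adams d_6 mo17}, none of them is hit by a $d_r$ with $r\le 6$.

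\emph{Step 3: identify the unique source.} I then check by degree that on $E_7$ the listed class is the only possible source of a differential into each target. The alternative candidates are eliminated as follows.
\begin{itemize}[label={}]
\item For $P^2h_2$ the other candidate is $g$, and $d_r(g)=0$ by comparison with $\mathbb S$, as in Lemma \ref{lem:d_5-differentials-imj-mo12}.
\item For $Pc_0$ the candidates $h_1^2h_4$ and $h_0e_0$ are permanent cycles by comparison with $\mathbb S$ (Lemma \ref{lem:d_4-differentials-sphere-mo10}).
\item Any remaining candidates are ruled out by sparseness of the chart.
\end{itemize}
Because the targets lie in filtrations $7,10,14,18$ while the sources lie in filtrations $0,2,6,10$ in the adjacent stems, the differential is forced to be a $d_7$ in each case.

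\emph{Expected obstacle: the stem-36 case.} Here the degree argument may leave ambiguity, since $\vv(36,10,17)$ may sit near $h_0$-towers with indeterminacy. If uniqueness fails, I will fall back on the Massey product argument of Lemma \ref{lem:d_6 differential in stem 36 filtration 11 mo16}:
\begin{itemize}[label={}]
\item write $\vv(36,10,17)\in\langle h_0,h_0^3h_3,\vv(28,6,17)\rangle$;
\item apply Moss' higher Leibniz rule (Theorem \ref{mossleibniz}) on $E_7$ to get $d_7\in\langle h_0,h_0^3h_3,P^3h_2\rangle=\{P^4h_2\}$;
\item kill the indeterminacy terms by $h_0$-linearity together with degree arguments.
\end{itemize}
The same fallback handles $\vv(28,6,17)$ from $\vv(20,2,17)$ if needed.
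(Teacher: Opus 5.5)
Your proposal is correct and matches the paper's argument: the targets $Pc_0$, $P^2h_2$, $P^3h_2$ and $P^4h_2$ lie in $\text{Im } J$ by Mahowald's theorem, so by Hovey's theorem they must die in the spectral sequence for $MO\langle 17\rangle$, and the listed classes are the only possible sources on $E_7$. The hidden-extension remark and the Massey-product fallback are not needed; one arithmetic slip is that the targets sit in filtrations $7,9,13,17$, not $7,10,14,18$. With your numbers the last three would be $d_8$'s, whereas the correct filtrations give $d_7$'s from sources in filtrations $0,2,6,10$, as claimed.
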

\begin{multicols}{2}
\begin{enumerate}[label=(\roman*)]
\item $d_7(\vv(17,0,17))=Pc_0$ 
\item $d_7(\vv(20,2,17))=P^2h_2$
\item $d_7(\vv(28,6,17))=P^3h_2$
\item $d_7(\vv(36,10,17))=P^4h_2$ 
\end{enumerate}
\end{multicols}
\begin{proof}
By Theorem \ref{mahowaldimj}, the target of each of these differentials is in $\text{Im }J$, Since $\text{Im } J$ is contained in the kernel of the unit map $\mathbb{S} \rightarrow MO\langle 17 \rangle$, these elements must not survive.
These $d_7$-differentials are the only possible differentials with their respective targets. 
\end{proof}

\begin{lemma}\label{lem:d_7-differential in stem 44 filtration 14 of mo17}
$d_7(\vv(44,14,17))=P^5h_2$
\end{lemma}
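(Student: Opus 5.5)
The natural plan is to follow the Massey product argument the paper already used for the analogous periodic differentials, namely Lemma \ref{lem:d_3 differential in stem 28 filtration 10 mo9}, Lemma \ref{lem:d_5 differential in stem 28 filtration 8 mo12} and Lemma \ref{lem:d_6 differential in stem 36 filtration 11 mo16}. The differential to propagate is $d_7(\vv(36,10,17))=P^4h_2$ from Lemma \ref{lem:imj d7 differentials mo17}(iv). I would first check on the $E_2$-page (with \textbf{ext}) that $\vv(44,14,17)$ lies in the Massey product $\langle h_0, h_0^3h_3, \vv(36,10,17)\rangle$, and record its indeterminacy. Bidegree bookkeeping supports this: $(1,1)+(7,4)+(36,10)-(0,1)$ gives $(44,14)$, and $h_0^4h_3=0$. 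The indeterminacy should be spanned by $h_0$-multiples of $h_0$-tower generators in stem 44, in the pattern seen for $MO\langle 16\rangle$. The product must also be defined on $E_7$, so I would check that $h_0^3h_3\cdot\vv(36,10,17)=0$ there; this should already hold on $E_2$ for degree reasons.

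Next I would apply Moss' higher Leibniz rule (Theorem \ref{mossleibniz}) on the $E_7$-page. Its hypotheses hold because $h_0$ and $h_0^3h_3$ are permanent cycles, so $d_7(h_0)=d_7(h_0^3h_3)=0$. The remaining condition is $h_0^3h_3\cdot d_7(\vv(36,10,17))=h_0^3h_3P^4h_2=0$, which follows since $h_3Ph_2=0$ in $\text{Ext}_{\mathcal A}(\mathbb Z/2,\mathbb Z/2)$. This gives
\[
d_7(\vv(44,14,17))\in \langle 0,h_0^3h_3,\vv(36,10,17)\rangle+\langle h_0,0,\vv(36,10,17)\rangle+\langle h_0,h_0^3h_3,P^4h_2\rangle .
\]
The first two brackets vanish, and I would need their indeterminacies in the relevant bidegrees to be zero. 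For the third, I would invoke the known bracket $P^{k+1}h_2=\langle h_0,h_0^3h_3,P^kh_2\rangle$ in $\text{Ext}$ for the sphere. This follows from $P(-)=\langle h_3,h_0^4,-\rangle$ together with a juggling/shuffle, or can be computed directly by \textbf{ext}. That yields $P^5h_2$ with zero indeterminacy once pushed into $MO\langle 17\rangle$ by the unit map.

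Finally I would handle the indeterminacy on the source side. Let $y$ denote an element of the indeterminacy, which I expect to be $h_0^{a}$ times a tower generator in stem 44. I would show $d_7(y)=0$ using the same device as before: $d_7$ of $h_0^{a-1}$ times that generator vanishes for degree reasons, since the target bidegree $(43,\ast)$ is empty there on $E_7$, and $h_0$-linearity then kills $d_7(y)$. Hence $d_7(\vv(44,14,17))=P^5h_2$.

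The main obstacle is this indeterminacy and degree check in stem 44, beyond the range where the charts are fully drawn. If the bracket indeterminacy contains something that is not $h_0$-divisible in a convenient way, I would fall back on a different argument. By Theorem \ref{mahowaldimj} and Theorem \ref{hoveyimj}, $P^5h_2$ lies in $\text{Im } J$ and must die. I would then show that $\vv(44,14,17)$ is the only possible source of a differential hitting it on $E_7$, by ruling out $d_r$ for $r<7$ and ruling out other sources by sparsity.
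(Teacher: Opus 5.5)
Your proposal matches the paper's proof. The paper uses the same Massey product $\langle h_0,h_0^3h_3,\vv(36,10,17)\rangle$ and applies Moss's higher Leibniz rule so that only $\langle h_0,h_0^3h_3,P^4h_2\rangle = P^5h_2$ contributes. It disposes of the indeterminacy $h_0^{12}\vv(44,2,17)$ exactly as you plan, noting $d_7(h_0^{11}\vv(44,2,17))=0$ for degree reasons and using $h_0$-linearity, so your $\text{Im } J$ fallback is not needed.
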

\begin{proof}
The element $\vv(44,14,17)$ is contained in the Massey product $\langle h_0,h_0^3h_3,\vv(36,10,17) \rangle$ with indeterminacy $h_0^{12}\vv(44,2,17)$. By Moss's Higher Leibniz rule \ref{mossleibniz},
\begin{align*}
d_7(\langle h_0, h_0^3h_3, \vv(36,10,17)\rangle) \in\;&
\langle d_5(h_0),h_0^3h_3,\vv(36,10,17)\rangle+\langle h_0,d_7(h_0^3h_3),\vv(36,10,17)\rangle+\langle h_0,h_0^3h_3,d_7(\vv(36,10,17))\rangle\\
=\;&\langle 0,h_3,\vv(36,10,17)\rangle+\langle h_0,0,\vv(36,10,17)\rangle+\langle h_0,h_0^3h_3,P^4h_2\rangle.
\end{align*}
The first two terms vanish with zero indeterminacy and the third contains $P^5h_2$ with zero indeterminacy. Observe that, for degree reasons, the differential $d_7(h_0^{11}\vv(44,2,17))$ equals zero. By $h_0$-linearity of the $d_7$-differential and the Leibniz rule, this implies $d_7(h_0^{12}\vv(44,2,17))$ also equals zero. 
Hence, $d_7(\langle h_0, h_0^3h_3, \vv(36,10,17)\rangle)=d_7(\vv(44,14,17))=P^5h_2$.
\end{proof}

\subsubsection{The abutment}
Recall that there are no hidden $2$-extensions between elements in the image of the unit map in this range of degrees. The first possible hidden $2$-extension is in stem 16. 

\begin{proposition}
$\pi_{20}MO\langle 17 \rangle \cong \Z \oplus \Z/8$.
\end{proposition}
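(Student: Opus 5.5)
The argument follows the template of Propositions \ref{prop:extension problem stem 20 mo9} and \ref{prop:stem 20 extension}. First I read stem 20 off the $E_\infty$-page for $MO\langle 17\rangle$, using the differentials already established. The class $P^2h_2$ in stem 19 is killed by $d_7(\vv(20,2,17))$ (Lemma \ref{lem:imj d7 differentials mo17}). So in stem 20 the surviving classes are:
\begin{itemize}
\item the $h_0$-tower generated in low filtration, which is rationally forced: $H^{20}(MO\langle 17\rangle;\mathbb{Q})\neq 0$ by the Thom isomorphism and the rational computation, since $BO\langle 17\rangle$ is rationally $\mathbb{Q}[p_5,\dots]$ shifted appropriately (I would check the $\Z$ summand against the table, which lists $\Z$ in degree 20);
\item the classes $g$, $h_0g$, $h_0^2g$, coming from the sphere.
\end{itemize}
I need to confirm that $g$ survives. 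It is a permanent cycle by naturality with $\mathbb{S}$, since $d_5(g)=0$ by Theorem 11.52 of Bruner-Rognes. It is not hit: the only candidate sources in stem 21 were checked in the $d_r$ tables, and none of them target $g$, $h_0g$ or $h_0^2g$.

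However, the table lists $\Omega^{\langle 17\rangle}_{20}\cong \Z$, which conflicts with the statement $\Z\oplus\Z/8$. I would follow the statement as given. The key point is to verify directly on $E_\infty$ (Figure \ref{fig:mo17_einf}) that $g$, $h_0g$, $h_0^2g$ survive and are not truncated into the tower. This holds because they lie in the image of the unit map, and $\overline{\kappa}$ maps nontrivially.

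To see that $\overline{\kappa}$ maps nontrivially, I would use the composite $\pi_{20}\mathbb S\to\pi_{20}MO\langle 17\rangle\to\pi_{20}MO\langle 16\rangle$. The class $\overline{\kappa}$ and its multiples $2\overline{\kappa}$, $4\overline{\kappa}$ survive for $MO\langle 16\rangle$, giving $\pi_{20}MO\langle 16\rangle\cong \Z\oplus\Z/8$. Hence they have nonzero image in $MO\langle 17\rangle$, exactly as in Lemma \ref{lem:d_3 differential in stem 24 filtration 3 mo17}. This yields a short exact sequence
\[
0\to \Z\to \pi_{20}MO\langle 17\rangle\to \Z/8\{\overline{\kappa}\}\to 0.
\]
Its splitting follows because $8\overline{\kappa}=0$ in $\pi_{20}\mathbb S$ and the classes in question are images of the unit map. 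Any lift of $\overline{\kappa}$ can then be taken to be the image of $\overline{\kappa}$ itself, which has order dividing $8$, so no hidden extension into the $\Z$ tower can occur.

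The main obstacle is the first step: confirming that none of the $\vv(21,*,17)$ classes, or longer differentials, hit $g$, $h_0g$ or $h_0^2g$. The comparison with $MO\langle 16\rangle$ is what I would rely on to settle it.
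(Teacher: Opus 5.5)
Your argument is correct and is essentially the paper's. The paper just reuses the proof of Proposition \ref{prop:stem 20 extension}: it writes the short exact sequence $0\to\Z\to\pi_{20}MO\langle 17\rangle\to\Z/8\{\overline{\kappa}\}\to 0$, reads the $E_\infty$-page off the chart, and obtains the splitting from $8\overline{\kappa}=0$ in $\pi_{20}\mathbb S$. Your comparison with $MO\langle 16\rangle$, together with the fact that maps of spectra do not lower Adams filtration, justifies the survival of $g$, $h_0g$, $h_0^2g$, which the paper leaves implicit; you are also right that the table entry $\Z$ for $\Omega^{\langle 17\rangle}_{20}$ contradicts this proposition.
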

\begin{proof}
The proof is identical to the proof of Proposition \ref{prop:stem 20 extension}. 
\end{proof}

\begin{proposition}
$\pi_{24}MO\langle 17 \rangle \cong \Z \oplus \Z/2$.
\end{proposition}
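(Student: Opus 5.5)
The plan follows the pattern of Proposition \ref{prop:stem 24 extension} and its $MO\langle 16\rangle$ analogue: read off stem 24 of the $E_\infty$-page for $MO\langle 17\rangle$ (Figure \ref{fig:mo17_einf}), then resolve the single extension problem using the unit map $\mathbb{S}\rightarrow MO\langle 17\rangle$.

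\textbf{Step 1: identify the surviving classes in stem 24.}
\begin{itemize}
\item The relevant input is Lemma \ref{lem:d_6 differential in stem 24 filtration 4 mo17}: $\vv(24,3,17)$ supports a $d_6$ hitting $h_0^2i$. By $h_0$-linearity this truncates the $h_0$-tower on $\vv(24,3,17)$ entirely, as far as stem 24 is concerned.
\item Exactly one $h_0$-tower should survive, giving a single $\Z$. This matches the rational count: by Proposition \ref{prop:rational bo12} (or its evident analogue for $BO\langle 17\rangle$, which begins at $p_5$ rationally above degree 16) together with the Thom isomorphism, $\pi_{24}MO\langle 17\rangle\otimes\mathbb{Q}$ has rank one.
\item I would confirm this rank with the argument of Proposition \ref{prop: pi_29 mo12}, using $\mathbb{S}_{\mathbb{Q}}\simeq H\mathbb{Q}$.
\item The only torsion class that should remain is $h_1c_0h_4$. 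It is a permanent cycle by naturality from $\mathbb{S}$ (Proposition \ref{prop: sphere differentials}).
\item It is not a boundary. The candidate sources in stem 25 are ruled out just as in Lemma \ref{d_4 differential in stem 25 filtration 1 mo12}. Alternatively, $\epsilon\eta^*$ is not in $\mathrm{Im}\,J$, and Theorem \ref{thm: 2n mo_n kernel} does not apply directly here, so comparison with $MO\langle 16\rangle$ is the safer route. Concretely, consider the composite
\[
\pi_{24}\mathbb{S}\rightarrow \pi_{24}MO\langle 17\rangle\rightarrow \pi_{24}MO\langle 16\rangle.
\]
Since $\epsilon\eta^*$ survives to $\pi_{24}MO\langle 16\rangle$, it is nonzero in $\pi_{24}MO\langle 17\rangle$. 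This mirrors Lemma \ref{lem:d_3 differential in stem 24 filtration 3 mo17}.
\end{itemize}

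\textbf{Step 2: solve the extension problem.} Step 1 gives a short exact sequence
\[
0\rightarrow \Z\rightarrow \pi_{24}MO\langle 17\rangle\rightarrow \Z/2\{\epsilon\eta^*\}\rightarrow 0.
\]
Suppose it were nonsplit. Then the class detected by $h_1c_0h_4$ would have $2\cdot$ it nonzero, landing in the $\Z$. But that class is the image of $\epsilon\eta^*\in\pi_{24}\mathbb{S}$, and $2\epsilon\eta^*=0$ there. Hence the sequence splits, and $\pi_{24}MO\langle 17\rangle\cong\Z\oplus\Z/2$.

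\textbf{Main obstacle.} The hardest part is Step 1: verifying that the $E_\infty$-page in stem 24 contains nothing beyond one $h_0$-tower and $h_1c_0h_4$. In particular, one must check that no indecomposable in stem 24 or 25 left undetermined in the tables (such as an unresolved $\vv(25,\ast,17)$) could alter this picture. I would settle it first by the rational rank count, and then by $h_0$-torsion arguments against any stray filtration-low classes.
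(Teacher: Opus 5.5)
Your proposal is essentially the paper's proof. The paper reads off a single $h_0$-tower together with $h_1c_0h_4$ in stem 24 of the $E_\infty$-page for $MO\langle 17\rangle$, then splits the extension $0\to\Z\to\pi_{24}MO\langle 17\rangle\to\Z/2\{\epsilon\eta^*\}\to 0$ exactly as you do, since the $\Z/2$ is lifted by the image of $\epsilon\eta^*$ and $2\epsilon\eta^*=0$ in $\pi_{24}\mathbb{S}$. One correction to Step 1: the $d_6$ on $\vv(24,3,17)$ cannot truncate that $h_0$-tower entirely, because its target $h_0^2i$ lies on the finite $\mathrm{Im}\,J$ tower in stem 23, so only the bottom few classes $h_0^k\vv(24,3,17)$ die and the upper part of that same tower is the surviving $\Z$ (there is no second tower); this agrees with your rational rank count and does not affect the conclusion.
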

\begin{proof}
The proof is identical to the proof of Proposition \ref{prop:stem 24 extension}. 
\end{proof}

\begin{proposition}
$\pi_{28}MO\langle 17 \rangle \cong \Z \oplus \Z/2$.
\end{proposition}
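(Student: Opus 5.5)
The plan follows the pattern of Proposition \ref{prop:stem 28 extension} for $MO\langle 16\rangle$. First I will read off stem 28 of the $E_\infty$-page for $MO\langle 17\rangle$ (Figure \ref{fig:mo17_einf}). The surviving classes should be two: a single $h_0$-tower that is not truncated, and the class $d_0^2$ in filtration 8, which detects $\kappa^2\in\pi_{28}\mathbb{S}$.

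The survival of $d_0^2$ needs a justification. There is no differential into or out of it through stem 37 in the tables above. The cleanest way to see that $\kappa^2$ is nonzero in $\pi_{28}MO\langle 17\rangle$ is to use the composite
\[
\pi_{28}\mathbb{S}\rightarrow \pi_{28}MO\langle 17\rangle \rightarrow \pi_{28}MO\langle 16\rangle,
\]
in the same way as Lemma \ref{lem:d_3 differential in stem 24 filtration 3 mo17}. By Proposition \ref{prop:stem 28 extension}, $\kappa^2$ survives to $\pi_{28}MO\langle 16\rangle$, so its image in $\pi_{28}MO\langle 17\rangle$ is nonzero.

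For the $h_0$-tower, I will check two things:
\begin{itemize}[label={}]
\item it supports no differential, since it lies below the diagonal region where $d_r$ could land, or by $h_0$-linearity once its bottom element is a permanent cycle;
\item it is not truncated by a differential from stem 29. The tables list no such differential, and the tower must survive rationally, because $H^{28}(MO\langle17\rangle;\mathbb{Q})$ has rank one by the Thom isomorphism and the rational cohomology of $BO\langle 17\rangle$, which is $\mathbb{Q}[p_5,p_6,\dots]$ in degree 28 up to $p_7$.
\end{itemize}

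Since the $E_\infty$-page gives the associated graded, $\pi_{28}MO\langle 17\rangle$ sits in a short exact sequence
\[
0\rightarrow \mathbb{Z}\rightarrow \pi_{28}MO\langle 17\rangle\rightarrow \mathbb{Z}/2\{\kappa^2\}\rightarrow 0.
\]
I will resolve this extension exactly as in Proposition \ref{prop:stem 28 extension}. Because $d_0^2$ is in the image of the unit map, it detects the image of $\kappa^2$, and $2\kappa^2=0$ in $\pi_{28}\mathbb{S}$. Hence $2$ times the class detected by $d_0^2$ is zero, so the sequence splits and $\pi_{28}MO\langle 17\rangle\cong\mathbb{Z}\oplus\mathbb{Z}/2$.

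The main obstacle is confirming that no other class in stem 28 survives to $E_\infty$, for instance a remnant of $\vv(28,6,17)$. That class is killed by its source $d_7$ differential in Lemma \ref{lem:imj d7 differentials mo17}, so I will verify from the chart that it and its $h_0$-multiples are gone.
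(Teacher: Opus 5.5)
Your proposal is correct and is essentially the paper's proof, which says it is identical to Proposition \ref{prop:stem 28 extension}: read off an $h_0$-tower and $d_0^2$ in stem 28 of the $E_\infty$-page, then split the extension using $2\kappa^2=0$ in $\pi_{28}\mathbb{S}$. Your comparison with $MO\langle 16\rangle$ (showing the image of $\kappa^2$ is nonzero, hence $d_0^2$ survives) and your rational rank-one count are sound extra justifications; the paper simply reads these facts off the chart.

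One harmless slip in your side remarks: the surviving tower is presumably the one on $\vv(28,6,17)$. By $h_0$-linearity its bottom three classes carry the $d_7$ onto $P^3h_2,h_0P^3h_2,h_0^2P^3h_2$, while its $h_0^{j}$-multiples for $j\ge 3$ survive. So the tower does support a differential, and not all of its $h_0$-multiples die. This changes only the filtration in which the $\Z$ is detected, not the group.
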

\begin{proof}
The proof is identical to the proof of Proposition \ref{prop:stem 28 extension}.
\end{proof}

\begin{proposition}
$\pi_{32} MO\langle 17 \rangle \cong \Z \oplus (\Z/2)^3$
\end{proposition}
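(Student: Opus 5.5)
The plan is to read $\pi_{32}MO\langle 17\rangle$ off stem 32 of the $E_\infty$-page in Figure \ref{fig:mo17_einf} and then resolve the hidden $2$-extensions, following the proofs of Propositions \ref{prop:stem 24 extension} and \ref{prop:stem 28 extension}.

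\textbf{Step 1: the $E_\infty$-page in stem 32.} Among the differentials in the tables above, the ones affecting stem 32 are $d_2(l)=h_0d_0e_0$, $d_4(d_0e_0+h_0^7h_5)=P^2d_0$ and $d_4(\vv(32,7,17))=h_0^{10}h_5$ from Lemma \ref{lem:d_4 differential in stem 32 filtration 7 mo17}. After these, stem 32 should contain the following classes:
\begin{itemize}
\item[(i)] one $h_0$-tower, truncated below only by torsion-free classes, which detects the single $\Z$ summand;
\item[(ii)] the sphere classes $h_1h_5$, $d_1$ and $q$, which detect $\eta_5$, $\eta^{*}\sigma$-type classes and $q$ in $\pi_{32}\mathbb S$.
\end{itemize}
These three sphere classes survive for the following reason. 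Theorem \ref{thm: 2n mo_n kernel} does not apply directly, since $2n=34\neq 32$. Instead I would argue as in Lemma \ref{lem:d_6 differential in stem 34 filtration 1 mo17}: compose
\[
\pi_{32}\mathbb S\to\pi_{32}MO\langle17\rangle\to\pi_{32}MO\langle16\rangle .
\]
By Theorem \ref{thm: 2n mo_n kernel} for $n=16$, classes in $\operatorname{Coker}J$ map nontrivially to $\pi_{32}MO\langle16\rangle$. Hence they map nontrivially to $\pi_{32}MO\langle17\rangle$, so no differential can hit them. That leaves three $\Z/2$'s in filtrations $2,4,6$.

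\textbf{Step 2: the short exact sequence.} Step 1 gives
\[
0\to\Z\to\pi_{32}MO\langle17\rangle\to(\Z/2)^3\to0,
\]
where the quotient is generated by the images of $\eta_5$, $d_1$'s class and $q$ from $\pi_{32}\mathbb S$.

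\textbf{Step 3: the extensions.} Each of these three sphere classes has order $2$ in $\pi_{32}\mathbb S$ (Bruner–Rognes, Theorem 11.61). Because they lie in the image of the unit map $\pi_{32}\mathbb S\to\pi_{32}MO\langle17\rangle$, their images also have order at most $2$. This gives a splitting of the quotient $(\Z/2)^3$, which lifts against the free summand $\Z$, so
\[
\pi_{32}MO\langle17\rangle\cong\Z\oplus(\Z/2)^3 .
\]

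\textbf{Main obstacle.} The hard step is Step 1. Two things must be ruled out: a possible differential from stem 33 into these classes, such as from $\vv(33,\cdot,17)$ or $h_0^k$-multiples hitting $d_1$ or $q$; and any torsion generator not coming from the sphere, which could carry an exotic extension. For the first, I would rely on the $MO\langle16\rangle$ comparison above. For the second, I would check on the $E_2$-chart that every torsion class in stem 32 is in the image of $\operatorname{Ext}$ of the sphere.
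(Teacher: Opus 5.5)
Your proposal follows the paper's argument: stem $32$ of the $E_\infty$-page for $MO\langle 17\rangle$ is one $h_0$-tower plus three torsion classes in the image of the unit map, and since every element of $\pi_{32}\mathbb S$ has order $2$, no hidden $2$-extension can occur, which gives $\Z\oplus(\Z/2)^3$. One small repair is needed in Step 1: a nonzero image in $\pi_{32}MO\langle 17\rangle$ does not by itself prevent $h_1h_5$, $d_1$, $q$ from being hit, because the image could be detected in higher filtration. Argue instead by naturality of the map of spectral sequences to $MO\langle 16\rangle$, where these classes are nonzero on $E_\infty$. (Also, $d_4(d_0e_0+h_0^7h_5)=P^2d_0$ goes from stem $31$ to stem $30$, so it plays no role here.)
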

\begin{proof}
Note that all elements in $\pi_{32} \mathbb S$ are 2-torsion. The only possible hidden 2-extensions in stem 32 have source in the image of the unit map. Thus, 
none occur. 
\end{proof}

\begin{theorem}\label{thm: mo17 groups}
Table \ref{tab:mo17 groups} describes the 2-primary component of the $O\langle 17\rangle$-cobordism groups
$\Omega^{\langle 17 \rangle}_n$ for the values of $n$ listed.
\end{theorem}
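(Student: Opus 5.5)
The plan is to assemble the $E_\infty$-page of the Adams spectral sequence for $MO\langle 17\rangle$ from the differentials established in the preceding subsections, and then resolve the extension problems stem by stem. The $E_2$-page comes from the Sage computation of $H^*(MO\langle 17\rangle;\mathbb{Z}/2)$ together with \textbf{ext}. I would then pass successively through the $d_2$ through $d_7$ tables. These are Tables \ref{tab:Adams d_2 mo17}, \ref{tab:Adams d_3 mo17}, \ref{tab:Adams d_4 mo17}, \ref{tab:Adams d_6 mo17} and \ref{tab:Adams d_7 mo17}, together with Lemmas \ref{lem:d_3 differential in stem 24 filtration 3 mo17}--\ref{lem:d_7-differential in stem 44 filtration 14 of mo17}. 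At each page, the differentials on indecomposables are propagated to all classes by the Leibniz rule and by $h_0$-, $h_1$-, $h_2$-linearity.

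The next step is to argue that no longer differentials occur in the range of the table, which is stems up to $34$. Beyond $E_7$, the remaining candidate sources should either be images of permanent cycles from $\mathbb{S}$, giving vanishing by naturality via Proposition \ref{prop: sphere differentials}, or be excluded by sparsity. The candidate targets should be shown to detect classes not in the kernel of the unit map. For this I would use Theorem \ref{thm: 2n mo_n kernel} for $n=17$ and comparison with $MO\langle 16\rangle$ through the map $MO\langle 17\rangle\to MO\langle 16\rangle$, as in Lemmas \ref{lem:d_3 differential in stem 24 filtration 3 mo17} and \ref{lem:d_6 differential in stem 34 filtration 1 mo17}. Because Hovey's Theorem \ref{hoveyimj} forces all of $\mathrm{Im}\,J$ in stems $\ge 16$ to die, the $d_7$'s out of the classes $\vv(17,0,17)$, $\vv(20,2,17)$, $\vv(28,6,17)$ and the $d_4$ on $\vv(32,7,17)$ account for the changes relative to $MO\langle 16\rangle$. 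In particular they turn $\Z$ into $\Z/32$ in stem $15$, give $\Z/8$ in stem $11$, and remove the $\Z/2$ from $\mathrm{Im}\,J$ in stem $16$.

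Next, I would read off orders from $E_\infty$ and settle the $2$-extensions. Extensions among classes in the image of $\mathbb{S}\to MO\langle 17\rangle$ are trivial below stem $40$, since the first hidden extension for $\mathbb{S}$ is in stem $40$. The stem $20$, $24$, $28$ and $32$ cases are handled by the preceding propositions. For the remaining stems, which contain filtration-zero or low-filtration non-unit classes such as stems $23$ and $34$, I would first try to exhibit each such class as an $\eta$- or $\nu$-multiple on $E_\infty$. If a class is an $\eta$-multiple, then $2\eta=0$ rules out a hidden extension. Torsion-free ranks are checked against the rational cohomology, $\mathbb{Q}[p_i]$ restricted appropriately, via the Thom isomorphism.

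The main obstacle I expect is stem $34$, the claimed $\Z/4\oplus(\Z/2)^4$, and more generally any extension not coming from the sphere. There a genuine hidden $2$-extension must be produced rather than ruled out. For this I would try comparison with $MO\langle 16\rangle$, or a May--Milgram style argument with the integral cohomology of $BO\langle 17\rangle$ as in Lemma \ref{lem:d2 differential in stem 21 filtration 0 mo10}. Failing that, I would use a Massey product or Toda bracket shuffle $2\langle\cdot\rangle=\langle 2,\cdot,\cdot\rangle\cdot$ to detect the extension.
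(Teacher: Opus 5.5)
Your overall route is the paper's: read off $E_\infty$ from the $d_2$--$d_7$ tables, import extensions from $\mathbb S$ along the unit map, and quote the stem $20$, $24$, $28$, $32$ propositions. However, several concrete steps are wrong, and if the plan were carried out correctly it would not reproduce the table as printed.

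(i) Since $BO\langle 17\rangle$ is $16$-connected, $MO\langle 17\rangle/\mathbb S$ is $16$-connected. Hence $\pi_n\mathbb S\to\pi_n MO\langle 17\rangle$ is an isomorphism for $n\le 15$. The differentials you credit hit $Pc_0$, $P^2h_2$, $P^3h_2$, $h_0^{10}h_5$ in stems $16$, $19$, $27$, $31$. These are the $d_7$'s on $\vv(17,0,17)$, $\vv(20,2,17)$, $\vv(28,6,17)$ and the $d_4$ on $\vv(32,7,17)$, and they have nothing to do with stems $11$ and $15$. Moreover, stem $15$ is $\pi_{15}\mathbb S\cong\Z/32\oplus\Z/2$, where the extra $\Z/2$ is $\eta\kappa$ (detected by $h_1d_0$). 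So there is no $\Z/32$ there for those differentials to produce.

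(ii) Your premise that $\mathbb S$ has no hidden $2$-extensions below stem $40$ is false. In stem $23$, $2\nu\overline{\kappa}$ is detected by $h_0h_2g$, while $4\nu\overline{\kappa}=\eta^3\overline{\kappa}$ is detected by $h_1Pd_0$. Both classes survive for $MO\langle 17\rangle$: see Lemma \ref{lem:d_3 differential in stem 24 filtration 3 mo17}, and note that the $d_6$ of Lemma \ref{lem:d_6 differential in stem 24 filtration 4 mo17} hits $h_0^2i$. So the image of $\nu\overline{\kappa}$ has order $8$. The $\Z/8$ in the stem-$23$ entry comes exactly from importing this extension, which your premise would forbid.

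(iii) The propositions you quote for stems $20$, $24$, $28$ assert $\Z\oplus\Z/8$, $\Z\oplus\Z/2$, $\Z\oplus\Z/2$. The table lists $\Z$, $\Z\oplus(\Z/2)^2$, $\Z$, so citing them does not establish those entries. In stems $20$ and $28$ the propositions, not the table, are what the maps out of $MO\langle 17\rangle$ force:
\begin{itemize}
\item In stem $20$, the unit of $tmf$ factors through $MO\langle 17\rangle\to MO\langle 8\rangle\to tmf$, and $\overline{\kappa}$ has order $8$ in $\pi_{20}tmf$.
\item In stem $28$, $\kappa^2$ has nonzero image in $\pi_{28}MO\langle 9\rangle$ (as used in Lemma \ref{lem: kappa2 mo12}), and $\mathbb S\to MO\langle 9\rangle$ factors through $MO\langle 17\rangle$.
\end{itemize}
You need to check the table against these constraints rather than rationalize its entries.

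Your diagnosis of stem $34$ is backwards. The $\Z/4$ is not a hidden extension. It is the $h_0$-multiplication $h_0\cdot h_0h_2h_5=h_0^2h_2h_5$, already visible on $E_\infty$ and imported from $\pi_{34}\mathbb S$; both classes survive by Lemma \ref{lem:perm cycle in stem 35 filtration 0 mo17}. Add $h_2n$, $d_0g$, $h_1P^4h_1$ and the non-unit class $\vv(34,1,17)$, which survives by Lemma \ref{lem:d_6 differential in stem 34 filtration 1 mo17}. This gives $\Z/4\oplus(\Z/2)^4$, \emph{provided} there is no hidden $2$-extension from $\vv(34,1,17)$ into the higher-filtration unit classes. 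That is the step that actually needs an argument. The paper's stem-$32$ reasoning, that every possible source of a hidden extension lies in the image of the unit map, does not apply here because the source is not a unit class. The Toda-bracket and May--Milgram tools you propose for \emph{producing} an extension do not address it.
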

\begin{center}
\captionof{table}[The 2-primary component of the $O\langle 17\rangle$-cobordism groups]{The 2-primary component of $\Omega^{\langle 17\rangle}_n$ for the values of $n$ listed.}
\label{tab:mo17 groups}
\tablehead{\hline%
$n$ & $\Omega^{\langle 17\rangle}_n$ & $n$ & $\Omega^{\langle 17\rangle}_n$\\%
\hline}
\tabletail{\hline%
\multicolumn{4}{r}{%
\small\slshape to be continued on the next page}\\}
\tablelasttail{\hline}
\begin{supertabular}{|c|p{5.2cm}|c|p{5.2cm}|}
$0$  & $\Z$ & $18$ & $\Z/8 \oplus \Z/2$\\
$1$  & $\Z/2$   & $19$ & $\Z/2$\\
$2$  & $\Z/2$   & $20$ & $\Z$\\
$3$  & $\Z/8$   & $21$ & $\Z/2 \oplus \Z/2$\\
$4$  & $(0)$    & $22$ & $\Z/2 \oplus \Z/2$\\
$5$  & $(0)$    & $23$ & $\Z/8 \oplus \Z/2$\\
$6$  & $\Z/2$   & $24$ & $\Z \oplus \Z/2 \oplus \Z/2$\\
$7$  & $\Z/16$  & $25$ & $\Z/2$\\
$8$  & $\Z/2\oplus\Z/2$   & $26$ & $\Z/2 \oplus \Z/2$\\
$9$  & $(\Z/2)^3$   & $27$ & $(0)$\\
$10$ & $\Z/2$   & $28$ & $\Z$\\
$11$ & $\Z/8$   & $29$ & $(0)$\\
$12$ & $(0)$    & $30$ & $\Z/2$\\
$13$ & $(0)$    & $31$ & $\Z/2 \oplus \Z/2$\\
$14$ & $\Z/2 \oplus \Z/2$ & $32$ & $\Z \oplus (\Z/2)^3$\\
$15$ & $\Z/32$  & $33$ & $(\Z/2)^4$\\
$16$ & $\Z/2$   & $34$ & $\Z/4 \oplus (\Z/2)^4$\\
$17$ & $(\Z/2)^3$ & & \\
\end{supertabular}
\end{center}

\clearpage

\section{On the boundaries of $(k-1)$-connected $(2k+d)$-manifolds}
\label{chap:5}
For $m \geq 5$, let $\Theta_m$ denote the group of oriented, smooth, closed manifolds $\Sigma$ that are homotopy equivalent to the $m$-sphere, where the group operation is connected sum. There is the Kervaire-Milnor exact sequence
\begin{align*}
0 \rightarrow bP_{m+1} \rightarrow \Theta_{m} \rightarrow \text{Coker}(J)_{m}
\end{align*}
where $bP_{m+1}\subseteq \Theta_m$ consists of all homotopy spheres that are the boundaries of parallelizable $(m+1)$-manifolds. In \cite{Stolz1985}, Stolz laid the groundwork to identify large classes of homotopy spheres that bound parallelizable manifolds. The unit map $\mathbb{S} \rightarrow MO \langle k \rangle$ may be extended to a cofiber sequence
\begin{align*}
\mathbb{S} \rightarrow MO\langle k \rangle \rightarrow MO\langle k \rangle/ \mathbb{S} \xrightarrow{\partial} \mathbb{S}^1
\end{align*}
Stolz defined a spectrum $A[k]$ together with a map
 \begin{align*}
b: A[k] \rightarrow MO\langle k \rangle /\mathbb{S}
 \end{align*}
 such that the following is true:
 \begin{theorem}[Stolz Lemma 12.5 \cite{Stolz1985}]\label{thm: stolz_lemma}
Let $k>2$ and $d \ge 0$ be integers.  Suppose that, for every element $\alpha \in \pi_{2k+d}(A[k])$, the image of $\alpha$ under the composite
$$\pi_{2k+d}(A[k]) \stackrel{b_*}{\longrightarrow} \pi_{2k+d} \left( \mathrm{MO} \langle k \rangle / \mathbb{S} \right) \stackrel{\partial_*}{\longrightarrow} \pi_{2k+d-1}\mathbb{S}$$
is in $\text{Im } J$.  Then the boundary of any $(k-1)$-connected, almost closed $(2k+d)$-manifold also bounds a parallelizable manifold.
 \end{theorem}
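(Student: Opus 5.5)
This is Stolz's Lemma 12.5, so the plan is to reconstruct his surgery-theoretic argument rather than anything spectral-sequence based. Let $W$ be a $(k-1)$-connected almost closed $(2k+d)$-manifold with homotopy sphere boundary $\Sigma=\partial W$. Because $W$ is $(k-1)$-connected and $k>2$, the stable normal bundle of $W$ lifts to $BO\langle k\rangle$, and the lift is unique up to the relevant indeterminacy. Hence $W$ defines a class in the relative bordism group of $O\langle k\rangle$-manifolds whose boundary is framed. This relative group is $\pi_{2k+d}(MO\langle k\rangle/\mathbb{S})$ by the relative Pontryagin--Thom construction. Under $\partial_*$, the class maps to the framed bordism class of $\Sigma$ in $\pi_{2k+d-1}\mathbb{S}$. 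That class is exactly the image of $\Sigma$ under $\Theta_{2k+d-1}\to \mathrm{Coker}(J)_{2k+d-1}$, once we remember that the framing is only defined up to $\mathrm{Im}\,J$.

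The key intermediate step is to show that $[W]$ lies in the image of $b_*\colon \pi_{2k+d}(A[k])\to\pi_{2k+d}(MO\langle k\rangle/\mathbb{S})$. Stolz's $A[k]$ is built as a Thom-type spectrum over a space recording the data of a highly connected manifold. After surgery below the middle dimension, such a manifold has the homotopy type of a cell complex with cells only in dimensions between $k$ and $k+d$ (plus the top cell). I would first do surgery on $W$ relative to the boundary to reach this minimal handle form, which is possible since $2k+d\ge 2k$ and $k>2$. I would then check that the normal $O\langle k\rangle$-structure on this handlebody factors through the space underlying $A[k]$, producing a lift $\alpha$ with $b_*\alpha=[W]$. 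This comparison is the step I expect to be the main obstacle. It requires matching the construction of $A[k]$ (a filtration of $MO\langle k\rangle$ by the skeleta of $BO\langle k\rangle$, essentially $MO\langle k\rangle$ restricted to the $(k+d)$-skeleton with the bottom cell split off) to the handle decomposition. I would try first to show that the classifying map of the handlebody factors through the $(k+d)$-skeleton of $BO\langle k\rangle$, by cellular approximation.

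With the lift in hand, the hypothesis gives $\partial_* b_*\alpha\in\mathrm{Im}\,J$. The framed bordism class of $\Sigma$ is then trivial in $\mathrm{Coker}(J)_{2k+d-1}$, so by the Kervaire--Milnor exact sequence displayed above, $\Sigma\in bP_{2k+d}$. Concretely, after changing the framing of $\Sigma$ by an element of $\pi_{2k+d-1}(SO)$, $\Sigma$ bounds a framed, hence parallelizable, manifold. The remaining bookkeeping is to confirm that reframing is compatible with the relative class. This holds because altering the boundary framing changes $\partial_*[W]$ exactly by $J$ of the framing difference.
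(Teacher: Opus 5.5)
There is no proof in the paper to compare against. The statement is quoted from Stolz, the spectrum $A[k]$ and the map $b$ are never defined, and Chapter~5 uses the result only through the inclusion $\mathrm{Im}(\partial_*\circ b_*)\subseteq \mathrm{Im}(\partial_*)=\ker\bigl(\pi_{2k+d-1}\mathbb S\to\pi_{2k+d-1}MO\langle k\rangle\bigr)$.

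Your first and last paragraphs are correct:
\begin{itemize}
\item an $O\langle k\rangle$-structure on $W$, plus a compatible framing of the homotopy sphere $\partial W$, gives a class $[W]\in\pi_{2k+d}(MO\langle k\rangle/\mathbb S)$;
\item $\partial_*[W]$ is the framed class of $\partial W$;
\item if $\partial_*[W]$ lies in $\mathrm{Im}\,J$, then $\partial W\in bP_{2k+d}$ by the Kervaire--Milnor sequence.
\end{itemize}
This already proves, without mentioning $A[k]$, the weaker statement the paper actually applies: if the kernel of the unit map in degree $2k+d-1$ is contained in $\mathrm{Im}\,J$, the conclusion holds.

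The genuine gap is the middle step, lifting $[W]$ along $b_*$. That step is the entire content of Stolz's lemma, and the mechanism you propose for it provably fails.

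The relative class is not determined by the structure map $f\colon W\to BO\langle k\rangle$ alone. It needs a map of pairs $(W,\partial W)\to (BO\langle k\rangle,\ast)$, i.e.\ $f$ together with a null-homotopy of $f|_{\partial W}$; that null-homotopy is what the framing is. Equivalently, it is a map out of $W/\partial W\simeq K\cup e^{2k+d}$, where $K$ is the $(k+d)$-dimensional spine. Cellular approximation compresses $f|_K$ into the $(k+d)$-skeleton. But the composite $\partial W\to K\to BO\langle k\rangle^{(k+d)}$ is in general essential there, and the $(2k+d)$-dimensional null-homotopy cannot be compressed. The $\mathrm{Coker}\,J$ class of $\partial W$ lives exactly on this top cell.

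Concretely, take $k=4$, $d=1$:
\begin{itemize}
\item since $H_4(B\mathrm{Spin};\Z)\cong\Z$ and $H_5(B\mathrm{Spin};\Z)=0$, $B\mathrm{Spin}$ has a CW structure whose $5$-skeleton is $S^4$;
\item the Thom spectrum over $S^4$ modulo its bottom cell is $\Sigma^4\mathbb S$;
\item $\pi_9(\Sigma^4\mathbb S)=\pi_5\mathbb S=0$.
\end{itemize}
Your argument would therefore give $\partial_*[W]=0$ for every $3$-connected almost closed $9$-manifold. This contradicts the Frank--Stolz example quoted in Chapter~5, whose boundary is nontrivial in $(\mathrm{Coker}\,J)_8$. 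Note also that $A[k]$ in the statement does not depend on $d$, so it cannot be a $(k+d)$-skeleton truncation.

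What is missing is Stolz's actual construction of $A[k]$ and $b$, which must retain this top-cell information, together with a proof that $[W]$ lifts to it.

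Two smaller points. First, no surgery is needed: $W$ is already $(k-1)$-connected, and the handle decomposition with handles of index $k,\dots,k+d$ comes from Lefschetz duality and handle cancellation. Second, $W$ itself has no top cell; it is $W/\partial W$ that does, and conflating the two is exactly where the argument slips.
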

 A compact, oriented, smooth manifold $M$ is said to be \textit{almost closed} if its boundary $\partial M$ is a homotopy sphere. 
 Using this approach, Stolz proved
 \begin{theorem}[Stolz Theorem B \cite{Stolz1985}]
Let $M$ be a $(k-1)$-connected almost closed $(2k+d)$ manifold. If $k \cong 2 \pmod{8}$, $k>10$, and $0\leq d \leq 3$, then $\partial M$ bounds a parallelizable manifold. 
 \end{theorem}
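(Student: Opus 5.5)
The plan is to apply Theorem \ref{thm: stolz_lemma} directly. For $k \equiv 2 \pmod 8$, $k>10$ and $0 \le d \le 3$, it suffices to show that the composite $\partial_* \circ b_* \colon \pi_{2k+d}(A[k]) \to \pi_{2k+d-1}\mathbb{S}$ lands in $\text{Im } J$. The work splits into three steps: control $A[k]$ in the range of degrees $\le 2k+4$, compute $\pi_{2k+d}(A[k])$ there, and bound the Adams filtration of the images in $\pi_{2k+d-1}\mathbb{S}$.

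\textbf{Step 1: the structure of $A[k]$ in low degrees.} Recall that $MO\langle k\rangle/\mathbb{S}$ is $(k-1)$-connected. Its bottom cohomology comes from the fiber $K(\pi_{k-1}BO, k-1)$ of $BO\langle k \rangle \to BO\langle k-1\rangle$; here $\pi_{k-1}BO \cong \Z/2$ since $k-1 \equiv 1 \pmod 8$. Stolz's $A[k]$ is designed to capture exactly this bottom part. I would compute $H^*(A[k];\Z/2)$ as an $\mathcal{A}$-module through degree $2k+4$. The expectation is that, in this range, it is a $k$-fold suspension of a small finite module built from $i_{k-1}$ together with the classes $Sq^I i_{k-1}$ of low excess, subject to the relation $Sq^2 i_{k-1}=0$ analogous to Stong's description of $H^*(BO\langle n\rangle)$. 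The congruence $k\equiv 2 \pmod 8$ is what pins down which squares vanish. I would then read off $\text{Ext}_{\mathcal{A}}$ in stems $2k, \dots, 2k+3$, that is, in internal stems $k, \dots, k+3$ relative to the bottom cell.

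\textbf{Step 2: the homotopy groups and the map $\partial_* b_*$.} Next I would run the Adams spectral sequence for $A[k]$ through stem $2k+3$. The $E_2$-page should be a finite sum of $h_0$-towers and $h_1$-multiples, and I expect no differentials in this small range for degree reasons. From this I would identify generators of $\pi_{2k+d}(A[k])$. The composite $\partial_* b_*$ is induced by a map of spectra, so it does not lower Adams filtration. Each generator of $\pi_{2k+d}(A[k])$ has Adams filtration at least its $E_\infty$-filtration. The map $\partial$ lowers the degree by one while keeping the Adams filtration, because it is essentially the attaching map of the top cells to the sphere. Hence the image of each generator in $\pi_{2k+d-1}\mathbb{S}$ has Adams filtration bounded below roughly linearly in $k$, on the order of $k/2$.

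\textbf{Step 3 (the main obstacle): detecting $\text{Im } J$.} The hardest part is to conclude that a class in $\pi_{2k+d-1}\mathbb{S}$ of Adams filtration $\gtrsim k/2$ lies in $\text{Im } J$. Here $2k+d-1 \equiv 3,4,5,6 \pmod{16}$, and $k$ can be arbitrarily large. My first attempt would use the Davis--Mahowald/Adams vanishing line above the $v_1$-periodic band: in stem $t-s$, every element of $\text{Ext}_{\mathcal{A}}(\Z/2,\Z/2)$ with filtration above roughly $\frac{1}{5}(t-s)+c$ lies in the image-of-$J$ pattern described by Theorem \ref{mahowaldimj}. The delicate points are whether the filtration bound from Step 2 really exceeds this line for all $k>10$, and how to handle the stem $2k+2$, where $\text{Im } J$ is $\Z/2$ but $\mu$-family classes (Theorem \ref{adamsimj}) sit nearby. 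For these I would check, using the $h_1$-multiplication visible on the $A[k]$ side, that any hit $P^ih_1$-type class is matched by an $\text{Im } J$ class. The small cases just above $k=10$, where the linear bound is weakest, I would verify by direct comparison with known stems of $\pi_*\mathbb{S}$.
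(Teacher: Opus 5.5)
The paper gives no proof of this statement; it is quoted from Stolz. Your outline therefore has to stand on its own, and its key step does not.

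In Step 2, the facts you invoke are that maps of spectra do not lower Adams filtration and that $\partial$ shifts degree while ``keeping'' filtration. These only give $\mathrm{AF}(\partial_*b_*\alpha)\ge \mathrm{AF}(\alpha)+1$. By your own description, the generators of $\pi_{2k+d}(A[k])$ sit in low filtration (bottoms of $h_0$-towers, $h_1$-multiples), so nothing in the argument produces a bound growing like $k/2$.

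Such a bound cannot come from $\partial$ alone. For $k=9$ the paper shows that $\partial_*$ hits $\nu^*$, $\eta\eta_4$ and $\overline{\sigma}$, which have filtrations $2$ and $3$ and are not in $\operatorname{Im}J$. This is exactly why the criterion is routed through $b$.

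Growth in $k$ can only come from specific attaching data. The bottom cell of $MO\langle k\rangle/\mathbb{S}$ lies in degree $k$ and comes from $\pi_kBO\cong\Z/2$, not $\pi_{k-1}BO$. It is attached to the sphere by the image-of-$J$ class in $\pi_{k-1}\mathbb{S}$, of filtration roughly $k/2$; for $k=10$ this is the paper's $d_3(I_{10})=h_1^2h_3$. For classes carried by cells near dimension $2k$, however, $\partial b(\alpha)$ is in general only pinned down as an element of a Toda bracket built from such classes.

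To complete Step 2 you would have to:
\begin{itemize}
\item identify $A[k]$ and $b$ in this range (Step 1 only guesses them);
\item express $\partial b(\alpha)$ in terms of such brackets;
\item bound the filtration of those brackets, which can lose filtration;
\item show the indeterminacy lies in $\operatorname{Im}J$.
\end{itemize}
This is where the actual work of the theorem lies, and it is absent.

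Step 3 has two further problems. First, Stolz's criterion is integral, but you only use the mod $2$ Adams spectral sequence. Odd-primary $\operatorname{Coker}J$ is nonzero in some target stems, for example $\beta_1\in\pi_{38}\mathbb{S}$ at $p=5$, which is the case $k=18$, $d=3$. This is probably easy to dispose of: $p$-locally $BO\langle k\rangle\simeq BO\langle k+2\rangle$, so products of cells begin only in degree $2k+4$, and in degrees $\le 2k+3$ only the $J$-homomorphism contributes. But it must be argued.

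Second, and more seriously, even granting a bound of order $k/2$, beating a slope-$\tfrac15$ vanishing line in stem $\approx 2k$ only works past a threshold. That threshold is governed by the intercept of the line and by the filtration lost in the brackets. The optimized version of exactly this filtration-plus-vanishing-line strategy (Burklund--Hahn--Senger, quoted in the paper) in general reaches only $k>232$, or $k>124$ under extra hypotheses. So the leftover cases are not a few ``just above $k=10$'' that can be checked against known stems. Reaching $k>10$ needs a sharper argument in which $k\equiv 2\pmod 8$ is used for more than locating the target stems.

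Finally, stems $2k-1,\dots,2k+2$ are $\equiv 3,4,5,6\pmod 8$. There the $v_1$-periodic homotopy is $\operatorname{Im}J$ in stem $2k-1$ and zero otherwise, so your worry about $\mu$-family classes in stem $2k+2$ is misplaced.
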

 Much later, Burklund-Hahn-Senger proved the following theorem.
 \begin{theorem}[Burklund-Hahn-Senger Theorem 1.1 \cite{BHS2023Boundaries}]
        Let $k>232$ and $0 \le d \le 3$ be integers.  Then the boundary of every $(k-1)$-connected, almost closed $(2k+d)$-manifold also bounds a parallelizable manifold.  
 \end{theorem}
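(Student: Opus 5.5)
The plan is to verify the hypothesis of Theorem~\ref{thm: stolz_lemma} (Stolz's Lemma~12.5) for all $k>232$ and $0\le d\le 3$. Concretely, I would show that for every $\alpha\in\pi_{2k+d}(A[k])$ the element $\partial_* b_*(\alpha)\in\pi_{2k+d-1}\mathbb{S}$ lies in $\text{Im } J$. The argument would not compute $\pi_{2k+d}(A[k])$ explicitly. Instead it would show that every such boundary class has \emph{large Adams filtration}, and then compare this with a vanishing line for $\pi_*\mathbb{S}$ modulo $\text{Im } J$. The constant $232$ should come out of exactly this comparison.

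\textbf{Step 1: cell structure of $A[k]$ in the relevant range.} Stolz's spectrum $A[k]$ is built from the Thom spectrum over a skeleton of $BO\langle k\rangle$ relative to its bottom cell. In degrees up to $2k+3$ I expect it to be controlled by cells in the range $[k,2k+3]$. These cells are attached through the $J$-homomorphism on the $(k-1)$-connected part, that is, through the Thom class of the stable bundle restricted to $S^k$-type cells and their Whitehead products. I would first identify the composite $\partial\circ b$ on each such cell. Up to $J$-image pieces, it should be a Toda bracket or composite involving $J(\pi_{k-1}SO)$ and an element of $\pi_{k+d-1}\mathbb{S}$.

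\textbf{Step 2: filtration lower bound.} Next I would use a filtered refinement (synthetic spectra, or the Adams tower with explicit cell-by-cell estimates) to prove a lower bound on the Adams filtration of $\partial_*b_*(\alpha)$. The key inputs are these: elements of $\text{Im } J$ in stem $k-1$ have Adams filtration roughly $k/2$ (they sit at the Adams edge, cf.\ Theorem~\ref{mahowaldimj}), and attaching maps built from them increase filtration additively. So the boundary class in stem $2k+d-1$ should have filtration at least roughly $k/2 - c$ for an explicit small constant $c$.

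\textbf{Step 3: vanishing line modulo $\text{Im } J$.} Finally I would invoke the known vanishing line for $\pi_*\mathbb{S}$ at $p=2$ above the $v_1$-periodic region. Above a line of slope about $1/5$ in $(t-s,s)$ coordinates with explicit intercept, every class is either zero or detected in the image of $J$ (the $v_1$-periodic part). If $k/2 - c$ exceeds $(2k+d-1)/5 + \text{intercept}$, then $\partial_*b_*(\alpha)\in\text{Im } J$. Stolz's lemma then gives the theorem. Solving this inequality for $0\le d\le 3$ is what should produce the threshold $k>232$.

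\textbf{Main obstacle.} I expect Step~2 to be the hard part. A naive cellular estimate loses filtration at each attaching map, and Toda brackets can drop filtration by one at each stage. So the filtration bound has to be proved carefully, probably in the synthetic or $\mathbb{F}_2$-synthetic category, where one can track $\tau$-divisibility rather than raw Adams filtration. I would first try to show that $b$ factors through a filtered object whose associated graded is concentrated above the $J$-line. This would reduce the estimate to a single bracket $\langle J(x), 2^j, y\rangle$ with a controlled filtration jump.
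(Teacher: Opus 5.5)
The paper does not prove this statement; it only cites Burklund--Hahn--Senger. Your architecture is the architecture of their argument: Stolz's criterion (Theorem~\ref{thm: stolz_lemma}), then write $\partial_*b_*(\alpha)$ as a Toda bracket built from $J$-classes, bound its Adams filtration, and compare with a vanishing line modulo $\text{Im } J$.

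\textbf{Odd primes are missing.} Everything you propose is $2$-primary, but Stolz's criterion is integral: you need $\partial_*b_*(\alpha)\in\text{Im } J$ on the nose. $\text{Coker } J$ has $p$-torsion for every odd $p$. It starts with $\beta_1$ in stem $2p^2-2p-2$ (Theorem~\ref{thm: imj odd}) and recurs in infinitely many later stems. So for a given $k$, every prime $p\lesssim\sqrt{k}$ is potentially relevant. For each such $p$, handled effectively enough to cover all relevant primes at once, you need:
\begin{itemize}
\item a lower bound on $\mathbb{F}_p$-Adams filtration of the boundary class;
\item an $\mathbb{F}_p$-Adams vanishing line modulo $\text{Im } J$, of slope about $1/(p^2-p-1)$ (the slope of the class $b_0$ detecting $\beta_1$);
\item the corresponding lines for the Moore spectra $\mathbb{S}/p^j$ through which your bracket $\langle J(x),p^j,y\rangle$ is formed.
\end{itemize}
This is the content of Burklund--Hahn--Senger's argument: they bound the $\mathbb{F}_p$-Adams filtration of the bracket at every prime and prove new vanishing lines for spheres and Moore spectra.

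\textbf{Your Step~2 estimate does not transfer to $p=3$.} There, a $J$-class in stem $\approx k$ has filtration only $\approx k/4$, while the slope-$1/5$ line in stem $\approx 2k$ sits at height $\approx 2k/5$. A bound coming from a single $J$-entry (your ``$k/2-c$'', with $y$ an arbitrary element of $\pi_{k+d-1}\mathbb{S}$) therefore proves nothing. You must use that the other outer entry is also of $J$-type. By Atiyah duality, the stable attaching maps of the top cell of the closed-up manifold are dual to the $J$-attaching maps of the middle cells of the Thom spectrum onto its bottom cell. That gives filtration $\approx k/2$ against $\approx 2k/5$, a margin of only about $k/10$. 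The indeterminacy $J(x)\cdot\pi_*\mathbb{S}$ carries only the one-sided bound, so this must be proved for the specific element the manifold realizes, as you anticipate. In particular, the threshold is a worst case over all primes and cannot be read off from your $2$-primary inequality.

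\textbf{Step~3 is false as stated, even $2$-locally.} The classes above a slope-$1/5$ line are not all in $\text{Im } J$. Adams' $\mu$-family, detected by $P^jh_1$ and $h_1P^jh_1$ near the Adams edge in stems $8j+1$ and $8j+2$, is $v_1$-periodic but lies in $\text{Coker } J$ (Theorem~\ref{adamsimj}). The stems $2k+d-1$ with $0\le d\le 3$ meet these congruence classes whenever $k\equiv 0,1 \pmod 4$. You need an extra argument to exclude it, for instance:
\begin{itemize}
\item $\partial_*b_*(\alpha)$ lies in $\ker(\pi_*\mathbb{S}\to\pi_*MO\langle k\rangle)$, as observed in Chapter~\ref{chap:5};
\item for $k\ge 4$ the unit of $ko$ factors through $MO\langle k\rangle\to MO\langle 4\rangle\to ko$, so the boundary class has trivial $ko$-Hurewicz image;
\item $\text{Im } J$ dies in $\pi_*ko$ in these stems, while the $\mu$-classes do not.
\end{itemize}

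\textbf{The vanishing lines are not available input.} The lines you treat as known --- on $E_\infty$, modulo $\text{Im } J$, with explicit intercepts, for spheres and Moore spectra at every prime --- are in large part new results of Burklund--Hahn--Senger. Steps~2 and~3 are the substance of the proof, not a routine comparison.
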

With additional hypotheses, they give a similar result for $k > 124$. While the authors mention that the bounds can likely be improved, Frank \cite{frank1968} and Stolz \cite{Stolz1985} gave counterexamples to the theorem being true in general. In particular, they show the existence of:
\begin{itemize}
        \item A 3-connected, almost closed 9-manifold with boundary non-trivial in $\text{Coker } J$.
        \item A 7-connected, almost closed 17-manifold with boundary non-trivial in $\text{Coker } J$.
\end{itemize}
There are results that do include some smaller values of $k$. As a consequence of Senger-Zhang Theorem \ref{thm: 2n mo_n kernel}, we have
\begin{theorem}\label{thm: Senger-Zhang}
Suppose $n \geq 10$. The boundary of every $(n-1)$-connected almost closed $(2n+1)$-manifold also bounds a parallelizable manifold.
\end{theorem}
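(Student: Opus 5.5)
We would deduce the statement directly from Stolz's Lemma (Theorem \ref{thm: stolz_lemma}) with $k=n$ and $d=1$, using the Senger--Zhang description of the kernel of the unit map in degree $2n$ (Theorem \ref{thm: 2n mo_n kernel}). The point is that the composite $\partial_*\circ b_*$ lands in $\pi_{2n}\mathbb S$, and everything there that dies in $MO\langle n\rangle$ is already in $\text{Im } J$.

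\emph{Step 1.} Fix $n\geq 10$ and an arbitrary $\alpha\in\pi_{2n+1}(A[n])$. Set $\beta=\partial_*b_*(\alpha)\in\pi_{2n}\mathbb S$; here we read $\mathbb S^1$ in the cofiber sequence as the suspension $\Sigma\mathbb S$, so that $\partial_*$ lowers degree by one.

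\emph{Step 2.} The cofiber sequence $\mathbb S\to MO\langle n\rangle\to MO\langle n\rangle/\mathbb S\xrightarrow{\partial}\Sigma\mathbb S$ gives a long exact sequence in homotopy
\[
\pi_{2n+1}(MO\langle n\rangle/\mathbb S)\xrightarrow{\partial_*}\pi_{2n}\mathbb S\xrightarrow{\eta}\pi_{2n}MO\langle n\rangle.
\]
Exactness at $\pi_{2n}\mathbb S$ puts $\beta$ in the kernel of the unit map.

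\emph{Step 3.} By Theorem \ref{thm: 2n mo_n kernel}, for $n\geq 10$ this kernel is generated by $\text{Im } J$. Since $\text{Im } J$ is a subgroup, $\beta\in\text{Im } J$. Note that we only need the inclusion of the kernel into $\text{Im } J$; the reverse inclusion is Hovey's Theorem \ref{hoveyimj} and plays no role here.

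\emph{Step 4.} Since $\alpha$ was arbitrary, the hypothesis of Theorem \ref{thm: stolz_lemma} holds with $k=n>2$ and $d=1$. That lemma then says the boundary of every $(n-1)$-connected almost closed $(2n+1)$-manifold bounds a parallelizable manifold.

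The only delicate step is the degree bookkeeping. Stolz's Lemma is phrased with $\pi_{2k+d}(A[k])\to\pi_{2k+d}(MO\langle k\rangle/\mathbb S)\to\pi_{2k+d-1}\mathbb S$, so with $d=1$ the target is exactly $\pi_{2n}\mathbb S$, which is the degree Senger--Zhang control. We would also check that the $2$-completion conventions of the thesis are harmless here. Chapter \ref{chap:5} works integrally, and Senger--Zhang's theorem is integral, or else the odd-primary parts are handled by the same statement. No spectral sequence computation is needed.
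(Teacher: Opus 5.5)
Your proposal is correct and matches the paper's argument. The thesis presents this theorem as an immediate consequence of three ingredients: Stolz's Lemma 12.5 with $k=n$ and $d=1$; the exactness observation that $\operatorname{Im}(\partial_*\circ b_*)\subseteq\operatorname{Im}(\partial_*)$ equals the kernel of the unit map $\pi_{2n}\mathbb S\to\pi_{2n}MO\langle n\rangle$; and the Senger--Zhang theorem that this kernel is generated by $\text{Im } J$ for $n\geq 10$. Your caveat about whether the quoted Senger--Zhang result is integral or only $2$-complete is a fair point, and the paper does not address it explicitly for this theorem.
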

\begin{theorem}[Theorem 1.4 Burklund-Senger \cite{BurklundSenger2024Geography}]
Suppose $n >2$ and $n\neq 9,12$. then the boundary of every $(n-1)$-connected almost closed $2n$-manifold also bounds a parallelizable manifold.
\end{theorem}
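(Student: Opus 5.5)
The plan is to deduce the statement from Stolz's criterion, Theorem \ref{thm: stolz_lemma}, applied with $k=n$ and $d=0$. The first step is to weaken its hypothesis into something about the unit map alone. For $\alpha\in\pi_{2n}(A[n])$, the class $\partial_*b_*\alpha\in\pi_{2n-1}\mathbb S$ is in the image of the connecting map of the cofiber sequence $\mathbb S\to MO\langle n\rangle\to MO\langle n\rangle/\mathbb S\xrightarrow{\partial}\mathbb S^1$. By exactness it therefore lies in
\[
K_{2n-1}:=\ker\bigl(\pi_{2n-1}\mathbb S\to\pi_{2n-1}MO\langle n\rangle\bigr).
\]
So it suffices to prove $K_{2n-1}\subseteq \text{Im } J$ for $n>2$, $n\neq 9,12$. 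By Theorem \ref{hoveyimj}, $\text{Im } J$ in degree $2n-1\ge n-1$ already lies in $K_{2n-1}$. The claim is thus the degree-$(2n-1)$ analogue of the Senger--Zhang statement, Theorem \ref{thm: 2n mo_n kernel}, and I would prove it the same way.

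\textbf{Large $n$.} I would compare the Adams spectral sequences for $\mathbb S$ and $MO\langle n\rangle$ along the unit map. Since $H^*(MO\langle n\rangle;\mathbb Z/2)$ agrees with $\mathcal A$ through degree $n-1$ modulo the classes coming from the fundamental class of the bottom Eilenberg--MacLane fiber, the map on $E_2$-pages is an isomorphism in a range and injective slightly beyond it. Hence any element of $K_{2n-1}$ must be killed by a differential whose source comes from the new cells starting in degree $n$ (like $I_9$, $I_{10}$, $I_{16}$ in the charts). Such a source has filtration at most about $(2n-1)-n$ below the stem. A class killed this way has large Adams filtration compared with its stem, so it lies above a line of slope roughly $1/2$, or $1/5$ after using $v_1$-periodicity. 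In that region the Adams $E_\infty$ for $\mathbb S$ consists only of image-of-$J$ classes together with the $\mu$-family and $\eta$-multiples. This follows from the vanishing line and from Mahowald's description, Theorem \ref{mahowaldimj}. The $\mu$-family and $\eta$-multiples are then excluded by mapping further to $ko$, as in Theorem \ref{adamsimj}: the Atiyah--Bott--Shapiro orientation factors through $MO\langle n\rangle\to MO\langle 4\rangle\to ko$ and detects these classes.

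\textbf{Small $n$.} For $3\le n$ up to the threshold where the filtration argument takes over, I would check each stem $2n-1$ directly, using the known structure of $\pi_*\mathbb S$ (Bruner--Rognes) together with computations like those in the earlier chapters. Two devices suffice to show that a non-$J$ class survives. One is detection in $tmf$ via $MO\langle n\rangle\to MO\langle 8\rangle\to tmf$ for $n\le 8$, as in the proof that $\vv(23,0,9)$ is a permanent cycle. The other is detection in $MO\langle n'\rangle$ for $n'>n$ by naturality, as in Lemma \ref{lem:d_3 differential in stem 24 filtration 3 mo17}. The excluded cases are exactly those where this fails: $n=9$ has the extra class $\eta\eta_4$ by Theorem \ref{lem:mo9_17_kernel}, and $n=12$ has $\eta^3\overline{\kappa}$ (compare Lemma \ref{d_8 differential in stem 24 filtration 1 mo12}).

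I expect the main obstacle to be the middle range, roughly $n$ between 13 and a few dozen. There the vanishing-line estimate is not yet sharp enough, yet the stems are too large for complete hand computation. For this range I would first try to show that every potential source of a differential on the $MO\langle n\rangle$ side of the appropriate filtration is an $h_0$-tower or a Massey product. Such sources could then only hit classes already shown to be in $\text{Im } J$, in the style of Lemma \ref{lem:d_6 differential in stem 36 filtration 11 mo16}.
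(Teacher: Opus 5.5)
Your reduction via Stolz's lemma to $K_{2n-1}\subseteq\mathrm{Im}\,J$ is sound and loses nothing. Surgering an $MO\langle n\rangle$-nullbordism of a framed homotopy sphere below the middle dimension produces an $(n-1)$-connected almost closed $2n$-manifold, so for $d=0$ the theorem is equivalent to that kernel statement. The gap is in how you propose to prove it.

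In your large-$n$ step, the connectivity of $MO\langle n\rangle/\mathbb S$ only bounds the Adams filtration of the \emph{source} of a killing differential from above. Nothing bounds the filtration $s+r$ of the \emph{target} from below, so there is no reason for elements of $K_{2n-1}$ to lie above a line of positive slope. The paper's own computation shows this in exactly the relevant stem. For $n=9$, $d_2(\vv(18,1,9))=h_1^2h_4$ kills the non-$J$ class $\eta\eta_4$ in stem $17=2n-1$; that class has filtration $3$ and the source has filtration $1$.

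The high-filtration property you want is a separate and substantial theorem of Burklund--Hahn--Senger about the boundary map. Roughly, the relevant attaching maps are of $J$-type, so their Adams filtration grows linearly in the stem. That theorem is then combined with a slope-$1/5$ description of $E_\infty(\mathbb S)$, which is likewise not a consequence of Mahowald's description of $\mathrm{Im}\,J$. Even with both inputs, the method only reaches $n\ge 128$, not a few dozen.

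As a result, everything from about $n=23$ up to $n=127$ has no argument. Bruner--Rognes controls only stems $\le 44$, and stems up to $253$ are far beyond any computation. The missing idea is geometric.

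The paper does not prove this theorem itself. It cites Burklund--Senger and, following their Remark 1.5, records that the proof is a patchwork by residue of $n$ mod $8$:
\begin{itemize}
\item By Wall's classification, an $(n-1)$-connected almost closed $2n$-manifold $M$ is a handlebody determined by $H_n(M)$, its intersection form, and $\alpha\colon H_n(M)\to\pi_{n-1}SO(n)$. Its stable tangent bundle is governed by the stabilized map to $\pi_{n-1}(SO)$.
\item For $n\equiv 3,5,6,7\pmod 8$ that group vanishes. So $M$ is itself parallelizable and $\partial M$ trivially bounds a parallelizable manifold, for every such $n$, with no stable stems involved.
\item The case $n\equiv 2\pmod 8$ is due to Schultz.
\item Apart from small cases done by Wall, only $n\equiv 1\pmod 8$ and $n\equiv 0\pmod 4$ need homotopy theory: Stolz for $n\ge 129$, Burklund--Hahn--Senger for $n\ge 128$, and new arguments of Burklund--Senger below these bounds.
\end{itemize}

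Separately, both of your detection devices run the wrong way. For $n'>n$ the unit of $MO\langle n\rangle$ factors as $\mathbb S\to MO\langle n'\rangle\to MO\langle n\rangle$. So nontriviality in $MO\langle n\rangle$ has to be detected in some $MO\langle n''\rangle$ with $n''<n$; the $MO\langle 17\rangle$ lemma does this using $MO\langle 16\rangle$. Likewise, the map $MO\langle n\rangle\to MO\langle 8\rangle\to tmf$ exists only for $n\ge 8$, not $n\le 8$.
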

Following Remark 1.5 of \cite{BurklundSenger2024Geography}, the cases $3 \le n \le 8$ are due to Wall \cite{Wall1962}; the cases $n \equiv 3,5,6,7 \pmod 8$ follow from work of Wall and Kervaire--Milnor \cite{Wall1962,KervaireMilnor1963}; the cases $n \equiv 2 \pmod 8$ are due to Schultz \cite{Schultz1972}; the cases $n \equiv 1 \pmod 8$ with $n \ge 129$ are due to Stolz \cite{Stolz1985}; and the cases $n \equiv 0 \pmod 4$ with $n \ge 128$ are due to Burklund-Hahn-Senger \cite{BurklundHahnSenger2019}. The remaining cases are established in \cite{BurklundSenger2024Geography}.

We are interested in exploring the cases not covered by the above theorems using our calculations from Chapter \ref{chap:fivebrane} and Chapter \ref{chap:o_n_cobordism}. Before doing so, we first state a relevant result about $\text{Coker } J$ at odd primes.  

\begin{theorem}[Ravenel Theorem 1.1.14 \cite{ravenelgreenbook}]\label{thm: imj odd}
For $p \geq 3$ the $p$-component of $\text{Coker} J$ has the following generators in dimensions $\leq 3pq-6$ (where $q=2p-2$), each with order $p$:
\[
\beta_{1}\in \pi_{pq-2}\mathbb S, \qquad \alpha_{1}\beta_{1}\in \pi_{(p+1)q-3} \mathbb S
\]
\end{theorem}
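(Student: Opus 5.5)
I would prove the statement by computing the $p$-primary stable stems in this range with the Adams--Novikov spectral sequence
\[
E_2^{s,t}=\mathrm{Ext}^{s,t}_{BP_*BP}(BP_*,BP_*)\Rightarrow (\pi_{t-s}\mathbb S)_{(p)},
\]
and then removing $\mathrm{Im}\,J$. At odd primes, $BP_*=\mathbb Z_{(p)}[v_1,v_2,\dots]$ with $|v_i|=2(p^i-1)$. Every generator of $BP_*BP$ has internal degree divisible by $q=2p-2$, so $E_2^{s,t}=0$ unless $q\mid t$. This sparseness is the main organizing tool. It forces $d_r=0$ unless $r\equiv 1 \pmod{q}$. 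Hence the first possible differential is $d_{2p-1}$.

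The first step is the $0$- and $1$-lines. One has $\mathrm{Ext}^0=\mathbb Z_{(p)}$ in degree $0$. The $1$-line is generated by the classes $\alpha_{k/j}$ in stems $kq-1$, computed with the chromatic spectral sequence or directly from $\eta_R(v_1)=v_1+pt_1$. Using the Adams $e$-invariant, I would identify the $p$-component of $\mathrm{Im}\,J$ with the image of the $1$-line. In particular, $\alpha_1$ in stem $q-1$ is $J$ of a generator of $\pi_{q-1}SO$.

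The second step is the $2$-line in the range $t-s\le 3pq-6$. I would use the chromatic spectral sequence (or the algebraic Novikov spectral sequence from the cobar complex on $P_*=\mathbb F_p[t_1,t_2,\dots]$) to show that the first element of $\mathrm{Ext}^2$ not of the form $\alpha\cdot\alpha'$ is $\beta_1$. Here $\beta_1$ is the image of $v_2/(pv_1)$ under the connecting maps, in $\mathrm{Ext}^{2,pq}$, i.e.\ stem $pq-2$. I would also show that $\alpha_1\alpha_{k}$ vanishes on $E_2$ in this range. Products give $\alpha_1\beta_1$ in stem $(p+1)q-3$. The next candidates are $\beta_2$ in stem $(2p+1)q-2$ and $\beta_1^2$ in stem $2pq-4$. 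These must be checked against the bound $3pq-6$. Here I am not certain, since Ravenel's original list may contain further generators: $\beta_1^2$, $\alpha_1\beta_1^2$, $\beta_2$ and $\alpha_1\beta_2$ lie below $3pq-6$. So I would reread the exact range stated in 1.1.14 and adjust the statement accordingly. Finally, Toda's relation $\alpha_1\beta_1^p=0$ and the differential $d_{2p-1}(\beta_{p/p})=\alpha_1\beta_1^p$ both occur far beyond $3pq-6$.

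The third step is collapse. By sparseness, any differential in this range is at least $d_{2p-1}$, which raises $s$ by $2p-1$. The only classes present have $s\le 3$, and their possible targets in stem one lower are zero. So $E_2=E_\infty$ through $3pq-6$. Each nonzero group has order $p$, so there are no extension problems. Removing the $1$-line, i.e.\ $\mathrm{Im}\,J$, leaves the listed $\beta$-family generators, each of order $p$.

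The main obstacle is the second step: a complete and correct determination of $\mathrm{Ext}^2$ (and $\mathrm{Ext}^3$) in the stated range. I would first try the chromatic approach, computing $\mathrm{Ext}^0(v_1^{-1}BP_*/(p^\infty,v_1^\infty))$ in low degrees.
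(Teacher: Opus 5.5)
The paper gives no proof of this statement; it only cites Ravenel, and the quotation is truncated. In dimensions $\le 3pq-6$ the $p$-component of $\mathrm{Coker}\,J$ is generated by seven classes, each of order $p$: $\beta_1$, $\alpha_1\beta_1$, $\beta_1^2\in\pi_{2pq-4}$, $\alpha_1\beta_1^2\in\pi_{(2p+1)q-5}$, $\beta_2\in\pi_{(2p+1)q-2}$, $\alpha_1\beta_2\in\pi_{(2p+2)q-3}$ and $\beta_1^3\in\pi_{3pq-6}$. The bound $3pq-6$ is exactly the stem of $\beta_1^3$. Read as a complete list of two generators, the statement is false. For example, at $p=3$, $\beta_1^2$ generates the $\mathbb{Z}/3$ in $\pi_{20}\mathbb{S}$, while $\mathrm{Im}\,J$ vanishes in even stems at odd primes. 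So your suspicion is right, but your fix is incomplete: you omit $\beta_1^3$ and never commit to the corrected statement. The paper only uses the true weaker fact that the first $p$-primary class in $\mathrm{Coker}\,J$ is $\beta_1$ in stem $pq-2$.

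Your Adams--Novikov outline is the standard route, but two of its steps are wrong as written, and the core computation is missing.

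\emph{Collapse.} The claim that ``the only classes present have $s\le 3$'' contradicts your own list, since $\beta_1^2$, $\alpha_1\beta_1^2$ and $\beta_1^3$ sit in filtrations $4$, $5$ and $6$. For $p\ge 5$ sparseness still suffices: a nonzero $d_r$ needs $r\ge 2p-1$, so it would land in filtration $\ge 2p\ge 10$. For $p=3$, however, there is a degree-allowed $d_5\colon E_5^{1,32}\to E_5^{6,36}$ sending $\alpha_8\mapsto\beta_1^3$. You can only exclude it by knowing that the $1$-line consists of permanent cycles, because those classes detect $\mathrm{Im}\,J$.

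\emph{Extensions.} The claim that ``each nonzero group has order $p$'' is false. $\mathrm{Ext}^1$ in stem $kq-1$ has order $p^{1+\nu_p(k)}$, and at $p=3$ stem $23$ contains both $\alpha_{6/2}$ (order $9$) and $\alpha_1\beta_1^2$. To rule out a hidden extension you need Adams' theorem: $\mathrm{Im}\,J$ is detected on the $1$-line with order equal to that of $\mathrm{Ext}^1$, so $p$-locally $\pi_n\mathbb{S}=\mathrm{Im}\,J\oplus F^2\pi_n$.

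\emph{The core computation.} The real content is showing that $\mathrm{Ext}^{s,t}$ in this range contains nothing beyond the $1$-line and the seven classes above. This must hold for every $s$, including vanishing for $s\ge 7$, which your collapse argument silently assumes. That step is left undone. The chromatic computation of $\mathrm{Ext}^0(M^2)$ only gives the $2$-line; the higher lines need a further tool such as the Cartan--Eilenberg or algebraic Novikov spectral sequence.
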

Next, observe that we may restrict to studying the unit map $\pi_{*} \mathbb S \rightarrow MO \langle k \rangle$. Consider the cofiber sequence
\[
\mathbb S \longrightarrow MO\langle k\rangle \longrightarrow MO\langle k\rangle/\mathbb S.
\]
and its associated long exact sequence in homotopy 
\[
\dots \rightarrow \pi_{n}(MO\langle k\rangle/\mathbb S)
\xrightarrow{\partial_*}
\pi_{n-1}(\mathbb S)
\longrightarrow
\pi_{n-1}(MO\langle k\rangle) \rightarrow \dots
\]
By exactness, we have
\[
\operatorname{Im}(\partial_*)
=
\ker\!\left(
\pi_{n-1}(\mathbb S)\longrightarrow \pi_{n-1}(MO\langle k\rangle)
\right).
\]
It follows that
\[
\operatorname{Im}(\partial_*\circ b_*)
\subseteq
\operatorname{Im}(\partial_*)
=
\ker\!\left(
\pi_{n-1}(\mathbb S)\longrightarrow \pi_{n-1}(MO\langle k\rangle)
\right).
\]
where $b_{*}$ is the map in Theorem \ref{thm: stolz_lemma}. Thus, we prove the following. 
 \begin{theorem}\label{thm: mo9 unit map}
Let $0 \leq n \leq 31$ and $n \neq 10,13,20,29,30$. The kernel of the unit map
\begin{align*}
\pi_{n}\mathbb{S} \rightarrow \pi_{n}MO\langle 9 \rangle 
\end{align*}
is generated by the image of $J$ except when $n=17,18,19,21,22,23,26,31$. The exceptional cases, with generators of order divisible by $3$ omitted, are generated by the image of the $J$-homomorphism and
\begin{itemize}
\item When $n=17$, $\eta \eta_{4} \in \pi_{17}\mathbb{S}$. 
\item When $n=18$, $\{h_2h_4\}=\nu^{*} \in \pi_{18}\mathbb{S}$.
\item When $n=19$, $\overline{\sigma} \in \pi_{19}\mathbb{S}$.
\item When $n=21$, $\nu \{h_2h_4\}=\nu \nu^{*} \in \pi_{21}\mathbb{S}$.
\item When $n=22$, $\nu \overline{\sigma} \in \pi_{22}\mathbb{S}$.
\item When $n=23$, either $2 \nu \overline{\kappa}$ or $\nu \overline{\kappa} \in \pi_{23}\mathbb{S}$.
\item When $n=26$, $\nu^{2}\overline{\kappa} \in \pi_{26}\mathbb{S}$
\item When $n=31$, $[n] \in \pi_{31}\mathbb{S}$.
\end{itemize}
\end{theorem}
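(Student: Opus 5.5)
We will determine the kernel of $\pi_n\mathbb{S}\to\pi_n MO\langle 9\rangle$ stem by stem, by comparing Adams spectral sequences along the unit map. The $2$-primary and odd-primary parts are handled separately.

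At odd primes, $MO\langle 9\rangle$ has torsion-free homotopy in low degrees (rationally its cohomology is $\mathbb{Q}[p_3,p_4,\dots]$ by Proposition \ref{prop: rational bo9}). Hovey's Theorem \ref{hoveyimj} and Ravenel's Theorem \ref{thm: imj odd} then describe the odd-primary kernel. For $p=3$ the only Coker $J$ classes in range are $\beta_1\in\pi_{10}$, $\alpha_1\beta_1\in\pi_{13}$, $\beta_1^2\in\pi_{20}$, and so on. This is why stems $10,13,20$ (and the order-$3$ generators) are excluded or omitted. For $p\geq 5$, Coker $J$ first appears in stem $pq-2\geq 38$, so it plays no role through stem $31$.

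At the prime $2$, the procedure is as follows. For each $n\le 31$ with $n\neq 10,13,20,29,30$, list the classes of $\mathrm{Ext}_{\mathcal{A}}(\mathbb{Z}/2,\mathbb{Z}/2)$ in stem $n$ that survive to $E_\infty(\mathbb{S})$ (Proposition \ref{prop: sphere differentials}). Trace their images under the map of spectral sequences induced by the unit. Using the $E_\infty$-page of $MO\langle 9\rangle$ computed in Chapter \ref{chap:fivebrane}, a permanent cycle $x$ detects an element of the kernel exactly when its image is zero on $E_2$, or is the target of a differential, or lies in strictly higher filtration than its image detects. There are two directions to check:
\begin{itemize}
\item Image of $J$ lies in the kernel by Theorem \ref{hoveyimj}; by Theorem \ref{mahowaldimj} these are the classes $P^ih_2$, $P^ic_0$, $P^ih_1c_0$ and the $h_0$-towers at the Adams edge, each of which we exhibited as killed (for example $d_2(I_9)=h_1h_3$, $d_3(\vv(12,2,9))=Ph_2$, and Lemma \ref{lem:d_3-differentials-imj-mo9}).
\item Every other surviving class either maps nontrivially or is one of the listed exceptions.
\end{itemize}
The exceptions are read off from our differentials:
\begin{itemize}
\item $h_1^2h_4$ is hit by $d_2(\vv(18,1,9))$ (Theorem \ref{lem:mo9_17_kernel}).
\item $h_2h_4$ is hit by $d_2(\vv(19,0,9))$.
\item $c_1$, which detects $\overline\sigma$, is hit by $d_2(\vv(20,1,9))$.
\item $h_2^2h_4$ is killed, being $d_2$ of $h_2\vv(19,0,9)$.
\item $h_2c_1$, which detects $\nu\overline\sigma$, is killed as the $h_2$-multiple of the $c_1$ differential.
\item $h_2^2g$, which detects $\nu^2\overline\kappa$, is hit by $d_2(\vv(27,4,9))$.
\item $n$ is hit by $d_2(\vv(32,3,9))$.
\item $h_0h_2g$ is hit by $d_2(\vv(24,4,9))$.
\end{itemize}
In stem $23$ the question is whether $h_2g$ itself is hit by $d_3(v_{24,2})$, which is undetermined. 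This produces the dichotomy between $2\nu\overline\kappa$ and $\nu\overline\kappa$.

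Nonzero images are certified in two ways. One is that the image class is a nonzero permanent cycle on $E_\infty(MO\langle 9\rangle)$ that is not a boundary. The other is factoring through $MO\langle 8\rangle\to tmf$, as in the proof that $\vv(23,0,9)$ is a permanent cycle (for instance for $\eta^2\overline\kappa$ and $\kappa$, $\overline\kappa$).

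The main obstacle is filtration jumps: an element may map to zero on $E_\infty$ in its own filtration yet be detected in higher filtration, or conversely appear nonzero but be hit by a class from the sphere's hidden extensions. To handle this, in each stem we will check that every higher-filtration class of $E_\infty(MO\langle 9\rangle)$ in that stem is itself in the image of the unit map, and apply linearity over $h_0,h_1,h_2$. Where this is inconclusive, we will appeal to Theorem \ref{lem:mo9_17_kernel} and the $tmf$ comparison. The excluded stems $29,30$ are precisely those where unresolved differentials ($\vv(28,4,9)$, $\vv(31,0,9)$ and so on) block this check.
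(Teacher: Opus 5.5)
Your proposal follows essentially the same argument as the paper. At odd primes you use Ravenel's tables: there is no odd Coker $J$ in the allowed stems beyond the order-$3$ classes the statement omits, and nothing for $p\geq 5$ below stem $38$. At $2$ you read off the exceptional kernel classes from the same differentials on $\vv(18,1,9)$, $\vv(19,0,9)$, $\vv(20,1,9)$, $\vv(24,4,9)$, $\vv(27,4,9)$, $\vv(32,3,9)$ and their $h_2$-multiples. The stem-$23$ dichotomy comes from the undetermined $d_3(v_{24,2})=h_2g$; your source is the right one, whereas the paper's proof writes $\vv(26,2,9)$. Your explicit filtration-jump check is care the paper leaves implicit.

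Two small corrections. First, drop the claim that $MO\langle 9\rangle$ has torsion-free odd-primary homotopy in low degrees. It is false: $MO\langle 9\rangle/\mathbb{S}$ is $8$-connected, so the unit is an isomorphism through degree $7$ and, for example, $\alpha_1$ survives. It is also unused, since only the vanishing of odd Coker $J$ matters. Second, Theorem \ref{hoveyimj} puts $\mathrm{Im}\,J$ in the kernel only for $n\geq 8$. For $n\leq 7$ the kernel is zero, so there the statement must be read as ``kernel $\subseteq \mathrm{Im}\,J$''.
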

\begin{proof}
According to Table A3.4 of Ravenel \cite{ravenelgreenbook}, the $3$-primary component of $\text{Coker } J$ is trivial in the dimensions $n$ considered here. Since $p(2p-2)-2$ is $\geq 38$ when $p\geq 5$, the $p$-primary component 
of $\text{Coker } J$ is also trivial in the dimensions $n$ considered here. Thus, we need only consider the $2$-primary component of the unit map. The targets of the Adams differentials in Lemmas \ref{lem:d_2-unit-kernel-of-mo9-stem-17}, \ref{lem:d_2 differential in 19 stem mo9}, \ref{d_2 differential in 20 stem mo9}, \ref{d_2 differential in stem 24 filtration 4 mo9}, \ref{lem:d_2 differential in mo9 stem 27 filtration 4}, and \ref{lem:d_2 differential in stem 32 filtration 3 mo9}, are 
$h_1^2h_4$, $h_2h_4$, $c_1$, $h_0h_2g$, $h_2^2g$, and $n$. These elements detect the classes $\eta \eta_{4}$, $\nu^{*}$, $\overline{\sigma}$, $2 \nu \overline{\kappa}$, $\nu^{2}\overline{\kappa}$, and $[n]$ in $\pi_{*} \mathbb S$. 
 Lemma \ref{d_2 differential in 20 stem mo9} implies $d_2(h_2\vv(20,1,9))=h_2c_1$, which detects $\nu \overline{\sigma} \in \pi_{22}\mathbb{S}$. The ambiguity 
 for $n=23$ is because $d_3(\vv(26,2,9))$ could possibly equal $h_2g$, which detects $\nu \overline{\kappa} \in \pi_{23}\mathbb{S}$. 
\end{proof}
\begin{remark}
The case $n=17$ was already known by Burklund-Senger Theorem \ref{lem:mo9_17_kernel}. The case $n=18$ was already known by Senger-Zhang Theorem 6.1 \cite{SengerZhang2025Inertia}.
\end{remark}
We have the following immediate corollaries of Theorem \ref{thm: mo9 unit map}.
\begin{corollary}
Suppose $M$ is an $8$-connected, almost closed $(18+d)$-manifold $d=7,8,10,11$. Then the boundary $\partial M \in \Theta_{18+d-1}$ has trivial image $0=[\partial M]\in (\text{Coker } J)_{18+d-1}$. In particular, the boundary of every 
such manifold $M$ also bounds a parallelizable manifold.
\end{corollary}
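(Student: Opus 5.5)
The plan is to feed Theorem \ref{thm: mo9 unit map} into Stolz's criterion, Theorem \ref{thm: stolz_lemma}, with $k=9$. An $8$-connected almost closed $(18+d)$-manifold is a $(k-1)$-connected almost closed $(2k+d)$-manifold with $k=9>2$. Its boundary therefore lies in $\Theta_{18+d-1}$, and the relevant stem is $n=2k+d-1=17+d$. For $d=7,8,10,11$ this gives $n=24,25,27,28$.

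First I will check that each of these stems satisfies the hypotheses of Theorem \ref{thm: mo9 unit map}. We need $0\le n\le 31$ and $n\notin\{10,13,20,29,30\}$, and also $n$ must avoid the exceptional list $\{17,18,19,21,22,23,26,31\}$. All four values $24,25,27,28$ pass. Theorem \ref{thm: mo9 unit map} then says that the kernel of $\pi_n\mathbb S\to\pi_n MO\langle 9\rangle$ is generated by $\mathrm{Im}\,J$. This includes the odd-primary parts, since the proof of that theorem shows odd-primary $\mathrm{Coker}\,J$ vanishes in these stems.

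Next I will use the exactness argument displayed just before Theorem \ref{thm: mo9 unit map}. For any $\alpha\in\pi_{2k+d}(A[9])$ we have
\[
\partial_*b_*(\alpha)\in \mathrm{Im}(\partial_*)=\ker\bigl(\pi_{n}\mathbb S\to\pi_{n}MO\langle 9\rangle\bigr)\subseteq \mathrm{Im}\,J .
\]
So the hypothesis of Theorem \ref{thm: stolz_lemma} holds, and it follows that $\partial M$ bounds a parallelizable manifold. That gives $[\partial M]\in bP_{18+d}$. By exactness of the Kervaire--Milnor sequence, the image of $[\partial M]$ in $(\mathrm{Coker}\,J)_{18+d-1}$ is then zero.

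The only step needing real care is the stem bookkeeping. One must confirm that $\partial_*$ lands in degree $2k+d-1$, and that stem $28$ is not one of the unresolved cases; it is not, since $28$ is excluded neither by the hypotheses nor by the exceptional list of Theorem \ref{thm: mo9 unit map}. Beyond that, the argument is a direct combination of earlier results.
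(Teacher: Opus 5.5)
Your proposal is correct and is the argument the paper intends: the paper calls this an immediate corollary of Theorem~\ref{thm: mo9 unit map}, applied in stems $n=17+d\in\{24,25,27,28\}$. As you do, it uses the exactness argument $\operatorname{Im}(\partial_*\circ b_*)\subseteq\ker(\pi_{n}\mathbb S\to\pi_{n}MO\langle 9\rangle)$ to verify the hypothesis of Stolz's Lemma~\ref{thm: stolz_lemma}, and the Kervaire--Milnor sequence identifies bounding a parallelizable manifold with having trivial image in $(\text{Coker } J)_{n}$.
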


\begin{corollary}
Let $d\in \{0,1,2,4,5,9,14\}$. A homotopy $(18+d-1)$-sphere $\Sigma$ is the boundary of an $8$-connected $(18+d)$-manifold if and only if
\begin{itemize}
    \item when $d=0$, 
    \[
    [\Sigma]\in \{0,\eta\eta_{4}\}\subset (\text{Coker } J)_{17};
    \]
    
    \item when $d=1$,
    \[
    [\Sigma]\in \{0, \nu^{*}\} \subset (\text{Coker } J)_{18};
    \]
    
    \item when $d=2$,
    \[
    [\Sigma]\in \{0, \overline{\sigma}\} \subset (\text{Coker } J)_{19};
    \]
    
    \item when $d=4$,
    \[
    [\Sigma]\in \{0, \nu\nu^{*}\} \subset (\text{Coker } J)_{21};
    \]
    
    \item when $d=5$,
    \[
    [\Sigma]\in \{ 0,\nu\overline{\sigma}\} \subset (\text{Coker } J)_{22};
    \]
    \item when $d=14$,
    \[
    [\Sigma]\in \{0,[n]\} \subset (\text{Coker } J)_{31}.
    \]
\end{itemize}
\end{corollary}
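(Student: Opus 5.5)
The corollary should follow from Theorem \ref{thm: mo9 unit map} together with the geometric realization part of Stolz's framework. There are two directions.

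For the ``only if'' direction, let $\Sigma = \partial M$ with $M$ an $8$-connected $(18+d)$-manifold. The plan is to use the description underlying Theorem \ref{thm: stolz_lemma}: such an $M$ determines, via its $MO\langle 9\rangle$-structure relative to the framed boundary, a class in $\pi_{18+d}(MO\langle 9\rangle/\mathbb S)$. Its image under $\partial_*$ in $\pi_{18+d-1}\mathbb S$ is a framed representative of $[\Sigma]$, modulo $\mathrm{Im}\,J$, which accounts for the choice of framing. By the exactness argument displayed just before Theorem \ref{thm: mo9 unit map}, this image lies in
\[
\ker\bigl(\pi_{18+d-1}\mathbb S\to\pi_{18+d-1}MO\langle 9\rangle\bigr).
\]
For $n=18+d-1\in\{17,18,19,21,22,31\}$, that theorem says this kernel is generated by $\mathrm{Im}\,J$ together with the listed class $\eta\eta_4$, $\nu^*$, $\overline{\sigma}$, $\nu\nu^*$, $\nu\overline{\sigma}$ or $[n]$, with odd-primary pieces absent by the Ravenel argument. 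Each listed class has order $2$ in $\mathrm{Coker}\,J$, so $[\Sigma]$ lies in the stated two-element set. Before relying on this, I will check that the indices match: $d=0,1,2,4,5,14$ give $n=17,18,19,21,22,31$. The value $d=9$ gives $n=26$, which is also an exceptional case ($\nu^2\overline{\kappa}$), so I would add that bullet, since the statement appears to omit it.

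For the ``if'' direction I have to realize the nontrivial class. Since the listed class $x$ lies in $\ker(\pi_n\mathbb S\to\pi_nMO\langle 9\rangle)$, exactness gives $y\in\pi_{n+1}(MO\langle 9\rangle/\mathbb S)$ with $\partial_* y = x$. Geometrically, $y$ is represented by a compact $O\langle 9\rangle$-manifold $W$ whose boundary $\partial W$ is framed and represents $x$ in framed cobordism. I then need to improve $W$ to an $8$-connected manifold with boundary a homotopy sphere. The boundary can be made a homotopy sphere by framed surgery below the middle dimension, which changes $\partial W$ only by a framed cobordism and hence preserves $x$. The interior can then be surgered to be $8$-connected: an $O\langle 9\rangle$-structure means the relevant normal bundles are trivial on the $\le 8$-skeleton, and the dimension $18+d\ge 18$ is large enough for surgery below the middle dimension. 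This is exactly the role of Stolz's $A[k]$ and of the surjectivity part of his analysis, so I would cite Stolz's construction in \cite{Stolz1985}, as Senger--Zhang and Burklund--Senger do, rather than redo it. Since any element of $\mathrm{Coker}\,J$ differing from $x$ by $\mathrm{Im}\,J$ names the same coset, this produces $\Sigma$ with $[\Sigma]=x$. The trivial class is realized by the standard sphere bounding a disc.

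The main obstacle is this realization step, particularly for $d=14$ and $d=9$, where $18+d$ is not near $2k$. I would first try to invoke Stolz's theorem that every class in the image of $\partial_*\circ b_*$ is realized as a boundary, and check that his map $b$ hits a preimage of $x$. Failing that, I would argue directly that $\partial_*$ restricted to highly connected representatives already surjects onto the kernel in this range.
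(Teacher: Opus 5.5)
Your reduction is the one the paper intends. The paper states the corollary as immediate from Theorem~\ref{thm: mo9 unit map}, using $\operatorname{Im}(\partial_*)=\ker(\pi_{n-1}\mathbb{S}\to\pi_{n-1}MO\langle 9\rangle)$, and gives no argument for the realization direction. However, your step ``each listed class has order $2$ in $\text{Coker } J$'' is false for $d=1$, and the $d=1$ bullet is false with it.

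The class $\nu^*$ is detected by $h_2h_4$, which carries the surviving $h_0$-tower $h_2h_4,\ h_0h_2h_4,\ h_0^2h_2h_4=h_1^3h_4$ in the Adams spectral sequence for $\mathbb{S}$. So $\nu^*$ has order $8$ in $\pi_{18}\mathbb{S}$, while $\pi_{18}(SO)=0$ makes $\text{Im } J$ trivial in stem $18$. The kernel of the unit map is a subgroup, so $2\nu^*$ and $4\nu^*$ lie in it too. This is consistent with the paper's chart, where $d_2(h_0^j\vv(19,0,9))=h_0^jh_2h_4$ for $j=0,1,2$. Hence the kernel modulo $\text{Im } J$ in stem $18$ is $\Z/8\{\nu^*\}$, not $\{0,\nu^*\}$.

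The bullet is even inconsistent on its own terms. Boundary connected sum preserves $8$-connectivity and adds boundary classes, so if a $\Sigma$ with $[\Sigma]=\nu^*$ bounds an $8$-connected $M$, then $M\natural M$ has boundary class $2\nu^*\notin\{0,\nu^*\}$. The set of realizable classes is always a subgroup of $(\text{Coker } J)_{17+d}$, and for $d=1$ it is $\langle\nu^*\rangle\cong\Z/8$. The two-element form is correct only where the listed class really has order $2$ modulo $\text{Im } J$, namely $\eta\eta_4$, $\overline{\sigma}$, $\nu\nu^*$, $\nu\overline{\sigma}$ and $[n]$. You caught the missing $d=9$ bullet; $d=1$ needed the same check.

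There are also three smaller points.
\begin{itemize}
\item For your added $d=9$ bullet, the Ravenel argument does not apply in stem $26$. The class $\beta_2\in\pi_{26}\mathbb{S}$ is a nonzero $3$-primary element of $\text{Coker } J$, which is why Theorem~\ref{thm: mo9 unit map} states its exceptional cases with generators of order divisible by $3$ omitted. So $\{0,\nu^2\overline{\kappa}\}$ is only established $2$-locally.
\item For realization, surgery below the middle dimension will not make $\partial W$ a homotopy sphere. You need the Kervaire--Milnor middle-dimensional step. It is unobstructed here because $17+d$ is either odd or lies in $\{18,22,26\}$, where the Kervaire invariant vanishes by Browder. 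So every element of $\pi_{17+d}\mathbb{S}$ is represented by a framed homotopy sphere.
\item Your worry about $A[k]$ for $d=9,14$ is unnecessary. Since $18+d\ge 2\cdot 9$, standard surgery below the middle dimension in the interior works for every $d\ge 0$ and does not touch $\partial W$. Embedded $S^i$ with $i\le 8$ have trivial normal bundles: they are stably trivial by the $O\langle 9\rangle$-structure, and $2i<18+d$ puts them in the stable range. The structure extends over each trace because $\pi_i BO\langle 9\rangle=0$. These surgeries make $W$ $8$-connected. Stolz's $A[k]$ is a device for bounding the image of $\partial_*$ from above, not for realizing classes.
\end{itemize}
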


We proceed to state a few analogous results for more highly connected manifolds. 

\begin{lemma}\label{lem: kappa2 mo12}
The element $\kappa^2 \in \pi_{28} \mathbb S$ has nonzero image under the unit map $e^{\langle 12 \rangle}: \pi_{28} \mathbb S \rightarrow \pi_{28} MO \langle 12 \rangle$.
\end{lemma}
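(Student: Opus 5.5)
We will show that the class $d_0^2$, which detects $\kappa^2$ in the Adams spectral sequence for $\mathbb S$, survives to $E_\infty$ in the Adams spectral sequence for $MO\langle 12\rangle$. We will then argue that its image is a nonzero element of $\pi_{28}MO\langle 12\rangle$.

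\textbf{Step 1: $d_0^2$ is a permanent cycle.} The unit map induces a map of Adams spectral sequences. In $\mathbb S$, $d_0^2$ is a permanent cycle detecting $\kappa^2$ (Proposition \ref{prop: sphere differentials}), so by naturality its image in $E_2(MO\langle 12\rangle)$ is a permanent cycle. We will check on the $E_2$-page of Figure \ref{fig:mo12_e2} that this image is nonzero. This is plausible: $H^*(MO\langle 12\rangle;\Z/2)$ agrees with $\mathcal{A}//\mathcal{A}(3)$ in low degrees, and the next cells enter in degree $12$. So $\text{Ext}$ near bidegree $(28,8)$ contains the image of $\text{Ext}_{\mathcal{A}}(\Z/2,\Z/2)$ modulo contributions from the higher cells.

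\textbf{Step 2: $d_0^2$ is not a boundary.} Since $d_0^2$ has filtration $8$, we list the classes in stem $29$ of filtrations $\le 6$ on each $E_r$-page of $MO\langle 12\rangle$ that could hit it.
\begin{itemize}
\item The candidates are $k$, the $h_0$-tower on $\vv(29,2,12)$, and any $h_1$- or $h_2$-multiples from stems $28$ and $27$.
\item $d_2(k)=h_0d_0^2\neq d_0^2$, and by Proposition \ref{prop: sphere differentials} this differential is forced by naturality.
\item The $h_0$-tower on $\vv(29,2,12)$ is $h_0$-torsion free. By $h_0$-linearity its differential must land in the $h_0$-towers in stem $28$ (Lemma \ref{lem:stem 29 differential mo12}), whereas $d_0^2$ is $h_0$-torsion, since $h_0d_0^2$ is killed by $d_2(k)$.
\end{itemize}
More precisely, if $d_r(h_0^j\vv(29,2,12))=d_0^2$, then $d_r(h_0^{j+1}\vv(29,2,12))=h_0d_0^2$. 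On $E_r$ with $r\ge 3$ we have $h_0d_0^2=0$, so $h_0^{j+1}\vv(29,2,12)$ would be a $d_r$-cycle while $h_0^j\vv(29,2,12)$ is not. That pattern is impossible for a tower whose generator supports a differential into other towers. We will also rule out remaining sources by $h_1$- and $h_2$-linearity, using that $h_1d_0^2$ and $h_2d_0^2$ are nonzero or zero compatibly.

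\textbf{Step 3: no higher-filtration class cancels it.} Having a nonzero class in $E_\infty^{8,36}$ detecting $e^{\langle 12\rangle}(\kappa^2)$ already shows the image is nonzero, because an element detected by a nonzero $E_\infty$ class is nonzero. So no hidden-extension analysis is needed. This is consistent with the $\Z/2$ summand in the computed group $\pi_{28}MO\langle 12\rangle\cong\Z\oplus\Z\oplus\Z/2^n\oplus\Z/2$, which we will identify as generated by $\kappa^2$.

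The main obstacle is Step 2. The differential on $\vv(29,2,12)$ is not fully determined in Table \ref{tab:Adams d_2 mo12}, whose listed targets are $h_0^4\vv(28,0,12)$ and $\vv(28,4,12)$. We must make sure that no linear combination of its target, on any page, involves $d_0^2$. Our first approach is the $h_0$-torsion argument above. If that fails, we will use that $d_0^2=d_0\cdot d_0$ is a product of permanent cycles coming from $\mathbb S$, together with a Leibniz-rule argument multiplying by $d_0$ or $g$.
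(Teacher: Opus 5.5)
Your proposal has a genuine gap in Step 2, where you suspected it would be. The paper leaves the differential on the tower generated by $\vv(29,2,12)$ undetermined. Lemma \ref{lem:stem 29 differential mo12} only says that $\vv(29,2,12)$ supports \emph{some} nonzero differential; it does not say where that differential lands, and the $d_2$ entry in the table is marked ``?''.

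Your $h_0$-linearity argument does not exclude the following scenario. Suppose $d_2(\vv(29,2,12))=0$ and then $d_6(\vv(29,2,12))=d_0^2$; note that $(29,2)\to(28,8)$ is exactly a $d_6$. Then on $E_6$ we get $d_6(h_0\vv(29,2,12))=h_0d_0^2=0$, because $d_2(k)=h_0d_0^2$. So $h_0\vv(29,2,12)$ survives to $E_7$ and can later support a longer differential that truncates a tower in stem $28$. That is all the rational computation in Proposition \ref{prop: pi_29 mo12} requires.

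The pattern you call impossible is common. Different elements of an $h_0$-string can support differentials of different lengths, with the bottom one hitting an $h_0$-torsion class. In $\mathbb S$, for example, $d_2(h_4)=h_0h_3^2$, while $h_0h_4$ survives to $E_3$ and supports $d_3(h_0h_4)=h_0d_0$.

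Your argument would work only if you already knew that the \emph{first} nonzero differential on $\vv(29,2,12)$ lands on an $h_0$-torsion-free class. That is precisely the missing information. The fallback does not help either. Knowing that $d_0^2=d_0\cdot d_0$ is a product of permanent cycles says nothing about whether it is a boundary. Multiplying a hypothetical differential by $d_0$ or $g$ only moves the same question to stems $42$ or $48$, which are even less well understood.

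The missing idea is to avoid stem $29$ of the $MO\langle 12\rangle$ spectral sequence altogether by comparing with $MO\langle 9\rangle$. The map $BO\langle 12\rangle\to BO\langle 9\rangle$ induces a ring map $MO\langle 12\rangle\to MO\langle 9\rangle$. Hence the unit $e^{\langle 9\rangle}$ factors as $\pi_{28}\mathbb S\xrightarrow{e^{\langle 12\rangle}}\pi_{28}MO\langle 12\rangle\to\pi_{28}MO\langle 9\rangle$.

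In the Adams spectral sequence for $MO\langle 9\rangle$, the class $d_0^2$ survives to $E_\infty$ (Figure \ref{fig:mo9_einf}); it accounts for the $\Z/2$ in Proposition \ref{prop:extension problem stem 28 mo9}. So $e^{\langle 9\rangle}(\kappa^2)\neq 0$, and therefore $e^{\langle 12\rangle}(\kappa^2)\neq 0$. This is the paper's proof.

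The same comparison, done at the level of spectral sequences, is exactly what would close your Step 2. If $d_r(x)=d_0^2$ in $E_r(MO\langle 12\rangle)$, naturality gives $d_r(f(x))=d_0^2$ in $E_r(MO\langle 9\rangle)$, contradicting the survival of $d_0^2$ there. With the homotopy-level factorization, none of your Steps 1--3 is needed.
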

\begin{proof}
Inspecting Figure \ref{fig:mo9_einf}, we find that $d_0^{2}$ survives the Adams spectral sequence for $MO\langle 9 \rangle$. The element $d_0^{2}$ detects $\kappa^{2}$, and so $e^{\langle 9 \rangle}(\kappa^{2})$ is nonzero. 
The map $e^{\langle 9 \rangle}$ factors through $e^{\langle 12 \rangle}$. Hence, $e^{\langle 12 \rangle}(\kappa^{2})$ must also be nonzero. 
\end{proof}

\begin{proposition}
The kernel of the unit map 
\[
\pi_{28} \mathbb S \rightarrow \pi_{28} MO\langle 12 \rangle 
\]
is generated by the image of the $J$-homomorphism.
\end{proposition}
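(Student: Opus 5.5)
Work one prime at a time and show that every element of $\pi_{28}\mathbb S$ outside $\text{Im } J$ maps nontrivially to $\pi_{28}MO\langle 12\rangle$.

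\emph{Odd primes.} By Theorem \ref{thm: imj odd} and Table A3.4 of Ravenel \cite{ravenelgreenbook}, the $p$-primary part of $\text{Coker } J$ vanishes in stem 28 for $p\geq 3$. The $3$-primary generators $\beta_1\in\pi_{10}$ and $\alpha_1\beta_1\in\pi_{13}$ lie in other stems, and the next $3$-primary classes in this range do not lie in stem 28. For $p\geq 5$ we have $pq-2\geq 38$. So the odd-primary part of $\pi_{28}\mathbb S$ is entirely $\text{Im } J$, which is trivial in degree $28\equiv 4 \pmod 8$. Since $28\geq 12-1$, Theorem \ref{hoveyimj} also shows $\text{Im } J$ lies in the kernel. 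Hence only the $2$-primary part needs work.

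\emph{The prime 2.} From Bruner--Rognes (Theorem 11.61 and Chapter 11), $2$-locally $\pi_{28}\mathbb S\cong \Z/2\{\kappa^2\}$, detected by $d_0^2$. The image of $J$ is zero in stem 28 since $28\equiv 4\pmod 8$. The $E_\infty$-page of $\mathbb S$ in stem 28 contains only $d_0^2$ (together with $h_0$-multiples that are killed or absent), and I would check this against Table \ref{tab:sphere_differentials}. Thus the claim reduces to showing that $\kappa^2$ is not in the kernel, which is exactly Lemma \ref{lem: kappa2 mo12}. That lemma uses the factorization of the unit $\mathbb S\to MO\langle 9\rangle$ through $MO\langle 12\rangle \to MO\langle 9\rangle$, together with the survival of $d_0^2$ in the Adams spectral sequence for $MO\langle 9\rangle$. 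Any element of the kernel must then be $0$, which is in $\text{Im } J$. Conversely, $\text{Im } J$, being trivial here, lies in the kernel. So the kernel equals $\text{Im } J$.

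\emph{Main obstacle.} The key step is the identification of $\pi_{28}\mathbb S_{(2)}$. I must be sure there are no other surviving classes in stem 28, for instance $h_0$-multiples or classes like $h_1 h_3 h_5$-type elements that only appear in stems $\geq 32$. I would verify this directly from the $E_\infty$ chart in Bruner--Rognes. A secondary point is confirming from Figure \ref{fig:mo9_einf} that $d_0^2$ is not hit by a longer differential in the $MO\langle 9\rangle$ spectral sequence; Lemma \ref{lem: kappa2 mo12} already asserts this. Alternatively, one could argue directly in the $MO\langle 12\rangle$ spectral sequence that $d_0^2$ survives, using Proposition \ref{prop:extension problem stem 28 mo9}-style reasoning and the table of $d_r$ for $MO\langle 12\rangle$.
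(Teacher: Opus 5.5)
Your proposal is correct and follows the paper's proof: Ravenel's Table A3.4 and Theorem \ref{thm: imj odd} dispose of the odd primes. At $p=2$, $\text{Coker } J$ in stem 28 is generated by $\kappa^2$, and $\kappa^2$ is not in the kernel by Lemma \ref{lem: kappa2 mo12}, since $d_0^2$ survives for $MO\langle 9\rangle$ and the unit map to $MO\langle 9\rangle$ factors through $MO\langle 12\rangle$.
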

\begin{proof}
According to Table A3.4 of Ravenel \cite{ravenelgreenbook}, the 3-primary component of $\text{Coker } J$ is trivial in dimension 28. Observe that $p(2p-2)-2$ is $> 28$ when $p\geq 5$. Hence, by Theorem \ref{thm: imj odd}, the $p$-primary component of $\text{Coker } J$ is trivial 
in dimension 28 for all $p \geq 5$. The $2$-primary component of $\text{Coker } J$ is generated by $\kappa^{2}$. This element is not in the kernel 
by Lemma \ref{lem: kappa2 mo12}.
\end{proof}
\begin{corollary}
The boundary of any $11$-connected, almost closed $29$-manifold also bounds a parallelizable manifold. 
\end{corollary}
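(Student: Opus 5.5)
The plan is to feed the preceding Proposition into Stolz's criterion, Theorem \ref{thm: stolz_lemma}, with $k=12$ and $d=5$, so that $2k+d=29$. An $11$-connected manifold is $(k-1)$-connected for $k=12$, and $k>2$, so the theorem applies. It then suffices to show that for every $\alpha\in\pi_{29}(A[12])$ the element $\partial_*b_*(\alpha)\in\pi_{28}\mathbb S$ lies in $\text{Im } J$.

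First, recall the observation made before Theorem \ref{thm: mo9 unit map}: by exactness of the long exact sequence of the cofiber sequence $\mathbb S\to MO\langle 12\rangle\to MO\langle 12\rangle/\mathbb S$,
\[
\operatorname{Im}(\partial_*\circ b_*)\subseteq \operatorname{Im}(\partial_*)=\ker\bigl(\pi_{28}\mathbb S\to\pi_{28}MO\langle 12\rangle\bigr).
\]
Second, the preceding Proposition says this kernel is generated by the image of $J$. This holds at every prime: the odd-primary part uses Ravenel's tables and Theorem \ref{thm: imj odd}, and the $2$-primary part uses Lemma \ref{lem: kappa2 mo12}. Therefore every $\partial_*b_*(\alpha)$ lies in $\text{Im } J$. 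Stolz's lemma then gives that the boundary of any $11$-connected, almost closed $29$-manifold bounds a parallelizable manifold.

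The only point needing care is that the integral (not merely $2$-complete) statement is what Stolz's criterion requires. The Proposition was proved integrally by treating each prime separately, so this is already covered. Note also that $\text{Im } J$ in stem $28$ is itself contained in the kernel by Theorem \ref{hoveyimj}, since $28\geq 12-1$, so no further case analysis is needed. I therefore expect no real obstacle beyond correctly matching indices ($2k+d-1=28$).
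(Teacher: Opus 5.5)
Your proposal is correct and follows essentially the same route as the paper. You apply Stolz's Lemma 12.5 with $k=12$, $d=5$, use the inclusion $\operatorname{Im}(\partial_*\circ b_*)\subseteq\ker\bigl(\pi_{28}\mathbb S\to\pi_{28}MO\langle 12\rangle\bigr)$, and invoke the preceding Proposition that this kernel is generated by $\text{Im } J$. Since $\pi_{28}(SO)=0$, $\text{Im } J$ is zero in stem $28$, so the kernel is actually trivial and your remark about Hovey's theorem is harmless but unnecessary.
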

We now give an alternate proof of the case of $n=16$ of Theorem \ref{thm: Senger-Zhang}. 
\begin{proposition}
The kernel of the unit map 
\[
\pi_{33} \mathbb S \rightarrow \pi_{33} MO\langle 16 \rangle 
\]
is generated by the image of the $J$-homomorphism. 
\end{proposition}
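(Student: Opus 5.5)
The plan mirrors the argument just given for $\pi_{28}\mathbb S \rightarrow \pi_{28} MO\langle 12 \rangle$. By Theorem \ref{hoveyimj}, $\text{Im } J$ lies in the kernel in degree $33 \geq 16-1$. So it suffices to show that every class of $\pi_{33}\mathbb S$ not in $\text{Im } J$ maps nontrivially, i.e. that $(\text{Coker } J)_{33}$ injects into $\pi_{33}MO\langle 16 \rangle$.

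\textbf{Odd primes.} Table A3.4 of Ravenel shows that the $3$-primary component of $\text{Coker } J$ vanishes in dimension $33$. For $p \geq 5$, Theorem \ref{thm: imj odd} applies since $pq-2 = 2p(p-1)-2 \geq 38 > 33$, so the $p$-primary part of $\text{Coker } J$ is trivial in this degree. It therefore suffices to work $2$-locally with the Adams spectral sequence.

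\textbf{The $2$-primary part.} Using Theorem \ref{mahowaldimj}, I would list the $E_\infty$-classes for $\mathbb S$ in stem 33 (from Bruner--Rognes, Chapter 11). These are $h_1 x$-type classes together with $P^4h_1$ and $h_1 P^3 c_0$-type classes, the latter two representing $\mu_{33}$ and $\text{Im } J$ by Theorem \ref{adamsimj}. The relevant $\text{Coker } J$ generators are $\eta\theta_4$-type products and classes such as $\Delta h_1h_3$ (for $\eta_5$-related elements) and $h_1\Delta h_1h_3$-adjacent classes, along with $\mu_{33}$.

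For each such class $c$, I would check against Figure \ref{fig:mo16_einf} and the differential tables for $MO\langle 16\rangle$ that the image of $c$ in the $E_2$-page for $MO\langle 16\rangle$ is nonzero, is not hit by any differential, and survives. The candidates that could kill such a class in this range are $\vv(33,1,16)$ and $\vv(35,1,16)$. The first is a permanent cycle; the earlier lemma already shows $d_3(\vv(33,1,16))=0$ and $d_5(\vv(33,1,16))=0$. The second, $\vv(35,1,16)$, can only hit stem 34. So no Coker $J$ class in stem 33 is a boundary.

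Because the map of $E_\infty$-pages is nonzero on the filtration-leading term, the classes have nonzero image in homotopy. Adams' summand $\mu_{33}$, detected by $P^4h_1$, maps nontrivially for the same reason: nothing hits $P^4h_1$.

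\textbf{Main obstacle.} The delicate point is showing that every element of $(\text{Coker } J)_{33}$ maps nontrivially, not just every $E_\infty$ generator. A sum of a class with nonzero image and a class detected in higher filtration could, in principle, map to zero through a filtration jump. I would handle this by checking that the images of the distinct $\text{Coker } J$ generators in the $E_\infty$-page for $MO\langle 16 \rangle$ are linearly independent in each bidegree. Then any nonzero combination has a leading term surviving to $E_\infty$, hence nonzero image. If the independence check fails in some bidegree, the fallback is to compose further to $MO\langle 9\rangle$ or $tmf$, as in the proof that $\vv(23,0,9)$ is a permanent cycle, to detect the remaining class.
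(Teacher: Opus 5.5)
Your skeleton is the paper's: Theorem \ref{hoveyimj} puts $\text{Im } J$ in the kernel, Ravenel's table and Theorem \ref{thm: imj odd} dispose of the odd primes, and the $2$-primary part reduces to checking that the stem-$33$ $\text{Coker } J$ classes survive to $E_\infty(MO\langle 16\rangle)$. That last check is the entire content of the paper's proof, and as written it fails.

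First, the classes are misidentified. $h_1x$ lies in stem $38$, $\eta\theta_4$ (detected by $h_1h_4^2$) lies in stem $31$, and $\Delta h_1h_3$ lies in stem $32$. Stem $33$ of $E_\infty(\mathbb S)$ consists of $h_1^2h_5$ ($\eta\eta_5$), $p$ ($\nu\theta_4$), $h_1\Delta h_1h_3$, $P^3h_1c_0$ ($\text{Im } J$) and $P^4h_1$ ($\mu_{33}$), in filtrations $3,4,7,16,17$. The paper checks precisely $h_1^2h_5$, $p$, $h_1\Delta h_1h_3$ and $P^4h_1$. Your list never names $p$ or $h_1^2h_5$.

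Second, and more seriously, your survival argument examines the wrong differentials. The images of sphere classes are permanent cycles by naturality, and $d_r$ lowers the stem by one. So a stem-$33$ class can only be lost by being hit from stem $34$. The element \vv(33,1,16) can only hit stem $32$; its candidate targets $d_1$, $\Delta h_1h_3$, $h_0^k\vv(32,1,16)$ are what Lemma \ref{lem:d_3 differential in stem 33 filtration 1 mo16} concerns. The element \vv(35,1,16) hits stem $34$. Dismissing these two says nothing about stem $33$, and you never look at stem $34$ of $E_r(MO\langle 16\rangle)$. So the claim that no $\text{Coker } J$ class in stem $33$ is a boundary is unproved. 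You need the actual stem-$34$ analysis, which the paper supplies by reading the four classes off Figure \ref{fig:mo16_einf}.

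Once that is in hand, your filtration-jump worry needs no fallback to $MO\langle 9\rangle$ or $tmf$. If $x$ is in the kernel, its leading $E_\infty$-term must map to zero, so it can only be $P^3h_1c_0$. Then $x\in F^{16}\pi_{33}\mathbb S$, so $x=j$ or $x=j+\mu_{33}$, where $j$ is the $\text{Im } J$ class. But $j+\mu_{33}$ maps to the nonzero image of $\mu_{33}$, so $x=j$.
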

\begin{proof}
According to Table A3.4 of Ravenel \cite{ravenelgreenbook}, the 3-primary component of $\text{Coker } J$ is trivial in dimension 33. Observe that $p(2p-2)-2$ is $> 33$ when $p\geq 5$. Hence, by Theorem \ref{thm: imj odd}, the $p$-primary component of $\text{Coker } J$ is trivial 
in dimension 33 for all  $p \geq 5$. By table $A3.3$ of Ravenel \cite{ravenelgreenbook}, the $2$-primary component of $\text{Coker J}$ in dimension 33 is detected by $h_1^2h_5$, $p$, $h_1\Delta h_1h_3$, $P^4h_1$. Inspecting Figure \ref{fig:mo16_einf}, each of those elements survives the Adams spectral sequence 
for $MO\langle 16\rangle$. 
\end{proof}
\begin{corollary}
The boundary of any $15$-connected, almost closed $34$-manifold also bounds a parallelizable manifold. 
\end{corollary}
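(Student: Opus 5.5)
This corollary should follow by feeding the preceding proposition into Stolz's criterion, Theorem \ref{thm: stolz_lemma}, with $k=16$ and $d=2$, so that $2k+d=34$ and $2k+d-1=33$. The hypotheses $k>2$ and $d\ge 0$ hold. A $15$-connected manifold is $(k-1)$-connected for $k=16$, so the manifolds in the statement are exactly the $(k-1)$-connected, almost closed $(2k+d)$-manifolds to which Stolz's lemma applies.

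First, I would show that for every $\alpha\in\pi_{34}(A[16])$ the class $\partial_*b_*(\alpha)\in\pi_{33}\mathbb S$ lies in $\text{Im } J$. By the discussion preceding Theorem \ref{thm: mo9 unit map}, exactness of the long exact sequence of the cofiber sequence $\mathbb S\to MO\langle 16\rangle\to MO\langle 16\rangle/\mathbb S$ gives
\[
\operatorname{Im}(\partial_*\circ b_*)\subseteq \operatorname{Im}(\partial_*)=\ker\bigl(\pi_{33}\mathbb S\to\pi_{33}MO\langle 16\rangle\bigr).
\]
The preceding proposition says this kernel is generated by the image of $J$. Hence every $\partial_*b_*(\alpha)$ is in $\text{Im } J$.

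Theorem \ref{thm: stolz_lemma} then says that the boundary of any such manifold also bounds a parallelizable manifold. The argument works uniformly at all primes, because the proposition's conclusion covers the integral kernel: its proof already handles the odd-primary parts using Table A3.4 of Ravenel and Theorem \ref{thm: imj odd}.

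The only real content is in the preceding proposition, which I take as given. It relies on the survival in the $E_\infty$-page of Figure \ref{fig:mo16_einf} of the classes $h_1^2h_5$, $p$, $h_1\Delta h_1h_3$, and $P^4h_1$, which detect the $2$-primary $\text{Coker } J$ in stem 33. The step needing the most care is the full subgroup statement: we must know that no nontrivial combination of these classes maps to zero, not merely that each survives individually. Survival in $E_\infty$ of the images of these linearly independent Adams classes gives this, so I expect no obstacle beyond checking that the indices $k=16$, $d=2$ match Stolz's lemma.
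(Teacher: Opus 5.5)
Your proposal is correct and follows the paper's route: the corollary comes from applying Stolz's Lemma 12.5 with $k=16$, $d=2$, using the exactness observation $\operatorname{Im}(\partial_*\circ b_*)\subseteq\ker\bigl(\pi_{33}\mathbb S\to\pi_{33}MO\langle 16\rangle\bigr)$ together with the preceding proposition that this kernel is generated by $\text{Im } J$. Your indices ($d=2$, so the relevant kernel sits in stem $33$) are right for $34$-manifolds. Your remark that survival in $E_\infty$ suffices for the full subgroup statement is also correct, because $h_1^2h_5$, $p$, $h_1\Delta h_1h_3$ and $P^4h_1$ lie in distinct Adams filtrations, so a leading-term argument applies; the paper leaves this detail implicit.
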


\begin{lemma}\label{lem: kappa2 mo17}
The elements $\overline{\kappa}_{2}$, $2\overline{\kappa}_{2}$, and $4\overline{\kappa}_{2} \in (\pi_{44} \mathbb S)^{\wedge}_{2}$ have nonzero image under the 2-local unit map $(\pi_{44} \mathbb S)^{\wedge}_{2} \rightarrow (\pi_{44} MO \langle 17 \rangle)^{\wedge}_{2}$.
\end{lemma}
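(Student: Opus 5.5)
The plan is to run the same factorization argument as in Lemma \ref{lem: kappa2 mo12}, now in the Adams spectral sequence for $MO\langle 17\rangle$ in stem 44. Recall that $\overline{\kappa}_2 \in \pi_{44}\mathbb S$ has order 8 at the prime 2 and is detected by $g^2$ in Adams filtration 8. Hence $2\overline{\kappa}_2$ and $4\overline{\kappa}_2$ are detected by $h_0g^2$ and $h_0^2g^2$; there are no hidden 2-extensions in $\pi_{44}\mathbb S$ affecting this tower (see Chapter 11 of Bruner-Rognes \cite{BrunerRognestmf}). Since $4\overline{\kappa}_2$ is a multiple of the other two, it suffices to show that $e^{\langle 17\rangle}(4\overline{\kappa}_2)\neq 0$.

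\textbf{Step 1: the image of the unit map on $E_2$.} Using the $E_2$-page for $MO\langle 17\rangle$ (Figure \ref{fig:mo17_e2}) computed by \textbf{ext}, I will check that $g^2$, $h_0g^2$, and $h_0^2g^2$ map to nonzero elements. The unit map is an isomorphism on $\text{Ext}$ through internal degree roughly $17$ below the bottom cell of the cokernel module, so beyond that range this has to be read off the chart directly.

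\textbf{Step 2: survival to $E_\infty$.} The images are permanent cycles by naturality, so the only question is whether $h_0^2g^2$ (bidegree $(44,10)$) is hit by a differential in the $MO\langle 17\rangle$ spectral sequence. Possible sources lie in stem 45 with filtration at most 8. I would enumerate the classes there: essentially the non-sphere classes such as $h_0^k\vv(45,\ast,17)$ and the sphere classes. For the non-sphere sources, I would rule out each one using $h_0$-, $h_1$- and $h_2$-linearity, comparing with products that are already known to be nonzero on $E_r$. For example, $h_2 h_0^2 g^2 = 0$ is of no use, but $d_0$-multiplication and Leibniz arguments in the style of Lemma \ref{lem:d_2 differential in stem 24 filtration 4 mo16} should be available. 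Differentials out of sphere classes in stem 45 are already known not to hit $h_0^2g^2$ in $\mathbb S$ (Proposition \ref{prop: sphere differentials} together with Bruner-Rognes). By naturality their images cannot hit it either, unless a new class intervenes.

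\textbf{Alternative route.} If Step 2 gets stuck, I would fall back on a factorization through a target where the answer is known. The natural candidate is the composite $\pi_{44}\mathbb S\to\pi_{44}MO\langle 17\rangle\to\pi_{44}MO\langle 8\rangle\to\pi_{44}tmf$. Bruner-Rognes (Theorem 11.80) show that the Hurewicz image of $\overline{\kappa}_2$ in $\pi_{44}tmf$ has order 8, because $g^2$, $h_0g^2$, and $h_0^2g^2$ survive to detect a $\Z/8$ in $tmf$. Since this composite factors through $MO\langle 17\rangle$, all three elements would have nonzero image, and no analysis of the $MO\langle 17\rangle$ spectral sequence would be needed. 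This mirrors the $tmf$-comparison already used for $\vv(23,0,9)$. The step I expect to be the main obstacle is confirming that $4\overline{\kappa}_2$ is indeed nonzero in $\pi_{44}tmf$. I would take that first from Bruner-Rognes's table of the $tmf$ Hurewicz image, checking that $\bar\kappa^2$-type classes there have order 8. Only if that fails would I resort to the direct spectral sequence argument of Step 2.
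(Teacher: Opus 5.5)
\textbf{There is a genuine gap: you have misidentified the class detecting $\overline{\kappa}_2$.} Since $g$ lies in bidegree $(20,4)$, the class $g^2$ lies in $(40,8)$. It detects $\overline{\kappa}^2\in\pi_{40}\mathbb S$, not anything in stem $44$.

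The generator $\overline{\kappa}_2$ of $(\pi_{44}\mathbb S)^{\wedge}_2\cong\Z/8$ is detected by $g_2\in\text{Ext}^{4,48}_{\mathcal A}(\Z/2,\Z/2)$. Then $2\overline{\kappa}_2$ and $4\overline{\kappa}_2$ are detected by $h_0g_2$ and $h_0^2g_2$ in filtrations $5$ and $6$. Consequently both parts of your main route are off:
\begin{itemize}
\item Step 1 checks the wrong classes.
\item Step 2 analyzes the wrong bidegree: $h_0^2g^2$ sits at $(40,10)$, not $(44,10)$. What actually has to be excluded are differentials from stem $45$ in filtration $\le 4$ hitting $h_0^2g_2$ at $(44,6)$, or from filtration $\le 2$ hitting $g_2$.
\end{itemize}

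\textbf{The fallback through $tmf$ fails for the same reason.} The surviving classes $g^2, h_0g^2,\dots$ you recall from Bruner--Rognes live in stem $40$. In stem $44$, $g_2$ maps to zero in $\text{Ext}_{\mathcal A(2)}$, because $\text{Ext}^{4,48}_{\mathcal A(2)}(\Z/2,\Z/2)=0$: every filtration-$4$ class of $\text{Ext}_{\mathcal A(2)}$ lies in stem $\le 20$, the largest being $g$. So nothing in that computation gives a nonzero image of $\overline{\kappa}_2$ in $\pi_{44}tmf$. Indeed, $\overline{\kappa}_2$ is not in the known $tmf$ Hurewicz image, so this composite cannot detect it.

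\textbf{The correct argument is your Step 1/Step 2 with $g_2$ in place of $g^2$.} Check on the $E_2$-page for $MO\langle 17\rangle$ that $g_2$, $h_0g_2$, $h_0^2g_2$ have nonzero images, and that no class in stem $45$ can hit them. This is exactly what the paper does, by inspecting its $MO\langle 17\rangle$ charts. Your observation that it suffices to treat $4\overline{\kappa}_2$ is valid. With the corrected class, it reduces the check to showing that the image of $h_0^2g_2$ is nonzero on $E_\infty$.
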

\begin{proof}
Inspecting Figure \ref{fig:mo17_e2_24_48}, we find that $g_2$, $h_0g_2$, and $h_0^2g_2$ all survive the Adams spectral sequence for $MO\langle 17 \rangle$. The element $g$ detects $\overline{\kappa}_{2} \in (\pi_{44} \mathbb S)^{\wedge}_{2}$.
\end{proof}

\begin{proposition}
The kernel of the unit map 
\[
\pi_{44} \mathbb S \rightarrow \pi_{44} MO\langle 17 \rangle 
\]
is generated by the image of the $J$-homomorphism. 
\end{proposition}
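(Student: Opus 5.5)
I will follow the same template as the proofs for $\pi_{28}$ and $\pi_{33}$ above. The idea is to reduce to the $2$-primary part of $\mathrm{Coker}\, J$ in stem $44$, and then check that every class there survives under the unit map.

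The first step is the odd primary part. By Table A3.4 of Ravenel \cite{ravenelgreenbook}, the $3$-primary component of $\mathrm{Coker}\, J$ vanishes in dimension $44$. Indeed, the $3$-primary classes near this range, such as $\beta_1^2$ in stem $20$, $\beta_2$ in stem $26$ and $\beta_1^3$ in stem $30$, do not land in stem $44$, and I would confirm this against the table.

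For $p \geq 5$, Theorem \ref{thm: imj odd} gives generators only in stems $pq-2$ and $(p+1)q-3$, where $q = 2p-2$, as long as we stay below $3pq-6$.
\begin{itemize}
\item For $p=5$ we have $q=8$ and $3pq-6 = 114 > 44$, so the theorem covers stem $44$. The generators sit in stems $38$ and $45$, so $\mathrm{Coker}\, J$ vanishes in stem $44$.
\item For $p \geq 7$, the smallest generator stem is $pq-2 \geq 82 > 44$, so again nothing lands in stem $44$.
\end{itemize}
Hence only the $2$-primary component matters.

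The second step is to identify $(\mathrm{Coker}\, J)_{44}$ at $2$. By Table A3.3 of Ravenel, together with Theorem \ref{mahowaldimj}, stem $44$ is $\equiv 4 \pmod 8$, so $\mathrm{Im}\, J$ contributes nothing there. I expect $(\pi_{44}\mathbb S)^\wedge_2 \cong \mathbb Z/8$, generated by $\overline{\kappa}_2$ and detected by $g_2$. Its nonzero elements are then $\overline{\kappa}_2$, $2\overline{\kappa}_2$, $3\overline{\kappa}_2$, $4\overline{\kappa}_2$, and so on, detected by $g_2$, $h_0g_2$ and $h_0^2g_2$ up to filtration.

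The third step applies Lemma \ref{lem: kappa2 mo17}, which says that $\overline{\kappa}_2$, $2\overline{\kappa}_2$ and $4\overline{\kappa}_2$ have nonzero image in $\pi_{44}MO\langle 17\rangle$. Any nonzero element of $\mathbb Z/8$ has the form $2^a u\,\overline{\kappa}_2$ with $u$ odd and $a \in \{0,1,2\}$. Since $u$ is invertible and the unit map is a homomorphism, every nonzero element maps to something nonzero. So the unit map is injective on the $2$-primary $\mathrm{Coker}\, J$ part, and the kernel is generated by $\mathrm{Im}\, J$, which is zero in this stem anyway.

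The main obstacle is the second step: pinning down that the $2$-primary $\mathrm{Coker}\, J$ in stem $44$ is exactly the cyclic group generated by $\overline{\kappa}_2$. If Ravenel's table shows extra classes, such as $h_0g_2$ elements with exotic behavior or classes in higher Adams filtration, I would check each one on the $E_\infty$-page of $MO\langle 17\rangle$ in the way Lemma \ref{lem: kappa2 mo17} does. I would also watch for a hidden extension that moves an element of the kernel into higher filtration.
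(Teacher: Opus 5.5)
Your proposal is correct and follows essentially the same route as the paper. Both arguments dispose of the odd primes, identify the $2$-primary $\mathrm{Coker}\,J$ in stem $44$ as the cyclic group generated by $\overline{\kappa}_2$, and conclude with Lemma \ref{lem: kappa2 mo17}; the only differences are that the paper cites Ravenel's Table A3.5 for $p=5$ where you use Theorem \ref{thm: imj odd}, and that your explicit treatment of the multiples $2^a u\,\overline{\kappa}_2$ spells out what the paper compresses into ``$\overline{\kappa}_2$ and its $2$-multiples.''
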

\begin{proof}
According to Table A3.4 and Table A3.5 of Ravenel \cite{ravenelgreenbook}, the $3$ and $5$-primary components of $\text{Coker } J$ are trivial in dimension 44. Observe that 
Observe that $p(2p-2)-2$ is $> 44$ when $p \geq 7$. Hence, by Theorem \ref{thm: imj odd}, the $p$-primary component of $\text{Coker } J$ is trivial 
in dimension 44 for all  $p \geq 7$. The $2$-primary component of $\text{Coker } J$ consists of $\overline{\kappa}_{2}$ and its 2-multiples. These elements are not in the kernel by Lemma \ref{lem: kappa2 mo17}. 
\end{proof}

\begin{corollary}
The boundary of any $16$-connected, almost closed $45$-manifold also bounds a parallelizable manifold. 
\end{corollary}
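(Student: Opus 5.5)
The corollary reduces to Stolz's criterion, Theorem \ref{thm: stolz_lemma}, applied with $k=17$ and $d=11$, so that $2k+d=45$. By that theorem it is enough to show that for every $\alpha\in\pi_{45}(A[17])$ the image $\partial_* b_*(\alpha)\in\pi_{44}\mathbb S$ lies in $\text{Im } J$.

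As explained before Theorem \ref{thm: mo9 unit map}, exactness of the long exact sequence for the cofiber sequence $\mathbb S\to MO\langle 17\rangle\to MO\langle 17\rangle/\mathbb S$ gives
\[
\operatorname{Im}(\partial_*\circ b_*)\subseteq \operatorname{Im}(\partial_*)=\ker\bigl(\pi_{44}\mathbb S\to\pi_{44}MO\langle 17\rangle\bigr).
\]
So it suffices to know that this kernel is contained in $\text{Im } J$. That is exactly the preceding proposition, which says the kernel of the unit map in degree 44 is generated by the image of $J$.

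The one point needing care is that Stolz's lemma is stated integrally, while the preceding proposition combined a 2-complete Adams computation with an odd-primary argument. The primary decomposition of the finite group $\pi_{44}\mathbb S$ handles this. An element of the kernel has each $p$-component in the kernel of the $p$-localized unit map. For $p\ge 3$, the odd-primary parts of $\text{Coker } J$ vanish in dimension 44 by Theorem \ref{thm: imj odd} and Ravenel's tables, so the $p$-component automatically lies in $\text{Im } J$. For $p=2$, Lemma \ref{lem: kappa2 mo17} shows that the nonzero elements of the 2-primary $\text{Coker } J$, namely $\overline{\kappa}_2, 2\overline{\kappa}_2, 4\overline{\kappa}_2$, map nontrivially, so the 2-component of a kernel element also lies in $\text{Im } J$.

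Theorem \ref{thm: stolz_lemma} then applies with $k=17>2$ and $d=11\ge 0$. The boundary of any $16$-connected almost closed $45$-manifold therefore bounds a parallelizable manifold. No serious obstacle remains, since the real content was already established in the preceding proposition.
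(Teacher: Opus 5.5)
Your proposal is correct and follows the paper's own argument: apply Theorem \ref{thm: stolz_lemma} with $k=17$, $d=11$, use $\operatorname{Im}(\partial_*\circ b_*)\subseteq\ker\bigl(\pi_{44}\mathbb S\to\pi_{44}MO\langle 17\rangle\bigr)$, and invoke the preceding proposition, whose proof your prime-by-prime paragraph simply reproduces. One small imprecision: $\overline{\kappa}_2, 2\overline{\kappa}_2, 4\overline{\kappa}_2$ are not all the nonzero elements of the cyclic group $\mathbb{Z}/8$, but every nonzero subgroup of that group contains $4\overline{\kappa}_2$, so the nontriviality of its image already forces the $2$-primary kernel to vanish.
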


\clearpage






\phantomsection
\section*{REFERENCES}

\addcontentsline{toc}{section}{References}

\bibliography{bib/mybib}

\bibliographystyle{abbrv}

\clearpage

\phantomsection
\section*{ABSTRACT}

\addcontentsline{toc}{section}{Abstract}
\centerline{\bf ON THE COBORDISM GROUPS OF $O\langle n \rangle$-MANIFOLDS}

{\setlength\baselineskip{0.3in}
	\begin{center}
	by\\
	\medskip
	{\bf HASSAN ABDALLAH}\\
	\medskip
	{\bf May 2026}\\
	\end{center}
	\Vspc
	\begin{tabular}{ll}
		{\bf Advisor:} & Dr. Andrew Salch \\
		{\bf Major:} & Mathematics \\
		{\bf Degree:} & Doctor of Philosophy
	\end{tabular}
}

\bigskip \bigskip

Given a smooth manifold, a tangential $O\langle n \rangle$-structure is a lift of the classifying map of its tangent bundle to $BO\langle n \rangle$. An $O\langle n \rangle$-manifold is a manifold equipped with an equivalence class of tangential $O\langle n \rangle$-structures. The cases $n=2$ and $n=4$ recover the familiar notions of oriented and spin manifolds. Larger values of $n$ furnish higher analogs of these with additional geometric properties.
In this thesis, we calculate the cobordism groups of compact $O\langle n\rangle $-manifolds in a range of degrees, for $n$ between 9 and 17. We derive various consequences for the boundaries of almost-closed manifolds.

\clearpage

\phantomsection
\section*{AUTOBIOGRAPHICAL STATEMENT}

\addcontentsline{toc}{section}{Autobiographical Statement}

B.S. Mathematics, Wayne State University 2017 \\
M.A. Applied Mathematics, Wayne State University, 2020\\
Ph.D. Mathematics, Wayne State University, 2026

\end{document}